\documentclass[reqno]{amsart}
\usepackage[top=2cm,bottom=2cm,left=3cm,right=3cm]{geometry}
\usepackage{amssymb,amsthm,amsmath,amsxtra,dsfont,appendix,stix}
\usepackage{xcolor}
\definecolor{darkred}{RGB}{139,0,0}
\usepackage{hyperref} 
\hypersetup{
	colorlinks=true, 	
	citecolor=blue, 
	linkcolor=darkred,
	urlcolor=darkred
}

\usepackage[
backend=biber,
style=alphabetic-verb,
sortlocale=auto,
natbib=true,
url=false, 
doi=true,
eprint=true,
safeinputenc=true,
giveninits=true,
maxbibnames=10
]{biblatex}
\addbibresource{hyperbolic.bib}
\addbibresource{parabolic.bib}
\addbibresource{elliptic.bib}

\usepackage[nameinlink]{cleveref}
\usepackage[disable]{todonotes}

\usepackage{xparse,mathtools}
\usepackage{enumitem}
\usepackage{cancel}

\usepackage{mathrsfs}

\usepackage{tcolorbox}

\newcommand{\1}{\mathds{1}}
\renewcommand{\i}{\mathbf{i}}
\renewcommand{\a}{\mathrm{a}}
\renewcommand{\d}{\mathrm{d}}
\newcommand{\A}{\mathcal{A}}
\newcommand{\B}{\mathcal{B}}
\newcommand{\C}{\mathds{C}}
\newcommand{\E}{\mathds{E}}

\newcommand{\F}{\mathcal{F}}
\newcommand{\G}{\mathrm{G}}
\newcommand{\W}{\mathrm{W}}

\renewcommand{\O}{\mathcal{O}}
\renewcommand{\L}{\mathfrak{L}}
\newcommand{\M}{\mathrm{M}}
\newcommand{\N}{\mathbb{N}}

\newcommand{\Q}{\mathcal{Q}}
\renewcommand{\P}{\mathds{P}}
\newcommand{\X}{\mathrm{X}}
\newcommand{\R}{\mathds{R}}

\newcommand{\Ai}{\operatorname{Ai}}
\newcommand{\SAi}{\operatorname{SAi}}

\newcommand{\sgn}{\operatorname{sgn}}
\newcommand{\Z}{\mathbb{Z}}
\newcommand{\D}{\mathds{D}}

\newcommand{\dif}{\mathrm{d}}
\newcommand{\bpsi}{\boldsymbol{\psi}}

\newcommand{\tphi}{\boldsymbol{\chi}}
\newcommand{\Exp}{\mathbb{E}}



\numberwithin{equation}{section}

\newtheorem{theorem}{Theorem}[section]

\newtheorem{proposition}[theorem]{Proposition}
\newtheorem{corollary}[theorem]{Corollary}
\newtheorem{lemma}[theorem]{Lemma}
\newtheorem{remark}[theorem]{Remark}

\theoremstyle{definition} 
\newtheorem{definition}{Definition}[section]
\newtheorem{assumption}[definition]{Assumptions}

\renewcommand{\Re}{\operatorname{Re}}
\renewcommand{\Im}{\operatorname{Im}}

\renewcommand\o{
\mathchoice
{{\scriptstyle\mathcal{O}}}
{{\scriptstyle\mathcal{O}}}
{{\scriptscriptstyle\mathcal{O}}}
{\scalebox{.7}{$\scriptscriptstyle\mathcal{O}$}}
}


\newcommand{\filt}{\mathscr{F}}

\DeclareDocumentCommand{\Pto} {o} {
\IfNoValueTF {#1}
{\overset{\P}{\longrightarrow}}
{ \xrightarrow[ #1 \to \infty]{\P }}
}
\DeclareDocumentCommand{\Asto} {o} {
\IfNoValueTF {#1}
{\overset{\operatorname{a.s.}}{\longrightarrow}}
{
\xrightarrow[ #1 \to \infty]{\operatorname{a.s.} }
}
}
\DeclareDocumentCommand{\Mgfto} {o} {
\IfNoValueTF {#1}
{\overset{\operatorname{mgf}}{\longrightarrow}}
{ \xrightarrow[ #1 \to \infty]{\operatorname{mgf} }}
}

\DeclareDocumentCommand{\Wkto} {o} {
\IfNoValueTF {#1}
{\overset{\rm law}{\longrightarrow}}
{ \xrightarrow[ #1 \to \infty]{\operatorname{law}}}
}

\DeclareDocumentCommand \LPto { O{1} }
{\overset{\operatorname{\LP^{#1}}}{\longrightarrow}}

\title[Bulk asymptotics of the G$\beta$E characteristic polynomial]{Bulk asymptotics of the Gaussian $\beta$-ensemble characteristic polynomial}

\author[G. Lambert]{Gaultier Lambert}
\address{Department of Mathematics, KTH Royal Institute of Technology, Stockholm, Sweden}
\email{glambert@kth.se}

\author[E. Paquette]{Elliot Paquette}
\address{Department of Mathematics, McGill University, Montreal, QC, Canada}
\email{elliot.paquette@mcgill.ca}

\date{August 1, 2025}

\begin{document}

\maketitle

\begin{abstract}
The Gaussian $\beta$-ensemble (G$\beta$E) is a fundamental model in random matrix theory.
In this paper, we provide a comprehensive asymptotic description of the characteristic polynomial of the G$\beta$E anywhere in the bulk of the spectrum that simultaneously captures both local-scale fluctuations (governed by the Sine-$\beta$ point process) and global/mesoscopic log-correlated Gaussian structure, which is accurate down to vanishing errors as $N\to\infty$.

As immediate corollaries, we obtain several important results: (1) convergence of characteristic polynomial ratios to the stochastic zeta function, extending known results from \cite{ValkoViragZeta} to the G$\beta$E; (2) a martingale approximation of the log-characteristic polynomial which immediately recovers the central limit theorem from \cite{BourgadeModyPain}; (3) a description of the order one correction to the martingale in terms of the stochastic Airy function.
\end{abstract}


\tableofcontents

\todo[inline]{Fix the parabolic paper (most of the appendix goes) }
\todo[inline]{check correlations + write down martingale CLT}

\section{Introduction}

\subsection{Gaussian $\beta$-ensembles.}
For $\beta>0$ and $N\in\N$, the \emph{Gaussian $\beta$-ensemble} (G$\beta$E) is a distribution on $\R^N$ given by 
\begin{equation}\label{eq:GbE}
(\lambda_1, \lambda_2, \dots,\lambda_N) \mapsto
\frac{1}{\mathcal{Z}(N,\beta)} e^{- \sum_{i=1}^N \beta N \lambda_i^2} \prod_{i > j} |\lambda_i - \lambda_j|^\beta. 
\end{equation}
This is the subject of a long line of literature in random matrix theory, see e.g.~\cite{Forrester}. 
The most traditional investigation of this point process is through its bulk local limit, which is described by the Sine-$\beta$ point process introduced in \cite{KillipStoiciu,ValkoVirag}, and which generalize the classical determinantal/Pfaffian point processes for $\beta \in \{1,2,4\}$ \cite{mehta2004random}. 

A second, more recent direction of interest is the study of the distributions of the characteristic polynomial of \eqref{eq:GbE}.  Specifically, in this paper, we focus on the the normalized characteristic polynomial
\begin{equation} \label{charpoly}
\Phi_N(z) \coloneqq
w_N(z)
{\textstyle \prod_{i=1}^N} ( z - \lambda_i),
\quad\text{where}\quad
w_N(z) \coloneqq (\tfrac{N}{2\pi})^{1/4}e^{-N z^2}
{\textstyle \prod_{k=1}^N \sqrt{\tfrac{4N}{k}}},
\qquad z\in\R. 
\end{equation}
In particular, the normalization is chosen so that $\int_\R\big(\E \Phi_N(z)\big)^2 \d z=1/2$ and the empirical measure $\frac1N \sum_{i=1}^N \delta_{\lambda_i}$ converges (in a large deviation sense) to the semicircle law $\varrho$  on $[-1,1]$; see Section~\ref{sec:tri}.  

The study of characteristic polynomials of random matrices has focused on its connection to log-correlated Gaussian fields. 
In particular, $\log |\Phi_N(z)|$ converges in distribution to a log-correlated Gaussian field $\Re\mathfrak{X}(z)$ for $z \in \C \setminus [-1,1]$ (this is originally due to \cite{Johansson}); see \eqref{eq:X_field} for the definition of the limit $\mathfrak{X}$.  While harmonic in the upper-half plane, the field is not pointwise defined on $[-1,1]$, but can be formalized as a random Schwartz distribution.  
Nevertheless, by suitable approximations, it is possible to define the exponential of $\Re \mathfrak{X}(z)$ on $[-1,1]$; the resulting random measures are instances of \emph{Gaussian multiplicative chaos} (GMC) measures \cite{Berestycki}.  Then the connection between $|\Phi_N(x)|^{\gamma}$ and GMC measures has been shown only in case $\beta=2$ \cite{CFLW} (see also \cite{BWW} for the $L^2$--regime and \cite{Pax} for some related results for $\beta=1,4$). 

For general $\beta$-ensembles (regular one-cut potential and any fixed $\beta > 0$), the log-correlated field structure has been established, in the sense of finite-dimensional marginals and in the sense of exponential moments in \cite{BourgadeModyPain} (see also \cite{Augeri} for a related CLT). A closely related problem is the convergence of the leading order behavior for the maximum of the recentered log-characteristic polynomial, which was established in \cite{LambertPaquette01} for $\beta=2$ and \cite{BLZ} for general $\beta>0$.
In fact, \cite{BLZ} establishes that the $\O(1)$ behavior of the maximum of characteristic polynomials of many large random Hermitian matrix models is universal and matches that of G$\beta$E.
For the circular $\beta$-ensemble (C$\beta$E), the asymptotic picture is much more complete and the convergence in distribution of the maximum of the characteristic polynomial has been established in \cite{CMN,PaquetteZeitouni}.
The convergence of powers of the C$\beta$E characteristic polynomial have also been obtained throughout the subcritical phase in \cite{ChhaibiNajnudel,LambertNajnudel}. These results rely on the theory of \emph{orthogonal polynomial on the unit circle} by studying  the  asymptotics of the \emph{Szeg\"o recursion}. This method is specific to circular $\beta$-ensembles but it bears some resemblance with the \emph{Pr\"ufer phase} recursion investigated in Section~\ref{sec:ell}.

\smallskip

In this paper, we aim to give a bridge between these two pictures, by giving a description of the characterisitic polynomial at multiple points $\{z_j\}$ in the bulk $(-1,1)$ which simultaneously recovers the local-scale $z_1-z_2=\Theta(N^{-1})$ fluctuations, governed by the Sine-$\beta$ point process, and the global/mesoscopic $|z_1-z_2| \gg N^{-1}$ log-correlated field structure. Our description is furthermore accurate, in a distributional sense, down to vanishing errors in the bulk as $N\to\infty$.
As an illustration of the usefulness of the description, it will be an immediate consequence that for a fixed $z\in(-1,1)$, the ratio $\Phi_N(z+\lambda/N\varrho(z))/\Phi_N(z)$ converges in the sense of finite dimensional marginals to a random analytic function, called the \emph{stochastic zeta function} \cite{NajnudelNikeghbali,Assiotis,ValkoViragZeta}. 
The limit object was introduced in \cite{ValkoViragZeta} as the local limit of analogous ratios for circular-$\beta$-ensemble. The local convergence of ratios of G$\beta$E characteristic polynomials to the stochastic zeta function is new.

\subsection{Tridiagonal models.} \label{sec:tri}

Our starting point is the tridiagonal matrix model or random \emph{Jacobi matrix}   for the Gaussian $\beta$-ensemble \cite{DumitriuEdelman},
\begin{equation} \label{def:trimatrix}
\mathbf{A} =
\left[ \begin{array}{cccc} 
b_1 & a_1 & &\\
a_1 & b_2 & a_2 & \\
& a_2 & b_3 & \ddots  \\
&& \ddots & \ddots
\end{array} \right],
\end{equation}
where $b_k \sim \mathcal{N}(0,2)$ and  $a_k \sim \chi_{\beta k}$ are independent random variables for $k\ge 1$.  
The eigenvalues of the principal minor $[\mathbf{A}/\sqrt{4N\beta}]_{N\times N}$ of this matrix are distributed according to \eqref{eq:GbE}, and consequently $\Phi_N(z) = w_N(z)\det([z-({4}{N\beta})^{-1/2}\mathbf{A}]_{N\times N}).$
Our method does apply to a class of random Jacobi matrices which generalize the Gaussian $\beta$-ensembles (after an appropriate truncation). 
Our main results are formulated under the following assumptions and notations.

\begin{definition} \label{def:noise}
The entries of the tridiagonal matrix model $\mathbf{A}$ are independent random variables which depend on a parameter $\beta>0$. 
We define for $k\in\N$, 
\begin{equation} \label{noise}
X_k \coloneqq \frac{b_{k+1}}{\sqrt 2}, 
\qquad\qquad
Y_k \coloneqq \frac{a_{k}^2 - \beta k}{\sqrt{2\beta k}}  .  
\end{equation}
We assume that for fixed $\mathfrak{K}, \mathfrak{S}\in \N$, it holds for $k \ge \mathfrak{K}$, 
\begin{equation}\label{mean}
\E X_k = \E Y_k = 0 , \qquad \E X_k^2 =  \E Y_k^2  = 1
\qquad \text{and}\qquad
\| X_k \|_2 ,  \| Y_k \|_2 \le \mathfrak{S}.
\end{equation}
Here and in the sequel of this paper, $\|\cdot\|_q$ refers to the Orlicz norm defined in Appendix~\ref{sec:concentration}. 
In the sequel, all constants are allowed to depend on the fixed parameters $\beta,\mathfrak{S}>0$.

We define $\sigma$-algebras $\F_0 = \sigma(b_1)$ and
\(
\F_{n} \coloneqq \sigma\{ X_k,Y_k : k\le n\},
\)
Then, $\{\F_{n}\}_{n\in\N_0}$ is a filtration.  
\end{definition}

\begin{remark}
The G$\beta$E fits this framework only after a mild truncation of the entries.  For every $\epsilon > 0$, there are $\mathfrak{K}$ and $\mathfrak{S}$ sufficiently large (depending on $(\beta, \epsilon)$) and a matrix model $\tilde{\mathbf{A}}$ satisfying Definition \ref{noise} so that $\P( \mathbf{A} \neq \tilde{\mathbf{A}}) \geq 1-\epsilon$.  In particular any convergence statement as $N\to\infty$ we formulate under Definition~\ref{def:noise} also applies to G$\beta$E.
\end{remark}

We will abuse the notation \eqref{charpoly} and define for $n\in\N$, $z\in\R$, 
\begin{equation} \label{def:Phi}
\Phi_n(z) \coloneqq  w_n(z)\det[z-({4}{N\beta})^{-1/2}\mathbf{A}]_{n,n} , \qquad 
w_n(z) \coloneqq (\tfrac{N}{2\pi})^{1/4}e^{-N z^2} {\textstyle \prod_{k=1}^n \sqrt{\tfrac{4N}{k}}},
\end{equation}
for the rescaled characteristic polynomials of successive minors of the random matrix $\mathbf{A}$. Note this agrees with \eqref{charpoly} for $n=N$ but we do not emphasize the dependence of $\Phi_n$ on $N$ throughout the paper. 

\subsection{Hermite polynomials.}
For comparison, it is of interest to consider the properties of the deterministic matrix $\E\mathbf{A}$. 
For G$\beta$E, this also corresponds to the weak limit as $\beta \to\infty$. 
This case motivates our choice of normalization \eqref{def:Phi} as well as the choice of \eqref{noise}--\eqref{mean} for the characteristic polynomial, as this leads to the identity:
\begin{equation*} 
\E \Phi_n(z) = h_n(z), 
\end{equation*}
where $\{h_n(z); z\in\C\}_{n\ge 0}$ are the \emph{Hermite functions}, which are orthonormal with respect to the Gaussian measure $(\tfrac{2N}{\pi})^{1/2}e^{-2N x^2}dx$ on $\R$, and which have zeros asymptotically distributed according to the semicircle law $\varrho$ on $[-1,1]$. 

\medskip

It will be advantageous to compare our main result (Theorem~\ref{thm:main}) with the classical \emph{Plancherel-Rotach} asymptotics for the Hermite polynomials \cite{PR29}
for $z \in \big[-1+\tfrac{c_N}{N^{2/3}},1-\tfrac{c_N}{N^{2/3}}\big]$ and $\lambda\in\R$, it holds as $n\to\infty$,
\begin{equation} \label{PR1}
h_N\big(z + \tfrac{\lambda}{N \varrho(z)}\big) 
= \sqrt{1/\pi}\,(1-z^2)^{-1/4}
\Re
\bigl[
\exp
\bigl(\i\pi\big(N F(z) - \tfrac{\arcsin(z)}{2\pi} +\lambda\big)
+\o(1)
\bigr)
\bigr],
\end{equation}
where $F(z) = \int_z^{1} \varrho(x) \d x$ is an antiderivative of the semicircle and the error goes to $0$ locally uniformly in $\lambda$ and for $z$ in this range  if $c_N \to \infty.$ 
In contrast, at the edges, one has \emph{Airy-type} asymptotics, it holds locally uniformly in $\lambda\in\R$ as $N\to\infty$,   
\begin{equation} \label{PR2}
h_N\big(\pm 1+\tfrac{\lambda}{2 N^{2/3}}\big) = (\pm1)^N \sqrt{N^{1/3}}\Ai(\pm \lambda)\big(1+\O(N^{-1/3})\big) . 
\end{equation}
Both regimes are consistent and these asymptotics are universal for orthonormal polynomials with respect to varying weight on $\R$ in the one-cut regime, \cite{Deiftstrong}.

\subsection{Limiting stochastic processes.}
The Hermite polynomials describe the mean behavior of the G$\beta$E characteristic polynomials. 
To describe the fluctuations and present our main theorem, we need to introduce two stochastic processes; 
\begin{itemize}  \setlength\itemsep{0em}
\item  A Gaussian analytic function $\mathfrak{X}=\big\{\mathfrak{X}(z) :  z\in \C \setminus [-1,1]\big\}$ which describes the macroscopic fluctuations of the log characteristic polynomial. 
\item The Sine-$\beta$ point process and the \emph{stochastic zeta function} which describes the microscopic fluctuations of the G$\beta$E eigenvalues and the scaling limit of the characteristic polynomial inside the bulk. 
\end{itemize}


\paragraph{Macroscopic Gaussian landscape -- log-correlated field.}
We introduce a map, sometimes called the  \emph{inverse Joukowsky transform},
\begin{equation} \label{Jouk}
J : \C \setminus [-1,1] \ni w \mapsto  w- \sqrt{w^2-1}
\end{equation}
where the branch of $\sqrt{\cdot}$ is chosen so that $J:  \C \setminus [-1,1] \to \D$ is conformal. 
This function describes the asymptotics of Hermite polynomials outside of the cut $[-1,1]$ as $-2J(z)$ corresponds to the Stieltjes transform of the semicircle distribution.
More relevant here, it gives the exact correlation structure of the (harmonic) Gaussian field $\mathfrak{X}$ which describes the fluctuations of $z \in \C \setminus [-1,1] \mapsto \log \Phi_N(z)$. 
We define $\mathfrak{X} : \C \setminus [-1,1] \to \C$ to be a mean-zero Gaussian field such that $\mathfrak{X}(\overline{z}) = \overline{\mathfrak{X}(z)}$ and
\begin{equation}\label{eq:X_field}
\E[\mathfrak{X}(x)\mathfrak{X}(z)]= -2\log\big(1-J(x)J(z)\big),
\qquad
x,z \in \C \setminus [-1,1].
\end{equation}
This corresponds to the pull-back of the GAF, $z\in\D\mapsto \sum_{k \geq 1} \xi_k z^k/\sqrt{k}$ with i.i.d.~standard real Gaussian coefficients $\{\xi_k\}_{k\in\N}$, under the map \eqref{Jouk}. 
We refer to \cite[Section~1.4]{LambertPaquette02} for further properties of this complex-valued log-correlated field.
Then, by \cite[Theorem~1.4]{LambertPaquette02}, in the topology of locally uniform convergence,
\begin{equation}\label{gobalgmc}
\biggl\{\tfrac{\Phi_N(z)}{h_N(z)} : z \in \C \setminus [-1,1] \biggr\}
\Wkto[N]
\biggl\{
\exp\Big( \sqrt{\tfrac{1}{\beta}} \mathfrak{X}(z) - \tfrac{1}{2\beta} \Exp \mathfrak{X}(z)^2 \Big)
: z \in \C \setminus [-1,1] 
\biggr\}. 
\end{equation}

Then, we can define a generalized field $\big\{\mathfrak{X}(z) ; z\in\R \big\}$ by continuity from the upper-half plane. This is a log-correlated Gaussian field with correlation structure; for $x,z\in\R$ with $x\neq z$, 
\begin{equation} \label{corrW}
\E[\mathfrak{X}(x)\mathfrak{X}(z)]= -2\log\big(1-J(x)J(z)\big) ,\qquad 
\E[\mathfrak{X}(x)\overline{\mathfrak{X}(z)}]= -2\log\big(1-J(x)\overline{J(z)}\big), 
\end{equation} 
where $J(x) = \displaystyle\lim_{\eta\to0^+} J(x+\i\eta)$ is given by \eqref{def:J} below. 
Then  $\big\{\mathfrak{X}(z) : z\in [-1,1] \big\}$ is a complex-valued Gaussian generalized field and $\big\{\mathfrak{X}(z) : z\in\R\setminus [-1,1] \big\}$ is a real-valued smooth Gaussian field. 

\paragraph{Microscopic landscape -- the stochastic zeta function.} 
To define the stochastic zeta function of \cite{ValkoViragZeta}, we introduce the \emph{complex sine equation}. 
Let $\{Z_t :t\in\R_+\}$ be a complex Brownian motion with normalization $[Z_t,Z_{t}]=0$ and $[Z_t,\overline{Z}_t] = 2t$ for $t\ge 0$.
We consider the coupled solutions of the stochastic differential equation (SDE) for $\lambda\in\C$  and $t\ge 0$, 
\begin{equation}\label{eq:csse}
\begin{aligned}
\dif \omega_t(\lambda)
&= \i\frac{\pi \lambda}{\sqrt{t}}\dif t
+ \sqrt{\frac{2}{\beta t}}
\biggl( \bigl(1-e^{-\i\Im\omega_t(\lambda)}\bigr) \dif Z_t \biggr),
\qquad
\omega_0(\lambda) = 0 . 
\end{aligned}
\end{equation}
This equation is singular as $t \to 0$, but there is a unique continuous strong solution $\big\{ \omega_t(\lambda) : \lambda\in\C , t\in\R_+\big\}$ with the property that $\omega_0=0$ (see Lemma \ref{lem:sine0}).  We note that this differs slightly from existing formulations (\cite{KillipStoiciu} and \cite{ValkoViragZeta}), by simple changes of time and space (see Appendix~\ref{app:Sine} for details).

The resulting solution $\lambda\in\C \mapsto \omega_t(\lambda)$ has many properties: in particular, it is an entire function and the map $\lambda\in\R \mapsto \Im\omega_t(\lambda)$ is non-decreasing.
This equation was in a sense introduced in \cite{KillipStoiciu} and one can define the Sine-$\beta$ point process:
\begin{equation}
\big\{\lambda\in\R ;\,  \Im \omega_1(\lambda) + \alpha \in 2\pi \Z \big\}
\end{equation}
where the random variable $\alpha$ is uniform in $[0,2\pi]$, independent of the Brownian motion $\{Z_t\}$.
Hence, the function $\lambda\in\R \mapsto \lfloor\Im\omega_1(\lambda) + \alpha\rfloor_{2\pi}$, where $\lfloor\cdot\rfloor_{2\pi}$ denotes floor function\footnote{
For $x\in\R$, we denote $\lfloor x\rfloor_{2\pi} = k$ if $x\in[2\pi k , 2\pi(k+1))$ for $k\in\Z$ and $\{ x\}_{2\pi} = x-2\pi k$ so that $\{ x\}_{2\pi}\in[0,2\pi)$ and  $\lfloor x\rfloor_{2\pi}\in\Z$.} 
mod-$2\pi$, is $2\pi$ multiplied by the counting function of the Sine-$\beta$ point process. 
The equation \eqref{eq:csse} can also be used to construct the scaling limit of the characteristic polynomial.
Following \cite{ValkoViragZeta}, we define \emph{stochastic $\zeta$ function};
\begin{equation} \label{eq:zeta}
{\zeta}_\beta(\lambda)
\coloneqq
\frac{\Re( e^{i \alpha + \omega_1(\lambda)})}
{\Re( e^{i \alpha})},
\qquad
\lambda \in \C. 
\end{equation}
The properties of this function, in particular its relationship to certain Dirac operators, are studied in \cite{ValkoViragZeta}. 
By a coupling argument going back to \cite{ValkoViragOperatorLimit}, it is also known that $\boldsymbol{\zeta}_\beta$ is the limit of microscopic ratios of the circular $\beta$-ensemble characteristic polynomial  \cite[Theorem 41]{ValkoViragZeta}. 
We obtain a similar description for G$\beta$E (Corollary~\ref{cor:zeta}). 

\subsection{Pr\"ufer phases.}
Our main result (Theorem~\ref{thm:main}) can be viewed as a type of probabilistic version of the Plancherel-Rotach asymptotics \eqref{PR1} for the Hermite polynomials, which hold in the case of $\beta=\infty.$  
These asymptotic are obtained by analyzing the recursion for the characteristic of the random tridiagonal matrix model from Section~\ref{sec:tri}. 
In this section, we review the basic properties of this recursion and we define a type of \emph{Pr\"ufer phase} which is convenient to study the \emph{elliptic part} of the recursion.

The sequence of characteristic polynomials $\{\Phi_n\}_{n\ge 0}$, \eqref{def:Phi}, satisfies a 3-term recurrence, or equivalently a $2\times2$ matrix recurrence \eqref{rec1}. If the spectral parameter $z\in [-1,1]$, this recurrence exhibits a \emph{turning point} at step $N_0(z) = \lfloor Nz^2 \rfloor$ where the fundamental solutions of the 3-term recurrence transition from exponential type to oscillatory, or equivalently where the transfer matrices transition from having distinct real eigenvalues (i.e.\ hyperbolic matrices) to complex conjugate pairs (i.e.\ elliptic matrices). These different behaviors also arise for the Hermite polynomials ($\beta=\infty$) and the different regimes are explained in details in \cite[Section 1.2]{LambertPaquette02}. In particular, the transition window (called the \emph{parabolic regime}) around the turning point is of size $\O( \lfloor Nz^2 \rfloor^{1/3})$. 

At generic $z \in (-1,1)$ we see all these behaviors, but there are two special cases:
\begin{itemize}[leftmargin=*]  
\item the edges, $z\in\{\pm 1\}$, where the whole recurrence is hyperbolic, save for a parabolic regime of size $\O(N^{1/3})$ at the end of the recurrence.
\item $z$ in a $\O(N^{-1/2})$-neighborhood of 0 where the whole recurrence is elliptic. 
\end{itemize}

We have already studied the edge cases in \cite{LambertPaquette03}, and we established that the scaling limit of the characteristic polynomial is given in terms of the \emph{stochastic Airy function}, see Section~\ref{sec:edge}.
In particular, Theorem~\ref{thm:PR2} should be compared to \eqref{PR2} for the Hermite polynomials in case of $\beta=\infty.$  
These asymptotics also occur in the transition window and they will be instrumental to prove our main Theorem~\ref{thm:main}. 

\smallskip

In this paper, we focus on the \emph{elliptic part of the recursion} which encodes the bulk asymptotics of the characteristic polynomials.  Let $\mathcal{I}_n \coloneqq \big(-\sqrt{n/N}, \sqrt{n/N}\big)$ so that $\{z \in \mathcal{I}_n\}$ is equivalent to $\{n>N_0(z)\}$. 
To describe the evolution of the characteristic polynomials for $n>N_0(z)$, we introduce a new process $\big\{\bpsi_n(z) : z\in \mathcal{I}_n \big\}$ by a linear combination;
\begin{equation} \label{def:psi}
\exp\big(\bpsi_n(z)\big) \coloneqq
\i\sqrt{\tfrac{n}{n-Nz^2}}
\Big( e^{-\i\theta_n(z)}\Phi_{n}(z)-
\sqrt{\tfrac{n+1}{n}}\Phi_{n+1}(z)\Big) , 
\qquad
\theta_n(z)   \coloneqq \arccos\big(z\sqrt{N/n}\big).
\end{equation}
This definition may seem ad hoc, but it comes naturally from the transfer matrix recursion and we verify that for $z\in (-1,1)$ and $n>N_0(z)$,  
\begin{equation} \label{polychar}
\Phi_{n}(z) = \Re\big( \exp\bpsi_n(z)\big). 
\end{equation}
In the sequel,  $\big\{\bpsi_n(z) : n\ge N_0(z) \big\}$ will be called the \emph{(complex) Pr\"ufer phase} and we decompose
\begin{equation} \label{def:phase}
\bpsi_n(z)\eqqcolon \rho_n(z) + \i \phi_n(z) , 
\qquad\qquad
\begin{pmatrix}
\rho_n \\ \phi_n
\end{pmatrix} : z\in\mathcal{I}_n \mapsto \R^2
\quad\text{are smooth functions.}
\end{equation}
The process $\big\{\bpsi_n(z) : z\in \mathcal{I}_n \big\}$ is well-defined because of the interlacing property of the zeros of $\Phi_{n+1}(z),\Phi_n(z)$ and the phase $\big\{\phi_n(z) : z\in \mathcal{I}_n \big\}$ is properly constructed in the  Appendix~\ref{app:charpoly}.
In particular, it satisfies multiple approximate monotonicity properties, most significantly for $z\in\mathcal{I}_n$,
\begin{equation}\label{eq:N_n}
\left\lfloor\phi_{n+1}(z) - \tfrac{\pi}{2}\right\rfloor_{\pi} =  \mathrm{N}_n([z,\infty))
\quad \text{where} \quad \mathrm{N}_n([z,\infty)) \coloneqq \#\{ \lambda \geq z : \Phi_n(\lambda) = 0 \}.
\end{equation}
Here, $\mathrm{N}_n:\R \mapsto [0,n]$ is the (non-increasing) counting function for the eigenvalues of the matrix $[({4}{N\beta})^{-1/2}\mathbf{A}]_{n}$; see Proposition \ref{prop:prufer} for a proof as well as other detailed properties. 

\paragraph{Main theorem.}
We now state our main result: 

\begin{theorem} \label{thm:main}
Suppose $z=z(N) \in (-1,1)$ is such that $N^{1/3}\varrho(z) \to \infty$.
Then for $\lambda \in \R$,
\[
\Phi_N\big(z + \tfrac{\lambda}{N \varrho(z)}\big)
=\Re\Big[\exp( \bpsi_N(z) + \tfrac12\varphi_N(\lambda;z))\Big]
=(1-z^2)^{-\mathrm{c}_\beta}
\Re\Big[\exp\big(\i\pi N F(z)  + \tfrac12\varphi_N(\lambda;z)- \tfrac{\M_N(z)}{\sqrt{\beta}} 
+\Omega_N(z)\big)\Big] 
\]
where $\mathrm{c}_\beta = \frac{1}{4}-\frac{1}{2\beta}$, $\varphi_N(0;z)=0$, and
where $F$, $\varphi_N(\lambda;z)$, $\{\M_n\}$ and $\Omega_N$ satisfy the following:
\begin{enumerate}[leftmargin=*]  
\item $F(z) = \int_z^1 \varrho(x)\dif x$ is the antiderivative of the semicircle law.
\item The pair $\big( \{\phi_N(z)\}_{2\pi}, \{ \varphi_N(\lambda;z) : \lambda \in \R \big)$ converges in distribution in the sense of finite dimensional marginals as $N\to\infty$ to $\big( \alpha, \omega_1(\lambda) : \lambda \in \R \big)$ where $\alpha$ is uniform in $[0,2\pi]$, independent of $\omega$, which is a solution of the \emph{complex sine equation} \eqref{eq:csse}. This extends to locally uniform convergence  when restricting to $\lambda\in\R \mapsto \Im \varphi_N(\lambda;z)$.
\item The process $\{\M_n :n \in \N\}$ is a martingale adapted to $\{\filt_n :n \in \N\}$ and it matches the correlation structure of the Gaussian field $\mathfrak{X}$; 
if $x = x(N) \in \R$, then as $N\to\infty$, 
\begin{equation} \label{corrM}
\begin{cases}
[\M_N(x), \M_N(z)] =  -2\log_{\epsilon_N(z)}\big(1-J(x)J(z)\big) + \O(1),\\
[\M_N(x), \overline{\M_N(z)}] = -2\log_{\epsilon_N(z)}\big(1-J(x)\overline{J(z)}\big) + \O(1),
\end{cases}
\end{equation}
with $\log_\epsilon(1-z) \coloneqq -\sum_{k \epsilon < 1} \frac{z^k}{k}$ and $\epsilon_N(z)^{-1} \coloneqq \max\{ N^{1/3}, N\varrho(z)^2\}$.  The errors $\O(1)$ terms are tight families of random variables. Moreover, if $|x-z|/\epsilon_N(z) \to \infty$, then the errors tend to $0$ in probability.
\item The error term $\big\{ \Omega_N(z) : N \in \N\big\}$ is tight and further converges in law as $N \to \infty$ provided either $Nz^2 = \lambda$ for fixed $\lambda \in \R$ or $Nz^2 \to \infty$.
\end{enumerate}
\end{theorem}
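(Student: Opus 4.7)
The starting point is the three-term recursion for $\{\Phi_n\}$ and the induced discrete-time stochastic recursion for the Pr\"ufer phase $\bpsi_n(z)$ on the elliptic range $n > N_0(z)$. I would write the increment as
\[
\bpsi_{n+1}(z) - \bpsi_n(z) = \mu_n(z) + \xi_{n+1}(z),
\]
with deterministic drift $\mu_n(z)$ and $\filt_{n+1}$-measurable, conditionally centered noise $\xi_{n+1}(z)$ depending on $(X_{n+1},Y_{n+1})$ and on $\phi_n(z)$ through oscillatory factors $e^{\pm\i\phi_n(z)}$. Summed from some $n_\star = N_0(z)+\O(N^{1/3})$ just past the turning point up to $N$, the drift reproduces the Plancherel-Rotach phase $\i\pi N F(z)$ and amplitude $(1-z^2)^{-\mathrm{c}_\beta}$, exactly as in \eqref{PR1} in the noise-free Hermite case. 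The initial datum $\bpsi_{n_\star}(z)$ is provided by the parabolic/stochastic-Airy analysis of \cite{LambertPaquette03}, of the type in Theorem~\ref{thm:PR2}, and the matching error when passing from the parabolic to the elliptic regime yields the tight remainder $\Omega_N(z)$, with the dichotomy $Nz^2 = \lambda$ versus $Nz^2\to\infty$ governing whether the turning point contributes non-trivially in the limit.

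\textbf{Macroscopic martingale and correlations.} The martingale $\sum \xi_{n+1}(z)$ would split into a slow, non-oscillatory piece --- packaged as $-\M_N(z)/\sqrt\beta$ --- and a fast, $\phi_n$-oscillatory piece. At any fixed $z$, the fast piece is negligible by a Riemann-Lebesgue/averaging argument leveraging the approximate equidistribution of $\{\phi_n(z)\}_{2\pi}$ that follows from \eqref{eq:N_n} and the known bulk statistics. To verify the correlation structure \eqref{corrM} of $\M_N$, I would compute the predictable bracket $\sum_n \E[\xi^{\mathrm{slow}}_{n+1}(x)\,\xi^{\mathrm{slow}}_{n+1}(z)\mid\filt_n]$, expand in the Chebyshev frequencies $e^{\i k(\theta_n(x)\pm\theta_n(z))}$, and match, frequency by frequency up to the cut-off $1/\epsilon_N(z)$, the Taylor expansion of $-2\log(1-J(x)J(z))$ and its conjugate variant.

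\textbf{Microscopic sine equation and main obstacle.} To identify $\tfrac12\varphi_N(\lambda;z) = \bpsi_N(z+\tfrac{\lambda}{N\varrho(z)}) - \bpsi_N(z)$ with $\tfrac12\omega_1(\lambda)$, I would recast the Pr\"ufer recursion for the $\lambda$-difference under the time rescaling $t = (n-N_0(z))/(N\varrho(z)^2)$. Differentiating $\mu_n$ in $z$ gives the singular drift $\i\pi\lambda/\sqrt t$, while the noise $\xi_{n+1}$ evaluated at the two nearby points, with its oscillation governed by $\phi_n$ now playing the role of $\Im\omega_t$, fits a standard martingale-problem diffusion approximation with limiting SDE \eqref{eq:csse}; the convergence closes by matching quadratic covariations, and the monotonicity \eqref{eq:N_n} upgrades finite-dimensional to locally uniform convergence of $\Im\varphi_N$. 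The main obstacle is precisely the two-scale coupling: the same noise $\{X_k,Y_k\}$ drives both the macroscopic $\M_N$ and the microscopic $\varphi_N$, and showing that their joint law factorizes asymptotically --- so that $\M_N$ and the pair $(\alpha,\omega_1(\lambda))$ appear as effectively independent --- requires a careful quantitative slow/fast splitting of $\xi_{n+1}$ combined with a priori control on $\phi_n$ across both the parabolic boundary layer and the whole elliptic regime.
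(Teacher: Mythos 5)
Your skeleton follows the paper's route (Pr\"ufer recursion, drift--plus--martingale decomposition on the elliptic range, stochastic-Airy input at the turning point for the initial datum and for $\Omega_N$, diffusion approximation to \eqref{eq:csse} for the microscopic ratio), but there is a genuine gap in how you treat the oscillatory part of the noise, and it would break claim 3. You split the martingale into a ``slow, non-oscillatory piece packaged as $-\M_N(z)/\sqrt\beta$'' and a ``fast, $\phi_n$-oscillatory piece'' that you discard at fixed $z$ by a Riemann--Lebesgue/equidistribution argument. In the paper, however, $\M_N=\G_N+\overline{\W_N}$ (Definition~\ref{def:GW}) \emph{contains} the $\phi$-oscillatory martingale $\W_N$, and $\W_N$ is not negligible: its bracket $[\W_N(z),\overline{\W_N(z)}]$ is of order $\log(N\varrho(z)^2/\mathfrak{L}(z))$ (Lemma~\ref{lem:W}), i.e.\ a macroscopic share of the total log-variance. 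Dropping it leaves only $[\G(z),\overline{\G(z)}]\asymp\log(N\varrho^2\mathfrak{L})$ instead of the $2\log(N\varrho^2)$ demanded by \eqref{corrM}, so the local-regime correlations (and hence the counting-function CLT of Corollary~\ref{cor:ReIm}) would come out wrong. What the oscillation actually kills is only $[\W,\W]$ without conjugation, the cross-bracket $[\G,\overline\W]$, and the oscillatory part of the $\mathbf{Q}$-term (Propositions~\ref{prop:Q}, \ref{prop:W}, \ref{prop:GW}); and even there a soft averaging heuristic does not suffice, because the phase $\phi_n$ is random and correlated with the driving noise. The paper needs quantitative control of the phase increments along blocks (Proposition~\ref{lem:varphase}, Propositions~\ref{prop:osc1}--\ref{prop:osc2}) and, for two points with merging turning points $|x-z|\lesssim N^{-2/3}[z]_N^{-1/3}$, the whole relative-phase continuity analysis of Section~\ref{sec:cont} (Proposition~\ref{prop:relcont1}); in that regime the errors are only $\O_\P(1)$, never $\o_\P(1)$, exactly as stated in claim 3. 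Your frequency-by-frequency matching of the $\G$-bracket with $-2\log(1-J(x)J(z))$ is fine for the independent-sum part, but it cannot produce the truncated-log saturation at scale $\epsilon_N(z)$ without the white-noise contribution of $\W$.

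A second, smaller inaccuracy: you attribute the amplitude $(1-z^2)^{-\mathrm{c}_\beta}$ to the deterministic drift ``exactly as in the noise-free Hermite case.'' The Hermite/Plancherel--Rotach exponent is $1/4=\mathrm{c}_\infty$; the $-\tfrac{1}{2\beta}$ part of $\mathrm{c}_\beta$ is an It\^o-type correction coming from the quadratic variation of the noise, namely the $[\M_{n,m}]/2\beta\approx-[\G_{n,m}]/2\beta$ term produced by the second-order expansion in Lemma~\ref{lem:linear} together with Proposition~\ref{prop:Q} and the bracket asymptotics of Proposition~\ref{prop:app_G1}. A drift-only bookkeeping misses this term, and with it the exponent in the statement. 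Finally, for claim 2 you correctly flag the two-scale coupling as the main obstacle but offer no mechanism for the uniformity of $\{\phi_N(z)\}_{2\pi}$ and its asymptotic independence from $\omega_1$; the paper gets this from Weyl's criterion combined with Gaussian averaging over a mesoscopic window of the noise (end of Section~\ref{sec:sine}), an ingredient your plan would still need.
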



\paragraph{Discussion and corollaries.}
We note that there are two scaling regimes in Theorem~\ref{thm:main}, one where $Nz^2$ is fixed and another away from $0$ where $Nz^2 \to \infty$; indeed they differ in multiple qualitatively distinct ways.  
In particular, if $Nz^2=\lambda$ for $\lambda\in\R$, it is possible to entirely remove the parameter $N$ from the definition of the characteristic polynomial and we have formulated in Theorem \ref{thm:0} a version of Theorem \ref{thm:main} which is special to this regime. 

The representation of $\Phi_N(z)$ in Theorem~\ref{thm:main} is a generalization of the Plancherel-Rotach asymptotics \eqref{PR1} for the Hermite polynomials, which hold in the case of $\beta=\infty$. In particular, 
$\E \omega_1(\lambda) = 2\pi \i \lambda $ by \eqref{eq:csse} and the deterministic leading behavior is captured by the semicircle law, through $F(z)$, for all $\beta \in(0,\infty]$.  In what follows, we discuss in order the remaining $\varphi_N$, ${\M_N}$, and $\Omega_N$ terms.


\smallskip

To begin, $\varphi_N(\lambda;z)$ is an approximate solution of the complex sine equation, and it encodes the limiting Sine-$\beta$ point process.  As an immediate corollary of Theorem~\ref{thm:main}, we observe the convergence to the stochastic zeta function: 
\begin{corollary}\label{cor:zeta}
	Suppose $z=z(N)$ is such that $N^{1/3}\varrho(z) \to \infty$.
	Then 
	\[
	\bigl\{
	{
	\Phi_N\bigl(z + \tfrac{\lambda}{N \varrho(z)}\bigr)
	}/{
	\Phi_N(z)
	}
	:\lambda \in \R
	\bigr\}
	\Wkto[N]
	\bigl\{ \zeta_\beta(\lambda) : \lambda \in \R\bigr\},
	\]
	in the sense of finite dimensional marginals.
\end{corollary}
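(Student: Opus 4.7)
The plan is a direct application of Theorem~\ref{thm:main} followed by the continuous mapping theorem; there are no substantial new ingredients.

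I would first apply Theorem~\ref{thm:main} twice, once at $z + \lambda/(N\varrho(z))$ and once at $z$ itself (using $\varphi_N(0;z) = 0$), to obtain
\[
\Phi_N(z) = \Re\bigl[\exp \bpsi_N(z)\bigr],
\qquad
\Phi_N\bigl(z + \tfrac{\lambda}{N\varrho(z)}\bigr) = \Re\bigl[\exp\bigl(\bpsi_N(z) + \tfrac12 \varphi_N(\lambda;z)\bigr)\bigr].
\]
Decomposing $\bpsi_N(z) = \rho_N(z) + \i\phi_N(z)$ with $\rho_N, \phi_N$ real (see \eqref{def:phase}), the real factor $e^{\rho_N(z)}$ pulls out of both real parts and cancels in the ratio, while $e^{\i\phi_N(z)}$ depends only on $\phi_N(z)$ modulo $2\pi$. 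Hence
\[
\frac{\Phi_N(z + \lambda/(N\varrho(z)))}{\Phi_N(z)} = \frac{\Re\bigl[\exp\bigl(\i\{\phi_N(z)\}_{2\pi} + \tfrac12 \varphi_N(\lambda;z)\bigr)\bigr]}{\Re\bigl[\exp\bigl(\i\{\phi_N(z)\}_{2\pi}\bigr)\bigr]}.
\]

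Then I would invoke item (2) of Theorem~\ref{thm:main}: for any finite tuple $\lambda_1, \ldots, \lambda_k \in \R$, the vector $(\{\phi_N(z)\}_{2\pi}, \varphi_N(\lambda_1;z), \ldots, \varphi_N(\lambda_k;z))$ converges jointly in distribution to $(\alpha, \omega_1(\lambda_1), \ldots, \omega_1(\lambda_k))$, with $\alpha$ uniform on $[0,2\pi]$ and independent of the complex sine solution $\omega$. The map sending this vector to the corresponding $k$-tuple of ratios is continuous at every point where $\cos a \neq 0$, and since $\alpha$ is uniform, $\cos\alpha \neq 0$ almost surely. The continuous mapping theorem then yields finite-dimensional convergence of the ratios to the limit $\bigl(\Re(e^{\i\alpha + \omega_1(\lambda_j)/2})/\Re(e^{\i\alpha})\bigr)_{j=1}^k$, which agrees with $(\zeta_\beta(\lambda_j))_{j=1}^k$ under the normalization of \eqref{eq:zeta}.

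The only real obstacle is the potential vanishing of the denominator $\Re(e^{\i\alpha})$, which would spoil continuity of the ratio map; this is neutralized by the uniformity of $\alpha$, so $\{\cos\alpha = 0\}$ has probability zero and the continuous mapping theorem applies unobstructed. Everything else is bookkeeping: Theorem~\ref{thm:main} is tailored precisely so that the global-scale exponent $\rho_N$ cancels multiplicatively in ratios of characteristic polynomials, leaving only the microscopic factor encoded by the complex sine SDE.
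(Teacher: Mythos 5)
Your proposal is correct and follows essentially the same route as the paper: write both polynomials via \eqref{polychar}, cancel the common factor $e^{\rho_N(z)}$ so the ratio becomes $\Re\exp(\i\phi_N(z)+\tfrac12\varphi_N(\lambda;z))/\Re\exp(\i\phi_N(z))$, and pass to the limit using claim (2) of Theorem~\ref{thm:main} together with the representation \eqref{eq:zeta}. The only addition is your explicit continuous-mapping step with the observation that $\Re(e^{\i\alpha})\neq 0$ almost surely since $\alpha$ is uniform, which the paper leaves implicit in calling the conclusion immediate.
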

\begin{proof}
	Using \eqref{polychar}, we have that 
	\[
		\frac{\Phi_N\bigl(z + \tfrac{\lambda}{N \varrho(z)}\bigr)}{\Phi_N(z)}
		=\frac{\Re \exp( \i \phi_N(z) + \varphi_N(\lambda;z))}{\Re \exp( \i \phi_N(z))}.
	\]
	Hence from Theorem~\ref{thm:main} and the representation \eqref{eq:zeta}, the conclusion is immediate. 
\end{proof}

We recall that $\Phi_N$ is normalized by a deterministic weight \eqref{def:Phi}. If we instead consider the ratio of monic characteristic polynomials, we deduce that at any fixed $z \in (-1,1)$, in the sense of finite-dimensional marginals in $\lambda \in \R$,
\[
\prod_{j=1}^N \left( 1 - \frac{\lambda}{N \varrho(z)(z-\lambda_j)}\right)
= \frac{w_N(z)}{w_N(z+\tfrac{\lambda}{N \varrho(z)})}\frac{\Phi_N\bigl(z + \tfrac{\lambda}{N \varrho(z)}\bigr)}{\Phi_N(z)} 	\Wkto[N] \exp\left( \frac{2\lambda z}{\varrho(z)} \right) {\zeta}_\beta(\lambda).
\]
This extends the convergence in \cite{NajnudelNikeghbali} from the GUE to the G$\beta$E, and extends the convergence in \cite{ValkoViragZeta} from the C$\beta$E to the G$\beta$E.

\smallskip
	
Under the hypothesis from Definition~\ref{def:noise}, the martingale $\{\M_n : n \in \N\}$ have uniformly small increments, hence from the standard martingale central limit theorem, we have:
\begin{corollary}\label{cor:cltM}
	Suppose $z_j=z_j(N) \in \R$ for $j=1,\dots, k$ and  
	\[
	\frac{\log_{\epsilon_N(z_j)} \big(1-J(z_j)J(z_k)\big)}{\log N} \to \alpha_{j,k} 
	\qquad\text{and}\qquad
	\frac{\log_{\epsilon_N(z_j)} \big(1-J(z_j)\overline{J(z_k)}\big)}{\log N} \to \widetilde{\alpha}_{j,k} 
	\text{ as } N\to\infty.
	\]
	Then the coefficients $\alpha_{j,k}$ and $\widetilde{\alpha}_{j,k}$ are necessarily real-valued, and furthermore
	\[
	\biggl\{
	\tfrac{\M_N(z_j)}{\sqrt{ \log (N)}}
	: j=1,\dots,k
	\biggr\}
	\Wkto[N]
	\biggl\{ \mathfrak{X}'_j : j=1,\dots,k \biggr\},
	\]
	a family of centered complex normal random variables with $\Exp \mathfrak{X}'_j \mathfrak{X}'_k = \alpha_{j,k}$ and $\Exp \mathfrak{X}'_j \overline{\mathfrak{X}'_k} = \widetilde{\alpha}_{j,k}$. In particular, $\{ \Re\mathfrak{X}'_j : j=1,\dots,k\}$ and $\{ \Im\mathfrak{X}'_j : j=1,\dots,k\}$ are independent and the convergence also holds in the sense of exponential moments.
\end{corollary}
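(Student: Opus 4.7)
The plan is to apply the vector martingale central limit theorem (MCLT) to the $\F_n$-adapted complex martingale $n\mapsto(\M_n(z_1),\dots,\M_n(z_k))$, viewed as a real $2k$-dimensional martingale by separating real and imaginary parts. The three standard inputs required are: (a) convergence of the normalized bracket matrix; (b) a Lindeberg jump condition; (c) verification that the limit covariance matrix is real, so that it defines a bona fide covariance of a real Gaussian vector.

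Input (a) is immediate from Theorem~\ref{thm:main}(3): the bracket and co-bracket equal the displayed truncated logarithms plus tight $\O(1)$ errors, so dividing by $\log N$ and invoking the hypothesized limits yields convergence in probability to $\alpha_{j,k}$ and $\widetilde\alpha_{j,k}$. Input (c) contains the only nontrivial analytic content. Since $|J(z)|\le 1$ on all of $\C$ with equality precisely on $[-1,1]$, every argument of $\log_{\epsilon_N}$ has the form $1-w$ with $|w|\le 1$. When $|w|<1$ strictly, the truncated logarithm remains uniformly $\O(1)$ and the limit coefficient vanishes (hence is trivially real). When $|w|=1$, writing $w=e^{\i\phi}$, one has $\Im\log_\epsilon(1-e^{\i\phi})=-\sum_{k<1/\epsilon}\sin(k\phi)/k$, which is uniformly bounded in $\phi$ and $\epsilon$ by the classical uniform bound on partial Fourier sums of the sawtooth function. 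Dividing by $\log N$ therefore annihilates the imaginary part, forcing $\alpha_{j,k},\widetilde\alpha_{j,k}\in\R$.

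Input (b) follows from the uniform Orlicz-$\psi_2$ bounds of Definition~\ref{def:noise}: the increments $\Delta\M_n$, constructed in the body of the paper as approximately linear functionals of $X_n,Y_n$, inherit uniform sub-Gaussian bounds and are individually of size $\o(\sqrt{\log N})$, while their squared sum is of order $\log N$; the Lindeberg condition is then a routine fourth-moment estimate. The standard MCLT now delivers the asserted weak convergence to a centered complex Gaussian vector $(\mathfrak X'_j)$ with the stated covariance. Independence of $\Re\mathfrak X'_j$ and $\Im\mathfrak X'_j$ follows from the reality of $\alpha,\widetilde\alpha$ by direct computation: writing $\mathfrak X'_j=R_j+\i I_j$, the identities $\E\mathfrak X'_j\mathfrak X'_k,\E\mathfrak X'_j\overline{\mathfrak X'_k}\in\R$ force $\E[R_jI_k]=\E[I_jR_k]=0$, and two jointly Gaussian families with vanishing cross-covariance are independent.

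The upgrade to convergence of exponential moments is obtained by pairing the MCLT with the same sub-Gaussianity: the standard exponential supermartingale inequality applied to $\exp(\xi\cdot\M_n-\tfrac{C|\xi|^2}{2}[\M]_n)$ combined with the $\O(\log N)$ bracket bound yields $\E\exp(\xi\cdot\M_N/\sqrt{\log N})\le e^{C'|\xi|^2}$ uniformly in $N$ for $\xi$ in a complex neighborhood of the origin, providing the uniform integrability needed to upgrade weak convergence to MGF convergence. The main conceptual obstacle is step (c); once the reality of the limit covariance is established, the remaining steps are routine applications of standard martingale tools.
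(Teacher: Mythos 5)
Your proof follows essentially the same route as the paper's: the bracket asymptotics of Theorem~\ref{thm:main}(3) combined with a fourth-moment (Lyapunov/Lindeberg) bound coming from the uniformly sub-Gaussian increments give the standard martingale CLT, the exponential-moment upgrade comes from the sub-Gaussian norm of $\M_N$ being controlled by its $\Theta(\sqrt{\log N})$ standard deviation, and the independence of real and imaginary parts is deduced from the reality of the limiting covariance, for which you supply the explicit argument the paper leaves implicit. One small caveat: your claim that the truncated logarithm stays uniformly $\O(1)$ whenever $|w|<1$ is false when $w=w(N)$ approaches the unit circle (its real part can grow like $\log N$), but the reality of $\alpha_{j,k},\widetilde{\alpha}_{j,k}$ still follows because only the imaginary part matters, and $\bigl|\sum_{k<1/\epsilon} r^{k}\sin(k\phi)/k\bigr|$ is uniformly bounded for all $r\le 1$ by Abel summation over the bounded sawtooth partial sums, exactly as in your $|w|=1$ case.
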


\begin{proof}
From the Definition \ref{def:M} below of the martingale $\M_N$, it is easily verified the sum of the fourth moments of the increments of the martingale is bounded independently of $N$.  Hence the Lypaunov CLT condition is satisfied, and the conclusion follows from the standard martingale central limit theorem using the estimates \eqref{corrM}.  The exponential integrability of the martingale follows as the sub-gaussian norm of $\M_N$ is control by its standard deviation $\Theta(\sqrt{\log N})$.  The independence of the real and imaginary parts of $\{\mathfrak{X}'_j : j=1,\dots,k\}$ is a consequence of the limit $\{\alpha_{j,k}\}$ and $\{\widetilde{\alpha}_{j,k}\}$ being real-valued, which is also a consequence of \eqref{corrM}.
For comparison, we  also record that 
	\[
	\biggl\{
	\tfrac{\mathfrak{X}\big(z_j+ \i \epsilon_N(z)\big)}{\sqrt{ \log (N)}}
	: j=1,\dots,k
	\biggr\}
	\Wkto[N]
	\biggl\{ \mathfrak{X}'_j : j=1,\dots,k \biggr\} . \qedhere
	\]
\end{proof}

The martingale $\{\Re\M_N(z)\}$ and $\{\Im\M_N(z)\}$ can be directly compared to the real part of the logarithm of the characteristic polynomial and to the recentered eigenvalue counting function\footnote{The counting function \eqref{eq:N_n}, $\mathrm{N}_n([z,\infty))$ can also be connected to the imaginary part of the logarithm of the characteristic polynomial $\frac{1}{\pi}\Im \log(\Phi_n(z))$, when the $\log(\cdot)$ is defined by continuity from the upper half plane (branch cuts to the left).}, respectively.  Specifically, we have the following relations:

\begin{corollary}\label{cor:ReIm}
	Suppose $z=z(N)$ is such that $N^{1/3}\varrho(z) \to \infty$.  Then
	\begin{equation}\label{tight}
	\begin{aligned}
	&\left\{
	\Re \log \Phi_N(z) -  \left( c_\beta \log (1-z^2) -\frac{\Re \M_N(z)}{\sqrt{\beta}}\right) : N \in \N
	\right\}
	\quad\text{and}\quad \\
	&\left\{
	\pi \mathrm{N}_N([z,\infty)) -  \left( \pi N F(z) - \frac{\Im \M_N(z)}{\sqrt{\beta}}\right)
	:N \in \N
	\right\}
	\end{aligned}
	\end{equation}
	are tight families of random variables.  Hence the  CLT  shown in Corollary~\ref{cor:cltM} holds with $\M_N(z_j)$ replaced by
	\[
	-\log|\Phi_N(z_j)|-\i \pi \mathrm{N}_N([z_j,\infty))
	+\left( c_\beta \log (1-z_j^2) + \i \pi N F(z_j) \right).
	\]
\end{corollary}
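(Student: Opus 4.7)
\medskip

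\noindent\textbf{Proof proposal.}
Everything follows from inserting $\lambda=0$ into Theorem~\ref{thm:main} and separating real and imaginary parts. Since $\varphi_N(0;z)=0$, the theorem gives
\[
\Phi_N(z)
= (1-z^2)^{-c_\beta}\,\Re\!\bigl[\exp\bigl(\i\pi N F(z) - \tfrac{\M_N(z)}{\sqrt\beta} + \Omega_N(z)\bigr)\bigr]
= (1-z^2)^{-c_\beta}\,e^{-\Re\M_N(z)/\sqrt\beta + \Re\Omega_N(z)}\cos\bigl(\Psi_N(z)\bigr),
\]
where $\Psi_N(z) \coloneqq \pi N F(z) - \Im\M_N(z)/\sqrt\beta + \Im\Omega_N(z)$. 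Taking $\log|\cdot|$ then isolates a deterministic $\log(1-z^2)$-term, the real martingale contribution $\Re\M_N(z)/\sqrt\beta$, a tight $\Re\Omega_N(z)$ error (by part~(4) of Theorem~\ref{thm:main}), and a residual $\log|\cos(\Psi_N(z))|$, which is the only nontrivial term.

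The main obstacle is tightness of $\log|\cos(\Psi_N(z))|$, which is unbounded below whenever $\Psi_N(z)$ approaches $\tfrac{\pi}{2}+\pi\Z$. Comparing the Theorem~\ref{thm:main} representation with the decomposition \eqref{def:phase} $\bpsi_N=\rho_N+\i\phi_N$ yields $\Psi_N(z)\equiv\phi_N(z)\pmod{2\pi}$, so $|\cos(\Psi_N)|=|\cos(\phi_N)|$ exactly. Part~(2) of Theorem~\ref{thm:main} provides $\{\phi_N(z)\}_{2\pi}\Wkto\Unif[0,2\pi]$, and since the limit has constant density on $[0,2\pi]$,
\[
\limsup_{N\to\infty}\P\bigl(|\cos(\phi_N(z))|<e^{-t}\bigr) \;\le\; \tfrac{2}{\pi}\arcsin(e^{-t}) \;\xrightarrow[t\to\infty]{}\; 0.
\]
Together with almost sure finiteness of $\phi_N(z)$ at each $N$, this yields tightness of $\{\log|\cos(\phi_N(z))|\}_N$, and hence of the first family in \eqref{tight}.

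For the counting-function family I would use the identity \eqref{eq:N_n}: $\pi\mathrm{N}_N([z,\infty)) = \phi_{N+1}(z)-\pi/2-\{\phi_{N+1}(z)-\pi/2\}_\pi$, with the fractional-part term bounded in $[0,\pi)$. A one-step control $\phi_{N+1}(z)-\phi_N(z)=O_\P(1)$ -- immediate from the Pr\"ufer recursion of Section~\ref{sec:ell} -- reduces the problem to the same decomposition $\phi_N(z)=\pi N F(z)-\Im\M_N(z)/\sqrt\beta+\Im\Omega_N(z)$ used above, and the second tightness statement in \eqref{tight} follows from tightness of $\Im\Omega_N$. Finally, the CLT assertion is immediate by Slutsky: the proposed substitute for $\M_N(z_j)$ differs from $\M_N(z_j)$ by the tight random variables just controlled, which vanish once divided by $\sqrt{\log N}$, so the conclusion of Corollary~\ref{cor:cltM} transfers verbatim.
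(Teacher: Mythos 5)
Your proposal is correct and follows essentially the same route as the paper's own proof: evaluate the representation of Theorem~\ref{thm:main} at $\lambda=0$ (equivalently, use the definition of $\Omega_N$), identify the resulting phase with $\phi_N(z)$, obtain tightness of $\log|\cos\phi_N(z)|$ from the convergence of $\{\phi_N(z)\}_{2\pi}$ to a uniform random variable, invoke \eqref{eq:N_n} for the counting function, and transfer the CLT by Slutsky. If anything, your write-up is slightly more careful than the paper's, which glosses over the quantitative tail bound for $\log|\cos\phi_N|$ and the index shift between $\phi_{N+1}$ in \eqref{eq:N_n} and $\phi_N$ in the decomposition.
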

\begin{proof}
	The real part of the logarithm of the characteristic polynomial is given by 
	\[
	\Re \log \Phi_N(z) = \log | \Re \exp(\bpsi_N(z)) | =
	c_\beta \log(1-z^2) - \frac{\Re \M_N(z)}{\sqrt{\beta}} + \Re \Omega_N(z) + \log \cos( \phi_N(z) ).
	\]
	The $\Re \Omega_N(z)$ term is tight from Theorem \ref{thm:main}, and the last $\log \cos( \phi_N(z) )$ term converges in law.  Hence the tightness follows. For the imaginary part, we have from \eqref{eq:N_n} that
	\[
	\pi \mathrm{N}_N([z,\infty)) = \lfloor \phi_N(z) - \tfrac{\pi}{2} \rfloor_{\pi} = \bigg\lfloor \pi N F(z) - \frac{\Im \M_N(z)}{\sqrt{\beta}} - \frac{\pi}{2} \bigg\rfloor_{\pi}.
	\]
	Hence the claimed tightness follows.
\end{proof}

These corollaries immediately recovers a central limit theorem for the real and imaginary parts of the characteristic polynomial from \cite[Theorem 1.8]{BourgadeModyPain} in in the case of the G$\beta$E (the results of \cite{BourgadeModyPain} follow from an optimal local law and they hold for general regular one-cut $\beta$-ensembles). This builds on a large literature of related central limit theorems: this result is well-known in the determinantal case $\beta=2$ and is essentially due to \cite{Gustavsson} (which is formulated for the quantile function). For general $\beta>0$, the CLT for $\Re \Phi_N(x) $ with $x \in (-1,1)\setminus \{0\}$ is obtained in \cite{Augeri}.
The situation at $0$ is special and it has been considered in \cite{TaoVu,Duy17}.
The edge CLT following from Theorem~\ref{thm:PR2} has already been studied in \cite{Johnstone, LambertPaquette03}.

\begin{remark}
	The convergence of Theorem \ref{thm:main} and Corollary~\ref{cor:cltM} hold jointly in the sense that the process
	\[
	\big( \{\phi_N(z)\}_{2\pi}, \tfrac{\M_N(z)}{\sqrt{\log N}}, \Omega_N(z), \big\{\varphi_N(\lambda;z)  : \lambda\in \R \big\} \big)
	\]
	converges in the sense of finite dimensional distributions, and the limiting random variables are all independent.
	
Moreover, combining Theorem \ref{thm:main} with \cite[Theorem~1.1]{LambertPaquette03} and 
\cite[Theorem 1.7]{LambertPaquette02}, we also obtain the tightness of the families of random variables \eqref{tight} indexed by $z
\in\mathcal{\R}$. 
\end{remark}

Finally we add some detail on $\Omega_N(z)$ and its limit $\Omega(z)$.  
As a consequence of \cite{LambertPaquette03}, the recentred martingale $\{\M_{N_0 + t(N_0)^{1/3}} - \M_{N_0} : t \ge 1\}$ converges to a diffusive limit : $\{\mathfrak{m}_t^{-} : t \ge 1\}$  as $N_0(z) \to \infty$, driven by a 2-sided real Brownian motion $\{B(t) : t \in \R\}$.  With respect to this Brownian motion, we can construct a version of the Stochastic Airy function $\SAi_t = \SAi_t(0)$, which is solution of a second-order diffusion with respect to $\{B(t)\}$ (see Section \ref{sec:HP} for precise definitions).  This is a stochastic process whose mean is the classical Airy function $\Ai(t)$.  Moreover, $\SAi$ is the scaling limit of the characteristic polynomial of G$\beta$E at the edge (Theorem~\ref{thm:PR2}).

We show in Proposition \ref{cor:mho} that in the limit $T \to -\infty$, there is a complex random variable $\hat{\boldsymbol{\mho}}_\beta^{-}$ so that 
\[
\SAi_{-T}  = \Re\big\{ \exp\big(\i \big(\tfrac23 T^{3/2} - {\mathrm{c}_\beta}\pi \big) \ + \tfrac1{\sqrt\beta} \mathfrak{m}_T^{-}    + {\mathrm{c}_\beta} \log T + \hat{\boldsymbol{\mho}}_\beta^{-} + \o_\P(1)\big)\big\} ,\qquad {\mathrm{c}_\beta} =\tfrac14-\tfrac{1}{2\beta},
\]
where the error converges in probability as $T\to\infty$. Then the limit random variable $\Omega(z)$ is given in law by 
\[
\Omega_N(z)  \Wkto[N] \Omega(z)
= \hat{\boldsymbol{\mho}}_\beta^{-} 
-\i2 c_\beta \arcsin(z) 
-\frac{\log 2}{c_\beta} 
+ \frac{{\rm g}}{\sqrt{\beta}}-\frac{\Exp {\rm g}^2}{2\beta}
\]
where ${\rm g}$ is an explicit Wiener integral of $\{B(t) : t > 0\}$; see Proposition \ref{prop:limit}.  Hence $\Omega(z)$ is a functional of a scaling window of the driving noise $\{(X_n,Y_n)\}$ for $|n-N_0| \leq N_0^{1/3}T$, where $N_0 \to \infty$ followed by $T \to \infty$.  Thus one can see  $\Omega(z)$ as statistics typically associated to the edge of the G$\beta$E characteristic polynomial.

\smallskip

With more effort (and we do not go into the details), one can show for the G$\beta$E that \emph{all} non-Gaussian behavior $\bpsi_N(z)$ is captured by a window of this turning point $N_0(z)$.  This is to say, for any $\epsilon > 0$, there is a probability space supporting $\bpsi_N(z)$, a 2-sided Brownian motion $\{B^z(t) :t \in \R\}$ and a Gaussian random variable $\mathfrak{G}_{N,T}$, so that (for a distance compatible with the topology of weak convergence), if $T$ is sufficiently large,
\begin{equation} \label{alldressed}
\operatorname{dist}\big( \bpsi_N(z), \mathfrak{G}_{N,T} + \mathfrak{m}_T^{-} + \hat{\boldsymbol{\mho}}_\beta^{-} + g_T\big) \leq \epsilon, 
\quad \text{ with } \mathfrak{G}_{N,T} \text{ independent of } \mathfrak{m}_T^{-} + \hat{\boldsymbol{\mho}}_\beta^{-} + g_T.
\end{equation} 
Here ${\rm g}_T$ is an approximation of ${\rm g}$ and the law of $(\mathfrak{m}_T^{-}, \hat{\boldsymbol{\mho}}_\beta^{-} ,g_T)$ does not depend on $N$ or $z$, provided that $N_0(z) \to\infty$.  Hence we conjecture the Mellin transform of $|\Phi_N(z)|$ at a bulk point $z \in (-1,1) \setminus \{0\}$ converges to a functional of $\SAi$ and its driving Brownian motion $B(t)$ in the sense that
\begin{equation} \label{mellin}
\left(\Exp |\Phi_N(z)|^{s}\right) N^{-\frac{s^2}{2\beta}} \to \mathfrak{f}(s, \SAi, B),
\end{equation}\todo{Make better}
an explicit functional $\mathfrak{f}(s,\cdot)$ which is implicit from the representation \eqref{alldressed}.  We note that this Mellin transform has been identified in the work of \cite{DeiftItsKrasovsky} for the GUE, and the same factor appears in the CUE \cite{widom1973toeplitz}, in terms of a Barnes G-function which is independent of the bulk point $z \in (-1,1)$. For both the C$\beta$E and the G$\beta$E at $0,$ the Mellin transform is explicit \cite[Proposition 4.3]{bourgade2009circular} (see also \cite{ForresterFrankel}), and so we expect that \eqref{mellin} coincides with these cases.

\todo[inline]{Organization}

\subsection{Notations.}

Throughout this paper, we rely on the following conventions. 
Some of these notations are consistent with our previous work \cite{LambertPaquette02,LambertPaquette03}. 

\begin{definition}\label{def:Z}
The spectral parameter $z$  is allowed to depend on the dimension $N\in\N$ of the underlying matrix. We consider a 
sequence $z=z(N)$ with a limit point also denoted $z\in[-1,1]$. The \emph{turning point} of the transfer matrix recursion is $N_0(z)\coloneqq \lfloor Nz(N)^2\rfloor$ and we set
\[
\Q \coloneqq  \big \{ z(N)  \in (-1,1) ; \,  \liminf_{N\to\infty}  Nz(N)^2=\infty \text{ and } 
\liminf_{N\to\infty}  N^{1/3}\varrho(z(N)) =\infty \big\} .
\]
To describe the transition window (called the \emph{parabolic regime}), we introduce the following time units, for $T\ge 0$, 
\begin{equation} \label{para}
N_T(z) \coloneqq \lfloor Nz^2 +T \mathfrak{L}(z)\rfloor  , \qquad\qquad \L(z) =  \lceil Nz^2\rceil^{1/3} 
\end{equation}
for $z\in\R$.
The first condition in $\Q$ guarantees that $\L(z(N)) \to\infty$ as $N\to\infty$. 
The second condition in $\Q$ guarantees for any $T\ge 0$, $N_T(z(N)) \ll N$  as $N\to\infty$ so that the spectral parameter is away from the edge of the semicircle law.
\end{definition}

In the sequel, we will need to distinguish two asymptotic regimes: $z\in \Q$ or $z=\tfrac{\mu}{2\sqrt N}$  for $\mu \in \mathcal{K}$ where $\mathcal{K}\Subset\R$ is any compact. 
In the second case, the whole recursion is elliptic and according to \eqref{def:Phi}, 
\begin{equation} \label{def:Phi0}
\Phi_n(z) = N^{1/4} \widehat\Phi_n(\mu)\sqrt{e^{-\mu^2/2} /\sqrt{2\pi}} , 
\qquad 
\widehat\Phi_n(\mu) \coloneqq \det[\mu-\beta^{-1/2}\mathbf{A}]_{n,n} \sqrt{\textstyle \prod_{k=1}^n k^{-1}},  . 
\end{equation}
In particular, the sequence $\{\widehat\Phi_n(\mu)\}_{n\in\N}$ is independent of the parameter $N$. 
In contrast, if $z\in\Q$, the initial part of the transfer matrix recurrence is not elliptic and we need to import the asymptotics of $\Phi_n(z)$ for $n$ in a neighborhood of the turning point from our previous work \cite{LambertPaquette03}. We review the relevant results in Section~\ref{sec:HP}. 


\medskip

In terms of Definition~\ref{def:noise}, the random variables which naturally arise in the characteristic polynomial recursion are given by\footnote{For $w\in\C\setminus[-1,1]$, the map $J$ is given by \eqref{Jouk}. 
The expression \eqref{def:J} follows by continuity form the upper-half plane. 
We also note that $J$ has the reflection symmetries; $J(\overline{w}) = \overline{J(w)}$ for $w \in \C \setminus (-1,1)$ and 
$J(-w) =- \overline{J(w)}$ for $w\in\R$.}, for $z\in\R$ and  $n\in\N$, 
\begin{equation} \label{def:J}
Z_n(z) \coloneqq  \frac{X_n + J\big(z\sqrt{N/n}\big) Y_n}{\sqrt{2}}  , 
\qquad \qquad
J(w) \coloneqq \begin{cases}
w \mp \sqrt{w^2-1} , &\pm w \ge 1\\
e^{-\i\arccos(w)} , & w\in [-1,1] 
\end{cases} . 
\end{equation}

In the sequel, we will also use the following conventions: 
\begin{itemize}[leftmargin=*] \setlength\itemsep{0em}
\item Given two positive sequences $\{a(N)\}_{N\in\N}, \{b(N)\}_{N\in\N}$, we write  $a \gg b$ if $\displaystyle\lim_{N\to\infty} b(N)/a(N) =0$.
\item Similarly, we write $b \lesssim a$ if there is a constant $C=C(\beta,\mathfrak{S},\mathfrak{K})$ such that $\displaystyle\limsup_{N\to\infty} b(N)/a(N) \le C$.
$C$ is also allowed allowed to depend on other parameters independent of $(z,N)$. 

\item For a random field $\X =\big\{ \X_N(x) : x\in\mathcal{S}_N , N\in\N\big\}$, we write $\X_N(x)= {\o_\P(1)}$ if 
\[
\sup_{\epsilon>0}\limsup_{N\to\infty}\sup_{x\in\mathcal{S}_N} \P\big[|\X_N(x)|\ge \epsilon\big] =0 .
\]
That is, if for  all $x\in \mathcal{S}_N $,  $ \X_N(x)\to 0$ in probability as $N\to\infty$. 
\item Similarly, we write $\X= {\O_\P(1)}$ if 
\[
\lim_{R\to\infty}\limsup_{N\to\infty}\sup_{x\in\mathcal{S}_N} \P\big[|\X_N(x)|\le R\big] =1.
\]
That is if the random field $\X$ is \emph{tight}. 
\item Let $\varrho(x) \coloneqq \frac2\pi\sqrt{1-x^2}\1\{|x|\le 1\}$ be the  semicircle law on $[-1,1]$ and $F(z) = \int_z^{1} \varrho(x) \d x$ for $z\in[-1,1]$. 
\end{itemize}

\subsection{Martingale noise.}
Corollary~\ref{cor:cltM} is a consequence of the log-correlated structure of the martingale $\big\{\M_n(z) : z\in[-1,1]\big\}_{n\ge 1}$ which describes the \emph{macroscopic fluctuations} of the characteristic polynomial.
In this section, we give an explicit description of the martingale from Theorem~\ref{thm:main} and the asymptotics for its bracket process. 
The martingale can be decomposed in two processes (Definition~\ref{def:GW}), which have small correlations as $N\to\infty$. 

\begin{definition}[Martingale noise] \label{def:GW}
In terms of the random variables \eqref{def:J} and \eqref{para}, we define for $z\in\R$ and $n\le N$, 
\begin{equation*} 
\G_n(z) \coloneqq \sum_{0< k\le n} \1\{k\notin \Gamma(z)\} \frac{Z_k(z)}{\sqrt{k}\sqrt{Nz^2/k-1}} , \qquad
\Gamma(z)  \coloneqq \big\{k\in\N: |k-Nz^2| < \mathfrak{L}(z)\big\} , 
\end{equation*}
where $\sqrt{\cdot}$ is chosen as in \eqref{Jouk}\footnote{For $w\in[-1,1]$, $\sqrt{w^2-1}$ is imaginary and defined by continuity from the upper-half plane. Moreover, the $\sqrt{\cdot}$ is consistent with $J$ in~\eqref{def:J}.}. 
Similarly, we define 
\begin{equation*}
\W_{n}(z)   \coloneqq \sum_{N_0(z)<k \le n}   \1\{k\notin \Gamma(z)\}\frac{Z_k(z)e^{2\i(\theta_k(z)+\phi_{k-1}(z))}}{{\sqrt{k}\sqrt{Nz^2/k-1}}} , \qquad \text{the process $\{\phi_n(z) : n> N_0(z)\}$ is given by \eqref{def:phase}.} 
\end{equation*}
In particular, both processes $\{\G_n\}$, $\{\W_n\}$ are  $\{\F_n\}$-martingales and  we define  for $z\in\R$ and $n\le N$, 
\begin{equation}\label{def:M}
\M_n(z) \coloneqq \G_n(z) + \overline{\W_n(z)} .
\end{equation}
\end{definition}

We make the following remarks about these definitions
\begin{itemize}[leftmargin=*] \setlength\itemsep{0em}
\item We exclude the set $\Gamma(z)$ from this sum because the noise becomes singular around the turning point. This singularity is responsible for the log-correlated structure of $\{\G_N(z),\W_N(z)\}$. 
\item The martingale $\{\G_n\}$ is a sum of independent random variables, so its brackets are deterministic sums. In contrast, because of the rapid growth of the phase $\{\phi_n\}$,  the brackets of $\{\G_n\}$ are random \emph{oscillatory sums}. 
In fact, because of  these \emph{oscillations}, the field $z\in (-1,1) \mapsto \W_N(z)$ behaves like a \emph{white noise}. 
Hence, the long-range covariance structure of $z\in\R \mapsto \M_N(z)$ coincide with that of $z\in\R \mapsto\G_N(z)$.
\item For $n\le N_0(z)$, $\M_n(z)=\G_n(z)$ is real-valued and this contribution comes from the \emph{hyperbolic part} of the recursion. 
In particular, the field $z \in\R \mapsto \M_N(z)$ is real-valued for $z\in\R\setminus (-1,1)$ and log-correlated on the spectrum, for $z\in[-1,1]$. 
\end{itemize}

The next proposition collects the precise asymptotics of the martingale brackets and we distinguish two different regimes;  

\begin{proposition}[Correlation structure] \label{prop:M}
Let $x,z\in\R$ and suppose, without loss of generality, that $|x|\le |z|$. 
Let $[z]_N \coloneqq |z| \vee N^{-1/2}$ and $\epsilon_N(z) \coloneqq \big(N\varrho(z)^2 \vee N^{1/3}\big)^{-1}$ for $z\in[-1,1]$. 
The following asymptotics hold as $N\to\infty$, 
\begin{itemize}[leftmargin=*] \setlength\itemsep{0em}
\item[\normalfont 1.] {\normalfont (Global regime)} If  $\big(|z|-1\big)\gg N^{-2/3}$ or if $z\in[-1,1]$ with  $ |x-z| \gg  N^{-2/3} [z]_N^{-1/3}$,    
\[\begin{aligned}
&\big[\M_N(z),\M_N(x)\big]
= \big[\G_N(z),\G_N(x)\big] + {\o_\P(1)} , 
\qquad \big[\M_N(z),\overline{\M_N(x)}\big]=  \big[\G_N(z),\overline{\G_N(x)}\big] + \o_\P(1)
\\
&= - 2\log\big(1-J(z)J(x)\big)  + {\o_\P(1)}, \qquad
\qquad\qquad\, = - 2 \log\big(1- J(z)\overline{J(x)}\big) +{\o_\P(1)}.
\end{aligned}\]
\item[\normalfont 2.] {\normalfont (Local regime)} 
For a constant $C\ge 1$, if  $ |x-z| \le CN^{-2/3}[z]_N^{-1/3}$, 
\[
\big[\M_N(z),\M_N(x)\big]  =-2 \log\big(\varrho(z) \vee \epsilon_N(z)\big)  +\O_\P(1), \qquad 
\big[\M_N(z),\overline{\M_N(x)}\big] = -2\log\big( \tfrac{|x-z|}{\varrho(z)} \vee  \epsilon_N(z)\big)+ \O_\P(1).
\]
\end{itemize}
\end{proposition}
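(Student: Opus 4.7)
The plan is to decompose $\M_n = \G_n + \overline{\W_n}$ and expand the quadratic covariation:
\[
[\M_N(z), \M_N(x)] = [\G_N(z), \G_N(x)] + [\G_N(z), \overline{\W_N(x)}] + [\overline{\W_N(z)}, \G_N(x)] + [\overline{\W_N(z)}, \overline{\W_N(x)}],
\]
and analogously for the conjugated bracket $[\M_N(z), \overline{\M_N(x)}]$. The $\G$--brackets will be computed in closed form and provide the main log-structure; the $\W$--involved brackets are either oscillatory and hence $\o_\P(1)$, or (at the extreme hyper-local scale) contribute a further deterministic log-divergence which combines additively with the $\G$ contribution to give the claimed formulas.

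For the $\G$--brackets, independence of $X_k, Y_k$ together with \eqref{mean} give
\[
\E[Z_k(z) Z_k(x)] = \tfrac12\big(1 + J_k(z) J_k(x)\big), \qquad
\E[Z_k(z) \overline{Z_k(x)}] = \tfrac12\big(1 + J_k(z) \overline{J_k(x)}\big),
\]
with $J_k(w) \coloneqq J(w\sqrt{N/k})$, so the brackets become explicit deterministic sums against the weight $\big(2k\sqrt{(Nz^2/k - 1)(Nx^2/k - 1)}\big)^{-1}$. I would approximate these sums by integrals in $s = k/N$; the Riemann-sum error is $\o_\P(1)$ since the excluded set $\Gamma(z) \cup \Gamma(x)$ has width $\O(\L(z)) = o(N)$ and the integrand has only integrable $|s - z^2|^{-1/2}$--type singularities at the turning points. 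The substitution $w = z\sqrt{N/k}$ sends the branch point of $J$ to $w = 1$; after this change of variables, the integrals can be evaluated via the expansion $-2\log(1 - uv) = 2\sum_{m \ge 1}(uv)^m/m$, with each monomial $J(z\sqrt{N/k})^m J(x\sqrt{N/k})^m$ integrated against the smooth weight recovering the corresponding coefficient of $-2\log(1 - J(z)J(x))$ (and analogously, with $\overline{J(x)}$, for the conjugated bracket).

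In the \emph{global regime}, $\Gamma(z)$ and $\Gamma(x)$ are disjoint and stay clear of any log-singularity of the integrand, so the integrals evaluate cleanly to $-2\log(1 - J(z) J(x))$ and $-2\log(1 - J(z)\overline{J(x)})$. In the \emph{local regime}, the windows overlap and the integrand accumulates a logarithmic divergence on the cut $(-1,1)$, truncated at scale $|s - z^2| \gtrsim \L(z)/N$; translating via $w = z\sqrt{N/k}$ to the scale $\epsilon_N(z)$, this produces the $\log_{\epsilon_N(z)}$--regularized formulas. At the hyper-local scale $|x - z| \lesssim \varrho(z)/N$ the bracket $[\overline{\W_N(z)}, \W_N(x)]$ ceases to oscillate and contributes an additional elliptic log-divergence; this is already encoded in the stated formula via the $\max$ with $\epsilon_N(z)$.

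The main obstacle is showing the remaining cross brackets are $\o_\P(1)$. Each carries a net factor $e^{\pm 2\i\phi_{k-1}(w)}$ or $e^{\pm 2\i(\phi_{k-1}(z) \pm \phi_{k-1}(x))}$, whose $\F_{k-1}$--measurable argument grows by $\theta_k = \Theta(1)$ per step via the approximate Pr\"ufer recursion $\phi_k(w) - \phi_{k-1}(w) = \theta_k(w) + \o(1)$ established in \cite{LambertPaquette02}. Summation-by-parts against the smooth amplitude reduces each such bracket to a boundary term of size $\O(1/\sqrt{N})$ plus a martingale remainder whose predictable quadratic variation is strictly smaller than that of $[\G_N, \G_N]$ (gaining a factor from the non-stationary phase, which is summable once the parabolic window $\Gamma(z)$ is excluded); Doob's martingale maximal inequality then controls the remainder in probability. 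Combining the $\o_\P(1)$ oscillatory bounds with the closed-form evaluations of the non-oscillatory brackets yields the asymptotics in both regimes.
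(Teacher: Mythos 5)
Your global strategy (decompose $\M=\G+\overline{\W}$, compute the $\G$--brackets by Riemann-sum approximation of deterministic sums, and kill the $\W$--involved brackets by oscillation of the Pr\"ufer phase) is the same as the paper's, and the $\G$--computation is essentially right (the paper evaluates the integrals via the exact identity $\tfrac{d}{dt}\log(1\pm J(x/\sqrt t)J(z/\sqrt t))=-\tfrac{\mp1+JJ}{4\sqrt{x^2-t}\sqrt{z^2-t}}$ rather than a power-series expansion, and must also handle the merging of turning points when $x\approx \pm z$, cf.\ Lemma~\ref{lem:G3} and Proposition~\ref{prop:G4}, but these are details). The genuine gap is in the local regime. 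There, the stated formula $[\M_N(z),\overline{\M_N(x)}]=-2\log\big(\tfrac{|x-z|}{\varrho(z)}\vee\epsilon_N(z)\big)+\O_\P(1)$ depends on $|x-z|$ throughout the whole mesoscopic window $\varrho(z)\epsilon_N(z)\lesssim|x-z|\lesssim N^{-2/3}[z]_N^{-1/3}$, and this dependence comes entirely from $[\W_N(z),\overline{\W_N(x)}]$: one must show that the relative phase $\phi_n(x)-\phi_n(z)$ stays $o(1)$ for all $n$ up to $N_0(z)+\Theta(N^{-1}|x-z|^{-2})$ (so that stretch of the sum contributes $\sum\delta_n^2\approx 2\log$ of the correct truncation scale) and that the remainder of the sum then oscillates out. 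Your proposal only addresses the extreme case $|x-z|\lesssim\varrho(z)/N$ and asserts the rest is ``already encoded via the max with $\epsilon_N(z)$''; that is precisely the statement to be proved, and it is a priori unclear because the relative phase is random and could decorrelate much earlier than the deterministic prediction. Controlling it is the content of Section~\ref{sec:cont} (Proposition~\ref{prop:relcont1}, the stopping-time/generator analysis for $P_n$) together with the refined oscillatory estimates of Section~\ref{sec:Wosc} (in particular Lemma~\ref{lem:osc3} and Propositions~\ref{prop:osc3}--\ref{prop:osc6}); nothing in your outline replaces this, and without it the $|x-z|$--dependent term in the local regime is unproved.

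A second, related problem is the mechanism you invoke for the oscillatory brackets. The quantities $[\W(x),\W(z)]$, $[\G(x),\W(z)]$, etc.\ are adapted (predictable) sums such as $\sum_n q_n^{\pm}e^{2\i(\phi_n(x)\pm\phi_n(z))}$, not martingales, so Doob's inequality does not apply directly; the per-step phase increment is $\theta_k+(\text{random term})$ where the random term is only controlled in probability over blocks (Proposition~\ref{lem:varphase}), not deterministically $\o(1)$; and $\theta_k$ is \emph{not} $\Theta(1)$ near the turning point (where $\theta_n\to0$), nor is the relevant oscillation rate for the difference-phase sums, which is $\ell_n^-\asymp\sqrt N|x-z|\delta_n$ and vanishes exactly in the regime where the extra logarithm is generated. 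The paper's substitute is a block decomposition in which the random part of the phase is frozen on each block (justified by a martingale maximal inequality on the event $\A_\chi$) while the deterministic phase supplies the cancellation via elementary geometric-sum bounds of size $1/\sin\theta$ per block (Lemma~\ref{lem:osc}); your summation-by-parts with a uniform $\O(1/\sqrt N)$ boundary term and a ``smaller quadratic variation'' remainder would not survive near the turning point, which is exactly where the log-correlated structure arises.
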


Proposition~\ref{prop:M} is proved in Section~\ref{sec:LCF}. 
The two asymptotic regimes are consistent and we recover the correlation structure from claim 3 of Theorem~\ref{thm:main}. 

\medskip

These two different regimes depend on whether the  \emph{turning point are merging} (the global regime corresponds to the case where $|N_0(x)-N_0(z)| \gg \mathfrak{L}(z)$ as $N\to\infty$). This phenomena plays a surprising role in producing the log-correlated behavior of the characteristic polynomial and the local regime requires an extensive analysis to control the effect of the oscillations of the phase. In this regime, our estimates hold up to errors which are tight random variables (there is a non-trivial part of the covariance coming from the \emph{stochastic-Airy}-type behavior near the common turning point). In fact, this \emph{transition} is not apparent from the log-correlations  \eqref{corrM} and it does not appear when studying the logarithm of the G$\beta$E characteristic polynomial by other methods; for instance \cite{BourgadeModyPain} using loop equations or \cite{CFLW} using the determinantal structure.

\begin{remark}[Symmetry around 0] \normalfont\label{rk:sym0}
Let $\mathbf{A}^\dagger$ be the random  Jacobi matrix associate with the sequence  $\{a_k,-b_k\}_{k\in\N}$, \eqref{def:trimatrix}, and $\{\Phi_n^\dagger(z)\}_{n\in\N}$  the correspdonding sequence of characteristic polynomials, \eqref{def:Phi}. 
By construction, we have the relationships for any $z\in(-1,1)$ and $n\in\N$, 
\[
\Phi_n^\dagger(-z) = (-1)^n\Phi_n(z) , \qquad\qquad \bpsi_n^\dagger(-z)= \overline{\bpsi_n(z)}+\i n\pi .
\]
This is consistent with the fact that the map $\mathbf{A}\mapsto\mathbf{A}^\dagger$ transforms for $k\in\N$, 
\[
(X_k,Y_k) \mapsto (-X_k,Y_k) , \qquad\qquad
Z_k(z) \mapsto Z_k^\dagger(z) = - \overline{Z_k(-z)} .   
\]
Then, under this map, we verify that $\G_N^\dagger(-z) = \overline{\G_N(z)}$
and $\W_N^\dagger(-z) = -\overline{\W_N(z)}$. 
In particular, if the coefficients $(b_k)_{k\in\N}$ have symmetric laws as  it is the case for the Gaussian $\beta$-ensembles, then the martingale satisfies $\M_N(-z) \overset{\rm law}{=} \overline{\M_N(z)}$ for $z\in\R$. 
\end{remark}

\todo{Paper plan}

\section{Parabolic regime} \label{sec:HP}



\subsection{Edge asymptotics.} \label{sec:edge}
In this section, we review the main results from \cite{LambertPaquette02,LambertPaquette03}  which give an approximation of the characteristic polynomial in a transition window around the turning point. This is a crucial input in this paper which provides the asymptotics of the characteristic polynomial at the beginning of the elliptic part of the recurrence. 
First, we recall the definition of the \emph{Stochastic Airy function}. 

\begin{definition}[Stochastic Airy function] \label{def:SA}
Let  $\{B(t); t\in\R\}$ be a standard (two-sided) Brownian motion.
Let $\big\{\SAi_t(\lambda)
: t \in \R, \lambda \in \R\big\}$ be the unique\footnote{By the general theory, for any $\lambda\in\R$, the SDE has a unique solution in $L^2(\R_+)$ up to a multiplicative constant. This solution is constructed in \cite{LambertPaquette03} and it is fixed by the condition $\E\SAi_t(\lambda) = \Ai(t+\lambda)$ for $\lambda,t \in\R$ and $\beta>0$. Moreover, the zeros of $t\mapsto \SAi_t(\lambda)$ and $t\mapsto \partial_t\SAi_t(\lambda)$ interlace, so that the process $(t,\lambda)\in\R_+\times\R\mapsto\boldsymbol{\varpi}_\lambda(t)$ is well-defined (up to a multiple of $2\pi$) and continuous. Using \cite{LambertPaquette03}, Proposition 1.4 and Proposition 6.4,  as $\lambda\to+\infty$, $\partial_t\SAi_{-1} (\lambda) \sim -\sqrt{\lambda} \SAi_{-1} (\lambda) $ with $\SAi_{-1} (\lambda)>0$ so that we can fix the complex phase by $\Im\boldsymbol{\varpi}_\lambda(1) \to \mp\pi/2$ as $\lambda\to+\infty$.} strong solution in $H^1(\R_+)$ of the equation
\begin{equation*}
\partial_{tt}\phi_t(\lambda)=  \phi_t(\lambda)\big(t + \lambda + \tfrac{2}{\sqrt\beta} \d B(t)\big)  . 
\end{equation*}
In terms of this stochastic Airy function, we define 
\begin{equation} \label{SAistruct}
\exp\big(\boldsymbol{\varpi}_t^{\pm}(\lambda)\big)  \coloneqq  \SAi_{-t}(\lambda)
\pm \i t^{-1/2} \SAi_{-t}'(\lambda)  , \qquad\lambda\in\R, \, t>0. 
\end{equation} 
where $(t,\lambda) \mapsto \SAi_{t}'(\lambda)=\partial_t \SAi_{t}(\lambda)$ is a continuous function on $\R^2$.

The processes $\big\{\boldsymbol{\varpi}^\pm_t(\lambda) ; \lambda\in\R , t>0\big\}$ are continuous, smooth with respect to $\lambda\in\R$, and satisfy $\Im\boldsymbol{\varpi}^\pm_t(\lambda) \to \mp\pi/2$ as $\lambda\to+\infty$, for $t>0$ fixed. 
Moreover, $\boldsymbol{\varpi}_t^{-}(\lambda) = \overline{\boldsymbol{\varpi}_t^{+}(\lambda)}$ 
for $t>0$ and $\lambda\in\R$. 
\end{definition}

The main result of \cite{LambertPaquette03} is the counterpart of Theorem~\ref{thm:main} at the edge of the spectrum. 

\begin{theorem}[\cite{LambertPaquette03}, Theorem~1.1]\label{thm:PR2}
Let $\pm=\pm1$ and $\M_N(\pm) =  \G_{N_0}(\pm1)$ according to \eqref{def:M}. 
There are two independent stochastic Airy functions $\SAi^\pm$, so that 
\[
(\pm)^N\Phi_N\big(\pm\big(1 + \tfrac{\lambda}{2N^{2/3}}\big)\big)
= \sqrt{N^{1/3}}\exp\Big(\tfrac{\M_N(\pm)+\mathrm{g}_{\pm}}{\sqrt{\beta}} - \tfrac{\E \M_N(\pm)^2+\E\mathrm{g}_{\pm}^2}{2\beta}\Big)\big(\SAi_0^{\pm}(\lambda) +\underset{N\to\infty}{\o_\P(1)}\big)
\]
where $\mathrm{g}_{\pm}$  are (identically distributed) real Gaussian  random variables with mean zero and, for any compact $\mathcal{K} \subset \R$,  the error converges to 0 in probability uniformly for $\lambda\in\mathcal{K}$. 
\end{theorem}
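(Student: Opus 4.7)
The statement is quoted directly from \cite{LambertPaquette03}, so the real content lies in that paper; I will nevertheless sketch the strategy I would follow. Start from the tridiagonal model \eqref{def:trimatrix} and its three-term recursion for the principal-minor characteristic polynomials $\Phi_n$. At the edge spectral parameter $z = \pm(1 + \lambda/(2N^{2/3}))$, the turning point $N_0(z)$ sits within $O(N^{1/3})$ of $N$, so the recursion has a long \emph{hyperbolic} segment $1 \le n \le N_0 - T\L$ and a short \emph{parabolic window} $|n-N_0| \le T\L$ (with $\L \asymp N^{1/3}$ and $T$ large but fixed). The strategy is to analyze these two regimes separately and then glue.

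In the hyperbolic segment, diagonalize the (leading order) transfer matrix and write $\log\Phi_n$ as a deterministic main term plus a sum over the noise $\{Z_k(z)\}$. A perturbative expansion of the log-transfer matrix, paired with the hyperbolicity gap, yields an exact-plus-remainder identity
\[
\log\Phi_n(z) = \text{(deterministic Hermite-type part)} + \tfrac{1}{\sqrt\beta}\G_n(z) + \text{error},
\]
where $\G_n$ is the martingale of Definition~\ref{def:GW}. The deterministic part is recovered by comparing with $h_n$ (i.e.\ the $\beta=\infty$ Plancherel--Rotach formulas). Standard martingale increment estimates and a careful bookkeeping of variance absorption in the excluded window $\Gamma(z)$ give that at the edge of the hyperbolic segment, $n = N_0 - T\L$, the solution is, up to a multiplicative explicit scalar $\sqrt{N^{1/3}} \exp\{\tfrac{1}{\sqrt\beta}\M_N(\pm) - \tfrac{1}{2\beta}\Exp\M_N(\pm)^2\}$, aligned with the expanding eigenvector of the transfer matrix.

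In the parabolic window, reparametrize time by $n = N - t\L$ with $t \in [-T,T]$, and rescale the state vector so that the drift of the rescaled transfer matrix converges (entrywise) to the Airy operator symbol and the noise, through a KMT / Skorokhod coupling of $\{X_k, Y_k\}$ to a Brownian motion $B(\cdot)$, generates the multiplicative Brownian driving term $2\beta^{-1/2}\d B(t)$ of the SAi equation in Definition~\ref{def:SA}. A functional continuity theorem for linear SDEs (initial conditions depending continuously on starting data and noise) then identifies the endpoint at $n=N$ with $\SAi_0^\pm(\lambda)$, uniformly for $\lambda$ in a compact, starting from the expanding direction delivered by the hyperbolic analysis.

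The residual Gaussian $\mathrm{g}_\pm$ appears because $\G_{N_0}(\pm 1)$ omits the window $\Gamma(\pm 1)$, but a macroscopic sub-window of $\Gamma$ (at scales $\L \ll |k-N| \ll N$) still behaves like white noise in the limit, is asymptotically independent of the martingale $\M_N(\pm)$, and can be realized as an explicit Wiener integral against the same Brownian motion $B(\cdot)$ driving $\SAi_0^\pm$; this Wiener integral is incorporated into the definition of $\SAi_0^\pm$ as a residual Gaussian factor, which is why it can be factored out as a separate Gaussian multiplier. Independence of the $+$ and $-$ instances comes from the fact that the two edges are driven, at leading order, by disjoint scale windows of the noise sequence $\{(X_k,Y_k)\}$. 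I expect the main obstacle to be the parabolic-regime step: establishing the joint convergence of the discrete transfer-matrix recursion to the stochastic Airy SDE together with the Brownian coupling, uniformly in $\lambda$, and matching the initial data handed over by the hyperbolic analysis without introducing an untracked multiplicative error.
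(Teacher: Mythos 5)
This theorem is not proved in the present paper at all: it is quoted from \cite{LambertPaquette03}, and the text around it (Section~\ref{sec:HP}) only summarizes the method of that work — an explicit coupling of the noise $\{(X_k,Y_k)\}$ near the turning point to a Brownian motion, convergence of the rescaled transfer-matrix recursion in the parabolic window of size $\O(N^{1/3})$ to the stochastic Airy function, and a hyperbolic stretch that produces the log-normal prefactor built from $\G_{N_0}(\pm1)$. Your outline (hyperbolic segment giving the martingale, parabolic window coupled to the $\SAi$ SDE, gluing along the expanding direction, a residual Gaussian $\mathrm{g}_\pm$ coming from noise near the turning point not captured by the truncated martingale) is consistent with that strategy in its overall architecture, though it is of course a plan rather than a proof: the uniform-in-$\lambda$ convergence to the $\SAi$ equation and the loss-free matching of the hyperbolic data are precisely the technical content carried by the cited paper.

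There is, however, a concrete error in your justification of the independence of $\SAi^{+}$ and $\SAi^{-}$, which is part of the statement. The two edges do \emph{not} correspond to disjoint windows of the noise sequence: the turning point is $N_0(\pm 1)=\lfloor N(\pm1)^2\rfloor=N$ for both signs, so the parabolic windows at $+1$ and $-1$ occupy the \emph{same} block of indices $|k-N|\lesssim N^{1/3}$, and the hyperbolic stretches coincide as well. The independence comes instead from the algebraic structure of the effective scalar noise: by \eqref{def:J} one has $J\to\pm1$ at the respective edges, so $Z_k(+1)\approx (X_k+Y_k)/\sqrt2$ while $Z_k(-1)\approx (X_k-Y_k)/\sqrt2$ (compare the symmetry $b_k\mapsto-b_k$ of Remark~\ref{rk:sym0}); these orthogonal linear combinations of the same increments are asymptotically independent, which is why the two coupled Brownian motions driving $\SAi^{\pm}$ can be chosen independent. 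Your heuristic would moreover wrongly suggest that the \emph{global} parts decouple too, whereas the martingales at the two edges are genuinely correlated (in the global regime $[\G(1),\G(-1)]\to-2\log\big(1-J(1)J(-1)\big)=-2\log 2\neq 0$); only the local Airy limits are independent. A smaller slip of the same kind: the scales $\mathfrak{L}\ll|k-N|\ll N$ you invoke for $\mathrm{g}_\pm$ lie outside the excluded set $\Gamma(\pm1)$ (which only has $|k-N|<\mathfrak{L}$); in the cited construction $\mathrm{g}$ arises as a Wiener integral of the coupling Brownian motion over the turning-point window itself, and it is not independent of $\SAi^{\pm}$, being a functional of the same driving noise.
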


These asymptotics should be compared to the Hermite polynomial asymptotics \eqref{PR2} (case $\beta=\infty$). 
In particular, $\SAi$ is the (random) counterpart of the Airy function for the edge asymptotics of the G$\beta$E characteristic polynomial.
Theorem~\ref{thm:PR2} is proved by using an explicit coupling and, in particular, the scaling limits of the characteristic polynomial and the eigenvalues at the edges $\pm1$ are independent.

\todo[inline]{What is missing from the Airy paper \\
- discrete approximation of the noise?
- Behavior near zero (including in the hyperbolic)? Change $N$ by $N_0$, etc
- results for the derivative \\
- $\pm$?\\
- indepdendence?}

\subsection{Asymptotics around the turning point.}
Throughout this section, we assume that $z\in\Q$ (Definition~\ref{def:Z}), otherwise the characteristic polynomial recursion has no turning point. 
We also proved in \cite{LambertPaquette03} that the asymptotics of the characteristic polynomial around the turning point are also described by the stochastic Airy function. 
This result is somewhat expected from the \emph{scale invariance} property of the random matrix \eqref{def:trimatrix}. The following result is a reformulation of \cite[Theorem 1.6]{LambertPaquette03}.

\begin{theorem}\label{thm:Airy}
Let $z\in\Q$, $\pm =\sgn(z)$, $\mathfrak{L}=\mathfrak{L}(z)$ and $N_t =N_t(z)$ for $t\in\R$. 
Recall that $\M_{N_0}(z) =\G_{N_0}(z)$ and  define
\[\begin{cases}
\widetilde{\Phi}_t(\lambda;z) \coloneqq  (\pm)^{N_t} \Phi_{N_t}\big(z\big(1 + \tfrac{\lambda}{2\mathfrak{L}^2}\big)\big) \cdot \big(\tfrac{N}{\L}\big)^{-1/4} \exp\Big(\tfrac{\G_{N_0}(z)}{\sqrt{\beta}} + \tfrac{\E[\G_{N_0}(z)^2]}{2\beta}\Big) 
\\
\widetilde{\Phi}_t'(\lambda;z) \coloneqq  (\pm)^{N_t}
\mathfrak{L}\Big(\Phi_{N_t}\mp \Phi_{N_t+1}\Big)\big(z\big(1 + \tfrac{\lambda}{2\mathfrak{L}^2}\big)\big)
\cdot\big(\tfrac{N}{\L}\big)^{-1/4}\exp\Big(\tfrac{\G_{N_0}(z)}{\sqrt{\beta}} + \tfrac{\E[\G_{N_0}(z)^2]}{2\beta}\Big) .
\end{cases}\]
For compact sets $\mathcal{K},\mathcal{T} \subset\R$, it holds in distribution $($in the $C^1(\mathcal{K}) \times C^0(\mathcal{T})$ topology$)$ as $N\to\infty$, 
\[
\big\{\widetilde{\Phi}_t(\lambda;z), \widetilde{\Phi}_t'(\lambda;z) ;  \lambda\in\mathcal{K}, t\in\mathcal{T} \big\} \to
\big\{ \SAi_{-t}(\lambda) , \SAi_{-t}^{\prime}(\lambda) ; \lambda\in\mathcal{K}, t\in\mathcal{T} \big\}\cdot \exp\big(\tfrac{\mathrm{g}}{\sqrt{\beta}} -\tfrac{\E(\mathrm{g}^2)}{2\beta} \big)
\]
where $\SAi$ is a stochastic Airy function and $\mathrm{g}$ is a mean-zero real Gaussian. 
\end{theorem}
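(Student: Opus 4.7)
The plan is to derive Theorem~\ref{thm:Airy} as a reformulation of \cite[Theorem~1.6]{LambertPaquette03}, using the scale invariance of the tridiagonal model to convert a neighborhood of the turning point $N_0(z)$ into an edge problem for an effective matrix of dimension comparable to $N_0(z)$.

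First, I would set up the change of scale. Writing $N_0 \asymp Nz^2$ and $\mathfrak{L} \asymp N_0^{1/3}$, the spectral parameter $z(1+\lambda/(2\mathfrak{L}^2))$ differs from $\sgn(z)$ by an amount comparable to the edge scaling $N_0^{-2/3}$ of a matrix of dimension $N_0$. Because the entries $\{(a_k,b_k): k > N_0 - C\mathfrak{L}\}$ of the Jacobi matrix \eqref{def:trimatrix} have, after suitable rescaling of the $\chi_{\beta k}$ weights, the same joint law as the principal block of dimension $\lesssim N_0$ of a G$\beta$E model at its edge (up to $\o_\P(1)$ errors in the norms used in \cite{LambertPaquette03}), the parabolic time $t$ in $N_t = N_0 + t\mathfrak{L}$ corresponds to the edge time variable of Theorem~1.6 of the companion paper. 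Under this identification, the normalization $(N/\mathfrak{L})^{-1/4}$ matches $\sqrt{N_{\rm eff}^{1/3}}$ from Theorem~\ref{thm:PR2} up to constants that can be tracked explicitly.

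Second, I would decompose $\Phi_{N_t}$ multiplicatively as a \emph{hyperbolic prefix} (the recursion from step $1$ to step $N_0 - C\mathfrak{L}$) times a \emph{parabolic remainder} (the recursion from this cutoff through $N_t$). The hyperbolic prefix is handled by the transfer-matrix analysis already developed in \cite{LambertPaquette02, LambertPaquette03}: it equals the Hermite-type deterministic growth multiplied by $\exp\bigl(\G_{N_0}(z)/\sqrt{\beta} - \tfrac{1}{2\beta}\E[\G_{N_0}(z)^2]\bigr)$ plus vanishing error, with $\G_{N_0}(z)$ the martingale of Definition~\ref{def:GW}, whose summation range precisely excludes $\Gamma(z)$. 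Multiplying by the compensating factor built into the definition of $\widetilde{\Phi}_t$ cancels this random prefix, leaving only a deterministic constant absorbed into $(N/\mathfrak{L})^{-1/4}$.

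Third, the parabolic remainder is precisely the object to which \cite[Theorem~1.6]{LambertPaquette03} applies. It yields joint convergence in the $C^1(\mathcal{K}) \times C^0(\mathcal{T})$ topology to $\bigl(\SAi_{-t}(\lambda), \SAi'_{-t}(\lambda)\bigr)$, multiplied by an independent factor $\exp\bigl(\mathrm{g}/\sqrt{\beta} - \tfrac{1}{2\beta}\E[\mathrm{g}^2]\bigr)$. The Gaussian $\mathrm{g}$ arises as the limit of the noise on the excluded indices in $\Gamma(z)\cap\{k\le N_0\}$: this contribution has uniformly bounded variance in $N$, is independent of $\G_{N_0}(z)$ by disjointness of summation ranges, and is independent of the Brownian motion driving $\SAi$ (which is built from increments above the turning point) by the same disjointness. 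The main obstacle is bookkeeping---matching normalizations, constants, and sign conventions between the two papers, and verifying that the discretization $\mathfrak{L}(\Phi_{N_t} \mp \Phi_{N_t+1})$ is the correct finite-difference approximation of $\SAi'_{-t}$ compatible with \eqref{SAistruct}. All genuinely analytic input---the stochastic Airy convergence, the tightness in the $C^1$-topology in $\lambda$, and the independence of $\mathrm{g}$ and $\SAi$---is already established in \cite{LambertPaquette03}.
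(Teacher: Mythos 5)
Your overall strategy coincides with the paper's: Theorem~\ref{thm:Airy} is not proved in this paper at all, but is stated as a reformulation of \cite[Theorem 1.6]{LambertPaquette03} (scale invariance turns the turning-point window into an edge problem of effective dimension $\asymp N_0(z)$, the hyperbolic stretch contributes the $\G_{N_0}(z)$ prefactor which the compensator in $\widetilde{\Phi}_t$ removes, and the parabolic remainder is exactly the object covered by the companion theorem). So as a citation-plus-bookkeeping argument your proposal follows the same route.

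However, your description of the limiting objects contains a genuine error. You claim that $\mathrm{g}$ is the limit of the noise on the excluded indices $\Gamma(z)\cap\{k\le N_0\}$ and is independent of the Brownian motion driving $\SAi$ ``by disjointness of summation ranges.'' This contradicts the paper: immediately after Theorem~\ref{thm:Airy} it is stated that the proof of \cite[Theorem 1.6]{LambertPaquette03} couples the noise in a window of size $\O(\mathfrak{L}(z))$ around the turning point to a single (two-sided) Brownian motion $B^z$, and that \emph{both} $\SAi$ and $\mathrm{g}$ are functionals of $B^z$ and are \emph{not} independent; later (Proposition~\ref{cor:mho} and the discussion around Proposition~\ref{prop:limit}) $\mathrm{g}$ is identified as an explicit Wiener integral of $\{B(t):t>0\}$, i.e.\ of the very Brownian motion driving $\SAi$, whose noise comes from the parabolic window on both sides of $N_0$, not only from indices above the turning point. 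The theorem as stated does not assert independence, so the fix is simply to drop these independence claims and take $(\SAi,\mathrm{g})$ jointly as delivered by the coupling of the cited theorem; but as written, your mechanism for $\mathrm{g}$ is wrong, and any later use of the claimed independence (e.g.\ when splitting off Gaussian prefactors) would be unjustified. A smaller bookkeeping point: since the compensator in $\widetilde{\Phi}_t$ is $\exp\big(+\tfrac{\G_{N_0}}{\sqrt{\beta}}+\tfrac{\E[\G_{N_0}^2]}{2\beta}\big)$, the hyperbolic prefix you need is $\exp\big(-\tfrac{\G_{N_0}}{\sqrt{\beta}}-\tfrac{\E[\G_{N_0}^2]}{2\beta}\big)$ (compare \eqref{err1}), not the $\exp\big(+\tfrac{\G_{N_0}}{\sqrt{\beta}}-\tfrac{\E[\G_{N_0}^2]}{2\beta}\big)$ you wrote, otherwise the cancellation you invoke does not occur.
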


The proof of  \cite[Theorem 1.6]{LambertPaquette03} proceeds by an explicit coupling of the noise from Definition~\ref{def:noise} with a Brownian motion $B^z = B = \{B_t , t\in\R\}$. 
In particular, the random variables $\{\SAi,\mathrm{g}\}$ from  Theorem~\ref{thm:Airy} are both defined in terms of $B^z$ and they are not independent.

In the sequel, we will use the following consequence of Theorem~\ref{thm:Airy}.

\begin{proposition}\label{prop:Airy}
Let $z\in\Q$, $\pm =\sgn(z)$, $\mathfrak{L}=\mathfrak{L}(z)$ and $N_t =N_t(z)$ for $t>0$.
Recall \eqref{def:psi} and define $\mho_N^1(\lambda, t;z)$ for $t>0$ and  $\lambda\in\R$ (implicitly) by  
\begin{equation} \label{err1}
\bpsi_{N_t}\big(z\big(1 + \tfrac{\lambda}{2\mathfrak{L}^2}\big)\big)
= \i\pi \1\{z<0\}N_t+ \tfrac14 \log\big(\tfrac{N}{\L}\big) - \tfrac{\G_{N_0}(z)}{\sqrt{\beta}} - \tfrac{\E\G_{N_0}(z)^2}{2\beta}+ \mho_N^1(\lambda, t;z) .
\end{equation}
For compact sets $\mathcal{K} \subset\R$ and $\mathcal{T} \subset\R_+$, the following limits hold jointly in  distribution as $N\to\infty$, 
\begin{enumerate}[leftmargin=*]  
\item  $\big\{\mho_N^1(\lambda,t;z) ; \lambda\in\mathcal{K}, t\in\mathcal{T} \big\} \to \big\{ \frac{\mathrm{g}}{\sqrt{\beta}} -\frac{\E(\mathrm{g}^2)}{2\beta}+ \boldsymbol{\varpi}^\pm_t(\lambda); \lambda\in\mathcal{K}, t\in\mathcal{T}  \big\} $ in the $C^1(\mathcal{K}) \times C^0(\mathcal{T})$ topology\footnote{This means that the processes $\mho_N^1(\lambda,t;z)$ and $\partial_\lambda\mho_N^1(\lambda,t;z)$ both converge uniformly for $(\lambda,t)\in \mathcal{K}\times\mathcal{T}$. Actually, the convergence of \cite{LambertPaquette03} holds in $C^k(\mathcal{K})$ for any $k\ge 1$ but we will need such a fact.}, where $ \boldsymbol{\varpi}^\pm$  are independent processes and 
$\boldsymbol{\varpi}^-\overset{\rm law}{=} \overline{\boldsymbol{\varpi}^+}$. 
\item the martingale satisfies $\big\{\M_{N_t,N_1}(z) \to  \mathfrak{m}_t^{\pm} , t\in\mathcal{T}\big\}$ in the $C^0(\mathcal{T})$ topology,where $\big\{ \mathfrak{m}_t^{\pm} , t\in[1,\infty)\big\}$ is a continuous martingale, $\mathfrak{m}^{\pm}$ are independent with $\mathfrak{m}^- \overset{\rm law}{=} \overline{\mathfrak{m}^+}$. 
\end{enumerate}
\end{proposition}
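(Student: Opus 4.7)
The plan is to derive item (1) from Theorem~\ref{thm:Airy} by asymptotic expansion, and item (2) from a martingale functional CLT. For item (1), start from the defining identity \eqref{def:psi},
\[
\exp\bigl(\bpsi_{N_t}(z')\bigr) = \i \sqrt{\tfrac{N_t}{N_t - Nz'^2}}\Bigl(e^{-\i \theta_{N_t}(z')}\Phi_{N_t}(z') - \sqrt{\tfrac{N_t+1}{N_t}}\Phi_{N_t+1}(z')\Bigr),
\]
with $z' = z(1 + \lambda/(2\mathfrak{L}^2))$. Set $s = \sgn(z)$ and $\Xi \coloneqq (N/\mathfrak{L})^{1/4}\exp(-\G_{N_0}(z)/\sqrt\beta - \E\G_{N_0}(z)^2/(2\beta))$. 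Inverting the definitions of $\widetilde{\Phi}_t$ and $\widetilde{\Phi}_t'$ from Theorem~\ref{thm:Airy} gives $\Phi_{N_t}(z') = s^{N_t}\Xi\widetilde{\Phi}_t(\lambda;z)$ and $\Phi_{N_t+1}(z') = s^{N_t+1}\Xi(\widetilde{\Phi}_t(\lambda;z) - \mathfrak{L}^{-1}\widetilde{\Phi}_t'(\lambda;z))$. Substituting above expresses $\exp(\bpsi_{N_t}(z'))/\Xi$ as a linear combination of $\widetilde{\Phi}_t$ and $\widetilde{\Phi}_t'$, with coefficients depending only on $\theta_{N_t}(z')$ and $\sqrt{(N_t+1)/(N_t - Nz'^2)}$.

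Next, I expand these coefficients in $\mathfrak{L}^{-1}$. Using $z'\sqrt{N/N_t} = 1 + (\lambda - t)/(2\mathfrak{L}^2) + O(\mathfrak{L}^{-4})$, one finds $\theta_{N_t}(z') = \mathfrak{L}^{-1}\sqrt{t-\lambda} + O(\mathfrak{L}^{-3})$ when $z > 0$ (with an extra $\pi$-shift when $z < 0$), together with $\sqrt{N_t/(N_t - Nz'^2)} = \Theta(\mathfrak{L})$ with an explicit leading coefficient. Keeping only the leading terms identifies the combination with a $\widetilde{\Phi}$-version of the right-hand side of \eqref{SAistruct}. The $C^1(\mathcal{K}) \times C^0(\mathcal{T})$ convergence in Theorem~\ref{thm:Airy} then produces convergence of $\exp(\bpsi_{N_t}(z'))/\Xi$ to $\exp(\mathrm{g}/\sqrt\beta - \E \mathrm{g}^2/(2\beta))\exp(\boldsymbol{\varpi}_t^{\pm}(\lambda))$. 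Taking logarithms — valid since the interlacing of zeros of $\SAi_{-t}$ and $\SAi_{-t}'$ keeps the limiting argument away from zero — and matching with \eqref{err1} yields item (1); the $\i\pi N_t$ term there absorbs the $\pi$-shift of $\theta_{N_t}(z')$ for $z < 0$, which is precisely what exchanges $\boldsymbol{\varpi}^+$ with $\boldsymbol{\varpi}^-$.

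For item (2), write $\M_{N_t,N_1}(z) = \sum_{N_1 < k \leq N_t} \Delta_k$ with increments $\Delta_k$ that are functions of $Z_k(z)$ (hence $\F_k$-martingale increments by Definition~\ref{def:GW}), weighted by deterministic coefficients of order $1/\sqrt{k - Nz^2}$ together with unimodular oscillatory factors for the $\overline{\W}$-piece. For $k = N_s$, $\sqrt{k - Nz^2} \asymp \sqrt{s\mathfrak{L}}$, so the conditional second moments form a deterministic quadratic variation converging to an integral of the form $\int_1^t \sigma^2(s)\, ds$ as $\mathfrak{L} \to \infty$. The Lyapunov condition holds thanks to the uniform Orlicz bound $\mathfrak{S}$ of Definition~\ref{def:noise}, and the martingale FCLT delivers a continuous Gaussian limit $\mathfrak{m}_t^\pm$ in the $C^0(\mathcal{T})$ topology. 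Joint convergence with item (1) holds because both limits are measurable functions of the two-sided Brownian motion $B^z$ to which the noise is coupled in \cite{LambertPaquette03}.

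The principal obstacle is the $C^1(\mathcal{K})$-regularity in $\lambda$ for item (1): matching values pointwise is not enough, and one must differentiate the identity above in $\lambda$ and prove convergence of $\partial_\lambda\mho_N^1$. Since $\lambda$ enters through the coefficients $\theta_{N_t}(z')$ and $\sqrt{N_t/(N_t - Nz'^2)}$ as well as through $\widetilde{\Phi}_t,\widetilde{\Phi}_t'$ themselves, this requires uniform control on $\mathcal{K} \times \mathcal{T}$ of the higher-order error terms in the coefficient expansion and of $\partial_\lambda\widetilde{\Phi}_t$, the latter supplied by the $C^1(\mathcal{K})$ statement in Theorem~\ref{thm:Airy}.
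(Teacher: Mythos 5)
Your treatment of item (1) is essentially the paper's own argument: invert the definitions of $\widetilde{\Phi}_t,\widetilde{\Phi}_t'$ from Theorem~\ref{thm:Airy}, expand $\theta_{N_t}\big(z(1+\tfrac{\lambda}{2\mathfrak{L}^2})\big)$ and the prefactor to leading order in $\mathfrak{L}^{-1}$, match the resulting combination with \eqref{SAistruct}, and take logarithms using the non-vanishing of $\SAi_{-t}\pm\i t^{-1/2}\SAi_{-t}'$; the $C^1(\mathcal{K})$ regularity is indeed supplied by the $C^1$ convergence in Theorem~\ref{thm:Airy}. That part is fine.

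Item (2) contains a genuine error. You claim that the conditional second moments of the increments of $\M_{N_t,N_1}(z)=\G_{N_t,N_1}(z)+\overline{\W_{N_t,N_1}(z)}$ form a \emph{deterministic} quadratic variation $\int_1^t\sigma^2(s)\,\d s$ and that a martingale FCLT yields a \emph{Gaussian} limit $\mathfrak{m}_t^{\pm}$. In the parabolic window this is false: the $\overline{\W}$-part carries the factor $e^{-2\i(\theta_k(z)+\phi_{k-1}(z))}$, and for $k=N_s(z)$ with $s=O(1)$ one has $\theta_k(z)\simeq \pm\sqrt{s}\,\mathfrak{L}^{-1}$ (up to a $\pi$-shift for $z<0$), so the accumulated deterministic phase over the window is $O(1)$ and there is no fast rotation to average the phase out. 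Consequently the conditional brackets (both $[\M,\M]$ and $[\M,\overline{\M}]$) involve $e^{-2\i\phi_{k-1}(z)}$, which converges (after removing the trivial $\pi N_t\1\{z<0\}$ shift) to the continuous \emph{random} process $e^{-2\i\boldsymbol{\chi}_t^{\pm}}$ with $\boldsymbol{\chi}_t^{\pm}=\Im\boldsymbol{\varpi}_t^{\pm}(0)$ coming from item (1). The limiting bracket is therefore random, and the limit is not a Gaussian process: the correct identification, which is what the paper proves via the coupling of the noise with the Brownian motion $B^z$ of \cite{LambertPaquette03} (using $Z_n(z)\approx\tfrac{X_n+Y_n}{\sqrt2}$, $\sqrt{n}\sqrt{Nz^2/n-1}=\i\sqrt{t\mathfrak{L}}$ and $e^{2\i\theta_n(z)}\simeq1$), is
\[
\M_{N_t,N_1}(z)\;\to\;\mathfrak{m}_t^{\pm}=\int_1^t\big(1+e^{-2\i\boldsymbol{\chi}_s^{\pm}}\big)\frac{\d B_s}{\i\sqrt{s}},
\]
with convergence in $C^0(\mathcal{T})$ jointly with Theorem~\ref{thm:Airy}. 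This dependence of $\mathfrak{m}^{\pm}$ on the same driving noise (and on the limiting phase from item (1)) is not a technicality: it is exactly what makes the later representation of $\SAi_{-T}$ in Proposition~\ref{cor:mho} and the cancellation in the proof of Proposition~\ref{prop:limit} work, and it would be lost under your Gaussian/deterministic-variance ansatz. The Lyapunov-type FCLT you invoke is the right tool only in the bulk elliptic regime, where the phase rotates fast (Section~\ref{sec:osc}); it does not apply in the window $t\in\mathcal{T}$ addressed by this proposition.
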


\begin{proof}
We can rewrite  \eqref{def:psi}; for $z\in (-1,1)$ and $n>N_0(z)$,  
\begin{equation*}
\exp\big(\bpsi_n(z)\big) 
= \Phi_{n}(z)-\tfrac{\i}{\sin\theta_n(z)}\Big(
\sqrt{\tfrac{n+1}{n}}\cdot\Phi_{n+1}(z)- \cos \theta_n(z) \cdot\Phi_{n}(z)\Big) . 
\end{equation*}
Moreover, around the turning point, 
\[\begin{aligned}
\exp\big(\i\theta_{N_t}\big(z\big(1 + \tfrac{\lambda}{2\mathfrak{L}^2}\big)\big)\big)
&= \pm \sqrt{Nz^2\big(1 + \tfrac{\lambda}{2\mathfrak{L}^2}\big)^2/N_t(z)} +\i\sqrt{1- Nz^2\big(1 + \tfrac{\lambda}{2\mathfrak{L}^2}\big)^2/N_t } \\
&= \pm1+\i \sqrt{t}\mathfrak{L}^{-1} +\O\big(\tfrac{|t|+|\lambda|}{\mathfrak{L}^2}\big)
\end{aligned}\]
so that with $n=N_t$
\[
\exp\big(\bpsi_n\big)\big(z\big(1 + \tfrac{\lambda}{2\mathfrak{L}^2}\big)\big)
= \Big(a_N^1\Phi_{n}\pm \tfrac{\i\mathfrak{L}}{\sqrt{t}}a_N^2\big(\Phi_{n}\mp\Phi_{n+1} \big)\Big)\big(z\big(1 + \tfrac{\lambda}{2\mathfrak{L}^2}\big)\big)
\]
where the coefficients $a_N^j$ are deterministic and  $a_N^j = 1+\o(1)$ as $N\to\infty$ uniformly for $\lambda\in\mathcal{K}, t\in\mathcal{T}$.

Then, using the asymptotics from Theorem~\ref{thm:Airy} and \eqref{SAistruct}, we obtain
\[\begin{aligned}
\exp\big( \mho_N^1(\lambda, t;z) \big) 
&=  (\pm)^{N_t} \exp \bpsi_{N_t}\big(z\big(1 + \tfrac{\lambda}{2\mathfrak{L}^2}\big)\big)
\Big(\big(\tfrac{N}{\L}\big)^{1/4} \exp\Big(\tfrac{\G^1}{\sqrt{\beta}} - \tfrac{\E(\G^1)^2}{2\beta}\Big)\Big)^{-1} \\
&= a_N^1 \widetilde{\Phi}_t(\lambda;z) \pm\i a_N^2 t^{-1/2}\widetilde{\Phi}_t'(\lambda;z)  \\
&\to \exp\big(\boldsymbol{\varpi}^\pm_t(\lambda)+ \tfrac{\mathrm{g}}{\sqrt{\beta}} -\tfrac{\E\mathrm{g}^2}{2\beta}\big) 
\end{aligned}\]
in distribution as $N\to\infty$ as $C^1\times C^0$ processes (that is, uniformly for $\lambda\in\mathcal{K}, t\in\mathcal{T}$).
This proves the first claim. In particular, the imaginary part of $\{\bpsi_n\}_{n>N_0}$ satisfies locally uniformly for  $t>0$, as $N\to\infty$, 
\[
\phi_{N_t}(z)- \pi \1\{z<0\}N_t \to  \boldsymbol{\chi}_t^{\pm} , \qquad\qquad
\boldsymbol{\chi}_t^{\pm} = \Im\boldsymbol{\varpi}^\pm_t(0) \text{ is a continuous real-valued process on $\R_+$}.
\]

For the second claim, recall the  Definition~\ref{def:GW} of the martingales $\G$ and $\W$.
In terms of the Brownian motion $B=B^z$ from the coupling of \cite[Theorem 1.6]{LambertPaquette03}, we have  
\begin{equation} \label{Mpara}
\G_{N_T,N_1}(z) \to \int_1^T \frac{\d B_t}{\i\sqrt{t}} , \qquad\qquad 
\W_{N_T,N_1}(z) \to \int_1^T e^{2\i \boldsymbol{\chi}_t^{\pm}} \frac{\d B_t}{\i\sqrt{t}}  ,
\end{equation}
in distribution as $N\to\infty$ as $C^0$ processes (indexed by $T\in\R_+$) and these limits hold jointly with that of Theorem~\ref{thm:Airy}.  
\eqref{Mpara} follows from the approximations $Z_n(z) \approx \tfrac{X_n+Y_n}{\sqrt 2}$  and $\d B_t^z \approx Z_n(z)/\sqrt{\L(z)}$
for $n=N_t(z)$ if $z\in\Q$. 
In this regime, $\sqrt{n}\sqrt{Nz^2/n-1} = \i \sqrt{t\L(z)}$, $e^{2\i\theta_n(z)} \simeq 1$ so that the sums $\G_{N_T,N_1}$ and  $\W_{N_T,N_1}$ converge to stochastic integrals without any further normalization (for $\W$, we use that the phase converges as a continuous process). This proves the second claim; for $z\in\Q$, in distribution  as $N\to\infty$, 
\[
\M_{N_T,N_1}(z) \to    \mathfrak{m}_T^{\pm}  = \int_1^T \big(1+e^{-2\i \boldsymbol{\chi}_t^{\pm}}\big) \frac{\d B_t}{\i\sqrt{t}}  . \qedhere
\]
\end{proof}

\begin{remark}[Independence at different points in the spectrum]
For another spectral parameter $x\in\Q$ with $|N_0(z)- N_0(x)|\gg \L(z)\vee \L(x)$, the limits from Proposition~\ref{prop:Airy} are independent. 
This follows from the fact that the coupling of \cite[Theorem 1.6]{LambertPaquette03} operates in a window of size $\O(\L(z))$ around the turning points, so we can choose the Brownian motions $B^z,B^x$ independently in this regime. 
\end{remark}

As a byproduct of our analysis of the characteristic polynomial in the elliptic regime, we can deduce the asymptotics of the stochastic Airy function in the oscillatory direction. 

\begin{proposition} \label{cor:mho}
There is a complex-valued random variable $\hat{\boldsymbol{\mho}}_\beta^{-}$, such that
\[
\SAi_{-T}(0)  = \Re\big\{ \exp\big(\i \big(\tfrac23 T^{3/2}- {\mathrm{c}_\beta}\pi \big) \ + \tfrac1{\sqrt\beta} \mathfrak{m}_T^{-}    + {\mathrm{c}_\beta} \log T + \hat{\boldsymbol{\mho}}_\beta^{-} + \o_\P(1)\big)\big\} ,\qquad {\mathrm{c}_\beta} =\tfrac14-\tfrac{1}{2\beta},
\]
where the error converges in probability as $T\to\infty$. 
\end{proposition}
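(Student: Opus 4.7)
The plan is to obtain Proposition~\ref{cor:mho} as a byproduct of Theorem~\ref{thm:main} and Proposition~\ref{prop:Airy} by comparing two asymptotic representations of the Pr\"ufer phase $\bpsi_{N_T}(z)$ at a bulk point $z\in\Q$ with $\sgn(z)=-1$. Observe first that by Definition~\ref{def:SA} and \eqref{SAistruct}, $\SAi_{-T}(0)=\Re\exp(\boldsymbol{\varpi}_T^-(0))$; hence it suffices to identify, modulo $2\pi\i\Z$, the large-$T$ asymptotics
\[
\boldsymbol{\varpi}_T^-(0) = \i(\tfrac23T^{3/2}-\mathrm{c}_\beta\pi) + \tfrac{1}{\sqrt\beta}\mathfrak{m}_T^- + \mathrm{c}_\beta\log T + \hat{\boldsymbol{\mho}}_\beta^- + \o_\P(1).
\]

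Fix $z\in\Q$ with $\sgn(z)=-1$ and a parameter $T>0$. By Proposition~\ref{prop:Airy}(1) together with the definition \eqref{err1} of $\mho_N^1$,
\[
\bpsi_{N_T}(z) - \i\pi N_T - \tfrac14\log\tfrac{N}{\mathfrak{L}(z)} + \tfrac{\G_{N_0}(z)}{\sqrt\beta} + \tfrac{\E\G_{N_0}(z)^2}{2\beta} \; \Wkto[N]\; \tfrac{\mathrm g}{\sqrt\beta} - \tfrac{\E\mathrm g^2}{2\beta} + \boldsymbol{\varpi}_T^-(0),
\]
jointly with $\M_{N_T,N_1}(z)\Wkto[N]\mathfrak{m}_T^-$ from Proposition~\ref{prop:Airy}(2). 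On the other hand, Theorem~\ref{thm:main} applied at $\lambda=0$ gives, modulo $2\pi\i\Z$,
\[
\bpsi_N(z) \equiv -\mathrm{c}_\beta\log(1-z^2) + \i\pi N F(z) - \tfrac{\M_N(z)}{\sqrt\beta} + \Omega_N(z),
\]
with $\Omega_N(z)\Wkto[N]\Omega(z)$ from Proposition~\ref{prop:limit}. Subtracting these two displays expresses $\boldsymbol{\varpi}_T^-(0)$ as a limit involving $\bpsi_N(z)-\bpsi_{N_T}(z)$, $\Omega_N(z)$, $\M_N(z)$, and the $N$-dependent normalization factors.

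The crucial second step is to compute $\bpsi_N(z)-\bpsi_{N_T}(z)$ directly via the elliptic recursion for the complex Pr\"ufer phase (Section~\ref{sec:ell}). Iterating the one-step increment across $n\in(N_T,N]$ produces a deterministic WKB piece plus the martingale increment $-(\M_N-\M_{N_T})/\sqrt\beta$, and the deterministic piece should decompose schematically as
\[
\i\pi[NF(z)-N_T] + \i\tfrac23 T^{3/2} - \i\mathrm{c}_\beta\pi - \mathrm{c}_\beta\log(1-z^2) - \mathrm{c}_\beta\log T + \tfrac14\log\tfrac{N}{\mathfrak{L}} + (\text{constants in }\beta,T,z) + \o_\P(1),
\]
in which $\tfrac23 T^{3/2}$ arises from integrating the WKB wavenumber $\sqrt{Nz^2/n-1}/\sqrt n$ across the parabolic-to-bulk transition, $-\i\mathrm{c}_\beta\pi$ is the Maslov phase, and $\mathrm{c}_\beta\log T$ comes from the It\^o-corrected logarithmic average of $\sin\theta_n(z)$ along the elliptic window. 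Equating this computation with the subtraction of the two displays above, solving for $\boldsymbol{\varpi}_T^-(0)$, and using Proposition~\ref{prop:Airy}(2) to identify $\mathfrak{m}_T^-=\lim_N\M_{N_T,N_1}(z)$, the cancellations yield the claimed expansion with $\hat{\boldsymbol{\mho}}_\beta^- = \Omega(z)+\i 2\mathrm{c}_\beta\arcsin z+\tfrac{\log 2}{\mathrm{c}_\beta}-\tfrac{\mathrm g}{\sqrt\beta}+\tfrac{\E\mathrm g^2}{2\beta}$, consistent with Proposition~\ref{prop:limit}. That all explicit $z$-dependence in the elliptic WKB expansion cancels against that in $\Omega(z)$ is a nontrivial consistency check which pins down $\hat{\boldsymbol{\mho}}_\beta^-$ as a $z$-independent random variable.

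The main technical obstacle lies in the precise tracking of the deterministic WKB expansion of $\bpsi_N(z)-\bpsi_{N_T}(z)$, in particular isolating the $\tfrac23T^{3/2}$ Airy phase and the $\mathrm{c}_\beta\log T$ amplitude correction from the integration across the parabolic-to-elliptic transition. This is the stochastic analogue of the classical matched asymptotic between Plancherel--Rotach (bulk) and Airy (edge) asymptotics for the Hermite polynomials, which must recover the $\beta=\infty$ specialization verbatim; at the amplitude level, the $\mathrm{c}_\beta\log T$ requires taming the It\^o corrections in the logarithmic averages together with the oscillatory contributions of the $\W$-part of the martingale near the turning point, which is exactly the delicate regime treated in the proof of Theorem~\ref{thm:main}.
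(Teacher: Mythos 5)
Your proposal follows essentially the same route as the paper: there, Proposition~\ref{cor:mho} is obtained as a byproduct of the proof of Proposition~\ref{prop:limit}, which decomposes $\Omega_N(z)$ into the elliptic piece (Proposition~\ref{prop:oneray1}), the parabolic piece (Proposition~\ref{prop:Airy}) and the deterministic piece (Propositions~\ref{prop:detphase} and~\ref{prop:app_G1}) and then uses the $T$-independence of $\Omega_N(z)$ to extract the limit \eqref{limitmho} defining $\hat{\boldsymbol{\mho}}_\beta^{-}$ as $T\to\infty$, exactly your comparison of the two representations of $\bpsi$ across the elliptic stretch. One caution: you should invoke only tightness and subsequential limits of $\Omega_N(z)$ (as the paper does) rather than the full convergence statement of Proposition~\ref{prop:limit}, since that convergence is itself established by the same $T$-independence argument and citing it as an external input would be circular in presentation.
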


\begin{proof}
According to the definition \eqref{SAistruct}, $\SAi_{-t}(\lambda) = \Re\big(\exp \boldsymbol{\varpi}_t^{\pm}(\lambda)\big)$.
We will obtain the asymptotics of random phase $\boldsymbol{\varpi}_t^{\pm}(0)$ as $t\to\infty$ in the proof of Proposition~\ref{prop:limit}, by \eqref{limitmho}, it holds  in distribution as $T\to\infty$, 
\[
\big( \boldsymbol{\varpi}^\pm_T(0) + \tfrac1{\sqrt\beta} \mathfrak{m}_T^{\pm}   \mp\i   \big(\tfrac23 T^{3/2}- {\mathrm{c}_\beta}\pi \big) + {\mathrm{c}_\beta} \log T\big) 
\to \hat{\boldsymbol{\mho}}_\beta^{\pm}  . 
\]
where $\hat{\boldsymbol{\mho}}_\beta^+ \overset{\rm law}{=} \overline{\hat{\boldsymbol{\mho}}_\beta^-}$ by Proposition~\ref{prop:Airy}. 
\end{proof}

\begin{remark}[Airy function asymptotics]\label{rk:Airy}
If $\beta=\infty$ and $z\in\Q$, similarly to the asymptotics \eqref{PR2}, it holds as $N\to\infty$, locally uniformly for $t\in\R$, 
\[
(\tfrac{\L}{N})^{1/4} h_{N_t}(z) = (\pm1)^{N_t} \Ai(-t)\big(1+\O(\L^{-1})\big) 
\]
with $\L=\L(z)$. 
Denote $\bpsi_{N_t}^\infty(z) = \bpsi_{N_t}(z)|_{\beta=\infty}$ for $t>0$. 
Since $\Phi_n|_{\beta=\infty} =h_n$ for all $n\in\N$, we deduce that as $N\to\infty$,
\[\begin{aligned}
\exp\big(\bpsi_{N_t}^\infty(z)\big) 
& \simeq h_{N_t}(z)\mp \tfrac{\i\mathfrak{L}}{\sqrt{t}}\big(
\pm  h_{N_t+1}(z) - h_{N_t}(z)\big)    \\
&\simeq  (\pm1)^{N_t}  (\tfrac{N}{\L})^{1/4}  \big( \Ai(-t) \pm \tfrac{\i}{\sqrt t} \Ai'(-t)\big) . 
\end{aligned}\]
Using the Airy function asymptotics in the oscillatory direction
\cite[Section 9.7]{DLMF}, we deduce that as $N\to\infty$ and then $T\to\infty$,
\[
\exp\big(\bpsi_{N_T}^\infty(z)\big) 
\simeq  (\pm1)^{N_T}  (\tfrac{N}{\pi^2 \L T})^{1/4}  \exp\big(\pm \i \big(\tfrac23 T^{3/2}- \tfrac\pi4\big)\big)
\]
This is consistent with Proposition~\ref{prop:Airy}, we obtain the asymptotics as $N\to\infty$ and $T\to\infty$,
\[
\mho_N^1(0,T;z)|_{\beta=\infty} \simeq  \boldsymbol{\varpi}^\pm_T|_{\lambda=0,\beta=\infty}  \simeq  - {\mathrm{c}_\infty}\log(\pi^2 T) \pm \i \big(\tfrac23 T^{3/2}- \tfrac\pi4\big) .
\]
In particular, this shows that $\hat{\boldsymbol{\mho}}_\infty = -\tfrac{\log \pi}{2}$. 
\end{remark}

\subsection{Continuity estimates.}

Let $\mathcal{I}_n= \big(-\sqrt{n/N}, \sqrt{n/N}\big)$. 
In the course of the proof, we will need some continuity estimates for the process $\big\{\bpsi_n(z) ; z\in\mathcal{I}_n\big\}$ at the beginning of the elliptic stretch, that is, for $n\in\N$ slightly after $N_0(z)$. 
If $z\in\Q$, these estimates are a direct consequence of Proposition~\ref{prop:Airy}.

\begin{proposition} \label{prop:entcont}
Let $\alpha>2$, $z\in[-1,1]$ and let $\L=\L(z)$, $N_T=N_T(z)$ for $T\ge 1$. For any $c>0$, we have
\[
\liminf_{\varepsilon\to0}
\liminf_{N\to\infty}
\P\bigg[\sup_{|z-w| \le \varepsilon/\sqrt{N\mathfrak{L}}}\bigg(\frac{|\bpsi_{N_T}(w)- \bpsi_{N_T}(z)|^\alpha}{N\mathfrak{L}|z-w|^{2}} \bigg)\leq c\bigg] =1. 
\]
\end{proposition}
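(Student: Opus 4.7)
The plan is to read off this modulus-of-continuity estimate directly from the $C^1(\mathcal{K})$ convergence of $\lambda \mapsto \mho_N^1(\lambda,T;z)$ provided by Proposition~\ref{prop:Airy}(1), which is the ``beginning of the elliptic stretch'' statement. I focus on the regime $z\in\Q$ which is explicitly flagged in the text as a direct consequence of Proposition~\ref{prop:Airy}; the complementary regime, in which $Nz^2$ remains bounded, is handled by the same argument applied to $\widehat{\Phi}_n(\mu)$ from \eqref{def:Phi0}, which has degree $O(T)$ and tight coefficients, hence a smooth dependence in $\mu = 2z\sqrt{N}$ whose derivative is tight on compacts.

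\textbf{Step 1 (tightness of the Lipschitz constant).} By Proposition~\ref{prop:Airy}(1) applied with the compact $\mathcal{K} = [-2,2]$, the process $\lambda \mapsto \mho_N^1(\lambda,T;z)$ converges in $C^1(\mathcal{K})$ in distribution. Consequently
\[
M_N \coloneqq \sup_{|\lambda|\le 2} \bigl|\partial_\lambda \mho_N^1(\lambda, T; z)\bigr|
\]
is a tight family of random variables. Crucially, in \eqref{err1} the deterministic-in-$\lambda$ term does not depend on $\lambda$, so
\[
\bpsi_{N_T}\bigl(z(1 + \tfrac{\lambda}{2\L^2})\bigr) - \bpsi_{N_T}(z) \;=\; \mho_N^1(\lambda, T; z) - \mho_N^1(0, T; z).
\]

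\textbf{Step 2 (scale translation).} Writing $w = z(1+\lambda/(2\L^2))$ gives $\lambda = 2\L^2(w-z)/z$, and using the identity $\L^{3/2} = (Nz^2)^{1/2} = \sqrt{N}\,|z|$ valid for $z\in\Q$, the hypothesis $|w-z|\le \varepsilon/\sqrt{N\L}$ becomes $|\lambda| \le 2\varepsilon$. For $\varepsilon\le 1$, this lies in $\mathcal{K}$, so the mean value theorem and Step~1 yield
\[
|\bpsi_{N_T}(w) - \bpsi_{N_T}(z)| \;\le\; M_N \cdot \frac{2\L^2}{|z|}\,|w-z|.
\]

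\textbf{Step 3 (Kolmogorov-type substitution).} Substituting the previous inequality, and using $\alpha > 2$ together with $\L^{3\alpha/2} = N^{\alpha/2}|z|^\alpha$,
\[
\frac{|\bpsi_{N_T}(w) - \bpsi_{N_T}(z)|^\alpha}{N\L\,|w-z|^2}
\;\le\; (2M_N)^\alpha\, \frac{\L^{2\alpha-1}\,|w-z|^{\alpha-2}}{|z|^\alpha\, N}
\;\le\; (2M_N)^\alpha\, \varepsilon^{\alpha-2}\cdot \frac{\L^{3\alpha/2}}{N^{\alpha/2}|z|^\alpha}
\;=\; (2M_N)^\alpha\,\varepsilon^{\alpha-2}.
\]
The bound is uniform in $w$ in the prescribed range, so it survives the supremum on the left. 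Tightness of $M_N$ and $\varepsilon^{\alpha-2} \to 0$ (as $\alpha > 2$) then yield the iterated limit stated in the proposition.

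\textbf{Main obstacle.} The only real subtlety is that Proposition~\ref{prop:Airy} is stated only as a qualitative distributional convergence; I therefore exploit it solely through the tightness of $M_N$. The algebraic miracle that makes the scaling work out is the identity $\L^{3/2} = \sqrt{N}|z|$, which precisely matches the local scale $1/\sqrt{N\L}$ to the parabolic $\lambda$-scale used in \eqref{err1}. In the boundary regime $Nz^2=O(1)$, one cannot appeal to Proposition~\ref{prop:Airy}, but the analogous bound $|\partial_\mu \bpsi_{N_T}| = O_\P(1)$ follows from \eqref{def:Phi0} since $\widehat{\Phi}_{N_T}(\mu)$ is a polynomial of bounded degree in $\mu$ with sub-Gaussian coefficients; the chain $\partial_z = 2\sqrt{N}\,\partial_\mu$ reproduces the same $\varepsilon^{\alpha-2}$ bound after the scale change.
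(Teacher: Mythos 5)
Your proposal is correct and follows essentially the same route as the paper: both rescale the local window $|w-z|\le \varepsilon/\sqrt{N\L}$ into the parabolic coordinate $|\lambda|\lesssim 2\varepsilon$ via $\L^{3}\simeq Nz^2$, invoke Proposition~\ref{prop:Airy} (through \eqref{err1}, noting the $\lambda$-independent terms cancel) to control the difference quotient, and win the factor $\varepsilon^{\alpha-2}$ from $\alpha>2$; the paper works with the difference quotient and Slutsky where you use tightness of $\sup_{\mathcal K}|\partial_\lambda\mho_N^1|$, which is the same input packaged slightly differently (just replace the mean value theorem by the fundamental theorem of calculus since $\mho_N^1$ is complex-valued).
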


\begin{proof}
Since $\alpha>2$, there is a $\delta>0$ and a numerical constant so that for any $\varepsilon\in(0,1]$, 
\[ 
\sup_{|z-w| \le \varepsilon/\sqrt{N\mathfrak{L}}}\bigg(\frac{|\bpsi_{N_T}(w)- \bpsi_{N_T}(z)|^\alpha}{N\mathfrak{L}|z-w|^{2}} \bigg)
\lesssim \epsilon^\delta \sup_{|\lambda| \le 2\varepsilon}\bigg| \frac{\bpsi_{N_T}\big(z\big(1 + \tfrac{\lambda}{2\mathfrak{L}^2}\big)\big)- \bpsi_{N_T}(z)}{\lambda} \bigg|^\alpha.
\]
By proposition~\ref{prop:Airy}, the random variable on the RHS converges in distribution as $N\to\infty$ (for a fixed $\varepsilon\in(0,1]$) and then as $\epsilon\to0$.
Indeed, using \eqref{err1}, we can replace the process $\bpsi_{N_T}\big(z\big(1 + \tfrac{\lambda}{2\mathfrak{L}^2}\big)\big)$ by  $\mho_N^1(\lambda, T;z)$ since all the other terms are independent of $\lambda$, then since $\L\to\infty$ as $N\to\infty$ and the limit process $\lambda\mapsto\boldsymbol{\varpi}_T(\lambda)$ is smooth on $\R$, we obtain  
\[
\sup_{|\lambda| \le 2\varepsilon}\bigg| \frac{\bpsi_{N_T}\big(z\big(1 + \tfrac{\lambda}{2\mathfrak{L}^2}\big)\big)- \bpsi_{N_T}(z)}{\lambda} \bigg|
\, \underset{N\to\infty}{\to}\,
\sup_{|\lambda| \le 2\varepsilon}\bigg| \frac{ \boldsymbol{\varpi}_T(\lambda)- \boldsymbol{\varpi}_T(0)}{\lambda} \bigg| 
\,\underset{\varepsilon\to0}{\to}\, |\partial_\lambda\boldsymbol{\varpi}_T(0)|. 
\]
By Slutsky's Lemma, this implies that in probability, 
\[
\limsup_{\varepsilon\to0}
\limsup_{N\to\infty}
\sup_{|z-w| \le \varepsilon/\sqrt{N\mathfrak{L}}}\bigg(\frac{|\bpsi_{N_T}(w)- \bpsi_{N_T}(z)|^\alpha}{N\mathfrak{L}|z-w|^{2}} \bigg) =0 .
\]
This proves the claim. 
\end{proof}

\section{Elliptic regime} \label{sec:ell}

The goal of this section is to prove that if the spectral parameter $z$ is inside the bulk, the random phase $\psi_{N}(z)$ which characterizes the characteristic polynomial, \eqref{polychar} can be decomposed as some deterministic terms, the martingale term $ \M_{N}(z)$ and an \emph{error} $\Omega_N^2(z)$ which forms a tight sequence of random variables as $N\to\infty$; see Proposition~\ref{prop:oneray1} below. 
The proof consists in analyzing the recursion for the sequence of Pr\"ufer phases $\{\psi_{n}(z)\}$ after the tuning point by using a linearization scheme.

\subsection{Elliptic recursion.}\label{sec:rec}
The goal of this section is to transform the $2\times2$ recursion for the characteristic polynomial in a scalar one using the transformation \eqref{xi1}.
Then, $\xi_n(z) \in\C$ does not vanish (because of the interlacing property of the zeros of $\Phi_n$) and $\xi_n(z) = e^{\bpsi_n(z)} $ according to \eqref{def:psi} (see Proposition~\ref{prop:prufer}). 
To describe the evolution of the process $\big\{\xi_{n}(z);z\in(-1,1) ,n >N_0(z)\big\}$, we rely on the following notation. 

\begin{definition} \label{def:ell}
Let  $z\in(-1,1)$ and $n>N_0(z)$, recall that 
$\theta_n(z) = \arccos\big(z\sqrt{N/n}\big)$ and, in terms of the random variables from Definition~\ref{def:noise}, define
\[
\delta_n(z) \coloneqq \frac1{\sqrt{n-Nz^2}}, \qquad 
\Delta_n(z) \coloneqq \frac{1}{2}\bigg(1-\frac{\delta_n(z)}{\delta_{n-1}(z)}\bigg), \qquad
Z_n' (z)\coloneqq \frac{\i\delta_n(z)}{\sqrt{2\beta}}   \big( \sqrt{\tfrac{n-1}{n}}  e^{\i \theta_{n-1}(z)} X_n + Y_n \big) e^{-\i \theta_{n}(z)}.
\]
\end{definition}

\begin{lemma} \label{lem:rec}
For $z\in(-1,1)$, the process $\{\xi_{n}(z)\}_{n>N_0(z)}$ is the (unique) solution of the equation 
\[
\xi_{n}e^{-\i\theta_n} 
=  \big(1-\Delta_n+   Z_n' \big) \xi_{n-1} + \big( \Delta_n  - \overline{Z_n'} e^{-2\i\theta_n} \big) \overline{\xi_{n-1}}  . 
\]
\end{lemma}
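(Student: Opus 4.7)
The plan is to derive the scalar recursion directly from the 3-term recurrence for $\{\Phi_n\}$ together with the inversion of \eqref{def:psi}, expressing $\xi_n = e^{\bpsi_n}$ as a $\C$-linear combination of $\xi_{n-1}$ and $\overline{\xi_{n-1}}$, and then checking that the coefficients match $1-\Delta_n + Z_n'$ and $\Delta_n - \overline{Z_n'}e^{-2\i\theta_n}$. First I would translate the tridiagonal structure of Definition~\ref{def:trimatrix} into a 3-term recurrence for $\{\Phi_n\}$: using $b_{n+1} = \sqrt{2}\,X_n$, $a_n^2 = \beta n + \sqrt{2\beta n}\,Y_n$, $w_{n+1}/w_n = \sqrt{4N/(n+1)}$, and the telescoping identity $2\cos\theta_{n+1}\sqrt{(n+1)/n} = 2\cos\theta_n$, it reads
\[
\sqrt{\tfrac{n+1}{n}}\,\Phi_{n+1} = \Bigl(2\cos\theta_n - X_n\sqrt{\tfrac{2}{\beta n}}\Bigr)\Phi_n - \Bigl(1 + Y_n\sqrt{\tfrac{2}{\beta n}}\Bigr)\Phi_{n-1}.
\]

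Next, from \eqref{polychar} one has $\Phi_{n-1} = \tfrac12(\xi_{n-1}+\overline{\xi_{n-1}})$, and rewriting \eqref{def:psi} at step $n-1$ via the elementary identity $\tfrac12 e^{-\i\theta} + \i\sin\theta = \tfrac12 e^{\i\theta}$ gives $\sqrt{n/(n-1)}\,\Phi_n = \tfrac{1}{2}(e^{\i\theta_{n-1}}\xi_{n-1} + e^{-\i\theta_{n-1}}\overline{\xi_{n-1}})$. Substituting these into \eqref{def:psi} at step $n$ (combined with the recurrence above) and multiplying by $e^{-\i\theta_n}$ expresses $\xi_n e^{-\i\theta_n}$ as a linear combination of $\xi_{n-1}$ and $\overline{\xi_{n-1}}$ with coefficients splitting cleanly into a deterministic part and a noise part. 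For the noise part, the prefactor $\frac{\i e^{-\i\theta_n}\sqrt{2/(\beta n)}}{2\sin\theta_n} = \frac{\i\delta_n e^{-\i\theta_n}}{\sqrt{2\beta}}$ (using $\sin\theta_n = 1/(\delta_n\sqrt{n})$) identifies the $\xi_{n-1}$-coefficient with $Z_n'$ and, by conjugation, the $\overline{\xi_{n-1}}$-coefficient with $-\overline{Z_n'}e^{-2\i\theta_n}$.

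The only genuine computation is the deterministic part, which reduces to evaluating $1 - \sqrt{(n-1)/n}\,e^{\i(\theta_n\pm\theta_{n-1})}$. The geometric identities $\sqrt{(n-1)/n}\cos\theta_{n-1} = \cos\theta_n$ and $\sqrt{(n-1)/n}\sin\theta_{n-1} = (1-2\Delta_n)\sin\theta_n$ (the second is just a rewriting of $\delta_n/\delta_{n-1} = 1 - 2\Delta_n$), followed by a standard trigonometric simplification, yield
\[
1 - \sqrt{\tfrac{n-1}{n}}\,e^{\i(\theta_n+\theta_{n-1})} = -2\i(1-\Delta_n)\sin\theta_n\, e^{\i\theta_n}, \qquad 1 - \sqrt{\tfrac{n-1}{n}}\,e^{\i(\theta_n-\theta_{n-1})} = -2\i\Delta_n\sin\theta_n\, e^{\i\theta_n},
\]
so that multiplying by $\i e^{-\i\theta_n}/(2\sin\theta_n)$ produces precisely $1-\Delta_n$ and $\Delta_n$. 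Uniqueness is immediate since the recursion is $\C$-linear in $(\xi_{n-1},\overline{\xi_{n-1}})$ and the value $\xi_{N_0(z)+1}$ is fixed by $(\Phi_{N_0(z)+1},\Phi_{N_0(z)+2})$ through \eqref{def:psi}. The main obstacle is purely bookkeeping: keeping the index shifts between the transfer-matrix step $n\to n+1$ and the noise labels $(X_n,Y_n)$ consistent, and spotting the two trigonometric identities above that collapse the apparent drift of the rescaled recurrence into the compact quantities $1-\Delta_n$ and $\Delta_n$.
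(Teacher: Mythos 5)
Your proof is correct, and it takes a genuinely different route from the paper's. The paper stays in the $2\times 2$ transfer-matrix picture: starting from \eqref{rec2}, it splits $T_n^\beta=T_n^\infty-\text{noise}$ with $T_n^\infty=V_n\Lambda_nV_n^{-1}$ as in \eqref{conj}, computes $V_n^{-1}V_{n-1}=\mathrm{I}-\Delta_n\big(\begin{smallmatrix}1&-1\\-1&1\end{smallmatrix}\big)$ (this is where $\Delta_n$ enters, cf.\ \eqref{def:Delta}) together with the conjugated noise via \eqref{V-1}, and reads off the first row. You instead work scalar-wise: you derive the rescaled 3-term recurrence for $\Phi_n$ (your formula is correct), invert \eqref{def:psi} at step $n-1$ to get $\Phi_{n-1}=\tfrac12(\xi_{n-1}+\overline{\xi_{n-1}})$ and $\sqrt{n/(n-1)}\,\Phi_n=\tfrac12\big(e^{\i\theta_{n-1}}\xi_{n-1}+e^{-\i\theta_{n-1}}\overline{\xi_{n-1}}\big)$, substitute into \eqref{def:psi} at step $n$, and match coefficients. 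I checked the two trigonometric identities you rely on: with $\sqrt{(n-1)/n}\,\cos\theta_{n-1}=\cos\theta_n$, $\delta_n/\delta_{n-1}=1-2\Delta_n$ and $1-e^{\i\theta}\cos\theta=-\i\sin\theta\,e^{\i\theta}$ they indeed give $1-\sqrt{(n-1)/n}\,e^{\i(\theta_n\pm\theta_{n-1})}=-2\i\,(1-\Delta_n)\sin\theta_n e^{\i\theta_n}$ resp.\ $-2\i\,\Delta_n\sin\theta_n e^{\i\theta_n}$, and the noise prefactor $\tfrac{\i e^{-\i\theta_n}}{2\sin\theta_n}\sqrt{2/(\beta n)}=\tfrac{\i\delta_n e^{-\i\theta_n}}{\sqrt{2\beta}}$ reproduces $Z_n'$ and $-\overline{Z_n'}e^{-2\i\theta_n}$ exactly as in Definition~\ref{def:ell}. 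The two arguments are the same algebra in different coordinates: the paper's version makes the origin of $\Delta_n$ (the change of eigenbasis between consecutive steps) structurally transparent and reuses \eqref{xi1}, \eqref{conj}, \eqref{V-1}, which are needed elsewhere; yours is more elementary and self-contained, and en route re-derives the relation $\Phi_n=\Re\xi_n$ of \eqref{polychar}. Two harmless remarks: the recursion only involves well-defined quantities when $n-1>N_0(z)$, so your choice of $\xi_{N_0(z)+1}$ as the initial datum for the uniqueness statement is the right one; and the ``telescoping identity'' $2\cos\theta_{n+1}\sqrt{(n+1)/n}=2\cos\theta_n$ you invoke, while true, is not actually needed to obtain the 3-term recurrence.
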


\begin{proof}
According to  \eqref{rec1} and \eqref{xi1}, the process $\{\xi_{n}(z)\}_{n>N_0(z)}$ satisfies the recursion
\begin{equation} \label{rec2}
\begin{pmatrix}
\xi_{n} \\
\overline{\xi_{n}}
\end{pmatrix}
= \sqrt{\tfrac{4N}{n}} V_{n}^{-1}  T_{n}^\beta  V_{n-1} \begin{pmatrix}
\xi_{n-1} \\
\overline{\xi_{n-1}}
\end{pmatrix}.
\end{equation}
Using \eqref{mnoise} and that $T_{n}^\infty = V_n \Lambda_n V_{n}^{-1}$, we split
\[\begin{aligned}
V_{n}^{-1}  T_{n}^\beta  V_{n-1}
&= V_{n}^{-1}T_{n}^\infty(z) V_{n-1}  -  \frac{1}{\sqrt{2\beta N}}  V_{n}^{-1}\begin{pmatrix} X_n & \sqrt{\frac{n}{4N}} Y_n\\  0 & 0  \end{pmatrix} V_{n-1} \\
&= \Lambda_n V_{n}^{-1}V_{n-1}  -  \frac{1}{\sqrt{2\beta N}}  V_{n}^{-1} \begin{pmatrix} Z_n'' & \overline{Z_n''} \\  0 & 0  \end{pmatrix}
\end{aligned}\]
where $Z_n'' = \big(\begin{smallmatrix} X_n \\  \sqrt{\frac{n}{4N}} Y_n\end{smallmatrix}\big)\cdot \big(\begin{smallmatrix}   \sqrt{\frac{n-1}{4N}} e^{\i\theta_{n-1}} \\ 1\end{smallmatrix}\big) = \sqrt{\frac{n}{4N}}\big(X_n \sqrt{\frac{n}{n-1}} e^{\i\theta_{n-1}} +Y_n\big)$. 
This expression follows from \eqref{conj} and we also have
\begin{equation}
\label{def:Delta}
V_{n}^{-1}  V_{n-1} 
= \mathrm{I} - \Delta_n 
\begin{pmatrix} 1 & -1 \\ -1 & 1 \end{pmatrix}.
\qquad\qquad
\Delta_n =\frac{\big(\sqrt{\frac{n}{4N}}e^{\i \theta_n}-\sqrt{\frac{n-1}{4N}}e^{\i \theta_{n-1}}\big) }{\i\sqrt{\frac nN-z^2}}.
\end{equation}
We easily check that this expression for $\Delta_n$ matches with that of Definition~\ref{def:ell}.
By \eqref{V-1}, this implies that
\[
\sqrt{\tfrac{4N}{n}} V_{n}^{-1}  T_{n}^\beta  V_{n-1} = \begin{pmatrix}  e^{\i\theta_n}  &0 \\  0 & e^{-\i\theta_n}\end{pmatrix} \bigg(\mathrm{I} - \Delta_n 
\begin{pmatrix} 1 & -1 \\ -1 & 1 \end{pmatrix}\bigg)
+ \frac{\i\delta_n}{\sqrt{2\beta}} \sqrt{\tfrac{4N}{n}}\begin{pmatrix} Z_n'' & \overline{Z_n''} \\  -Z_n'' &  -\overline{Z_n''}  \end{pmatrix}
\]
By \eqref{rec2}, multiplying by $\big(\begin{smallmatrix}  e^{\i\theta_n}  &0 \\  0 & e^{-\i\theta_n}\end{smallmatrix}\big)^{-1}$ on the left, the first row of this matrix gives the evolution for $\{\xi_{n}(z)\}_{n>N_0(z)}$ with $Z_n' =\i \delta_n e^{-\i\theta_n} \sqrt{\tfrac{4N}{n}}  Z_n'' /\sqrt{2\beta}
= \i \delta_n e^{-\i\theta_n} \big(X_n \sqrt{\frac{n}{n-1}} e^{\i\theta_{n-1}} +Y_n\big)/\sqrt{2\beta}
$
according to Definition~\ref{def:ell}.
\end{proof}

Then, one can approximate the evolution of the \emph{complex phase} $\{\bpsi_{n}(z)\}_{n>N_0(z)}$ by linearizing the evolution from Lemma~\ref{lem:rec}. 
In particular, the process $\xi_n(z)$  is subject to a large deterministic rotation (neglecting  both $\Delta_n, Z_n'$, noe has $\xi_{n} \approx e^{\i\theta_n}\xi_{n-1}$), this suggest to define a new process;  for $z\in(-1,1)$ and $n > m > N_0(z)$,
\begin{equation} \label{psi1}
\tilde{\psi}_{n,m}(z) \coloneqq \bpsi_n(z)-\bpsi_m(z) - \i\vartheta_{n,m}(z) 
\qquad\qquad
\vartheta_{n,m}(z) \coloneqq {\textstyle \sum_{k=m+1}^{n}} \theta_{k}(z) . 
\end{equation}

For our analysis, it will be crucial that the random variables  $\{Z_n'(z) : n > N_0(z)\}$ are independent, centered, sub-Gaussian, and we record the following estimates. 

\begin{lemma}\label{lem:Z}
For  $z\in(-1,1)$, let $Z_n(z) = \tfrac{X_n + Y_n  e^{-\i \theta_{n}(z)}}{\sqrt{2}}$ for $n>N_0(z)$ as in Definiton~\ref{def:Z}, then one has
\[
Z_n'(z)   =  \tfrac{\i \delta_n(z) Z_n(z)}{\beta^{1/2}} + \O(\delta_n^3|X_n|)  , \qquad 
\E |Z_n'(z)|^2 = \tfrac{\delta_n(z)\E |Z_n(z)|^2}{\beta} +\O(\delta_n^4) ,\qquad 
\E Z_n^{'2}(z) = -\tfrac{\delta_n(z)\E Z_n^{2}(z)}{\beta}  +\O(\delta_n^4) 
\]
and 
\begin{equation} \label{def:s}
\E |Z_n(z)|^2 =1 , \qquad\qquad
\E Z_n^2(z) =  \frac{1+e^{-2\i\theta_{n}(z)}}{2}  =  \cos\theta_n(z) e^{-\i\theta_{n}(z)}
\end{equation}
Moreover, one has
\[
0< \Delta_n - \delta_n^2/4 \le  \delta_n^4 . 
\]

\end{lemma}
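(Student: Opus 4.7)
\smallskip

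\textbf{Plan.} All four assertions follow from straightforward Taylor expansions in the small parameter $\delta_n(z) = (n - Nz^2)^{-1/2}$ (note that $n > N_0(z)$ guarantees $\delta_n$ is well-defined and bounded by $1$ after the parabolic window), combined with the moment hypotheses from Definition~\ref{def:noise}, namely $\E X_n = \E Y_n = 0$, $\E X_n^2 = \E Y_n^2 = 1$, $X_n \perp Y_n$, and $\|X_n\|_2, \|Y_n\|_2 \le \mathfrak{S}$ (which gives the higher moment control $\E X_n^4 \lesssim 1$).

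\smallskip

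\textbf{Step 1 (phase difference).} The key preliminary estimate is $|\theta_n(z) - \theta_{n-1}(z)| = \O(\delta_n^2)$. Indeed, differentiating $\cos\theta_k = z\sqrt{N/k}$ in $k$ and using $\sin\theta_k = \sqrt{(k-Nz^2)/k} = 1/(\delta_k \sqrt{k})$ gives
\[
|\theta_n - \theta_{n-1}| \lesssim \frac{|z|\sqrt{N}\,\delta_n}{n} \lesssim \frac{\delta_n}{\sqrt{n}} \le \delta_n^2,
\]
where the last bound follows from $|z|\sqrt{N/n}\le 1$ (since $n>Nz^2$) and $1/\sqrt{n} \le \delta_n$. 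Combined with $\sqrt{(n-1)/n} = 1 + \O(1/n) = 1 + \O(\delta_n^2)$, this yields
\[
\sqrt{\tfrac{n-1}{n}}\, e^{\i(\theta_{n-1}-\theta_n)} = 1 + \O(\delta_n^2).
\]

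\smallskip

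\textbf{Step 2 (identification of $Z_n'$ and its moments).} Substituting into the defining formula of $Z_n'(z)$ from Definition~\ref{def:ell},
\[
Z_n'(z) = \tfrac{\i \delta_n}{\sqrt{2\beta}}\bigl(X_n(1+\O(\delta_n^2)) + Y_n e^{-\i\theta_n}\bigr) = \tfrac{\i \delta_n}{\sqrt{\beta}} Z_n(z) + \O(\delta_n^3 |X_n|),
\]
which is the first claim. Squaring and taking expectations, the cross-term with the error is of order $\delta_n^{3+1} (\E |Z_n|\cdot|X_n|) \lesssim \delta_n^4$ by Cauchy--Schwarz and the moment bounds, while the error squared contributes $\O(\delta_n^6)$; this gives the asserted formulas for $\E |Z_n'|^2$ and $\E Z_n'^2$ (up to the power of $\delta_n$ in the leading term, which appears to come out to $\delta_n^2/\beta$ from the squaring, so the stated $\delta_n$ should be double-checked against conventions elsewhere). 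The moment identities $\E |Z_n|^2 = 1$ and $\E Z_n^2 = (1+e^{-2\i\theta_n})/2 = e^{-\i\theta_n}\cos\theta_n$ follow by expanding $Z_n = (X_n + Y_n e^{-\i\theta_n})/\sqrt{2}$, using $\E X_n^2 = \E Y_n^2 = 1$ and $\E X_n Y_n = 0$.

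\smallskip

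\textbf{Step 3 ($\Delta_n$ expansion).} Setting $a = n - Nz^2$, one has $\delta_n / \delta_{n-1} = \sqrt{(a-1)/a} = \sqrt{1-1/a}$. The Taylor expansion $\sqrt{1-x} = 1 - x/2 - x^2/8 - \sum_{k\ge 3} c_k x^k$ with positive coefficients $c_k$ on $[0,1)$ gives
\[
\Delta_n = \tfrac12\bigl(1 - \sqrt{1-1/a}\bigr) = \tfrac{1}{4a} + \tfrac{1}{16a^2} + \sum_{k\ge 3} \tfrac{c_k}{2 a^k} = \tfrac{\delta_n^2}{4} + \tfrac{\delta_n^4}{16} + \O(\delta_n^6),
\]
so $\Delta_n - \delta_n^2/4 > 0$, and, since $\delta_n \le 1$ in the elliptic regime, the tail series is dominated by a geometric with ratio $\delta_n^2$, giving $\Delta_n - \delta_n^2/4 \le \delta_n^4$ for $n$ sufficiently far from the turning point (and absolutely once $a\ge 2$, say).

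\smallskip

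\textbf{Main obstacle.} The computation is entirely routine; the only nontrivial input is confirming $|\theta_n - \theta_{n-1}| = \O(\delta_n^2)$, which critically uses the constraint $|z\sqrt{N/n}| \le 1$ valid throughout the elliptic regime $n > N_0(z)$. Everything else is bookkeeping of Taylor tails and second-moment estimates against the sub-Gaussian control of Definition~\ref{def:noise}.
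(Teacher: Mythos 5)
Your proof is correct and follows essentially the same route as the paper, which simply skips these elementary computations while citing exactly the ingredients you verify in Steps 1 and 3 (namely $|\theta_{n}-\theta_{n-1}|\lesssim\delta_n^2$, the monotonicity $0<\delta_n-\delta_{n+1}\le\delta_n^2/2$, and $\delta_n^2\ge n^{-1}$). Your remark in Step 2 is also right: the leading terms of $\E|Z_n'|^2$ and $\E Z_n'^{2}$ should carry $\delta_n^2/\beta$ rather than $\delta_n/\beta$ — this is a typo in the statement, as one confirms from how these quantities enter \eqref{zeta3} and \eqref{M2}.
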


\begin{proof}
We skip this elementary computations -- these estimates follow from the fact that the angle $|\theta_{n+1}- \theta_n| \le \delta_n^2/2 $, and the parameters $\delta_n$ satisfy  $0<\delta_n -\delta_{n+1} \le \delta_n^2/2$ and $\delta_n^2 \ge n^{-1}$. 
\end{proof}


\subsection{Linearization.}
To obtain asymptotics for $\{\tilde{\psi}_{n,m}(z)\}_{n\ge m}$, we proceed to linearize the evolution from Lemma~\ref{lem:rec} using that $\delta_n(z)$ are decreasing and small if $n$ is sufficiently far from the turning point $N_0(z)$. 
In particular, this requires to truncate the noise in order to control the linearization errors.
Fix a small $0<\epsilon < 1/9$ and define the events; for $m\in\N_{\ge \mathfrak{K}}$,  
\begin{equation} \label{eventA}
\A_m \coloneqq \big\{  |X_n|^2+|Y_n|^2 \le \beta n^\epsilon ;\, \forall\, n \ge m\big\} . 
\end{equation}

The parameter $\epsilon$ will play no role in the sequel, so we do not emphasize its dependence in $\A_m$.
We record that under the assumptions of Definition~\ref{def:noise}, by a direct union bound, there exists a constant $c= c(\mathfrak{S},\epsilon)$ so that 
\begin{equation} \label{PA}
\P[\A_m^{\rm c}] \lesssim \exp(-c \beta  m^{2\epsilon}) .
\end{equation}



\smallskip

The linearization errors are controlled deterministically and uniformly on the event \eqref{eventA}. 
In particular, Lemma~\ref{lem:linear} establishes that the process  $\{\tilde{\psi}_{n,m}(z)\}_{n\ge m}$  is \emph{varying slowly}, meaning that  $|\tilde{\psi}_{n+1,n}(z)|\ll 1$  away from the turning point of the recurrence.
We obtain the following decomposition:

\begin{lemma}[Linearization] \label{lem:linear}
Fix $z=z(N)\in(-1,1)$ and $N\in\N$ and assume that $m\ge N_1(z)$. 
On the event $\A_m$, it holds for $n\ge m$, 
\[
\tilde{\psi}_{n,m}(z)  
=  - \frac{\mathbf{Q}_{n,m}(z)}{4} - \frac{\M_{n,m}(z)}{\sqrt\beta}  + \frac{[\M_{n,m}(z)]+\mathbf{L}_{n,m}(z)}{2\beta} + \O(m^{(3\epsilon-1)/2})
\]
where $\{\M_{n,m}(z)\}_{n\ge m}, \{\mathbf{L}_{n,m}(z)\}_{n\ge m}$ are martingales and $\mathbf{Q}_{n,m}(z)\coloneqq\sum_{k=m+1}^n \delta_k^2(z)\big(1 -  e^{-2\i \phi_{k-1}(z)}\big)$. Moreover,  $\{\M_{n,m}(z)\}_{n\ge m}$ is as in terms of Definition~\ref{def:GW} and $\{\mathbf{L}_{n,m}(z)\}_{n\ge m}$ satisfies the  tail-bound \eqref{Lest} below. 
\end{lemma}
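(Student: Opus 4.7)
I plan to take logarithms in the multiplicative recursion of Lemma~\ref{lem:rec} and Taylor expand in the small noise. Writing $\xi_k = e^{\bpsi_k}$ and using $\overline{\xi_{k-1}}/\xi_{k-1} = e^{-2\i\phi_{k-1}}$, the recursion becomes $\tilde{\psi}_{k,k-1} = \log(1+W_k)$ where
\[
W_k \coloneqq -\Delta_k\bigl(1 - e^{-2\i\phi_{k-1}}\bigr) + Z_k' - \overline{Z_k'}\,e^{-2\i(\theta_k + \phi_{k-1})}.
\]
The hypothesis $k \ge m \ge N_1(z)$ forces $\delta_k^2 \le 1/(k - Nz^2) \le \mathfrak{L}(z)^{-1}$, and on $\A_m$ the estimates of Lemma~\ref{lem:Z} yield $|Z_k'| \lesssim \delta_k k^{\epsilon/2}$; hence $|W_k|$ is uniformly small. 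Taylor expanding $\log(1+W_k) = W_k - W_k^2/2 + \O(|W_k|^3)$ and telescoping,
\[
\tilde{\psi}_{n,m} = \sum_{k=m+1}^n W_k - \tfrac12 \sum_{k=m+1}^n W_k^2 + R_{n,m}, \qquad R_{n,m} = \sum_{k=m+1}^n \O(|W_k|^3).
\]

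Next I isolate the linear term. Using $\Delta_k = \delta_k^2/4 + \O(\delta_k^4)$ from Lemma~\ref{lem:Z}, its contribution is $-\mathbf{Q}_{n,m}/4$ plus error $\O(\sum \delta_k^4)$. Using $Z_k' = \i\delta_k Z_k/\sqrt\beta + \O(\delta_k^3|X_k|)$, the noise contribution becomes
\[
\frac{\i}{\sqrt\beta}\sum_k \delta_k\bigl( Z_k + \overline{Z_k}\,e^{-2\i(\theta_k + \phi_{k-1})}\bigr) + \text{error},
\]
which, using $\sqrt{k}\sqrt{Nz^2/k - 1} = \i/\delta_k$ in the elliptic regime, matches the structure of $\M = \G + \overline{\W}$ from Definition~\ref{def:GW} and gives $-\M_{n,m}/\sqrt\beta$ up to lower-order remainders.

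For the quadratic term, I expand
\[
W_k^2 \approx (Z_k')^2 - 2 Z_k'\overline{Z_k'}\,e^{-2\i(\theta_k + \phi_{k-1})} + (\overline{Z_k'})^2\,e^{-4\i(\theta_k + \phi_{k-1})},
\]
the $\Delta_k$ cross terms being of smaller order, and decompose $W_k^2 = \E[W_k^2|\F_{k-1}] + (W_k^2 - \E[W_k^2|\F_{k-1}])$. Using $\E|Z_k'|^2 = \delta_k^2/\beta + \O(\delta_k^4)$ and $\E(Z_k')^2 = -\delta_k^2 \cos\theta_k e^{-\i\theta_k}/\beta + \O(\delta_k^4)$ from Lemma~\ref{lem:Z}, together with the explicit increments of $\M$, the predictable part $-\tfrac12 \sum\E[W_k^2|\F_{k-1}]$ is identified with $[\M_{n,m}]/(2\beta)$. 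The martingale-difference residual defines $\mathbf{L}_{n,m}/(2\beta)$, with increments of the explicit form
\[
\delta_k^2\bigl[(Z_k^2 - \E Z_k^2) + 2(|Z_k|^2 - 1)\,e^{-2\i(\theta_k+\phi_{k-1})} + (\overline{Z_k}^2 - \E\overline{Z_k}^2)\,e^{-4\i(\theta_k+\phi_{k-1})}\bigr]/\beta.
\]

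Finally, I control errors. On $\A_m$, $|W_k|^3 \lesssim \delta_k^3 k^{3\epsilon/2}$; a careful summation based on $\delta_k \le (k - Nz^2)^{-1/2}$ and $Nz^2 \le m$ yields $|R_{n,m}| = \O(m^{(3\epsilon-1)/2})$, with the approximation errors from the expansions of $\Delta_k$ and $Z_k'$ of the same or smaller order. The sub-Gaussian tail bound on $\mathbf{L}_{n,m}$ will follow from Burkholder--Davis--Gundy applied to its martingale increments, whose conditional variance is summable at rate $\delta_k^4/\beta^2$. The main obstacle is the algebraic identification in the quadratic step: tracing through the cross terms of $(Z_k' - \overline{Z_k'}\,e^{-2\i(\theta_k + \phi_{k-1})})^2$ and verifying that the predictable part matches the bracket $[\M_{n,m}]/(2\beta)$ exactly requires careful bookkeeping of the phase factors $e^{-2\i(\theta_k+\phi_{k-1})}$ and $e^{-4\i(\theta_k+\phi_{k-1})}$ against the specific second moments $\E Z_k^2 = \cos\theta_k e^{-\i\theta_k}$ and $\E|Z_k|^2 = 1$ from Lemma~\ref{lem:Z}.
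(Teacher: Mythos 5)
Your proposal follows essentially the same route as the paper's proof: divide the recursion of Lemma~\ref{lem:rec} by $\xi_{k-1}$ so that $\overline{\xi_{k-1}}/\xi_{k-1}=e^{-2\i\phi_{k-1}}$, take logarithms on $\A_m$ where the noise is uniformly small, Taylor-expand to second order, use Lemma~\ref{lem:Z} to replace $Z_k'$ by $\i\delta_k Z_k/\sqrt\beta$ and $\Delta_k$ by $\delta_k^2/4$, thereby identifying the drift $-\mathbf{Q}_{n,m}/4$ and the martingale $-\M_{n,m}/\sqrt\beta$, split the quadratic term into its predictable part $[\M_{n,m}]/(2\beta)$ and the martingale remainder $\mathbf{L}_{n,m}/(2\beta)$, and sum the cubic errors on the truncation event — exactly the paper's argument. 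The one point to repair is the tail bound for $\mathbf{L}_{n,m}$: its increments are centered squares of sub-Gaussian variables, hence only sub-exponential, so the Bernstein-type bound \eqref{Lest} (with the $\lambda^2/(1+\lambda)$ exponent) is obtained from the martingale concentration inequality of Proposition~\ref{lem:conc1} applied with $\|\mathbf{L}_{n,n-1}\|_1\lesssim\delta_n^2$ and $\sum_{k>m}\delta_k^4\lesssim\delta_m^2$, not from a sub-Gaussian/BDG argument (and mind the constants: the increment of $\mathbf{L}_{n,m}$ itself is $\delta_k^2\bigl[(Z_k^2-\E Z_k^2)+2(|Z_k|^2-1)e^{-2\i(\theta_k+\phi_{k-1})}+(\overline{Z_k}^2-\E\overline{Z_k}^2)e^{-4\i(\theta_k+\phi_{k-1})}\bigr]$, with the $1/(2\beta)$ kept outside).
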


\begin{proof} 
Let $\tilde{\xi}_{n,m}(z)   \coloneqq \exp(\tilde{\psi}_{n,m}(z)) =  \xi_n(z) e^{-\i \vartheta_{n,m}(z)} \xi^{-1}_m(z)$  for $z\in(-1,1)$ and $n > m > N_0(z)$. 
By Lemma~\ref{lem:rec}, this process follows the evolution:
\begin{equation} \label{ell_rec}
\tilde{\xi}_{n,m}
=  \big(1-\Delta_n+   Z_n' \big) \tilde{\xi}_{n-1,m}+ \big( \Delta_n - \overline{Z_n'} e^{-2\i\theta_n} \big) e^{-2\i \phi_{n-1}}  \tilde{\xi}_{n-1,m} . 
\end{equation}
Observe that the noise satisfies  $|Z_n'(z)|   \le \delta_n(z) \sqrt{\tfrac{|X_n|^2+|Y_n|^2}{2\beta}}$ and, for any $\epsilon<1/2$, the map $n\mapsto \delta_n(z) n^{\epsilon}$ is decreasing  for $n>N_0(z)$. 
Then, on $\A_m$;
\[
\sup\big\{|Z_n'(z)| ; n\ge m  \big\} \le   m^{\epsilon/2} \delta_m(z) .
\]
It follows that if $m$ is sufficiently large, on $\A_m$;   
\begin{equation} \label{trunc1}
\sup\big\{|Z_n'(z)| ; n\ge m  ; z\in (-1,1) \text{ with } m\ge N_1(z) ; N\in\N\big\}  \le \epsilon,
\end{equation}
where $\epsilon$ is arbitrary (indeed, the condition $\{ m\ge N_1(z) \}$ guarantees that $\delta_m(z) \le m^{-1/6}$). 

Then, on $\A_m$, we can linearize  the RHS of \eqref{ell_rec} (the deterministic term $\Delta_n(z)$ are also small for $m\ge N_1(z)$ and $m$ sufficiently), so taking the principal branch of  $\log\big(\tilde{\xi}_{n,m}/\tilde{\xi}_{n-1,m}\big)$, by a Taylor expansion, we obtain
\begin{equation} \label{zeta3}
\begin{aligned}
\log\left(\tfrac{\tilde{\xi}_{n,m}}{\tilde{\xi}_{n-1,m}}\right)
&=\log\left( 1-\Delta_n+  Z_n'+ \big( \Delta_n - \overline{Z_n'}e^{-2\i\theta_n}  \big) e^{-2\i \phi_{n-1}} \right)
\\
& =\frac{\i \delta_n}{\sqrt\beta} \left( {Z_n} + \overline{Z_n}e^{-2\i \theta_{n}-2\i \phi_{n-1}}\right)
- \delta_n^2\frac{1 -  e^{-2\i \phi_{n-1}} }{4}
+ \delta_n^2\frac{\big( Z_{n}  + \overline{Z_{n}}  e^{-2\i \theta_{n}-2\i \phi_{n-1}}  \big)^2}{2\beta} 
+ \operatorname{EL}_n
\end{aligned}
\end{equation}
where the errors $\operatorname{EL}_n$ are defined implicitly by \eqref{zeta3}.
Here, we used Lemma~\ref{lem:Z} to replace the random variables $Z_n'$ by $Z_n$ in  \eqref{zeta3} and we check that for $n\ge m$, 
\begin{equation} \label{EL}
| \operatorname{EL}_n(z)|  \lesssim \delta_n(z)^3\big(1+|X_n|+|Y_n| \big)^3 .
\end{equation}
Thus, on $\A_m$,
\[
\sum_{n\ge m}  |\operatorname{EL}_n(z)|  \lesssim m^{3\epsilon/2} \delta_m(z) 
\]
and, choosing $\epsilon$ is small enough, the RHS converges to 0 as $m\to\infty$.

Summing \eqref{zeta3} and using that $\tilde{\psi}_{n,m} = \log(\tilde{\xi}_{n,m})$ with $\tilde{\psi}_{m,m}=0$, we obtain
\[
\tilde{\psi}_{n,m}   
=  - \frac{\M_{n,m}}{\sqrt\beta} - \frac{\mathbf{Q}_{n,m}}{4}   + \frac{\mathbf{S}_{n,m}}{2\beta} +\underset{m\to\infty}{\o(1)}
\]
where the martingale part is  $\M_{n,m} = -\i \sum_{k=m+1}^n \delta_{k} \big(Z_{k} + \overline{Z_{k}} e^{-2\i(\theta_k+\phi_{k-1})} \big) =  \G_n(z) + \overline{\W_n(z)}$ according to Definition~\ref{def:GW},  
and we define
\[
\mathbf{Q}_{n,m} \coloneqq \sum_{m<k\le n}  \delta_k^2\big(1 -  e^{-2\i \phi_{k-1}}  \big) , \qquad 
\mathbf{S}_{n,m} \coloneqq \sum_{m<k\le n} \ \delta_k^2 \big( Z_{k}  +\overline{Z_{k}}  e^{-2\i(\theta_k+\phi_{k-1})}  \big)^2 . 
\]

We can decompose
\[
\mathbf{S}_{n,m} = \mathbf{L}_{n,m} + [\M_{n,m}]
\]
where $\{\mathbf{L}_{n,m}\}_{n\ge m}$ is also a martingale and $\{[\M_{n,m}]\}_{n\ge m}$ denotes the bracket of the (complex) martingale $\{\M_{n,m}\}_{n\ge m}$; 
\begin{equation} \label{M2}
[\M_{n,m}]\coloneqq \sum_{m<k\le n} \delta_k^2 \E\big[ \big( Z_{k}  + \overline{Z_{k}}  e^{-2\i(\theta_k+\phi_{k-1})}  \big)^2 | \F_{k-1} \big] . 
\end{equation}

Finally, under the assumptions of Definition~\ref{def:noise}, the increments of the second martingale satisfy
$\|\mathbf{L}_{n,n-1}\|_{1}  \lesssim  \delta_{n}^2$ using the norm defined in the  Appendix~\ref{sec:concentration}. 
Hence, using that $\sum_{k>m} \delta_{n}^4 \lesssim \delta_{m}^2$, by Proposition~\ref{lem:conc1}, it holds for any $\lambda>0$, 
\begin{equation} \label{Lest}
\P\left[\max_{n>m}|  \mathbf{L}_{n,m}(z) |   \ge \lambda\right] \le 2 \exp\bigg(- \frac{c\lambda^2}{1+\lambda} \delta_{m}^{-2}(z) \bigg).
\qedhere
\end{equation}
\end{proof}

\subsection{Random oscillatory sums.} \label{sec:osc}
Recall that according to \eqref{psi1}, the phase $\phi_{n,m} =\Im(\bpsi_{n,m}) = \vartheta_{n,m}+ \boldsymbol{\chi}_{n,m}$. 
The goal of this section is to prove that certain oscillatory sums involving the phase $\{\phi_{n,m}(z)\}$ are small when $m\gg 1$, because of the fast variation of the deterministic part of the phase $\vartheta_{n,m}(z)$.

\paragraph{Continuity.}
The first step consists in showing that  the random part of the phase $\{\boldsymbol{\chi}_{n,m}(z)\}_{n\ge m}$ varies slowly as $n$ increases. 
The result is formulated  along suitable blocks and it will be crucial in the sequel of this paper.

\begin{proposition}[Smoothness of the phase]\label{lem:varphase}
Fix $z=z(N)\in(-1,1)$ and $N\in\N$.  Consider an increasing sequence $\{n_k\}_{k=T}^\infty$ such that $\delta_{n_k}^2(z)\cdot  (n_{k+1}-n_k) \lesssim 1/k $ for $k\in\N_{\ge T}$ and $n_T \ge N_1(z)$.  
Then, for $R\ge 1$, define the event
\begin{equation} \label{Asmooth}
\A_{\chi}(R,T; z) \coloneqq 
\bigcap_{k\ge T} \Big\{ \max_{\ell\in[n_k+1,n_{k+1}]} |   \boldsymbol{\chi}_{\ell,n_k}(z) |   \le  R k^{\frac{\epsilon-1}{2}}/T^{\frac\epsilon2} \Big\} .
\end{equation}
There exists a constant $c>0$ so that 
\begin{equation} \label{PAchi}
\P\big[ \A_{\chi}^{\rm c} (R,T; z) \cap \A_{n_T} \big] \lesssim\exp(-cR). 
\end{equation}
\end{proposition}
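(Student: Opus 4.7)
The strategy is to apply the linearization decomposition of Lemma~\ref{lem:linear} block by block and then control each term by a combination of deterministic bounds and martingale concentration. Fix $k\ge T$. Since $n_k\ge n_T\ge N_1(z)$, Lemma~\ref{lem:linear} applied with $m=n_k$ gives, on $\A_{n_T}\subset \A_{n_k}$, for every $\ell\in(n_k,n_{k+1}]$,
\[
\tilde\psi_{\ell,n_k}(z)= -\tfrac{1}{4}\mathbf{Q}_{\ell,n_k}(z) -\tfrac{1}{\sqrt\beta}\M_{\ell,n_k}(z) +\tfrac{1}{2\beta}\bigl([\M_{\ell,n_k}(z)]+\mathbf{L}_{\ell,n_k}(z)\bigr) +\O(n_k^{(3\epsilon-1)/2}),
\]
so that $\boldsymbol{\chi}_{\ell,n_k}(z)=\Im\tilde\psi_{\ell,n_k}(z)$ will be under control as soon as each of the four ingredients on the right is.

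The drift-type pieces are handled deterministically. Since $\delta_n(z)$ is decreasing past the turning point, the hypothesis $\delta_{n_k}^2(n_{k+1}-n_k)\lesssim 1/k$ immediately yields
\[
|\mathbf{Q}_{\ell,n_k}|\le \sum_{k'=n_k+1}^{n_{k+1}}\delta_{k'}^2\lesssim \tfrac1k, \qquad \big|[\M_{\ell,n_k}]\big|\lesssim \tfrac1k,
\]
the second estimate using the explicit form \eqref{M2} together with Lemma~\ref{lem:Z}. Because $\epsilon<1/9$ one has $1/k\le k^{(\epsilon-1)/2}$, and the assumption on $(n_k)$ forces $n_k$ to grow at least linearly in $k$, so the linearization error $\O(n_k^{(3\epsilon-1)/2})$ is also comfortably smaller than the target threshold $Rk^{(\epsilon-1)/2}/T^{\epsilon/2}$.

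For the two martingale pieces I would apply a sub-Gaussian Freedman/Bernstein-type maximal inequality from Appendix~\ref{sec:concentration}, exploiting that on $\A_{n_T}$ the increments of $\M$ and $\mathbf{L}$ have sub-Gaussian Orlicz norm bounded by $\lesssim\delta_{k'}$ and $\lesssim\delta_{k'}^2$ respectively (by Definition~\ref{def:noise} and Lemma~\ref{lem:Z}). The conditional quadratic variations over one block are therefore
\[
\sum_{k'=n_k+1}^{n_{k+1}}\delta_{k'}^2\lesssim \tfrac1k, \qquad \sum_{k'=n_k+1}^{n_{k+1}}\delta_{k'}^4\lesssim \tfrac{\delta_{n_k}^2}{k}\le \tfrac1k,
\]
so Doob's inequality produces, for $\lambda$ in a sub-Gaussian range,
\[
\P\Big[\max_{\ell\in(n_k,n_{k+1}]}\bigl(|\M_{\ell,n_k}|+|\mathbf{L}_{\ell,n_k}|\bigr)\ge \lambda;\,\A_{n_T}\Big]\lesssim \exp\bigl(-c\lambda^2 k\bigr).
\]
Choosing $\lambda=Rk^{(\epsilon-1)/2}/T^{\epsilon/2}$ gives the per-block tail $\exp(-cR^2(k/T)^\epsilon)$. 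Summing over $k\ge T$ and estimating the series by comparison with $\int_1^{\infty}\exp(-cR^2 y^\epsilon)\,\d y\lesssim (cR^2)^{-1}\exp(-cR^2)$ bounds the total probability by a constant multiple of $\exp(-cR^2)\le\exp(-cR)$ for $R\ge 1$, which closes the argument.

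The main obstacle is the martingale concentration step: one has to check that the sub-Gaussian tail regime actually covers the relevant $\lambda$ (which follows from $n_k$ growing polynomially in $k$ under the hypothesis), and, for $\mathbf{L}_{\ell,n_k}$, that the block-wise analog of the tail bound \eqref{Lest} is available. This last point is a minor re-run of the argument producing \eqref{Lest}, now with quadratic variation $\sum_{k'>n_k}\delta_{k'}^4$ replaced by the sub-block sum $\lesssim \delta_{n_k}^2/k$; the remaining bookkeeping (matching the Gaussian exponent to the claimed $\exp(-cR)$ uniformly in $T$) is straightforward.
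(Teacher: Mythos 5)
Your proposal follows essentially the same route as the paper's proof: a block-wise application of Lemma~\ref{lem:linear} with $m=n_k$, deterministic control of $\mathbf{Q}_{\ell,n_k}$, of the bracket $[\M_{\ell,n_k}]$ and of the linearization errors via $\delta_{n_k}^2(n_{k+1}-n_k)\lesssim 1/k$, a sub-Gaussian maximal inequality (Proposition~\ref{lem:conc2}) for $\M_{\cdot,n_k}$ at level $\lambda=Rk^{\frac{\epsilon-1}{2}}/T^{\frac{\epsilon}{2}}$, a block-wise version of \eqref{Lest} for $\mathbf{L}$, and a union bound over $k\ge T$. One correction: the increments of $\mathbf{L}$ are only sub-exponential ($\|\mathbf{L}_{n,n-1}\|_1\lesssim\delta_n^2$), not sub-Gaussian, so that term gives a Bernstein-type tail linear in $R$ (the paper's $\exp(-cR\,C_T)$ with $C_T=\delta_{n_T}^{-2}(z)/\sqrt{T}\gtrsim 1$), which is exactly why the combined estimate is $\exp\big(-cR(R\wedge C_T)\big)\lesssim\exp(-cR)$ rather than the $\exp(-cR^2)$ you assert; your concluding appeal to an \eqref{Lest}-type argument repairs this, but the per-block tail $\exp(-c\lambda^2 k)$ claimed for $|\M_{\ell,n_k}|+|\mathbf{L}_{\ell,n_k}|$ jointly is not valid for the $\mathbf{L}$ part.
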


\begin{proof}
We use the notation from Lemma~\ref{lem:linear} and the Appendix~\ref{sec:concentration}. 
The increments of the martingale
satisfy $\|\M_{n,n-1}\|_{2}  \lesssim  \delta_{n}$ for $n > N_0$. 
So, by Proposition~\ref{lem:conc2}, we claim that for any $k\ge 1$,  and any $\lambda>0$, 
\begin{equation} \label{Freed}
\P \Big( \max_{n\in[n_k+1,n_{k+1}]} |  \M_{n,n_k}  |   \ge \lambda  \Big)
\le 2 \exp\big( - c k \lambda^2\big) ,
\end{equation}
where we used that $\sum_{n=n_k+1}^{n_{k+1}}  \delta_n^2 \le (n_{k+1}-n_k) \delta_{n_k}^2 \lesssim 1/k$. 
Hence, taking $\lambda = R k^{\frac{\epsilon-1}{2}}/T^{\frac\epsilon2}$ for a $0<\epsilon<1$, the RHS is summable over all integer $k\ge T$ and we obtain, for $R\ge 1$, 
\begin{equation} \label{tbvarM}
\P\bigg[ \bigcup_{k\ge T} \Big\{ \max_{n\in[n_k+1,n_{k+1}]} |  \M_{n,n_k}  |   \ge  R k^{\frac{\epsilon-1}{2}}/T^{\frac\epsilon2}\Big\} \bigg]
\lesssim \exp(-c R^2)
\end{equation}
where the implied constant depends only on $(\epsilon,\beta, \mathfrak{S})$.

\smallskip

By \eqref{M2} and using the deterministic bound 
$\E \big[ \big( Z_{k}  + \overline{Z_{k}}  e^{-2\i \phi_{k-1}}  \big)^2\big| \F_{k-1} \big] \lesssim 1$, we have 
\[
\max_{n\in[n_k+1,n_{k+1}]} |  \mathbf{Q}_{n,n_k}^0  |  ,  \max_{n\in[n_k+1,n_{k+1}]}|[\mathbf{M}_{n,n_k}^1]|
\lesssim \delta_{n_k}^2 (n_{k+1}-n_k ) \lesssim k^{-1} .
\]
In particular, the contributions from these terms are deterministically negligible.
Moreover, by \eqref{Lest}, it holds for any $k\ge T$
\begin{equation*}
\P \Big( \max_{n >n_k+1} |  \mathbf{M}^2_{n,n_k}  |   \ge  R k^{-\frac{1}{2}}     \Big)
\lesssim \exp\bigg(- \frac{c R}{k^{1/2}\delta_{n_k}^2}  \bigg)  . 
\end{equation*}
Therefore, by a union bound (using that $\big(k^{1/2}\delta_{n_k}^2 \lesssim k^{-1/2}$ by assumptions on the blocks); this implies that if $R\ge 1$,
\begin{equation} \label{tbvarL}
\P\left( \bigcup_{k\ge T} \Big\{ \max_{n\in[n_k+1,n_{k+1}]} |  \mathbf{M}^2_{n,n_k}  |   \ge  R k^{-\frac{1}{2}}\Big\} \right)
\lesssim \exp\big( - c R C_T\big) , \qquad C_T = \delta_{n_T}^{-2}(z)/\sqrt{T} .
\end{equation}

Finally, according  to \eqref{EL},   the linearization errors are controlled (deterministically) on the event $\A_{n_T}$; for every $k\ge T$, 
\[
{\textstyle \sum_{n=n_k+1}^{n_{k+1}}} |\operatorname{EL}_n| \lesssim      \delta_{n_k}^{3} n_k^{3\epsilon/2} (n_{k+1}-n_k) \lesssim  k^{-1} 
\]
where we used again that $(n_{k+1}-n_k) \delta_{n_k}^2 \lesssim 1/k$.
This shows that, for every block, the sum of the linearization errors are also negligible.

\smallskip

Hence, by Lemma~\ref{lem:linear} and  combining the estimates \eqref{tbvarM}--\eqref{tbvarL}, we obtain an analogous bound for the process $\{\tilde{\psi}_{n,m}\}_{n\ge m}$.
Adjusting the constants, we conclude that for any $R\ge 1$, 
\begin{equation} \label{TBimprovement} 
\P\bigg( \bigcup_{k\ge T} \Big\{ \max_{n\in[n_k+1,n_{k+1}]} |  \tilde{\psi}_{n,n_k}  |   \ge  R k^{\frac{\epsilon-1}{2}}/T^{\frac\epsilon2}\Big\} \cap \A_{n_T} \bigg)
\lesssim \exp\big( - c R(R \wedge C_T) \big)  
\end{equation}
with $C_T \gtrsim 1$. 
Since $\boldsymbol{\chi}_{n,m}=\Im\tilde{\psi}_{n,m}$, this completes the proof.
\end{proof}


\paragraph{Deterministic phase.} We also need  basic estimates about the growth of the deterministic part of the phase.

\begin{lemma} \label{lem:theta}
For $z\in [0,1)$, we have $ \theta_n(-z)=\pi- \theta_n(z)$ and the function $n \mapsto \theta_n(z) = \arccos\big(z\sqrt{N/n}\big)$ is non-decreasing  for $n\ge N_0(z)$.
Moreover, we have for any $L \in \N$, 
\[
|\theta_{n+L}(z) - \theta_n(z)| \le \frac{L \sqrt{N_0(z)}}{2n^{3/2}\sin\theta_n(z)}
\le L \delta_n^2(z)/2 .
\]
Moreover, if $|x| \le |z| <1$, it holds for $n\ge N_0(z)$, 
\[
|z-x| \sqrt{N} \delta_n(z) \ge |\theta_n(x)-\theta_n(z)| \ge |z-x| \sqrt{N}\delta_n(x)
\]
\end{lemma}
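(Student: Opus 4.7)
The plan is to verify each claim through direct calculus applied to the explicit function $\theta_n(z) = \arccos(z\sqrt{N/n})$, exploiting the monotonicity of $\arccos$ and the explicit relation between $\sin \theta_n(z)$ and $\delta_n(z)$.

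The first two assertions are immediate. The identity $\theta_n(-z) = \pi - \theta_n(z)$ follows from $\arccos(-u) = \pi - \arccos(u)$. For the monotonicity, for fixed $z \in [0,1)$, the function $n \mapsto z\sqrt{N/n}$ is non-increasing on $[N_0(z),\infty)$ and lies in $[0,1]$, and $\arccos$ is non-increasing on $[-1,1]$, so $n \mapsto \theta_n(z)$ is non-decreasing.

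For the discrete bound on $\theta_{n+L}(z) - \theta_n(z)$, I would treat $n$ as a real variable and differentiate $\cos \theta_n(z) = z\sqrt{N/n}$, giving
\[
\frac{d\theta_n(z)}{dn} = \frac{z\sqrt{N}}{2 n^{3/2}\sin\theta_n(z)}.
\]
Since $n \mapsto \sin\theta_n(z)$ is non-decreasing (as $\theta_n(z)$ grows from $0$ to $\pi/2$) and $n \mapsto n^{-3/2}$ is decreasing, the derivative is non-increasing in $n$. Integrating from $n$ to $n+L$ and using $z\sqrt{N} \le \sqrt{N_0(z)+1}$ (absorbed into $\sqrt{N_0(z)}$ up to a harmless constant) yields the first bound. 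For the second inequality, I substitute $\sin\theta_n(z) = \sqrt{(n - Nz^2)/n}$ so that
\[
\frac{\sqrt{N_0(z)}}{n^{3/2}\sin\theta_n(z)} = \frac{\sqrt{N_0(z)}}{n \sqrt{n - Nz^2}} \le \frac{1}{n - Nz^2} = \delta_n^2(z),
\]
where the inequality is equivalent to $N_0(z)(n - Nz^2) \le n^2$. Writing $a = Nz^2$ and $b = n - a \ge 0$, we have $N_0(z) \le a$, so $N_0(z)\cdot b \le a b \le (a+b)^2 = n^2$, which is trivial.

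For the final two-sided estimate, I would differentiate in $z$: from $\cos\theta_n(z) = z\sqrt{N/n}$ we get $\partial_z \theta_n(z) = -\sqrt{N/n}/\sin\theta_n(z) = -\sqrt{N}\delta_n(z)$. Since $|x| \le |z|$, the map $y \mapsto \delta_n(y) = 1/\sqrt{n - N y^2}$ is monotone on any interval of constant sign containing $x$ and $z$; more precisely, on $y \in [\min(x,z), \max(x,z)]$ one has the pointwise bounds $\delta_n(y_{\min}) \le \delta_n(y) \le \delta_n(z)$, where $y_{\min}$ minimizes $|y|$ on this interval (and in particular $\delta_n(y) \ge \delta_n(x)$ whenever $|y| \ge |x|$ along the integration path). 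Integrating
\[
|\theta_n(x) - \theta_n(z)| = \sqrt{N}\,\Bigl|\int_x^z \delta_n(y)\,dy\Bigr|
\]
then gives the claimed bounds $|z-x|\sqrt{N}\delta_n(x) \le |\theta_n(x) - \theta_n(z)| \le |z-x|\sqrt{N}\delta_n(z)$.

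The statement is entirely elementary, and there is no real obstacle; the only care needed is in the algebraic check that $N_0(z)(n - Nz^2) \le n^2$ so that the two displayed upper bounds are comparable, and in the monotonicity argument for $\delta_n(\cdot)$ when $x,z$ lie on opposite sides of the origin, which is handled by working piece by piece on intervals of constant sign.
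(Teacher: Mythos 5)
Your treatment of the first three assertions is correct and is essentially the paper's own proof: the paper bounds $\theta_{n+L}-\theta_n$ via concavity of $t\mapsto\arccos(1/\sqrt t)$, which is the same fact as your observation that $\tfrac{d}{dn}\theta_n(z)=\tfrac{z\sqrt N}{2n^{3/2}\sin\theta_n(z)}$ is non-increasing in $n$ (since $n^{3/2}\sin\theta_n(z)=n\sqrt{n-Nz^2}$ increases), and the passage to $L\delta_n^2(z)/2$ is the same algebra via $\sqrt n\,\sin\theta_n(z)=\delta_n(z)^{-1}$, i.e.\ $N_0(z)\,(n-Nz^2)\le n^2$. The cosmetic mismatch $z\sqrt N\le\sqrt{N_0(z)+1}$ versus $\sqrt{N_0(z)}$ that you flag is present in the paper as well (it silently identifies $Nz^2$ with $N_0(z)$), and it is immaterial because the bound actually used downstream is $L\delta_n^2(z)/2$, which your computation yields exactly. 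Likewise, your derivation of the two-sided estimate from $\partial_z\theta_n(z)=-\sqrt N\,\delta_n(z)$ is exactly the paper's one-line justification.

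The genuine gap is your claim that the lower bound in the mixed-sign case is ``handled by working piece by piece on intervals of constant sign.'' The identity $|\theta_n(x)-\theta_n(z)|=\sqrt N\,\bigl|\int_x^z\delta_n(y)\,dy\bigr|$ and the upper bound (using $|y|\le|z|$ along the path) are fine, but when $x<0<z$ the path passes through the origin, where $\delta_n(y)<\delta_n(x)$, and splitting by sign does not restore the pointwise bound $\delta_n(y)\ge\delta_n(x)$; the average of $\delta_n$ over $[x,z]$ can genuinely drop below $\delta_n(x)$. Indeed the stated lower bound fails as written for $x=-z$: writing $u=z\sqrt{N/n}\in(0,1)$, one has $|\theta_n(-z)-\theta_n(z)|=\pi-2\theta_n(z)=2\arcsin(u)$, while $|z-x|\sqrt N\,\delta_n(x)=2z\sqrt N\,\delta_n(z)=2u/\sqrt{1-u^2}=2\tan(\arcsin u)>2\arcsin(u)$. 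So your integration argument (like the paper's terse justification) proves the lower bound only when $x$ and $z$ lie on the same side of $0$, i.e.\ for $0\le x\le z$ after the symmetry reduction $\theta_n(-z)=\pi-\theta_n(z)$; to be fair, the paper's proof is equally silent on this point, so the defect lies in the mixed-sign case of the statement itself rather than in an idea the paper has and you lack --- but your explicit claim to have handled that case is not substantiated and should be removed or restricted.
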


\begin{proof}
The function $t\in[1,\infty] \mapsto \theta(t)=\arccos(1/\sqrt t)$ is concave increasing, hence for $z\in [0,1)$,
\[
0\le \theta_{n+L} - \theta_n \le \frac{L}{N_0}  \theta'(\tfrac n{N_0}) 
= \frac{L}{2 n}\big(\tfrac n{N_0}-1\big)^{-1/2} 
= \frac{L \sqrt{N_0}}{2n^{3/2}\sin\theta_n} . 
\]
The next bound follows from the observation that $\sqrt{n}\sin \theta_n = \delta_n^{-1}$. 
The second claim follows from that  $-\partial_{z}\big(\arccos\big(z\sqrt{N/n}\big)\big)  = \sqrt{N}\delta_n(z)$. 
\end{proof}

\begin{lemma}\label{lem:osc}
There is a numerical constant so that for any $n \ge N_0(z)$ and any $L\in\N$, 
\[
\bigg| \sum_{j=n+1}^{n+L}  e^{\i 2 \vartheta_{j,n}(z)} \bigg|
\lesssim \sqrt{n} \delta_n(z) \big(1+\tfrac{L^3}{n}\big)  .
\]
\end{lemma}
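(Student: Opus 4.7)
The plan is to linearize the phase $\vartheta_{j,n}(z)$ around its value at the base point $n$ and then separately estimate the resulting geometric sum and the Taylor remainder. Write
\[
\vartheta_{j,n}(z) = (j-n)\theta_n(z) + \epsilon_j(z), \qquad \epsilon_j(z) \coloneqq \sum_{k=n+1}^j \bigl(\theta_k(z) - \theta_n(z)\bigr),
\]
so that $e^{2\i\vartheta_{j,n}(z)} = e^{2\i(j-n)\theta_n(z)}\cdot e^{2\i\epsilon_j(z)}$. This decomposition isolates the ``large deterministic rotation'' $(j-n)\theta_n(z)$ from a slowly-varying correction. The sum of the first factor over $j=n+1,\dots,n+L$ is a geometric series, and one checks
\[
\bigg|\sum_{j=n+1}^{n+L}e^{2\i(j-n)\theta_n(z)}\bigg| \le \frac{1}{|\sin\theta_n(z)|} = \sqrt{n}\,\delta_n(z),
\]
using $\sqrt{n}\sin\theta_n(z) = \delta_n(z)^{-1}$, which produces the leading $\sqrt{n}\,\delta_n(z)$ in the bound.

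The main input is then the sharp estimate from Lemma~\ref{lem:theta}: for every $k\ge n$,
\[
|\theta_k(z) - \theta_n(z)| \le \frac{(k-n)\sqrt{N_0(z)}\,\delta_n(z)}{2n}.
\]
(I would apply the first, tighter form of Lemma~\ref{lem:theta} directly, observing that $\sqrt{n}\sin\theta_n(z) = 1/\delta_n(z)$ converts $\frac{L\sqrt{N_0}}{2n^{3/2}\sin\theta_n}$ into $\frac{L\sqrt{N_0}\,\delta_n}{2n}$.) Summing in $k$ yields $|\epsilon_j(z)| \lesssim (j-n)^2 \sqrt{N_0(z)}\,\delta_n(z)/n$, and then, using the elementary estimate $|e^{ix}-1| \le |x|$, the correction sum obeys
\[
\bigg|\sum_{j=n+1}^{n+L}e^{2\i(j-n)\theta_n(z)}\bigl(e^{2\i\epsilon_j(z)}-1\bigr)\bigg|
\lesssim \sum_{j=n+1}^{n+L}|\epsilon_j(z)|
\lesssim \frac{L^3\sqrt{N_0(z)}\,\delta_n(z)}{n}.
\]
Since $\sqrt{N_0(z)}\le \sqrt{n}$, this is bounded by $\sqrt{n}\,\delta_n(z) \cdot L^3/n$, and combining with the geometric-sum piece yields the claim.

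There is no real obstacle here: the statement is a deterministic oscillatory-sum estimate, and the proof is a short Taylor expansion. The only subtlety is that one must use the \emph{sharp} first-order bound $|\theta_k-\theta_n|\lesssim (k-n)\sqrt{N_0}\,\delta_n/n$ from Lemma~\ref{lem:theta} and not the weaker $(k-n)\delta_n^2/2$: the weaker estimate would produce $L^3\delta_n^2$ instead of $L^3\sqrt{N_0}\,\delta_n/n$, which is strictly larger than $\sqrt{n}\,\delta_n\cdot L^3/n$ whenever $Nz^2>0$. Thus invoking the geometric, rather than the algebraic, form of the phase-derivative bound is the single point where care is needed.
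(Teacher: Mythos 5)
Your proof is correct and follows essentially the same route as the paper: split off the linear phase, bound the resulting geometric series by $1/\sin\theta_n=\sqrt{n}\,\delta_n$, and control the Taylor remainder via the sharp increment bound of Lemma~\ref{lem:theta}, which sums to $O(L^3\sqrt{N_0}\,\delta_n/n)\le \sqrt{n}\,\delta_n\,L^3/n$. The only cosmetic difference is that the paper linearizes around $\theta_{n+1}$ (and reduces to $z\ge 0$, exploiting monotonicity for a one-sided bound) rather than around $\theta_n$ with two-sided absolute values, and your remark that the weaker bound $|\theta_k-\theta_n|\le (k-n)\delta_n^2/2$ would not suffice is accurate.
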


\begin{proof}
Without loss of generality, we can assume that $z\in[0,1)$ for otherwise 
$\theta_n(z) +\pi = - \theta_n(-z)$.
By Lemma~\ref{lem:theta}, for any $n \ge N_0$ and $j\in\N$
\[
0\le  \vartheta_{n+j,n} -  j \theta_{n+1} \le \frac{j(j-1)}{4(n+1)\sin\theta_{n+1}} . 
\]
Hence, by decomposing
\[
\sum_{j=n+1}^{n+L}  e^{\i 2 \vartheta_{j,n}} = 
\sum_{j=1}^{L} \big(  e^{\i2(\vartheta_{n+j,n}-j\theta_{n+1})} -1 \big)  e^{\i 2 j \theta_{n+1}}  + \sum_{j=1}^{L}  e^{\i 2 j \theta_{n+1}}  ,
\]
we obtain
\[ \begin{aligned}
\bigg|
\sum_{j=n+1}^{n+L}  e^{\i 2 \vartheta_{j,n}}  \bigg|  & \le 
\bigg|\sum_{j=1}^{L}  e^{\i 2 j \theta_{n+1}} \bigg| 
+\sum_{j=1}^{L} \big| e^{\i2(\vartheta_{n+j,n}-j\theta_{n+1})} -1 \big| \\
& \le \frac{C}{\sin(\theta_{n+1})} \big(1+\tfrac{L^3}{n}\big) 
\end{aligned}\]
for a numerical constant $C>0$.
Here we used that for any $\theta\in(0,\pi)$ and any $L\in\N$, 
\(
\big|
\sum_{j=1}^{L} e^{\i 2 j \theta} \big| \le \frac{2}{\sin \theta}
\)
and
\( 
\sum_{j=1}^{L} \frac{j(j-1)}{4} =\O(L^3) .
\)
Finally, since $\sin \theta_n =\delta_n^{-1}/\sqrt{n}$,  this completes the proof. 
\end{proof}


\paragraph{Control of the $\mathbf{Q}$ terms.}
In Lemma~\ref{lem:linear}, the term $\mathbf{Q}_{n,m}(z)$ and the bracket \eqref{M2} both involves oscillatory sums, so we claim that these quantities are small on an appropriate event of the form \eqref{Asmooth}. 
In particular, this event depends on the spectral parameter $z$ and we need to specify a suitable sequence $\{n_k\}_{k=T}^\infty$.
Fix $N\in\N$ and $z=z(N)\in(-1,1)$. We work with the following blocks: for $k\in\N$, 
\begin{equation} \label{block1}
n_k(z) \coloneqq  
\begin{cases} N_k(z) =\lfloor Nz^2 +  k \mathfrak{L}(z)\rfloor  & k <  \mathfrak{L}(z)^2  \\
\lfloor Nz^2  +  k^{3/2} \rfloor  & k \ge   \L(z)^2  
\end{cases}  \quad\text{if }  z\in\Q , 
\qquad 
n_k(z) \coloneqq  \lfloor   k^{3/2} \rfloor  \quad\text{if } z\notin \Q . 
\end{equation}
We consider two separate regimes because $1)$ we need $n_k=N_k$ in the parabolic stretch after the turning point (in this part of the recursion the deterministic phase grows slowly), $2)$ $n_k$ needs to grow faster than linear for most of the recursion. 
We record that, in both cases, these blocks satisfy the condition of Lemma~\ref{lem:varphase}; 
$\delta_{n_k}^2(z)\cdot  (n_{k+1}-n_k)(z) \lesssim 1/k $ for~$k\in\N_{\ge T}$.

\begin{proposition}[Oscillatory sum 1] \label{prop:osc1}
Fix $N\in\N$ and $z\in(-1,1)$. For $\lambda\in \mathcal{T}$, let $\{q_n(z;\lambda)\}$ be a sequence of (random) coefficients such that (deterministically) for $n> m$, 
\begin{equation}
|q_n(z;\lambda)| \le  \delta_n^2(z),  \qquad\qquad
|q_{n+1}(z;\lambda)-q_n(z;\lambda)| \le  \delta_n^4(z) . 
\end{equation}
Then, for any $T\in\N$ (with $n_T= m$) and $R\ge 1$, on the event $\A_{\chi}(R,T; z)$ with blocks  \eqref{block1}, 
\[
\sup_{\lambda\in \mathcal{T}}\max_{n > m}\bigg| \sum_{m< k \le n} q_k(z;\lambda
) e^{\i2\phi_{k}(z)}  \bigg| 
\lesssim R/\sqrt{T}. 
\] 
\end{proposition}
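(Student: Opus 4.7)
The strategy is to partition $(m,n]$ into blocks $(n_j,n_{j+1}]$ for $j\ge T$ (with $n_T=m$) according to \eqref{block1}, apply Abel summation within each block to exploit the oscillations of $e^{2\i\vartheta_{k,n_j}(z)}$ quantified by Lemma~\ref{lem:osc}, and bound the error from replacing $e^{2\i\chi_{k,n_j}(z)}$ by $1$ using the event $\A_\chi(R,T;z)$.

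Since $\bpsi_k-\bpsi_{n_j}=\i\vartheta_{k,n_j}+\tilde\psi_{k,n_j}$, taking imaginary parts gives $\phi_k(z)=\phi_{n_j}(z)+\vartheta_{k,n_j}(z)+\chi_{k,n_j}(z)$ for $k\in(n_j,n_{j+1}]$, so the $j$-th block contribution is
\[
e^{2\i\phi_{n_j}(z)}\sum_{n_j<k\le n_{j+1}} q_k(z;\lambda)\,e^{2\i\chi_{k,n_j}(z)}\,e^{2\i\vartheta_{k,n_j}(z)}.
\]
On $\A_\chi(R,T;z)$ one has $|e^{2\i\chi_{k,n_j}}-1|\lesssim R\,j^{(\epsilon-1)/2}T^{-\epsilon/2}$. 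Combined with $|q_k|\le\delta_{n_j}^2$ and the fact that \eqref{block1} is calibrated so that $(n_{j+1}-n_j)\delta_{n_j}^2\lesssim 1/j$, the $\chi$-error summed over blocks is bounded by
\[
\sum_{j\ge T}\frac{1}{j}\cdot R\,j^{(\epsilon-1)/2}T^{-\epsilon/2}\lesssim R\,T^{-1/2}.
\]

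For the principal term (replacing $e^{2\i\chi_{k,n_j}}$ by $1$) I apply Abel summation with $B_\ell=\sum_{n_j<k\le\ell}e^{2\i\vartheta_{k,n_j}(z)}$. A direct check using \eqref{block1} gives $(n_{j+1}-n_j)^3/n_j=O(1)$ in both the parabolic regime $j<\mathfrak{L}^2$ and the post-parabolic regime $j\ge\mathfrak{L}^2$ (and likewise when $z\notin\Q$), so Lemma~\ref{lem:osc} yields $\max_\ell|B_\ell|\lesssim\sqrt{n_j}\,\delta_{n_j}(z)$. Coupling this with $|q_k|\le\delta_{n_j}^2$ and the total-variation bound $\sum_{k}|q_{k+1}-q_k|\le(n_{j+1}-n_j)\delta_{n_j}^4$, Abel's inequality produces a block contribution of size
\[
\sqrt{n_j}\,\delta_{n_j}(z)\cdot\bigl(\delta_{n_j}^2+(n_{j+1}-n_j)\delta_{n_j}^4\bigr),
\]
which case analysis on \eqref{block1} verifies to be $O(j^{-3/2})$ in every regime. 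Summing over $j\ge T$ gives $O(T^{-1/2})$; a partial final block with upper endpoint $n\in(n_J,n_{J+1}]$ is handled by the same Abel bound. Uniformity in $\lambda\in\mathcal T$ is automatic since the hypotheses on $q_n(z;\lambda)$ are uniform in $\lambda$.

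The main obstacle is checking that the two-regime partition \eqref{block1} simultaneously controls three competing scales: the local variation of $q_k$, the rate at which the deterministic phase $\vartheta_{k,n_j}$ accumulates oscillation (needing $(n_{j+1}-n_j)^3/n_j=O(1)$ for Lemma~\ref{lem:osc} to be useful), and the variation of the random phase $\chi_{k,n_j}$ (needing $(n_{j+1}-n_j)\delta_{n_j}^2=O(1/j)$). The switch at $j=\mathfrak{L}^2$ from blocks of length $\mathfrak L$ (when the deterministic phase grows slowly near the turning point) to blocks of length $\sim j^{1/2}$ is what makes both inequalities saturate simultaneously, and this marginal calibration is what ultimately forces the $1/\sqrt T$ rate rather than anything faster.
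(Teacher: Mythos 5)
Your proposal is correct and follows essentially the same route as the paper: the same blocks \eqref{block1}, the event $\A_{\chi}(R,T;z)$ to control the random part of the phase within each block, and Lemma~\ref{lem:osc} for the deterministic oscillatory sum, with the block estimates $\delta_{n_k}^2 L_k\lesssim k^{-1}$, $L_k^3\lesssim n_k$, $\sqrt{n_k}\,\delta_{n_k}^3\lesssim k^{-3/2}$ yielding the summable $k^{(\epsilon-3)/2}T^{-\epsilon/2}$ and $k^{-3/2}$ contributions. The only cosmetic difference is that you handle the varying coefficient by Abel summation, whereas the paper freezes $q_k'$ at its value $q_{n_k}$ at the start of each block; these are equivalent and give the same $R/\sqrt{T}$ bound.
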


\begin{proof}
Let $L_k \coloneqq n_{k+1}-n_k$ for $k\in\N$ denotes the block's lengths.
Observe that the blocks \eqref{block1} are designed so that, on top of the condition $\delta_{n_k}^2 L_k \lesssim k^{-1}$, we also have
\begin{equation} \label{block1est}
L_k^3 \lesssim  n_k , \qquad \sqrt{n_k} \delta_{n_k}^{3}(z) \lesssim k^{-3/2} , \qquad
\text{for every $k\in \N$.}
\end{equation} 
Recall that $\phi_{n,m} = \vartheta_{n,m}+ \boldsymbol{\chi}_{n,m}$ for $n>m$ where $\vartheta_{n,m}$ is deterministic. 
We denote $q_n' = q_n e^{2\i\boldsymbol{\chi}_{n,n_k}} $ for $n\in(n_{k}, n_{k+1}]$. 
By splitting the sum into blocks, we have
\[
\max_{n>n_T }\bigg|  \sum_{n_T< k \le n} q_k e^{\i2\phi_{k}} \bigg| 
\le  \sum_{k\ge T}\bigg|  \sum_{n_{k}<j\le n_{k+1}} q_j' e^{\i2\vartheta_{j,n_k}} \bigg|
\]
and we claim that for every block,
\begin{equation} \label{qest}
\max_{n \in [n_k+1,n_{k+1}] }
|q_n' - q_{n_k}'| \lesssim R L_k^{-1}  k^{\frac{\epsilon-3}{2}}/T^{\epsilon/2} .
\end{equation}
Indeed, on the event  \eqref{Asmooth}, it holds for every $n\in(n_k, n_{k+1}]$, 
\[ \begin{aligned}
|q_n' - q_{n_k}'|  &\le |q_n - q_{n_k}| + |q_{n_k}| |e^{2\i \boldsymbol{\chi}_{n,n_k}}- 1 | \\
& \le   \delta_{n_k}^2\big( L_k \delta_{n_k}^2+2 R k^{\frac{\epsilon-1}{2}}/T^{\epsilon/2} \big)  
\end{aligned}\]
using our assumptions on $\{q_n\}_{n>N_0}$.
This gives \eqref{qest} since $\delta_{n_k}^2 L_k \lesssim k^{-1}$, so that
\[
\sum_{n_{k}<j\le n_{k+1}} q_j' e^{\i2\phi_{j,n_k}} 
= q_{n_k} \sum_{n_{k}<j\le n_{k+1}}e^{\i 2 \vartheta_{j,n_k}} 
+ \O\big(R k^{\frac{\epsilon-3}{2}}T^{-\epsilon/2}\big). 
\]

By Lemma~\ref{lem:osc}, using \eqref{block1est} and that  $|q_n| \le \delta_n^2$, this shows  that
\begin{equation} \label{osc:est}
\bigg| \sum_{n_{k}<j\le n_{k+1}} q_j' e^{\i2\phi_{j,n_k}}  \bigg| 
\lesssim  \sqrt{n_k} \delta_{n_k}^{3} + R k^{\frac{\epsilon-3}{2}}T^{-\epsilon/2} \lesssim R k^{\frac{\epsilon-3}{2}}T^{-\epsilon/2} .
\end{equation}
Summing these estimates, we conclude that these sums are uniformly bounded by $\O(R/\sqrt{T})$.
\end{proof}

We will also need the following variant of the previous estimate.

\begin{proposition}[Oscillatory sum 2]\label{prop:osc2}
Fix  $N\in\N$ and $z,x\in(-1,1)$ with $|x|\le |z|$. For $\lambda\in \mathcal{T}$, let $\{q_n(z,x;\lambda)\}$ for  be a sequence of  coefficients such that (deterministically) for $n>m$, 
\[
|q_n(z,x;\lambda)| \le  \delta_n^2(z) |\cos(\ell_n(x,z))| , \qquad\qquad
|q_{n+1}(z,x;\lambda)-q_n(z,x;\lambda)| \le  \delta_n^4(z) 
\]
where  $\ell_n(x,z)= \frac{\theta_{n}(x)+\theta_{n}(z)}{2} \in (0,\pi)$. 
Define the event $\A_{\chi}^2(R, T; x, z) \coloneqq \A_{\chi}(R,T; z) \cap \A_\chi(R,T; x)$ with the same blocks $\{n_k(z)\}_{k\in\N}$ -- \eqref{block1} -- and $T\in\N$ (with $m=n_T$).
Then, for any and $R\ge 1$, on the event $ \A_\chi^2(R,T;x,z)$, 
\[
\sup_{\lambda\in \mathcal{T}}\max_{n> n_T}\bigg| \sum_{n_T< k \le n} q_k e^{\i2(\phi_{k}(x)+\phi_{k}(z))}  \bigg| 
\lesssim R /\sqrt{T}.
\]
\end{proposition}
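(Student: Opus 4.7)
The plan is to mimic the block-decomposition argument in the proof of Proposition~\ref{prop:osc1}, adapting the oscillatory estimate to account for the sum of two phases. Following Section~\ref{sec:osc}, split the sum over $k\ge T$ into blocks $(n_k, n_{k+1}]$. Within each block, decompose $\phi_j(y) = \vartheta_{j,n_k}(y) + \boldsymbol{\chi}_{j,n_k}(y) + \phi_{n_k}(y)$ for $y \in \{x,z\}$, and define
\[
q_j' \coloneqq q_j \exp\bigl(2\i (\boldsymbol{\chi}_{j,n_k}(x) + \boldsymbol{\chi}_{j,n_k}(z) + \phi_{n_k}(x) + \phi_{n_k}(z))\bigr),
\]
so that the block sum equals $\sum_{j} q_j' \exp(2\i(\vartheta_{j,n_k}(x) + \vartheta_{j,n_k}(z)))$. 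On the event $\A_{\chi}^2(R,T;x,z)$, the smoothness bound $|\boldsymbol{\chi}_{j,n_k}(y)| \le R k^{(\epsilon-1)/2}/T^{\epsilon/2}$ combined with the hypothesis on $q$ gives
\[
|q_j' - q_{n_k}'| \lesssim L_k \delta_{n_k}^4(z) + R \delta_{n_k}^2(z) |\cos\ell_{n_k}(x,z)|\, k^{(\epsilon-1)/2}/T^{\epsilon/2},
\]
with $L_k = n_{k+1} - n_k$, so the replacement error per block is $\lesssim L_k^2\delta_{n_k}^4(z) + R(L_k\delta_{n_k}^2(z))k^{(\epsilon-1)/2}/T^{\epsilon/2} \lesssim k^{-2} + Rk^{(\epsilon-3)/2}/T^{\epsilon/2}$ using $L_k\delta_{n_k}^2(z) \lesssim 1/k$.

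The crux is the oscillatory bound for $\sum_{j=n_k+1}^{n_{k+1}} \exp(2\i(\vartheta_{j,n_k}(x) + \vartheta_{j,n_k}(z)))$. Proceeding as in Lemma~\ref{lem:osc}, I approximate this phase by the linear phase $2(j-n_k)(\theta_{n_k+1}(x)+\theta_{n_k+1}(z)) = 4(j-n_k)\ell_{n_k+1}$ with telescoping error summing to $O(L_k^3 \delta_{n_k}(z)/\sqrt{n_k}) \lesssim k^{-3/2}$, and the geometric sum satisfies
\[
\Big|{\textstyle \sum_{j=1}^{L_k}} e^{\i 4 j \ell_{n_k+1}}\Big| \le \frac{1}{|\sin\ell_{n_k+1}|\,|\cos\ell_{n_k+1}|}.
\]
Here is the new ingredient: the factor $|\cos\ell_{n_k}|$ built into the hypothesis on $|q_{n_k}|$ \emph{exactly cancels} the $|\cos\ell|$ appearing in the denominator $|\sin(2\ell)| = 2|\sin\ell|\,|\cos\ell|$, leaving $\delta_{n_k}^2(z)/|\sin\ell_{n_k}|$. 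To control $|\sin\ell_{n_k}|$, I use the concavity of $\sin$ on $(0,\pi)$: Jensen's inequality yields
\[
\sin\ell_n(x,z) \ge \tfrac{1}{2}\bigl(\sin\theta_n(x) + \sin\theta_n(z)\bigr) \ge \sin\theta_n(z) = (\sqrt{n}\delta_n(z))^{-1},
\]
where the last step uses $\delta_n(x) \le \delta_n(z)$, a direct consequence of $|x|\le|z|$. Hence $|q_{n_k}|\cdot|\sin(2\ell_{n_k})|^{-1} \lesssim \sqrt{n_k}\,\delta_{n_k}^3(z) \lesssim k^{-3/2}$ by \eqref{block1est}, matching the strength of the corresponding step in Proposition~\ref{prop:osc1}.

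Combining the two per-block estimates gives a contribution of $O(k^{-3/2} + R k^{(\epsilon-3)/2}/T^{\epsilon/2})$ per block; summing over $k \ge T$ (with $\epsilon < 1$) yields the claimed bound $\lesssim R/\sqrt{T}$, uniformly in $n > n_T$ and in $\lambda \in \mathcal{T}$ since the estimate is entirely deterministic once we restrict to $\A_\chi^2$. The only non-routine observation is the $|\cos\ell|$ cancellation described above; it explains why the proposition's hypothesis $|q_n|\le\delta_n^2(z)|\cos\ell_n|$ is precisely the right one, as otherwise a $1/|\cos\ell|$ singularity appears at the frequencies $\ell \in \{\pi/4, 3\pi/4\}$ (i.e.\ when $\theta_n(x)+\theta_n(z) \in \pi\Z + \pi/2$) that cannot be absorbed by the rest of the argument.
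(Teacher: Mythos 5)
Your argument is essentially the paper's proof: the same blocks \eqref{block1}, the same freezing of $q$ and of the random phases $\boldsymbol{\chi}_{\cdot,n_k}$ along each block on $\A_{\chi}^2(R,T;x,z)$, and the same key cancellation of the hypothesis factor $|\cos\ell_n|$ against the $|\cos\ell|$ in $|\sin 2\ell|=2|\sin\ell||\cos\ell|$, followed by $\sin\ell_n\ge \sin\theta_n(z)=(\sqrt{n}\,\delta_n(z))^{-1}$ (your Jensen/concavity argument is a fine substitute for the paper's one-line monotonicity remark, which also uses $\delta_n(x)\le\delta_n(z)$). Two points should be corrected. First, your intermediate claim that the telescoping error $O\big(L_k^3\delta_{n_k}(z)/\sqrt{n_k}\big)$ for the bare exponential sum is $\lesssim k^{-3/2}$ is false: in the parabolic block regime of \eqref{block1} (where $L_k=\mathfrak{L}$, $\delta_{n_k}^2(z)\simeq (k\mathfrak{L})^{-1}$, $n_k\simeq\mathfrak{L}^3$) this quantity is of order $\mathfrak{L}\,k^{-1/2}$. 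It only becomes $\lesssim k^{-3/2}$ after multiplication by $|q_{n_k}|\le\delta_{n_k}^2(z)$, which is how it actually enters the block sum $q_{n_k}\sum_j e^{\i2(\vartheta_{j,n_k}(x)+\vartheta_{j,n_k}(z))}$; so your final per-block bound is right, but the $q$-weight must be kept (this is exactly \eqref{oscest1} multiplied by $|q_{n_k}|$, as in the paper). Second, the dangerous frequency is $\ell_n=\pi/2$, i.e.\ $\theta_n(x)+\theta_n(z)\in\pi\Z$ (the case $x\approx-z$), not $\ell\in\{\pi/4,3\pi/4\}$: $1/|\sin 2\ell|$ blows up precisely where the hypothesis factor $|\cos\ell_n|$ vanishes, which is why the cancellation works. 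To cover the degenerate value exactly (where the geometric bound is $+\infty$ and, because of the index mismatch between $\ell_{n_k}$ in the hypothesis and $\ell_{n_k+1}$ in the geometric ratio, $q_{n_k}$ need not vanish) you should also truncate the oscillatory estimate by the trivial bound $L_k$, as the paper does in \eqref{oscest1}; this costs only $|q_{n_k}|L_k\lesssim \delta_{n_k}^2(z)\,\big(\delta_{n_k}^2(z)L_k\big)\lesssim \delta_{n_k}^2(z)/k$ per block, which is summable.
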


\begin{proof}The argument is the same as that of the proof of Lemma~\ref{prop:osc1}.
Without loss of generality, $|x|\le |z|$ and the $\{n_k(z)\}_{k\in\N}$ -- \eqref{block1} --
satisfy the required conditions; $\delta_{n_k}^2(x) L_k \le \delta_{n_k}^2(z) L_k \lesssim k^{-1}$ and \eqref{block1est}.

Let $q_n' = q_n e^{2\i (\chi_{n,n_k}(x)+\chi_{n,n_k}(z))} $ for $n\in(n_{k}, n_{k+1}]$.
Then, like in the previous proof, the estimates \eqref{qest} hold on the event $\A_{\chi}^2(R,T;x,z)$ and we can \emph{linearize} $q_n'$ along each block; we obtain
\[
\max_{n>n_T }\bigg|  \sum_{n_T< k \le n} q_k e^{\i2(\phi_{k}(x)+\phi_{k}(z))} \bigg| 
\le  \sum_{k\ge T}\bigg( \bigg|  \sum_{n_{k}<j\le n_{k+1}} q_{n_k} e^{\i2(\vartheta_{j,n_k}(x)+\vartheta_{j,n_k}(z))} \bigg|
+ \O\big(R k^{\frac{\epsilon-3}{2}}T^{-\epsilon/2}\big) \bigg) .
\]
Now, the main difference is that, instead of Lemma~\ref{lem:osc}, it holds for every $n \ge N_0$ and $L\in\N$, 
\begin{equation} \label{oscest1}
\bigg| \sum_{j=n+1}^{n+L} e^{\i2(\vartheta_{j,n}(x)+\vartheta_{j,n}(z))} \bigg|
\le \max\bigg\{ \frac{C}{|\sin 2\ell_{n}|} + \frac{C L^3}{n \sin \theta_n(z)} , L \bigg\}.
\end{equation}
The proof is exactly the same; it relies on  Lemma~\ref{lem:theta} and the fact that $0< \sin \theta_n(z) \le \sin \theta_n(x)$ if $|x| \le |z|$  to control the error term. 
Note that if $\ell_n=\pi/2$ (this corresponds to the case $x=-z$), then the first term on the RHS is $\infty$, which is why we include a truncation. 

Using this bound and the condition $|q_n| \le  \delta_n^2 \cos(\ell_n)$
with   $\sin \theta_n(z) \le \sin(\ell_n)  $, we conclude that for every $k\in \N$, 
\[
\bigg|  \sum_{n_{k}<j\le n_{k+1}} q_{n_k} e^{\i2(\vartheta_{j,n_k}(x)+\vartheta_{j,n_k}(z))} \bigg| \lesssim \delta_{n_k}^2 \frac{1+ L_k^3/n_k}{\sin \theta_{n_k}(z)} .
\]
Since $\sin \theta_n =  n^{-1/2} \delta_n^{-1}$, using the conditions \eqref{block1est}, we obtain for every block an estimate comparable to \eqref{osc:est}. Hence, summing these estimates, this yields the required bound. 
\end{proof}

%

Going back to Lemma~\ref{lem:linear}, we  can use the previous estimates to control the size of  $\mathbf{Q}_{n,m}$ and the bracket $[\M_{n,m}]$, \eqref{M2}.

\begin{proposition} \label{prop:Q}
Fix $N\in\N$, $z\in(-1,1)$, $R\ge 1$ and $T\in\N$ (with $m=n_T$, \eqref{block1}). 
On the event $\A_{\chi}(R,T; z)$, it holds for all $n> m$, 
\[
\mathbf{Q}_{n,m}(z)=   \log\bigg(\frac{n-Nz^2}{m-Nz^2}\bigg)
+ \O\big(R/\sqrt{T}\big) 
\qquad\text{and}\qquad
[\M_{n,m}]= - [\G_{n,m}(z)]+ \O\big(R/\sqrt{T}\big) . 
\]
\end{proposition}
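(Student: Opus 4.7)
The plan is to prove both identities by splitting the relevant sums into a deterministic piece (which will give the explicit log-term and $-[\G_{n,m}]$ respectively) and a random oscillatory remainder, the latter controlled by Propositions~\ref{prop:osc1}--\ref{prop:osc2}.

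For the first estimate, I would decompose
\[
\mathbf{Q}_{n,m}(z) = \sum_{k=m+1}^{n} \delta_k^2(z) \;-\; \sum_{k=m+1}^{n} \delta_k^2(z)\, e^{-2\i\phi_{k-1}(z)}.
\]
Since $\delta_k^{2}(z) = (k-Nz^{2})^{-1}$ is monotone, a Riemann-sum comparison with $\int \frac{dt}{t-Nz^2}$ yields $\sum_{k=m+1}^{n}\delta_k^2(z) = \log\bigl(\tfrac{n-Nz^2}{m-Nz^2}\bigr) + \O(\delta_m^{2}(z))$, where the error is $\O(R/\sqrt T)$ using that $m=n_T\ge N_1(z)$ so that $\delta_m^2 \lesssim 1/T$ in the worst case. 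For the oscillatory sum I would pass to complex conjugates and re-index $j=k-1$ to bring it into the form $\sum q_j e^{\i 2\phi_j(z)}$ of Proposition~\ref{prop:osc1}, with $q_j = \delta_{j+1}^2(z)$; the hypotheses $|q_j|\le \delta_j^2$ and $|q_{j+1}-q_j| = \delta_j^2\delta_{j+1}^2 \lesssim \delta_j^4$ are immediate, so Proposition~\ref{prop:osc1} delivers the $\O(R/\sqrt T)$ bound on $\A_\chi(R,T;z)$.

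For the bracket estimate, I would expand the square in \eqref{M2}:
\[
\bigl(Z_k + \overline{Z_k}\,e^{-2\i(\theta_k+\phi_{k-1})}\bigr)^{2}
= Z_k^{2} + 2|Z_k|^{2}\,e^{-2\i(\theta_k+\phi_{k-1})} + \overline{Z_k}^{\,2}\,e^{-4\i(\theta_k+\phi_{k-1})}.
\]
Since $\phi_{k-1}$ is $\F_{k-1}$-measurable and $\theta_k$ is deterministic, Lemma~\ref{lem:Z} gives the conditional expectation as $\cos\theta_k\, e^{-\i\theta_k} + 2 e^{-2\i(\theta_k+\phi_{k-1})} + \cos\theta_k\, e^{\i\theta_k}\, e^{-4\i(\theta_k+\phi_{k-1})}$. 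The first (deterministic) summand contributes $\sum \delta_k^2 \cos\theta_k\, e^{-\i\theta_k}$, which is exactly $-[\G_{n,m}(z)]$: indeed, in the elliptic regime $\sqrt{k}\sqrt{Nz^2/k-1} = \i/\delta_k$, so the increments of $\G$ from Definition~\ref{def:GW} equal $-\i\delta_k Z_k(z)$ and $[\G_{n,m}] = -\sum \delta_k^2\, \E Z_k^2$ by Lemma~\ref{lem:Z}.

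Two oscillatory contributions then remain. The frequency-$2$ term is handled exactly as above via Proposition~\ref{prop:osc1} with $q_k = 2\delta_k^2 e^{-2\i\theta_k}$, after noting $|\theta_{k+1}-\theta_k| \lesssim \delta_k^2$ from Lemma~\ref{lem:theta} to verify the variation hypothesis. The frequency-$4$ term is the main obstacle: a naive application of Proposition~\ref{prop:osc1} to $e^{-4\i\phi_{k-1}}$ would require controlling $|\sum e^{\i 4 j\theta_{n+1}}|$, whose Dirichlet-kernel bound $1/|\sin 2\theta_{n+1}|$ degenerates as $\theta_k\to \pi/2$ (i.e.\ far from the turning point). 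Proposition~\ref{prop:osc2} is purpose-built to absorb this degeneracy: writing $4\phi_{k-1} = 2(\phi_{k-1}(z)+\phi_{k-1}(z))$, I apply the proposition at $x=z$, so that $\ell_k(z,z) = \theta_k(z)$ and the coefficient $q_k = \delta_k^2 \cos\theta_k\, e^{-3\i\theta_k}$ satisfies the required bound $|q_k|\le \delta_k^2 |\cos\ell_k|$ precisely thanks to the factor $\cos\theta_k$ arising from $\E \overline{Z_k}^2$ in Lemma~\ref{lem:Z}. Since $\A_\chi^2(R,T;z,z) = \A_\chi(R,T;z)$, this yields the claimed $\O(R/\sqrt T)$ bound and completes the proof.
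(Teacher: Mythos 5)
Your proposal is correct and follows essentially the same route as the paper: the harmonic sum plus Proposition~\ref{prop:osc1} for $\mathbf{Q}_{n,m}$, and for the bracket the same expansion of \eqref{M2} into the deterministic part (identified with $-[\G_{n,m}]$ via $\E Z_k^2$), a frequency-$2$ sum handled by Proposition~\ref{prop:osc1} (in the paper this is the cross-bracket $[\G_{n,m};\overline{\W_{n,m}}]$), and a frequency-$4$ sum handled by Proposition~\ref{prop:osc2} with $x=z$ (the bracket $[\W_{n,m}]$), using exactly the $\cos\theta_k$ factor from $\E\overline{Z_k}^{\,2}$ and $\A_\chi^2(R,T;z,z)=\A_\chi(R,T;z)$. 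The only cosmetic deviations (the factor $2$ in your $q_k$ and the index shift $j=k-1$) are absorbed into the implied constants, as in the paper.
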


\begin{proof}
For any  $n>N_0$, 
\begin{equation} \label{delta2}
0 \le \delta_{n}-\delta_{n+1} \le  \delta_n^3/2 .
\end{equation}
Thus, a direct application of Lemma~\ref{prop:osc1} yields on the event $\A_\chi^1$,  uniformly for all $n>m$, 
\[
\sum_{m<k\le n} \delta_k^2 e^{2\i \phi_{k-1}} = \O\big(R/\sqrt{T}\big) 
\]
As $\delta_k^2(z)= (k-Nz^2)^{-1}$ for $k>N_0(z)$, computing the harmonic sum, his implies that
\[
\mathbf{Q}_{n,m}= \sum_{m<k\le n} \delta_k^2\big(1 -  e^{-2\i \phi_{k-1}}  \big)
= \log\bigg(\frac{n-Nz^2}{m-Nz^2}\bigg)
+ \O\big(R/\sqrt{T}\big) .
\]
For the other claim, the martingale $\M_{n,m} = \G_{n,m}+\overline{\W_{n,m}}$ with $\W_{n,m} = -\i\sum_{k=m+1}^n \delta_{k}Z_{k} e^{2\i(\theta_k+\phi_{k-1})}$,  $\G_{n,m} = -\i\sum_{k=m+1}^n \delta_{k}Z_{k}$ , so its bracket
\[
[\M_{n,m}]= -[\G_{n,m}] -2  \sum_{m<k\le n} q_{k-1}^1  e^{-2\i\phi_{k-1}} 
- \sum_{m<k\le n} q_{k-1}^2  e^{-4\i\phi_{k-1}} 
\]
where
$q_{k-1}^1 = \delta_k^2  e^{-2\i\theta_k}$ and
$q_{k-1}^2 = \delta_k^2  e^{-3\i\theta_k}\cos\theta_n$. 
The first sum corresponds to the cross-bracket $[\G_{n,m};\overline{\W_{n,m}}]$, while the second sum corresponds to the bracket $[\W_{n,m}]$.

\smallskip

Using \eqref{delta2} and Lemma~\ref{lem:theta}, we verify that for $j\in\{1,2\}$,
$|q_{n+1}^j-q_n^j| \lesssim  \delta_n^4$, 
$|q_n^1| \le \delta_n^2$ and $|q_n^1| \le \delta_n^2  |\cos(\theta_{n+1})|$, so that by  Lemma~\ref{prop:osc1} and  Lemma~\ref{prop:osc2} (with $x=z$ -- in which case  $\A_{\chi}^2(R, T; z, z)=\A_{\chi}^1(R, T; z)$), it holds on this event, uniformly for $n> m$, 
\begin{equation} \label{crossbracket}
[\W_{n,m}]= \O\big(R/\sqrt{T}\big) ,\qquad
[\G_{n,m} ;\overline{\W_{n,m}}]= \O\big(R/\sqrt{T}\big),
\end{equation}
and $[\M_{n,m}]= -[\G_{n,m}]+\O\big(R/\sqrt{T}\big)$. 
\end{proof}

\subsection{Martingale approximation.}
\label{sec:approx}
To conclude, we gather our findings to relate the (complex) phase $\tilde{\psi}_{N,m}$ to the martingales from Definition~\ref{def:GW}.
We formulate two results in different regimes.

\medskip

We first treat the case where $z\in\Q$. We define for $T\ge 1$,  
\begin{equation} \label{Omega2}
\Omega_N^2(z;T) = \tilde{\psi}_{N,m}(z) +\tfrac14\log\big(\tfrac{N-Nz^2}{m-Nz^2}\big)+  \tfrac1{\sqrt\beta} \M_{N,m}(z) + \tfrac{[ \G_{N,m}(z)]}{2\beta} , \qquad m=N_T(z). 
\end{equation}

\begin{proposition}\label{prop:oneray1}
Let $z\in\Q$, $T\ge 1$ and $\A_m$ be as in \eqref{eventA} with $m=N_T(z)$.  
There exists a constants $c>0$ such that for any $R\ge 1$, 
\[\begin{aligned}
\P\big[ \big\{  |\Omega_N^2(z;T)|
\ge R/\sqrt{T} \big\} \cap\A_m \big]  
\lesssim \exp(-c R)  . 
\end{aligned}\]
This implies that the collection of random variables $\big\{ \Omega_N^2(z;T)  ; z\in\Q\big\}_{N\in\N}$ is tight.
Moreover, for $z\in\Q$, the random variable $ \Omega_N^2(z;T)\to 0$ in probability in the limit as $N\to\infty$ and $T\to\infty$. 

\end{proposition}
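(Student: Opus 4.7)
The approach is to assemble the three main technical ingredients of this section into a single concentration estimate. First I would substitute the linearization formula of Lemma~\ref{lem:linear} and the oscillatory-cancellation estimates of Proposition~\ref{prop:Q} directly into the definition \eqref{Omega2} on the intersection $\A_m \cap \A_\chi(R,T;z)$. A direct computation shows that the two counterterms introduced in \eqref{Omega2} (the logarithm and the $[\G_{N,m}]/(2\beta)$ piece) were engineered precisely to cancel the leading contributions of $-\mathbf{Q}_{N,m}/4$ and $[\M_{N,m}]/(2\beta)$ in the linearization; the surviving quantity is
\[
\Omega_N^2(z;T) = \tfrac{\mathbf{L}_{N,m}(z)}{2\beta} + \O\bigl(R/\sqrt{T}\bigr) + \O\bigl(m^{(3\epsilon-1)/2}\bigr)
\]
with $m = N_T(z)$, uniformly on $\A_m \cap \A_\chi(R,T;z)$.

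Second, I would bound the three residual contributions. Since $z \in \Q$ implies $\mathfrak{L}(z) \to \infty$ and hence $m \geq T\mathfrak{L}(z)$ grows, the linearization error $m^{(3\epsilon-1)/2}$ is $o(R/\sqrt T)$ for every $R \geq 1$ once $N$ is large (using $\epsilon<1/9$), so it can be absorbed. The deterministic $\O(R/\sqrt T)$ error is dealt with by working against the threshold $R/(2\sqrt T)$ after a harmless rescaling of the constant $c$. The key probabilistic input is the sub-exponential tail bound \eqref{Lest}: with $\delta_m^{-2}(z) = m - Nz^2 \gtrsim T\mathfrak{L}(z)$, taking $\lambda$ of order $R/\sqrt T$ and splitting on whether $R \leq \sqrt T$ or $R > \sqrt T$ (and using $\mathfrak L(z) \geq 1$), one obtains $\P\bigl[|\mathbf{L}_{N,m}(z)| \geq R/(2\sqrt T)\bigr] \lesssim \exp(-cR)$ uniformly in $N$, $T\geq 1$, and $R \geq 1$. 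Combining this with Proposition~\ref{lem:varphase}, which yields $\P[\A_\chi^c(R,T;z) \cap \A_{n_T}] \lesssim \exp(-cR)$, a union bound gives the claimed tail estimate on $\A_m$ after adjusting constants.

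Third, the two qualitative consequences follow as immediate corollaries. Tightness of the family $\{\Omega_N^2(z;T)\}_{z\in\Q, N\in\N}$ (for any fixed $T$) follows by letting $R \to \infty$ in the tail bound and combining with $\P[\A_m^c] \lesssim \exp(-c\beta m^{2\epsilon}) \to 0$ provided by \eqref{PA}, since $m \to \infty$ along any sequence $z(N)\in\Q$. Convergence in probability to zero as $N,T\to\infty$ is obtained by choosing $R = \epsilon\sqrt{T}$: the tail bound then gives $\P[|\Omega_N^2(z;T)|>\epsilon] \lesssim \exp(-c'\epsilon\sqrt T) + \P[\A_m^c]$, which vanishes when $N,T\to\infty$. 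The only delicate aspect of the argument is the bookkeeping of three different error scales ($R/\sqrt T$, $m^{(3\epsilon-1)/2}$, and the martingale fluctuation scale), but each is already controlled by the preceding lemmas and their combination is routine.
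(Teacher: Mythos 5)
Your proposal is correct and follows essentially the same route as the paper: combine Lemma~\ref{lem:linear} and Proposition~\ref{prop:Q} on $\A_m\cap\A_\chi(R,T;z)$ to reduce $\Omega_N^2$ to $\mathbf{L}_{N,m}/(2\beta)$ plus $\O(R/\sqrt T)$ errors, control $\mathbf{L}$ via the sub-exponential bound \eqref{Lest} with $\delta_m^{-2}=T\mathfrak{L}(z)$, handle $\A_\chi^c\cap\A_m$ by \eqref{PAchi}, and deduce tightness and the $N,T\to\infty$ convergence from \eqref{PA} and the choice $R=\epsilon\sqrt T$. Your threshold $R/(2\sqrt T)$ for the $\mathbf{L}$ tail (versus the paper's $R/T$) is an inessential variation that yields the same $\exp(-cR)$ bound.
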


\begin{proof}
Then, by Lemma~\ref{lem:linear} and Proposition~\ref{prop:Q}, on $\A_{\chi}\cap\A_m$ ($\A_{\chi}=\A_\chi(R,T;z)$ with the appropriate blocks), uniformly for $N>m$, 
\[
\Omega_N^2(z;T) = \tfrac{\mathbf{L}_{n,m}(z)}{2\beta} 
+ \O\big(\tfrac R{\sqrt T}\big) . 
\]
For $z\in\Q$,  $m= N_T(z) \to\infty$ as $N\to\infty$,  so the linearization errors in Lemma~\ref{lem:linear} converge to 0 on $\A_m$.

Moreover, by \eqref{PAchi} and \eqref{Lest}, using that $\delta_{m}^{-2}(z) = T\mathfrak{L}$ for any $R\ge 1$ 
\[
\P[ \A_{\chi}^c \cap \A_m] \lesssim\exp(-cR) , \qquad
\P\left[\max_{n>m}|  \mathbf{L}_{n,m}(z) |   \ge R/T\right] \lesssim\exp(-cR \mathfrak{L}) .  
\]
Adjusting the constants, this proves the estimate. Observe that the events \eqref{eventA} are increasing, so we may replace $m=N_T(z)$ by any fixed $m\in\N$. 

Then, by \eqref{PA}, it follows that
\[
\limsup_{R\to\infty}\limsup_{N\to\infty} \sup_{z\in\Q} \P\big[ |\Omega_N^2(z;1)| \ge R\big] =0 .  
\]
Consequently, the collection of random variables $\big\{ \Omega_N^2(z;1)  ; z\in\Q , N\in\N \big\}$ is tight 

Moreover,  for $\epsilon>0$ and any sequence $z\in\Q$,  choosing $R= \epsilon\sqrt{T}$, if $T$ is sufficiently large, 
\[ 
\P\big[ |\Omega_N^2(z;T)| \ge \epsilon \big]  \le C  e^{-c \epsilon\sqrt{T}} +
\P[\A_m^{\rm c} ]
\]
and then,
\[
\limsup_{T\to\infty}\limsup_{N\to\infty}  \sup_{z\in\Q}\P\big[ |\Omega_N^2(z;T)| \ge \epsilon \big] =0 .\qedhere
\]
\end{proof}

\section{Convergence of $\Omega_N$} \label{sec:struct}
In this section, we prove claim 4 of Theorem~\ref{thm:main}.
Recall that by \eqref{polychar}, $\Phi_{n}(z) = \Re(\exp\bpsi_n(z))$ for $n>N_0(z)$.
Then, to be consistent, the error is defined by; for $N\in \N$ and $z\in(-1,1)$, 
\begin{equation} \label{err0}
\begin{aligned}
\Omega_N(z)&\coloneqq \bpsi_N(z)- \i\pi N F(z) + {\mathrm{c}_\beta}\log(1-z^2) 
+\tfrac{\M_N(z)}{\sqrt{\beta}} , \\
\tfrac{1}{2}\varphi_n(\lambda;z)&\coloneqq  \bpsi_n\big(z + \tfrac{\lambda}{N \varrho(z)}\big)-\bpsi_n(z) , \qquad\qquad \lambda\in\R, \qquad N_0(z)<n\le N.
\end{aligned}
\end{equation}
In particular, the quantity $\Omega_N(z)$ is independent of the local coordinate $\lambda\in\R$, while the asymptotics of $\varphi_n(\lambda;z)$ are expected to be independent of $z$ in the bulk. 

There are two regimes, and they are treated in a slightly different way.
\todo{fix}

\subsection{Asymptotic regime away from 0.}

\begin{proposition} \label{prop:limit}
If $z\in\Q$, there are random variables $\boldsymbol{\mho}_\beta^{\pm}$ (independent of $z$) such that in distribution as $N\to\infty$, 
\[
\Omega_N(z) \to \boldsymbol{\mho}_\beta^{\pm}   - \i2{\mathrm{c}_\beta}\arcsin(z) 
\]
and  $\boldsymbol{\mho}_\beta^+ \overset{\rm law}{=} \overline{\boldsymbol{\mho}_\beta^-}$. 
Moreover, using the notation from Proposition~\ref{cor:mho} (in terms of the stochastic Airy function), one has 
\[
\boldsymbol{\mho}_\beta^{\pm} \overset{\rm law}{=} \hat{\boldsymbol{\mho}}_\beta^{\pm} - \tfrac{\log 2}{\beta} +\tfrac{\mathrm{g}}{\sqrt{\beta}} -\tfrac{\E\mathrm{g}^2}{2\beta} .
\]
\end{proposition}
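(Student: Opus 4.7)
The plan is to combine the elliptic decomposition $\bpsi_N(z) = \tilde\psi_{N,m}(z) + \bpsi_m(z) + \i\vartheta_{N,m}(z)$ (with $m = N_T(z)$, $T\ge 1$) with the parabolic anchor at $m$, then let $N\to\infty$ followed by $T\to\infty$. For the tail, Proposition~\ref{prop:oneray1} solves
\[
\tilde\psi_{N,m}(z) = -\tfrac{1}{4}\log\bigl(\tfrac{N(1-z^2)}{m-Nz^2}\bigr) - \tfrac{\M_{N,m}(z)}{\sqrt\beta} - \tfrac{[\G_{N,m}(z)]}{2\beta} + o_\P(1).
\]
For the anchor, \eqref{err1} at $\lambda = 0$, $t = T$ together with Proposition~\ref{prop:Airy} gives jointly $\mho_N^1(0,T;z) \Wkto \boldsymbol{\varpi}_T^\pm(0) + \tfrac{\mathrm{g}}{\sqrt\beta} - \tfrac{\E\mathrm{g}^2}{2\beta}$ and $\M_{N_T,N_1}(z) \Wkto \mathfrak{m}_T^\pm$. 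Using $\M_N = \G_{N_0} + \M_{N_T,N_1} + \M_{N,m}$ (with $\M_{N_T,N_0} = \M_{N_T,N_1}$ since $(N_0,N_1]\subset \Gamma(z)$), substitution into the definition of $\Omega_N(z)$ produces pairwise cancellations of the $\G_{N_0}/\sqrt\beta$ and $\M_{N,m}/\sqrt\beta$ terms.

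The next step identifies the surviving deterministic ingredients. The identity $\E\G_{N_0}^2 + [\G_{N,N_T}] = [\G_N] - [\G_{N_T,N_1}]$ combined with Proposition~\ref{prop:M}, interpreted via direct Riemann-sum evaluation (which also fixes the branch of the complex logarithm), yields this combination converging to $-2\log 2 - \log(1-z^2) + \log T + 2\i\arccos(z) + o_\P(1)$ in the case $z>0$; the negative $\log T$ that appears here is exactly $-[\G_{N_T,N_1}]$. A Euler--Maclaurin expansion of $\vartheta_{N,m}(z) = \sum_{m<k\le N}\arccos(z\sqrt{N/k})$, whose boundary term at $k = N$ contributes $\arccos(z)/2$, gives $\vartheta_{N,m}(z) - \pi N F(z) \to -\tfrac{2}{3}T^{3/2} + \tfrac{\pi}{4} - \tfrac{\arcsin z}{2} + o(1)$. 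Combined with the log prefactors $\tfrac{1}{4}\log(N/\mathfrak{L}) - \tfrac{1}{4}\log\tfrac{N(1-z^2)}{m-Nz^2} \to \tfrac{1}{4}\log T - \tfrac{1}{4}\log(1-z^2)$ and with the $\mathrm{c}_\beta\log(1-z^2)$ coming from $\Omega_N$, all occurrences of $\log(1-z^2)$ cancel exactly using $\mathrm{c}_\beta = \tfrac{1}{4} - \tfrac{1}{2\beta}$.

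Finally I send $T\to\infty$. Rewriting $\boldsymbol{\varpi}_T^\pm(0) + \tfrac{\mathfrak{m}_T^\pm}{\sqrt\beta}$ via Proposition~\ref{cor:mho} as $\pm\i\bigl(\tfrac{2}{3}T^{3/2} - \mathrm{c}_\beta\pi\bigr) - \mathrm{c}_\beta\log T + \hat{\boldsymbol{\mho}}_\beta^\pm + o_\P(1)$, the $\tfrac{2}{3}T^{3/2}$ imaginary divergences cancel the Euler--Maclaurin residue, and the logarithms cancel because $\tfrac{1}{4} - \tfrac{1}{2\beta} - \mathrm{c}_\beta = 0$. The $\pi$-contributions combine (from $-\i\mathrm{c}_\beta\pi$, from $\tfrac{\pi}{4}$ in the Euler--Maclaurin residue, and from the imaginary part $2\arccos(z)$ of the bracket) so that the net imaginary constant is exactly $-2\mathrm{c}_\beta\arcsin(z)$, and the surviving real expression assembles into $\hat{\boldsymbol{\mho}}_\beta^\pm - \tfrac{\log 2}{\beta} + \tfrac{\mathrm{g}}{\sqrt\beta} - \tfrac{\E\mathrm{g}^2}{2\beta}$, matching $\boldsymbol{\mho}_\beta^\pm$. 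The independence and conjugation symmetry $\boldsymbol{\mho}_\beta^+ \overset{\rm law}{=} \overline{\boldsymbol{\mho}_\beta^-}$ follow from the corresponding properties of $\hat{\boldsymbol{\mho}}_\beta^\pm, \mathfrak{m}_T^\pm$ in Proposition~\ref{prop:Airy} together with the $z\to -z$ symmetry in Remark~\ref{rk:sym0}. The main obstacle is the precise bookkeeping of branches and constants through the interplay of bracket asymptotics, Euler--Maclaurin boundary residues, and Airy oscillations, all of which conspire to produce the clean final answer.
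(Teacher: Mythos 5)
Your decomposition and bookkeeping are essentially the paper's: the same split into the elliptic tail (Proposition~\ref{prop:oneray1}), the parabolic anchor \eqref{err1} with Proposition~\ref{prop:Airy}, and the deterministic terms (your Riemann-sum/Euler--Maclaurin computations reproduce Propositions~\ref{prop:app_G1} and~\ref{prop:detphase}, up to a harmless regrouping of the brackets). The constants and branches you track do assemble as you claim.

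However, your final step is circular within this paper's logical structure. You invoke Proposition~\ref{cor:mho} to rewrite $\boldsymbol{\varpi}^\pm_T(0)+\tfrac{1}{\sqrt\beta}\mathfrak{m}^\pm_T$ as $\pm\i\bigl(\tfrac23 T^{3/2}-\mathrm{c}_\beta\pi\bigr)-\mathrm{c}_\beta\log T+\hat{\boldsymbol{\mho}}_\beta^\pm+\o_\P(1)$, but the paper's only proof of Proposition~\ref{cor:mho} is the limit \eqref{limitmho}, which is itself \emph{established inside} the proof of Proposition~\ref{prop:limit}: there is no independent derivation of the oscillatory ($t\to-\infty$) asymptotics of $\SAi$ with the martingale compensation $\mathfrak{m}_T^\pm$ available elsewhere in the paper or in \cite{LambertPaquette03}. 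The paper's key extra idea, which your proposal is missing, is to run the argument the other way: after the $N\to\infty$ limit (taken along a subsequence, diagonally over $T\in\mathds{Q}\cap[1,\infty)$, a point you should also address since the limits at different $T$ must be realized consistently), the left-hand side $\Omega_N(z)$ has a limit law that does not depend on $T$, while the elliptic error satisfies $\curlywedge^*_T=\O_\P(T^{-1/2})\to0$; this $T$-independence \emph{forces} the convergence in distribution of $\boldsymbol{\varpi}^\pm_T(0)+\tfrac{1}{\sqrt\beta}\mathfrak{m}^\pm_T\mp\i\bigl(\tfrac23 T^{3/2}-\mathrm{c}_\beta\pi\bigr)+\mathrm{c}_\beta\log T$ as $T\to\infty$, which simultaneously defines $\hat{\boldsymbol{\mho}}_\beta^\pm$, proves Proposition~\ref{cor:mho}, and yields the stated limit of $\Omega_N(z)$. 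To make your proof non-circular you would either have to adopt this deduction or supply an independent proof of the $T\to\infty$ asymptotics of the stochastic Airy phase, which is a genuinely nontrivial missing ingredient.
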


\begin{proof}
According to Definition~\ref{def:GW}, the martingale term can be decomposed in three part: for $T\ge 1$, 
\[
\M_N(z) = \M_{N,N_T}(z) + \M_{N_T,N_0}(z) + \G_{N_0}(z) ,
\]
coming from the \emph{elliptic}, \emph{parabolic} and \emph{hyperbolic} regimes respectively. 
Recall also the definitions \eqref{psi1} and \eqref{Omega2}; for $z\in\Q$ and $T\ge 1$,  
\[\begin{cases}
\bpsi_N =  \bpsi_{N_T}+  \i\vartheta_{N,N_T}+\tilde{\psi}_{N,N_T} \\
\Omega_N^2(z;T) \coloneqq \tilde{\psi}_{N,N_T}(z) +\tfrac14\log\big(\tfrac{N(1-z^2)}{T\mathfrak{L}(z)}\big)+  \tfrac1{\sqrt\beta} \M_{N,N_T}(z) - \tfrac1{2\beta} \E\big[ \G_{N,N_T}(z) \big] .
\end{cases}\]
This quantity should be compared to \eqref{err0}. Namely, we split  
\begin{align} 
\Omega_N(z) &\notag
= \tilde{\psi}_{N,N_T}(z)+ \bpsi_{N_T}(z) + \i\vartheta_{N,N_T}(z)- \i\pi N F(z) + {\mathrm{c}_\beta}\log(1-z^2) +\tfrac{1}{\sqrt{\beta}}\M_{N}(z) \\
&\notag
= \Omega_N^2(z;T)+\bpsi_{N_T}(z) +\tfrac{\M_{N_T,N_0}(z)}{\sqrt{\beta}}+\Big(\tfrac{\G_{N_0}(z)}{\sqrt{\beta}} + \tfrac{[\G_{N_0}(z)]}{2\beta}\Big)\\
&\notag\qquad
+\i\big(\vartheta_{N,N_T}(z)- \pi N F(z)\big) 
+ {\mathrm{c}_\beta}\log(1-z^2)  -\tfrac14\log\big(\tfrac{N(1-z^2)}{T\L(z)}\big) - \tfrac{[\G_{N_0}(z)]-[\G_{N,N_T}(z)]}{2\beta}
\\
&\label{Omegasplit}
= \Omega_N^1(z;T) + \Omega_N^0(z;T) +\Omega_N^2(z;T) 
\end{align}
where we define for $z\in \Q$, 
\begin{equation} \label{err3}
\begin{aligned}
\Re\Omega_N^0(z;T)&\coloneqq     {\mathrm{c}_\beta}\log(1-z^2)  -\tfrac14\log\big(\tfrac{1-z^2}{T}\big) - \tfrac{[\G_{N_0}(z)]-\Re[\G_{N,N_T}(z)]}{2\beta}\\
\Im \Omega_N^0(z;T)&\coloneqq   \vartheta_{N,N_T}(z)- \pi N F(z) +  \pi N_T(z)\1\{z< 0\} + \tfrac{\Im[\G_{N,N_T}(z)]}{2\beta}, \\
\Omega_N^1(z;T)&\coloneqq \bpsi_{N_T}(z) +\Big(\tfrac{\G_{N_0}(z)}{\sqrt{\beta}} + \tfrac{[\G_{N_0}(z)]}{2\beta}\Big)- \tfrac14\log\big(\tfrac{N}{\L(z)}\big) - \i \pi N_T(z)\1\{z< 0\} + \tfrac{\M_{N_T,N_1}(z)}{\sqrt{\beta}} .
\end{aligned}
\end{equation}
Here,  the \emph{error} $\Omega_N^0$ is deterministic, $\Omega_N^1$ is random and related to the parabolic stretch of the recursion, while $\Omega_N^2$ accounts for the  elliptic part of the error. 
In particular, by Proposition~\ref{prop:oneray1}, for $z\in\Q$, by extracting a subsequence as $N\to\infty$, 
\begin{equation} \label{err2}
\Omega_N^2(z;T) \to  \curlywedge_T^* , \qquad    \curlywedge_T^* = \O_{\P}(T^{-1/2}). 
\end{equation}
The limit \eqref{err2} holds for $T\in\mathds{Q}\cap[1,\infty)$ by a diagonal extraction and $\curlywedge_T  \to 0$ in probability as $T\to\infty$. This limit depends a priori on the subsequence and on $z\in\Q$. 

\smallskip

The \emph{parabolic error} can be handled using the Stochastic Airy machinery from \cite{LambertPaquette03}.
We review the relevant results in Section~\ref{sec:HP} and we have $\Omega_N^1(z;T) =\mho_N^1(0, T;z)+ \M_{N_T,N_1}(z)/\sqrt{\beta}$ so that, by Proposition~\ref{prop:Airy}, in distribution as $N\to\infty$, 
\begin{equation} \label{err5}
\Omega_N^1(z;T) \to \tfrac{\mathrm{g}+ \mathfrak{m}_T^{\pm}}{\sqrt{\beta}} -\tfrac{\E\mathrm{g}^2}{2\beta}+ \boldsymbol{\varpi}^\pm_T(0)  , \qquad \pm=\sgn(z), 
\end{equation}
where $\mathrm{g}$ is a Gaussian variable with mean zero and the law of $ \boldsymbol{\varpi}$ is specified by Definition~\ref{def:SA} in terms of the stochastic Airy function. The limit $ \omega_T^{\pm}$ is a (random) continuous function of $T$ and the convergence holds as processes indexed by $T\in \R_+$ and, besides $\pm = \sgn(z)$, the limit \eqref{err5} is independent of $z$. 

\smallskip

Finally, for the \emph{deterministic error}, by Proposition~\ref{prop:app_G1} below, if $z\in\Q$, 
\[
[\G_{N_0}(z)] - [\G_{N,N_T}(z)]= - \log\bigg(\frac{1-z^2}{T/4}\bigg) + \underset{N\to\infty}{\o(1)} , \qquad
\Im [\G_{N,N_T}(z)] = \pm \pi -2 \arcsin(z)  +\underset{N\to\infty}{\o(1)}
\]
so that with $ {\mathrm{c}_\beta} =\tfrac14-\frac{1}{2\beta}$, 
\[
\Re\Omega_N^0(z;T) =  {\mathrm{c}_\beta} \log T - \tfrac{\log 2}{\beta} + \underset{N\to\infty}{\o(1)}. 
\]
Then, by Proposition~\ref{prop:detphase} below, if $z\in\Q$, with $\pm=\sgn(z)$,
\[
\vartheta_{N,N_T}(z) -  \pi N F(z)  =  - N_T(z)\1\{z<0\} \mp \big( \tfrac23 T^{3/2} - \tfrac\pi4\big)
-\frac{\arcsin(z)}{2} +\underset{N\to\infty}{\o(1)} . 
\]
so that
\[
\Im \Omega_N^0(z;T) = \mp \big( \tfrac23 T^{3/2} - \tfrac\pi4\big)
-\tfrac{\arcsin(z)}{2} - \tfrac{\pm \pi-2\arcsin(z)}{2\beta}   +  \underset{N\to\infty}{\o(1)} .
\]
This implies that  if $z\in\Q$,  
\begin{equation} \label{err4}
\Omega_N^0(z;T) =  {\mathrm{c}_\beta} \log T  - \tfrac{\log 2}{\beta}    \mp\i \big( \tfrac23 T^{3/2} - {\mathrm{c}_\beta}\pi\big) - \i2{\mathrm{c}_\beta}\arcsin(z)  +  \underset{N\to\infty}{\o(1)} .
\end{equation}

Hence, combining \eqref{Omegasplit} with \eqref{err2}, \eqref{err5}, \eqref{err4}, we conclude that in distribution as $N\to\infty$ (along an appropriate subsequence for $T\in\mathds{Q}\cap[1,\infty)$),  
\begin{equation} \label{limitOmega}
\Omega_N(z) \to  {\mathrm{c}_\beta} \log T  - \tfrac{\log 2}{\beta}    \mp\i \big( \tfrac23 T^{3/2} - {\mathrm{c}_\beta}\pi\big) - \i2{\mathrm{c}_\beta}\arcsin(z) +\tfrac{\mathrm{g}}{\sqrt{\beta}} -\tfrac{\E\mathrm{g}^2}{2\beta}+ \tfrac1{\sqrt\beta}\boldsymbol{\varpi}^\pm_T(0) + \mathfrak{m}_T^{\pm} + \curlywedge_T^* . 
\end{equation}
In particular, the RHS of \eqref{limitOmega} is independent of $T$, and since $\curlywedge_T^*\to 0$ as $T\to\infty$, 
the following limit holds in distribution
\begin{equation} \label{limitmho}
\big( \boldsymbol{\varpi}^\pm_T(0) + \tfrac1{\sqrt\beta} \mathfrak{m}_T^{\pm}   \mp\i   \big(\tfrac23 T^{3/2}- {\mathrm{c}_\beta}\pi \big) + {\mathrm{c}_\beta} \log T\big) 
\to \hat{\boldsymbol{\mho}}_\beta^{\pm} .
\end{equation}
These asymptotics are directly relevant to prove Proposition~\ref{cor:mho}. 
Then, by  \eqref{limitOmega}, we also obtain the limit in distribution as $N\to\infty$, 
\[
\Omega_N(z) \to \hat{\boldsymbol{\mho}}_\beta^{\pm} - \tfrac{\log 2}{\beta}  - \i2{\mathrm{c}_\beta}\arcsin(z) +\tfrac{\mathrm{g}}{\sqrt{\beta}} -\tfrac{\E\mathrm{g}^2}{2\beta} .
\]
This limit holds for any $z\in\Q$ and along the full sequence as $N\to\infty$ since limit has the same law along any subsequence. 
This completes the proof.
\end{proof}

\begin{remark}[Hermite polynomial asymptotics]
If $\beta=\infty$, one has $\mathrm{c}_\infty =1/4$ and we obtain the asymptotics for $z\in\Q$, 
\[
\bpsi_N(z)|_{\beta =\infty} =  \i\pi N F(z) - {\mathrm{c}_\infty}\log(1-z^2) + \Omega_N(z)|_{\beta =\infty} 
=   \i\pi N F(z) -\tfrac14\log(1-z^2)  -\tfrac{\log \pi}{2} - \i\tfrac{\arcsin(z)}{2} + \underset{N\to\infty}{\o(1)}
\]
since $\hat{\boldsymbol{\mho}}_\infty = -\tfrac{\log \pi}{2}$ according to Remark~\ref{rk:Airy}. 
Since $h_N(z) = \Re\exp(\bpsi_N(z)|_{\beta =\infty})$, we recover the Hermite polynomial asymptotics \eqref{PR1}. 
\end{remark}

\subsection{Asymptotic regime in a neighborhood of 0.} \label{sec:0} 

As discussed in the introduction, in a $\O(N^{-1/2})$-neighborhood around 0, the whole transfer matrix recursion is elliptic.
In particular, there is no turning point and the characteristic polynomial cannot be approximated using the stochastic Airy function at the start of the recursion.
In fact, by \eqref{def:Phi0},  the characteristic polynomials $\{\widehat\Phi_n\}$ are independent of $N$ in this regime and we consider the complex phase:
\[
\mu\in \big(-\sqrt{4n},\sqrt{4n}\big) \mapsto \widehat{\bpsi}_n(\mu) \coloneqq \bpsi_n(\tfrac{\mu}{2\sqrt N})-\tfrac14\log N .
\]

\begin{lemma} \label{lem:GS}
For every $n\in\N$, the function $\big\{ \widehat{\bpsi}_n(\mu) ; |\mu| <\sqrt{4n} \big\}$
is smooth, independent of $N$, and $ \widehat{\Phi}_n(\mu)  =\Re\big[\exp\widehat{\bpsi}_n(\mu)\big]$. 
\end{lemma}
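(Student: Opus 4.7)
The plan is to substitute $z = \mu/(2\sqrt N)$ directly into the defining formula \eqref{def:psi} for $\bpsi_n$, and verify that the resulting expression depends on $N$ only through a common factor $N^{1/4}$ which is cancelled by the $-\tfrac14\log N$ shift in the definition of $\widehat\bpsi_n$. Under this substitution, $z\sqrt{N/n} = \mu/(2\sqrt n)$ and $n-Nz^2 = n-\mu^2/4$, so both the rotation angle $\theta_n(z) = \arccos(\mu/(2\sqrt n))$ and the prefactor $\sqrt{n/(n-Nz^2)}$ in \eqref{def:psi} become $N$-independent smooth functions of $(\mu,n)$ on the range $|\mu|<2\sqrt n$.

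Next, I would use \eqref{def:Phi0} to rewrite $\Phi_n(\mu/(2\sqrt N))$ and $\Phi_{n+1}(\mu/(2\sqrt N))$ as $N^{1/4}$ times a common real, smooth, strictly positive $\mu$-factor times $\widehat\Phi_n(\mu)$ and $\widehat\Phi_{n+1}(\mu)$, respectively. Plugging into \eqref{def:psi} and pulling the common $N^{1/4}$ outside, one obtains an explicit closed form for $\exp\widehat\bpsi_n(\mu)$ expressed purely in terms of $\mu$, $n$, $\widehat\Phi_n$ and $\widehat\Phi_{n+1}$, which establishes $N$-independence. Smoothness of $\widehat\bpsi_n$ then reduces to showing that this closed form never vanishes as a complex number: because $\widehat\Phi_n,\widehat\Phi_{n+1}$ are real and $\theta_n(\mu)\in(0,\pi)$, the imaginary part of the bracket in \eqref{def:psi} is a nonzero multiple of $\widehat\Phi_n(\mu)$ (namely $-\sin\theta_n(\mu)\,\widehat\Phi_n(\mu)$) and the real part equals $\cos\theta_n(\mu)\,\widehat\Phi_n(\mu)-\sqrt{(n+1)/n}\,\widehat\Phi_{n+1}(\mu)$, so simultaneous vanishing would force $\widehat\Phi_n(\mu)=\widehat\Phi_{n+1}(\mu)=0$. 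This is ruled out by the classical interlacing of zeros of consecutive principal-minor characteristic polynomials of a Jacobi matrix, as recorded in Appendix~\ref{app:charpoly} (Proposition~\ref{prop:prufer}).

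Finally, the real-part identity $\widehat\Phi_n(\mu)=\Re[\exp\widehat\bpsi_n(\mu)]$ follows by applying \eqref{polychar} at $z=\mu/(2\sqrt N)$ and using \eqref{def:Phi0} to rescale, since the common prefactor extracted in the previous step is real and factors cleanly out of the real part. The whole argument is essentially an algebraic substitution; the only non-routine ingredient is the interlacing input at the non-vanishing check, and I expect no genuine obstacle.
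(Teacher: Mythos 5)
Your proposal is correct and follows essentially the same route as the paper: substitute $z=\mu/(2\sqrt N)$ into \eqref{def:psi}, use \eqref{def:Phi0} to pull out the $N^{1/4}$ (cancelled by the $-\tfrac14\log N$ shift) together with the real positive Gaussian weight, and read off an $N$-independent closed form in terms of $\widehat\Phi_n,\widehat\Phi_{n+1}$, with the real-part identity coming from \eqref{polychar}. The only addition is that you spell out the non-vanishing/smoothness check via interlacing, which the paper leaves implicit since it is already built into the construction of $\bpsi_n$ in Appendix~\ref{app:charpoly} (Proposition~\ref{prop:prufer}).
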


\begin{proof}
This is a simply a rescaling using \eqref{def:Phi0}; $\{\widehat{\Phi}_n(\mu)\}$ are polynomial of increasing degree $n$ that are  independent of $N$. Then, by \eqref{def:psi}, for $\mu\in\big(-\sqrt{4n},\sqrt{4n}\big)$, 
\[\begin{aligned}
\exp\Big(\bpsi_n(\tfrac{\mu}{2\sqrt N}\big)\big)  & = \Big(\Phi_n(\tfrac{\mu}{2\sqrt N}\big) -\i\Big(2\sqrt{\tfrac{n+1}{4n-\mu^2}} \Phi_{n+1}(\tfrac{\mu}{2\sqrt N}\big)-  \mu \sqrt{\tfrac{n}{4n-\mu^2}} \Phi_n(\tfrac{\mu}{2\sqrt N}\big) \Big)\Big) \\ 
\exp\big(\widehat{\bpsi}_n(\mu)\big) & = \Big(\widehat{\Phi}_n(\mu) -\i\Big(2\sqrt{\tfrac{n+1}{4n-\mu^2}} \widehat{\Phi}_{n+1}(\mu)-  \mu \sqrt{\tfrac{n}{4n-\mu^2}} \widehat{\Phi}_n(\mu) \Big)\Big) \sqrt{e^{-\mu^2/2} /\sqrt{2\pi}} .\qquad \qedhere
\end{aligned}\]
\end{proof}



Let $\widehat{Z}_n \coloneqq Z_n(0)$ for $n\in\N$.
According to \eqref{def:J}, $\{\widehat{Z}_n\}$ is a sequence of i.i.d.~standard Gaussians random variables.
We define for $\mu\in\R$, the martingale sequence 
\begin{equation} \label{M0}
\widehat{\M}_{n,m}(\mu) \coloneqq \sum_{m<k\le n} \frac{-\i}{\sqrt k}\Big(\widehat{Z}_k-\overline{\widehat{Z}_k (-1)^k  e^{2\i\widehat{\phi}_{k-1}(\mu)}}\Big)  , \qquad \sqrt{4m} \ge |\mu| . 
\end{equation}
The last condition guarantees that the phase $\widehat\phi_k = \Im\widehat\psi_k$ is defined for $k\ge m$. 

This martingale is related to the martingale $\{\M_n\}$ from Definition~\ref{def:GW} by the following estimates: if $\mathcal{K} \Subset  \big(-3\sqrt{m},3\sqrt{m}\big)$, 
\[
\sup_{N\ge m}\sup_{\mu \in \mathcal{K}} \big\| \sup_{n>m} \big| \M_{n,m}\big(\tfrac{\mu}{2\sqrt N}\big)  - \widehat{\M}_{n,m}(\mu) \big| \big\|_2^2 
\lesssim \frac{C_{\mathcal{K}}}{m} . 
\]
These sub-Gaussian bounds follow directly from Lemma~\ref{lem:GW0}.

The global/local asymptotic behaviors of $\big\{\widehat{\bpsi}_n(\mu)\big\}$ for $\mu\in\R$ as $n\to\infty$ can be analyzed like that of $\big\{\bpsi_N(z)\big\}$ for $z\in(-1,1)$ as $N\to\infty$, the situation is even simpler because their is no \emph{turning point} and the martingale \eqref{M0} is also less complex. In this regime, we obtain the following result which is a special case of our main Theorem~\ref{thm:main}.
We will review the main steps of the proof to explain the main differences. 

\begin{theorem}[Asymptotics in a neighborhood of 0] \label{thm:0}
Let $\Lambda_n \coloneqq - \tfrac14\log(n) + \frac{\i\pi n}{2}$ for $n\in\N$ and $\mathcal{K} \Subset\R$ be any compact set. 
Fix $m\in\N$ such that $\mathcal{K} \subset  \big(-\sqrt{4m},\sqrt{4m}\big)$.
Then, on $\A_m$, for any $n\ge m$,
\begin{equation} \label{psi0}
\widehat{\bpsi}_{n}(\mu) = \Lambda_n + \i\sqrt{n} \mu   +\tfrac{1}{\sqrt{\beta}}\widehat{\M}_{n,m}(\mu) + \Omega_{n}^{(m)}(\mu)
\end{equation}
and $\Omega_{n}^{(m)}(\mu) \to \Omega_\infty^{(m)}(\mu)$ in probability as $n\to\infty$. 
Moreover, for a fixed $\mu\in\R$, it holds as $N\to\infty$,  
\[
\Big\{ \{\widehat{\bpsi}_{N}(\mu)\}_{2\pi} , \big( \widehat{\bpsi}_{N}\big(\mu + \tfrac{\pi \lambda}{\sqrt N}\big)  - \widehat{\bpsi}_{N}(\mu) \big) : \lambda\in\R \Big\} \to 
\big\{ \boldsymbol{\alpha} , \tfrac12 \omega_1(\lambda) : \lambda\in\R  \big\}
\]
in the sense of of finite dimensional distributions. 
\end{theorem}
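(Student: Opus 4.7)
The plan is to apply the elliptic-regime machinery of Section~\ref{sec:ell} directly, with the fixed base index $m$ playing the role that $N_T(z)$ plays in Proposition~\ref{prop:oneray1}. At $z=\mu/(2\sqrt N)$ with $|\mu|<\sqrt{4m}$, the recursion \eqref{ell_rec} is elliptic throughout $n\ge m$ and there is no turning point, so all appeals to Section~\ref{sec:HP} (the stochastic Airy function, the parabolic error $\mho_N^1$, the hyperbolic sum $\G_{N_0}$) are simply dropped; the role of the compound remainder $\Omega_N^0+\Omega_N^1$ that appears in the proof of Proposition~\ref{prop:limit} is then replaced by a single $N$-independent constant essentially equal to $\widehat\bpsi_m(\mu)$.

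For the decomposition \eqref{psi0}, I would write $\widehat\bpsi_n(\mu)=\widehat\bpsi_m(\mu)+\i\vartheta_{n,m}(z)+\tilde\psi_{n,m}(z)$ with $z=\mu/(2\sqrt N)$ and expand each piece. A Taylor expansion $\theta_k(z)=\arccos(\mu/(2\sqrt k))=\pi/2+O(k^{-1/2})$ together with Lemma~\ref{lem:theta} gives $\i\vartheta_{n,m}(z)=\tfrac{\i\pi n}{2}\pm\i\sqrt n\,\mu+C(m,\mu)+o_n(1)$ (the $\pm$ absorbed into the sign convention for $\widehat\bpsi_m$). Lemma~\ref{lem:linear} at $z=\mu/(2\sqrt N)$ provides the linearization of $\tilde\psi_{n,m}$ in terms of the martingale $\M_{n,m}(z)/\sqrt\beta$, the quadratic part $\mathbf Q_{n,m}/4$, the $\mathbf L$ martingale, and the bracket $[\M_{n,m}]$. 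Proposition~\ref{prop:Q}, evaluated on a high-probability event, yields $\mathbf Q_{n,m}(z)=\log(n/(m-\mu^2/4))+O(R/\sqrt T)$, producing the $-\tfrac14\log n$ half of $\Lambda_n$, and cancels $[\M_{n,m}]$ against $-[\G_{n,m}]$ up to oscillatory $o(1)$. A direct comparison between $\M_{n,m}(\mu/(2\sqrt N))$ and $\widehat\M_{n,m}(\mu)$ (the $L^2$-bound stated just after \eqref{M0}) substitutes the $N$-independent martingale. Collecting the remaining $\mu$- and $m$-dependent constants into $\Omega_n^{(m)}(\mu)$ completes the decomposition. Convergence in probability as $n\to\infty$ is then routine: the $L^2$-martingale $\widehat\M_{n,m}$ converges since its bracket is uniformly bounded for fixed $m$ (the turning-point singularity being absent), the $\mathbf L_{n,m}$ martingale converges by the tail bound \eqref{Lest}, and the oscillatory sums converge by Propositions~\ref{prop:osc1}--\ref{prop:osc2}.

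For the local convergence claim, the strategy is to iterate the two-term recursion of Lemma~\ref{lem:rec} for the two-point difference
\[
\tfrac12\varphi_N(\lambda;\mu)\coloneqq\widehat\bpsi_N\bigl(\mu+\tfrac{\pi\lambda}{\sqrt N}\bigr)-\widehat\bpsi_N(\mu),
\]
rescale the step $n\mapsto s=n/N\in(0,1]$, and verify that the resulting piecewise-constant process converges to the unique solution of the complex sine SDE \eqref{eq:csse} on $(0,1]$. The deterministic phase difference $\vartheta_{n,m}(\mu+\pi\lambda/\sqrt N)-\vartheta_{n,m}(\mu)$, via Lemma~\ref{lem:theta}, delivers the drift $\i\pi\lambda\,\mathrm ds/\sqrt s$, while the noise built from the $Z_n'$ of Definition~\ref{def:ell} matches the diffusive term $\sqrt{2/(\beta t)}(1-e^{-\i\Im\omega_t})\,\mathrm dZ_t$ after invoking the martingale CLT. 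Joint convergence with the uniform limit $\boldsymbol\alpha$ of $\{\widehat\phi_N(\mu)\}_{2\pi}$ follows from Weyl equidistribution for $\vartheta_{N,m}(\mu/(2\sqrt N))\bmod 2\pi$ combined with tightness of the stochastic remainder supplied by Proposition~\ref{prop:entcont}. The main obstacle is verifying that the limiting SDE has the correct complex quadratic and cross-variation structure of \eqref{eq:csse} jointly across several $\lambda$'s: this requires the oscillatory-cancellation technology of Section~\ref{sec:osc} (notably Proposition~\ref{lem:varphase}) to suppress cross-brackets between the $\G$ and $\W$ pieces, and amounts to the Jacobi analog of the Valk\'o--Vir\'ag construction in \cite{ValkoVirag,ValkoViragOperatorLimit}.
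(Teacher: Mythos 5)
Your treatment of the first claim follows essentially the same route as the paper: linearize the recursion as in Lemma~\ref{lem:linear} at $z=\mu/(2\sqrt N)$ (where, by Lemma~\ref{lem:GS}, everything is $N$-independent), read off $\tfrac{\i\pi n}{2}+\i\sqrt n\,\mu$ from the deterministic phase, get $-\tfrac14\log n$ from the $\mathbf Q$-sum via the oscillatory estimates (Propositions~\ref{prop:osc1}--\ref{prop:osc2}, Proposition~\ref{prop:Q}), and pass from $\M_{n,m}(\mu/(2\sqrt N))$ to $\widehat{\M}_{n,m}(\mu)$ by the $L^2$-comparison of Lemma~\ref{lem:GW0}; this is what the paper does. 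One assertion here is false, although fortunately inessential: the bracket of $\widehat{\M}_{n,m}(\mu)$ is \emph{not} uniformly bounded for fixed $m$ --- $[\widehat{\M}_{n,m},\overline{\widehat{\M}_{n,m}}]\asymp 2\log(n/m)\to\infty$ (the absence of a turning point removes the singularity of the increments, not the logarithmic growth of the variance; compare Proposition~\ref{prop:G1} near $0$) --- so $\widehat{\M}_{n,m}$ does not converge as $n\to\infty$. This does not matter for the conclusion, because $\widehat{\M}_{n,m}$ is subtracted in the definition of $\Omega_n^{(m)}$ and the remaining pieces (the $\mathbf L$-martingale via \eqref{Lest}, the oscillatory sums, the summable linearization errors) do converge, but as written your ``routine'' convergence step rests partly on a false premise. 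Note also that the paper establishes the quantitative Cauchy estimate \eqref{Cauchy0}, not just qualitative convergence; this bound is reused both in the proof of the local statement and for \eqref{Omegalim0}.

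The genuine gap is in the second claim. Your mechanism for the uniform phase is wrong: you invoke Weyl equidistribution of the deterministic phase $\vartheta_{N,m}(\mu/(2\sqrt N))\bmod 2\pi$ together with ``tightness of the stochastic remainder''. The stochastic part of $\widehat{\phi}_N(\mu)$ is not tight --- its variance grows logarithmically in $N$ (this is precisely the log-correlated martingale $\Im\widehat{\M}_{N,m}$) --- and the deterministic phase need not equidistribute along the full sequence $N\to\infty$ (at $\mu=0$ it equals $\pi(N-m)/2$, which is $4$-periodic modulo $2\pi$); moreover, deterministic equidistribution could never yield the asserted \emph{independence} of $\boldsymbol{\alpha}$ from the limit process $\omega_1(\cdot)$. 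The paper obtains uniformity and independence probabilistically: the Weyl/characteristic-function criterion is verified by conditioning on $\F_m$ and exploiting the independent Gaussian noise $\mathscr{W}_{\epsilon,\delta}$ accumulated on an intermediate window, whose variance $\log(\epsilon/\delta)$ diverges, so the conditional characteristic function vanishes for every nonzero frequency (end of Section~\ref{sec:sine}); Proposition~\ref{prop:entcont} is not the relevant tool (in the near-zero regime the entrance estimate is Remark~\ref{rk:entrance0}, i.e.\ \eqref{cont0}). In addition, ``invoking the martingale CLT'' does not by itself give convergence of the discretized recursion to the singular SDE \eqref{eq:csse}: one needs the stochastic Gr\"onwall stability, the block coarse-graining and Wasserstein coupling of Lemma~\ref{lem:coupling}, the continuum approximation, and the treatment of the singular start $t\to0$ via \eqref{cont0} and Proposition~\ref{prop:continuity0}. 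The paper does not redo any of this: after writing the discretized sine equation for the relative phase using \eqref{psi0} and \eqref{Cauchy0}, it concludes by observing that the claim is a special case of Proposition~\ref{prop:sine}, whose proof (Section~\ref{sec:zeta}) supplies exactly these ingredients; your sketch would have to reproduce them.
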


\begin{proof}[Proof]
{\bf Pointwise asymptotics.}
We proceed as in Lemma~\ref{lem:linear} to linearize the recurrence equation for $\big\{\widehat{\bpsi}_n(\mu)\big\}$. On the event $\A_m$, it holds for $n\ge m$, 
\begin{equation} \label{lin0}
\widehat{\bpsi}_{n,m}(\mu) =\i\bigg( \frac{\pi (n-m)}{2} - \sum_{k=m+1}^{n} \widehat{\theta}_k(\mu) \bigg) - \frac{\widehat{\mathbf{Q}}_{n,m}(\mu)}{4} - \frac{\widehat{\M}_{n,m}(\mu)}{\sqrt\beta}  + \frac{\widehat{\mathbf{S}}_{n,m}(\mu)}{2\beta} + \O(m^{(3\epsilon-1)/2}) 
\end{equation}
where $\widehat\theta_k(\mu) = \tfrac\pi2-\arccos(\tfrac{\mu}{2\sqrt k}\big)$,
$\widehat{\mathbf{Q}}_{n,m}(\mu)\coloneqq\sum_{k=m+1}^n k^{-1} \big(1 -  e^{-2\i \widehat{\phi}_{k-1}(\mu)}\big)$
and $\widehat{\mathbf{S}}_{n,m} \coloneqq \sum_{m<k\le n} k^{-1} \big( \widehat{Z}_{k}  +\overline{\widehat{Z}_k}  e^{-2\i\widehat{\theta}_k(\mu)} e^{-2\i\widehat{\phi}_{k-1}(\mu)}  \big)^2$.
Moreover, we can replace the deterministic term $\sum_{k=m+1}^{n} \widehat{\theta}_k(\mu)$ by $\mu(\sqrt{n}-\sqrt{m})$, up to a negligible error.
The proof is exactly the same with $z=\tfrac{\mu}{2\sqrt{N}}$, replacing $\{\Delta_n(z),Z_n'(z)\}$ by $\{n^{-1/2}, \widehat{Z}_n \}$ in \eqref{zeta3} instead of $\{\delta_n^2(z),Z_n(z)\}$ by using the estimates \eqref{lin0est}  as in Lemma~\ref{lem:GW0} 
In particular, by \eqref{EL}, the linearization errors satisfy
$\sup_{\lambda\in\mathcal{K}}\sum_{n\ge m}  |\operatorname{EL}_n\big(\tfrac{\mu}{2\sqrt{N}}\big)|  \lesssim m^{(3\epsilon-1)/2}$. Moreover, the oscillatory sums in $\widehat{\mathbf{Q}}_{n,m}$ and  $\widehat{\mathbf{S}}_{n,m}$ are also small when $m\gg1$.

\smallskip

By Propositions~\ref{prop:osc1} and~\ref{prop:osc2} \big(with $z=\tfrac{\mu}{2\sqrt{N}}$, $x=z$ and $m$ fixed using the blocks $n_k \coloneqq  \lfloor   k^{3/2} \rfloor$ for $k\ge T = \lceil m^{2/3}\rceil$\big) we have on the event $\A_\chi =\A_{\chi}(R,T; z)$, 
\[\begin{aligned}
\widehat{\mathbf{Q}}_{n,m}&= \sum_{k=m+1}^n k^{-1} +\O(R m^{-1/3})  = \log\left(\frac nm\right) +\O(R m^{-1/3}) ,  \\
\widehat{\mathbf{S}}_{n,m}&= \sum_{m<k\le n} k^{-1}\big( \widehat{Z}_{k}^2  + 2\big(|\widehat{Z}_{k}|^2 -1\big) e^{-2\i\widehat{\theta}_k} e^{-2\i\widehat{\phi}_{k-1} }  
+ \overline{\widehat{Z}_k^2}  e^{-4\i\widehat{\theta}_k} e^{-4\i\widehat{\phi}_{k-1} }\big)^2 +\O(R m^{-1/3})  ,
\end{aligned}\] 
where the error terms are controlled uniformly for $n\ge m$.
Then, up to errors, $\widehat{\mathbf{S}}_{n,m}$ is a complex martingale whose increments satisfy
$\|\widehat{\mathbf{S}}_{n+1,n}\|_{1} \lesssim n^{-1}$. Then, by Proposition~\ref{lem:conc1}, there is a constant $C\ge 1$ so that for any $R\ge 1$, 
\[
\P\Big[\big\{\sup_{n\ge m} |\widehat{\mathbf{Q}}_{n,m}- \log\left(\tfrac nm\right)| \ge C R m^{-1/3} \big\}\cup \big\{ |\widehat{\mathbf{R}}_{n,m}| \ge CR m^{-1/3} \big\}\cap \A_\chi \Big] \lesssim \exp(-R m^{2/3} ) .
\]
The random sequence $\{\Omega_{n}^{(m)}(\mu)\}_{n\ge m}$ is defined implicitly by \eqref{psi0}. In particular 
$\Omega_{m}^{(m)}(\mu) = \widehat{\bpsi}_{m}(\mu) - \Lambda_m - \i\sqrt{m} \mu $ and the increments of $\{\Omega_{n}^{(m)}(\mu)\}_{n\ge m}$ are controlled using \eqref{lin0}. 
Thus, combining the previous estimate and choosing for instance $R= m^{\epsilon}/C$ with $\epsilon>0$ and fixed, there is a constant $c>0$ such that if $m \ge \mathfrak{K}$,
\begin{equation} \label{Cauchy0}
\P\Big[\sup_{n\ge m} |\widehat{\Omega}_{n,m}^{(m)}(\mu)| \ge C m^{\epsilon-1/3} \Big]  \lesssim \exp(-c m^\epsilon) . 
\end{equation}
Note that we have included $\P[\A_m^c]$ and $\P[\A_\chi^c \cap \A_m]$ on the RHS by \eqref{PA}  and \eqref{PAchi}.

This establishes that 
$\{\Omega_{n}^{(m)}(\mu)\}_{n\ge m}$ is a Cauchy sequence in probability, so it is convergent; that is, for fixed $\mu\in\mathcal{K}$ and $m \ge \mathfrak{K}$, 
\[
\Omega_{n}^{(m)}(\mu) \to \Omega_\infty^{(m)}(\mu)  \qquad\text{in probability as $n\to\infty$}. 
\]

\paragraph{Local asymptotics.}
For the second claim, we consider the relative phase, for $N\gg 1$,  
\[
\widehat{\varphi}_n^{(N)}(\lambda;\mu) \coloneqq 2\bigg(\widehat{\bpsi}_{n}\big(\mu + \tfrac{\pi \lambda}{\sqrt N}\big)  - \widehat{\bpsi}_{n}(\mu) \bigg)  , \qquad  m \le n\le N .
\]
By \eqref{psi0} and \eqref{Cauchy0} to control the error term, on the appropriate event: it holds uniformly for any $n\ge m$, 
\[
\widehat{\varphi}_n^{(N)}(\lambda;\mu)  =  2\pi\i\lambda \sqrt{\frac nN}  +\sqrt{\frac2\beta} \sum_{m \le k < n} \frac{\widehat{W}_{k+1}}{\sqrt{k+1}}{\rm f}\big(\widehat{\varphi}_{k}^{(N)}(\lambda;\mu)\big) +\O\big(m^{\epsilon-1/3}\big) , \qquad \widehat{W}_{k} = \i \sqrt{2} \overline{\widehat{Z}_k (-1)^k  e^{2\i\widehat{\phi}_{k-1}(0)}}, 
\]
where  ${\rm f} : w\in\C \mapsto (1-e^{\i\Im w})$.
This is a discretization of the complex sine equation \eqref{eq:csse} with 
$t = \frac nN \in [ \frac mN,1]$. 
To prove the convergence of this process as $N\to\infty$, we use the scheme from Section~\ref{sec:zeta}.
In particular, this relies on the coupling\footnote{Here, $S_{j+1}\coloneqq  \sqrt{\frac1{N\eta}} \sum_{k=n_{j}+1}^{n_{j+1}}  \widehat{W}_{k} $ with $n_j = j N\eta$ for $j\in[\delta \eta^{-1},\eta^{-1}] $ choosing the parameter $\eta(N) \ll 1$. } 
from Lemma~\ref{lem:coupling} which does not have an explicit rate of convergence, so this requires to consider the above equation starting from $m\leftarrow \lfloor \delta N \rfloor $ for a small $\delta>0$ to apply the stochastic Gr\"onwall inequality (in this case, the initial condition is controlled using~the estimates \eqref{cont0}).
This concludes the proof as a special case of Proposition~\ref{prop:sine}.
\end{proof}

In \eqref{psi0}, the limit $\Omega^{(m)}$ depends on the parameter $m\in\N$ through the truncation of the martingale \eqref{M0}.
Then, using that  $\Omega_{m}^{(m)}(\mu)  = \widehat{\bpsi}_{m}(\mu)- \Lambda_m - \i\sqrt{m} \mu + $ with 
$\Omega_{m}^{(m)}(\mu)  = \Omega_{n}^{(m)}(\mu) - \widehat{\Omega}_{n,m}^{(m)}(\mu)$, taking the limit as $n\to\infty$ using the estimate \eqref{Cauchy0}, we obtain for a fixed $\mu\in\mathcal{K}$
\begin{equation} \label{Omegalim0}
\Omega^{(m)}_\infty(\mu)  =  \widehat{\bpsi}_{m}(\mu) - \Lambda_m - \i\sqrt{m} \mu+ \O_{\P}(m^{\epsilon-1/3}) , \qquad
\text{for any $m\in\N$ with }\mathcal{K} \subset  \big(-\sqrt{4m},\sqrt{4m}\big). 
\end{equation}

Finally, Theorem~\ref{thm:0} implies that the sequence of characteristic polynomials \eqref{def:Phi0} of the random Jacobi matrix $\mathbf{A}$ satisfy for a fixed $\mu\in\mathcal{K}$:
\begin{equation} \label{charpolyasymp}
\det[\mu-\beta^{-1/2}\mathbf{A}]_{n,n}  = \sqrt{n!} n^{-1/4} 
\Re \Big[ \exp\Big( \tfrac{\i\pi n}{2} + \i\sqrt{n} \mu   +\tfrac{1}{\sqrt{\beta}}\widehat{\M}_{n,m}(\mu) +  \Omega_\infty^{(m)}(\mu) + \underset{n\to\infty}{\o_\P(1)} \Big) \Big]. 
\end{equation}

\todo[inline]{Corollary for the eigenfunctions of $\mathbf{A}$?}


\begin{lemma} \label{lem:GW0}
Let $\mathcal{K} \Subset \R$ and $m\in\N$  with $\mathcal{K} \Subset  \big(-3\sqrt{m},3\sqrt{m}\big)$.
Then 
\[
\big\| \sup_{n>m} \big| \G_{n,m}(\tfrac{\lambda}{2\sqrt N}\big)  -  {\textstyle \sum_{k=m}^n} \tfrac{Z_k(0)}{\i\sqrt k} \big| \big\|_2^2  \lesssim  m^{-1} , 
\qquad 
\big\| \sup_{n>m} \big| \W_{n,m}(\tfrac{\lambda}{2\sqrt N}\big)  +{\textstyle \sum_{k=m}^n} \tfrac{Z_k(0)}{\i\sqrt k} (-1)^ke^{2\i\widehat{\phi}_{k-1}(\lambda)} \big| \big\|_2^2  \lesssim m^{-1} , 
\]
where the implied constants depend only on $\mathcal{K}$.
\end{lemma}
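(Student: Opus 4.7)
My plan is to treat each statement as a difference of $\F_n$-adapted martingales and reduce the supremum-$L^2$ bound to a sum of squared $L^2$ norms of increments via Doob's maximal inequality. I fix $z=\lambda/(2\sqrt N)$ with $\lambda\in\mathcal{K}$; since $\mathfrak{L}(z)$ and $N_0(z)$ are bounded uniformly for $\lambda\in\mathcal{K}$, the indicator $\1\{k\notin\Gamma(z)\}$ equals $1$ for all $k>m$ once $m$ is large enough, with any finitely many boundary terms contributing an admissible $O(m^{-1})$ to the final $L^2$ sum. For such $k$, $\sqrt{k}\sqrt{Nz^2/k-1}=\i\sqrt{k-\lambda^2/4}$. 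Both $\G_{n,m}(z)$, $\W_{n,m}(z)$ and the two approximating sums are $\F_n$-martingales (since $Z_k(0)$ is centered and independent of $\F_{k-1}$, while $\theta_k(z)$, $\phi_{k-1}(z)$, and $\widehat{\phi}_{k-1}(\lambda)$ are all $\F_{k-1}$-measurable), so Doob's $L^2$ maximal inequality reduces the task to proving $\sum_{k>m}\|d_k\|_2^2\lesssim m^{-1}$ for the $k$-th increment $d_k$ of each difference.

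To estimate $d_k$, I will use two Taylor bounds: $|(k-\lambda^2/4)^{-1/2}-k^{-1/2}|\lesssim \lambda^2 k^{-3/2}$, and, from the representation $J(w)=-\i e^{\i\arcsin w}$ in \eqref{def:J}, $|J(\lambda/(2\sqrt{k}))-J(0)|\lesssim |\lambda|k^{-1/2}$, the latter giving $\|Z_k(z)-Z_k(0)\|_2\lesssim |\lambda|k^{-1/2}$ via $\|Y_k\|_2\le\mathfrak{S}$. For the $\G$-comparison I split $d_k=[a_k(z)-b_k]Z_k(z)+b_k[Z_k(z)-Z_k(0)]$ with $a_k(z)=(\i\sqrt{k-\lambda^2/4})^{-1}$ and $b_k=(\i\sqrt{k})^{-1}$, obtaining $\|d_k\|_2\lesssim \lambda^2 k^{-3/2}+|\lambda|k^{-1}$ and thus $\sum_{k>m}\|d_k\|_2^2\lesssim m^{-1}$, uniformly for $\lambda\in\mathcal{K}$. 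The $\W$-comparison follows the identical decomposition once the oscillatory factors are matched: using $\phi_{k-1}(z)=\widehat{\phi}_{k-1}(\lambda)$ (immediate from Lemma~\ref{lem:GS}) together with $2\theta_k(z)=\pi-2\arcsin(\lambda/(2\sqrt{k}))$, the combined phase in $\W_{n,m}(z)$ equals $-e^{-2\i\arcsin(\lambda/(2\sqrt{k}))}e^{2\i\widehat{\phi}_{k-1}(\lambda)}$, which differs from the target $(-1)^k e^{2\i\widehat{\phi}_{k-1}(\lambda)}$ by an $O(|\lambda|/\sqrt{k})$ multiplicative factor once the systematic $-1=e^{\i\pi}$ from $e^{2\i\theta_k(z)}$ combines with the deterministic $e^{\i\pi(k-1)}$ coming from the $\tfrac{\pi(k-1)}{2}$ component of $\widehat{\phi}_{k-1}(\lambda)$ (via \eqref{psi0}).

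The main technical obstacle is precisely this phase bookkeeping for the $\W$-comparison: the alternating sign $(-1)^k$ in the statement is not a single deterministic prefactor but emerges from combining the limit $e^{2\i\theta_k(z)}\to -1$ with the deterministic $\tfrac{\pi(k-1)}{2}$ inside $\widehat{\phi}_{k-1}(\lambda)$. Once this identification is carried through, the Doob-plus-Taylor scheme yields the claimed $O(m^{-1})$ bound, uniformly for $\lambda\in\mathcal{K}$.
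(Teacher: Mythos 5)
Your overall scheme is the same as the paper's: view each difference as an $\{\F_n\}$-martingale, reduce the supremum to a sum of squared $L^2$-norms of increments via Doob's maximal inequality, and estimate the increments by Taylor expanding $1/\sqrt{k-\lambda^2/4}$ around $1/\sqrt{k}$ and $Z_k(z)$ around $Z_k(0)$. Your treatment of the $\G$-comparison is correct and matches the paper's.

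The gap is in the $\W$-comparison, exactly at the phase bookkeeping you flag as the main obstacle. You correctly compute that the phase factor in $\W_{n,m}(\tfrac{\lambda}{2\sqrt N})$ is $-e^{-2\i\arcsin(\lambda/(2\sqrt k))}\,e^{2\i\phi_{k-1}(z)}$, and you correctly identify $\phi_{k-1}(z)=\widehat{\phi}_{k-1}(\lambda)$ from Lemma~\ref{lem:GS}. But under that identification the factor $e^{2\i\widehat{\phi}_{k-1}(\lambda)}$ is \emph{common} to your expression and to the target summand containing $(-1)^k e^{2\i\widehat{\phi}_{k-1}(\lambda)}$, so the $\tfrac{\pi}{2}(k-1)$ growth inside $\widehat{\phi}_{k-1}$ (visible in \eqref{psi0}) cancels in the comparison and cannot supply the extra alternating sign: the ratio of the two phase factors is $\pm\, e^{-2\i\arcsin(\lambda/(2\sqrt k))}$ with the sign alternating in $k$, which is \emph{not} $1+\O(|\lambda|/\sqrt k)$ — it is close to $-1$ for every other $k$. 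With that matching the increments of the $\W$-difference have $L^2$-norm of order $k^{-1/2}$ on half of the indices, the sum of squares is of order $\log(n/m)$ rather than $m^{-1}$, and the claimed bound fails. Your reconciliation is circular: you cannot simultaneously set $\widehat{\phi}_{k-1}(\lambda)=\phi_{k-1}(z)$ and then invoke the $\tfrac{\pi}{2}(k-1)$ component of that same $\widehat{\phi}_{k-1}$ to manufacture the $(-1)^k$. To close the step you must keep the conventions consistent: either the explicit $(-1)^k$ in the approximating martingale (as in \eqref{M0}) has to be read as the $e^{\i\pi(k-1)}$-part of the true phase, i.e.\ the $\widehat{\phi}$ appearing next to it is the recentered, slowly varying phase $\phi_{k-1}-\tfrac{\pi}{2}(k-1)$, or one compares $\W_{n,m}(z)$ directly with $-\sum_k \tfrac{Z_k(0)}{\i\sqrt k}\,e^{2\i\phi_{k-1}(z)}$ without the alternating factor. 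Once the phases are matched consistently, your Doob-plus-Taylor estimate goes through for $\W$ exactly as it does for $\G$.
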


\begin{proof}
We use the notations $\widehat{Z}_k(\mu) \coloneqq Z_k(\tfrac{\mu}{2\sqrt N}\big)$ 
and  $\widehat\theta_k(\mu) =  \theta_k(0) -\theta_k(\tfrac{\mu}{2\sqrt N}\big)  = \tfrac\pi2-\arccos(\tfrac{\mu}{2\sqrt k}\big)$
for $k\in\N$ (if defined). 
Then $\widehat{Z}_k(\mu) =  \tfrac{X_k - \i Y_k e^{\i \widehat\theta_k(\mu)}}{\sqrt 2}$,  $\widehat{Z}_k(0) = \widehat{Z}_k$ and
for $k\ge m$,
\begin{equation} \label{lin0est}
\widehat{\theta}_k(\mu) = \tfrac{\mu}{2\sqrt k} + \tfrac{\mu^3}{48 k^{3/2}} +\O(k^{-2})  , \qquad \tfrac1{\sqrt{k-\mu^2/2}} = \tfrac1{\sqrt{k}}- \tfrac{\mu^2}{4 k^{3/2}} +\O(k^{-2}) 
\end{equation}
where the implied constants depend only on $\mathcal{K}$.

According to Definition~\ref{def:GW}, choosing $m$ sufficiently large, one has
\[
\G_{n,m}\big(\tfrac{\mu}{2\sqrt N}\big) = \sum_{k=m+1}^{n} \frac{-\i\widehat{Z}_k(\mu) }{\sqrt{2k-\mu^2/2}} , \qquad 
\W_{n,m}(\tfrac{\mu}{2\sqrt N}\big) =\sum_{k=m+1}^{n} \frac{-\i\widehat{Z}_k(\mu)}{\sqrt{2k-\mu^2/2}} (-1)^k e^{-2\i\widehat\theta_k(\mu)} e^{2\i\widehat\phi_{k-1}(\mu)}
\]
using that $\phi_{k}(\tfrac{\mu}{2\sqrt N}) =\widehat\phi_k(\mu) $ for $k\ge m$. 
Consequently, using that $(\widehat{Z}_k(\mu)-\widehat{Z}_k) = Y_k(1-e^{\i\widehat{\theta}_k(\mu)})/\sqrt{2}$
\[
\G_{n,m}\big(\tfrac{\mu}{2\sqrt N}\big) - \sum_{k=m+1}^{n} \frac{-\i \widehat{Z}_k}{\sqrt k} 
= \i\sum_{k=m+1}^{n} \bigg(\frac1{\sqrt{k}}- \frac1{\sqrt{k-\mu^2/2}}\bigg) \widehat{Z}_k(\mu) 
+ \i \sum_{k=m+1}^{n} Y_k \frac{1-e^{\i\widehat{\theta}_k(\mu)}}{\sqrt{2 k}}
\]

\[
\sup_{\mu\in\mathcal{K}}\bigg| \G_{n,m}\big(\tfrac{\mu}{2\sqrt N}\big) - \sum_{k=m+1}^{n} \frac{\i \widehat{Z}_k}{\i\sqrt k} 
-  \i \mu \sum_{k=m+1}^{n} \frac{Y_k}{\sqrt8 k} \bigg| 
\lesssim \sum_{k=m+1}^{n} \frac{|\widehat{Z}_k|}{k^{3/2}}
\]
and 
\[
\sup_{\mu\in\mathcal{K}}\bigg| \W_{n,m}\big(\tfrac{\mu}{2\sqrt N}\big) - \sum_{k=m+1}^{n} \frac{\widehat{Z}_k (-1)^k  e^{2\i\widehat{\phi}_{k-1}(\mu)}}{\sqrt k} 
-  \i \mu \sum_{k=m+1}^{n} \frac{Y_k  e^{2\i\widehat{\phi}_{k-1}(\mu)}}{\sqrt8 k} \bigg| 
\lesssim \sum_{k=m+1}^{n} \frac{|\widehat{Z}_k|}{k^{3/2}}.
\]

This sum is independent of $N$, it is a $\{\F_n\}$-martingale and, using that  we have 
\[
\sum_{k=m+1}^{n}  \big\| \tfrac{X_n + e^{\i\widehat{\theta}_n(\lambda)}Y_n}{\sqrt{2k-\lambda^2/2}}-  \tfrac{Z_k(0)}{\sqrt k} \big\|_2^2 \lesssim \sum_{k=m+1}^n  \big| \frac1{k-\lambda^2/4} -\frac1k\big|
+  \sum_{k=m+1}^n \frac{|\widehat{\theta}_n(\lambda)-\widehat{\theta}_n(0)|^2}k.  
\]  
Since  $|\widehat{\theta}_n(\lambda)-\widehat{\theta}_n(0)| \le \sqrt{ \frac{\lambda^2}{4k-\lambda^2}}$, this implies that if $2 m \ge \lambda^2$,  
\[
\sum_{k=m+1}^{n}  \big\| \tfrac{X_n + e^{\i\widehat{\theta}_n(\lambda)}Y_n}{\sqrt{k-\lambda/4}} -  \tfrac{Z_k(0)}{\sqrt k} \big\|_2^2 \lesssim \sum_{k=m+1}^n \frac{\lambda^2}{k^2}  \lesssim \frac{\lambda^2}{m} . 
\]  
Using the martingale property, this yields   for any $n\ge m$, 
\[
\big\| \sup_{n>m} \big| \G_{n,m}(\tfrac{\lambda}{2\sqrt N}\big)  -  \tfrac{Z_k(0)}{\sqrt k} \big| \big\|_2^2 \lesssim \frac{\lambda^2}{m} . 
\]
Similarly, using Lemma~\ref{lem:GS},  $\widehat{\phi}_n(\lambda)=\Im\widehat{\bpsi}_n(\lambda)$ and 
\[
\W_{n,m} (\tfrac{\lambda}{2\sqrt N}\big) = \sum_{k=m+1}^{n} \tfrac{(X_n + e^{\i\widehat{\theta}_n(\lambda)}Y_n) }{\sqrt{2k-\lambda^2/2}}e^{2\i(\widehat{\theta}_k(\lambda)+\widehat{\phi}_{k-1}(\lambda))}
\] 
is also a $\{\F_n\}$-martingale, independent of $N$.  Moreover, the same computation as for the $\G$ field, also gives  for any $n\ge m$, 
\[
\sum_{k=m+1}^{n} \big\| \tfrac{(X_n + e^{\i\widehat{\theta}_n(\lambda)}Y_n)}{\sqrt{2k-\lambda^2/2}}e^{2\i(\widehat{\theta}_k(\lambda)+\widehat{\phi}_{k-1}(\lambda))}
+ \tfrac{Z_k(0)}{\sqrt k} e^{2\i\widehat{\phi}_{k-1}(\lambda)} \big\|_{2,k}^2 \lesssim \frac{\lambda^2}{m} . \qedhere
\]
%
\end{proof}

\section{Relative phase}\label{sec:cont}


This section concerns continuity properties of the \emph{complex phase} $z\mapsto \psi_n(z)$ on  short scales. These estimates are important to understand how the phases at different points \emph{decorrelate} in the elliptic regime (branching structure).
In particular, we will give two applications of Proposition~\ref{prop:relcont1}:
\begin{itemize}[leftmargin=*]   \setlength\itemsep{0em}
\item decorrelations of the $\W$ part of the martingale noise (Section~\ref{sec:Wosc}).
\item control of the initial condition in the approximation of the \emph{microscopic} relative phase by the complex sine equation \eqref{eq:csse} (Section~\ref{sec:zeta}).
\end{itemize}

Throughout this section, let $T\ge 2$ ($T$ is fixed independent of $N$), and set
\begin{equation} \label{ell}
m :=  N_T(z) , \qquad\qquad
\Omega(w,z) \coloneqq N^{-1}|z-w|^{-2}. 
\end{equation}
The quantity $\Omega(w,z)$ will be used to control the errors. 
We consider the event  
\begin{equation} \label{B1}
\B =\B(T,\varepsilon; z) \coloneqq 
\big\{ 
|\bpsi_m(w)- \bpsi_m(z)|\leq \big(\mathfrak{L}(z)/\Omega(w,z)\big)^{3/8}  ; \, |z-w| \le \varepsilon/\sqrt{N\mathfrak{L}(z)}
\big\}. 
\end{equation}
This event controls the entrance behavior of the relative phase at the start of the elliptic stretch.
Using the stochastic Airy function machinery, one can prove that if $z\in\Q$, for a fixed $T$, 
$\B(T,\varepsilon; z)$ holds with high probability. 
By Proposition~\ref{prop:entcont} (with $\alpha=8/3$, $c=1$ and $T$ fixed), we have for $z\in\Q$
\begin{equation} \label{B2}
\liminf_{\varepsilon\to0}\liminf_{N\to\infty}\P[\B(T, \varepsilon; z)] =1.
\end{equation}

Throughout this section, we  assume
that for some $0<\varepsilon \le 1$, 
\begin{equation} \label{loc}
|z-w| \le \varepsilon/\sqrt{N\mathfrak{L}(z)} .
\end{equation}
This is the regime where the \emph{turning points are matching}, meaning that  
\[
|N_0(z)- N_0(w)|\le 2\big( N|z| |z-w| + \varepsilon^2\big) \lesssim\varepsilon\, \mathfrak{L}(z)
\]
so that $|\mathfrak{L}(z)-\mathfrak{L}(w)| \lesssim  \varepsilon$ 
and also $\Omega(w,z) \ge  \varepsilon^{-2}\mathfrak{L}(z)$. 




\begin{proposition}[Continuity] \label{prop:relcont1}
Recall that $\phi_n(z) =\Im\bpsi_n(z)$ for $n\ge m$. 
Let $z,w\in(-1,1)$ which satisfy \eqref{loc}. 
Consider the events $\A_m$ \eqref{eventA}, and $\B =\B(T, \varepsilon; z)$ \eqref{B1}. 
There are constants $C , c >0$ (depending only on $\beta$) so that, with $m$ and $\Omega$ as in \eqref{ell}, one has for $R\ge 1$. 
\[\begin{aligned}
\P\Big[\big\{\exists n \in [m,N_0(z)+e^{-C R}\Omega] ;  |\phi_n(w)-\phi_n(z)| > \big(\tfrac{n-N_0(z)}{\Omega(w,z)}\bigr)^{1/4} \big\}\cap \A_m \cap \B \Big]   \lesssim  \exp(-cR).
\end{aligned}\]
\end{proposition}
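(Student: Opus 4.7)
The strategy is a bootstrap argument on the stopping time
\[
\tau := \inf\{n \ge m : |\phi_n(w) - \phi_n(z)| > (s_n/\Omega)^{1/4}\}, \qquad s_n := n - N_0(z),
\]
aiming to show $\P[\{\tau \le N_0(z) + e^{-CR}\Omega\} \cap \A_m \cap \B] \lesssim \exp(-cR)$. Applying Lemma~\ref{lem:linear} at both $z$ and $w$ and taking the imaginary part of the difference gives, on $\A_m$,
\[
\phi_n(w) - \phi_n(z) = [\phi_m(w) - \phi_m(z)] + [\vartheta_{n,m}(w) - \vartheta_{n,m}(z)] - \tfrac{1}{\sqrt\beta}\,\mathcal{M}_n + \mathcal{E}_n,
\]
where $\mathcal{M}_n := \Im[\M_{n,m}(w) - \M_{n,m}(z)]$ and $\mathcal{E}_n$ collects the differences of the $\mathbf{Q}$, $\mathbf{L}$, and linearization-error terms. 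The initial data is controlled by $\B$: since $\mathfrak{L}/\Omega \le \varepsilon^2 \le 1$ and $T \ge 2$, $(\mathfrak{L}/\Omega)^{3/8} \le (s_m/\Omega)^{1/4}$. The deterministic drift is bounded using Lemma~\ref{lem:theta} by $|w-z|\sqrt{N}\sum_{k=m+1}^n \delta_k(z) \lesssim \sqrt{s_n/\Omega}$, which is dominated by the target $(s_n/\Omega)^{1/4}$ for $s_n \le \Omega$.

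The core estimate is for $\mathcal{M}_n$, split using $\M = \G + \overline{\W}$. For the $\G$ part, using $Z_k(z) - Z_k(w) = (e^{-\i\theta_k(z)} - e^{-\i\theta_k(w)})Y_k/\sqrt 2$ and Lemma~\ref{lem:theta}, the conditional bracket is bounded deterministically by $|w-z|^2 N \sum_k \delta_k^4 \lesssim \Omega^{-1}/(T\mathfrak{L})$, negligible on the scale of the target. For the $\W$ part, one expands
\[
\overline{Z_k(z)}\,e^{-2\i\phi_{k-1}(z)} - \overline{Z_k(w)}\,e^{-2\i\phi_{k-1}(w)} = (\overline{Z_k(z)} - \overline{Z_k(w)})\,e^{-2\i\phi_{k-1}(z)} + \overline{Z_k(w)}\bigl(e^{-2\i\phi_{k-1}(z)} - e^{-2\i\phi_{k-1}(w)}\bigr);
\]
on the bootstrap event $\{\tau > k-1\}$ the second term contributes to the conditional bracket at most $C\delta_k^2 (s_{k-1}/\Omega)^{1/2}$, and summing yields $[\mathcal{M}]_{n \wedge \tau} \lesssim \sqrt{s_n/\Omega}$.

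The Freedman-type bound of Proposition~\ref{lem:conc2} applied to the stopped martingale then gives
\[
\P\bigl[\sup_{k \le n \wedge \tau} |\mathcal{M}_k| > \lambda\bigr] \lesssim \exp\bigl(-c \lambda^2 \sqrt{\Omega/s_n}\bigr),
\]
and taking $\lambda \asymp (s_n/\Omega)^{1/4}$ at $s_n = e^{-CR}\Omega$ produces the decay $e^{-cR}$ (a dyadic union bound over windows of $s_n$ absorbs the supremum). The error $\mathcal{E}_n$ is controlled on a parallel event: the difference of $\mathbf{Q}$'s is handled by oscillatory-sum estimates à la Propositions~\ref{prop:osc1}--\ref{prop:osc2} applied jointly at $z$ and $w$ with the blocks of \eqref{block1} (the two choices of blocks coincide up to $\O(\varepsilon)$ since the turning points match), the $\mathbf{L}$ difference is a martingale whose increments acquire a small factor $|w-z|\sqrt{N/k}\,\delta_k$ and thus a negligible bracket, and the linearization error is deterministically $\o(1)$ on $\A_m$. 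A standard stopping-time argument then closes the bootstrap by showing that $\tau \le N_0(z) + e^{-CR}\Omega$ forces a violation of the above pathwise estimate.

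The main obstacle is the self-referential nature of the $\W$-bracket bound: because $w$ and $z$ have matching turning points, the phases $\phi_k(z)$ and $\phi_k(w)$ have been built up from essentially the same randomness through the parabolic transition, so one cannot treat $\M_{n,m}(z)$ and $\M_{n,m}(w)$ as independent. The bootstrap, using the small phase difference as both hypothesis and conclusion, is exactly what allows one to trap this dependence and obtain the optimal $(s_n/\Omega)^{1/4}$ scaling; the pre-factor $e^{-CR}$ in the time cutoff is chosen so that the martingale fluctuation $R^{1/2}(s_n/\Omega)^{1/4}$ never overshoots the target.
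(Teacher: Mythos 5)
There is a genuine gap at the heart of your argument, and it is quantitative: the Freedman step cannot produce any decay, let alone $e^{-cR}$. Your bracket computation is correct — on the bootstrap event the dominant contribution to $[\mathcal{M}]_{n\wedge\tau}$ comes from the terms $\overline{Z_k(w)}\bigl(e^{-2\i\phi_{k-1}(z)}-e^{-2\i\phi_{k-1}(w)}\bigr)$ and sums to
\[
\sum_{m<k\le n}\delta_k^2\,(s_{k-1}/\Omega)^{1/2}\;\asymp\;\sqrt{s_n/\Omega},
\]
which is exactly the \emph{square of the bootstrap threshold} $(s_n/\Omega)^{1/4}$. Hence $\P[\sup_{k\le n\wedge\tau}|\mathcal{M}_k|>\lambda]\lesssim\exp(-c\lambda^2\sqrt{\Omega/s_n})$ evaluated at $\lambda\asymp(s_n/\Omega)^{1/4}$ gives $\exp(-c)$, a constant; and at $s_n=e^{-CR}\Omega$ both $\lambda^2$ and the bracket carry the same factor $e^{-CR/2}$, so the cutoff buys nothing. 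A union bound over dyadic windows then only makes the bound worse. This is the signature of \emph{multiplicative} noise: the stochastic forcing of the equation for $\alpha_n=2\partial\phi_n$ is itself proportional to $\alpha$, so over each dyadic time scale the fluctuation is of the same order as the quantity you are trying to control, and no direct additive martingale bound can close the bootstrap.

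The paper's proof resolves exactly this point by not lumping the $\alpha$-proportional noise into an additive error. After the linearization (Lemma~\ref{lem:lin2}), the imaginary part is written as a linear recursion $\alpha_n=\alpha_m P_n+2\sum_{k}\tfrac{P_n}{P_k}(\partial\theta_k+\Im(\operatorname{EM}_k+\operatorname{EL}_k))$ with the propagator $P_n$ of \eqref{def:P}; the genuinely additive errors then have brackets of order $\delta_k^2\a_k^4$ (not $\delta_k^2\a_k^2$), i.e.\ strictly smaller than the target, while the dangerous term is absorbed into $P_n$. The second key input, which your proposal has no substitute for, is that $P_n/P_k$ is an exponential martingale with negative compensator, $\exp\bigl(\tfrac{2}{\sqrt\beta}\Im\mathbf{W}_{n,k}-\tfrac{2}{\beta}[\Im\mathbf{W}_{n,k}]+\mathcal{E}_{n,k}\bigr)$ (Lemma~\ref{lem:lin4}), whose quadratic variation grows logarithmically, so that over dyadic blocks the propagator contracts geometrically (Lemma~\ref{lem:Pdyadic}, Proposition~\ref{prop:prop}). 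It is this contraction that lets one sum error contributions of size $\a_{n_i}$ with geometrically decaying weights and conclude $\hat\alpha_\tau\le 1/4$ except with probability $\lesssim e^{-cR}$; the $e^{-CR}$ in the time cutoff then enters through the choice $S=LR$ in the dyadic construction, not through a balance between Freedman tails and the threshold. Without the variation-of-constants representation and the exponential-martingale decay of $P_n/P_k$ (or some equivalent Gr\"onwall-with-multiplicative-noise mechanism), the proof cannot be completed along the lines you propose.
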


The proof of Proposition~\ref{prop:relcont1} occupies the remainder of this section. Recall that we decompose the phase $\phi_n =\phi_m+ \vartheta_{n,m}+ \chi_{n,m}$, where the deterministic part  $\{ \vartheta_{n,m}\}_{n\ge m}$ satisfies appropriate estimates and the initial condition $\phi_m=\Im\psi_m$ is controlled by \eqref{B1}.
The proof is divided in the following steps:
\begin{enumerate}[leftmargin=*]   \setlength\itemsep{0em}
\item In Section~\ref{sec:linrel}, based on  Lemma~\ref{lem:linear}, we linearize the recursion equation satisfied by the relative phase $\big\{\partial\chi_n(w,z)\big\}_{n\ge m}$ and establish bounds for the various linearization errors. 
\item In Section~\ref{sec:relrep}, this allows us to express $\big\{\partial\chi_n(w,z)\big\}_{n\ge m}$ in terms of certain  generators $\{P_n\}_{n\ge m}$ and we develop bounds  for these generators in Section~\ref{sec:gen} which allows to control the growth of  $\big\{\partial\chi_n(w,z)\big\}_{n\ge m}$.
\item Finally, we conclude the proof in Section~\ref{sec:st} by using a stopping time argument. 
\end{enumerate}

For Section~\ref{sec:zeta}, we record the following consequence of Proposition~\ref{prop:relcont1}.

\begin{proposition} \label{prop:entrancesmall}
Let $\mathcal{K} \Subset \R$.  
Let $z\in\Q$ and $M = M(\delta;z): = N_0(z)+ \delta N \varrho(z)^2$ for $\delta>0$.
There is a constant $\epsilon>0$ so that
\[
\lim_{\delta\to 0} \limsup_{N\to\infty} \sup_{\lambda\in\mathcal{K}} \P\big[|\partial\bpsi_M\big(z,z+\tfrac{\lambda}{N\varrho(z)}\big)| > \delta^\epsilon \big] = 0 . 
\]
\end{proposition}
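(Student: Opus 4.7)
The plan is to reduce Proposition~\ref{prop:entrancesmall} to Proposition~\ref{prop:relcont1} by choosing the parameter $R$ to grow logarithmically in $1/\delta$. First I would set $w = z + \lambda/(N\varrho(z))$ and compute
\[
|z-w| = \frac{|\lambda|}{N\varrho(z)}, \qquad \Omega(w,z) = \frac{N\varrho(z)^2}{\lambda^2}, \qquad \frac{M-N_0(z)}{\Omega(w,z)} = \delta\lambda^2.
\]
For $z\in\Q$ and $\lambda$ in the fixed compact $\mathcal{K}$, the assumption $N^{1/3}\varrho(z)\to\infty$ gives $\mathfrak{L}(z)/(N\varrho(z)^2)\to 0$, so the localization condition \eqref{loc} (with $\varepsilon=1$, say) is satisfied for all large $N$. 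Fix $T\ge 2$ and $\varepsilon > 0$ small and set $m=N_T(z)$; by \eqref{PA} and the entry estimate \eqref{B2}, the event $\A_m\cap\B(T,\varepsilon;z)$ has probability tending to $1$ as $N\to\infty$ followed by $\varepsilon\to 0$.

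Next I would take $R(\delta)\coloneqq\frac{1}{2C}\log(1/\delta)$, where $C$ is the constant from Proposition~\ref{prop:relcont1}, so that for every $\lambda\in\mathcal{K}$ and every sufficiently small $\delta$, $e^{-CR}\Omega(w,z)\ge M-N_0(z)$. Proposition~\ref{prop:relcont1} then yields, outside an exceptional event of probability $\lesssim e^{-cR}=\delta^{c/(2C)}$, the uniform bound
\[
|\phi_n(w)-\phi_n(z)|\lesssim \bigl((n-N_0(z))/\Omega(w,z)\bigr)^{1/4} \quad \text{for all } n\in[m,M],
\]
which at $n=M$ specializes to $|\phi_M(w)-\phi_M(z)|\lesssim (\delta\lambda^2)^{1/4}$.

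To conclude, I would upgrade this imaginary-part estimate to the full complex phase using the decomposition $\bpsi_n=\bpsi_m+\i\vartheta_{n,m}+\tilde\psi_{n,m}$ and take differences. The initial term $\partial\bpsi_m(w,z)$ is $O(\delta^{\epsilon})$ directly from $\B$ together with $\mathfrak{L}(z)/\Omega(w,z)\to 0$; the deterministic piece $\partial\vartheta_{M,m}$ is purely imaginary and, by summing the second estimate of Lemma~\ref{lem:theta}, satisfies $|\partial\vartheta_{M,m}|\lesssim|\lambda|\sqrt{\delta}$. For the random part $\partial\tilde\psi_{M,m}$, I would apply Lemma~\ref{lem:linear} at both $w$ and $z$ and subtract: the oscillatory contribution $\partial\mathbf{Q}_{M,m}$ is controlled by the just-obtained $\partial\phi$ estimate together with Proposition~\ref{prop:osc2}, while the martingale increments $\partial\M_{M,m}$ and $\partial\mathbf{L}_{M,m}$ are handled by the concentration inequalities of Appendix~\ref{sec:concentration} applied to the difference martingale, using that $J(w\sqrt{N/k})-J(z\sqrt{N/k})=O(|z-w|\sqrt{N/k})$ yields brackets of size $\lesssim |z-w|^2 N\sum_{k=m+1}^M \delta_k^2(z)/k$, which is $o_\delta(1)$ after summation.

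The hard part will be this final step, in particular the control of $\Re \partial\M_{M,m}$ and of the quadratic variation differences: while the $\partial\phi$ bound is supplied directly by Proposition~\ref{prop:relcont1}, transferring it to $\partial\rho = \Re\partial\bpsi$ requires combining that bound with the near-cancellation in $Z_k(w)-Z_k(z)$ in order to make the difference brackets of polynomial size in $\delta$ rather than of the $O(\log N)$ size of the individual martingales. This is essentially a repetition of the linearization machinery of Section~\ref{sec:ell} applied to difference martingales, but starting from the already-small initial condition supplied by $\B$ and the newly-established $\partial\phi$ estimate.
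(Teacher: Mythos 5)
Your first half coincides with the paper's proof: check that $|z-w|=|\lambda|/(N\varrho(z))\ll 1/\sqrt{N\mathfrak{L}(z)}$ for $z\in\Q$ so that \eqref{loc} holds, work on $\A_m\cap\B(T,\varepsilon;z)$ whose probability is handled by \eqref{PA} and \eqref{B2}, and apply Proposition~\ref{prop:relcont1} with $e^{-cR}\asymp\delta$ (so $R\asymp\log(1/\delta)$) to get the tube bound $|\partial\phi_n|\lesssim\big((n-N_0)/\Omega\big)^{1/4}$ on all of $[m,M]$ outside an event of probability $\delta^{a}$; your computation $(M-N_0)/\Omega=\delta\lambda^2$ and the bound $|\partial\vartheta_{M,m}|\lesssim|\lambda|\sqrt{\delta}$ are both correct and consistent with \eqref{detgrowth}.

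The gap is in your final step, the passage from the imaginary-part bound to the full complex difference $\partial\bpsi_M$. You propose to subtract two instances of Lemma~\ref{lem:linear} and control the resulting difference martingale by concentration, with brackets of size $\lesssim|z-w|^2N\sum_k\delta_k^2/k$ coming from $J(w\sqrt{N/k})-J(z\sqrt{N/k})=O(|z-w|\sqrt{N/k})$. That estimate only covers the contributions in which the coefficients ($Z_k$, $\delta_k$, $\theta_k$) are differenced; it omits the dominant term, namely the $\W$-part of the martingale differenced in the \emph{phase factor}, $\delta_k\overline{Z_k}e^{-2\i\theta_k}\big(e^{-2\i\phi_{k-1}(w)}-e^{-2\i\phi_{k-1}(z)}\big)$, whose size is $\sim\delta_k|\partial\phi_{k-1}|$ and is \emph{not} $O(|z-w|)$; it is small only because of the tube bound $|\partial\phi_{k-1}|\le\a_{k-1}=(\delta_{k-1}^2\Omega)^{-1/4}$, giving $\sum_k\delta_k^2\a_k^2\lesssim\sqrt{(M-N_0)/\Omega}=\sqrt{\delta}\,|\lambda|$. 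This interplay between the stopping time $\tau_1$ and the difference recursion is exactly what Lemma~\ref{lem:lin2} and Proposition~\ref{prop:tube} in Section~\ref{sec:cont} package: Proposition~\ref{prop:tube} states that, on the event that $\partial\phi$ stays in the tube up to $n$ and on $\A_m$, $|\partial\bpsi_{n,m}|\le\Lambda_n^{1/4}=(\delta_n^2\Omega)^{-1/8}$ outside probability $\lesssim\exp(-c\Lambda_n^{-1/4})$, which at $n=M$ gives precisely the bound $\delta^{b}$ with $b<1/8$. The paper's proof of the proposition consists of nothing more than Proposition~\ref{prop:relcont1} (your step two) followed by this citation, so rather than redoing a difference-linearization "essentially repeating Section~\ref{sec:ell}" (which, as sketched, would miss the phase-difference term), you should invoke Proposition~\ref{prop:tube} directly; with that substitution your argument closes.
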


\begin{proof}
If $z\in\Q$, then $\sqrt{N\mathfrak{L}(z)} \ll N\varrho(z)$ as $N\to\infty$ so that 
\begin{equation} \label{Bmcond}
\limsup_{N\to\infty} \sqrt{N\mathfrak{L}(z)}\, \sup_{\lambda\in\mathcal{K}}\big|z-\big(z+\tfrac{\lambda}{N\varrho(z)}\big)\big| = 0 .
\end{equation}
Let $w = z+\tfrac{\lambda}{N\varrho(z)}$ (microscopic regime) and $n = N_0+ \delta N \varrho^2$ for a small $\delta>0$.
\eqref{Bmcond} guarantees that for any $\varepsilon>0$,  \eqref{loc} holds if $N$ is sufficiently large.

Let $\a_k = (\delta_k^2\Omega)^{-1/4}$ for $k\ge m = N_T(z)$ with $T$ fixed.  
In particular, $(\delta_M^2\Omega)^{-1} = \frac{M-N_0}{\Omega} = \delta \lambda^2 $
so that if $b<1/8$ and $\delta$ is sufficiently small, $\sqrt{\a_M} \le \delta^{b}$ for $\lambda \in \mathcal{K}$.
Then, by Proposition~\ref{prop:tube} below, 
\[
\P\big[\big\{|\partial\bpsi_{M,m}| > \delta^b \big\}\cap\big\{|\partial \phi_k| \le \sqrt{\a_k} , \forall  k\in[m,M] \big\}\cap\A_m \big] \lesssim \exp\big(-c\delta^{-b} \big) . 
\]
Moreover,  on the event $\B$, we also  $|\partial\bpsi_{m}| \le   \delta^b$ and by Proposition~\ref{prop:relcont1} with $e^{-cR} = C \delta$ for some sufficiently large constant $C(\mathcal{K})$, there is a small constant $a>0$ so that 
\[
\P\big[\big\{|\partial \phi_k| \le \sqrt{\a_k} , \forall  k\in[m,n] \big\}^{\rm c}\cap\A_m \cap \B \big] \lesssim \delta^a . 
\]
We conclude that 
\[
\P\big[\big\{|\partial\bpsi_n| > 2 \delta^b  \big\}\cap\B\cap\A_m \big] \lesssim \delta^\alpha . 
\]
Then by \eqref{B2} (we can take $\varepsilon\to0$  by \eqref{Bmcond}), $\P[\B]\to 0$, and \eqref{PA} ($m=N_T(z) \to\infty$ as $N\to\infty$ for $z\in\Q$), $\P[\A_m] \to 0$,  this completes the proof. 
\end{proof}

\begin{remark} \label{rk:entrance0}
In the regime where the spectral parameter $z_\mu= \frac{\mu}{2\sqrt{N}}$ for $\mu\in\mathcal{K}$, $\mathcal{K}\Subset\R$ is a compact, there is no turning point, so the parameters $m\ge \mathfrak{K}$ and $\mathcal{L}$ are fixed. 
Then, the condition \eqref{loc} is reduced to $2\sqrt{N}|z_\mu-z_{\eta}| =|\mu-\eta| \le \varepsilon$ and 
$\B =\B(m,\varepsilon; \mu) = \big\{|\widehat{\bpsi}_m(\mu) - \widehat{\bpsi}_m(\eta)|^{8/3} \le  2|\mu-\eta|  ; |\mu-\eta| \le \varepsilon \big\}$. 
In this case, \eqref{B2} follows directly from the the smoothness of $\mu \in \mathcal{K}\mapsto \widehat{\bpsi}_m(\mu)$; see Lemma~\ref{lem:GS}, provided that $\mathcal{K} \Subset  \big(-3\sqrt{m},3\sqrt{m}\big)$. 
Then, as in Proposition~\ref{prop:entrancesmall}, we obtain that there is an $\epsilon>0$ such that with $M = \delta N$, 
\begin{equation} \label{cont0}
\lim_{\delta\to 0} \limsup_{N\to\infty} \P\big[|
\widehat{\bpsi}_{M}\big(\mu + \tfrac{\pi \lambda}{\sqrt N}\big)  - \widehat{\bpsi}_{M}(\mu)| > \delta^\epsilon \big] = 0 . 
\end{equation}
The only difference is that one lets $m\to\infty$ at the last step of the proof so that $\P[A_m] \to 1$.  
\end{remark}

\subsection{Linearization.} \label{sec:linrel}
The following basic (deterministic) bounds will be instrumental in the course of the proof.

\begin{lemma} \label{lem:contest}
Let $z,w\in(-1,1)$ satisfying \eqref{loc}.
Recall definition~\ref{def:ell}, \eqref{ell}, and define  
$\Lambda_n(w,z) :=  \big(\delta_n^2(z)\Omega(w,z)\big)^{-1/2}$. 
There are numerical constants so that for any $n\ge m$, 
\begin{enumerate}
\item $|\partial \delta_n(w,z)| \le   \delta_n(z)\Lambda_n(w,z)$.
\item $|\partial \theta_n(w,z)| \le \delta_n(z)/\sqrt{\Omega(w,z)}  = \delta_n^2(z)\Lambda_n(w,z)$.
\item $|\partial_z\Delta_n(w,z)| \le \delta_n^2(z)\Lambda_n(w,z)$.
\item $\|\partial Z_n'(w,z) \|_2 \vee \|\partial(e^{\i \theta_n} Z_n')(w,z)\|_2 \lesssim \delta_n(z)\Lambda_n(w,z)$. 
\end{enumerate}
\end{lemma}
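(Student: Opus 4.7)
The plan is to prove each estimate by direct differentiation combined with the mean value theorem, observing that all four bounds reduce to a single universal inequality
\[
\sqrt{N}\,|z|\,\delta_n^{\,3}(z) \lesssim 1
\qquad\text{for all } n \ge m = N_T(z),
\]
which holds because the turning-point condition gives $\delta_n^{-2}(z) = n-Nz^2 \ge T\mathfrak{L}(z)$ while the definition $\mathfrak{L}(z) = \lceil Nz^2\rceil^{1/3}$ gives $\sqrt{Nz^2}\le\mathfrak{L}(z)^{3/2}$, so $\sqrt{N}|z|\delta_n^{\,3}(z) \le \sqrt{Nz^2}/(T\mathfrak{L}(z))^{3/2} \le T^{-3/2}$. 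Moreover, from $\delta_n^{-2}(\xi)-\delta_n^{-2}(z) = N(z^2-\xi^2)$ and $|z-w| \le \varepsilon/\sqrt{N\mathfrak{L}(z)}$, one checks that $\delta_n(\xi)$ stays within a universal constant of $\delta_n(z)$ uniformly for $\xi$ on the segment from $z$ to $w$, so in every mean-value argument I freely replace $\delta_n(\xi)$ by $\delta_n(z)$ up to a harmless constant.

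For (1), I would differentiate $\delta_n(z) = (n-Nz^2)^{-1/2}$ to get $\partial_z\delta_n = Nz\,\delta_n^{\,3}$, so that the mean value theorem yields $|\partial\delta_n(w,z)| \lesssim N|z|\,\delta_n^{\,3}(z)\,|w-z| = \sqrt{N}|w-z|\cdot\sqrt{N}|z|\delta_n^{\,3}(z) \lesssim \sqrt{N}|w-z| = \delta_n(z)\Lambda_n(w,z)$ by the key inequality. For (2), the formula $\theta_n(z) = \arccos(z\sqrt{N/n})$ gives $\partial_z\theta_n = -\sqrt{N}\delta_n(z)$ directly, so $|\partial\theta_n(w,z)| \lesssim \sqrt{N}\delta_n(z)|w-z| = \delta_n(z)/\sqrt{\Omega(w,z)}$, which matches the statement. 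For (3), writing $\Delta_n = \tfrac12(1-\delta_n/\delta_{n-1})$, a short calculation using the identity $\delta_n^2-\delta_{n-1}^2 = -\delta_n^2\delta_{n-1}^2$ yields $\partial_z\Delta_n = \tfrac12\,Nz\,\delta_n(\delta_{n-1}^2-\delta_n^2)/\delta_{n-1}$ with magnitude $\lesssim N|z|\delta_n^{\,4}(z)$; the mean value theorem and one more application of the key inequality give $|\partial\Delta_n(w,z)| \lesssim \delta_n(z)\cdot\sqrt{N}|w-z| \cdot \delta_n(z) = \delta_n^{\,2}(z)\Lambda_n(w,z)$.

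For (4), I would decompose
\[
Z_n'(z) = \tfrac{i\,\delta_n(z)}{\sqrt{2\beta}}\,e^{-i\theta_n(z)}\bigl(\sqrt{(n-1)/n}\,e^{i\theta_{n-1}(z)}X_n + Y_n\bigr)
\]
and note that this is linear in $X_n,Y_n$ whose $L^2$ norms are uniformly bounded by Definition~\ref{def:noise}. Hence $\|\partial Z_n'(w,z)\|_2$ reduces to controlling the $z$-differences of coefficients of the form $\delta_n(z)\,e^{i(c_1\theta_n(z)+c_2\theta_{n-1}(z))}$ with $c_1,c_2\in\{-1,0,1\}$. Writing such a coefficient as $fh$ with $f=\delta_n$ and $|h|\equiv 1$, the product rule together with (1), (2) and the trivial $|e^{i\alpha}-e^{i\beta}|\le|\alpha-\beta|$ gives $|\partial(fh)| \le \delta_n(z)\Lambda_n(w,z) + \delta_n(z)\cdot \delta_n^{\,2}(z)\Lambda_n(w,z) \lesssim \delta_n(z)\Lambda_n(w,z)$ since $\delta_n^{\,2}(z) \lesssim 1$. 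Multiplying by the $L^2$-bounded random variables gives the claim for $Z_n'$; the same argument applied to the product without the leading $e^{-i\theta_n(z)}$ factor yields the bound for $e^{i\theta_n}Z_n'$.

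The only mild obstacle is keeping track of the various powers of $\delta_n(z)$ that appear at each step and verifying that each reduction terminates at the single key inequality stated above; beyond that bookkeeping, the proof is routine differential calculus.
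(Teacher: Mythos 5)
Your proposal is correct and follows essentially the same route as the paper: compute $\partial_z\delta_n=Nz\delta_n^3$, $\partial_z\theta_n=-\sqrt{N}\delta_n$, $|\partial_z\Delta_n|\lesssim N|z|\delta_n^4$, apply the mean value theorem (with $\delta_n$ comparable along the segment under \eqref{loc}), and reduce everything to $\sqrt{N}|z|\,\delta_n^3(z)\le T^{-3/2}$, which is exactly the paper's inequality $N|z|\delta_n^2(z)|w-z|\le T^{-3/2}\Lambda_n(w,z)$; item (4) is likewise handled by linearity in $(X_n,Y_n)$ plus the bounds (1)--(2). The only blemish is a bookkeeping slip in your displayed chain for (3): the middle expression $\delta_n\cdot\sqrt{N}|w-z|\cdot\delta_n$ equals $\delta_n^3\Lambda_n$, not $\delta_n^2\Lambda_n$, but the correct factorization $N|z|\delta_n^4|w-z|=(\sqrt{N}|z|\delta_n^3)\,\delta_n\sqrt{N}|w-z|\lesssim\delta_n^2\Lambda_n$ gives the stated bound, so the conclusion stands.
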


\begin{proof}
One has for $n>N_0(z)$, 
\[
|\partial_z\delta_n(z)| =  2N|z| \delta_n^3(z) , \qquad 
|\partial_z\theta_n(z)| =  \sqrt{N}\delta_n(z) , \qquad 
|\partial_z\Delta_n(z)| \le 2 N|z| \delta_n^4(z) .
\]
Then, using that $ N|z| |w-z| \le \mathfrak{L}(z)^{3/2}\Omega(w,z)^{-1/2}$ and $ N|z| \delta_n^2(z) |w-z| \le T^{-3/2}\Lambda_n(w,z) $, if $T$ is large enough, we obtain  for $|w|\le |z|$, 
\[
|\partial \delta_n(w,z)| \le \delta_n(z)\Lambda_n(w,z) , \qquad 
|\partial \theta_n(w,z)| \le \delta_n(z)/\sqrt{\Omega(w,z)} , \qquad
|\partial_z\Delta_n(w,z)| \le  \delta_n^2(z)\Lambda_n(w,z)
\]
which gives the required estimates.

Recall that $e^{\i \theta_n} Z_n' =  \frac{\i\delta_n}{\sqrt\beta}   \bigg( \sqrt{\frac{n-1}{2n}}  e^{\i\theta_{n-1}} X_n +\frac{1}{\sqrt2} Y_n \bigg) $ so that using the previous estimates
\[
|\partial\big(e^{\i \theta_n} Z_n'\big)| \lesssim \delta_n \Lambda_n \frac{|X_n|+|Y_n|}{\sqrt{2\beta}} . 
\]
Then, using that  $\| X_n\|_2, \|Y_n \|_2\le \mathfrak{S}$, we conclude that $\|\partial(e^{\i \theta_n} Z_n')\|_2 \lesssim \delta_n\Lambda_n$ and similarly  $\|\partial Z_n'\|_2 \lesssim \delta_n\Lambda_n$. 
\end{proof} 

\begin{remark} \label{rk:contest}\normalfont
On microscopic scales, if $\varrho(z)\ge \mathfrak{R}N^{-1/3}$ with $ \mathfrak{R}\ge 1$ and $|z-w| \le \tfrac{C} {N\varrho(z)}$, the same argument (using that $\delta_n(z)\varrho(z) \ge 1$) also shows that
\[
|\partial \delta_n(w,z)| \le  C \delta_n^2(z) , \qquad |\partial \theta_n(w,z)| \le C\delta_n(z) .
\]
This also implies that $\|\partial Z_k(w,z)\|_2 \lesssim C\delta_n(z)$. 
Moreover,
\[
\bigg|\frac{\delta_n^2(w)}{\delta_n^2(z)} -1\bigg|  \le 2N \delta_n(z)^2 |z-w| \le 2C\delta_n(z)
\]
so, if needed, we can replace $\delta_n(z)$ by $\delta_n(w)$ in the previous estimates up to a small multiplicative constant.
\end{remark}

Recall that phase $\{\psi_n(z)\}_{n\ge m}$ is defined by the recursion from Lemma~\ref{lem:rec} and we can linearize the recursion  on the event \eqref{eventA}.
However, one cannot rely directly on Lemma~\ref{lem:linear} to study the relative phase because the errors do not take into account the  improvements due to the condition \eqref{loc}. 
So, we formulate another linearization lemma depending on a stopping time. 

Let $\{\a_n\}_{n\ge m}$ be a non-decreasing deterministic sequence such that $\Lambda_n \le \a_n^2 \le 1$ and define the stopping time
\begin{equation} \label{tube1}
\alpha_n \coloneqq 2\partial \phi_n, \qquad\qquad
\tau_1 \coloneqq \min\big\{ n \ge m :  
|\partial \phi_n| > \a_n \big\} . 
\end{equation}

\begin{lemma}[Linearization] \label{lem:lin2}
On the event $\A_m$ with $m = N_T(z)$, it holds for any $n \ge m$, 
\[
\partial\psi_{n,m} =  {\textstyle\sum_{k=m+1}^n} \big(\alpha_{k-1}  \overline{\Gamma_k} e^{-2\i \phi_{k-1}}+   \operatorname{EM}_{k} +\operatorname{EL}_{k} \big)
\] 
where $\{\Gamma_k(z)\}_{k\ge m}$  is an adapted process $($defined in \eqref{Gamma}$)$, $\operatorname{EM}_{k}$ are martingale increments, and the errors satisfy
\begin{equation} \label{errtube}
\|\1\{k\le \tau_1\}\operatorname{EL}_{k}(w,z)\|_1 \lesssim \delta_k^2 \a_{k}^2,
\qquad\qquad 
\|\1\{k\le \tau_1\}\operatorname{EM}_{k}(w,z)\|_2^2  \lesssim   \delta_k^2  \a_{k}^4 .
\end{equation}
\end{lemma}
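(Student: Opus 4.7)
My plan is to differentiate the recursion already analyzed in Lemma \ref{lem:linear}. That lemma expanded $\log(\tilde{\xi}_{n,m}/\tilde{\xi}_{n-1,m})$ via \eqref{zeta3} into (i) the linear martingale increment $\tfrac{\i\delta_n}{\sqrt\beta}(Z_n + \overline{Z_n}e^{-2\i\theta_n - 2\i\phi_{n-1}})$, (ii) the drift $-\tfrac{\delta_n^2}{4}(1 - e^{-2\i\phi_{n-1}})$, (iii) the quadratic-noise term $\tfrac{\delta_n^2}{2\beta}(Z_n + \overline{Z_n}e^{-2\i\theta_n - 2\i\phi_{n-1}})^2$, plus a cubic remainder $\operatorname{EL}_n$ controlled deterministically on $\A_m$ by \eqref{EL}. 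I would apply the difference operator $\partial$ (between parameters $w$ and $z$) to each piece and regroup the resulting terms into the three categories announced in the statement.

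The leading phase-dependent term arises from differentiating $e^{-2\i\phi_{n-1}}$ inside (ii) and (iii). A first-order Taylor expansion gives
\[
\partial e^{-2\i\phi_{n-1}} \;=\; -\i\alpha_{n-1}\,e^{-2\i\phi_{n-1}} \;+\; \O(|\partial\phi_{n-1}|^2),
\]
and analogously for the $e^{-4\i\phi_{n-1}}$ harmonic produced when one expands the square in (iii). I would then \emph{define} $\overline{\Gamma_k}$ so that it packages precisely the coefficient of $\alpha_{k-1}e^{-2\i\phi_{k-1}}$ coming from the sum of the drift (ii) and the conditional-expectation part of the quadratic cross-term in (iii). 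Everything that does not match this template — namely the $e^{-4\i\phi_{k-1}}$ pieces and the mean-zero part of $|Z_k|^2$ — is absorbed into $\operatorname{EM}_k$, which must be a martingale difference with respect to $\F_k$; this last requirement is what forces the conditional-bracket portion of (iii) into $\Gamma_k$.

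The remaining contributions split naturally. The martingale part $\operatorname{EM}_k$ collects the $\partial Z_k$-type derivatives of the linear noise and of the quadratic noise, together with the oscillatory pieces identified above. Lemma \ref{lem:contest}(4) gives $\|\partial Z_k\|_2 \vee \|\partial(e^{\i\theta_k}Z_k')\|_2 \lesssim \delta_k \Lambda_k$, and since $\Lambda_k \le \a_k^2$ by the standing hypothesis on $\{\a_n\}$, each such increment is bounded by $\delta_k \a_k^2$ in $L^2$, yielding the claimed $\|\operatorname{EM}_k\|_2^2 \lesssim \delta_k^2 \a_k^4$. The error $\operatorname{EL}_k$ collects: (a) the second-order Taylor remainders from $\partial e^{-2\i j\phi_{n-1}}$, of order $|\partial\phi_{n-1}|^2 \le \a_{k-1}^2 \le \a_k^2$ on $\{k \le \tau_1\}$, multiplied by $\delta_k^2$; (b) the parameter derivatives $\partial \delta_n$, $\partial\theta_n$, $\partial\Delta_n$, each bounded by $\delta_k^2\Lambda_k \le \delta_k^2 \a_k^2$ by Lemma \ref{lem:contest}(1)--(3); and (c) the differenced cubic remainder $\partial\operatorname{EL}_n$ from \eqref{zeta3}, which is of strictly smaller order on $\A_m$. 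All three give the required $L^1$ bound $\delta_k^2 \a_k^2$.

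The final step is to telescope from $k=m+1$ to $n$. The stopping-time truncation $\{k \le \tau_1\}$ enters only through the error estimates: up to time $\tau_1$ one has $|\alpha_{k-1}| \le 2\a_{k-1}$, which is exactly the bound needed to control the second-order Taylor remainders. The main bookkeeping obstacle is the delicate choice of $\Gamma_k$: it must simultaneously (i) be $\F_{k-1}$-adapted, (ii) absorb the deterministic drift from (ii) \emph{and} the conditional bracket of the cross-term in (iii) so that $\operatorname{EM}_k$ is a genuine martingale increment, and (iii) leave behind only pieces whose oscillation or smallness fits the targeted $\delta_k^2\a_k^2$ error budget. Once $\Gamma_k$ is pinned down in this way, the identity and the two error bounds follow by direct summation and the moment estimates from Lemma \ref{lem:contest}.
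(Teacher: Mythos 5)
There is a genuine gap, and it is structural rather than technical: you never account for the term produced by differentiating the phase factor inside the \emph{linear} noise term (i). Differencing $\tfrac{\i\delta_k}{\sqrt\beta}\,\overline{Z_k}e^{-2\i\theta_k-2\i\phi_{k-1}}$ in the parameter produces a contribution of the form $\alpha_{k-1}\,\delta_k\,\overline{Z_k}\,e^{-2\i\theta_k-2\i\phi_{k-1}}$ (up to constants), whose $L^2$ norm on $\{k\le\tau_1\}$ is of order $\delta_k\a_k$. This term is too large for either error class in \eqref{errtube}: it cannot go into $\operatorname{EL}_k$ (its $L^1$ size exceeds $\delta_k^2\a_k^2$), and if it goes into $\operatorname{EM}_k$ — which is what your split forces, since you insist that $\Gamma_k$ be $\F_{k-1}$-measurable and consist only of the drift plus the conditional bracket of the quadratic cross-term — then $\|\1\{k\le\tau_1\}\operatorname{EM}_k\|_2^2$ is only $\lesssim\delta_k^2\a_k^2$, not the required $\delta_k^2\a_k^4$. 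This is not a cosmetic loss: in the proof of Proposition~\ref{prop:relcont1} (Section~\ref{sec:st}) the errors are weighted by the ratios $P_n/P_k$ built from $\Gamma$ via \eqref{def:P}, and the block sums only close because $\big(\sum_k\delta_k^2\a_k^4\big)^{1/2}\lesssim\a_{n_i}^2$; with the weaker bound $\delta_k^2\a_k^2$ one only gets $\a_{n_i}$, and the dyadic summation no longer produces the needed smallness.

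The paper's resolution is precisely to keep that leading noise inside $\Gamma$: in \eqref{Gamma}, $\Gamma_n=\i\,\tfrac{\Delta_n+Z_n'e^{2\i\theta_n}}{1+\overline{\Upsilon_n}}$, which is only $\F_n$-adapted (``adapted'' in the statement does not mean predictable), and whose leading part is $-\delta_nZ_ne^{2\i\theta_n}/\sqrt\beta$ — i.e.\ the main term $\alpha_{k-1}\overline{\Gamma_k}e^{-2\i\phi_{k-1}}$ is dominated by noise of size $\delta_k\a_k$, and only the \emph{second}-order-in-$\alpha$ noise remainder $\overline{Z_k'}\big(e^{-\i\alpha_{k-1}}-1+\i\alpha_{k-1}\big)e^{-2\i\theta_k-2\i\phi_{k-1}}$ and the $\partial Z_k'$-type terms end up in $\operatorname{EM}_k$, which is how the bound $\delta_k^2\a_k^4$ arises. (The splitting you propose, with the noise-times-$\alpha$ term treated as a martingale error of size $\delta_k\a_k$, is essentially what the paper does later, inside Proposition~\ref{prop:tube}, where a weaker bound suffices — but it cannot serve as the statement of Lemma~\ref{lem:lin2}.) A secondary, less serious point: rather than differencing the already-expanded \eqref{zeta3}, the paper differences $\log(1+\Upsilon_n)$ directly from the raw recursion \eqref{Upsi} and keeps the factor $(1+\overline{\Upsilon_n})^{-1}$ inside $\Gamma_n$, which avoids having to re-examine the cubic remainder of Lemma~\ref{lem:linear}; your route through $\partial\operatorname{EL}_n$ could probably be made to work, but the placement of the leading noise term and the predictability requirement on $\Gamma_k$ must be abandoned for the lemma as stated to be provable.
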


\begin{proof}
According to \eqref{ell_rec}, define the ratio
\begin{equation} \label{Upsi}
\Upsilon_n \coloneqq \tfrac{\tilde{\xi}_{n,m}}{\tilde{\xi}_{n,m-1}}-1 = -\Delta_n+  Z_n' + \big( \Delta_n + \overline{Z_n'}e^{-2\i\theta_n}  \big) e^{-2\i \phi_{n-1}} .
\end{equation}
On the event $\A_m$, by \eqref{trunc1}, the complex phase satisfy 
\begin{equation} \label{lin1}
\tilde{\psi}_{n,m} = \log \tilde{\xi}_{n,m} = {\textstyle\sum_{k=m+1}^n \log(1+\Upsilon_k) }
\end{equation}
and we can linearizing $\log(1+\cdot)$;
\[
\partial\log(1+\Upsilon_n) = \frac{\partial\Upsilon_n}{1+\Upsilon_n}  + \O( |\partial\Upsilon_n|^2)
\]
where 
\begin{equation*} 
\partial\Upsilon_n = \big(\Delta_n + \overline{Z_n'}e^{-2\i\theta_n}  \big) e^{-2\i \phi_{n-1}} \big(e^{-\i\alpha_{n-1}}-1\big) + \underbrace{\partial Z_n' +  \partial(\overline{Z_n'}e^{-2\i\theta_n})e^{-2\i \phi_{n-1}}\big(e^{-\i\alpha_{n-1}}-1\big)}_{\operatorname{EM}^1_{n} }  -\partial\Delta_n .
\end{equation*}
We can also linearize $(e^{-\i\,\cdot}-1)$ and rewrite
\[
\partial\Upsilon_n =- \i\alpha_{n-1}\big(\Delta_n + \overline{Z_n'}e^{-2\i\theta_n}  \big) e^{-2\i \phi_{n-1}}
+ \underbrace{\overline{Z_n'}\big(e^{-\i\alpha_{n-1}}-1+\i\alpha_{n-1}\big)e^{-2\i\theta_n-2\i \phi_{n-1}}+   \operatorname{EM}^1_{n} }_{  \operatorname{EM}^2_{n} }
+ \underbrace{\Delta_n\big(e^{-\i\alpha_{n-1}}-1+\i\alpha_{n-1}\big)e^{-2\i \phi_{n-1}}-\partial\Delta_n}_{\operatorname{EL}^2_{n}}
\]
where $\operatorname{EM}_{n}^1, \operatorname{EM}_{n}^2$ are both martingale increments.
Moreover, using Lemma~\ref{lem:contest} and the conditions $\Lambda_n \le \a_n^2 \le 1$, we have 
\[
|\1\{n\le \tau_1\}\operatorname{EL}^2_{n}| \lesssim \delta_n^2\a_n^2, 
\qquad\qquad 
\|\1\{n\le \tau_1\}\operatorname{EM}^j_{n}\|_2^2  \lesssim   \delta_n^2 \a_n^4.
\]
Let $\operatorname{EM}_{n}= \operatorname{EM}_{n}^1+ \operatorname{EM}_{n}^2$.
On $\A_m$, we further expand
\begin{equation} \label{Gamma}
\frac{\partial\Upsilon_n}{1+\Upsilon_n} = \alpha_{n-1} \overline{\Gamma_n} e^{-2\i \phi_{n-1}}+ \operatorname{EM}_{n} +\O(\operatorname{EM}_{n} |\Upsilon_n|)+\O(|\operatorname{EL}^2_{n}|), \qquad  
\Gamma_n \coloneqq \i\frac{\Delta_n + Z_n' e^{2\i\theta_n}}{1+\overline{\Upsilon_n}}. 
\end{equation}
The linearization errors are controlled using that 
\[
\|\Upsilon_n\|_2^2 \lesssim \delta_n^2 , \qquad\qquad
\|\1\{n\le \tau_1\}\partial\Upsilon_{n}\|_{2}^2 \lesssim 
\delta_n^2\a_n^2  
\]
so that
\[
\operatorname{EL}^1_{n} = \O(\operatorname{EM}^1_{n} |\Upsilon_n|) +\O(|\partial\Upsilon_n|^2) , \qquad\qquad
\|\1\{n\le \tau_1\}\operatorname{EL}^j_{n}\|_1\lesssim \delta_n^2\a_n^2. 
\]
Let $\operatorname{EL}_{n}= \operatorname{EL}_{n}^1+ \operatorname{EL}_{n}^2$.
Going back to \eqref{lin1}, we conclude that
\[
\partial\psi_{n,m} =  {\textstyle\sum_{k=m+1}^n} \big( \alpha_{k-1} \overline{\Gamma_k}e^{-2\i \phi_{k-1}}+ \operatorname{EM}_{k} +\operatorname{EL}_{k} \big) 
\]
with the required estimates.
\end{proof}

We record a direct consequence of Lemma~\ref{lem:lin2}.

\begin{proposition} \label{prop:tube}
Under the above assumptions, choosing $\a_n^2= \Lambda_n= \big(\delta_n^2(z)\Omega(w,z)\big)^{-1/2}$, one has for any $n\ge m$,
\[
\P\big[\big\{|\partial\psi_{n,m}(w,z)| > \Lambda_n^{1/4} \big\}\cap\{\tau_1\le n\}\cap\A_m \big] \lesssim \exp(-c\Lambda_n^{-1/4}) . 
\]

\end{proposition}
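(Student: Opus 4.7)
The plan is to apply Lemma~\ref{lem:lin2} to the process stopped at $\tau_1$.  On the event $\{n \le \tau_1\}\cap\A_m$ — the event on which the tube is respected up to time $n$ — we have $\partial\psi_{n\wedge\tau_1,m} = \partial\psi_{n,m}$, so it suffices to bound the stopped quantity.  From Lemma~\ref{lem:lin2},
\[
\partial\psi_{n\wedge\tau_1,m}
= \sum_{k=m+1}^{n\wedge\tau_1}\Bigl(\alpha_{k-1}\overline{\Gamma_k}\,e^{-2\i\phi_{k-1}}+\operatorname{EM}_k+\operatorname{EL}_k\Bigr),
\]
which I would further decompose by writing $\Gamma_k = \E[\Gamma_k\mid\F_{k-1}]+\widetilde\Gamma_k$.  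By Lemma~\ref{lem:Z} the predictable part satisfies $|\E[\Gamma_k\mid\F_{k-1}]|\lesssim\delta_k^2$ (its leading contribution is $\i\Delta_k\sim\delta_k^2/4$), while the conditional sub-Gaussian norm of $\widetilde\Gamma_k$ is of order $\delta_k$.

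Four pieces have then to be controlled.  First, the main predictable piece $\sum\alpha_{k-1}\overline{\E[\Gamma_k\mid\F_{k-1}]}e^{-2\i\phi_{k-1}}$ is bounded pathwise on the tube using $|\alpha_{k-1}|\le 2\a_{k-1}\le 2\a_n$, yielding $\a_n\sum_{k\le n}\delta_k^2\lesssim \a_n\log\bigl((n-Nz^2)/(m-Nz^2)\bigr)$; this is comfortably smaller than $\Lambda_n^{1/4} = \a_n^{1/2}$ in the range $n-Nz^2 \lesssim e^{-CR}\Omega$ relevant for Proposition~\ref{prop:relcont1}.  Second and third, the main martingale piece $\sum\alpha_{k-1}\overline{\widetilde\Gamma_k}e^{-2\i\phi_{k-1}}$ and the $\operatorname{EM}$ piece are stopped martingales with predictable brackets on $\{n\le\tau_1\}$ bounded by $\lesssim\sum\a_k^2\delta_k^2\approx\Lambda_n$ and by $\lesssim\sum\delta_k^2\a_k^4\lesssim\Lambda_n^2$ respectively (using $\a_k^2=(\delta_k^2\Omega)^{-1/2}$ and $\sum_k\delta_k\approx 2\sqrt{n-Nz^2}$); Proposition~\ref{lem:conc2} then delivers exponential tails at scale $\Lambda_n^{1/4}$ of order $\exp(-c\Lambda_n^{-1/2})$ and $\exp(-c\Lambda_n^{-3/2})$ respectively, both stronger than required.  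Fourth, the non-martingale $\operatorname{EL}$ contribution is dominated by independent sub-exponential terms of type $O(|\partial\Upsilon_k|^2)$ with $L^1$ bound $\delta_k^2\a_k^2$, and Proposition~\ref{lem:conc1} provides the matching exponential concentration.  A union bound over the four pieces completes the argument.

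The principal obstacle lies in the pathwise control of the predictable piece, whose summands decay only algebraically like $\delta_k^2\sim 1/(k-Nz^2)$ and therefore produce a logarithmic factor $\log((n-Nz^2)/(m-Nz^2))$ that must be absorbed into $\Lambda_n^{1/4}$.  This balance is precisely what motivates the range restriction $n-Nz^2\le e^{-CR}\Omega$ in Proposition~\ref{prop:relcont1}: in that regime the logarithm is bounded by $R$, which is then comfortably below $\Lambda_n^{-1/4} \gtrsim e^{CR/8}$.  All other pieces are suppressed by a positive power of $\Lambda_n$ and hence do not constrain the regime.
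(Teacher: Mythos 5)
Your overall architecture (stop at $\tau_1$, split $\Gamma_k$ into a predictable part of size $\delta_k^2$ plus a mean-zero part of size $\delta_k$, treat the martingale pieces with Proposition~\ref{lem:conc2} and the remainder with Proposition~\ref{lem:conc1}) is the same as the paper's, and your bracket computations $\sum_k\delta_k^2\a_k^2\lesssim\Lambda_n$ and $\sum_k\delta_k^2\a_k^4\lesssim\Lambda_n^2$ are fine. The genuine gap is in your treatment of the predictable piece. You bound it by pulling out $\max_k|\alpha_{k-1}|\le 2\a_n$ and then summing $\delta_k^2$, which produces $\a_n\log\bigl((n-Nz^2)/(m-Nz^2)\bigr)$; this is lossy, and the logarithm is \emph{not} absorbed the way you claim. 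The proposition must hold for every $n\ge m$ in the tube's range and contains no parameter $R$; even in the window $n-Nz^2\le e^{-CR}\Omega$ the logarithm equals $\log\bigl(e^{-CR}\Omega/(T\L)\bigr)$, which is of order $\log(\Omega/\L)$ (potentially $\log N$) and is certainly not bounded by $R$, while the threshold you must beat is $\Lambda_n^{1/4}=\a_n^{1/2}\le 1$ (you compare against $\Lambda_n^{-1/4}\gtrsim e^{CR/8}$, but that quantity belongs in the exponent of the tail probability, not in the deviation level). So as written, your deterministic bound on the drift term can exceed the allowed deviation and the argument does not prove the statement.

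The fix — and what the paper does — is to keep $\a_{k-1}$ inside the sum rather than replacing it by $\a_n$: since $\a_k=(\delta_k^2\Omega)^{-1/4}$, one has $\delta_k^2\a_k=\delta_k^{3/2}\Omega^{-1/4}$, and
\[
\sum_{m<k\le n}\delta_k^{3/2}\ \lesssim\ (n-Nz^2)^{1/4}=\delta_n^{-1/2},
\qquad\text{hence}\qquad
\sum_{m<k\le n}\delta_k^2\a_k\ \lesssim\ \a_n ,
\]
a convergent power sum with no logarithm, because $\a_k$ grows like $(k-Nz^2)^{1/4}$. Concretely, the paper absorbs the predictable part of $\Gamma_k$ (of size $\O(\delta_k^2)$, via $\|\Gamma_k+\beta^{-1/2}\delta_kZ_ke^{2\i\theta_k}\|_1\lesssim\delta_k^2$) into error increments $\operatorname{EL}'_k$ with $\|\1\{k\le\tau_1\}\operatorname{EL}'_k\|_1\lesssim\delta_k^2\a_k$, so that $\|\sum_{k\le n\wedge\tau_1}\operatorname{EL}'_k\|_1\lesssim\a_n$ by the triangle inequality; the martingale part $\alpha_{k-1}\delta_k\overline{Z_k}e^{-2\i\theta_k-2\i\phi_{k-1}}$ has bracket $\lesssim\sum_k\delta_k^2\a_k^2\lesssim\a_n^2$. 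The binding tail is then the sub-exponential one, $\P[|\cdot|\ge\a_n^{1/2}]\lesssim\exp(-c\a_n^{-1/2})=\exp(-c\Lambda_n^{-1/4})$, uniformly over the whole admissible range of $n$, with no range restriction or extra parameter needed.
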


\begin{proof}
From the previous proof, $\Gamma_n =\i \frac{\Delta_n + Z_n' e^{2\i\theta_n}}{1+\overline{\Upsilon_n}}$ and $\|\Upsilon_n\|_2 \lesssim \delta_n$ (on the event $\A_m$, we also have  $|\Gamma_n| , |\Upsilon_n|\le 1/2$ for all $n\ge m$).
Then, by linearize, we can bound 
\[
\bigg\|\Gamma_n +  \frac{\delta_n Z_n}{\beta^{1/2}}e^{2\i\theta_n}\bigg\|_1 \lesssim \delta_n^2. 
\]
Thus, using the the approximation from Lemma~\ref{lem:lin2}, on the event $\A_m$, it holds for any $n \ge m$,
\[
\partial\psi_{n,m} =  {\textstyle\sum_{k=m+1}^n} \big( -\beta^{-1/2} \underbrace{\alpha_{k-1} \delta_k \overline{Z_k}  e^{-2\i\theta_k-2\i \phi_{k-1}}}_{\operatorname{EM}'_{k}}+  \operatorname{EL}_{k}'+ \operatorname{EM}_{k} +\operatorname{EL}_{k} \big)
\]
where  $\operatorname{EM}_{k}'$ are martingale increments and 
\[
\|\1\{k\le \tau_1\}\operatorname{EL}_{k}'\|_1 \lesssim \delta_k^2 \a_{k},
\qquad\qquad 
\|\1\{k\le \tau_1\}\operatorname{EM}_{k}'\|_2^2  \lesssim   \delta_k^2  \a_{k}^2 .
\]
Then, since $\a_n = \delta_n^{-1}\Omega^{-1/2}$ is increasing, 
\[
\big\|{\textstyle\sum_{k=m+1}^{n\wedge\tau_1}} \operatorname{EM}_{k}'\big\|_{2}^2 \lesssim 
{\textstyle\sum_{k=m+1}^n} \delta_k^2 \a_k^2 \lesssim  \a_n^2 , 
\qquad\qquad 
\big\|{\textstyle\sum_{k=m+1}^{n\wedge\tau_1}} \operatorname{EL}_{k}'\big\|_1 \lesssim 
{\textstyle\sum_{k=m+1}^n} \delta_k^2 \a_k \lesssim  \a_n 
\]
By Lemma~\ref{lem:lin2}, we have a similar control for the contributions of $ \operatorname{EM}_{k}$ and $\operatorname{EL}_{k} $ (in fact the estimates are better since $\a_n\le 1$).
Thus, we conclude the tail-bound: 
\[
\P[\{|\partial\psi_{n,m}| > \a_n^{1/2} \}\cap\{\tau_1\le n\}\cap\A_m ] \lesssim \exp(-c\a_n^{-1/2}) . \qedhere
\]
\end{proof}

\subsection{Representation of $\alpha_n$.} \label{sec:relrep}
Recall that the phase $\phi_n =\phi_m+ \vartheta_{n,m}+ \chi_{n,m}$ with $\chi_{n,m}= \Im\psi_{n,m} $ and $ \vartheta_{n,m} = \sum_{k=m+1}^n\theta_k $.
By Lemma~\ref{lem:contest}, the deterministic phase satisfies
\begin{equation} \label{detgrowth}
|\partial \vartheta_n| \le {\textstyle \sum_{k=m+1}^n} |\partial\theta_k| \le  {\textstyle \sum_{k=m+1}^n}\delta_k/\sqrt{\Omega} \lesssim (\delta_n^{2}\Omega)^{-1/2} .
\end{equation}
Therefore, we must choose a sequence $\{\a_n\}_{n\ge m}$ such that  $|\partial \vartheta_n| \ll \a_n$ for all $n \le  \mathfrak{M} \coloneqq N_0 +e^{-4S}\Omega$ for some large $S>0$.
It will be convenient to choose\footnote{
At the endpoint, $(\delta_{\mathfrak{M}}^{2}\Omega)^{-1}= (\mathfrak{M}-N_0)/\Omega = e^{-4S} \ll 1$.
Thus, $\a_{\mathfrak{M}} \ll 1$  and we indeed have $|\partial \vartheta_n| \ll \a_n$ for all $n \le  \mathfrak{M}$. 
Moreover, on the event $\B$ (definition \eqref{B1}), at the entrance point, we also have  $|\alpha_m| \ll \a_m$.
The power $\frac14$ is arbitrary, any power $< \frac12$ would work.}  $\a_n \coloneqq  (\delta_n^2\Omega)^{-1/4} $ so that $\a_n^2 = \Lambda_n$  as required for using Lemma~\ref{lem:lin2}. 

\medskip

Taking imaginary part in Lemma~\ref{lem:lin2}, we obtain an autonomous equation for $\{\alpha_n\}_{n\ge m}$. It holds for $n \ge m$,
\[
\alpha_n = \alpha_m+  2{\textstyle\sum_{k=m+1}^n} \big(\alpha_{k-1} \Im\big(\Gamma_k  e^{2\i \phi_{k-1}}\big)+  \partial\theta_k+ \operatorname{EM}_{k} +\operatorname{EL}_{k} \big) . 
\] 
This equation has an explicit solution.
Define $P_m =1$ and for $n>m$, 
\begin{equation} \label{def:P}
P_n(z) \coloneqq {\textstyle \prod_{k=m+1}^n\big(1+2\Im\big(\Gamma_k(z)e^{2\i \phi_{k-1}(z)}\big)\big)} . 
\end{equation}
Then we can represent for $n\ge m$, 
\vspace{-.5cm}
\begin{equation} \label{contrec}
\alpha_n =P_n\alpha_m  +2\sum_{k=m+1}^n \frac{P_n}{P_k} 
\big( \partial\theta_k+\Im\big(\operatorname{EM}_{k} +\operatorname{EL}_{k} \big)\big).
\end{equation}
To estimate the growth of $\{\alpha_n\}_{n\ge m}$, we will rely on certain  bounds for $\{P_n/P_k\}_{n\ge k\ge m}$. To Formulate the result, we introduce the \emph{dyadic blocks} 
$n_j = N_{2^j} =N_0 +\mathfrak{L} 2^j$ for $j\ge \kappa$. 

\medskip

Let $J\in\N_{\ge \kappa}$ and $R\ge 1$.
For some constants $C_\beta, c_\beta >0$ and $0<\eta<c_\beta$, we introduce the stopping time
\begin{equation} \label{tube3}
\varsigma_J \coloneqq  \min\big\{ n\ge m : (P_n/P_{n_j})^{\pm 1}   \ge e^{C_\beta R} 2^{\eta(J-j)}
\text{ or }
\max_{i\le j}\big(2^{c_\beta(j-i)} P_{n_j}/P_{n_i} \big)   \ge e^{C_\beta R} 2^{\eta(J-j)} ; n \in [n_j,n_{j+1}) , j\le J \big\} .
\end{equation}

\begin{proposition} \label{prop:prop}
Fix $z\in(-1,1)$ and let $J\in\N_{\ge \kappa}$.
There exists a constant $c=c(\beta)$ such that for any $R\ge 1$, 
\[
\P\big[\{\varsigma_J<n_{J+1}\} \cap \A(2^\kappa,R;z) \big]  
\lesssim  \exp(-cR)
\]
The implied constant depends only on  $(\beta,\eta)$. 
\end{proposition}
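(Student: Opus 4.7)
The plan is a martingale-plus-negative-drift analysis of $\log P_n$. On the event $\A_{2^\kappa}$, the estimate $|\Upsilon_k|\le 1/2$ from \eqref{trunc1} combined with Lemma~\ref{lem:contest} yields $|y_k|\le 1/2$ for $y_k := 2\Im(\Gamma_k e^{2i\phi_{k-1}})$, so Taylor-expanding $\log(1+y_k) = y_k - \tfrac12 y_k^2 + O(|y_k|^3)$ and substituting $\Gamma_k = i\Delta_k - \delta_k Z_k e^{2i\theta_k}/\sqrt\beta + O(\delta_k^3)$ from Lemma~\ref{lem:Z} yields a decomposition
\[
\log(P_{n_j}/P_{n_i}) = -\tfrac{1}{\beta}\log\tfrac{n_j - Nz^2}{n_i - Nz^2} + M_{ij} + \mathscr{O}_{ij} + \mathscr{E}_{ij}.
\]
Here $\{M_{ij}\}$ is a martingale with predictable bracket $\sum \delta_k^2 \cdot O(1/\beta)$ and sub-Gaussian increments; $\mathscr{O}_{ij}$ collects oscillatory sums in $e^{i p \phi_{k-1}}$, $p\in\{2,4\}$, arising from $\E[y_k|\F_{k-1}]$ and $\E[y_k^2|\F_{k-1}]$; and $\mathscr{E}_{ij}$ collects the cubic linearization errors. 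On $\A(2^\kappa, R; z)$, Propositions~\ref{prop:osc1} and~\ref{prop:osc2} give $|\mathscr{O}_{ij}|\lesssim R/2^{\kappa/2}$ uniformly, and $|\mathscr{E}_{ij}|$ is uniformly bounded by a small constant on $\A_{2^\kappa}$ (as in Lemma~\ref{lem:lin2}).

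Using the dyadic choice $n_j - Nz^2 \asymp \mathfrak{L}\, 2^j$, the deterministic drift equals $-(j-i)(\log 2)/\beta$. Choose $c_\beta \in (0, 1/\beta)$ small enough and set $\tau := 1/\beta - c_\beta > 0$. Then the centered walk $U_j := \log P_{n_j} + c_\beta j \log 2$ satisfies
\[
U_j - U_i = M_{ij} - \tau(j-i)\log 2 + \mathscr{O}_{ij} + \mathscr{E}_{ij}.
\]
The two conditions in the definition \eqref{tube3} of $\varsigma_J$ translate, up to the bounded additive correction $\mathscr{O}_{ij} + \mathscr{E}_{ij}$ (absorbable into $C_\beta R$), into \emph{(i)} $\sup_{n\in[n_j,n_{j+1})}|M_n - M_{n_j}| \le \lambda_j$, and \emph{(ii)} $U_j - \min_{i\le j} U_i \le \lambda_j$, with $\lambda_j := C_\beta R + \eta(J-j)\log 2$.

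For condition \emph{(ii)} I would use the exponential supermartingale trick. With $V=V(\beta)$ a uniform bound on the conditional variance of $M_{j+1}-M_j$ per block, for $\theta^* := \tau\log 2/V$ and $\kappa$ large enough that the $O(R/2^{\kappa/2})$ MGF correction is absorbed by the safety margin $\tfrac12\theta^*\tau\log 2$,
\[
\E\bigl[\exp\bigl(\theta^*(U_{j+1} - U_j)\bigr)\bigm|\F_{n_j}\bigr] \le \exp\bigl(-\theta^*\tau\log 2 + \tfrac12 (\theta^*)^2 V + O(R/2^{\kappa/2})\bigr) \le 1,
\]
so $\exp(\theta^*(U_\cdot - U_0))$ is a supermartingale. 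By the Markov property at the running minimum and Doob's inequality, $\P[U_j - \min_{i\le j} U_i > \lambda] \le e^{-\theta^*\lambda}$ for each fixed $j$. Taking $\lambda = \lambda_j$ and summing geometrically,
\[
\sum_{j=0}^{J} e^{-\theta^*(C_\beta R + \eta(J-j)\log 2)} = e^{-\theta^* C_\beta R}\sum_{k=0}^{J} 2^{-\theta^* \eta k} \lesssim e^{-\theta^* C_\beta R},
\]
which is $\le e^{-cR}$ upon choosing $C_\beta$ large. For condition \emph{(i)}, the bracket of $M_n-M_{n_j}$ on a single block is bounded by $V$, so Freedman's maximal inequality (Proposition~\ref{lem:conc2}) gives $e^{-c\lambda_j^2}$ per block, and the union bound is dominated by $e^{-cR^2}\le e^{-cR}$ for $R\ge 1$ thanks to the slack $\eta(J-j)$. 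The complementary events $\{|\mathscr{O}|>R/2^{\kappa/2}\}$ and $\A_\chi^{\rm c}$ contribute at most $e^{-cR}$ by Proposition~\ref{lem:varphase}.

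The main obstacle is verifying that the oscillation and linearization corrections do not spoil the strict negativity of the effective drift $\tau\log 2$: the parameter $\kappa$ must be chosen large enough (depending on $\beta$) so that $O(R/2^{\kappa/2})$ is dominated by $\tfrac12\theta^*\tau\log 2$ in the MGF estimate. This is compatible with the statement, since the implicit constants are allowed to depend only on $(\beta,\eta)$ and $2^\kappa$ is a fixed threshold, independent of $N$, once $\beta$ is fixed.
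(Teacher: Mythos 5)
Your proposal is correct and follows essentially the same route as the paper: you expand $\log P_n$ into the negative drift $-\tfrac1\beta\log\tfrac{n_j-Nz^2}{n_i-Nz^2}$ coming from the compensator of the quadratic term, a martingale, and oscillatory sums controlled on the smoothness event via Propositions~\ref{prop:osc1}--\ref{prop:osc2}, then combine per-block maximal inequalities with a drift-beats-fluctuation tail bound and union bounds exploiting the $2^{\eta(J-j)}$ slack, which is exactly the structure of Lemma~\ref{lem:lin4}, Lemma~\ref{lem:Pdyadic} and the paper's proof. The only differences are cosmetic (a Chernoff/supermartingale bound for the decay instead of the paper's explicit exponential martingale $\mathcal{P}_{n,k}$ with a quadratic-variation lower bound), plus two repairable technicalities: the centered quadratic fluctuations are sub-exponential rather than sub-Gaussian (the paper handles them by a separate $\|\cdot\|_1$ martingale bound, cf.\ \eqref{gammamarterr}), and the oscillatory corrections should be kept outside the conditional MGF estimate as additive $\O(R)$ corrections on $\A_\chi$ rather than absorbed inside it.
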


Proposition~\ref{prop:prop} will be proved in the next subsection, and for now, we turn to the proof of Proposition~\ref{prop:relcont1}.

\subsection{Proof of Proposition~\ref{prop:relcont1}} \label{sec:st}
Let $S\ge 1$ and let $J\in\N$ be the last integer so that $\mathfrak{L}2^J \le  e^{-4S}\Omega$. 
We introduce the stopping time 
\[
\tau \coloneqq  \tau_1 \wedge \varsigma_J \wedge n_{J+1} .
\]

On the event  $\B$ (definition \eqref{B1}), one has $|\alpha_m| \le \a_m 2^{-\kappa/4}\Omega^{-1/8}$ using that $T=2^\kappa$. 
Then, for the first term in \eqref{contrec}, which arise from the initial condition, we can bound for $n \in [n_j,n_{j+1})$ and $n<\varsigma_J $, 
\[
\frac{|P_n\alpha_m |}{\a_n} \le \frac{|\alpha_m|}{\a_m} \frac{P_n}{P_{n_j}}\frac{P_{n_j}}{P_m}  \le 2^{2CR-\kappa/4} 2^{2\eta J} \Omega^{-1/8}\le \tfrac14 2^{2CR-8\eta S} \Omega^{\eta-1/8}
\]
if $\kappa$ is sufficiently large. Thus, if $\eta\le 1/8$ and $S=LR$ for some large enough constant $L$ (depending on $\eta$ and $C$), then $|P_n\alpha_m |\le \a_n/4$ for all $n<\tau$. 

For the driving term in \eqref{contrec}, using that  $(\a_n \Omega^{1/2})^{-1} = \delta_n\a_n$, we have similarly to \eqref{detgrowth},
\[
\sum_{k=m+1}^n \frac{P_n}{P_k}\frac{|\partial\theta_k|}{\a_n} \le  \frac{P_n}{P_{n_j}} \sum_{\kappa \le i\le j} \frac{P_{n_j}}{P_{n_i}}
\sum_{k=n_i+1}^{n_{i+1}\wedge n} \frac{P_{n_i}}{P_k} \frac{\delta_k}{\a_n\Omega^{1/2}}
\le 2^{3CR+3\eta(J-j)}  \delta_n\a_n\sum_{k=m+1}^{n}   \delta_k   \lesssim 2^{3CR+3\eta(J-j)} \a_{n_{j+1}} .
\]
By construction $ \a_{n_{j+1}}/\a_{n_J+1} =2^{-(J-j)/4}$ and $\a_{n_J+1}\lesssim e^{-S}$ so that if $\eta\le 1/12$ and $S=LR$ for some large constant $L$, we can alslo bound
\[
\sum_{k=m+1}^n \frac{P_n}{P_k}\frac{|\partial\theta_k|}{\a_n} \lesssim 2^{3CR-S} \le \tfrac14.
\]

Now, let 
\[
\hat{\alpha}_n \coloneqq  \a_n^{-1}  \bigg| \sum_{k=m+1}^n \frac{P_n}{P_k} 
\Im\big(\operatorname{EM}_{k} +\operatorname{EL}_{k} \big)\bigg|
\]
and recall that our goal is to estimate $\P\big[\{\tau_1 < n_{J+1}\} \cap \A\cap \B\big] $.

We deduce from \eqref{contrec} and the previous estimates that 
\[
\{\tau_1 < n_{J+1}\} \cap \A\cap \B \cap \{\varsigma_J\ge n_{J+1}\} \subset \{\hat{\alpha}_n > \tfrac14 \text{ for a } m<n<n_{J+1}\} \cap \{\varsigma_J\ge n_{J+1}\}  \subset \{ \hat{\alpha}_\tau>\tfrac14\}.
\]
Hence,
\begin{equation} \label{alphaest}
\P\big[\{\tau_1 < n_{J+1}\} \cap \A\cap \B\big] \le \P\big[\hat{\alpha}_\tau > \tfrac14 \big] + \P\big[\{\varsigma_J<n_{J+1}\} \cap \A\big].
\end{equation}

\medskip

It remains to estimate the first term on the RHS of \eqref{alphaest}.
Using the dyadic blocks, we rewrite 
\begin{equation} \label{alpharep2}
\begin{aligned}
\hat{\alpha}_n&= \a_n^{-1}   \bigg|\sum_{\kappa\le i} \frac{P_{n}}{P_{n_i}} \sum_{k=n_i+1}^{n_{i+1}\wedge n}   \frac{P_{n_i}}{P_{k}}  \Im\big(\operatorname{EM}_{k}+\operatorname{EL}_{k} \big) \bigg| \\
&\le \sum_{\kappa\le i}  \a_{n_i}^{-1}\frac{P_{n}}{P_{n_i}} \bigg| \sum_{k=n_i+1}^{n_{i+1}\wedge n}  \Im\big(\operatorname{EM}_{k}' +\operatorname{EL}_{k}' \big)\bigg|
\end{aligned}
\end{equation}
where 
\[
\operatorname{EM}_{k}'= \frac{P_{n_i}}{P_{k-1}} \operatorname{EM}_{k} ,\qquad\qquad
\operatorname{EL}_{k}' = \frac{P_{n_i}}{P_{k}}\bigg( \frac{P_{k-1}-P_k}{P_{k-1}}\operatorname{EM}_{k}
+  \operatorname{EL}_{k} \bigg). 
\]
In particular $\operatorname{EM}_{k}'$ are still martingale increments and  by \eqref{def:P}, 
\[
\frac{P_{k-1}-P_k}{P_{k-1}} = -2\Re(\Gamma_ke^{2\i \phi_{k-1}}) ,
\qquad\qquad 
\bigg\| \frac{P_{k-1}-P_k}{P_{k-1}}\bigg\|_2  \lesssim \delta_k. 
\]
Then, using the conditions \eqref{tube3} and \eqref{errtube}, for any $k\in[n_i,n_{i+1}]$, 
\[\begin{aligned}
&\|\1\{k\le \tau\}\operatorname{EM}_{k}'\|_2 \le  e^{CR} 2^{\eta(J-i)}  \|\1\{k\le \tau_1\}\operatorname{EM}_{k}\|_2 \lesssim  e^{CR} 2^{\eta(J-i)}   \delta_k  \a_k^2 \\
&\|\1\{k\le \tau\}\operatorname{EL}_{k}'\|_1 \lesssim 
e^{CR} 2^{\eta(J-i)}\big( \delta_k\|\1\{k\le \tau_1\}\operatorname{EM}_{k}\|_2
+\|\1\{k\le \tau_1\}\operatorname{EL}_{k}\|_1\big)
\lesssim e^{CR} 2^{\eta(J-i)}  \delta_k^2  \a_{k}^2 .
\end{aligned}\]
Using these estimates, by Proposition~\ref{lem:conc2},
\[
\bigg\| \max_{n\le n_{i+1}}\bigg|
\sum_{k=n_{i}+1}^{n \wedge \tau } \operatorname{EM}_{k}'  \bigg|\bigg\|_1
\lesssim  e^{CR} 2^{\eta(J-i)} \bigg( \sum_{k=n_{i}+1}^{n_{i+1}}   \delta_k^2  \a_{k}^4\bigg)^{\frac12}
\lesssim e^{CR}  2^{\eta(J-i)}\a_{n_i}^2  
\]
and 
\[
\bigg\| \max_{n\le n_{i+1}}\bigg|
\sum_{k=n_{i}+1}^{n \wedge \tau } \operatorname{EL}_{k}'  \bigg|\bigg\|_1
\lesssim  e^{CR} 2^{\eta(J-i)} \bigg( \sum_{k=n_{i}+1}^{n_{i+1}}   \delta_k^2  \a_{k}^2\bigg)
\lesssim e^{CR} 2^{\eta(J-i)}\a_{n_i}^2  . 
\]

Going back to the representation \eqref{alpharep2}, this implies that  
\[
\hat{\alpha}_{n\wedge\tau} \le\sum_{\kappa\le i} \1\{n_i\le n\} \frac{P_{n}}{P_{n_i}}  \a_{n_i}^{-1}  Q_i , \qquad\qquad Q_{i} =  \max_{\ell \le n_{i+1}}\bigg| \sum_{k=n_{i}+1}^{\ell \wedge \tau }  \Im\big(\operatorname{EM}_{k}' +\operatorname{EL}_{k}' \big)\bigg| . 
\]
Then, evaluating this sum at $\tau$, using the previous estimates, we obtain
\[
\hat{\alpha}_{\tau} 
\le \sum_{\kappa\le j \le J} \1\{n_j <\tau\le n_{j+1}\}  \bigg(\max_{n_j <\ell \le n_{j+1}}\frac{P_\ell}{P_{n_j}} \bigg) \sum_{i\le j} \frac{P_{n_j}}{P_{n_i}} \a_{n_i}^{-1} Q_i  , \qquad\qquad 
\|Q_i\|_1\lesssim  e^{CR} 2^{\eta(J-i)}\a_{n_i}^2 .
\]
Then, for $i\le j$ and $n_j <\tau$, 
\[
\bigg(\max_{n_j <\ell \le n_{j+1}}\frac{P_\ell}{P_{n_j}} \bigg) \frac{P_{n_j}}{P_{n_i}}
\le   e^{2CR} 2^{2\eta(J-j)} 2^{c(i-j)} 
\]
so that
\[
\hat{\alpha}_{\tau} \le e^{2CR}\sum_{\kappa\le j \le J}   2^{\eta(J-j)}  
\sum_{\kappa \le i\le j}  2^{c(i-j)} \a_{n_i}^{-1}  Q_i . 
\]

Consequently, if $\eta\le c$, 
\[
\sum_{\kappa \le i\le j}  2^{c(i-j)} \a_{n_i}^{-1}  \|Q_i\|_1 
\lesssim e^{CR} 2^{\eta J}  \sum_{\kappa \le i\le j}  2^{c(i-j)-\eta i} \a_{n_i}
\lesssim e^{CR}2^{\eta(J-j)}\a_{n_j} 
\]
and summing these bound (with $\eta<1/8$ and $\a_{n_J} \le e^{-S}$), we obtain
\[
\|\alpha_\tau\|_1 \lesssim  e^{3CR}\sum_{\kappa\le j \le J}  2^{2\eta(J-j)} \a_{n_j} 
\lesssim e^{3CR-S} . 
\]
Again, choosing $S=LR$ for some large constant $L$, this quantity is $\O(e^{-R})$ and we conclude that there is constant $c>0$ so that
\[
\P\big[|\hat{\alpha}_\tau|>\tfrac14 \big] \le 2\exp(-c  e^{R}) .
\]
Going back to \eqref{alphaest}, by Proposition~\ref{prop:prop}, this tail bound is negligible and we conclude that
\[
\P\big[\{\tau_1 < n_{J+1}\} \cap \A\cap \B\big] \lesssim  \exp(-cR).
\]
This completes the proof. \qed

\subsection{Proof of Proposition~\ref{prop:prop}.} \label{sec:gen}
The argument is divided in several steps, we first relate the ratios $\{{P_n}/{P_k}\}_{n\ge k\ge m}$ to an \emph{exponential martingale}.

\begin{lemma} \label{lem:lin4}
Recall the martingale $\{\mathbf{W}_{n,m}\}_{n\ge m}$ (Definition~\ref{def:GW}) and the event $\A_{\chi} = \A_{\chi}(T,R; z) $ (Lemma~\ref{lem:varphase}). \\
For all $m\le k \le n$, 
\[
\frac{P_n}{P_k}=\exp\biggl( \tfrac{2}{\sqrt\beta} \Im \mathbf{W}_{n,k} - \tfrac2{\beta} [ \Im \mathbf{W}_{n,k}] + \mathcal{E}_{n,k}\biggr)
\]
and there exist constants $C,c>0$ (depending only on $\beta$) so that for any $ R\ge 1$,
\[
\P\left[\big\{\max_{m\le k \le n } |\mathcal{E}_{n,k}| > CR  \big\}  \cap \A_\chi \right]
\lesssim \exp\left(- c R \sqrt{\mathfrak{L}T}\right).
\]
\end{lemma}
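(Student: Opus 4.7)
The plan is to take logarithms in $P_n/P_k = \prod_{j=k+1}^n (1+\gamma_j)$, where $\gamma_j \coloneqq 2\Im(\Gamma_j e^{2\i\phi_{j-1}})$, and Taylor-expand so as to identify the linear sum with $\tfrac{2}{\sqrt\beta}\Im\mathbf{W}_{n,k}$ and the quadratic correction with $\tfrac{2}{\beta}[\Im\mathbf{W}_{n,k}]$, absorbing every residual into $\mathcal{E}_{n,k}$. On $\A_m$, the truncation bound \eqref{trunc1} together with $|\Delta_j|\lesssim\delta_j^2\lesssim 1/(T\mathfrak{L})$ forces $|\gamma_j|\le\tfrac12$ uniformly, so
\[
\log(P_n/P_k) = \sum_{j=k+1}^n \gamma_j - \tfrac12 \sum_{j=k+1}^n \gamma_j^2 + \sum_{j=k+1}^n O(\gamma_j^3).
\]

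For the linear term I would plug in $\Gamma_j$ from \eqref{Gamma} together with $Z_j' = \i\delta_j Z_j/\sqrt\beta + O(\delta_j^3|X_j|)$ from Lemma~\ref{lem:Z}, and expand $1/(1+\overline{\Upsilon_j}) = 1 - \overline{\Upsilon_j} + O(|\Upsilon_j|^2)$. The leading piece matches the increment of $\tfrac{2}{\sqrt\beta}\Im\mathbf{W}_{n,k}$ by the construction of $\mathbf{W}$ in Definition~\ref{def:GW}. The deterministic residue $\sum 2\Delta_j\cos(2\phi_{j-1})$ has coefficients $q_j = 2\Delta_j$ satisfying the regularity hypotheses $|q_j|\lesssim\delta_j^2$ and $|q_{j+1}-q_j|\lesssim\delta_j^4$ of Proposition~\ref{prop:osc1}, so it is $O(R/\sqrt T)$ on $\A_\chi$; the stochastic $O(\Upsilon)$-correction is a sum of martingale differences with predictable bracket $\lesssim\sum\delta_j^4 \lesssim 1/(T\mathfrak{L})$, handled via Freedman's inequality (Proposition~\ref{lem:conc2}) combined with Doob's maximal inequality to obtain uniformity in $k$.

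For the quadratic term I would split $\gamma_j^2 = \E[\gamma_j^2\mid\F_{j-1}] + M_j$, where $M_j$ is a centered martingale difference of Orlicz-$\psi_1$ norm $\lesssim\delta_j^2$ (sub-exponential as the square of a sub-Gaussian). Using $\E|Z_j|^2=1$ and $\E Z_j^2 = \cos\theta_j\, e^{-\i\theta_j}$ from \eqref{def:s}, a direct expansion shows that the predictable piece equals $\tfrac{4}{\beta}$ times the increment of $[\Im\mathbf{W}_{n,k}]$, up to oscillatory residues of the form $\delta_j^2 \cos(4\phi_{j-1}+c_j)$ and $\delta_j^2\cos(2\phi_{j-1}+c_j)$ coming from the $\E Z_j^2$ contribution, which are again absorbed by Proposition~\ref{prop:osc1}. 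The centered sum $\sum M_j$ is controlled by Proposition~\ref{lem:conc1} combined with Doob.

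The cubic remainder is bounded deterministically on $\A_m$ by $\sum\delta_j^3(1+|X_j|+|Y_j|)^3\lesssim \delta_m m^{3\epsilon/2}$, which vanishes faster than $1/\sqrt{T\mathfrak{L}}$ for $\epsilon$ small. Collecting the contributions, $\mathcal{E}_{n,k}$ decomposes as an $O(R)$ oscillatory piece on $\A_\chi$, a sub-exponential martingale of Orlicz norm $\lesssim 1/\sqrt{T\mathfrak{L}}$, and a negligible deterministic residue, to which Propositions~\ref{lem:conc1}--\ref{lem:conc2} together with Doob yield
\[
\P\big[\big\{\max_{m\le k\le n}|\mathcal{E}_{n,k}|>CR\big\}\cap\A_\chi\big] \lesssim \exp(-cR\sqrt{\mathfrak{L}T}),
\]
as required. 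The principal difficulty I anticipate is the bookkeeping in the $1/(1+\overline{\Upsilon_j})$ expansion: each Taylor level generates a mixture of martingale and deterministic pieces that must be paired so that the martingale parts assemble into $\Im\mathbf{W}_{n,k}$, the second-moment pieces into $[\Im\mathbf{W}_{n,k}]$, and the remaining oscillatory pieces fall under Propositions~\ref{prop:osc1}--\ref{prop:osc2}; uniformity in the lower endpoint $k$ (rather than a fixed starting index) then requires Doob's inequality at every invocation of martingale concentration.
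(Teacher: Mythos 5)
Your proposal follows essentially the same route as the paper: expand $\log(P_n/P_k)$ to second order, identify the linear part with $\tfrac{2}{\sqrt\beta}\Im\mathbf{W}_{n,k}$ and the predictable quadratic part with $-\tfrac2\beta[\Im\mathbf{W}_{n,k}]$ using \eqref{def:s}, absorb oscillatory residues on $\A_\chi$ via Propositions~\ref{prop:osc1}--\ref{prop:osc2}, and control the centered remainder by sub-exponential martingale concentration (Proposition~\ref{lem:conc1}) with a maximal inequality and a union bound over the lower endpoint $k$, the rate $\exp(-cR\sqrt{\mathfrak{L}T})$ coming from $\delta_m^{-1}=\sqrt{\mathfrak{L}T}$. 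One step is imprecise as written and would fail if taken literally: the ``stochastic $O(\Upsilon)$-correction'' in the linear term, i.e.\ the cross term $\delta_j Z_j e^{2\i\theta_j}\overline{\Upsilon_j}$ coming from expanding $(1+\overline{\Upsilon_j})^{-1}$ in \eqref{Gamma}, is \emph{not} a martingale difference: since $\overline{\Upsilon_j}$ contains $\overline{Z_j'}$ and $Z_j'$, its conditional mean is of size $\delta_j^2$ per step, and $\sum_j\delta_j^2$ diverges logarithmically, so Freedman plus Doob alone cannot dispose of it. The correct treatment (this is exactly the paper's $\operatorname{EO}_n$ decomposition) is to subtract the compensator, which consists of oscillatory sums with coefficients of size $\delta_j^2$ multiplying $e^{2\i\phi_{j-1}}$ and $e^{4\i\phi_{j-1}}$; the $e^{2\i\phi}$ piece goes through Proposition~\ref{prop:osc1} and the $e^{4\i\phi}$ piece through Proposition~\ref{prop:osc2} with $x=z$, yielding the $O(R)$ contribution on $\A_\chi$, while only the centered residue is fed into Proposition~\ref{lem:conc1}. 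Your closing remarks about pairing martingale, second-moment and oscillatory pieces indicate you have the right tools in hand, so this is a repairable bookkeeping error rather than a wrong approach, but the centering of that cross term must be made explicit for the argument to close.
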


\begin{proof}
Recall that $\Gamma_n =\i \frac{\Delta_n + Z_n' e^{2\i\theta_n}}{1+\overline{\Upsilon_n}}$ according to \eqref{Gamma}, \eqref{Upsi} and on the event $\A \subset \A_\chi$,  we have that  $|\Gamma_n| , |\Upsilon_n|\le 1/2$ for all $n\ge m$.
Then, using Lemma~\ref{lem:Z}, we can linearize 
\[
\Gamma_n =  \frac{\delta_n Z_n}{\beta^{1/2}}e^{2\i\theta_n}(\overline{\Upsilon_n}-1)+ \i\delta_n^2/4 +  \O(R\delta_n^{3-\epsilon})
\]
and
\[
\log\big(1 -2 \Im(\Gamma_ne^{2\i \phi_{n-1}})\big) =  -2\Im(\Gamma_ne^{2\i \phi_{n-1}}) 
- \frac2{\beta}\Im\big(\delta_n Z_n e^{2\i(\theta_n+\phi_{n-1})}\big)^2 +  \O(R\delta_n^{3-\epsilon}). 
\]
Now, since $\overline{\Upsilon_n} =  \frac{\i \delta_n}{\beta^{1/2}}(-\overline{Z_n}+ Z_n e^{2\i(\theta_n+ \phi_{n-1})}\big) +\O(\delta_n^2)$ with a deterministic error, it holds on the event $\A$,
\[\begin{aligned}
\log\big(1-2\Im(\Gamma_ne^{2\i \phi_{n-1}})\big)  &= \tfrac{2}{\sqrt\beta} \Im\big(\delta_n Z_ne^{2\i(\theta_n+ \phi_{n-1})}\big) - \tfrac2{\beta} \Im\big(\delta_n Z_n e^{2\i(\theta_n+\phi_{n-1})}\big)^2 \\
&\quad + \tfrac2\beta \Re\underbrace{\delta_n^2\big( |Z_n|^2 e^{2\i(\theta_n+ \phi_{n-1})}-Z_n^2e^{4\i(\theta_n+ \phi_{n-1})}-\tfrac{\beta}4e^{2\i \phi_{n-1}}\big)}_{\operatorname{EO}_{n}} +  \O(R\delta_n^{3-\epsilon}) \\
& = \tfrac{2}{\sqrt\beta} \Im \mathbf{W}_{n,n-1} - \tfrac{2}{\beta}\E\big[(\Im \mathbf{W}_{n,n-1})^2|\F_{n-1}\big] \\
&\quad -\tfrac2{\beta} \underbrace{\big\{\Im\big(\delta_n Z_n e^{2\i(\theta_n+\phi_{n-1})}\big)^2 
-\E\big[ \Im\big(\delta_n Z_n e^{2\i(\theta_n+\phi_{n-1})}\big)^2 |\F_{n-1}\big]\big\}}_{\operatorname{EM}^1_{n}} +\tfrac2\beta \Re(\operatorname{EO}_{n})  +  \O(R\delta_n^{3-\epsilon}). 
\end{aligned}\]
The terms $\operatorname{EO}_{n}$ can be handled by making a martingale decomposition and using Proposition~\ref{prop:osc1} (and also Proposition~\ref{prop:osc2} with $x=z$); we decompose
\[
\operatorname{EO}_{n} = \operatorname{EM}^2_{n}
+\, q^1_n e^{2\i\phi_{n-1}}+ q^2_ne^{4\i\phi_{n-1}} 
\]
where $ q^1_n = \delta_n^2\big(\E |Z_n|^2e^{2\i\theta_n} -\beta/4)$, $ q^2_n = \delta_n^2\E Z_n^2 e^{4\i\theta_n}$  and the martingale increments $\operatorname{EM}^j_{n}$ satisfy 
$\|\operatorname{EM}^j_{n}\|_1 \lesssim \delta_n^2$ for $j\in\{1,2\}$.
We check that the sequence $\{q^1_n\}_{n\ge m}$ satisfies the assumptions of Proposition~\ref{prop:osc1} and $\{q^2_n\}_{n\ge m}$ that of Proposition~\ref{prop:osc2} (the argument is the same as in the proof of Proposition~\ref{prop:Q}; $\E |Z_n|^2=1$ and $\E Z_n^2 = (\cos\theta_n)e^{-\i\theta_n}$).
Then, it holds on the  event $\A_\chi(T,R; z)$, 
\[
\max_{n> m}\bigg| \sum_{m< k \le n} \big(q^1_n e^{2\i\phi_{n-1}}+ q^2_ne^{4\i\phi_{n-1}}\big) \bigg| 
\lesssim R/ T^{1/3} . 
\]

Let $\operatorname{EM}_{n}= \operatorname{EM}_{n}^2- \operatorname{EM}_{n}^1$ and 
$\operatorname{M}_{n,k}= {\textstyle\sum_{\ell=k+1}^n} \operatorname{EM}_{k}$. 
Then, using that $\sum_{n>k}\|\operatorname{EM}_{n}\|_1^2 \lesssim \delta_k^2$, by Proposition~\ref{lem:conc1},  for any $\lambda>0$
\[
\P\left[\max_{n>k} |\operatorname{M}_{n,k}|  \ge \lambda \right] \le 2 \exp\left(- c \lambda \delta_k^{-1}\right). 
\]
Consequently, by a union bound and using that  $ \delta_m^{-2} = \mathfrak{L}  T $,  
\begin{equation} \label{gammamarterr}
\P\left[\max_{n>k \ge m} |\operatorname{M}_{n,k}|  \ge R \right] \lesssim \exp\left(- c R \sqrt{\mathfrak{L}T}\right). 
\end{equation}

Hence, combining these estimates, we conclude that on the event $\A_\chi$, uniformly for any $n>m$,
\[
\frac{P_n}{P_k}
= \exp\bigg( \sum_{ k< \ell \le n} \log\big(1+ \Re(\Gamma_\ell e^{2\i \phi_{\ell-1}})\big)  \bigg)
= \exp\bigg( \tfrac{2}{\sqrt\beta} \Im \mathbf{W}_{n,k} - \tfrac2{\beta} [ \Im \mathbf{W}_{n,k}]  - \tfrac2{\beta} \operatorname{M}_{n,k}  +  \O(R) \bigg)  
\]
where the error is deterministic and  $\{\operatorname{M}_{n,k}\}_{n\ge m\ge k}$ is controlled by \eqref{gammamarterr}. 
\end{proof}

Dropping the errors for now, define the \emph{exponential martingale}, 
\[
\mathcal{P}_{n,k} \coloneqq \exp\big( \tfrac{2}{\sqrt\beta} \Im \mathbf{W}_{n,k} - \tfrac2{\beta} [ \Im \mathbf{W}_{n,k}]\big)
\]
The next step is to control the variation of $\mathcal{P}_{n,k}$ over the dyadic blocks
$n_j = N_0 +\mathfrak{L} 2^j$ for $j\in \N$. 

\begin{lemma} \label{lem:Pdyadic}
There exists constants $c_i= c_i(\beta)>0$ and $C=C(\beta)$ so that for all $j\ge \kappa$, 
\begin{equation}\label{Wtb}
\P\Big[ \max_{n_j < n \le n_{j+1}}\big(\mathcal{P}_{n,n_j}^{\pm1}\big)\ge e^{R} \Big] 
\lesssim e^{-c_1R^2} 
\end{equation}
and 
\begin{equation}\label{Wdecay}
\P\Big[ \big\{ \max_{\kappa\le i\le j} \big( 2^{c_2(j-i)} \mathcal{P}_{n_j,n_i} \big)\ge e^{CR} \big\}  \cap \A_{\chi}\Big]
\lesssim \exp(- c_1R) .
\end{equation}
\end{lemma}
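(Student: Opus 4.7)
The plan is to exploit that $\mathcal{P}_{n,k}$ is essentially the standard exponential supermartingale $\exp(\lambda M_n - \tfrac{\lambda^2}{2}[M]_n)$ associated to the real $\{\F_n\}$-martingale $M = \Im\mathbf{W}$ with $\lambda = 2/\sqrt\beta$. The random fluctuations will be controlled by the sub-Gaussian/Bernstein inequalities of Proposition~\ref{lem:conc2}, while the deterministic decay factor $2^{-c_2(j-i)}$ in \eqref{Wdecay} must come from a lower bound on the bracket $[\Im\mathbf{W}_{n_j,n_i}]$, which I would isolate using Lemma~\ref{lem:Z} together with the oscillatory-sum machinery (Propositions~\ref{prop:osc1}--\ref{prop:osc2}) already developed for Proposition~\ref{prop:Q}.

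\textbf{Block estimate \eqref{Wtb}.} For $k \in (n_j, n_{j+1}]$ one has $\delta_k^2 \in [2^{-j-1}/\mathfrak{L}, 2^{-j}/\mathfrak{L}]$, so $\sum_{n_j < k \le n_{j+1}}\delta_k^2 = \log 2 + O(2^{-j})$ is $O(1)$ deterministically. Since $\E[|Z_k|^2|\F_{k-1}] = 1$ and $\E Z_k^2$ is bounded in modulus (Lemma~\ref{lem:Z}), the same holds for $[\Im\mathbf{W}_{n,n_j}]$ on $n\le n_{j+1}$, whence $\log\mathcal{P}_{n,n_j}^{\pm 1} \le \tfrac{2}{\sqrt\beta}|\Im\mathbf{W}_{n,n_j}| + O(1)$. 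The sub-Gaussian maximal inequality of Proposition~\ref{lem:conc2} gives $\P[\max_{n \le n_{j+1}}|\Im\mathbf{W}_{n,n_j}| \ge t] \lesssim e^{-c t^2}$, which yields \eqref{Wtb}.

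\textbf{Multi-block decay \eqref{Wdecay}.} The essential step is to isolate the deterministic drift of the bracket. Using $Z_k = (X_k + e^{-\i\theta_k}Y_k)/\sqrt 2$ and $\E Z_k^2 = \cos\theta_k \, e^{-\i\theta_k}$ from Lemma~\ref{lem:Z}, one computes
\[
\E\big[\Re(Z_k e^{2\i(\theta_k+\phi_{k-1})})^2 \big| \F_{k-1}\big] = \tfrac12 + \tfrac12 \cos\theta_k \cos(3\theta_k + 4\phi_{k-1}),
\]
so that
\[
[\Im\mathbf{W}_{n_j,n_i}] = \tfrac12 \sum_{n_i < k \le n_j}\delta_k^2 \;+\; \tfrac12 \sum_{n_i < k \le n_j}\delta_k^2 \cos\theta_k \cos(3\theta_k + 4\phi_{k-1}).
\]
The first sum equals $\tfrac{\log 2}{2}(j-i) + O(1)$ since $\sum_{n_i < k \le n_j}\delta_k^2 = \log(2^{j-i}) + O(1)$; the second, once expanded via $e^{\pm 4\i\phi_{k-1}}$, satisfies the hypotheses of Proposition~\ref{prop:osc1} and is therefore $O(R/\sqrt T) = O(R)$ on $\A_\chi$. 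Combining gives, on $\A_\chi$,
\[
\log\mathcal{P}_{n_j,n_i} = \tfrac{2}{\sqrt\beta}\Im\mathbf{W}_{n_j,n_i} - \tfrac{\log 2}{\beta}(j-i) + O(R).
\]

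Fixing any $c_2 < 1/\beta$, the event $\{2^{c_2(j-i)}\mathcal{P}_{n_j,n_i} \ge e^{CR}\}$ forces $\tfrac{2}{\sqrt\beta}|\Im\mathbf{W}_{n_j,n_i}| \gtrsim R + (j-i)$ for $C$ large enough. Since $[\Im\mathbf{W}_{n_j,n_i}] \lesssim (j-i)$, Proposition~\ref{lem:conc2} provides $\P[|\Im\mathbf{W}_{n_j,n_i}| \ge t] \lesssim \exp(-c\,t^2/(j-i))$, and plugging in $t \gtrsim R + (j-i)$ gives $\lesssim \exp(-c'(R + (j-i)))$ by AM-GM; a geometric union bound over $i \le j$ then yields \eqref{Wdecay} with total mass $\lesssim e^{-c''R}$. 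The delicate point is Step 2: the residual oscillatory error must remain $O(R)$ rather than growing with $j-i$, so that the $\tfrac{\log 2}{\beta}(j-i)$ drift strictly dominates the target rate $c_2(\log 2)(j-i)$; this is precisely why $c_2$ must be taken below $1/\beta$.
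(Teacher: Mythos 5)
Your proof is correct and takes essentially the same route as the paper: the single-block bound \eqref{Wtb} via a deterministic $O(1)$ bracket over each dyadic block plus the sub-Gaussian maximal inequality of Proposition~\ref{lem:conc2}, and \eqref{Wdecay} by isolating the deterministic drift $[\Im \mathbf{W}_{n_j,n_i}]=\tfrac12\log 2^{\,j-i}+O(R)$ on $\A_\chi$, converting the event into a Gaussian large-deviation event for $\Im\mathbf{W}_{n_j,n_i}$ with variance $\lesssim j-i$, and summing geometrically over $i\le j$ (the paper's choice $\epsilon=1/2\beta$ corresponds to your $c_2<1/\beta$). The only nitpick is that the $e^{4\i\phi_{k-1}}$ oscillatory correction is controlled in the paper by Proposition~\ref{prop:osc2} with $x=z$ rather than Proposition~\ref{prop:osc1}, but the resulting $O(R)$ bound is identical.
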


\begin{proof}
Using  \eqref{def:s}, the martingale $\{ \Im \mathbf{W}_{n,k}\}_{n\ge k}$ 
satisfy $\|\mathbf{W}_{k+1,k} \|_{2}\le \delta_k$ for any $k\ge m$ and its quadratic variation is given by
\[
[\Im \mathbf{W}_{n,k}] = \sum_{k<j\le n} \delta_j^2 \frac{1- \Re(s_j e^{4\i(\theta_j +\phi_{j-1})})}{2} . 
\]
Then,  by Proposition~\ref{lem:conc2}, it holds (uniformly) for $j\ge \kappa$,
\[
\Big\|\max_{n_j < n \le n_{j+1} } |\Im \mathbf{W}_{n,n_j}|\Big\|_2^2
\le \sum_{n_j<k\le n_{j+1}} \delta_j^2  \lesssim 1
\]
by the dyadic construction. 
Moreover, deterministically $ [ \Im \mathbf{W}_{n_{j+1},n_j}] \lesssim 1$ uniformly for all $j\ge \kappa$. 
This yields the tail bound \eqref{Wtb}. 

To prove the second estimate, on the event $\A_{\chi}$ (Lemma~\ref{prop:osc2} with $x=z$), we control the oscillatory part of the quadratic variation; for any $R\ge 1$,
\[
\max_{n> m}\bigg| \sum_{m< j \le n}  \delta_j^2s_j e^{4\i(\theta_j +\phi_{j-1})}  \bigg| 
\lesssim R /T^{1/3} . 
\]
This argument has already been used several times. This implies that on $\A_{\chi}$, for any integer $j>i\ge \kappa$, 
\[
[\Im \mathbf{W}_{n_j,n_i}] \ge \tfrac12  \log(2^{j-i})  +\O(R) .
\]
From this estimate, we expect that $\mathcal{P}_{n_j,n_i} $ decays like $2^{(i-j)/\beta}$.
Then, by Proposition~\ref{lem:conc2} again, it holds for any $\epsilon>0$, 
\[\begin{aligned}
\P\big[ \big\{ \mathcal{P}_{n_j,n_i} \ge 2^{(\beta^{-1}-\epsilon)(i-j)}  e^{C R} \big\} \cap \A_{\chi}\big]
&\le \P\big[ \big\{ |\Im \mathbf{W}_{n_j,n_i} | \ge \sqrt{\beta}(R +  \epsilon \log 2^{j-i})  \big\}\big]\\
&\le 2 \exp\big( -c \tfrac{(R+\epsilon \log 2^{j-i})^2}{\log 2^{j-i}} \big)
= 2^{1+c \epsilon (i-j)} e^{-2c \epsilon R}  . 
\end{aligned}\]
Then, by a union bound, summing these estimates (for $i\le j$),  this yields for $R\ge 1$, 
\[
\P\Big[ \big\{ \max_{\kappa \le i\le j} \big( 2^{(\beta^{-1}-\epsilon)(j-i)} \mathcal{P}_{n_j,n_i} \big)\ge e^{CR} \big\}  \cap \A_{\chi}\Big]
\lesssim  \exp\big(- 2c \epsilon R \big) 
\]
where the implies constant depends only on $\epsilon>0$.
Choosing $\epsilon =1/2\beta$, this completes the proof of \eqref{Wdecay}. 
\end{proof}

We are now ready to complete the proof.

\begin{proof}[Proof of Proposition~\ref{prop:prop}]
By definition \eqref{tube3}, with $C=C_\beta$, 
\[
\{\varsigma_J \ge n_J\} =\Big\{ \max_{n_j<n\le n_{j+1}}\big((P_n/P_{n_j})^{\pm 1}\big)\vee \max_{i\le j}\big(2^{c_\beta(j-i)} P_{n_j}/P_{n_i} \big) \le e^{CR} 2^{\eta(J-j)} ; \forall j\in[\kappa,J]  \Big\} 
\]
and using the notation from Lemma~\ref{lem:lin4},
\[
\Big\{ \max_{n_j<n\le n_{j+1}}\big(\mathcal{P}_{n,n_j}^{\pm1}\big)\vee \max_{i\le j}\big(2^{c_\beta(j-i)}  \mathcal{P}_{n_j,n_i} \big) \le e^{CR/2} 2^{\eta(J-j)} ; \forall j\in[\kappa,J]  \Big\} 
\cap \big\{\max_{m\le k \le n } |\mathcal{E}_{n,k}| \le CR/2  \big\} \subset \big\{\varsigma_J \ge n_J\big\} .
\]
Then, 
\[\begin{aligned}
\P\big[\{\varsigma_J<n_J\} \cap \A_{\chi} \big]  
&\le \P\big[\big\{\exists j\in[\kappa,J]  ;  \max_{i\le j}\big(2^{c_\beta(j-i)}  \mathcal{P}_{n_j,n_i} \big) > e^{CR/2} 2^{\eta(J-j)}\big\} \cap \A_{\chi}\big] \\
&\quad+ \P\big[\exists j\in[\kappa,J]  ; \max_{n_j<n\le n_{j+1}}\big(\mathcal{P}_{n,n_j}^{\pm1}\big)> e^{CR/2} 2^{\eta(J-j)}\big]
+ \exp\big(-c R\sqrt{\mathfrak{L} T}\big).
\end{aligned}\]
These probabilities are controlled using Lemma~\ref{lem:Pdyadic} and a union bound. For instance, by \eqref{Wtb}, 
\[\begin{aligned}
\P\bigg[\max_{j\in[\kappa,J]}\Big( 2^{\eta(j-J)} \max_{n_j < n \le n_{j+1} }\big(\mathcal{P}_{n,n_j}^{\pm1}\big) \Big) \ge e^R\bigg]
& \lesssim  \sum_{j\ge 1}  \exp\big(-c_1R^2 - c_1(\log 2^{\eta j})^2 \big)  \\
&\lesssim e^{-c_1R^2}  
\end{aligned}\]
where the implied constant depends on $\beta,\eta>0$. Similarly, using \eqref{Wdecay} and adjusting constants, we conclude that there is a constant $c=c(\beta)>0$ such that for any $R\ge1$, 
\[
\P\big[\{\varsigma_J<n_J\} \cap \A_{\chi} \big]  
\lesssim   \exp(-cR)
\]
Finally, by Lemma~\ref{lem:varphase}, $\P[ \A_{\chi}^c \cap \A] \lesssim \exp\big( - c R(R \wedge \sqrt{\mathfrak{L} T}) \big) $, which is negligible.
\end{proof}

\section{Log-correlated structure} \label{sec:LCF}

Th goal of this section is to prove Proposition~\ref{prop:M} on the bracket structure or the complex martingale $\{\M_n\}$ and the corresponding claim 3 from Theorem~\ref{thm:main}.
According to Definition~\ref{def:GW}, the martingale has two parts: the $\G$ field which is a sum of independent random variables and the $\W$ field which is a true martingale (meaning that its brackets are stochastic processes). Because of the rapid growth of the phase $\{\phi_n\}$, these two fields are asymptotically uncorrelated and the $\W$ field behaves like a white noise.
The proof is structured as follows: 
\begin{itemize}[leftmargin=*]   \setlength\itemsep{0em}
\item In Section~\ref{sec:G}, we describe the correlation structure of the field $\G$, Proposition~\ref{prop:G}. Since its brackets are deterministic sums, the proof consists of some Riemann sum approximations.
\item In Section~\ref{sec:W}, we  describe the correlation structure of the field $\W$, Proposition~\ref{prop:W}. Its brackets have deterministic equivalents, with errors controlled in probability. Using the techniques introduced in Section~\ref{sec:osc}, one can also obtain tail-bounds for these errors. 
\item In Section~\ref{sec:Wosc}, we prove extra estimates on random oscillatory sums which are instrumental to obtain Proposition~\ref{prop:W}. These estimates in the \emph{merging regime} are based on the continuity properties of the phase obtained in Section~\ref{sec:cont}.
\item Finally, in Section~\ref{sec:M}, we consider the correlation structure between the $\G$ and $\W$ field and combine the previous results to deduce  Proposition~\ref{prop:M}. 
\end{itemize}

Throughout the proof, we abuse the notation from Definition~\ref{def:GW} and let 
\begin{equation}\label{GW} 
\G_n(z) \coloneqq \sum_{0< k\le n} \1\{k\notin \Gamma_T(z)\} \frac{Z_k(z)}{\sqrt{k}\sqrt{Nz^2/k-1}} , \quad
\W_{n}(z)   \coloneqq \sum_{N_0(z)<k \le n}   \1\{k\notin \Gamma_T(z)\}\frac{Z_k(z)e^{2\i(\theta_k(z)+\phi_{k-1}(z))}}{{\sqrt{k}\sqrt{Nz^2/k-1}}} ,
\end{equation}
where the $\sqrt{\cdot}$ is chosen as in \eqref{Jouk}\footnote{For $w\in[-1,1]$, $\sqrt{w^2-1}$ is imaginary and defined by continuity from the upper-half plane.}, for any $T\ge 1$, 
\begin{equation*}
\Gamma_T(z)  \coloneqq \big\{k\in[N] : |k-Nz^2| < T\mathfrak{L}(z)\big\}  
\quad \text{and the process $\{\phi_n(z) : n> N_0(z)\}$ is given by \eqref{def:phase}.} 
\end{equation*} 
$\L(z) =  \lceil Nz^2\rceil^{1/3}$  is the \emph{parabolic time scale} around the turning point, \eqref{para}.
Introducing the parameter $T\ge 1$, independently of $N$, will only affect the $\O(1)$ error terms in the merging regime (in Definition~\ref{def:GW}, $T=1$). In particular, it will be convenient to increase $T$ is necessary for some arguments by using for instance Remark~\ref{rk:cutoff}. 
Recall that  $[z]_N \coloneqq |z| \vee N^{-1/2}$ for $z\in\R$.
Throughout the proof, we also write 
\[
\G(z) = \G_N(z) \qquad \W(z) = \W_N(z) , \qquad \M(z) = \M_N(z) =\G(z) + \overline{\W(z)} 
\]
and we will also distinguish two regimes:
\begin{itemize}\setlength\itemsep{0em}
\item The \emph{global regime}  if  $|z-x| \gg N^{-2/3}[z]_N^{-1/3}$ where the brackets of $\G,\W$ have deterministic equivalents in terms of the map \eqref{Jouk}  (the bracket of $\W$  converges to 0 in probability in this regime).
\item The \emph{local regime}  if  $|z-x| \le S N^{-2/3}[z]_N^{-1/3}$, for some constant $S\ge 1$, 
where the bracket of the $\G$ field is constant and the bracket of $\M$ can be computed up to errors which are tight random variables.
\end{itemize}

\begin{remark}\normalfont \label{rk:parascale}
Observe that the following three conditions are equivalent: $|z-x| = \Theta\big(N^{-2/3}[z]_N^{-1/3}\big)$, 
$ |x-z| = \Theta\big(1/\sqrt{N\mathfrak{L}(z)}\big)$ and 
$|N_0(z)-N_0(x)| = \Theta\big(\mathfrak{L}(z)\big)$. So, the \emph{transition regime} corresponds to the case where \emph{the two  turning points are merging at the parabolic scale}.
It is difficult to obtain information of the brackets of the $\W$ field in this regime since its behavior can be related to the stochastic Airy function.
\end{remark}


\begin{remark}\normalfont \label{rk:cutoff}
The parameter $T$ acts as a cutoff around the turning point. We observe that, since $\E|Z_k(z)|^2=1$,  for any $R\ge T$,
\[
\sum_{k\in\Gamma_R(z)\setminus\Gamma_T(z)} \E\bigg| \frac{Z_k(z)}{\sqrt{k}\sqrt{Nz^2/k-1}}\bigg|^2 
= \sum_{k\in\Gamma_R(z)\setminus\Gamma_T(z)} \frac{1}{Nz^2-k} = \log\bigg(\frac{R}{T}\bigg)  +\underset{N\to\infty}{\o(1)}.
\]
Under the assumptions of Definition~\ref{def:noise}, one has a similar estimate for the $\boldsymbol{\Psi}_2$-norm.
\end{remark}

\subsection{Correlations of the $\G$ field.} \label{sec:G}
The brackets of the $\G$ field are deterministic and so equal to its covariances. 
The goal of this section is to prove the following asymptotics:  

\begin{proposition} \label{prop:G}
The $\G$ field has the following covariance; for $x,z\in\R$,\\
$\bullet[\text{Global regime}]$ If $|x|\le |z|$ and $ |x-z| \gg 1/\sqrt{N\mathfrak{L}(z)}$ or $\big(|z|-1\big)\gg N^{-2/3}$, 
\[
\big[\G(z),\G(x)\big] = - 2\log\big(1-J(z)J(x)\big)  + \underset{N\to\infty}{\o(1)}, \qquad
\big[\G(z),\overline{\G(x)}\big] = - 2 \log\big(1- J(z)\overline{J(x)}\big) +\underset{N\to\infty}{\o(1)}.
\]
$\bullet[\text{Local regime}]$ If $|z|\le 1-N^{-2/3}$ and $|x-z|\le C/\sqrt{N\mathfrak{L}(z)}$ for a constant $C\ge 1$, then 
\[
\big[\G(z),\G(x)\big]  = - 2\log(\varrho(x)) +\O(1), \qquad\qquad
\big[\G(z),\overline{\G(x)}\big]  =\log\big(\varrho(x)^2N\mathfrak{L}(x)) +\O(1).
\]
$\bullet[\text{Edge regime}]$ If $| x\pm 1| , |z\pm 1|  \le C N^{-2/3}$ for some constant $C\ge 1$, 
\[
\big[\G(z),\G(x)\big]  = \log(N^{2/3})+\O(1) \qquad\qquad
\big[\G(z),\overline{\G(x)}\big]  =\log(N^{2/3})+\O(1). 
\]
The error are deterministic and depend only on $(C,T)$.
\end{proposition}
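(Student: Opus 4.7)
The plan is to exploit that $\G$ is, by \eqref{GW}, a sum of random variables independent across $k$, so its bracket is deterministic. Using $\E X_k = \E Y_k = 0$, $\E X_k^2 = \E Y_k^2 = 1$, $\E X_k Y_k = 0$ together with \eqref{def:J}, I would first compute
\[
\E[Z_k(z) Z_k(x)] = \tfrac{1}{2}\bigl(1 + J(z\sqrt{N/k}) J(x\sqrt{N/k})\bigr), \qquad
\E[Z_k(z) \overline{Z_k(x)}] = \tfrac{1}{2}\bigl(1 + J(z\sqrt{N/k}) \overline{J(x\sqrt{N/k})}\bigr),
\]
reducing both brackets to explicit deterministic sums of the form
\[
S^\sharp(z,x) = \sum_{k \notin \Gamma_T(z) \cup \Gamma_T(x)} \frac{1 + J(z\sqrt{N/k})\, J^\sharp(x\sqrt{N/k})}{2\sqrt{(Nz^2-k)(Nx^2-k)}}, \qquad J^\sharp \in \{J, \overline{J}\},
\]
with branches inherited from \eqref{Jouk}--\eqref{def:J}.

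In the global regime the two cutoff windows are disjoint by Remark~\ref{rk:parascale}, and under $t = k/N$ the sum is a Riemann approximation with step $1/N$ of the integral
\[
\int_0^1 \frac{1 + J(z/\sqrt t)\, J^\sharp(x/\sqrt t)}{2\sqrt{(z^2-t)(x^2-t)}}\, dt,
\]
whose integrable square-root singularities at $t = z^2, x^2$ are precisely what the cutoffs $\Gamma_T$ remove, so a standard quantitative Riemann-sum bound yields an $o(1)$ error. To evaluate the integral in closed form, I would differentiate both sides in $z$, check that they both reduce to $-2 J'(z) J^\sharp(x)/(1 - J(z) J^\sharp(x))$ after a short residue computation (using $J'(z) = -J(z)/\sqrt{z^2-1}$), and fix the constant by sending $z \to \infty$, where both vanish. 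This produces $-2\log(1 - J(z) J^\sharp(x))$; the regime $(|z|-1) \gg N^{-2/3}$ is handled identically with the real branch of $J$.

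For the local and edge regimes the two turning points merge at the parabolic scale, and I would split $S^\sharp(z,x)$ into a hyperbolic piece $k \le N\min(z^2,x^2) - T\L(z)$ and an elliptic piece $k \ge N\max(z^2,x^2) + T\L(z)$. On each piece $|(Nz^2-k)(Nx^2-k)|^{-1/2} \asymp |k-Nz^2|^{-1}$ because $|Nz^2 - Nx^2| \lesssim \L(z)$ is negligible outside the cutoff, and each partial sum reduces to a harmonic sum modulated by the bounded factor $1 + J J^\sharp$. The branch of $\sqrt{(Nz^2-k)(Nx^2-k)}$ is real positive in the hyperbolic part; in the elliptic part it flips sign for $[\G(z),\G(x)]$ (since $\i \cdot \i = -1$) but remains positive for $[\G(z),\overline{\G(x)}]$ (since $\i \cdot \overline{\i} = 1$). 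Explicit harmonic-sum evaluation, together with elementary summation-by-parts estimates to absorb the oscillatory part of $J J^\sharp$ in the elliptic regime, then produces the stated $-2\log\varrho(x) + O(1)$ and $\log(\varrho(x)^2 N \L(x)) + O(1)$. In the edge regime $\L(\pm 1) \asymp N^{1/3}$ forces only the hyperbolic part (of length $\asymp N - TN^{1/3}$) to survive, giving $\log N^{2/3} + O(1)$ for both brackets.

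The main obstacle is the consistent tracking of the branches of $\sqrt{Nz^2-k}$ and $J(z\sqrt{N/k})$ across the transition at $k = Nz^2$, which is what produces the crucial contrast between the two brackets in the local regime. Once this bookkeeping is settled, everything reduces to routine Riemann summation and the closed-form integral evaluation of the global step.
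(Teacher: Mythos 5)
Your reduction of both brackets to deterministic sums is exactly the paper's starting point (the covariance formulas for $Z_k$ you write are correct, and your closed-form evaluation via the logarithmic derivative of $1-J(z/\sqrt t)J^\sharp(x/\sqrt t)$ is in substance the paper's identity \eqref{Jid}), and your local/edge treatment by harmonic sums off an enlarged cutoff window is a workable, slightly more direct variant of the paper's route through the diagonal variances (Proposition~\ref{prop:G1}) plus the continuity estimate $\|\G(x)-\G(z)\|_2^2\lesssim\log C$ (Lemma~\ref{lem:G3}). However, there is a genuine gap in your global regime: the hypothesis there is $|x-z|\gg 1/\sqrt{N\mathfrak{L}(z)}$, which does \emph{not} imply that the two cutoff windows are disjoint, because $N_0(z)=\lfloor Nz^2\rfloor$ depends only on $z^2$. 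If $x$ lies near $-z$ with $|z|$ of order one, then $|x-z|\asymp 2|z|$ is large, yet $N|x^2-z^2|=N|x-z|\,|x+z|$ can be $\ll\mathfrak{L}(z)$, so $\Gamma_T(x)$ and $\Gamma_T(z)$ essentially coincide and the turning points merge. Your appeal to Remark~\ref{rk:parascale} misreads it: that remark concerns the scale of $|x-z|$ near a \emph{common} point, and the correct separation condition is $N|z^2-x^2|\gg\mathfrak{L}(z)$, not a condition on $|x-z|$.

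In that symmetric case your Riemann-sum argument as written breaks down: the two square-root singularities of the integrand sit at the same location $t\approx z^2$, producing an apparent non-integrable $1/(z^2-t)$ singularity which is only tamed by exact cancellations in the numerator (on the hyperbolic side $1-J(w)^2\sim 2\sqrt{w^2-1}\,J(w)$, and on the elliptic side $1-|J(w)|^2=0$, respectively $1-J(w)^2\sim (\arccos w)^2$ for the conjugate bracket), and the near-but-not-equal case $x\neq -z$ with $N|x+z|^2\ll\mathfrak{L}(z)$ generates logarithmic terms $\log\big(\mathfrak{L}/(N|x^2-z^2|)\big)$ that must be shown to be $\o(1)$ after multiplication by the small prefactor. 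This is precisely why the paper's proof of Proposition~\ref{prop:G} runs the case analysis \eqref{loccond} (either $N|z^2-x^2|\gg\mathfrak{L}(z)$ or $N|z+x|^2\ll\mathfrak{L}(z)$) and devotes a separate argument, Proposition~\ref{prop:G4}, to the merging-by-symmetry case, where the limits come out as $-2\log(1+|J(z)|^2)$ and $-2\log(1+J(z)^2)$, consistent with $-2\log(1-J(z)J^\sharp(x))$ at $x\approx-z$. To complete your proof you would need to add this case (or reduce it to the diagonal case via the reflection symmetry $\G^\dagger(-x)=\overline{\G(x)}$ of Remark~\ref{rk:sym0}, as the paper does for the conjugate bracket); as it stands, your two regimes do not cover it.
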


\paragraph{Local estimates.}
We begin the proof by computing the variance of the two parts of the  $\G$ field. 

\begin{proposition}\label{prop:G1}
For $z\in\R$, 
\[\begin{aligned}
&\big[\G^1(z)\big]  = 2\log\big(\mathfrak{L}(z)\big) +\O(1) \qquad\text{if $|z|\le 1+TN^{-2/3}$},\\
&\big[\G^1(z)\big]  = -2\log\big(1-J(z)^2\big)+\O(1) \qquad\text{if $|z|\ge 1+TN^{-2/3}$},\\
&\big[\G^2(z)\big] = 2 \log_+\big(\varrho(z)\mathfrak{L}(z)\big)  + \O(1), \\
&\big[\G^2(z),\overline{\G^2(z)} \big] = \log_+\Big(\tfrac{\varrho(z)^2N}{\mathfrak{L}(z)}\Big)+\O(1)
\end{aligned}\]
where the errors depend on the parameter $T\ge 1$ and are locally uniform in $z$. 
Consequently, it holds uniformly for $z\in[-1,1]$ as $N\to\infty$, 
\[
\big[\Re \G(z)\big]  = \tfrac12\log\big(N\mathfrak{L}(z)\big) +\O(1),
\qquad\qquad
\big[\Im \G(z)\big]  = \tfrac12\log_+\big(\varrho(z)^4N\mathfrak{L}(z)\big) +\O(1).
\]
\end{proposition}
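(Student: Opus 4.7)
Since $\G$ has independent increments, each bracket here is a purely deterministic sum: with the notation of Section~\ref{sec:G} and the decomposition $\G = \G^1 + \G^2$ (hyperbolic indices $k < N_0(z)-T\mathfrak{L}(z)$ versus elliptic indices $k > N_0(z)+T\mathfrak{L}(z)$), I have
\[
[\G^j(z)] = \sum_{k} \frac{\E Z_k(z)^2}{Nz^2-k},
\qquad
[\G^2(z),\overline{\G^2(z)}] = \sum_{k} \frac{\E|Z_k(z)|^2}{|Nz^2-k|},
\]
where the range of summation depends on $j$, and the entries satisfy $\E Z_k(z)^2 = (1+J(z\sqrt{N/k})^2)/2$ and $\E |Z_k(z)|^2 = (1+|J(z\sqrt{N/k})|^2)/2$. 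The whole proof reduces to computing these sums by Riemann-sum comparison with an explicit integral.

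First, for $[\G^1]$ in the global regime $|z| \ge 1+TN^{-2/3}$, the summand is smooth in $s=k/N$, so a standard Euler--Maclaurin bound gives $[\G^1(z)] = \int_0^1 \frac{1+J(z/\sqrt{s})^2}{2(z^2-s)}\,ds + O(N^{-1})$. The substitution $w = z/\sqrt{s}$ sends the integral to $\int_{|z|}^{\infty} \frac{1+J(w)^2}{w(w^2-1)}\,dw$, and the further substitution $u = J(w)$ (for which $dw = (u^2-1)\,du/(2u^2)$ and $w(w^2-1) = (u^2+1)(u^2-1)^2/(8u^3)$) collapses this to the elementary integral $\int_{J(z)}^{0} \frac{4u\,du}{u^2-1} = -2\log(1-J(z)^2)$, as required.

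For $[\G^1]$ in the local regime $|z| \le 1+TN^{-2/3}$, the same change of variables reduces the problem to integrating up to $w_\eta = z/\sqrt{z^2-T\mathfrak{L}/N}$, and the integrand is now singular as $w \to 1^+$. Expanding $J(w_\eta) \simeq 1 - \sqrt{T\mathfrak{L}/N}/|z|$ gives $1 - J(w_\eta)^2 \asymp 2\sqrt{T\mathfrak{L}/N}/|z|$, so that $-2\log(1-J(w_\eta)^2) = \log(N/T\mathfrak{L}) + O(1) = 2\log \mathfrak{L}(z) + O(1)$ after using $\mathfrak{L}(z)^3 \asymp Nz^2$. The elliptic bracket $[\G^2(z)]$ is handled symmetrically: parametrize $w = \cos\theta$ with $J(w) = e^{-i\theta}$, so the integrand becomes $\cot\theta - i$; the upper cutoff at $\theta \asymp \sqrt{T}/\mathfrak{L}(z)$ then yields
\[
[\G^2(z)] \propto [\log\sin\theta - i\theta]_{\sqrt{T}/\mathfrak{L}}^{\arccos z} = \log\sqrt{1-z^2} - \log(\sqrt{T}/\mathfrak{L}(z)) + O(1) = \log(\varrho(z)\mathfrak{L}(z)) + O(1),
\]
matching the claimed $2\log_+$ after keeping track of the factor of two. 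The conjugate bracket $[\G^2,\overline{\G^2}]$ is the simplest of the four, being the real harmonic sum $\sum_{N_0+T\mathfrak{L} < k \le N}(k-Nz^2)^{-1} = \log(N(1-z^2)/T\mathfrak{L}) + O(1) = \log_+(\varrho(z)^2 N/\mathfrak{L}(z)) + O(1)$.

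The final expressions for $[\Re\G(z)]$ and $[\Im\G(z)]$ will then follow by an algebraic combination: $\G^1$ is real and independent of $\G^2$, so $(\Re\G)^2 = (\G^1)^2 + (\Re\G^2)^2 + 2\G^1\Re\G^2$ and analogously for the imaginary part, and taking expectations expresses everything in terms of the brackets above. The main obstacle will be bookkeeping the Riemann-sum errors uniformly as $z$ varies across all three regimes (edge $|z| - 1 \asymp N^{-2/3}$, bulk, and $z$ near $0$ where $\mathfrak{L}(z) \to 0$); this requires monotonicity/derivative bounds on the integrand in the smooth interior and a separate explicit expansion near each turning point $w = 1$ to verify that the truncation scale $T\mathfrak{L}$ produces only $O(1)$ corrections independent of $T$. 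Beyond this uniformity issue, every step is a direct integral evaluation.
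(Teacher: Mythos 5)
Your proposal is correct in substance and takes essentially the same route as the paper: since $\G$ has independent centered increments, all four brackets are deterministic sums, and both you and the paper evaluate them by Riemann-sum comparison plus the explicit calculus of $J$ — your substitutions $u=J(w)$ and $w=\cos\theta$ are just a more explicit way of doing what the paper does by splitting off harmonic sums and bounding the remainder by convergent integrals (using $1-J(w)^2\lesssim\sqrt{w^2-1}$), and your final algebraic combination for $[\Re\G]$, $[\Im\G]$ is the paper's closing step. One bookkeeping caveat: with your own formula $[\G^{j}(z)]=\sum_k \E Z_k(z)^2/(Nz^2-k)$, the elliptic increments carry $(-\i\delta_k)^2=-\delta_k^2$, so the exact evaluation yields $[\G^2(z)]=-2\log_+\big(\varrho(z)\L(z)\big)+\O(1)$; the sign you hide in ``$\propto$'' is a convention mismatch with the statement (the paper's proof drops the same factor and compensates in its $[\Re\G],[\Im\G]$ identities), and carried consistently it still produces the stated $\tfrac12\log(N\L)$ and $\tfrac12\log_+(\varrho^4N\L)$. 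Also, in the local regime the intermediate display should be $\log\big(Nz^2/(T\L)\big)$ rather than $\log\big(N/(T\L)\big)$ — the dropped $z^2$ is exactly what makes the identification with $2\log\L(z)$ uniform down to $|z|\asymp N^{-1/2}$ — and the claimed $\O(N^{-1})$ Riemann-sum error is optimistic near the edge, though only $\O(1)$ is needed.
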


\begin{proof} $\bullet$ Let $m= N\wedge N_{-T}(z)$ and $\G^1 = \G_m^1(z)$ for $z\in\R\setminus\mathfrak{Q}$.  By definition, $\G^1$ is real-valued and 
\[
[\G^1(z)] =  \sum_{k \le m} \frac{1 +J(z\sqrt{N/k})^2}{2k(Nz^2/k-1)}   = \sum_{k \le m} \frac{1}{Nz^2-k} -  \sum_{k \le m} \frac{1 -J(z\sqrt{N/k})^2}{2k(Nz^2/k-1)} .
\]
In terms of \eqref{Jouk}, we have $1-J(w)^2 \lesssim \sqrt{w^2-1}$ for $w\in\R\setminus (-1,1)$ so that the second sum is bounded by
\[
\sum_{k \le m} \frac{1 -J(z\sqrt{N/k})^2}{2k(Nz^2/k-1)} \lesssim \sum_{k < N_0(z)} \frac{1}{2k\sqrt{Nz^2/k-1}}  =\O(1) .
\]
Indeed, this sum is convergent and it can be approximated by the Riemann integral 
\(\displaystyle 
\int_0^{z^2} \frac{dt}{\sqrt{t(z^2-t)}}<\infty. 
\)
Computing the harmonic sum, this shows that for $|z|\le 1+TN^{-2/3}$
\[
[\G^1(z)] = \log\bigg(\frac{N_0(z)}{N_0(z)-N_{-T}(z)}\bigg) +\O(1)
=  \log_+\big(\mathfrak{L}(z)^2/T\big) +\O(1)
\]
where the error is controlled independently of $T$. 
These asymptotics remains true if $z\in\mathfrak{Q}$ (neighborhood of $0$) in which case 
$\mathfrak{L}(z) =1$ and $\G^1$ is a finite sum.\\
Otherwise, if $|z|\ge 1+TN^{-2/3}$, $m=N$ and using that 
$(1-J(z)^2)^2 \sim 4(z^2-1)$ as $z\to\pm 1$,  we obtain
\[
[\G^1(z)] =  -2\log\big(1-J(z)^2\big)+\O(1) .
\]

$\bullet$ Let $z\in\R$ with $|z|\le 1-TN^{-2/3}$, $m= N_{T}(z)$ and $\G^2=\G_{N,N_T(z)}^2(z)$. 
According to Lemma~\ref{lem:Z}, using that  
\[\begin{aligned}
&1+\cos(2\theta_k(z))= 2(\cos \theta_k(z)) = 2 N_0(z)/k \\
&\sin(2\theta_k(z))= \pm 2\sqrt{(k-N_0(z))N_0(z)}/k \qquad \pm = \sgn(z)
\end{aligned}\]
we have $\E Z_k^2(z)= \big(N_0\pm\i\sqrt{(k-N_0)N_0}\big)/ k$ and 
\[
[\G^2(z)] =  \sum_{m<k \le N} \frac{N_0}{k(k-N_0)}\pm  \i   \sum_{m<k \le N}\frac{\sqrt{N_0}}{k\sqrt{k-N_0}} .
\]
As above, the second sum is approximated by the Riemann integral 
\(\displaystyle 
\int_{z^2}^1 \frac{z^2 dt}{t\sqrt{t-z^2}}<\infty. 
\)
Then, computing the harmonic sum,
\[\begin{aligned}
[\G^2(z)]
& =   \sum_{m\le k \le N}\bigg(\frac{1}{k-N_0} - \frac1k\bigg) +\i\O(1) \\
& = \log\bigg(\frac{(1-z^2)N_T(z)}{\mathfrak{L}(z)T}\bigg) + \O(1) \\
& = 2 \log\big(\varrho(z)\mathfrak{L}(z)\big)  + \O(1) . 
\end{aligned}\]
The last estimate follows from the fact that 
$N_T(z) = \mathfrak{L}(z)^3\O(1)$ if $T$ is bounded and the density of states $\varrho(z)= c\sqrt{1-z^2}$. 

Finally, by a similar computation using that $\E |Z_k(z)|^2 =1$ for $k>N_0(z)$, 
\[
\big[\G^2(z),\overline{\G^2}(z)\big] = \sum_{m\le k \le N} \frac{1}{k-Nz^2}
=\log\bigg(\frac{(1-z^2)N}{\mathfrak{L}(z)T}\bigg) +\O(1)
= \log\bigg(\frac{\varrho(z)^2N}{\mathfrak{L}(z)}\bigg)+\O(1). 
\]
If  $|z|\ge 1-TN^{-2/3}$, then the field $\G^2=0$ so that the previous asymptotics remains true for all $z\in\R$ if we replace $\log(\cdot)$ by $\log_+(\cdot)$ where $\log_+(x)=\log(x)\1\{x\ge 1\}$ for $x\in\R_+$. 

To conclude the proof, we use that by definition,
\[
\big[\Re \G\big]  =  [\G^1]  + \tfrac12 \Re\big( \big[\G^2,\overline{\G^2}\big] - \big[\G^2, \G^2\big] \big),
\qquad\qquad
\big[\Im\G\big]  = \tfrac12 \Re\big( \big[\G^2,\overline{\G^2}\big] + \big[\G^2, \G^2\big] \big),
\]
and by combining the previous estimates we obtain for $z\in[-1,1]$,
\[\begin{aligned}
\big[\Re \G(z)\big]  & =  2\log\big(\mathfrak{L}(z)\big) + \tfrac12  \log_+\Big(\tfrac{\varrho(z)^2N}{\mathfrak{L}(z)}\Big) -  \log_+\big(\varrho(z)\mathfrak{L}(z)\big) +\O(1) \\
& = \tfrac12\log\big(N\mathfrak{L}(z)\big) +\O(1)\\
\big[\Im\G(z)\big]  &= \tfrac12  \log_+\Big(\tfrac{\varrho(z)^2N}{\mathfrak{L}(z)}\Big)+  \log_+\big(\varrho(z)\mathfrak{L}(z)\big)  +\O(1) \\
&= \tfrac12\log_+\big(\varrho(z)^4N\mathfrak{L}(z)\big) +\O(1) .\qedhere
\end{aligned}\] 
\end{proof}

\paragraph{Merging regime.}
The correlation structure of the $\G$ field is more complicated to study as it depends whether the turning points are merging. 
The next lemma shows that in the merging regime, the $\G$ field is \emph{continuous}. 

\begin{lemma} \label{lem:G3}
If $x,z\in[-1,1]$ with $N|x-z|^{2} \le C/\mathfrak{L}(z)$ for some $C\ge 2$, then
\[
\| \G(x) -\G(z) \|_2^2 \lesssim \log(C)
\]
\end{lemma}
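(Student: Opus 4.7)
The hypothesis places $x,z$ within the merging parabolic scale (Remark~\ref{rk:parascale}). Set $s=Nz^2$, $t=Nx^2$, and $\L=\L(z)$; WLOG $|z|\le |x|$. From $|x+z|\le 2|z|+|x-z|$ combined with $\L(z)^3\sim Nz^2\vee 1$, the hypothesis gives $|t-s|\lesssim \sqrt{C}\,\L+C$ and $|\L(x)-\L(z)|\lesssim |t-s|/\L^2+1$. Since the $Z_k$ are independent centered sub-Gaussians across $k$ with bounded $\|\cdot\|_2$-norm, the Orlicz sub-Gaussian addition inequality gives $\|\G(x)-\G(z)\|_2^2\lesssim \sum_k \|I_k\|_2^2$, where $I_k=\1\{k\notin\Gamma_T(x)\}\tfrac{Z_k(x)}{\sqrt{k}\sqrt{Nx^2/k-1}}-\1\{k\notin\Gamma_T(z)\}\tfrac{Z_k(z)}{\sqrt{k}\sqrt{Nz^2/k-1}}$ is the $k$-th increment of $\G(x)-\G(z)$.

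A direct termwise bound with the original cutoff $T$ fails: past the turning points, the elementary estimate $\bigl|\tfrac{1}{\sqrt{k-s}}-\tfrac{1}{\sqrt{k-t}}\bigr|^2\lesssim (t-s)^2/u^3$ (with $u=|k-N_0(z)|$) sums to $(t-s)^2/(T\L)^2\asymp C/T^2$, much larger than $\log C$. The plan is to inflate the cutoff: let $\tilde\G^{T'}$ denote $\G$ defined with cutoff $T'=\kappa C$ for a sufficiently large absolute constant $\kappa$. By Remark~\ref{rk:cutoff}, $\|\G(w)-\tilde\G^{T'}(w)\|_2^2\lesssim \log(T'/T)\lesssim \log C$ for both $w=x,z$. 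By the triangle inequality, the main task reduces to showing $\|\tilde\G^{T'}(x)-\tilde\G^{T'}(z)\|_2^2\lesssim \log C$.

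Split $\sum_k \|I_k\|_2^2$ according to the membership of $k$ in $\Gamma_{T'}(x)\cap\Gamma_{T'}(z)$ (zero contribution), the symmetric difference $\Gamma_{T'}(x)\triangle\Gamma_{T'}(z)$ (one surviving term), or the exterior $\Gamma_{T'}(x)^c\cap\Gamma_{T'}(z)^c$ (a true finite-difference). The symmetric difference has size $\lesssim |t-s|+T'|\L(x)-\L(z)|$ and each surviving term satisfies $\|\cdot\|_2^2\lesssim 1/|k-N_0|$ with $|k-N_0|\ge T'\L$; the resulting harmonic sum is $\lesssim \log\bigl(1+|t-s|/(T'\L)\bigr)+\log\bigl(1+|\L(x)-\L(z)|/\L\bigr)=O(\log C)$. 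For the exterior part, the mean-value estimates of Lemmas~\ref{lem:contest} and~\ref{lem:theta} give $\bigl|\tfrac{1}{\sqrt{k-t}}-\tfrac{1}{\sqrt{k-s}}\bigr|^2\lesssim (t-s)^2/u^3$ and $|J_k(x)-J_k(z)|^2\lesssim N(x-z)^2/u$, so $\|I_k\|_2^2\lesssim (t-s)^2/u^3+N(x-z)^2/u^2$. Summing over $u\ge T'\L$ yields $(t-s)^2/(T'\L)^2+N(x-z)^2/(T'\L)\lesssim (C\L^2+C^2)/(C\L)^2+(C/\L)/(C\L)=O(1)$. The same bounds hold identically in the hyperbolic regime $k<N_0$, modulo branch choices for $\sqrt{\cdot}$ and $J_k$.

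The main bookkeeping obstacle is uniformity across parameter regimes: $\L(z)$ may range from $1$ (when $|z|\lesssim N^{-1/2}$, in which case $\Gamma_{T'}(z)$ is clipped against $k=1$ and $N_0(z)=0$) to order $N^{1/3}$ (near the spectral edge). In each case the symmetric-difference harmonic sum stays at $O(\log C)$ while the exterior derivative sum stays at $O(1)$, and combining with the cutoff change via the triangle inequality yields $\|\G(x)-\G(z)\|_2^2\lesssim \log C$.
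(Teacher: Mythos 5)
Your proof is correct and takes essentially the same approach as the paper's: inflate the cutoff window to scale $\sim C\mathfrak{L}(z)$ and pay the $\log C$ via Remark~\ref{rk:cutoff}, then control the remaining difference outside the enlarged window by the two mean-value errors — the coefficient difference $\lesssim |t-s|/u^{3/2}$ and the noise difference via $J'(w)=-J(w)/\sqrt{w^2-1}$ — each of which sums to $O(1)$. The only cosmetic deviation is that you keep separate enlarged windows around $x$ and $z$ and estimate their symmetric difference by a harmonic sum, whereas the paper uses the containment $\Gamma_T(x)\subset\Gamma_{\tau}(z)$ with $\tau\ge CT$ to reduce both fields to a single common index set; the resulting bounds are the same.
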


\begin{proof}
Let $\mathfrak{L}=\mathfrak{L}(z)$.
In this regime, the turning points satisfy
\[
|N_0(z)-N_0(z)| \lesssim \sqrt{N}\mathfrak{L}^{3/2}|x-z| \le \sqrt{C} \mathfrak{L}
\]
and similarly
\[
|\mathfrak{L}(z)-\mathfrak{L}(w)| \lesssim  \sqrt{N}\mathfrak{L}^{-1/2}|x-z| \le 1/\sqrt{C} . 
\]
This implies that the sets $\Gamma_T(x)  \subset \Gamma_{\tau}(z) $ choosing $\tau \ge CT$ if $C$ is sufficiently large. 
Thus, by Remark~\ref{rk:cutoff},
\[
\G(z) = \sum_{k\in N\setminus \Gamma_{\tau}(z)} \frac{Z_k(z)}{\sqrt{k}\sqrt{Nz^2/k-1}} +\O_{\boldsymbol{\Psi}2}(1) ,\qquad 
\G(x) = \sum_{k\in N\setminus \Gamma_{\tau}(z)} \frac{Z_k(x)}{\sqrt{k}\sqrt{Nx^2/k-1}} +\O_{\boldsymbol{\Psi}2}(1) ,
\]
where both errors are of order $\log(C)$. 
Then, we assume (without loss of generality) that $|z|\le |x|$.\\
We claim that
\[
\G(x) = \G(z) + \operatorname{Er}_N^1(x,z) + \operatorname{Er}_N^2(x,z) +\O_{\boldsymbol{\Psi}2}(1), 
\]
where the errors are given by
\[
\operatorname{Er}_N^1(x,z) \coloneqq  \sum_{k\in N\setminus \Gamma} \frac{Z_k(x)}{\sqrt{k}} \bigg(\frac1{\sqrt{Nx^2/k-1}}- \frac1{\sqrt{Nz^2/k-1}}\bigg) , \qquad
\operatorname{Er}_N^2(x,z) \coloneqq  \sum_{k\in N\setminus \Gamma} \frac{Z_k(x)-Z_k(z)}{\sqrt{k}\sqrt{Nz^2/k-1}}  
\]
with $\Gamma= \Gamma_{\tau}(z)$. 

Using that $\|Z_k(x)\|_2^2 \lesssim 1 $ uniformly for $x\in\R$ and that these random variables are independent, if $\Omega(x,z) \ge \varepsilon \mathfrak{L}(z)$,  we obtain 
\[
\|\operatorname{Er}_N^1\|_2^2 
\lesssim  \sum_{k\in N\setminus \Gamma} \frac{(Nz)^2|z-x|^2}{|Nz^2-k|^{3}}
\lesssim C  \sum_{k\in N\setminus \Gamma} \frac{N_0(z)\mathfrak{L}(z)^{-1}}{|Nz^2-k|^{3}} \lesssim T^{-2}
\]
for some numerical constant (since the factor of $C$ cancel and $N_0(z)\le \mathfrak{L}(z)^{3}$). 
Similarly, according to \eqref{Jouk}, $J'(w)  = -J(w)/\sqrt{w^2-1} $ (this holds for $w\in\R\setminus\{\pm1\}$ with the appropriate  $\sqrt{\cdot}$) so that
\[
Z_k(x) = Z_k(z) +\O\big(|Y_k|\sqrt{N/k} |J'(z\sqrt{N/k})|\cdot |z-x| \big)
= Z_k(z) +\O_{\boldsymbol{\Psi}2}\big(\sqrt{N/|Nz^2-k|}\cdot |z-x| \big) .
\]
Hence, if $N|x-z|^{2} \le C/\mathfrak{L}(z)$,
\[
\|\operatorname{Er}_N^2\|_2^2 \le  \sum_{k\in N\setminus \Gamma} \frac{\|Z_k(x)-Z_k(z)\|_2^2}{|Nz^2-k|}   
\le  \sum_{k\in N\setminus \Gamma_{\tau}(z)} \frac{N|z-x|^2}{|Nz^2-k|^{2}}
\lesssim C \sum_{k\in N\setminus \Gamma} \frac{\mathfrak{L}(z)^{-1}}{|Nz^2-k|^{2}}  \lesssim  T^{-1}. 
\]
This is the main error and it concludes the proof.
\end{proof} 

Lemma~\ref{lem:G3} implies that the asymptotics of Proposition~\ref{prop:G1} can be extended on any neighborhood of the diagonal of size $\O(N^{-1/2}\mathfrak{L}(z)^{-1/2}) = \O(N^{-2/3}[z]_N^{-1/3}) $ where $[z]_N = |z| \vee N^{-1/2}$.

\paragraph{Global correlations.}
In the regime where the turning points are sufficiently far apart, we can exactly compute the correlations of  the $\G$ field up to vanishing errors using the properties of the map $J$. 

\begin{proposition}\label{prop:G2}
Let $x,z\in \R$ with $|x| \le |z|$ be such that  $N|z^2-x^2| \gg \mathfrak{L}(z)$. 
Then, we have
\[
\big[\G(z),\G(x)\big] = - 2\log\big(1-J(z)J(x)\big)  + \underset{N\to\infty}{\o(1)}
\qquad\text{and}\qquad
\big[\G(z),\overline{\G(x)}\big] = - 2 \log\big(1- J(z)\overline{J(x)}\big) +\underset{N\to\infty}{\o(1)}.
\]
These asymptotics hold uniformly for $x,z\in \R\setminus [-1,1]$.

\end{proposition}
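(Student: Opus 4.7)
The bracket $[\G(z),\G(x)]$ is deterministic since $\G$ is a sum of independent centered variables. Using $\E X_k^2=\E Y_k^2=1$, $\E X_kY_k=0$, and the form of $Z_k(w)$ in \eqref{def:J}, I would first compute $\E[Z_k(z)Z_k(x)]=\tfrac12(1+J(z\sqrt{N/k})J(x\sqrt{N/k}))$, and similarly $\E[Z_k(z)\overline{Z_k(x)}]$ with the second $J$ replaced by its conjugate. Substituting into \eqref{GW} and writing $s=k/N$, this expresses
\[
[\G(z),\G(x)] \,=\, \sum_{k\notin\Gamma_T(z)\cup\Gamma_T(x)} \frac{1+J(z\sqrt{N/k})J(x\sqrt{N/k})}{2\sqrt{(Nz^2-k)(Nx^2-k)}}
\]
as a Riemann sum of spacing $1/N$ for the integral
\[
I(z,x) \,=\, \int_0^1 \frac{1+J(z/\sqrt s)J(x/\sqrt s)}{2\sqrt{(z^2-s)(x^2-s)}}\,ds,
\]
with neighborhoods of the integrable singularities at $s=z^2, x^2$ removed to match $\Gamma_T(z),\Gamma_T(x)$, and the branches inherited from \eqref{def:J}.

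Under the hypothesis $N|z^2-x^2|\gg\mathfrak{L}(z)$ the two omitted windows are disjoint and well-separated, so near (say) $s=z^2$ the companion factor $\sqrt{x^2-s}$ is bounded below by $\sqrt{|z^2-x^2|/2}$, making the integrand locally $O(|z^2-s|^{-1/2})$; the omitted slice of width $T\mathfrak{L}(z)/N$ then contributes $O(\sqrt{T\mathfrak{L}(z)/(N|z^2-x^2|)})=o(1)$, and the standard Riemann error bound $\sup|f'|/N$ on the bulk is also $o(1)$. For $z,x\in\R\setminus[-1,1]$ the integrand is smooth on all of $[0,1]$ and the approximation is uniform with no cutoffs.

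The core of the proof is the closed form $I(z,x)=-2\log(1-J(z)J(x))$, which I would establish by recognizing the integrand as a total derivative. Setting $u(s)=J(z/\sqrt s)$ and $v(s)=J(x/\sqrt s)$, the chain-rule formulas $u'(s)=zu/(2s\sqrt{z^2-s})$, $v'(s)=xv/(2s\sqrt{x^2-s})$ hold with the branch convention of \eqref{def:J} (checked separately in the hyperbolic and elliptic ranges), and combine with $\sqrt{z^2-s}=z(1-u^2)/(1+u^2)$, $\sqrt{x^2-s}=x(1-v^2)/(1+v^2)$ and the defining relation $zu/(1+u^2)=xv/(1+v^2)=\sqrt s/2$ to yield the identity
\[
\frac{d}{ds}\bigl[-2\log(1-u(s)v(s))\bigr] \,=\, \frac{1+J(z/\sqrt s)J(x/\sqrt s)}{2\sqrt{(z^2-s)(x^2-s)}}.
\]
Integrating from $s=0^+$ (where $u,v\to 0$) to $s=1$ (where $u=J(z), v=J(x)$) gives $I(z,x)=-2\log(1-J(z)J(x))$. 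The case of $[\G(z),\overline{\G(x)}]$ proceeds identically with $v$ replaced by $\overline{J(x/\sqrt s)}$; this only affects the elliptic contributions $s>x^2$ and yields $-2\log(1-J(z)\overline{J(x)})$. The main technical obstacle is showing that the antiderivative $-2\log(1-u(s)v(s))$ is continuous across the singular points $s=z^2,x^2$, i.e.\ that $1-u(s)v(s)$ avoids $(-\infty,0]$; this holds because $|u|=1$ at $s=z^2$ and $|v|=1$ at $s=x^2$, so $uv=1$ at those points only if $x=z$, which the hypothesis excludes.
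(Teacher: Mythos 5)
Your strategy is the same as the paper's: the bracket is deterministic, equals the sum in \eqref{G2} via $\E[Z_k(z)Z_k(x)]=\tfrac12\big(1+J(z\sqrt{N/k})J(x\sqrt{N/k})\big)$, and is identified through the exact derivative identity \eqref{Jid}, which you re-derive by hand (fine) while the paper quotes it from \cite[Lemma~A.4]{LambertPaquette02}. Your treatment of $[\G(z),\overline{\G(x)}]$ by conjugating one $J$-factor is a legitimate shortcut for the paper's detour through the symmetry $x\mapsto-x$ and the ``$+$'' version of the identity; note only that the branch of $\sqrt{x^2-s}$ in the denominator is conjugated as well (this comes for free from the definition of the bracket, and the algebra with $J+J^{-1}=2w$ still closes).

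The genuine gap is in the quantitative error control, which is the technical heart of the paper's proof. Your claim that ``the standard Riemann error bound $\sup|f'|/N$ on the bulk is also $o(1)$'' fails: with only windows of width $T\mathfrak{L}(z)/N$ (in $s$-units) removed, the integrand's derivative behaves like $|z^2-s|^{-3/2}|x^2-s|^{-1/2}$ near $s=z^2$, so $\sup|f'| \asymp (N/(T\mathfrak{L}))^{3/2}|z^2-x^2|^{-1/2}$ on the remaining region and $\sup|f'|/N$ is not small — e.g.\ for $|z|\asymp 1$, $|z^2-x^2|\asymp N^{-1/3}$ (allowed by the hypothesis) it is of order $N^{1/6}$. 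What works is either the $L^1$ form of the error, $N^{-1}\int|f'|$, combined with a case analysis near $s=0$, $s=x^2$, $s=z^2$ (this is exactly the paper's estimate $|f'(t/N)|\lesssim N^2\{F(t;x,z)+F(t;z,x)\}$ and the ensuing integrals, including the regime $Nx^2\lesssim \mathfrak{L}$ where the $t^{-1/2}$ factor from differentiating $J(x/\sqrt t)$ matters and there is essentially no window around $Nx^2$), or the paper's device of enlarging the excluded set to an intermediate scale $\mho$ with $\mathfrak{L}(z)\ll\mho\ll N|z^2-x^2|$, which simultaneously keeps the extra excluded mass $O\big(\sqrt{\mho/(N|z^2-x^2|)}\big)$ small and tames the derivative. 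Separately, the uniformity statement for $x,z\in\R\setminus[-1,1]$ ``with no cutoffs'' is not justified: the integrand is smooth on $[0,1]$ but not uniformly so as $|z|\downarrow 1$ (the singularity at $s=z^2$ may sit within $O(N^{-2/3})$ of $s=1$); the paper still removes the window $[N-TN^{1/3},N]$ and bounds the worst case $x=z=1$ by $N^{-1/3}$. By contrast, the point you single out as the main obstacle — continuity of the antiderivative $-2\log(1-uv)$ across $s=x^2,z^2$ — is comparatively routine; the real work is the Riemann-sum control above.
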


\begin{proof} 
Note that the condition $|x| \le |z|$  holds without loss of generality and we choose a sequence  $\mho(N) \to\infty$ as $N\to\infty$ such that $N|z^2-x^2| \gg \mho \ge  \mathfrak{L}= \mathfrak{L}(z)$. 
By \eqref{GW}, we have for $(x,z)\in\R^2$, with $\Gamma = \Gamma_T(x)\cup\Gamma_T(z)$, 
\begin{equation}\label{G2}
\big[\G(z),\G(x)\big]  =  \sum_{k\in[N]\setminus \Gamma} \frac{1}{2k} \frac{1+ J(x\sqrt{N/k})J(z\sqrt{N/k})}{\sqrt{Nz^2/k-1}\sqrt{Nx^2/k-1}}  . 
\end{equation}
In this regime, the turning points are separated in the sense that  $\Gamma_T(x)\cap\Gamma_T(z) =\emptyset$. 
Moreover, in the previous sum, we can replace $\Gamma_T(x)\cup\Gamma_T(z)$ by 
\[
\Gamma  \coloneqq \big\{k\in[N] : |Nx^2-k| \vee|Nz^2-k| \le \mho/N \big\} .
\]
Indeed, $|J(w)|\le 1$ for any $w\in\R$ and
\[
\sum_{k\in \Gamma} \frac{1}{\sqrt{|Nz^2-k|Nx^2-k|}} 
\lesssim  \sqrt{\frac{\mho}{N|z^2-x^2|}}  \ll 1. 
\]

Then, we can approximate \eqref{G2} by a Riemann integral using the identity; for $z,x\in\R$ and $t\in (0,1]$, 
\begin{equation}\label{Jid}
\frac{d}{dt}  \log\big(1\pm J(x/\sqrt{t})J(z/\sqrt{t})\big) = - \frac{ \mp  1 +  J(x/\sqrt{t}) J(z/\sqrt{t}) }{4 \sqrt{x^2-t} \sqrt{z^2-t}} . 
\end{equation}
The proof  follows from the definition of the map $J$, see \eqref{Jouk} and \cite[Lemma~A.4]{LambertPaquette02} for details. 
Define $f : [0,1] \to \C$ by $f: t\in  \mapsto  \frac{1 +J(x/\sqrt{t}) J(z/\sqrt{t}) }{\sqrt{x^2-t} \sqrt{z^2-t}}$; we have
\[
\sum_{k\in[N]\setminus \Gamma} \frac{1+ J(x\sqrt{N/k})J(z\sqrt{N/k})}{\sqrt{Nz^2-k}\sqrt{Nx^2-k}}
=  \frac1N \int_{[0,N]\setminus\Gamma} f(t/N) dt +\O\bigg( \frac1{N^2} \int_{[0,N]\setminus\Gamma}|f'(t/N)| dt\bigg) .
\]

Since  $J'(w)  = -J(w)/\sqrt{w^2-1} $ and $|J(w)| \le 1/|w|$ for $w\in\R$, one has  
$|\partial_tJ\big(x/\sqrt{t}\big)|^2 \le  t^{-1} |x^2-t|^{-1}$ for $t\in(0,1)\setminus x^2$.  
Then, it holds for $t\in[0,N]\setminus \Gamma$, 
\[
|f'(t/N)|\lesssim N^2\big\{  F(t;x,z) + F(t;z,x) \big\}, \qquad\qquad
F(t;x,z) = \frac{|Nx^2-t|^{-1/2}+ t^{-1/2}}{|Nx^2-t||Nz^2-t|^{1/2}}
\]
$\bullet$ If $N x^2 \le \mho$,
\[
\int_{[0,N]\setminus \Gamma} F(t;x,z) dt \lesssim \Omega^{-1/2} \int_{\mho}^\infty  t^{-3/2}dt \lesssim \Omega^{-1} . 
\]
$\bullet$ If $N x^2 \ge\mho$, using that $\sqrt{Nx^2} \le \mathfrak{L}^{3/2}$, we have
\[
\int_{[0,N]\setminus \Gamma} F(t;x,z) dt \lesssim \Omega^{-3/2} \int_0^{N x^2}  t^{-1/2}dt 
+ \Omega^{-1/2} \int_{\mho}^\infty  t^{-3/2}dt
\lesssim (\mathfrak{L}/\Omega)^{3/2}+ \Omega^{-1}
\]
$\bullet$ The same computation also shows that 
\[
\int_{[0,N]\setminus \Gamma} F(t;z,x) dt \lesssim (\mathfrak{L}/\Omega)^{3/2}+ \Omega^{-1} . 
\] 
Altogether, the errors are controlled by 
\[
\int_{[0,N]\setminus \Gamma} F(t;x,z) dt \ll  1 , \qquad\qquad
\frac1N \int_\Gamma |f(t/N)| dt  \lesssim \int_{\Gamma} \frac{dt}{\sqrt{|Nz^2-t||Nx^2-t|}}  \lesssim  \sqrt{\frac{\mho}{N|z^2-x^2|}}  \ll 1. 
\]
Going back to \eqref{G2} and \eqref{Jid}, this implies that as $N\to\infty$
\[
\big[\G(z),\G(x)\big]  = -2 \int_{[0,1]} \frac{d}{dt}  \log\big(1 - J(x/\sqrt{t})J(z/\sqrt{t})\big) dt +\o(1).
\]
Since $J(\infty)=0$, this proves the first claim. The second claim follows from the same argument using that according to Remark~\ref{rk:sym0}, we have 
\begin{equation} \label{G6}
\big[\G(z),\overline{\G(-x)}\big]
= \big[\G(z), \G^\dagger(x)\big]
= \sum_{k\in [N]\setminus\Gamma}  \frac{1}{2k}\tfrac{-1+ J(x\sqrt{N/k})J(z\sqrt{N/k})}{\sqrt{Nx^2/k-1}\sqrt{Nz^2/k-1}}  +\underset{N\to\infty}{o(1)} 
\end{equation}
where the error terms are controlled as above. Using \eqref{Jid} again, we obtain in this case, 
\[
\big[\G(z),\overline{\G(-x)}\big]
= - 2  \log\big(1+J(x)J(z)\big)  +\underset{N\to\infty}{\o(1)} . 
\]
Replacing $x \mapsto -x$ using that  $J(x) = -\overline{J(-x)}$ for $x\in\R$, this proves the second claim.

Finally, if $x,z\in\R\setminus[-1,1]$ without any extra assumption, then we can pick $\Gamma = [N-TN^{1/3},N]$ in \eqref{G2}. The Riemann sum approximation remains valid and the errors is controlled in the worst case $x=z=1$ by 
\[
\int_{[0,N]\setminus \Gamma} \frac{dt}{|N-t|^2} \le N^{-1/3} .
\]
This completes the proof.
\end{proof}

We claim that the regimes of Lemma~\ref{lem:G3} and Proposition~\ref{prop:G2} are complementary unless $x$ lies in a small neighborhood of $-z$.
Indeed, because of the symmetry, the turning points are merging in this case. However, we can adapt the proof of Proposition~\ref{prop:G2} to also treat this case. 

\begin{proposition}\label{prop:G4}
Let $x,z\in\R$ and assume that $\mathfrak{L}(z) \to\infty$ (equivalently $|z|\gg N^{-1/2}$) and that $N|z-x|^2 \ll \mathfrak{L}(z)$, then
\[\begin{aligned}
\big[\G(z),\G(-x)\big] &= - 2\log\big(1-J(z)J(-x)\big)   + \underset{N\to\infty}{o(1)}  \qquad
\big[\G(z),\overline{\G(-x)}\big] = - 2\log\big(1+ J(z)J(x)\big) + \underset{N \to\infty}{o(1)} \\
&= - 2\log\big(1+|J(z)|^2\big)   + \underset{N\to\infty}{o(1)}
\hspace{2.9cm}= - 2\log\big(1+ J(z)^2\big) + \underset{N \to\infty}{o(1)} 
\end{aligned}\]
\end{proposition}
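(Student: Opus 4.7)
The strategy follows the proof of Proposition~\ref{prop:G2} and exploits the integral identity~\eqref{Jid}, with the key novelty being the treatment of a merged (rather than well-separated) excluded window. Using the independence of $(X_k,Y_k)$ together with the reflection $J(-w) = -\overline{J(w)}$ for $w\in\R$, I will first express both brackets as explicit sums:
\[
[\G(z),\G(-x)] = \sum_{k\notin\Gamma'} \frac{1+J(z\sqrt{N/k})J(-x\sqrt{N/k})}{2k\sqrt{(Nz^2/k-1)(Nx^2/k-1)}} + \underset{N\to\infty}{o(1)},
\]
and the analogous representation of $[\G(z),\overline{\G(-x)}]$ with numerator $-1+J(z\sqrt{N/k})J(x\sqrt{N/k})$ (which matches the sum appearing in \eqref{G6} from the proof of Proposition~\ref{prop:G2}), where $\Gamma' = \Gamma_T(z)\cup\Gamma_T(x)$.

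The main departure from Proposition~\ref{prop:G2} is that $\Gamma_T(z)$ and $\Gamma_T(x)$ now overlap: because $N|z-x|^2 \ll \mathfrak{L}(z)$ one has $|N_0(z)-N_0(x)| \lesssim \sqrt{N}\mathfrak{L}(z)^{3/2}|z-x| \ll \mathfrak{L}(z)^{2}$, so both cutoffs lie within a single interval of width $O(\mathfrak{L}(z))$ around $N_0(z)$. I will replace $\Gamma'$ by a single window $\Gamma_{CT}(z)$ for a sufficiently large constant $C$; the discrepancy contributes at most $O(\log C)$ to the variance by Remark~\ref{rk:cutoff}, and the contribution to the deterministic sum is in fact $o(1)$ because both numerators $1+J_zJ_{-x}$ and $-1+J_zJ_x$ vanish at the shared turning point $k \approx N_0(z)$, using $J(\pm 1) = \pm 1$. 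The Riemann-sum approximation then goes through as in Proposition~\ref{prop:G2}: the derivative-integral bound $N^{-2}\int|f'(t/N)|\,dt = o(1)$ can be verified via the explicit expansion $J(w) \simeq 1 - \sqrt{2(w-1)}$ near $w=1$, which shows that the local vanishing of the numerator (at rate $\sqrt{|t-z^2|}$) more than compensates the $1/\sqrt{|t-z^2|\cdot|t-x^2|}$ singularity in the denominator uniformly as $x\to z$.

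Applying the identity~\eqref{Jid} (with the ``$-$'' choice in the log for the first bracket, ``$+$'' for the second) then evaluates
\[
\int_0^1 \frac{1+J(z/\sqrt{t})J(-x/\sqrt{t})}{2\sqrt{(z^2-t)(x^2-t)}}\,dt = -2\log\bigl(1-J(z)J(-x)\bigr),
\]
and similarly $\int_0^1 \frac{-1+J_z J_x}{2\sqrt{(z^2-t)(x^2-t)}}\,dt = -2\log(1+J(z)J(x))$, where the boundary term at $t=0$ vanishes via $J(\infty) = 0$. The simplified forms in terms of $|J(z)|^2$ and $J(z)^2$ follow by letting $x\to z$ and applying $J(-z) = -\overline{J(z)}$, giving $J(z)J(-x)\to -|J(z)|^2$ and $J(z)J(x)\to J(z)^2$. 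The main obstacle I anticipate is the Riemann-sum error estimate at the merged turning point: unlike in Proposition~\ref{prop:G2}, the denominator singularity is not avoided by the geometry of the excluded set, and one must carefully extract enough decay from the numerator (via the half-power vanishing at $w=1$) to bound the total variation of the integrand uniformly in $x\to z$.
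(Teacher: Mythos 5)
Your proposal follows essentially the same route as the paper's proof: the same sum representation, the same replacement of the two overlapping cutoff windows by a single window around the common turning point (with the error controlled as in Remark~\ref{rk:cutoff}), the same observation that the numerator vanishes at the turning point — which the paper encodes through the algebraic identity $\tfrac{1-J(w)^2}{2}=\sqrt{w^2-1}\,J(w)$, equivalent to your expansion $J(w)\simeq 1-\sqrt{2(w-1)}$ — the same Riemann-sum approximation and application of \eqref{Jid}, and the same use of $J(-x)=-\overline{J(x)}$ at the end.

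The one point you gloss over is the final passage from $-2\log\big(1+J(z)J(x)\big)$ to $-2\log\big(1+J(z)^2\big)$: this does not follow from ``letting $x\to z$'' alone, because $z=z(N)$ is allowed to tend to $0$ (only $|z|\gg N^{-1/2}$ is assumed), and then $1+J(z)^2=2zJ(z)\to 0$, so the logarithm loses its modulus of continuity. The paper handles this by writing the error as $\O\big(\big|\tfrac{J(x)-J(z)}{J(z)+J(z)^{-1}}\big|\big)=\O\big(\tfrac{|x-z|}{|z|}\big)$ and using the hypotheses $N|x-z|^2\ll\mathfrak{L}(z)$ together with $\sqrt{N}|z|\simeq\mathfrak{L}(z)^{3/2}$ to get $\tfrac{|x-z|}{|z|}\lesssim\mathfrak{L}(z)^{-1}\to 0$; you would need to add this quantitative step (the first identity, involving $1+|J(z)|^2\ge 1$, is immune to this issue and your continuity argument suffices there).
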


\begin{proof}
If $N|x-z|^2 \ll \mathfrak{L}(z)$, the turning point are merging and we can replace $\Gamma = \Gamma_T(z)$ in formula \eqref{G2}, up to an error $\o(1)$ as $N\to\infty$ by Remark~\ref{rk:cutoff}. 
Then, as in the proof of Proposition~\ref{prop:G2}, 
\[\begin{aligned} 
\big[\G(z),\G(-x)\big] & =  \sum_{k\in[N]\setminus \Gamma} \frac{1}{2k} \frac{1+ J(-x\sqrt{N/k})J(z\sqrt{N/k})}{-\sqrt{Nx^2/k-1}\sqrt{Nz^2/k-1}}   \\
&=  \frac{1}{2N} \int_{[0,N]\setminus\Gamma} f(t/N) dt +\O\bigg( \frac1{N^2} \int_{[0,N]\setminus\Gamma}|f'(t/N)| dt\bigg) 
+\underset{N\to\infty}{\o(1)} 
\end{aligned}\]
where $f : [0,1] \to \C$ is given by $f: t\in  \mapsto  \frac{1 +J(-x/\sqrt{t}) J(z/\sqrt{t}) }{-\sqrt{x^2-t} \sqrt{z^2-t}}$. 
We note that $f$ is integrable on $[0,1]$ for every $x,z>0$. In particular, on the diagonal $(x=z)$, since
$J(-w) = -\overline{J(w)}$ for $w\in\R$, one has 
\[
f(t) = \1\{t\le z^2\}\frac{-J(z/\sqrt{t}) }{\sqrt{t}\sqrt{z^2-t}}
\]
where we used the algebraic identity
\(\frac{1-J(w)^2}2 = 1-wJ(w) 
=\sqrt{w^2-1}J(w) .
\)

Again, as in the proof of Proposition~\ref{prop:G2}, the derivative of $f$ satisfies for for every $x,z>0$ and for $t\in[0,N]\setminus \Gamma$, 
\[
|f'(t/N)|\lesssim N^2  \frac{|Nz^2-t|^{-1/2}+ t^{-1/2}}{|Nz^2-t|^{3/2}} 
\]
where we used that the turning points are merging. In particular, as $N_0(z) \to\infty$, 
\[
\frac1{N^2} \int_{[0,N]\setminus\Gamma}|f'(t/N)| dt \lesssim \frac1{\mathfrak{L}^{3/2}} \int_0^1 \frac{dt}{t^{1/2}}
+ \int_{\mathfrak{L}}^\infty \frac{dt}{t^{3/2}} \lesssim \frac1{\mathfrak{L}^{1/2}} \ll 1 .
\]
This shows that the error in the Riemann sum approximation goes to 0 as $N\to\infty$. 
For the main term, we expand for $t\in(0,1]$, 
\[
f(t) =\frac{1 -xz/t -x/\sqrt{z^2-t}-z\sqrt{x^2-t}-\sqrt{z^2-t}\sqrt{x^2-t}}{-\sqrt{x^2-t} \sqrt{z^2-t}}
= \frac{xz-t}{t \sqrt{x^2-t} \sqrt{z^2-t}} + \frac{x}{\sqrt{z^2-t}}+ \frac{z}{\sqrt{x^2-t}} +1 . 
\]
This implies that for every $x,z>0$ (in a compact),
\[
\frac1N \int_{\Gamma} \bigg| f(t/N) - \frac{Nxz-t}{t \sqrt{Nx^2-t} \sqrt{Nz^2-t}}\bigg| dt 
\lesssim \frac1{\sqrt{N}} \int_{\Gamma}  \frac{dt}{|Nz^2-t|^{1/2}} +\O\bigg(\frac{\mathfrak{L}}{N}\bigg)
\lesssim\sqrt{\frac{\mathfrak{L}}{N}} . 
\]
If $x=z$, the main term is exactly $-\displaystyle\int_\Gamma \frac{dt}{t} =\o(1)$ as $\mathfrak{L}\to\infty$. 
If $x\neq z$, we can bound 
\[
\frac1N \int_{\Gamma} \bigg| \frac{Nxz-t}{t \sqrt{Nx^2-t} \sqrt{Nz^2-t}} \bigg| dt
\lesssim \frac1N \int_\Gamma \frac{dt}{t}+ \frac{|x-z|}{|N|z}  \int_\Gamma  \frac{dt}{|Nx^2-t|^{1/2}|Nz^2-t|^{1/2}}
\]
The last integral grows logarithmically,
\[
\frac{|x-z|}{|N|z}  \int_\Gamma  \frac{dt}{|Nx^2-t|^{1/2}|Nz^2-t|^{1/2}}
\lesssim  \frac{|x-z|}{|N|z}  \log\bigg(\frac{\mathfrak{L}}{N|x^2-z^2|}\bigg)
\lesssim \frac{\Delta}{N} \log(\Delta \mathfrak{L}^2)^{-1}
\]
where $\Delta = |x/z-1|$ is small.
Hence, we conclude that in this regime
\[
\frac1N \int_{\Gamma} | f(t/N)|dt  = \underset{N\to\infty}{\o(1)}.
\]
Using the identity \eqref{Jid} again, this proves that 
\[
\big[\G(z),\G(-x)\big] = \frac{1}{2}\int_{[0,1]} f(t)dt +\underset{N\to\infty}{\o(1)}
= -2\log\big(1-J(-x)J(z)\big) +\underset{N\to\infty}{\o(1)}.
\]
We also note that, since $J(-x) = -\overline{J(x)}$ for $x\in\R$ and $J$ is 1/2-H\"older, if $x\to z$,  
\[
\log\big(1-J(-x)J(z)\big) = \log\big(1+|J(z)|^2) +\o(1)
\]

To compute $\big[\G(z),\overline{\G(-x)}\big]$, we use \eqref{G6} and the previous method; the arguments are identical and we obtain if $N|x-z|^2 \ll \mathfrak{L}(z)$, 
\[
\big[\G(z),\overline{\G(-x)}\big] = - 2\log\big(1+ J(z)J(x)\big) + \underset{N \to\infty}{o(1)} .
\]
The main term is singular as $z\to0$ and we have 
\[
\log\big(1+ J(z)J(x)\big) = \log\big(1+ J(z)^2\big)+\O\bigg(\bigg|\frac{J(x)-J(z)}{J(z)+J(z)^{-1}}\bigg|\bigg)
\]
Using that $J(z)+J(z)^{-1}=2z$, the error term converges to 0 away from 0. In a neighborhood of 0, $J$ is smooth and using that $N|x-z|^2 \ll \mathfrak{L}(z)$ and $\sqrt{N}|z= \mathfrak{L}(z)^{3/2}$,  the error is controlled by 
$\O\big(\frac{|x-z|}{|z|}\big) = \O\big(\mathfrak{L}(z)^{-1}\big)$. 
This shows that 
\[
\log\big(1+ J(z)J(x)\big) = \log\big(1+ J(z)^2\big)+\o(1) 
\]
in the regime that we are considering. 
\end{proof}

\paragraph{Proof of Proposition~\ref{prop:G}.}
We now combine the previous estimates to obtain Proposition~\ref{prop:G}. 

\begin{proof}
Let $x,z\in\R$. Without loss of generality, suppose that $|x| \le |z|$.  We split the argument in two regimes (local and global) and we record that the condition $N|z-x|^2 \gg \mathfrak{L}(z)^{-1}$ implies that as $N\to\infty$, 
\begin{equation} \label{loccond}
\mathfrak{L}(z) \to\infty 
\quad\text{and}\quad 
\text{either }\,
i)\, N  |z^2-x^2| \gg \mathfrak{L}(z) \, \text{ or }\, 
ii)\, N|z+x|^2 \ll \mathfrak{L}(z).
\end{equation}
\eqref{loccond} follows from the following case;\\
$\bullet$ if $\mathfrak{L}(x) \le C$, then  $N|z^2-x^2| \simeq \sqrt{Nz^2} \sqrt{N|x-z|^2 } \gg \mathfrak{L}(z)$. \\
$\bullet$ if $\sgn(x) =\sgn(z)$, then $N|z^2-x^2| \ge \sqrt{Nz^2} \sqrt{N|x-z|^2 } \gg \mathfrak{L}(z)$. \\
$\bullet$ otherwise,  $\sgn(x) \neq \sgn(z)$ and  $\sqrt{N|x-z|^2}\ge \sqrt{Nz^2}= \mathfrak{L}(z)^{3/2}$, so that either $N|z+x|^2 \ll \mathfrak{L}(z)$ or 
$N|z+x|^2 \ge c\mathfrak{L}(z)$ for a $c>0$ in which case we also have $N|z^2-x^2| \ge c\mathfrak{L}(z)^2$. 

\smallskip\noindent
$\mathbf{1}.$
In the local regime, $N|z-x|^2 \le C \mathfrak{L}(z)^{-1}$ for a constant $C\ge 1$, by combining Proposition~\ref{prop:G1} and Lemma~\ref{lem:G3}, we obtain 
\[\begin{aligned}
&\big[\G(z),\G(x)\big]  =  \log_+\big(|1-x^2|^{-1}\wedge N^{2/3}\big)  + \O(1) ,\\
&\big[\G(z),\overline{\G(x)}\big]  = \begin{cases} \log\big(\varrho(x)^2N\mathfrak{L}(x)) &\text{if } |x| \le 1-N^{-2/3} \\
\log_+\big(|1-x^2|^{-1}\wedge N^{2/3}\big)  &\text{if } |x| \ge 1-N^{-2/3}
\end{cases}
\,+\O(1).
\end{aligned}\] 
In particular, this covers the case where $\mathfrak{L}(z) \le c$ for some constant $c\ge 1$ ($N^{-1/2}$-neighborhood of 0).  In this special case,  if~$|x-z| \le C N^{-1/2}$, 
\[
\big[\G(z),\G(x)\big]  = \O(1),
\qquad
\big[\G(z),\overline{\G(x)}\big]  = \log(N)+\O(1) . 
\] 

\smallskip\noindent
$\mathbf{2}.$
In the global regime, if  $N|z-x|^2 \gg \mathfrak{L}(z)^{-1}$, by \eqref{loccond}, we can either apply Proposition~\ref{prop:G2}  or Proposition~\ref{prop:G4} (in the special case where $N|z+x|^2 \ll \mathfrak{L}(z)$).
In both cases, we have 
\begin{equation*}
\big[\G(z),\G(x)\big] = - 2\log\big(1-J(z)J(x)\big)  + \underset{N\to\infty}{\o(1)} \qquad
\big[\G(z),\overline{\G(x)}\big] = - 2 \log\big(1- J(z)\overline{J(x)}\big) +\underset{N\to\infty}{\o(1)}. 
\qedhere
\end{equation*}
\end{proof}

\subsection{Oscillatory sums.} \label{sec:Wosc}
To study the bracket of the $\W$ field, we need to refine certain estimates from Section~\ref{sec:osc}.  
Indeed, its bracket is given by certain sums whose oscillations speed is controlled $|\theta_n(x)-\theta_n(z)|$, see formula \eqref{W2} below. 
In this case, we need the following improvement of Lemma~\ref{lem:osc}. 

\begin{lemma}\label{lem:osc3}
Fix $z,x \in [-1,1]$  with $|x|\le |z|$.  
For any $n \ge N_0(z)$ and any $L\in\N$, 
\[
\bigg| \sum_{j=n+1}^{n+L}  e^{\i 2( \vartheta_{j,n}(x) - \vartheta_{j,n}(z))} \bigg|
\lesssim   \frac1{|\sin(2\ell_{n+1}^-(x,z))|}+ |z-x|\sqrt{N}L^3\delta_{n+1}^3(z). 
\]
\end{lemma}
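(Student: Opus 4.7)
The plan is to adapt the proof of Lemma~\ref{lem:osc} to the ``merging'' setting where $x,z$ are distinct but the phase difference $\vartheta_{j,n}(x) - \vartheta_{j,n}(z)$ is slowly varying. I would define $\ell_k^-(x,z) \coloneqq (\theta_k(x) - \theta_k(z))/2$ so that, by telescoping, $\vartheta_{n+m,n}(x) - \vartheta_{n+m,n}(z) = 2\sum_{k=n+1}^{n+m}\ell_k^-(x,z)$. Freezing at $k = n+1$, introduce the linearization error
\[
\Delta_m \coloneqq 2\sum_{k=n+1}^{n+m}\ell_k^-(x,z) - 2m\,\ell_{n+1}^-(x,z) = 2\sum_{k=n+2}^{n+m}\bigl(\ell_k^-(x,z) - \ell_{n+1}^-(x,z)\bigr),
\]
and split
\[
\sum_{m=1}^L e^{2\i(\vartheta_{n+m,n}(x) - \vartheta_{n+m,n}(z))} = \sum_{m=1}^L e^{4\i m\,\ell_{n+1}^-(x,z)} + \sum_{m=1}^L \bigl(e^{2\i\Delta_m}-1\bigr)e^{4\i m\,\ell_{n+1}^-(x,z)}.
\]
The geometric sum is bounded by $|\sin(2\ell_{n+1}^-(x,z))|^{-1}$ via the elementary identity $|\sum_{m=1}^L e^{\i\alpha m}| \le |\sin(\alpha/2)|^{-1}$ applied with $\alpha = 4\ell_{n+1}^-$, producing the first term on the right-hand side.

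For the linearization remainder, the key point is that $|x|\le|z|$ forces $|w|\le|z|$ for every $w$ on the segment between $x$ and $z$ (checking case-by-case on signs of $x,z$), hence $\delta_k(w) \le \delta_k(z)$ uniformly. Combined with $\partial_w\theta_k(w) = -\sqrt{N}\,\delta_k(w)$ from Lemma~\ref{lem:theta}, this yields
\[
2\bigl(\ell_k^-(x,z) - \ell_{n+1}^-(x,z)\bigr) = \int_x^z \sqrt{N}\,\bigl(\delta_{n+1}(w) - \delta_k(w)\bigr)\dif w.
\]
Iterating the bound \eqref{delta2} gives $0 \le \delta_{n+1}(w) - \delta_k(w) \le \tfrac{k-n-1}{2}\,\delta_{n+1}(w)^3$, so $|\ell_k^-(x,z) - \ell_{n+1}^-(x,z)| \lesssim |z-x|\sqrt{N}\,(k-n)\,\delta_{n+1}(z)^3$. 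Summing over $k\in[n+2,n+m]$ yields $|\Delta_m| \lesssim m^2\,|z-x|\sqrt{N}\,\delta_{n+1}(z)^3$, and since $\Delta_m$ is real, $|e^{2\i\Delta_m}-1|\le 2|\Delta_m|$; a final summation over $m\le L$ delivers the $|z-x|\sqrt{N}\,L^3\,\delta_{n+1}(z)^3$ term.

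There is no substantive obstacle: the proof amounts to tracking two distinct sources of smallness, the spatial separation $|z-x|$ (entering through the $w$-derivative of $\theta_k$) and the temporal drift of $\delta_k(w)$ as $k$ varies over an interval of length $m$ (entering through \eqref{delta2}). The $L^3$ in the final estimate is a direct consequence of the double summation, once over $k$ of length $m$ to build $\Delta_m$ and once over $m\le L$ in the outer sum.
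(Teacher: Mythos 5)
Your proof is correct and follows essentially the same route as the paper: freeze the per-step phase at $2\ell_{n+1}^-(x,z)$, bound the resulting geometric sum by $1/|\sin(2\ell_{n+1}^-)|$, and control the accumulated deviation by a derivative estimate in the spectral variable together with the monotonicity $\delta_k(w)\le\delta_{n+1}(z)$ for $|w|\le|z|$, $k\ge n+1$, which yields the $|z-x|\sqrt{N}L^3\delta_{n+1}^3(z)$ term. The only difference is cosmetic (the paper bounds the drift of $\ell_k^-$ via the mixed partial $\partial_{t,u}\arccos(u\sqrt{N/t})=\tfrac{\sqrt N}{2}\delta_t(u)^3$, whereas you iterate the discrete increment bound \eqref{delta2}), and your sign convention in the identity for $2(\ell_k^--\ell_{n+1}^-)$ is flipped, which is harmless since only absolute values enter.
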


\begin{proof}
Without loss of generality, $z\in[0,1)$.  Then, for $k>n$, 
\[
\big(\theta_k(x)- \theta_n(x)\big) - \big(\theta_k(z)- \theta_n(z)\big) 
=\int_n^k\int_x^z \partial_{t,u}\big(\arccos\big(u\sqrt{N/t}\big)\big)  \d t \d u 
\]
where we compute 
\[
\partial_{t,u}\big(\arccos\big(u\sqrt{N/t}\big)\big)  \big|_{t=n,u=z} = \frac{\sqrt{N}}{2}\delta_n^3(z) . 
\]
Note that  $\delta_n(z) \ge \delta_k(u)$ for $k\ge n > N_0(z)$ and $|u|<z$.
In particular, the quantity $n \mapsto \ell_n^-$ is positive, non-decreasing and 
\[
0 \le \ell^-_k -\ell^-_n
\le (k-n)(z-x)\sqrt{N} \delta_n^3(z)/4 . 
\]
This implies that for any $j\in\N$, 
\[
0\le  \vartheta_{n+j,n}(x) - \vartheta_{n+j,n}(z)- 2 j \ell^-_{n+1}
\le \tfrac{j(j-1)}{4} (z-x) \sqrt{N}\delta_{n+1}^3(z) .
\]
Like in the proof of  Lemma~\ref{lem:osc}, we obtain
\[
\bigg| \sum_{j=n+1}^{n+L}  e^{\i 2 (\vartheta_{j,n}(x)- \vartheta_{j,n}(z))} \bigg|
\le \frac{1}{\sin(2\ell^-_{n+1})} +  L^3\sqrt{N}(z-x) \delta_{n+1}^3(z) . \qedhere
\]
\end{proof}

Throughout this section, we define for $z,x \in (-1,1)$, 
\begin{equation} \label{qW}
\ell_n^{\pm}= \ell_n^\pm(x,z) \coloneqq \frac{\theta_n(x)\pm\theta_n(z)}{2},\qquad\qquad
q_{n}^{\pm} = q_{n}^{\pm} (x,z) \coloneqq \delta_{n+1}(z)\delta_{n+1}(x) \cos(\ell_{n+1}^\pm) e^{\i3 \ell_{n+1}^\pm} .
\end{equation}
These coefficients arise for instance when computing the bracket of the $\W$ field; see \eqref{Z2}--\eqref{W1} below.
We note that the phases of the coefficients $q_{n}^{\pm}$ will not be relevant in the proof. We record two variants of  Proposition~\ref{prop:osc2}. 

\begin{proposition}\label{prop:osc3}
Let $z,x\in(-1,1)$  with $|x|\le |z|$.
If $N|x-z|^2 \mathfrak{L}(z) \gg 1$, then
\[
\max_{n> N_T(z)}\bigg| \sum_{N_T(z)\le k \le n} q_k^-(z,x) e^{\i2(\phi_{k}(x)-\phi_{k}(z))}  \bigg|  \to 0 \qquad\text{in probability as $N\to\infty$}.
\]
\end{proposition}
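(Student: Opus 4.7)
The plan is to adapt the block-decomposition and linearization scheme of Propositions~\ref{prop:osc1} and~\ref{prop:osc2}, substituting the new oscillation bound of Lemma~\ref{lem:osc3} in place of Lemma~\ref{lem:osc}. Fix $T\ge 1$ and set $m=N_T(z)$; I would work with the blocks $\{n_k(z)\}_{k\ge T}$ from \eqref{block1} on the event $\A_\chi^2(R,T;x,z)$, which has probability $1-\O(e^{-cR})$ by Proposition~\ref{lem:varphase}. The preliminary step is to verify via Lemma~\ref{lem:contest} and \eqref{delta2} that the coefficients satisfy $|q_n^-|\le \delta_{n+1}(z)\delta_{n+1}(x)|\cos\ell_{n+1}^-|$ and $|q_{n+1}^- - q_n^-|\lesssim \delta_n^4(z)$. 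Setting $q'_n\coloneqq q_n^- e^{2\i(\boldsymbol{\chi}_{n,n_k}(x)-\boldsymbol{\chi}_{n,n_k}(z))}$ for $n\in(n_k,n_{k+1}]$, a verbatim adaptation of \eqref{qest} yields $\max_n|q'_n-q_{n_k}^-|\lesssim RL_k^{-1}k^{(\epsilon-3)/2}/T^{\epsilon/2}$, so that up to a summable linearization error the sum reduces to the purely deterministic
\[
\sum_{k\ge T} q_{n_k}^-\sum_{n_k<j\le n_{k+1}\wedge n} e^{2\i(\vartheta_{j,n_k}(x)-\vartheta_{j,n_k}(z))}.
\]

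Next I would apply Lemma~\ref{lem:osc3} block-by-block. The key algebraic simplification $|q_{n_k}^-|/|\sin 2\ell_{n_k+1}^-| = \delta_{n_k+1}(z)\delta_{n_k+1}(x)/(2|\sin\ell_{n_k+1}^-|)$ (the $|\cos\ell^-|$ factors cancel against those in $\sin 2\ell^- = 2\sin\ell^-\cos\ell^-$), combined with the lower bound $|\sin\ell_{n_k+1}^-|\gtrsim |z-x|\sqrt{N}\delta_{n_k+1}(x)$ from Lemma~\ref{lem:theta} valid when $|\ell^-|\ll 1$, converts the first term of Lemma~\ref{lem:osc3} into $\delta_{n_k}(z)/(|z-x|\sqrt{N})$, while the second term bounds the per-block contribution by $|z-x|\sqrt{N}L_k^3\delta_{n_k}^5(z)$. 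I would split the block index into a \emph{fast} regime where $|\theta_{n_k}(x)-\theta_{n_k}(z)|\gtrsim 1$ and a \emph{slow} regime where this phase difference is small. In the fast regime, $|\sin 2\ell^-|$ is of order unity and the per-block bound becomes $\O(\delta_{n_k}(z)\delta_{n_k}(x))$; a Riemann-sum identification using the identity \eqref{Jid} (as in the proof of Proposition~\ref{prop:G2}) shows the total fast-regime contribution vanishes in probability.

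The main obstacle is the slow regime, where the naive per-block estimate diverges only logarithmically: indeed $\sum_k\delta_{n_k}(z)/(|z-x|\sqrt{N})\asymp \varrho(z)/|z-x|$ need not be small under the hypothesis. Here I would instead perform Abel summation \emph{across} blocks, writing $\sum_k q_{n_k}^- B_k = q_{n_K}^-\sum_{k\le K}B_k - \sum_k(q_{n_{k+1}}^-- q_{n_k}^-)\sum_{k'\le k}B_{k'}$, using the total-variation bound $\sum_k|q_{n_{k+1}}^-- q_{n_k}^-|\lesssim 1$ which follows from the discrete-derivative estimate above. The partial sums $\sum_{k'\le k}B_{k'}=\sum_{j=m}^{n_k}e^{2\i(\vartheta_{j,m}(x)-\vartheta_{j,m}(z))}$ are controlled by a second application of Lemma~\ref{lem:osc3} at macroscopic scale. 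The hypothesis $N|x-z|^2\mathfrak{L}(z)\to\infty$ enters quantitatively precisely here: it guarantees that the cumulative deterministic phase $|\vartheta_{N,m}(x)-\vartheta_{N,m}(z)|\asymp |z-x|N\varrho(z)$ diverges, producing the geometric-series cancellation that keeps these partial sums bounded. Assembling the fast- and slow-regime estimates gives convergence to $0$ in probability on $\A_\chi^2\cap\A_m$, and the standard tail bounds \eqref{PAchi} and \eqref{PA} remove the complementary event.
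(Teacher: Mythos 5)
Your opening steps do match the paper's: block decomposition, freezing the random phase on the event $\A_\chi$, the coefficient bounds on $q_n^-$, Lemma~\ref{lem:osc3} applied block by block, and the cancellation $\delta_n(x)|\cos\ell_n^-|/|\sin 2\ell_n^-|\lesssim 1/(\sqrt{N}|z-x|)$ via Lemma~\ref{lem:theta} (which in fact holds for every $|\ell_n^-|\le\pi/2$, so no fast/slow split is needed). The gap is in how you handle the resulting non-summability. First, if you keep the blocks \eqref{block1}, the cubic error term of Lemma~\ref{lem:osc3} is not under control: per block it is $|z-x|\sqrt{N}L_k^3\delta_{n_k}^5(z)\asymp|z-x|\sqrt{N}\,k^{-9/4}$, whose sum over $k\ge T$ is of order $|z-x|\sqrt{N}$ (and even larger in the parabolic blocks), which can diverge; and your ``fast regime'' bound only yields $\sum_{k\ge T}\delta_{n_k}(x)\delta_{n_k}(z)\lesssim T^{-1/2}$ with $T$ fixed, i.e.\ $\O(1)$, not $\o(1)$. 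Second, the Abel summation across blocks rests on a false identity. After freezing the random phase at each block start, the $k$-th block contributes $e^{2\i(\phi_{n_k}(x)-\phi_{n_k}(z))}\,q_{n_k}^-\sum_{j}e^{2\i(\vartheta_{j,n_k}(x)-\vartheta_{j,n_k}(z))}$, so the reduced sum is not ``purely deterministic''; moreover, since $\vartheta_{j,n_{k'}}=\vartheta_{j,m}-\vartheta_{n_{k'},m}$, each block sum $B_{k'}$ carries a $k'$-dependent unimodular prefactor and $\sum_{k'\le k}B_{k'}\neq\sum_{j=m}^{n_k}e^{2\i(\vartheta_{j,m}(x)-\vartheta_{j,m}(z))}$. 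You could only telescope into one long exponential sum by referencing all phases to $m$, but then the frozen-phase approximation of $\boldsymbol{\chi}_{j,m}$ is no longer valid over the whole range, and a ``macroscopic'' application of Lemma~\ref{lem:osc3} fails in any case because its error grows like the cube of the stretch length.

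The paper's proof keeps your per-block strategy but changes the blocks: it takes $n_k=N_0(z)+Lk^{3}$ with $L=N^{-1}|x-z|^{-2}$, so that $\delta_{n_k}(z)=L^{-1/2}k^{-3/2}$ while still $\delta_{n_k}^2L_k\lesssim k^{-1}$, so Proposition~\ref{lem:varphase} applies to these blocks. With this choice both contributions from Lemma~\ref{lem:osc3} are $\lesssim k^{-3/2}$ per block, uniformly in the separation, and the summation starts at $K\asymp\big(T\,N|x-z|^2\mathfrak{L}(z)\big)^{1/3}$, which tends to infinity precisely under the hypothesis $N|x-z|^2\mathfrak{L}(z)\gg1$; the total is then $\lesssim R K^{-1/2}$, and choosing $R=R(N)\to\infty$ slowly gives convergence to $0$ in probability, with $\P[\A_\W]\to1$. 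This adaptation of the block scale to $|x-z|$ is the missing idea; without it (or an equivalent device exploiting the divergence of $N|x-z|^2\mathfrak{L}(z)$) your scheme yields at best tightness, not the claimed vanishing.
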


\begin{proof}
According to \eqref{Asmooth}, we consider the event $\A_\W(R;z,x) \coloneqq \A_\chi(T,R; z) \cap\,  \A_\chi(T,R; x)$ for some sequence of blocks $\big\{n_k = N_0+ L k^{1+\alpha}\big\}_{k\ge K}$ where $L, R\ge 1$ and $\alpha>0$ are to be decided in the course of the proof. Here $n_K=N_T(z)$ with $T\ge 1$ fixed. \\
We proceed as in the proof of Lemma~\ref{prop:osc1} by splitting the sum into blocks,
\[
\max_{n>n_K}\bigg|  \sum_{n_K< k \le n} q_k^-(z,x) e^{\i2(\phi_{k}(x)-\phi_{k}(z))}  \ \bigg| 
\le  \sum_{k\ge K}\bigg|  \sum_{n_{k}<j\le n_{k+1}} q_j'(z,x) e^{\i2(\vartheta_{j,n_k}(x)-\vartheta_{j,n_k}(z))} \bigg|
\]
where  $q_n'(z,x) = q_n^-(z,x) e^{2\i \chi_{n,n_k}(x)} e^{-2\i \chi_{n,n_k}(x)} $ for $n\in(n_{k}, n_{k+1}]$. 

The coefficients $\big\{q_n^-(z,x)\big\}_{n>N_0(z)}$ also satisfy
\[
|q_n^-(z,x)| \le \delta_n(x) \delta_n(z) |\cos(\ell_n^-(z,x))| , \qquad\qquad
|q_{n+1}(z,x)-q_n^-(z,x)| \le \delta_n(x) \delta_n^3(z) . 
\]
The second estimate is a consequence of \eqref{delta2} and Lemma~\ref{lem:theta}
Then on the event $\A_\W$,  for each  block,
\[\begin{aligned}
\bigg|\sum_{n_{k}<n\le n_{k+1}} q_j'(z,x) e^{\i2(\vartheta_{j,n_k}(x)-\vartheta_{j,n_k}(z))}\bigg|
&\le |q_{n_k}(z,x)| \bigg| \sum_{n_{k}<j\le n_{k+1}} e^{\i2(\vartheta_{j,n_k}(x)-\vartheta_{j,n_k}(z))}\bigg|
+ \O\big(R k^{\frac{\epsilon-3}{2}}K^{-\epsilon/2}\big) \\
&\lesssim \frac{\delta_{n_k}(x)\delta_{n_k}(z)|\cos\ell_{n_k}^-|}{|\sin(2\ell_{n_k}^-)|}+ \sqrt{N|z-x|^2}L_k^3\delta_{n_k}^5(z) +R k^{\frac{\epsilon-3}{2}}K^{-\epsilon/2} 
\end{aligned}\] 
where we used Lemma~\ref{lem:osc3} with $L_k = n_{k+1}-n_k$. 

Observe that $\ell_n^- \in [-\pi/2,\pi/2] $, then  using the second claim of Lemma~\ref{lem:theta}, 
\[
\frac{\delta_n(x)|\cos\ell_n^-|}{|\sin 2\ell_n^-|} = \frac{\delta_n(x)}{2|\sin\ell_n^-|} 
\le \frac{2\delta_n(x)}{|\ell_n^-|} \le  \frac{4}{\sqrt{N|z-x| ^2}} . 
\]
By construction, $L_k\simeq L k^\alpha$ and we have $\delta_{n_k}^2 L_k \lesssim k^{-1}$ for every $k\in\N_{\ge K}$, so that 
\[
\frac{\delta_{n_k}(x)\delta_{n_k}(z)|\cos\ell_{n_k}^-|}{|\sin(2\ell_{n_k}^-)|}+ \sqrt{N|z-x|^2}L_k^3\delta_{n_k}^5(z) 
\lesssim \frac{\delta_{n_k}(z)}{\sqrt{N|z-x| ^2}} \bigg(1+ \frac{N|z-x|^2L}{k^{2-\alpha}}\bigg) \lesssim  k^{-3/2}
\]
by choosing  $\alpha=2$ and $L =  N^{-1}|z-x|^{-2}$ (here, $\delta_{n_k}(z) =L^{-1/2}k^{-3/2}$). 

We conclude that on $\A_\W$, 
\begin{equation} \label{sumblock1}
\max_{n>n_K }\bigg|  \sum_{n_K< k \le n} q_k^-(z,x) e^{\i2(\phi_{k}(x)-\phi_{k}(z))}  \ \bigg| 
\lesssim R \sum_{k\ge K}  k^{\frac{\epsilon-3}{2}}K^{-\epsilon/2}  \lesssim RK^{-1/2} 
\end{equation}
where $K= T^{1/3}\Theta^{-1/3}\ge 1$ (by construction, $L K^{3} = T\mathfrak{L}(z)$ and $\Theta =  L/\mathfrak{L}(z) = \mathfrak{L}(z)^{-1}N^{-1}|z-x|^{-2} \ll 1$). 

Hence, we can choose a sequence $R(N)\to\infty$ as $N\to\infty$ in such a way that $R\ll \Theta^{-1/3}$ in which case
$\eqref{sumblock1}\to0$ on $\A_\W$ and $\P[ \A_{\W}] \to 1$ as $N\to\infty$ (see Proposition~\ref{lem:varphase}  and  \eqref{PA} -- $\mathfrak{L}\gg 1$ in this regime). 
This shows that \eqref{sumblock1} converges to 0 in probability. 
\end{proof}

\begin{proposition}\label{prop:osc4}
Let $z,x\in(-1,1)$  with $|x|\le |z|$.
If $N|x-z|^2 \mathfrak{L}(z) \gg 1$, then
\[
\max_{n> N_T(z)}\bigg| \sum_{N_T(z)\le k \le n} q_k^+(z,x) e^{\i2(\phi_{k}(x)+\phi_{k}(z))}  \bigg| \to 0 \qquad\text{in probability as $N\to\infty$}.
\]
\end{proposition}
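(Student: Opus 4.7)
I would follow the same blueprint as the proof of Proposition~\ref{prop:osc3}: work on the event $\A_\W(R;z,x) \coloneqq \A_\chi(T,R;z) \cap \A_\chi(T,R;x)$ with the dyadic blocks $n_k = N_0(z) + L k^{1+\alpha}$, $k \ge K$, and the same choices $\alpha = 2$, $L = N^{-1}|z-x|^{-2}$, so that $K = T^{1/3}\Theta^{-1/3}$ with $\Theta = (N|z-x|^2 \mathfrak{L}(z))^{-1} \ll 1$ under the hypothesis. The only substantive change is that Lemma~\ref{lem:osc3} (for the difference of deterministic phases) is replaced by the sum-of-phases oscillatory estimate already derived in~\eqref{oscest1} in the proof of Proposition~\ref{prop:osc2}.

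\textbf{Reduction.} Splitting into blocks and linearizing $q_j'(z,x) = q_j^+ e^{2\i(\chi_{j,n_k}(x) + \chi_{j,n_k}(z))}$ to $q_{n_k}^+$ on $\A_\W$ (up to an error $O(R k^{(\epsilon-3)/2} K^{-\epsilon/2})$ exactly as in the proof of Proposition~\ref{prop:osc3}) reduces the block contribution to $|q_{n_k}^+|$ times the oscillatory sum
\[
\textstyle \bigl|\sum_{n_k < j \le n_{k+1}} e^{\i 2(\vartheta_{j,n_k}(x) + \vartheta_{j,n_k}(z))}\bigr| \le \min\bigl\{L_k,\, C |\sin 2\ell_{n_k}^+|^{-1} + C L_k^3 (n_k \sin \theta_{n_k}(z))^{-1}\bigr\}.
\]
Using $|q_n^+| \le \delta_n(x)\delta_n(z)|\cos\ell_n^+|$ and the identity $\sin 2\ell_n^+ = 2\sin\ell_n^+ \cos\ell_n^+$, the main geometric contribution collapses to $\delta_{n_k}(x)\delta_{n_k}(z)/(2|\sin\ell_{n_k}^+|)$, while the $L_k^3$-term is handled exactly as in Proposition~\ref{prop:osc3} and contributes $O(k^{-3/2})$ per block.

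\textbf{Case analysis for $|\sin\ell_n^+|^{-1}$.} When $\sgn(x) \ne \sgn(z)$, one has $\ell_n^+ = \pi/2 + (\theta_n(z) - \theta_n(|x|))/2$, which stays uniformly bounded away from $0$ and $\pi$ for $n \ge N_T(z)$ (since both $\theta_n(z)$ and $\theta_n(|x|)$ lie in $(0,\pi/2)$); hence $|\sin\ell_n^+| \gtrsim 1$ and the block contribution is $O(\delta_{n_k}(z)^2) = O(1/(Lk^3))$, which sums to a quantity vanishing as $N \to \infty$. When $\sgn(x) = \sgn(z)$, one has only $\ell_n^+ \ge \theta_n(z)$, so I would split the sum at the threshold $K_\# \simeq \mathfrak{L}(z)(N|z-x|^2)^{1/3}$ beyond which $\theta_{n_k}(z) \ge \pi/4$ (so the previous bound applies); for $K \le k < K_\#$ one instead substitutes the trivial truncation $L_k$ from the right-hand side of~\eqref{oscest1}, combined with $\delta_{n_k}^2 L_k \lesssim 1/k$ and the fact that under the hypothesis $N|z-x|^2 \mathfrak{L}(z) \gg 1$ the two turning points $N_0(x)$ and $N_0(z)$ remain separated by $\gg \mathfrak{L}(z)$, which provides the extra decay needed across the early blocks.

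\textbf{Conclusion and main obstacle.} Combining the block estimates yields, on $\A_\W(R;z,x)$, a uniform $o(1)$ bound as $N \to \infty$; choosing $R = R(N) \to \infty$ slowly enough, Lemma~\ref{lem:varphase} and~\eqref{PA} guarantee $\P[\A_\W(R;z,x)] \to 1$, completing the proof. The main obstacle is precisely the same-sign regime: unlike in the $\ell_n^-$ analysis of Proposition~\ref{prop:osc3}, Lemma~\ref{lem:theta} does not supply a lower bound of the form $|\sin\ell_n^+| \gtrsim |z-x|\sqrt{N}\,\delta_n$, and the oscillation frequency $2\ell_n^+$ actually degenerates near the beginning of the elliptic stretch (where both $\theta_n(z)$ and $\theta_n(x)$ are small); the sum-of-phases oscillatory bound therefore has to be interpolated between the trivial truncation $L_k$ near $N_T(z)$ and the sharp $|\sin 2\ell_n^+|^{-1}$ estimate further out, the interpolation being made possible only thanks to the separation of turning points enforced by the hypothesis.
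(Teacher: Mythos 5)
There is a genuine gap, and it sits exactly where you locate the "main obstacle": your estimates do not close in either branch of your sign dichotomy. In the opposite-sign branch you cancel $|\cos\ell_{n}^+|$ against the $1/|\sin 2\ell_n^+|$ factor and are left with a per-block bound $\delta_{n_k}(x)\delta_{n_k}(z)/(2|\sin\ell_{n_k}^+|)\lesssim \delta_{n_k}(x)\delta_{n_k}(z)$. With your blocks $n_k=N_0(z)+Lk^3$, $L=N^{-1}|z-x|^{-2}$, this sums to $\asymp 1/(LK^2)=\big(N|z-x|^2/(T\mathfrak{L}(z))^2\big)^{1/3}$, which does \emph{not} vanish: for $x$ close to $-z$ one has $\delta_{n_k}(x)\approx\delta_{n_k}(z)$ and $N|z-x|^2\approx 4\mathfrak{L}(z)^3$, so the sum is of order $(\mathfrak{L}(z)/T^2)^{1/3}\to\infty$. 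The smallness you threw away is precisely what saves this regime: when $N|z+x|^2\ll\mathfrak{L}(z)$ the paper uses $\ell_n^+(x,z)=\tfrac{\pi}{2}-\ell_n^-(-x,z)$ and Lemma~\ref{lem:theta} to get $|\cos\ell_n^+|\le |x+z|\sqrt{N}\,\delta_n(z)$, so that $\sum_{n\ge N_T}|q_n^+|\le\sqrt{N|x+z|^2/\mathfrak{L}(z)}\ll1$ \emph{deterministically}, with no oscillation argument at all. In the same-sign branch your treatment of the early blocks $K\le k<K_\#$ via the trivial truncation $L_k$ gives per-block $\lesssim\delta_{n_k}(x)\delta_{n_k}(z)L_k$; summing this yields an $O(1)$ contribution from the range $Lk^3\lesssim N(z^2-x^2)$ and an honest $\log$ (of order $\log\big(z^2/(z^2-x^2)\big)$, divergent e.g.\ for $|x-z|\asymp N^{-1/2}$) from the range $Lk^3\gtrsim N(z^2-x^2)$, so the promised "extra decay from separation of turning points" is not only unproved but insufficient in its natural form.

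The paper's proof is organized around the dichotomy \eqref{loccond} rather than the sign of $x$: either $N|z+x|^2\ll\mathfrak{L}(z)$ (handled as above by absolute values), or $N|z^2-x^2|\gg\mathfrak{L}(z)$. In the latter case one picks $\mho(N)\to\infty$ with $\mho\,\mathfrak{L}(z)\ll N|z^2-x^2|$, bounds the initial stretch $N_T(z)\le n\le N_0(z)+\mho\,\mathfrak{L}(z)$ by absolute values using the sharper coefficient bound $|q_n^+(x,z)|\le\delta_n(z)/\sqrt{N(z^2-x^2)}$ (giving $\lesssim\sqrt{\mho\,\mathfrak{L}(z)/(N(z^2-x^2))}\ll1$), and then applies the already-established Proposition~\ref{prop:osc2} to the remaining sum, which yields $\lesssim R/\sqrt{\mho}$ on $\A_\chi(T,R;z)\cap\A_\chi(T,R;x)$; choosing $R\ll\sqrt{\mho}$ and using Lemma~\ref{lem:varphase} with \eqref{PA} finishes the proof. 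If you want to salvage your block scheme, the fix is to \emph{keep} the factor $|\cos\ell_n^+|$ (never cancel it) in the merging regime and to replace the trivial truncation on the early same-sign blocks by the $\delta_n(z)/\sqrt{N(z^2-x^2)}$ bound over a segment of length $o(N(z^2-x^2))$ — at which point you have essentially reconstructed the paper's argument.
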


\begin{proof}
In this regime, \eqref{loccond} holds and, for technical reasons, we treat the two cases separately. Let $\mathfrak{L}=\mathfrak{L}(z)$ and $N_T=N_T(z)$.

\noindent
$\mathbf{1}$.  Using that $\ell_n^{+}(x,z) = \frac\pi2 -\ell_n^{-}(-x,z) $, by Lemma~\ref{lem:theta}, 
\[
|\cos \ell_n^+(x,z)| = |\sin \ell_n^{-}(-x,z)| 
\le \big| \tfrac{\theta_{n+1}(-x)-\theta_{n+1}(z)}{2} \big| \le |x+z| \sqrt{N} \delta_n(z). 
\]
Then, in the case where $x$ lies in a small neighborhood of $-z$, that is if $N|z+x|^2 \ll \mathfrak{L}$, we have the (deterministic) bound,
\[
\sum_{n\ge N_T} |q_n^+(z,x)| 
\le  |x+z| \sqrt{N}  \sum_{n\ge N_T } \delta_n^3(z)
\le \sqrt{ \frac{|x+z|^2N}{\mathfrak{L}}} \ll 1. 
\]

\noindent
$\mathbf{2}$. 
Otherwise, we can choose a sequence  $\mho(N) \to\infty$ as $N\to\infty$
such that $\mho \mathfrak{L}  \ll N|z^2-x^2|$. 
Using that $ |q_n^+(x,z)|\le \delta_n(z)/\sqrt{N(z^2-x^2)}$, we obtain the (deterministic) bound,  with $m=N_0(z)+\mho\mathfrak{L}$, 
\[
\sum_{ N_T\le n\le m} |q_n^+(x,z)|  \lesssim  \sqrt{\frac{\mho\mathfrak{L}}{N(z^2-x^2)}} \ll 1 .
\]

\noindent
$\mathbf{3}$.
The coefficients $\big\{q_n^+(z,x)\big\}_{n\ge N_0(z)}$ also satisfy the conditions;
\begin{equation} \label{qWcont}
|q_{n-1}^+(z,x)| \le \delta_n(z)\delta_n(x) |\cos(\ell_n^+(z,x))| , \qquad\qquad
|q_{n}^+(z,x)-q_{n-1}^+(z,x)| \le  \delta_n^4(z) . 
\end{equation}
Hence, by Proposition~\ref{prop:osc2} (choosing blocks according to \eqref{block1} with $K = \mho\mathfrak{L}$ and $\mathfrak{L}\gg1$ -- in addition, $\delta_n(x) \le \delta_n(z)$), we obtain on the event $\A_\W(R;z,x) \coloneqq \A_\chi(T,R; z) \cap\,  \A_\chi(T,R; x)$,
\[
\max_{n> m}\bigg| \sum_{m<k<n} q_k^+(z,x) e^{\i2(\phi_{k}(x)+\phi_{k}(z))}  \bigg| 
\lesssim R/\sqrt{\mho}. 
\]

Hence, we can choose a sequence $R(N)\to\infty$ as $N\to\infty$ in such a way that $R\ll \sqrt{\mho}$ in which case
\[
\max_{n> N_T}\bigg| \sum_{N_T\le k \le n} q_k^+(z,x) e^{\i2(\phi_{k}(x)+\phi_{k}(z))}  \bigg|  \ll 1 \quad\text{on $\A_\W$}
\]
and $\P[ \A_{\W}] \to 1$ as $N\to\infty$, as in the previous proof.
\end{proof}

The same arguments, replacing  by  yields the following result. 

\begin{proposition}\label{prop:osc5}
Let $z,x\in(-1,1)$  with $|x|\le |z|$.
Suppose that the coefficients $\big\{q_n^\pm(z,x)\big\}_{n\ge N_0(z)}$ satisfy the conditions \eqref{qWcont}. 
Then, if $N|x-z|^2 \mathfrak{L}(z) \gg 1$, 
\[
\sup_{|x|\le |z|}\max_{n> N_T(z)}\bigg| \sum_{N_T(z)\le k \le n} q_k^{\pm}(z,x) e^{\pm\i2\phi_{k}(z)}  \bigg| \to 0 \qquad\text{in probability as $N\to\infty$}.
\]
\end{proposition}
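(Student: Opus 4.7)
The key observation is that the oscillatory factor $e^{\pm 2\i\phi_k(z)}$ depends only on the spectral parameter $z$, so this is actually a one-parameter oscillatory sum of the type already handled by Proposition~\ref{prop:osc1}; the other parameter $x$ enters only through the coefficients and can be treated as an auxiliary parameter $\lambda$ in the hypotheses of that proposition.

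The plan is to verify that the coefficients $\{q_n^\pm(z,x)\}_{n>N_0(z)}$ satisfy the hypotheses of Proposition~\ref{prop:osc1} uniformly in $x$. Since $|x|\le |z|$ implies $N x^2 \le N z^2$, we have $\delta_n(x) \le \delta_n(z)$ for all $n > N_0(z)$, and since $|\cos(\ell_n^\pm)| \le 1$, the first bound in \eqref{qWcont} yields $|q_n^\pm(z,x)| \le \delta_n^2(z)$. The Lipschitz estimate $|q_n^\pm - q_{n-1}^\pm| \le \delta_n^4(z)$ is precisely the second condition in \eqref{qWcont}. Hence, viewing $x$ as a parameter ranging over $\mathcal{T} = \{x: |x|\le |z|\}$, the hypotheses of Proposition~\ref{prop:osc1} are met.

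Applying Proposition~\ref{prop:osc1} with the dyadic block sequence \eqref{block1}, on the event $\A_\chi(R,T;z)$ one obtains uniformly
\[
\sup_{|x|\le |z|}\max_{n > N_T(z)} \bigg|\sum_{N_T(z) < k \le n} q_k^\pm(z,x) e^{\pm 2\i\phi_k(z)}\bigg| \lesssim R/\sqrt{T}.
\]
To close the argument one now selects $T=T(N), R=R(N) \to \infty$ with $R \ll \sqrt{T}$ (e.g., $R = \log T$) such that $n_T \le N$ remains valid: when $\mathfrak{L}(z) \to \infty$ (this case includes $z \in \mathfrak{Q}$) we may take $T$ up to a small power of $\mathfrak{L}(z)$; otherwise, the block structure \eqref{block1} with $n_k = \lfloor k^{3/2}\rfloor$ allows arbitrary $T$ up to roughly $N^{2/3}$. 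The tail bound \eqref{PAchi} then gives $\P[\A_\chi(R,T;z)^c \cap \A_{n_T}] \lesssim \exp(-cR) \to 0$, and combined with $\P[\A_{n_T}^c] \to 0$ from \eqref{PA}, the deterministic bound $R/\sqrt{T}\to 0$ on $\A_\chi \cap \A_{n_T}$ yields the desired convergence in probability.

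The hypothesis $N|x-z|^2 \mathfrak{L}(z) \gg 1$ plays no essential role in the oscillatory estimate itself (which only uses the fast variation of $\vartheta_{j,n}(z)$), but is inherited from the global/merging dichotomy relevant in the application of this lemma to Proposition~\ref{prop:W}. There is really no deep obstacle here: the argument is a direct specialization of Proposition~\ref{prop:osc1}, noticeably simpler than the proofs of Propositions \ref{prop:osc3} and \ref{prop:osc4} which had to control the combined oscillation $\vartheta_{j,n}(x)\pm\vartheta_{j,n}(z)$ via Lemma~\ref{lem:osc3} or Proposition~\ref{prop:osc2}. The mild technical point worth double-checking is that Proposition~\ref{prop:osc1} is genuinely uniform over the auxiliary parameter $\lambda\in\mathcal{T}$, which is indeed the case by its statement.
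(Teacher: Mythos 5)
There is a genuine gap. Your plan applies Proposition~\ref{prop:osc1} with the crude bounds $|q_n^\pm|\le\delta_n^2(z)$, which only yields $\lesssim R/\sqrt{T}$ on $\A_\chi(R,T;z)$, where $T$ is the block index of the \emph{starting point} of the sum. In Proposition~\ref{prop:osc5} the sum starts at $N_T(z)$ with $T$ fixed independently of $N$ (it is the cutoff parameter of the $\W$ field in \eqref{GW}), so with $T$ fixed this gives only tightness, $\O_\P(R)$, not convergence to $0$. Your fix — letting $T=T(N)\to\infty$ — silently changes the statement: you would then still have to control the head segment $\sum_{N_T(z)\le k\le n_{T(N)}} q_k^\pm e^{\pm 2\i\phi_k(z)}$, and the only bound available from your hypotheses is $\sum\delta_k^2(z)$, a harmonic sum which does not vanish (it grows logarithmically in the length of the stretch). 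So the argument as written does not produce the claimed $\o_\P(1)$.

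This is exactly where the hypothesis $N|x-z|^2\mathfrak{L}(z)\gg 1$ enters, contrary to your remark that it "plays no essential role." The paper's proof splits according to the dichotomy \eqref{loccond}. When $N|z^2-x^2|\gg\mathfrak{L}(z)$, the factor $|\cos(\ell_n^\pm)|$ in \eqref{qWcont} gives $|q_n^\pm|\lesssim \delta_n(z)/\sqrt{N(z^2-x^2)}$, so the head $\sum_{N_T\le n\le N_0+\mho\mathfrak{L}}|q_n^\pm|\lesssim\sqrt{\mho\mathfrak{L}/(N(z^2-x^2))}\ll 1$ for a slowly growing $\mho$, and only then is Proposition~\ref{prop:osc1} applied from the shifted starting point $m=N_0+\mho\mathfrak{L}$ (block index $\approx\mho\to\infty$), giving $R/\sqrt{\mho}\to 0$ with $R\ll\sqrt{\mho}$. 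When instead $N|z+x|^2\ll\mathfrak{L}(z)$ (the $x\approx -z$ case), the $+$ coefficients are absolutely summable since $|\cos\ell_n^+|\le|x+z|\sqrt{N}\delta_n(z)$, while the $-$ coefficients require the extra decomposition \eqref{costheta}, producing a term $\delta_n(z)/\sqrt{n}$ whose head is summed deterministically and whose tail is again handled by Proposition~\ref{prop:osc1} from a shifted index. None of this case analysis, nor the use of the $\cos(\ell_n^\pm)$ decay near the turning point, appears in your proposal, and without it the conclusion $\to 0$ in probability cannot be reached.
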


\begin{proof}
Let $\mathfrak{L}=\mathfrak{L}(z)$ and $N_T=N_T(z)$. In this regime $\mathfrak{L} \gg1$ and we need again to the cases in \eqref{loccond} separately.

\noindent
$\bullet$ If $N  |z^2-x^2| \gg \mathfrak{L}$, for $\{q_n^{\pm}\}_{n\ge N_0}$, repeating the steps $\mathbf{2}$--$\mathbf{3}$ from the proof of Proposition~\ref{prop:osc4}, using Proposition~\ref{prop:osc1} (instead of Proposition~\ref{prop:osc2}) at step $\mathbf{3}$, we conclude that 
\[
\sup_{|x|\le |z|}\max_{n> N_T}\bigg| \sum_{N_T\le k \le n} q_k^{\pm}(z,x) e^{\pm\i2\phi_{k}(z)}  \bigg|  \ll 1 \quad\text{on $\A_\chi(T,R; z)$}.
\]

\noindent
$\bullet$ If $N|z+x|^2 \ll \mathfrak{L}$, by step $\mathbf{1}$ of the  proof of Proposition~\ref{prop:osc4},  
$\sum_{n\ge N_T} |q_n^+(z,x)| \ll 1$. 
Then, according to \eqref{qW}\footnote{Observe that \eqref{costheta} is an equality if $x=-z$ as $\ell_n^+(z,-z) =\pi/2-\theta_n(z)$ for $z\ge 0$.}, 
\begin{equation} \label{costheta}
|\cos(\ell_n^-(z,x))| = |\cos(\theta_n(z)-\ell_n^+(z,x))| \le |\cos(\ell_n^+(z,x))|+ |\sin(\theta_n(z))|
\end{equation}
and by \eqref{qWcont},
\[
|q_{n-1}^+(z,x)| \le \delta_n(z)\delta_n(x) |\cos(\ell_n^+(z,x))|  + \delta_n(z)/\sqrt{n}  
\]
using that $\sin(\theta_n(z)) = \delta_n^{-1}(z)/\sqrt{n}$ and $\delta_n(x) \le \delta_n(z)$.
The first term is handled exactly as $\sum_{n\ge N_T} |q_n^+(z,x)| \ll 1$.  For the second term, choosing $m=(1+\mho^{-1}) N_0$  for some $\mho \gg1$, we have the (deterministic) bound
\[
\sum_{ N_T\le n\le m}  \frac{\delta_n(z)}{\sqrt n}  \lesssim  \mho^{-1/2}  \ll 1 . 
\]
Then, using again Proposition~\ref{prop:osc1} (in this regime $\mathfrak{L}\gg 1$ and $K = N_0/\mho\mathfrak{L} = \mathfrak{L} ^2/\mho$), we obtain
\[
\sup_{|x|\le |z|}\max_{n> m}\bigg| \sum_{m<k<n} q_k^-(z,x) e^{\i2\phi_{k}(z)}  \bigg| 
\lesssim R \frac{\mho^{1/2}}{\mathfrak{L}} \ll 1\quad\text{on $\A_\chi(T,R; z)$} 
\]
provided that we choose two sequences $R(N),\mho(N) \to\infty$ in such as way that $R\sqrt{\mho} \ll \mathfrak{L}$ as $N\to\infty$. 

By Proposition~\ref{lem:varphase}  and  \eqref{PA} ($\mathfrak{L} \gg 1$ in this regime), we have  $\P[\A_\chi(T,R; z)] \to 1$ as $N\to\infty$ ($R\gg1$ and $T\ge 1$). This shows that both sums converge to 0 in probability as claimed. 
\end{proof}

Finally, we need to record another variant of the previous propositions in the complementary regime $N|x-z|^2 \mathfrak{L}(z) \le C$ for a constant $C\ge 1$. In this regime, we cannot aim for vanishing errors.

\begin{proposition}\label{prop:osc6}
Let $z,x\in(-1,1)$  with $|x|\le |z|$  and $\Omega = \Omega(w,z)= N^{-1}|w-z|^{-2}$. 
Suppose that $\Omega\ge C\mathfrak{L}$ for a constant $C\ge 1$.  
Let $m_+ = N_T(z)$ and $m_-= N_\Theta(z)$ with $\Theta=\Omega/\mathfrak{L}$. 
For $R\ge 1$, there is an event $\A_\W(R;z,x)$ on which 
\[
\max_{n> m_{\pm}}\bigg| \sum_{m_\pm< k \le n} q_k^\pm(z,x) e^{\i2(\phi_{k}(x)\pm\phi_{k}(z))}  \bigg| \lesssim  R
\]
and
\[
\P\big[ \A_{\W}^c \cap \A(T,R;z) \big] \lesssim\exp( - c R^2) .
\]
\end{proposition}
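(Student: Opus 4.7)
\medskip

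\noindent\textbf{Proof plan for Proposition~\ref{prop:osc6}.} The strategy mirrors that of Propositions~\ref{prop:osc4}-\ref{prop:osc5}, but in the merging regime $\Omega \ge C\mathfrak{L}$ we can no longer hope for $\o(1)$ errors. I would define the event $\A_\W(R;z,x)$ as the intersection of three ingredients: (i) the truncation event $\A(T,R;z)$; (ii) the phase-smoothness events $\A_\chi(T,R;z) \cap \A_\chi(T,R;x)$ from Proposition~\ref{lem:varphase}; and (iii) the relative-phase continuity event from Proposition~\ref{prop:relcont1} guaranteeing that $|\phi_n(x) - \phi_n(z)| \le ((n - N_0(z))/\Omega)^{1/4}$ for $n \in [m_+, N_0(z) + e^{-CR}\Omega]$. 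By combining \eqref{TBimprovement} (applied to both $z$ and $x$) with Proposition~\ref{prop:relcont1}, a union bound yields $\P[\A_\W^c \cap \A(T,R;z)] \lesssim \exp(-cR^2)$, which is the probabilistic content. Note that $|\mathfrak{L}(x) - \mathfrak{L}(z)| = \O(1)$ here (since $N|x-z|^2 \le C/\mathfrak{L}$), so the blocks~\eqref{block1} built from $z$ are simultaneously usable for $x$, and $\delta_{n_k}(x) \le \delta_{n_k}(z)$ since $|x|\le|z|$.

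For the $+$ sum, starting from $m_+ = N_T(z)$, I would split into the blocks~\eqref{block1} and linearize $q_n^+ e^{2\i(\chi_{n,n_k}(x)+\chi_{n,n_k}(z))}$ on each block exactly as in the proof of Proposition~\ref{prop:osc2}, using the event $\A_\chi$ to replace $\chi$ by its value at the block boundary up to an error $\O(R k^{(\epsilon-3)/2}/T^{\epsilon/2})$. On each block the oscillation estimate \eqref{oscest1} (applied with $\sin \theta_n(x) \asymp \sin \theta_n(z) \asymp \delta_n^{-1}(z)/\sqrt{n_k}$) combined with $|q_{n_k}^+| \le \delta_{n_k}(z)\delta_{n_k}(x)|\cos \ell_{n_k}^+|$ gives a block contribution $\lesssim R k^{(\epsilon-3)/2}$. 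Summing over $k \geq T^{2/3}$ produces the advertised $\O(R)$ bound (no vanishing factor of $T^{-1/2}$ is attempted, since in the merging regime it is not available, but neither is it needed).

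For the $-$ sum this is the hard part: the deterministic phase $\vartheta_{n}(x) - \vartheta_{n}(z)$ oscillates slowly, and Lemma~\ref{lem:osc} must be replaced by Lemma~\ref{lem:osc3}. This is precisely why we start at $m_- = N_0(z) + \Omega$ rather than at $m_+$: for $n \ge m_-$, one has $\delta_n(z)^2 \le 1/\Omega$, so the coefficient $|q_n^-|\lesssim\delta_n(z)\delta_n(x)$ is small enough that the bound $1/|\sin(2\ell_{n+1}^-)| \asymp \sqrt{\Omega}/\delta_{n+1}(z)$ from Lemma~\ref{lem:osc3} contributes, at block level, $|q_{n_k}^-|\cdot \sqrt{\Omega}/\delta_{n_k}(z) \asymp \delta_{n_k}(z)\sqrt{\Omega} \asymp k^{-3/4}$ (using dyadic blocks with $L_k \asymp \Omega 2^k$); while the second term $|z-x|\sqrt{N}L_k^3 \delta_{n_k}^3(z) \cdot|q_{n_k}^-|$ is summable by the same bookkeeping used in Proposition~\ref{prop:osc2}. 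Linearization of $q_k^- e^{2\i(\chi_{k,n_k}(x) - \chi_{k,n_k}(z))}$ on each block is controlled by $\A_\chi$ as before.

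The main obstacle will be the $-$ sum: choosing the block sequence so that \emph{both} terms in Lemma~\ref{lem:osc3} are summable against $|q_{n_k}^-|$ and also satisfy $\delta_{n_k}^2 L_k \lesssim 1/k$ (so that Proposition~\ref{lem:varphase} applies with the stated constants). Since $\Omega \ge C\mathfrak{L}$, one cannot reuse blocks~\eqref{block1} verbatim in the range $[m_+, m_-]$; I would instead use blocks of the form $n_k = N_0(z) + \Omega \cdot 2^{k-k_0}$ for $k \ge k_0 = \lceil\log_2(T\mathfrak{L}/\Omega)\rceil$, which automatically matches $m_-$ and respects all the required smoothness conditions. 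Once these compatibility estimates are in place, the proof reduces to the same telescoping bookkeeping as in Propositions~\ref{prop:osc1}-\ref{prop:osc5}, with the only difference being that the final geometric sum contributes an $\O(R)$ constant rather than a vanishing $\O(R/\sqrt T)$.
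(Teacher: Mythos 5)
Your overall architecture matches the paper's: the $+$ sum is handled from $m_+=N_T(z)$ by the Proposition~\ref{prop:osc2} machinery (the paper simply invokes that proposition, as in step 3 of the proof of Proposition~\ref{prop:osc3}, and accepts the $\O(R)$ bound), while the $-$ sum is only started at $m_-=N_\Theta(z)$ and is controlled by block linearization on $\A_\chi$ together with the deterministic oscillation bound of Lemma~\ref{lem:osc3}. However, there is a concrete gap in your treatment of the $-$ sum: the dyadic blocks $n_k=N_0(z)+\Omega\,2^{k-k_0}$ do \emph{not} ``respect all the required smoothness conditions''. Proposition~\ref{lem:varphase} requires $\delta_{n_k}^2(z)\,(n_{k+1}-n_k)\lesssim 1/k$, whereas for dyadic blocks $\delta_{n_k}^2 L_k\equiv 1$, so the event $\A_\chi$ you invoke is simply not available along these blocks. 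Even granting some phase control, the per-block errors are not summable: the coefficient-variation error on block $k$ is $\lesssim L_k^2\delta_{n_k}^4=(L_k\delta_{n_k}^2)^2\asymp 1$ and the freezing error $|q_{n_k}|L_k\max|\boldsymbol{\chi}_{\cdot,n_k}|$ is likewise order one per block, so summing over the $\asymp\log N$ dyadic blocks gives $\O(\log N)$, not $\O(R)$. The paper avoids this by reusing the blocks of the proof of Proposition~\ref{prop:osc3}, i.e.\ $n_k=N_0(z)+\Omega k^{3}$ (the choice $\alpha=2$, $L=\Omega$), for which $\delta_{n_k}^2L_k\asymp k^{-1}$; in the merging regime one takes $T=\Theta=\Omega/\mathfrak{L}\ge C$, so the starting index is $K=1$ and the first block begins exactly at $m_-=N_0(z)+\Omega$, and then \eqref{sumblock1} gives the $\O(R K^{-1/2})=\O(R)$ bound directly. (Your intermediate numerology, e.g.\ the claimed block contribution $k^{-3/4}$, also only comes out summable with these cubic blocks, where it is $\asymp\delta_{n_k}\sqrt{\Omega}\asymp k^{-3/2}$.)

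A secondary issue: you should not put the Proposition~\ref{prop:relcont1} event into $\A_\W$. It is never used in the block estimates (it enters only later, in the local-regime part of Proposition~\ref{prop:W}), and including it would break the stated tail: Proposition~\ref{prop:relcont1} only yields $\exp(-cR)$ and is conditional on the entrance event $\B$, whose probability is controlled merely asymptotically via \eqref{B2}, so the claimed bound $\P[\A_\W^c\cap\A(T,R;z)]\lesssim\exp(-cR^2)$ would not follow. The correct event is just $\A_\chi(T,R;z)\cap\A_\chi(T,R;x)$ built on the cubic blocks, with the probability bound supplied by Proposition~\ref{lem:varphase} (cf.\ \eqref{TBimprovement}).
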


\begin{proof}
To control the $+$ sum, we apply Proposition~\ref{prop:osc2} as in the proof of Proposition~\ref{prop:osc3}, step $\mathbf{3}$. We obtain on the event $\A_\W(R;z,x)$, 
\[
\max_{n> N_T}\bigg| \sum_{N_T<k<n} q_k^+(z,x) e^{\i2(\phi_{k}(x)+\phi_{k}(z))}  \bigg| 
\lesssim R . 
\]
To control the $-$ sum,  we can apply the same construction as in the proof of Proposition~\ref{prop:osc3}.
Observe that in this regime, the parameter $\Theta  \ge C$ and $K=1$ because we choose $T=\Theta$. 
Then, by \eqref{sumblock1}, on the event  $\A_\W(R;z,x)$, 
\[
\max_{n>n_T }\bigg|  \sum_{n_T< k \le n} q_k^-(z,x) e^{\i2(\phi_{k}(x)-\phi_{k}(z))}  \ \bigg| 
\lesssim R . 
\]
Finally, by Proposition~\ref{lem:varphase}, we obtain 
\[
\P\big[ \A_{\W}^c \cap \A(T,R;z) \big] \lesssim\exp( - c R^2) . \qedhere
\]
\end{proof}

\subsection{$\W$ field.} \label{sec:W}
In terms of Definition~\ref{def:GW},for $z\in(-1,1)$, 
\begin{equation} \label{W0}
\W(z)  = \sum_{N_T(z)<k\le N} \frac{Z_k(z)e^{2\i(\theta_k(z)+\phi_{k-1}(z))}}{\sqrt{k-Nz^2}} .
\end{equation}
where $T\ge 1$ is fixed. The goal of this section is to derive the following asymptotics for the $\W$ field's bracket. 

\begin{proposition}[Correlation structure of the $\W$ field] \label{prop:W}
Define the random fields $\Xi_1, \Xi_2$ for $x,z\in (-1,1)$ by
\[
\big[\W(z),\overline{\W(x)}\big] = 2\log_+\bigg(\frac{|x-z|^{-1}\wedge N\varrho(z)}{\sqrt{N\mathfrak{L}(z)}}\bigg) +\Xi_1(z,x),
\qquad\qquad
\big[\W(z),\W(x)\big] = \Xi_2(z,x) . 
\]
If $|x|\le |z|$, it holds for $i=1,2,$ 
\begin{itemize}[leftmargin=*] \setlength\itemsep{0em}
\item {\normalfont(global regime)}  if  $N|z-x|^2 \gg \mathfrak{L}(z)^{-1}$, $\Xi_i(z,x) \to0$ in probability as $N\to\infty$. 
\item {\normalfont(local regime)} for any $S\ge 1$, 
\[
\lim_{R\to\infty}\sup_{N\in\N}\sup_{|z-x|^{-2} \ge S N\mathfrak{L}(z)}\P\big[ |\Xi_i(z,x)| \le R \big] =0.
\]
\end{itemize}
\end{proposition}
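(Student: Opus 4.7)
The plan is to reduce the two brackets to oscillatory sums of the form studied in Section~\ref{sec:Wosc}. Using the definition of $Z_k$ in \eqref{def:J} and independence of the Jacobi entries, for $k > N_0(z) \vee N_0(x)$ one computes
\begin{equation*}
\E\bigl[Z_k(z)\overline{Z_k(x)}\mid\F_{k-1}\bigr] = \cos(\ell_k^-)\,e^{\i\ell_k^-}, \qquad
\E\bigl[Z_k(z)Z_k(x)\mid\F_{k-1}\bigr] = \cos(\ell_k^+)\,e^{-\i\ell_k^+},
\end{equation*}
since $J(w) = e^{-\i\arccos w}$ in the elliptic region. Combined with the deterministic factors $e^{2\i(\theta_k(z)\pm\theta_k(x))}$ from \eqref{W0} and $1/\sqrt{(k-Nz^2)(k-Nx^2)} = \delta_k(z)\delta_k(x)$, the brackets become
\begin{equation*}
[\W(z),\overline{\W(x)}] = \sum_{k} \tilde q_k^-(x,z)\,e^{2\i(\phi_{k-1}(z)-\phi_{k-1}(x))}, \quad [\W(z),\W(x)] = \sum_{k} \tilde q_k^+(x,z)\,e^{2\i(\phi_{k-1}(z)+\phi_{k-1}(x))},
\end{equation*}
summed over $k > N_T(z) \vee N_T(x)$, where $\tilde q_k^\pm(x,z) = \delta_k(z)\delta_k(x)\cos(\ell_k^\pm)e^{\i\nu_k^\pm}$ differ from the $q_k^\pm$ of \eqref{qW} only by a constant in the phase and satisfy \eqref{qWcont} by Lemma~\ref{lem:theta} and \eqref{delta2}. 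In the global regime $N|x-z|^2 \mathfrak{L}(z) \gg 1$, the main term $2\log_+(\cdot)$ vanishes since $|x-z|^{-1} \ll \sqrt{N\mathfrak{L}(z)}$, and Propositions~\ref{prop:osc3} and \ref{prop:osc4} apply directly to give $\Xi_1,\Xi_2 \to 0$ in probability.

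For the local regime $|z-x|^{-2}\ge SN\mathfrak{L}(z)$, set $\Omega = (N|z-x|^2)^{-1}$, $\Theta=\Omega/\mathfrak{L}(z)$, $m_+=N_T(z)$, $m_-=N\wedge N_\Theta(z)$. The $+$ case of Proposition~\ref{prop:osc6} applied at $m_+$ bounds $[\W(z),\W(x)]$ by $O_\P(R)$ directly, so $\Xi_2$ is tight. For $[\W(z),\overline{\W(x)}]$, split at $m_-$: the far sum $\sum_{k>m_-}$ is $O_\P(R)$ by the $-$ case of Proposition~\ref{prop:osc6}. For the near sum on $[m_+,m_-]$, Lemma~\ref{lem:theta} yields $|\ell_k^-| \le \sqrt{1/(\Omega(k-Nz^2))} \ll 1$ uniformly, so $\cos(\ell_k^-)e^{\i\nu_k^-} = 1 + O(\ell_k^-)$ with $\sum_k \delta_k^2(z)\,O(\ell_k^-) = O(1)$ by direct Riemann estimation. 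Proposition~\ref{prop:relcont1} controls the phase: on an event of probability $1-\exp(-cR)$, $|\phi_{k-1}(z)-\phi_{k-1}(x)| \le ((k-N_0(z))/\Omega)^{1/4}$ for $k$ up to $n_R = N_0(z) + e^{-CR}\Omega$. Replacing $e^{2\i(\phi(z)-\phi(x))}$ by $1$ on $[m_+,n_R]$ then produces an error $\Omega^{-1/4}\sum_j j^{-3/4} = O(e^{-CR/4})$, while the trivial bound $|e^{2\i(\cdot)}|\le 1$ on the residual strip $[n_R,m_-]$ contributes $\sum_{n_R<k\le m_-}\delta_k^2(z) = O(R)$. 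The remaining deterministic sum satisfies
\begin{equation*}
\sum_{m_+<k\le m_-}\delta_k(z)\delta_k(x) = \log(\Theta/T) + O(1) = 2\log_+\Bigl(\tfrac{|x-z|^{-1}\wedge N\varrho(z)}{\sqrt{N\mathfrak{L}(z)}}\Bigr) + O(1),
\end{equation*}
with the $N\varrho(z)$ saturation arising from the cap $m_-\le N$, giving the claimed main term with tight error $\Xi_1 = O_\P(R)$.

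The main obstacle is bridging the gap between the effective range $[m_+, n_R]$ of Proposition~\ref{prop:relcont1} and the desired range $[m_+, m_-]$ of the near sum. On the residual strip $[n_R, m_-]$, the continuity estimate for $\phi_{k-1}(z)-\phi_{k-1}(x)$ degenerates, forcing the use of the trivial $L^\infty$ bound and producing an $O(R)$ rather than $O(1)$ error. This is compatible with the statement of Proposition~\ref{prop:W} which only claims tightness in the local regime (not convergence), and the probability bounds $\exp(-cR^2)$ from Proposition~\ref{prop:osc6} and $\exp(-cR)$ from Proposition~\ref{prop:relcont1} can be simultaneously prepared for the same $R$ large so that a tight envelope is obtained uniformly in $N$.
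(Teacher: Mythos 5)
Your proposal is correct and follows essentially the same route as the paper: reduce both brackets to oscillatory sums with coefficients of the form $\delta_k(z)\delta_k(x)\cos(\ell_k^{\pm})e^{\i\nu_k^{\pm}}$, handle the global regime by Propositions~\ref{prop:osc3}--\ref{prop:osc4}, and in the local regime combine Proposition~\ref{prop:osc6} for the far range, the phase-continuity bound of Proposition~\ref{prop:relcont1} near the turning point (the paper's cut at $M_{\epsilon}$ with $\epsilon=e^{-CR}$ is exactly your $n_R$), a trivial $O(R)$ bound on the intermediate strip, and the harmonic sum for the $\log_+$ main term. The only cosmetic difference is that the paper is slightly more explicit that the Proposition~\ref{prop:relcont1} tail bound is stated on the event $\A_m\cap\B$, whose probability is controlled via \eqref{PA} and \eqref{B2} before concluding tightness, which your "simultaneously prepared events" remark implicitly covers.
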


In particular, the $\W$ field behaves like a (complex) \emph{white noise} that is log-correlated on scales $\le \sqrt{N^{-1}\mathfrak{L}(z)} = N^{-2/3}[z]_N^{-1/3}$ where $[z]_N = |z| \vee N^{-1/2}$.

\medskip

The proof of Proposition~\ref{prop:W} relies on the estimates from Section~\ref{sec:Wosc} and the fact that for  $x,z\in(-1,1)$,
\begin{equation} \label{Z2}
\begin{cases}
\E\big[Z_n(x)Z_n(z)\big] = \frac{1+e^{-2\i\ell_n^+(x,z)}}{2} = e^{-\i\ell_n^+(x,z)}\cos(\ell_n^+(x,z)) \\
\E\big[Z_n(x)\overline{Z_n}(z)\big] = \frac{1+e^{-2\i\ell_n^-(x,z)}}{2} = e^{-\i\ell_n^-(x,z)}\cos(\ell_n^-(x,z))
\end{cases}
\qquad\text{with $\ell_n^\pm(x,z) \coloneqq \frac{\theta_n(x)\pm\theta_ n(z)}{2}$.}
\end{equation}
Then, in terms of the notation \eqref{qW} and $N_T= N_T(z)$, one has for $x,z\in(-1,1)$,
\begin{equation} \label{W1}
\big[\W(x), \W(z)\big]  
=  \sum_{N_T\le n< N} q_{n}^+(x,z)  e^{2\i(\phi_{n}(x)+ \phi_{n}(z))}  , \qquad
\big[\W(x), \overline{\W(z)}\big]  
= \sum_{N_T\le n< N} q_{n}^-(z,z)  e^{2\i(\phi_{n}(x)- \phi_{n}(z))} 
\end{equation}
with $q_{n}^{\pm} (x,z) \coloneqq \delta_{n+1}(z)\delta_{n+1}(x) \cos(\ell_{n+1}^\pm) e^{\i3 \ell_{n+1}^\pm}$.

\medskip

We begin by computing the quadratic variation of the $\W$ field.

\begin{lemma} \label{lem:W}
It holds uniformly for $z\in[-1,1]$, with a deterministic error, 
\[
\big[\W_N(z),\overline{\W_{N}(z)}\big] = \log_+\bigg(\frac{\rho(z)^2N}{\mathfrak{L}(z)}\bigg) +  \O(1).
\]
\end{lemma}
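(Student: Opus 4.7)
The plan is to observe that this is a direct deterministic computation because the predictable bracket of the complex martingale $\{\W_n(z)\}$ against its conjugate involves only the conditional second moments $\E[|Z_k(z)|^2 \mid \F_{k-1}]$, which are deterministic constants equal to $1$ by \eqref{def:s}.

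First I will unpack the bracket formula. From Definition~\ref{def:GW} (equivalently \eqref{W0}, which differs only by a harmless factor of $\i$), since $\{Z_k(z)\}$ is an $\{\F_k\}$-adapted sequence of independent, centered, unit-variance variables and the phase $\phi_{k-1}(z)$ is $\F_{k-1}$-measurable, the predictable bracket is
\[
[\W_N(z), \overline{\W_N(z)}]
= \sum_{N_T(z) < k \le N} \frac{\E\bigl[|Z_k(z)|^2\bigm|\F_{k-1}\bigr]}{k - Nz^2}\,\bigl|e^{2\i(\theta_k(z)+\phi_{k-1}(z))}\bigr|^2
= \sum_{k=N_T(z)+1}^{N} \frac{1}{k - Nz^2}.
\]
(This is also the $x=z$ specialization of \eqref{W1}, since $\ell_{n+1}^-(z,z)=0$ and $\delta_{n+1}^2(z)=(n+1-Nz^2)^{-1}$.)

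Next I evaluate this harmonic sum in two cases. If $|z| \ge 1 - TN^{-2/3}$, then $N_T(z) \ge N$, the sum is empty, and the bracket vanishes, matching $\log_+$ of a quantity $\le 1$ up to $\O(1)$. Otherwise $N - Nz^2 = N(1-z^2) \gtrsim TN^{1/3}$ and $N_T(z) - Nz^2 = T\mathfrak{L}(z) + \O(1)$. Comparing with the integral $\int_{N_T+1-Nz^2}^{N-Nz^2}\frac{dt}{t}$ gives
\[
\sum_{k=N_T(z)+1}^{N} \frac{1}{k - Nz^2}
= \log\!\left(\frac{N(1-z^2)}{T\,\mathfrak{L}(z)}\right) + \O(1)
= \log\!\left(\frac{\varrho(z)^2 N}{\mathfrak{L}(z)}\right) + \O(1),
\]
where we used $\varrho(z)^2 \asymp 1-z^2$ to absorb the constant into the $\O(1)$ and absorbed $\log T$ likewise. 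The deterministic error depends only on $T$ and is uniform in $z\in[-1,1]$, so the two cases combine to give $\log_+\!\bigl(\varrho(z)^2 N/\mathfrak{L}(z)\bigr) + \O(1)$.

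There is no real obstacle here: the formula \eqref{W1} reduces the bracket to a deterministic sum, and the rest is a standard harmonic-sum comparison to an integral. The only point to be careful about is that $\mathfrak{L}(z) = \lceil Nz^2\rceil^{1/3}$ with the convention $\mathfrak{L}(z) = 1$ in the $N^{-1/2}$-neighborhood of $0$, so that the two endpoints $N-Nz^2$ and $N_T(z)-Nz^2$ are correctly controlled throughout $[-1,1]$; this is already compatible with the $\log_+$ in the statement.
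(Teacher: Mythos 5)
Your proposal is correct and follows essentially the same route as the paper: the bracket reduces (via the $x=z$ specialization of \eqref{W1}, where $\ell_n^-=0$ and $q_n^-=\delta_{n+1}^2$, equivalently via $\E[|Z_k(z)|^2\mid\F_{k-1}]=1$) to the harmonic sum $\sum_{N_T<k\le N}(k-Nz^2)^{-1}$, which is then evaluated by comparison with the integral to give $\log_+\bigl(\varrho(z)^2N/\mathfrak{L}(z)\bigr)+\O(1)$. Your handling of the empty-sum case near the edge is a fine way to make the $\log_+$ and the uniformity in $z\in[-1,1]$ explicit.
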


\begin{proof}
In this case $q_n^- = \delta_{n+1}^2$ since $\ell_n^- = 0$ for $n\ge N_0(z)$. Thus, by \eqref{W1}, 
\[
\big[\W_N(z),\overline{\W_{N}(z)}\big] = \sum_{N_T<k \le N}  \frac{1}{k-Nz^2} 
= \log_+\bigg(\frac{(1-z^2)N}{T\mathfrak{L}(z)}\bigg) +  \o(1)
\]
where the error goes to zero if $N_0(z) \to\infty$ (it is bounded otherwise). 
\end{proof}

\begin{proof}[Proof of Proposition~\ref{prop:W}]
Let $x,z\in(-1,1)$ with $|x|\le |z|$, $N_T=N_T(z)$ and $\mathfrak{L}= \mathfrak{L}(z)$.\\
$\bullet$ [Global regime] The claim follows directly from \eqref{W1} by combining Propositions~\ref{prop:osc3} and~\ref{prop:osc4}.\\
$\bullet$ [Local regime] Proposition~\ref{prop:osc6} ($+$ case) shows that  if ${|z-x|^{-2} \ge S N\mathfrak{L}(z)}$, then 
\[
[\W(z),\W(x)] = \O(R)\qquad\text{on }\A_\W(R;x,z)
\]
and with $\A= \A(T,R ;z)$, 
\begin{equation} \label{Wtight1}
\lim_{R\to\infty}\sup_{N\in\N}\sup_{|z-x|^{-2} \ge S N\mathfrak{L}(z)}\P\big[ \A_{\W}^c(R;x,z)\cap \A\big] =0. 
\end{equation}
This proves the claim for $\Xi_2$. 

Using the notation from Section~\ref{sec:cont}, let $\delta_{n}=\delta_n(z)$ and $\Omega = \Omega(w,z)= N^{-1}|w-z|^{-2}$. 
By \eqref{qW}, as in the proof of Lemma~\ref{lem:contest},
\[\begin{aligned}
q_{n-1}^{-} = \delta_{n}^2\cos(\partial\theta_n/2)e^{\i3\partial\theta_n/2}+\O(\delta_n\partial\delta_n)
&= \delta_{n}^2 +\O(\delta_n^2\partial\theta_n)+\O(\delta_n\partial\delta_n)\\
&= \delta_{n}^2 +\O\big(\delta_n^3\Omega^{-1/2}\big) +\O\big(\delta_n^4\sqrt{N_0/\Omega}\big) .
\end{aligned}\]
This shows that 
\[
\sum_{n>m} | q_{n-1}^-(x,z) -\delta_n^2(z) | \lesssim  \sqrt{\mathfrak{L}/\Omega} 
\]
and by \eqref{W1}, if $\Omega \ge S \mathfrak{L}$, 
\[
\big[\W_{N}(x), \overline{\W_{N}(z)}\big]  
= \sum_{N_T\le n\le N} \delta_n^2  e^{2\i(\phi_{n}(x)- \phi_{n}(z))} +\O(1)
\] 
with a deterministic error.
Let $\epsilon\in(0,1]$, $M_\epsilon=M_\epsilon(w,z) \coloneqq N_0(z) +\epsilon\Omega(w,z)$, and consider the event
\[
\A_\phi =  \A_\phi(\epsilon;x,z) \coloneqq \big\{|\phi_n(x)-\phi_n(z)| \le \big(\delta_n(z)\Omega\big)^{-1/2} ; \forall  n \in [N_T, M_\epsilon] \big\}.
\]
On $\A_\phi$, one has
\[
\max_{N_T\le n \le M_\epsilon}\bigg|  \sum_{N_T\le k < n} \delta_k^2 \big(  e^{2\i(\phi_k(x)- \phi_k(z))} -1\big) \bigg| 
\le  \sum_{N_T\le n \le M_\epsilon} \frac{\delta_n(z)^{3/2}}{\Omega^{1/2}}  \lesssim \sqrt{\epsilon}. 
\]
Moreover, by Proposition~\ref{prop:osc6} ($-$ case), on $\A_\W$,
\[
\bigg|\sum_{M_1<n < N} q_n^-  e^{2\i(\phi_{n}(x)- \phi_{n}(z))}\bigg| \lesssim R .
\]
For the remaining pieces, we can use the trivial estimates
\[
\sum_{M_\epsilon< k \le M_1} \delta_k^2  \le \log(\epsilon^{-1}) , \qquad\qquad 
\sum_{N_T\le  k \le M_\epsilon} \delta_k^2 =\log\bigg(\frac{\epsilon\Omega}{T\mathfrak{L}}\bigg).
\] 
Choosing $\epsilon= e^{-CR}$, we conclude that on the event $\A_\W(R;x,z)\cap \A_\phi(\epsilon;x,z)$;  
\[
\big[\W_{N}(x), \overline{\W_{N}(z)}\big]  
= \log_+\bigg(\frac{\Omega\wedge N\varrho(z)^2}{\mathfrak{L}}\bigg) +\O(R) .
\]
To conclude the proof, by Propositions~\ref{prop:relcont1}  with $\A= \A(T,R ;z)$  and $\B =\B(T; x,z)$,  one has 
\[
\lim_{R\to\infty}\sup_{N\in\N}\sup_{|z-x|^{-2} \ge S N\mathfrak{L}(z)}\P\big[\A_\phi^c(e^{-CR};x,z) \cap \A \cap \B \big]  =0
\]
Moreover, for a fixed $T\ge 1$,
\(\displaystyle
\lim_{R\to\infty}\sup_{N\in\N}\sup_{z\in[-1,1]}\sup_{|z-x|^{-2} \ge S N\mathfrak{L}(z)}\P\big[ \A(T,R ;z)\cap\B(T; x,z)\big] =1.
\)
Together with \eqref{Wtight1}, this shows that the random fields $\Xi_1,\Xi_2$ are tight. 
\end{proof}

\subsection{Proof of Proposition~\ref{prop:M}.} \label{sec:M}

To finish the proof, it remains to compute the correlations between the two martingale fields $\G$ and $\W$.
Proposition~\ref{prop:W} shows that these fields are \emph{almost uncorrelated}. 

\begin{proposition}[Joint bracket of the $\G,\W$ fields] \label{prop:GW}
Define the random fields $\Xi_3, \Xi_4$  for $x,z\in (-1,1)$,
\[
\Xi_3(z,x) =  [\G(x),\overline{\W(z)}]  
\qquad\qquad
\Xi_4(z,x)= [\G(x),\W(z)] . 
\]
If $|x|\le |z|$, it holds for $i=3,4,$
\begin{itemize}[leftmargin=*] \setlength\itemsep{0em}
\item {\normalfont(global regime)}  if  $N|z-x|^2 \gg \mathfrak{L}(z)^{-1}$, $\Xi_i(z,x) \to0$ in probability as $N\to\infty$. 
\item {\normalfont(local regime)}  
\[
\lim_{R\to\infty}\sup_{N\in\N}\sup_{x,z\in(-1,1)}\P\big[ |\Xi_i(z,x)| \ge R \big] =0.
\]
\end{itemize}
\end{proposition}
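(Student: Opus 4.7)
The plan is to reduce the cross-brackets $\Xi_3,\Xi_4$ to \emph{single-phase} oscillatory sums and apply the estimates from Section~\ref{sec:Wosc}, in direct parallel with the proof of Proposition~\ref{prop:W}.

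First, I compute the predictable cross-brackets explicitly. Since $\G$ is a sum of independent scaled $Z_k$'s and the phase $e^{\pm 2\i(\theta_k(z)+\phi_{k-1}(z))}$ appearing in the $\W$-increment is $\F_{k-1}$-measurable, conditioning on $\F_{k-1}$ and applying \eqref{Z2} yields
\[
\Xi_3(z,x) = \sum_{k > N_T(z)} \delta_k(x)\delta_k(z)\cos\ell_k^-(x,z)\, e^{-\i\ell_k^-(x,z)-2\i(\theta_k(z)+\phi_{k-1}(z))} + \O_\P(1),
\]
with the analogous formula for $\Xi_4$ (with $\ell_k^+$ in place of $\ell_k^-$, phase $+2\i(\theta_k(z)+\phi_{k-1}(z))$, and an overall sign coming from $(-\i)^2=-1$). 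The $\O_\P(1)$ absorbs the at most $\O(T)$ indices lost at the overlap of $\Gamma_T(x)\cup\Gamma_T(z)$, which is controlled by Remark~\ref{rk:cutoff}. The hypothesis $|x|\le|z|$ enters crucially: it forces $N_0(x)\le N_0(z)$, so any $k>N_T(z)$ automatically has $k>N_0(x)$ and both prefactors are purely imaginary on the relevant branch, producing the real product $\delta_k(x)\delta_k(z)$.

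Using the identity $\theta_k(z) = \ell_k^+ - \ell_k^-$, these become $\sum q_{k-1}^\pm e^{\pm 2\i\phi_{k-1}(z)}$ with coefficients $q_{k-1}^\pm = \pm\delta_k(x)\delta_k(z)\cos\ell_k^\pm\, e^{\i(\ell_k^\pm\mp 2\ell_k^\mp)}$. These satisfy the hypotheses \eqref{qWcont}: the modulus bound is immediate using $\delta_k(x)\le\delta_k(z)$, and the discrete-derivative bound $|q_k^\pm - q_{k-1}^\pm|\lesssim\delta_k^4(z)$ follows from $|\delta_{k+1}(y)-\delta_k(y)|\lesssim\delta_k^3(y)$ (eq.~\eqref{delta2}) together with $|\ell_{k+1}^\pm-\ell_k^\pm|\lesssim\delta_k^2(z)$ (Lemma~\ref{lem:theta}) and the product rule. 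In the global regime $N|z-x|^2\gg\mathfrak{L}(z)^{-1}$, Proposition~\ref{prop:osc5} applied in its $-$ case to $\Xi_3$ and $+$ case to $\Xi_4$ then gives convergence to $0$ in probability.

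For the local regime, i.e., tightness uniformly in $x,z,N$, the only sub-regime not already handled by the global estimate is $\Omega(x,z) \coloneqq (N|x-z|^2)^{-1} \gtrsim \mathfrak{L}(z)$. Here Proposition~\ref{prop:osc6} applies directly and yields $|\Xi_3|, |\Xi_4|\lesssim R$ on the event $\A_\W(R;x,z)\cap \A(T,R;z)$, whose complement has probability $\lesssim e^{-cR^2} + \P[\A(T,R;z)^{\rm c}] \to 0$ as $R\to\infty$ by Proposition~\ref{lem:varphase}. The main technical point is the bookkeeping of the $\Gamma_T$-cutoffs together with verifying that the residual $\F_{k-1}$-measurable unit-modulus phases $e^{\i(\ell_k^\pm \mp 2\ell_k^\mp)}$, which do not literally match the phase shape of \eqref{qW}, can be harmlessly absorbed into the complex sequence $q_k^\pm$: since Propositions~\ref{prop:osc5}--\ref{prop:osc6} depend only on the modulus and discrete derivative of the coefficients, no further work is needed.
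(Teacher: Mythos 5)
Your computation of the cross-brackets and your treatment of the global regime are essentially the paper's argument: conditioning on $\F_{k-1}$ and using \eqref{Z2} reduces $\Xi_3,\Xi_4$ to single-phase oscillatory sums $\sum_k q_k^{\pm}e^{\pm 2\i\phi_k(z)}$ with $|q_k^{\pm}|\le\delta_k(x)\delta_k(z)|\cos\ell_k^{\pm}|$ and discrete derivative $\lesssim\delta_k^4(z)$ (this is \eqref{qGW}), and the unit-modulus phases $e^{\i(\ell_k^{\pm}\mp2\ell_k^{\mp})}$ are indeed harmless since the oscillatory-sum propositions only see the modulus and increments of the coefficients. Applying Proposition~\ref{prop:osc5} when $N|x-z|^2\mathfrak{L}(z)\gg1$ is exactly what the paper does for the first bullet.

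The gap is in the local regime. Proposition~\ref{prop:osc6} concerns the \emph{two-phase} sums $\sum_k q_k^{\pm}e^{\i2(\phi_k(x)\pm\phi_k(z))}$, which arise in the $\W$--$\W$ bracket of Proposition~\ref{prop:W}; it does not apply to $\Xi_3,\Xi_4$, whose sums carry only the single phase $e^{\pm2\i\phi_k(z)}$ (the $\G$-increments contribute no phase), so invoking it ``directly'' is not valid, and the event $\A_\W(R;x,z)$ together with the relative-phase machinery of Section~\ref{sec:cont} is both inapplicable here and unnecessary. The paper instead applies Proposition~\ref{prop:osc1}: since $\delta_k(x)\le\delta_k(z)$ for $|x|\le|z|$, the coefficients satisfy $|q_k^{\pm}|\le\delta_k^2(z)$ and the derivative bound, so on the single event $\A_{\chi}(T,R;z)$ one gets
\[
\sup_{|x|\le|z|}\ \max_{n>N_T(z)}\Big|\sum_{N_T(z)<k\le n} q_k^{\pm}(z,x)\,e^{\pm2\i\phi_k(z)}\Big|\ \lesssim\ R ,
\]
uniformly in $x$ and without any restriction on $|x-z|$; combined with Proposition~\ref{lem:varphase} and \eqref{PA} this yields the tightness bound uniformly over all $x,z\in(-1,1)$ and $N$. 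Note also that your split (global case handled by a convergence-in-probability statement, merging case by a separate event) would not by itself deliver the stated $\sup_{x,z\in(-1,1)}$ uniformity; the single uniform bound on $\A_{\chi}(T,R;z)$ is what makes the second bullet go through. The fix is just to replace Proposition~\ref{prop:osc6} by Proposition~\ref{prop:osc1} in that step.
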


\begin{proof}
Without loss of generality, we assume that $|x|\le |z|$ and $N_T=N_T(z)$.
Using \eqref{GW}, \eqref{W0} and \eqref{Z2},  we compute for $x,z\in(-1,1)$
\begin{equation*} \label{W2}
\Xi_4(z,x)= -\i\big[\G^2(x),\W(z)\big] 
=  \sum_{N_T\le n< N} q_{n}^+  e^{2\i\phi_{n}(z)} , \qquad
\Xi_3(z,x)
= -\i\big[\G^2(x), \overline{\W(z)}\big] 
=\sum_{N_T\le n< N} q_{n}^-  e^{-2\i\phi_{n}(z)},
\end{equation*}
where the coefficients satisfy for $n>N_0(z)$, 
\begin{equation} \label{qGW}
|q_{n-1}^\pm(x,z)| \le \delta_n(z)\delta_n(x) \cos(\ell_n^{\pm}(x,z)), \qquad\qquad
|q_{n}^+(x,z)-q_{n-1}^+(x,z)| \le  \delta_n^4(z) . 
\end{equation}

This is exactly the setting of Proposition~\ref{prop:osc1} and~\ref{prop:osc5}.
Hence, in the regime where  $N|x-z|^2 \mathfrak{L}(z) \gg 1$, both 
$[\G(x), \W(z)]$ and $[\G(x), \overline{\W(z)}]$ converge to 0 in probability. 
In general, on the event $\A_{\chi}(T,R; z)$ for $R\ge 1$,  we have
\[
\sup_{|x|\le |z|}\max_{n> N_T}\bigg| \sum_{N_T<k<n} q_k^\pm(z,x) e^{\pm\i2\phi_{k}(z)}  \bigg| 
\lesssim R/T^{1/3}.
\]
By Proposition~\ref{lem:varphase}  and  \eqref{PA},
for any $T\ge 1$,
\(\displaystyle
\lim_{R\to\infty}\sup_{N\in\N}\sup_{z\in[-1,1]}\P\big[ \A_\chi(T,R ;z)\big] =1.
\)
This shows that the random fields $\Xi_3,\Xi_4$ are tight. 
\end{proof}

By \eqref{GW}, combining Proposition~\ref{prop:G}, Proposition~\ref{prop:W} and Proposition~\ref{prop:GW}, we obtain the following asymptotics for $x,z\in\R$ with $|x|\le |z|$; 
\begin{itemize}[leftmargin=*] \setlength\itemsep{0em}
\item (Global regime)\footnote{Observe that the condition $ |x-z| \gg 1/\sqrt{N\mathfrak{L}(z)}$ is equivalent to $ |x-z| \gg N^{-2/3} [z]_N^{-1/3}$.}  
If $z\in[-1,1]$ and  $|x-z| \gg N^{-2/3} [z]_N^{-1/3}$ or $\big(|z|-1\big)\gg N^{-2/3}$ as $N\to\infty$, 
\[\begin{cases}
[\M(z),\M(x)] =  [\G(z),\G(x)] - \Xi_2(z,x) -2\i \Xi_4(z,x) =  - 2\log\big(1-J(z)J(x)\big)  + \o_\P(1) , \\
[\M(z),\overline{\M(x)}] = [\G(z),\overline{\G(x)}] + \Xi_1(z,x) -2\Im \Xi_3(z,x) =- 2 \log\big(1- J(z)\overline{J(x)}\big) + \o_\P(1).
\end{cases}\]
In particular, the main term in $ [\W(z),\overline{\W(x)}] $ vanishes in this case and $\W(z) =0$ if $\big(|z|-1\big)\gg N^{-2/3}$ (that is, outside of the spectrum) -- the errors converge to 0 in probability. 

This shows that in this regime, the bracket of the $\M$ field matches the correlation structure of the Gaussian field $\mathrm{W}$; see \eqref{corrW}. 

\item (Local regime)  For a constant $C\ge 1$, if $|z|\le 1-CN^{-2/3}$ and $ |x-z| \le CN^{-2/3}[z]_N^{-1/3}$ as $N\to\infty$,
\[\begin{cases}
[\M(z),\M(x)] =  [\G(z),\G(x)] - \Xi_2(z,x) -2\i \Xi_4(z,x) =  -2 \log(\varrho(z)) +\O_\P(1) , \\
[\M(z),\overline{\M(x)}] = [\G(z),\overline{\G(x)}] + [\W(z),\overline{\W(x)}]  -2\Im \Xi_3(z,x) = 
2\log\big(\big(\tfrac{|x-z|}{\varrho(z)}\big)^{-1}\wedge\big(N\varrho(z)^2\big)\big)+ \O_\P(1).
\end{cases}\] 
For the second bracket, we used that
\[
\log\big(\tfrac{|x-z|^{-1}\wedge N\varrho(z)}{\sqrt{N\mathfrak{L}(z)}}\big)+ \log\big(\varrho(z)\sqrt{N\mathfrak{L}(z)})  =\log\big(\big(\tfrac{|x-z|}{\varrho(z)}\big)^{-1}\wedge\big(N\varrho(z)^2\big)\big).
\]
In this regime, the $\G$ field is \emph{saturated} and the \emph{extra variance} comes from the $\W$ field. 

\item (Edge regime) If $| x\pm 1| , |z\pm 1|  \le C N^{-2/3}$ for some constant $C\ge 1$, 
\[
[\M(z),\M(x)]  = \log(N^{2/3})+\O_\P(1) \qquad\qquad
[\M(z),\overline{\M(x)}]  =\log(N^{2/3})+\O_\P(1). 
\]
This follows form the fact that at the edge, $\| \W(z)\|_2^2\lesssim 1$, so that the brackets of $\M$ matches that of $\G$, up to order 1 random variables.

\item These three regimes are consistent.
Observe that for $x,z \in (-1,1)$, by \eqref{def:J}, one has 
\[
\log\big(1-J(z)J(x)\big) \underset{x\to z}{\to} \log\big(1-e^{-2\i\arccos(z)}\big)  
= \log\big|\tfrac{\pm 1-z}{\varrho(z)}\big| + \underset{z \to\pm 1}{\O(1)}
= \log(\varrho(z)) +\O(1).
\]
and, if $|x-z| \le C\varrho(z)^2$,
\[
\log\big(1-J(z)\overline{J(x)}\big)  = \log\big(1-e^{\i(\arccos(x)-\arccos(z))}\big)  
= \log\big|\tfrac{x-z}{\varrho(z)}\big| + \underset{x\to z}{\O}\big(\tfrac{|x-z|}{\varrho(z)^2}\vee 1\big)
= \log\big|\tfrac{x-z}{\varrho(z)}\big| +\O(1).
\]
Let $\epsilon_N(z) \coloneqq \big(N\varrho(z)^2 \vee N^{1/3}\big)^{-1}$. 
This implies that for $x,z \in\R$, 
\[\begin{cases}
[\M(z),\M(x)] = - 2\log\big(|1-J(z)J(x)| \vee \epsilon_N(z)\big) + \O_\P(1) , \\
[\M(z),\overline{\M(x)}] =- 2 \log\big(|1- J(z)\overline{J(x)}|\vee \epsilon_N(z)\big) + \O_\P(1).
\end{cases}\]
\todo{Remove, correctly, the absolute value}
This completes the proof. \qed
\end{itemize}

\section{Approximation by the stochastic sine equation}
\label{sec:zeta}

The goal of this section is to study the \emph{microscopic relative phase} and prove the following results (recall that $\phi_{N}(z) = \Im\Psi_N(z)$).
These convergence results are claim 2 of Theorem~\ref{thm:main}

\begin{proposition} \label{prop:sine}
Let $z\in(-1,1)$ with $\varrho(z)\ge \mathfrak{R} N^{-1/3}$ for some sequence $ \mathfrak{R}(N) \to \infty $ as $N\to\infty$. 
Then, in the sense of of finite dimensional distributions, as $N\to\infty$, 
\[
\big\{ \{\phi_{N}(z)\}_{2\pi} , 2\big( \psi_{N}\big(z+\tfrac{\lambda}{N\varrho(z)}\big) -  \psi_{N}(z) \big) : \lambda\in\R \big\}  \to \big\{ \alpha , \omega_1(\lambda) : \lambda\in\R \big\}
\]
where $\{\omega_{t}(\lambda) : t\in\R_+ , \lambda\in\R\}$ is the solution of the complex sine equation \eqref{eq:csse}
with $\omega_{0}=0$ and $\boldsymbol{\alpha}$ is an independent random variable uniformly distributed in $[0,2\pi]$. 
\end{proposition}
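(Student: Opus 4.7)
The plan is to view the microscopic relative phase $\varphi_n^{(N)}(\lambda;z) := 2\bigl(\bpsi_n(z+\tfrac{\lambda}{N\varrho(z)}) - \bpsi_n(z)\bigr)$ as a discretization of the complex sine equation \eqref{eq:csse} in the rescaled time $t=(n-N_0(z))/(N-N_0(z))$, and to pass to the limit by a stochastic Gr\"onwall estimate on an interval $[\delta,1]$, before taking $\delta\to 0$. The marginal $\{\phi_N(z)\}_{2\pi}$ and the independence of the limit $\alpha$ from $\omega_1$ are treated separately.

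To derive the discrete equation, I would take the difference of the scalar recursion of Lemma~\ref{lem:rec} at the two spectral parameters $z$ and $w=z+\lambda/(N\varrho(z))$, linearize as in Lemma~\ref{lem:linear}, and use the microscopic derivative bounds of Remark~\ref{rk:contest} for $\theta_n$, $\delta_n$ and $Z_n(\cdot)$. This yields
\[
\varphi_n^{(N)}(\lambda;z) = \varphi_m^{(N)}(\lambda;z) + 2\pi\i\lambda\Bigl(\sqrt{\tfrac{n-N_0}{N-N_0}} - \sqrt{\tfrac{m-N_0}{N-N_0}}\Bigr) + \sqrt{\tfrac2\beta}\sum_{m<k\le n}\frac{\widetilde Z_k}{\sqrt{k-N_0}}\mathrm{f}\bigl(\varphi^{(N)}_{k-1}\bigr) + \mathrm{err}_n,
\]
where $\mathrm{f}(w)=1-e^{-\i\Im w}$ is globally Lipschitz with $|\mathrm{f}|\le 2$, the $\widetilde Z_k=Z_k(z)e^{2\i(\theta_k(z)+\phi_{k-1}(z))}$ are rotated martingale differences of unit conditional variance, and $\mathrm{err}_n$ is uniformly negligible on the good events of Section~\ref{sec:ell}. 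Invoking Lemma~\ref{lem:coupling}, I would then construct a complex Brownian motion $\{Z_t\}$ on a common probability space so that the rescaled sums $\sum_{m<k\le n}\widetilde Z_k/\sqrt{k-N_0}$ approximate $\int_{\delta}^{t}\d Z_s/\sqrt{s}$ in probability (without an explicit quantitative rate).

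For fixed $\delta\in(0,1)$ and $m=\lfloor N_0+\delta(N-N_0)\rfloor$, the Lipschitz property of $\mathrm{f}$ combined with the coupling produces a stochastic Gr\"onwall inequality of the form $|e_t|^2\le C\bigl(|e_\delta|^2+\int_\delta^{t}\tfrac{|e_s|^2}{s}\d s+\mathcal{E}_{N,\delta}(t)\bigr)$ for the error $e_t$ between $\varphi_{\lfloor(N-N_0)t+N_0\rfloor}^{(N)}(\lambda;z)$ and $\omega_t(\lambda)$, with $\mathcal{E}_{N,\delta}(t)\overset{\P}{\to}0$ as $N\to\infty$. This gives $\sup_{[\delta,1]}|e_t|\overset{\P}{\to}0$ provided $|e_\delta|\overset{\P}{\to}0$. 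In the iterated limit $\lim_{\delta\to 0}\lim_{N\to\infty}$, Proposition~\ref{prop:entrancesmall} ensures $\varphi^{(N)}_m(\lambda;z)\to 0$ in probability, while $\omega_\delta(\lambda)\to 0$ by the initial condition in \eqref{eq:csse}, yielding $\varphi^{(N)}_N(\lambda;z)\to\omega_1(\lambda)$. Joint convergence of finitely many $\lambda_i$ then follows from running the same argument on a finite product, using $\lambda$-continuity of \eqref{eq:csse}. For the marginal $\{\phi_N(z)\}_{2\pi}$, Corollary~\ref{cor:ReIm} gives $\phi_N(z)=\pi NF(z)+\O_\P(\sqrt{\log N})$; the deterministic leading term rotates rapidly modulo $2\pi$, forcing convergence to the uniform law on $[0,2\pi]$. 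Independence from $\omega_1$ follows because the rotated noise $\widetilde Z_k$ driving $\omega_1$ has the same conditional distribution given $\F_{k-1}$ as $Z_k(z)$ and is thereby decoupled from the accumulated deterministic rotation $\pi N F(z)$ governing $\phi_N(z)\bmod 2\pi$.

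The hardest step is handling the singular point $t=0$. The Gr\"onwall factor grows like $\delta^{-c/\beta}$ coming from $\int_\delta^1\tfrac{\d s}{s}$, and since Lemma~\ref{lem:coupling} provides only a qualitative rate, one is constrained to take $N\to\infty$ before $\delta\to 0$. Matching this divergent Gr\"onwall factor against the polynomial decay $\delta^{\epsilon}$ supplied by Proposition~\ref{prop:entrancesmall}, while simultaneously keeping the stochastic remainder $\mathcal{E}_{N,\delta}$ small in this iterated limit, is the technical core of the argument and dictates the order of limits that is used.
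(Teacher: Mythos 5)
Your treatment of the relative phase follows essentially the paper's route: linearize the scalar recursion at the two spectral parameters, couple the rotated noise with Gaussian increments (Lemma~\ref{lem:coupling}), control the difference with the sine SDE by a stochastic Gr\"onwall estimate on $[\delta,\tau]$, and handle the singular entrance at $t=0$ by taking $N\to\infty$ before $\delta\to 0$, with Proposition~\ref{prop:entrancesmall} supplying the initial-condition decay --- this is exactly the chain of approximations in Section~\ref{sec:zeta} (Steps 1--5), merely compressed. One caveat: you pass directly from the increments $\widetilde Z_k=Z_k(z)e^{2\i(\theta_k+\phi_{k-1})}$ to a Brownian integral, but these increments have $\E[Z_k^2(z)]\neq 0$; the paper must first coarse-grain over mesoscopic blocks of size $\eta\varrho^2N$ so that the oscillation of $\vartheta_{k,n_j}$ kills $\E S_j^2$ and the block sums become approximately \emph{standard} complex Gaussians, and only then is the Wasserstein coupling applicable. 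Your sketch silently assumes this homogenization.

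The genuine gap is the marginal $\{\phi_N(z)\}_{2\pi}$ and its independence from $\omega_1$. Your argument --- ``the deterministic leading term $\pi N F(z)$ rotates rapidly modulo $2\pi$, forcing convergence to the uniform law'' --- is not valid: for each $N$ the deterministic term is a constant, and a constant mod $2\pi$ has no limiting uniform law; rapid rotation in $N$ equidistributes Ces\`aro averages over $N$, not the law of $\phi_N(z)$ at fixed $N$. What actually produces uniformity is the \emph{random} fluctuation with divergent variance, and one must verify Weyl's criterion, i.e.\ that $\E[e^{\i k\phi_N}g(X_N)]\to 0$ for every nonzero integer $k$ \emph{jointly} with the relative-phase observable $g(X_N)$. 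This joint statement is also where independence lives, and your ``decoupling'' claim is not a proof: $\phi_N(z)$ and $\varphi^{(N)}(\lambda)$ are functionals of the same noise $\{X_k,Y_k\}$, so independence is not automatic. The paper's mechanism is to condition on $\F_m$ (so the accumulated phase $\Lambda_{N,m}+\phi_m$ cancels upon taking the modulus), use the in-probability limits \eqref{varphiuni} and \eqref{phi_ini}--\eqref{GWcvg}, and then split the limiting noise integral as $\mathscr{W}_\delta=\mathscr{W}_\epsilon+\mathscr{W}_{\epsilon,\delta}$, where $\mathscr{W}_{\epsilon,\delta}=\int_\delta^\epsilon \d\zeta_s/\sqrt{2s}$ is independent of $(\mathscr{G}_\delta,\mathscr{W}_\epsilon,Y_\epsilon)$ and has $\Var(\Im\mathscr{W}_{\epsilon,\delta})\simeq\log(\epsilon/\delta)\to\infty$ as $\delta\to 0$; this Gaussian averaging forces the $k\neq 0$ frequencies to vanish, giving simultaneously the uniform marginal and the independence. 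Note the paper's warning that one cannot instead average over $\mathscr{G}_\delta$, whose imaginary variance can degenerate when $z(N)$ approaches the edge --- an indication that this step is delicate and needs the specific decomposition above, which your proposal does not contain.
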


\todo{Include the uniform convergence proposition}

\subsection{Linearization and continuity.}
To prove Proposition~\ref{prop:sine}, we first collect our assumptions and some prior results from Sections~\ref{sec:ell} and~\ref{sec:cont}. 

\begin{assumption}\label{ass:bulk}
Let $z(N)\in(-1,1)$ with $\varrho(z)\ge \mathfrak{R} N^{-1/3}$ and $ \mathfrak{R}(N) \to \infty $ as $N\to\infty$.\\
Let $m:= N_0(z)+ \delta N \varrho(z)^2$ for a small $\delta\in(0,1)$ and  $w_\lambda:=z-\tfrac{\lambda}{N\varrho(z)}$ for a fixed  $\lambda\in\R$. \\
Let $\tau\coloneqq (\pi/2)^2 $ and ${\rm c}_\beta \coloneqq \sqrt{2/\beta}$.
\end{assumption}

\begin{lemma}[Linearization] \label{lem:lin3}
Under the Assumptions~\ref{ass:bulk}. 
On has for $n>m$, 
\begin{equation} \label{varphi}
\partial\bpsi_{n,n-1}(w_\lambda,z)
=  \i \delta_n(z) \tfrac{\lambda}{\sqrt{N}\varrho(z)}  + \tfrac{1}{\sqrt\beta}  \overline{\W_{n,n-1}}(z)\big(1-e^{-2\i\partial\phi_{n-1}(w_\lambda,z)}\big)+  \mathscr{E}_{n,n-1}(\lambda;z)
\end{equation}
where $\W_{n,n-1}(z) = \i \delta_n(z) Z_n(z)  e^{2\i\theta_n(z)}e^{2\i\phi_{n-1}(z)} $ and there is an event $\A_\partial(\lambda,\delta;z)$ such that 
\[
\max_{m\le n\le N} |\mathscr{E}_{n,m}(\lambda;z)| \lesssim \mathfrak{R}(N)^{\epsilon-1} \quad\text{on $\A_\partial$} , \qquad 
\limsup_{N\to\infty}\P[\A_\partial^{\rm c}] =1 . 
\]
Moreover, we the relative phase satisfies 
\begin{equation} \label{varphi_ini}
\lim_{\delta\to 0} \limsup_{N\to\infty}  \P\big[|\partial\bpsi_{m}(w_\lambda,z)| > \delta^\epsilon \big] = 0 
\end{equation}
and there is a deterministic sequence $ \Lambda_{N,m}(z)\in\R$ such that the imaginary part of the phase satisfies as $N\to\infty$,
\begin{equation} \label{phi_ini}
\big| \phi_{N,m}(z) - \big( \Lambda_{N,m}(z) -  \tfrac1{\sqrt\beta} \Im\big(\G_{N,m}(z) +  \overline{\W_{N,m}}(z)\big) \big) \big| \overset{\P}{\to} 0. 
\end{equation} 
\end{lemma}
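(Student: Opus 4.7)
The plan is to establish \eqref{varphi} by taking a finite difference of the one-step recursion from Lemma~\ref{lem:rec} along the pair $(w_\lambda, z)$, while \eqref{varphi_ini} and \eqref{phi_ini} follow directly from Proposition~\ref{prop:entrancesmall} and Proposition~\ref{prop:oneray1}, respectively. For \eqref{varphi}, Lemma~\ref{lem:rec} gives $\bpsi_n - \bpsi_{n-1} = \i\theta_n + \log(1 + \Upsilon_n(z))$ with $\Upsilon_n = -\Delta_n + Z_n' + (\Delta_n - \overline{Z_n'}e^{-2\i\theta_n})e^{-2\i\phi_{n-1}}$, and applying the finite difference $\partial = (\cdot)(w_\lambda) - (\cdot)(z)$ produces
\[
\partial\bpsi_{n,n-1} = \i\partial\theta_n + \partial\log(1+\Upsilon_n).
\]
On the microscopic scale $|w_\lambda - z| = |\lambda|/(N\varrho(z))$, Remark~\ref{rk:contest} yields $\partial\theta_n(w_\lambda, z) = \delta_n(z)\lambda/(\sqrt{N}\varrho(z)) + \O(\delta_n^2)$, which is the deterministic drift. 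The only contribution to $\partial\Upsilon_n$ that couples back through $\phi_{n-1}$ is
\[
(\Delta_n - \overline{Z_n'(z)}e^{-2\i\theta_n(z)})\, e^{-2\i\phi_{n-1}(z)}\,(e^{-2\i\partial\phi_{n-1}} - 1);
\]
the $\Delta_n = \O(\delta_n^2)$ piece is subleading, and for the $-\overline{Z_n'}$ piece the substitution $Z_n' = \i\delta_n Z_n/\sqrt\beta + \O(\delta_n^3)$ from Lemma~\ref{lem:Z} turns the resulting expression into $(1/\sqrt\beta)\overline{\W_{n,n-1}}(z)(1 - e^{-2\i\partial\phi_{n-1}})$. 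All remaining contributions — the $\Delta_n$-in-feedback term, the direct derivatives $\partial\Delta_n$ and $\partial Z_n'$, the quadratic-in-$\Upsilon$ error from expanding $\log(1+\cdot)$, and the lower-order tail of $Z_n'$ — are absorbed into $\mathscr{E}_{n,n-1}$ and bounded via Lemma~\ref{lem:contest} and Remark~\ref{rk:contest}.

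To control $\max_n|\mathscr{E}_{n,m}|$, I take $\A_\partial(\lambda,\delta;z)$ to be the intersection of (i) the noise-truncation event $\A_m$ of \eqref{eventA}, whose probability tends to $1$ by \eqref{PA} since $m \ge \delta N\varrho(z)^2 \to \infty$, and (ii) a tube event on which $\max_{m\le k\le N}|\partial\phi_k(w_\lambda,z)|$ is bounded by a constant depending only on $\lambda$, whose probability tends to $1$ by Proposition~\ref{prop:relcont1} combined with the entrance estimate \eqref{varphi_ini}. On this joint event, summing $\mathscr{E}_{k,k-1}$ with the sub-Gaussian concentration bounds of Propositions~\ref{lem:conc1} and~\ref{lem:conc2} (using $\sum_{k>m}\delta_k^2 \lesssim 1$ in the bulk and the smallness of $\delta_n/\varrho(z)$) delivers the uniform bound $\O(\mathfrak{R}^{\epsilon-1})$.

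The estimate \eqref{varphi_ini} is Proposition~\ref{prop:entrancesmall} applied with parameter $-\lambda$ to match $w_\lambda = z - \lambda/(N\varrho(z))$, and Remark~\ref{rk:entrance0} covers the regime $Nz^2 \lesssim 1$. For \eqref{phi_ini} with $z \in \Q$, Proposition~\ref{prop:oneray1} with an auxiliary $T \ge 1$ gives
\[
\tilde\psi_{N,m}(z) = -\tfrac14\log\!\big(\tfrac{N-Nz^2}{m-Nz^2}\big) - \tfrac{1}{\sqrt\beta}\M_{N,m}(z) - \tfrac{[\G_{N,m}(z)]}{2\beta} + \o_\P(1),
\]
and taking imaginary parts, using $\phi_{N,m} = \vartheta_{N,m} + \Im\tilde\psi_{N,m}$ and $\M_{N,m} = \G_{N,m} + \overline{\W_{N,m}}$, yields \eqref{phi_ini} with
\[
\Lambda_{N,m}(z) = \vartheta_{N,m}(z) - \tfrac{\Im[\G_{N,m}(z)]}{2\beta},
\]
which is deterministic because $\G$ is a sum of independent increments. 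The regime $Nz^2 \lesssim 1$ is analogous using Theorem~\ref{thm:0}.

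The delicate point is closing the bootstrap between \eqref{varphi} and the smallness of $\partial\phi$: the linearization is only valid on a tube where $\partial\phi$ stays bounded, yet $\partial\phi$ itself is driven by the equation being linearized. Producing the event $\A_\partial$ of overwhelming probability on which the cumulative error is sharp requires the continuity machinery of Section~\ref{sec:cont} — in particular Proposition~\ref{prop:relcont1} and the generator analysis underlying it — adapted to the microscopic scale $|w_\lambda - z| = \O(1/(N\varrho(z)))$.
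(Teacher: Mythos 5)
Your treatment of \eqref{varphi_ini} and \eqref{phi_ini} is correct and matches the paper (Proposition~\ref{prop:entrancesmall}, Remark~\ref{rk:entrance0}, and Proposition~\ref{prop:oneray1} with $T=\delta\mathfrak{R}^2\to\infty$; your explicit $\Lambda_{N,m}=\vartheta_{N,m}-\tfrac{1}{2\beta}\Im[\G_{N,m}]$ is consistent). The gap is in the error bound for \eqref{varphi}. Taking the raw finite difference $\partial\bpsi_{n,n-1}=\i\partial\theta_n+\partial\log(1+\Upsilon_n)$ and absorbing everything except the $\W$-feedback term into $\mathscr{E}$ by absolute-value bounds cannot give $\max_n|\mathscr{E}_{n,m}|\lesssim\mathfrak{R}^{\epsilon-1}$. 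Concretely: the $\Delta_n$-in-feedback piece is $\Delta_n\big(e^{-2\i\phi_{n-1}(w_\lambda)}-e^{-2\i\phi_{n-1}(z)}\big)$, the cross term $\Upsilon_n\,\partial\Upsilon_n$ from $\tfrac{\partial\Upsilon_n}{1+\Upsilon_n}$, and the quadratic remainder $\O(|\partial\Upsilon_n|^2)$ of the log expansion are each of size $\delta_n^2(1+|Z_n|)^2$ when one only uses $|1-e^{-2\i\partial\phi_{n-1}}|\le 2$ (or even a constant bound on $\partial\phi$), and $\sum_{m<n\le N}\delta_n^2\asymp\log(1/\delta)$, which does not vanish as $N\to\infty$. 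These second-order terms are negligible only because of \emph{oscillation}: each of $\sum_n\delta_n^2e^{-2\i\phi_{n-1}(w_\lambda)}$ and $\sum_n\delta_n^2e^{-2\i\phi_{n-1}(z)}$ is small separately by Propositions~\ref{prop:osc1}--\ref{prop:osc2}. This is why the paper does not difference the one-step recursion directly: it starts from the organized second-order expansion \eqref{zeta3} of Lemma~\ref{lem:linear}, writes $\tfrac14\mathbf{Q}^0_{n,n-1}-\tfrac1{2\beta}\mathbf{S}_{n,n-1}$ as a non-oscillatory part $\tfrac14\delta_n^2-\tfrac1{2\beta}\delta_n^2Z_n^2$ plus oscillatory sums, bounds the difference of the non-oscillatory part by coefficient smoothness (Remark~\ref{rk:contest}, $\|\partial(\cdot)\|_1\lesssim\delta_n^3$, summing to $\O(N^{-1/6})$), and bounds the oscillatory sums at $w_\lambda$ and at $z$ each by $\O(R/\mathfrak{R})$ on $\A_\chi$; the feedback term is kept exact as $\overline{\W_{n,n-1}}(1-e^{-2\i\partial\phi_{n-1}})$ plus a martingale remainder $A_{n,n-1}$ with $\|A_{n,n-1}\|_2\lesssim\delta_n^2$.

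A second, related problem is your bootstrap: you invoke Proposition~\ref{prop:relcont1} to get a tube on which $\partial\phi$ is bounded over all of $[m,N]$, but that proposition only controls $n\le N_0+e^{-CR}\Omega$ with $\Omega=N\varrho(z)^2/\lambda^2$, i.e.\ the entrance region feeding \eqref{varphi_ini}; on most of $[m,N]$ the relative phase is genuinely of order one (it converges to the sine process), so no such tube with small or even uniformly controlled $\a_n$ is available — and, as noted above, mere boundedness of $\partial\phi$ would not rescue the absolute-value bounds anyway. No bootstrap on $\partial\phi$ is needed in the paper's proof: the event $\A_\partial$ is built only from $\A_m$, $\A_\chi$, and tail events for the martingale remainders, and the error $\mathscr{E}$ does not involve $\partial\phi$ at all; the boundedness of $\partial\phi$ over $[m,N]$ is established afterwards, via the stochastic Gr\"onwall scheme of Section~\ref{sec:homo}, using \eqref{varphi} as input.
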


\begin{proof}
We start from the proof of Lemma~\ref{lem:linear}.
From \eqref{zeta3},  on the event $\A_m$, it holds for $n> m$, 
\[
\tilde{\bpsi}_{n,n-1}  
=  - \tfrac14\mathbf{Q}_{n,n-1}^0- \tfrac{1}{\sqrt\beta}  \M_{n,n-1}+ \tfrac{1}{2\beta}\mathbf{L}_{n,n-1}+  \operatorname{EL}_n
\]
where the martingale increments $\M_{n,n-1} = -\i \delta_n Z_n (1+ e^{-2\i\theta_n}e^{-2\i\phi_{n-1}}) $, the linearization errors $|\operatorname{EL}_n| \lesssim N^{\epsilon} \delta_n^3  $ for a small $\epsilon>0$, \eqref{EL}.
This expansion holds at $w_\lambda$ and the errors are controlled uniformly for $\lambda\in\mathcal{K}$ 
where $\mathcal{K} \Subset \R$ is any compact set with $0\in \mathcal{K}$.
Moreover, $\mathbf{Q}_{n,n-1}^0=\delta_n^2(1 -  e^{-2\i \phi_{n-1}})$ and  $\mathbf{L}_{n,n-1} = - (\M_{n,n-1})^2$, so that 
\[
\tfrac14\mathbf{Q}_{n,n-1}^0 -  \tfrac{1}{2\beta}\mathbf{L}_{n,n-1} = \tfrac14 \delta_n^2 -   \tfrac{1}{2\beta} \delta_n^2 Z_n^2 + \mathbf{Q}_{n,n-1}^1
\]
where $\mathbf{Q}_{n,m}^1$ collects oscillatory sums of the types of Proposition~\ref{prop:osc1} and Proposition~\ref{prop:osc2} with $x=z$ (the coefficients are controlled uniformly for $\lambda\in\mathcal{K}$).
We work on the event $\A_\chi = \A_{\chi}(R,T; z)$ with blocks \eqref{block1} with $m=N_T$ so that 
$T \ge \delta \mathfrak{R}^2$ provided that $\varrho(z)\ge \mathfrak{R} N^{1/3}$. 
Thus, on this event ($\delta>0$ is fixed), it holds for $R\ge 1$, 
\[
\sup_{\lambda\in\mathcal{K}} \max_{n\ge m} |\mathbf{Q}_{n,m}^1(w_\lambda)| \lesssim  \frac{R}{\mathfrak{R}}. 
\]
Then,  by Remark~\ref{rk:contest}, $\big\| \partial\big( \tfrac14 \delta_n^2 -   \tfrac{1}{2\beta} \delta_n^2 Z_n^2 \big)\big\|_1 \lesssim \delta_n^3 $,  so that
\[
U_{n,m}= \tfrac14\partial\mathbf{Q}_{n,m} -  \tfrac{1}{2\beta}\partial\mathbf{S}_{n,m} = B_{n,m} +\partial \mathbf{Q}_{n,m}^1 , \qquad 
\|B_{n,n-1}\|_1 \lesssim \delta_n^3 , \qquad
\max_{n\ge m} |\partial\mathbf{Q}_{n,m}^1| \lesssim  \frac{R}{\mathfrak{R}} , 
\]
and $\sup_{n\ge m} \|B_{n,m}\|_1 \le \sum_{n>m}  \|B_{n,n-1}\|_1 \lesssim   \sum_{n>m} \delta_n^3  \lesssim  \delta_m \lesssim N^{-1/6} $. Thus,  by a union bound,
\[
\P\bigg[ \Big\{\sup_{m\le n\le N} |U_{n,m}| \ge \frac{CR}{\mathfrak{R}}  \Big\} \cap \A_\chi  \cap \A_m \bigg]
\le 
\P\bigg[ \sup_{m\le n\le N} |B_{n,m}| \ge \frac{CR}{2\mathfrak{R}}   \bigg] \le N \exp\big(- \mathfrak{R}^{-1} N^{\frac1{6}}\big)
\]
by choosing the constant $C\ge 1$ sufficiently large.
We may assume that $ \mathfrak{R}(N) \ll N^{\frac16}$ as $N\to\infty$. 

Recall \eqref{psi1}, so that on $\A_m$, it holds for $n> m$, 
\[
\partial\bpsi_{n,n-1}  
=  \i \partial\theta_n -U_{n,n-1}-  \tfrac{1}{\sqrt\beta} \partial\M_{n,n-1}+  \partial\operatorname{EL}_n ,
\]
where $|\partial\operatorname{EL}_n| \lesssim N^{\epsilon} \delta_n^3$ and, by Remark~\ref{rk:contest},  the martingale part satisfies
\[
\partial \M_{n,n-1} =  - \underbrace{\i \delta_n \overline{Z_n}  e^{-2\i\theta_n}e^{-2\i\phi_{n-1}}}_{= \overline{\W_{n,n-1}}}(1-e^{-2\i\partial\phi_{n-1}})+ A_{n,n-1}
\]
where $\E_{n-1}[A_{n,n-1}] =0$ and $\|A_{n,n-1}\|_{2,n} \lesssim \delta_n^2$. 
In particular, the process $\{A_{n,m}\}_{n\ge m}$ is a martingale and its quadratic variation is controlled by 
$\sqrt{\sum_{n\ge m} \delta_n^4} \lesssim  \delta_m \lesssim N^{-1/6} $, then by Proposition~\ref{lem:conc2},
\[
\P\bigg[ \sup_{n\ge m} |A_{n,m}| \ge \frac{C}{\mathfrak{R}}      \bigg] \lesssim \exp\big(- \mathfrak{R} N^{-\frac1{6}}\big) .
\]
Finally for the deterministic drift, using that $w_\lambda = z- \tfrac{\lambda}{N\varrho(z)}$ and 
$\partial_{z}\theta_n(z)  = -\sqrt{N}\delta_n(z)$, by a Taylor expansion
\[
\partial\theta_n  = - \tfrac{\lambda}{N\varrho(z)}\partial_z\theta_n(z) + \O\big(\tfrac{\sqrt{N}\delta_n(z)^2}{N^2\varrho(z)^2} \big) 
= \delta_n \tfrac{\lambda}{\sqrt{N}\varrho(z)} + \O\big(\delta_n^2 N^{-5/6}) . 
\]

We conclude that on $\A_m$, 
\[
\partial\bpsi_{n,n-1}  
=  \i \delta_n  \tfrac{\lambda}{\sqrt{N}\varrho(z)}  - \tfrac{1}{\sqrt\beta} V_{n,n-1}(1-e^{-2\i\partial\phi_{n-1}})+  \mathscr{E}_{n,n-1}
\]
where the error $\mathscr{E}_{n,n-1}$ includes the oscillatory terms $U_{n,n-1}$, the martingale part $A_{n,n-1}$ and both deterministic errors $\O(\delta_n^3N^\epsilon)$ and $ \O\big(\delta_n^2 N^{-5/6}) .$
These deterministic errors are summable for $n\in[m,N]$ and their total contribution is $\O\big(N^{\epsilon-1/6})$. 
Consequently, setting  
\[
\A_\partial = \{\sup_{m\le n\le N} |U_{n,m}| \le \tfrac{CR}{\mathfrak{R}}\} \cap \A_\chi \cap \A_m \cap \{\sup_{n\ge m} |A_{n,m}| \le \tfrac{C}{\mathfrak{R}} \} , \qquad \text{choosing } R= \mathfrak{R}^\epsilon ,
\]
we have $\displaystyle\sup_{m\le n\le N} |\mathscr{E}_{n,m}| \lesssim \mathfrak{R}^{\epsilon-1}$ on $\A_\partial $ and, by combining the previous estimates (with \eqref{PAchi}):
\[
\P\big[\A_\partial^{\rm c}\cap \A_\chi  \cap \A_m \big] \le N \exp\big(- \mathfrak{R} N^{-\frac1{6}}\big)  , \qquad 
\P\big[ \A_{\chi}^{\rm c} \cap \A_{m} \big]  \lesssim \exp\big(- c\mathfrak{R}^\epsilon\big) .
\] 
In addition, as $m\ge \delta N$ in this regime, $\P[\A_m^{\rm c}] \lesssim \exp(c N^\epsilon)$, \eqref{PA}. 
This proves the first claims. 

The entrance behavior of the relative phase follows from Proposition~\ref{prop:entrancesmall}; see also \eqref{cont0} for the case where $z$ is in a $\O(N^{-1/2})$-neighborhood of 0. 

If $z\in\Q$, the claim \eqref{phi_ini} is a direct consequence of Proposition~\ref{prop:oneray1};  with our choice of $m$, $T \ge \delta \mathfrak{R}^2 \gg 1$ as $N\to\infty$. 
Otherwise, if $z$ is in a $\O(N^{-1/2})$-neighborhood of 0,  \eqref{phi_ini} with $m= \delta N$ (in this regime $N_0$ is fixed and $\varrho(z)^2 = \tau^{-1}+\O(N^{-1})$)  follows directly from the representation \eqref{psi0} and the estimate \eqref{Cauchy0}. 
\end{proof}

\subsection{Homogenization} \label{sec:homo}
Starting from Lemma~\ref{lem:lin3}, we are going to show that \eqref{varphi} is a discretization of the stochastic sine equation \eqref{Zeq:alpha}.
This will imply that under the Assumptions~\ref{ass:bulk}, after a continuous time change $t\in[\delta,1] \mapsto n_t\in [N_0(z),N]$,
the process $\{\partial\bpsi_{n_t}(w_\lambda,z) : t\in[\delta,1] , \lambda\in\mathcal{K}\}$ converges as $N\to\infty$, in the sense of finite dimensional distributions, to $\{\omega_t(\lambda) :  t\in[\delta,1] , \lambda\in\R\}$. 
This requires to make a series of transformation of the equation \eqref{Zeq:alpha}; 
\begin{itemize}[leftmargin=*] \setlength\itemsep{0em}
\item Step 1: Removing the linearization errors.
\item Step 2: Coarse graining the driving noise using a blocking scheme.
\item Step 3: Replacing the driving noise by i.i.d.~complex Gaussians using a \emph{Wasserstein coupling}.
\item Step 4: Continuum approximation to replace the noise by a stochastic integral. 
\item Step 5: Fixing the initial condition. 
\end{itemize}

Steps 1--4 rely on using a generic \emph{stochastic Gr\"onwall inequality} proved in Section~\ref{sec:Gron}. 
This relies on the fact that the equation \eqref{varphi} is of the type
\begin{equation} \label{Gron}
\Delta_{j+1} = \Delta_j + U_{j+1} + V_{j+1}\overline{{\rm f}(\Delta_j)}, \qquad j\ge j_0, 
\end{equation}
where  ${\rm f} : w\in\C \mapsto (1-e^{\i\Im w})$ is Lipschitz-continuous, uniformly bound with ${\rm f}(0)=0$, and the driving noise $\{V_j\}$ are martingale increments; $\E[V_{j+1}| \F_j] =0$. 
In particular, if the errors $\{U_j\}$ are \emph{small}, then one can uniformly control the size of $\{\Delta_j\}$, see Proposition~\ref{prop:l2logladder}.
Step 5 is a direct consequence of the estimate \eqref{varphi_ini} and the properties of the stochastic sine equation \eqref{Zeq:alpha}; see Proposition~\ref{prop:continuity0}. 

\paragraph{Stochastic Gr\"onwall inequality.}
We start by stating a simplified version of Proposition~\ref{prop:l2logladder} tuned for our applications.

\begin{lemma} \label{lem:Gron}
Suppose that $\{\Delta_j\}$ satisfies \eqref{Gron} where ${\rm f} : \C \to \R$ is $1$-Lipschitz continuous with ${\rm f}(0)=0$ and $\{U_j\}, \{V_j\}$ are two adapted sequences with respect to a filtration $\{\F_j\}$. Suppose that 
\[
\E[V_{j+1}| \F_j] =0 , \qquad \|V_{j+1}\|_{2}^2 \lesssim j^{-1} , \qquad j\ge j_0, 
\]
and we can decompose $U_j = U_j^1 +U_j^2$ where $\{U_j^1\}$ are deterministic errors,  $\E[U_{j+1}| \F_j] =0 $ for $j\ge j_0$ and there is $\varepsilon >0$ such that 
\begin{equation} \label{Gronmartbd}
{\textstyle \sum_{j=j_0}^{j_1-1} U_{j+1}^1} \le \varepsilon , \qquad 
{\textstyle \sum_{j=j_0}^{j_1-1} \E |U_{j+1}^2}|^2 \le \varepsilon^2  . 
\end{equation}
where $1\le j_1/j_0 \le C$ for a constant $C$. 
Then, as $\varepsilon \to 0$, 
\[
\max_{j_0 \leq j \leq j_1} |\Delta_j|  \overset{\P}{\to} 0. 
\]
\end{lemma}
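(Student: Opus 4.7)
The conclusion is a stochastic Grönwall estimate, and the natural strategy is an $L^2$ bootstrap via Doob's maximal inequality. First, I would rewrite the recursion \eqref{Gron} as the telescoping sum
\begin{equation*}
\Delta_j = \Delta_{j_0} + \sum_{k=j_0}^{j-1} U_{k+1}^1 + \sum_{k=j_0}^{j-1} U_{k+1}^2 + \sum_{k=j_0}^{j-1} V_{k+1}\overline{{\rm f}(\Delta_k)}
\end{equation*}
and identify the last two partial sums as $\{\F_j\}$-martingales: the $U^2$ piece by hypothesis, and the $V$ piece because ${\rm f}(\Delta_k)$ is $\F_k$-measurable while $\E[V_{k+1}|\F_k] = 0$. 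The deterministic drift $\sum U^1$ is bounded by $\varepsilon$ uniformly.

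Next, I would apply Doob's $L^2$ maximal inequality to both martingales and use orthogonality of increments to obtain
\begin{equation*}
\E \max_{j_0 \le j \le j_1} |\Delta_j|^2 \lesssim |\Delta_{j_0}|^2 + \varepsilon^2 + \sum_{k=j_0}^{j_1-1} \E\bigl[|V_{k+1}|^2 |{\rm f}(\Delta_k)|^2\bigr].
\end{equation*}
Combining the $1$-Lipschitz bound $|{\rm f}(\Delta_k)| \le |\Delta_k|$ (from ${\rm f}(0)=0$) with the $L^2$ bound $\|V_{k+1}\|_2^2 \lesssim 1/k$, the last sum is dominated by $\sum_k \E|\Delta_k|^2/k$. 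Setting $h_j := \E\max_{j_0 \le k \le j}|\Delta_k|^2$, one arrives at a discrete Grönwall inequality of the form $h_j \lesssim h_{j_0} + \varepsilon^2 + \sum_{k=j_0}^{j-1} h_k/k$. Iterating, one gets $h_{j_1} \lesssim (h_{j_0}+\varepsilon^2)\exp\bigl(C\log(j_1/j_0)\bigr)$, which is bounded by $C'(h_{j_0}+\varepsilon^2)$ since $j_1/j_0 \le C$ by hypothesis, so that $\sum_{k=j_0}^{j_1-1}1/k = O(1)$. Markov's inequality then yields the claimed convergence in probability as $\varepsilon \to 0$.

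\textbf{Main obstacle.} The delicate point is the passage $\E[|V_{k+1}|^2 |{\rm f}(\Delta_k)|^2] \lesssim \E|\Delta_k|^2/k$, which strictly requires a \emph{conditional} $L^2$ bound $\E[|V_{k+1}|^2|\F_k] \lesssim 1/k$ rather than merely the unconditional one stated. In the intended applications this follows because $V_{k+1}$ arises from sub-Gaussian noise (e.g.\ $\W_{n,n-1}$ in Lemma~\ref{lem:lin3}) whose conditional variance is of the stated order; alternatively, one can exploit the a priori $L^\infty$ bound $|{\rm f}|\le 2$ (from ${\rm f}(w) = 1 - e^{\i\Im w}$) to interpolate $|{\rm f}|^2 \le 2|{\rm f}| \le 2|\Delta_k|$ and close the bootstrap via Cauchy--Schwarz. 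A second minor point is the starting value $\Delta_{j_0}$, which is not explicitly required to be small in the statement but is absorbed into the error budget $\varepsilon$ in the applications — see \eqref{varphi_ini} and \eqref{phi_ini} — consistent with the fact that Lemma~\ref{lem:Gron} is a tuned specialization of the more detailed Proposition~\ref{prop:l2logladder} where the initial condition is treated carefully.
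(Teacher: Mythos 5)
Your proposal is correct in substance but follows a genuinely different route from the paper. The paper proves Lemma~\ref{lem:Gron} by invoking the heavy-duty tail bound of Proposition~\ref{prop:l2logladder} (whose proof runs through the exponential product representation $P_j$ and Freedman's inequality), using Doob's maximal inequality only to convert the moment bound on $\{U_j^2\}$ into the event $\{\max_j|\sum_k U_k|\le u\}$ with $u\asymp\sqrt{\varepsilon}$, and then reads off $\P[\max_j|\Delta_j|\ge\sqrt{\varepsilon\log\varepsilon^{-1}}]\lesssim\varepsilon^c$; this yields exponential-type tails and an event-restricted formulation that the paper exploits later (e.g.\ in Proposition~\ref{prop:Delta1}, where the $U^1$ bound only holds on $\A_\chi$). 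Your argument is a self-contained second-moment bootstrap: telescoping, Doob's $L^2$ inequality with orthogonality of martingale increments, and a discrete Gr\"onwall iteration on $h_j=\E\max_{k\le j}|\Delta_k|^2$, closed by $\sum_{k=j_0}^{j_1}k^{-1}\lesssim\log(j_1/j_0)\lesssim 1$ and Chebyshev. This is simpler and avoids Proposition~\ref{prop:l2logladder} entirely, at the price of a polynomial rather than (stretched-)exponential tail — which is all the stated conclusion requires. Your diagnosis of the conditional-variance issue is apt and applies equally to the paper: Proposition~\ref{prop:l2logladder} itself assumes $\E(|V_j|^2\mid\F_{j-1})\lesssim j^{-1}$, so the hypothesis of the lemma must be read as a conditional bound (as it is verified in the applications via $\|\cdot\|_{2,n}$). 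One caveat: your fallback fix via $|{\rm f}|^2\le 2|{\rm f}|\le 2|\Delta_k|$ and Cauchy--Schwarz does not close the loop, since it only yields $h_{j_1}\lesssim\varepsilon^2+C\sqrt{h_{j_1}}$ and hence $h_{j_1}=\O(1)$ rather than $h_{j_1}=\o(1)$; you should rely on the conditional bound (your primary fix) and drop that remark. The minor point about $\Delta_{j_0}$ being (implicitly) zero or absorbed into the error budget matches the paper's usage.
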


\begin{proof}
The condition on $\{V_j\}$  directly implies that we can apply Proposition~\ref{prop:l2logladder} (with $\delta=0$ and $T\le C$). Moreover, by Doob's maximum inequality, 
\[
\P\Big[\max_{j_0 \leq j \leq j_1} \big| {\textstyle\sum_{k=j_0+1}^{j}} U_k^2 \big| > \sqrt{\varepsilon}\Big]  \le  \sqrt{\varepsilon}  
\] 
and a similar estimate holds for the deterministic part of $\{U_j\}$. Then, there is a constant $0<c\le 1/2$ such 
\[
\P\Big[\max_{j_0 \leq j \leq j_1}  |\Delta_j| \ge \sqrt{\varepsilon \log \varepsilon^{-1} }  \big] \lesssim \varepsilon^{c} .  
\]
This proves the claim. 
\end{proof}

\paragraph{Step 1: Removing the linearization errors.}
Let  $K_0:=  \delta N \varrho(z)^2$ so that $m= N_0 + K_0$ as in~\ref{ass:bulk}.\\
We introduce a new process $\{\varphi^0_k(\lambda;z) \}_{k\ge K_0}$ such that for $k\ge K_0$,
\begin{equation} \label{varphi0}
\varphi^0_k(\lambda;z)  = 2\partial\bpsi_{m}(w_\lambda,z) + 2 \sum_{K_0\le j\le k} \Big( -\i \delta_n(z) \tfrac{\lambda}{\sqrt{N}\varrho(z)}  + \tfrac{1}{\sqrt\beta}  \overline{\W_{n,n-1}}(z)\big(1-e^{-\i \Im\varphi^0_{j-1}(\lambda;z) }\big) \Big)_{n=N_0+j} .
\end{equation}
Thus, modulo a time shift, $\{\varphi^0_k\}_{k\ge K_0}$ follows the same evolution as $\{2\partial\bpsi_{n}\}_{n\ge m}$ without the linearization errors (Lemma~\ref{lem:lin3}) with the same initial condition.
We compare the two processes using by applying a stochastic Gr\"onwall inequality. 
Consider the difference:
\[
\Delta^0_k(\lambda;z) \coloneqq \varphi^0_k(\lambda;z)  - 2\partial\bpsi_{N_0+k}(w_\lambda,z)  , \qquad k\in[K_0,K_1],
\]
with $K_1:=  {\rm c} N \varrho(z)^2$,  $\sqrt{\rm c} = \pi/2$  so that $N_0+K_1 = N$. 
In particular, the ratio $K_1/K_0 = {\rm c}/\delta $ is bounded uniformly in~$N$. 

\begin{proposition} \label{prop:Delta0}
Under the Assumptions~\ref{ass:bulk}, as $N\to\infty$  
\[
\max_{K_0< k \le K_1}|\Delta^0_k(\lambda;z) | \overset{\P}{\to} 0. 
\]
\end{proposition}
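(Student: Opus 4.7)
\smallskip

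\noindent\textbf{Proof proposal.} The plan is to apply the stochastic Gr\"onwall inequality (Lemma~\ref{lem:Gron}) to $\Delta_k^0$. First I would derive the recursion. Subtracting the one-step increment of \eqref{varphi0} from twice that of \eqref{varphi}, evaluated at $n = N_0+k$, and using the identity $\Im\varphi^0_{k-1} - \Im(2\partial\bpsi_{n-1}) = \Im\Delta^0_{k-1}$, we obtain a recursion of the exact form \eqref{Gron}:
\[
\Delta^0_k - \Delta^0_{k-1} = U_k + V_k\,\overline{\operatorname{f}(\Delta^0_{k-1})}, \qquad \operatorname{f}(w) = 1-e^{\i\Im w},
\]
with $U_k = -2\mathscr{E}_{n,n-1}(\lambda;z)$ (plus any deterministic mismatch between the two $\lambda$-drift terms, which cancels identically) and
\[
V_k = \tfrac{2}{\sqrt\beta}\,\overline{\W_{n,n-1}}(z)\,e^{-2\i\,\partial\phi_{n-1}(w_\lambda,z)}.
\]
Since $\W_{n,n-1}(z) = \i\delta_n(z)Z_n(z)e^{2\i\theta_n(z)}e^{2\i\phi_{n-1}(z)}$ and $\partial\phi_{n-1}$ is $\F_{n-1}$-measurable while $Z_n$ is centered and independent of $\F_{n-1}$, $V_k$ is an $\F_n$-martingale increment with $\|V_k\|_2^2 \lesssim \delta_n^2 \lesssim 1/k$. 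By construction of $\varphi^0$ the initial condition is $\Delta^0_{K_0}=0$, and the index range satisfies $K_1/K_0 = \mathrm{c}/\delta = O(1)$ uniformly in $N$.

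Next I would unpack $\mathscr{E}_{n,n-1}$ along the decomposition exhibited in the proof of Lemma~\ref{lem:lin3}, which expresses $\mathscr{E} = -U_{n,n-1} - \tfrac{1}{\sqrt\beta}A_{n,n-1} + \partial\operatorname{EL}_n$ with $U_{n,m}=B_{n,m}+\partial\mathbf{Q}^1_{n,m}$. These four pieces carry the following estimates: (i) the deterministic remainder $\partial\operatorname{EL}_n$ is of size $O(\delta_n^3N^\epsilon + \delta_n^2 N^{-5/6})$ with cumulative sum $O(N^{\epsilon-1/6})$; (ii) the oscillatory cumulative sum $\partial\mathbf{Q}^1_{n,m}$ is uniformly bounded by $O(R/\mathfrak R)$ on the event $\A_\chi(T,R;z)$; (iii) $B_{n,n-1}$ is adapted with $\|B_{n,n-1}\|_1 \lesssim \delta_n^3$, so $\|\sum B\|_1 \lesssim \delta_m \lesssim N^{-1/6}$; and (iv) $A_{n,n-1}$ is a martingale increment with $\|A_{n,n-1}\|_2 \lesssim \delta_n^2$, hence $\sum_n \|A_{n,n-1}\|_2^2 \lesssim \delta_m^2$. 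Setting $U^1_k$ to be (i)+(ii)+(iii) (absorbing the adapted but $L^1$-small $B$ term into the deterministic part via Markov's inequality) and $U^2_k = -A_{n,n-1}/\sqrt\beta$ realizes the decomposition required by Lemma~\ref{lem:Gron} with $\varepsilon = O(\mathfrak R^{\epsilon-1}) \to 0$, since the tail-bounds for $\A_\chi$ and $\A_m$ established in Section~\ref{sec:HP} give $\P[\A_\chi\cap\A_m]\to 1$ with $R = \mathfrak R^{\epsilon}$.

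Applying Lemma~\ref{lem:Gron} on $[K_0,K_1]$ with these $U^1_k$, $U^2_k$ and $V_k$ then yields $\max_{K_0<k\le K_1}|\Delta^0_k|\overset{\P}{\to}0$, which is precisely the claim. The main technical nuisance is point (iii): $B_{n,n-1}$ is adapted rather than centered or deterministic, so strictly speaking it does not slot directly into either $U^1$ or $U^2$. The clean fix is either to further decompose $B = (B-\E[B\mid\F_{n-1}]) + \E[B\mid\F_{n-1}]$ (the centered part joins $U^2$ with the same $L^2$-budget, the predictable part joins $U^1$ and has the same $L^1$-bound), or more simply to absorb $\sum B$ into a high-probability deterministic remainder via Markov's inequality on its $L^1$-norm. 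Either route is elementary, so once the recursion is written down the result is a direct application of the Gr\"onwall framework developed in Section~\ref{sec:Gron}.
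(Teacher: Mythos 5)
Your proposal is correct and follows essentially the same route as the paper: write the difference of \eqref{varphi} and \eqref{varphi0} as a recursion of type \eqref{Gron} with martingale increments $V_k$ built from $\overline{\W_{n,n-1}}$ (your factorization with the prefactor $e^{-2\i\partial\phi_{n-1}}$ and the paper's with $e^{-\i\Im\varphi^0_{k-1}}$ are both valid and give $\|V_k\|_2^2\lesssim k^{-1}$), note that the drifts cancel so the accumulated error is exactly $\mathscr{E}_{n,m}$, and then invoke the stochastic Gr\"onwall machinery on $[K_0,K_1]$ with $K_1/K_0={\rm c}/\delta$ bounded. The one place where you diverge is in the handling of the error: you re-open the internal decomposition of $\mathscr{E}_{n,n-1}$ from the proof of Lemma~\ref{lem:lin3} and try to fit its pieces into the rigid deterministic-plus-martingale splitting of Lemma~\ref{lem:Gron}, which creates the ``nuisance'' you flag with the adapted term $B_{n,n-1}$ (and, in fact, the same issue with $\partial\mathbf{Q}^1_{n,m}$, which is only bounded on $\A_\chi$ and hence not deterministic either). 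The paper avoids this entirely: it uses only the \emph{statement} of Lemma~\ref{lem:lin3}, namely $\max_{m\le n\le N}|\mathscr{E}_{n,m}|\lesssim\mathfrak{R}^{\epsilon-1}$ on $\A_\partial$ with $\P[\A_\partial]\to 1$, and applies Proposition~\ref{prop:l2logladder} directly in its event-intersected form $\P[\{\max|A_j|\ge a\}\cap\{\max|\sum U_k|\le u\}]$, taking $u\simeq\mathfrak{R}^{\epsilon-1}$ and $a=\mathfrak{R}^{2\epsilon-1}$. If you adopt that form of the Gr\"onwall bound, your fixes (recentering $B$, Markov on its $L^1$ norm) become unnecessary and the proof collapses to a few lines; as written, your route still works but duplicates the proof of Lemma~\ref{lem:lin3}.
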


\begin{proof}
The process $\{\Delta^0_k\}_{k\ge K_0}$ satisfies $\Delta^0_{K_0} =0$ and the evolution
\[
\Delta^0_k - \Delta^0_{k-1}= -  \tfrac{2}{\sqrt\beta}  V_{k} 
\big(1-e^{-\i \Im\Delta^0_k }\big) - U_k 
\] 
where  $V_k =  \overline{\W_{N_0+k,N_0+k-1}} e^{-\i \Im\varphi^0_{k-1}}$  and the errors satisfy $\sum_{j=K_0+1}^k U_j =  \mathscr{E}_{m+k,m}$.
In particular, the martingale increments satisfy 
$\|V_k \|_2^2 \lesssim \delta_{k+N_0}^2  =  k^{-1}$ and, by Lemma~\ref{lem:lin3},  
\[
\max_{K_0< k \le K_1} |{\textstyle \sum_{j=K_0+1}^k U_j }| \lesssim \mathfrak{R}^{\epsilon-1}  \qquad \text{on the event $\A_\partial(\lambda,\delta;z)$}.
\]
Thus, by Proposition~\ref{prop:l2logladder}, since $\mathfrak{R} \to \infty$ as $N\to\infty$, 
\[
\limsup_{N\to\infty}\P\Big[ \Big\{ \max_{K_0< k \le K_1}|\Delta^0_k| \ge \mathfrak{R}^{2\epsilon-1} \Big\} \cap \A_\partial \Big] =0 . 
\]
Since $\P[\A_\partial ]\to 1$ as $N\to\infty$ (uniformly for $\lambda\in\mathcal{K}$), this completes the proof.
\end{proof}

\paragraph{Step 2: Coarse graining the noise.}
We need to compare  \eqref{varphi0} to an evolution equation driven by Gaussian increments. To achieve this, we first aggregate the noise using a blocking scheme so that the accumulated noise along each blocks can be compared to independent complex Gaussians.

\smallskip

We introduce blocks $n_j \coloneqq N_0(z) + \eta(N) \varrho(z)^2 N j$ for $j\ge \mathfrak{J}_0= \delta \eta^{-1}$ so that $n_{\mathfrak{J}_0} = m$ and $\eta =\eta(N) \ll 1$ is a new parameter that will be fixed later in the course of the proof. 
Let $\mathfrak{J}_1= {\rm c}\eta^{-1}$ so that $n_{\mathfrak{J}_1} = N$, and define the random variables, for $j\ge \mathfrak{J}_0$,
\begin{equation} \label{Srv}
S_{j+1}(z)\coloneqq  \sqrt{\frac2{\mathfrak{D}}}  \sum_{k=n_{j}+1}^{n_{j+1}} Z_{k}(z) e^{2\i \vartheta_{k,n_{j}}(z)} , \qquad  \mathfrak{D} \coloneqq \eta(N) \varrho(z)^2 N . 
\end{equation}

%

Let ${\rm c}_\beta \coloneqq \sqrt{2/\beta}$.
Recall that we decompose the imaginary part of the phase $\phi_{n,m} =\Im(\bpsi_{n,m}) = \vartheta_{n,m}+ \boldsymbol{\chi}_{n,m}$ where $\boldsymbol{\chi}_{n,m}$ is the ``random part''.
Then, we introduce a new process  $\{\varphi^1_j(\lambda;z) \}_{j\ge \mathfrak{J}_0}$ which satisfies the evolution
\begin{equation} \label{varphi1}
\varphi^1_j(\lambda;z)  = 2\partial\bpsi_{m}(w_\lambda,z) +  \sum_{i= \mathfrak{J}_0}^{j-1}  \bigg( \frac{2\i\lambda}{\sqrt{N}\varrho(z)} \bigg(\sum_{k=n_{i} +1}^{n_{i+1}} \hspace{-.1cm} \delta_k(z)  \bigg)
+\i{\rm c}_\beta \big(1-e^{-\i \Im\varphi^1_{i}(\lambda;z) }\big)  \frac{e^{-2\i\phi_{n_{i}}(z)}\overline{S_{i+1}}(z) }{\sqrt i}  \bigg) .
\end{equation}
This should be compared to the evolution  \eqref{varphi0}, the deterministic terms are the same, but the random part of the phase is ``frozen''  along every block.
This is similar to the constructions from Section~\ref{sec:osc}.
We consider the difference 
\[
\Delta_j^1(\lambda;z)   \coloneqq  \varphi^1_j(\lambda;z)- \varphi^0_{n_j}(\lambda;z) , \qquad  \Delta_{\mathfrak{J}_0}^1(\lambda;z) =0 . 
\]
By applying a stochastic Gr\"onwall inequality, we obtain the following estimates:

\begin{proposition} \label{prop:Delta1}
Assume that $\eta(N) \ll 1$ as $N\to\infty$ (with $\mathfrak{D} \in \N$), then as $N\to\infty$,
\[
\max_{j \in[\mathfrak{J}_0, \mathfrak{J}_1)} |\Delta_j^1|  \overset{\P}{\to}   0. 
\]
Moreover, as $N\to\infty$, 
\begin{equation} \label{Wblockbd}
\big[\W_{N,m}(z)-\i{\textstyle \sum_{j = \mathfrak{J}_0}^{\mathfrak{J}_1-1}} \tfrac{e^{2\i\phi_{n_{j}}(z)}S_{j+1}(z)}{\sqrt{2j}} \big]  \overset{\P}{\to}   0.  
\end{equation}
\end{proposition}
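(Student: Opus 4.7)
\textbf{Plan for the proof of Proposition~\ref{prop:Delta1}.}

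The strategy is to cast the difference $\Delta_j^1 = \varphi^1_j - \varphi^0_{n_j}$ as a discrete equation of the form \eqref{Gron}, then invoke the stochastic Gr\"onwall inequality of Lemma~\ref{lem:Gron}. Summing the defining recursion \eqref{varphi0} over $n \in (n_i, n_{i+1}]$ and subtracting from the $i$-th increment of \eqref{varphi1}, we obtain
\[
\Delta_{i+1}^1 - \Delta_{i}^1 = V_{i+1}\bigl(1 - e^{-\i \Im \Delta_i^1}\bigr) + U_{i+1},
\qquad V_{i+1} \coloneqq \i {\rm c}_\beta \tfrac{e^{-2\i\phi_{n_i}(z)}\overline{S_{i+1}(z)}}{\sqrt{i}} e^{-\i \Im \varphi^1_i},
\]
where $\|V_{i+1}\|_2^2 \lesssim 1/i$ by the near-orthogonality of $\{Z_k e^{2\i\vartheta_{k,n_i}}\}$ on block $i$ (so $\Exp |S_{i+1}|^2 \lesssim 1$), and the remainder $U_{i+1}$ collects three families of errors: (a) the telescoping contribution from replacing $\Im \varphi^0_{j-1}$ (variable inside the block) by $\Im \varphi^1_i$ (frozen); (b) the replacement of the exact phase $\theta_n + \phi_{n-1} = \vartheta_{n,n_i} + \phi_{n_i} + \chi_{n-1,n_i}$ by $\vartheta_{n,n_i} + \phi_{n_i}$, creating a factor $(e^{-2\i \chi_{n-1,n_i}} - 1)$; (c) the replacement of $\delta_n$ by $1/\sqrt{\mathfrak{D} i}$ on the block, with pointwise deterministic error $\lesssim \mathfrak{D}^{-1/2} i^{-3/2}$.

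Each of these errors splits into a martingale part and a deterministic part, as in the proofs of Propositions~\ref{prop:osc1}--\ref{prop:Q}. The continuity estimate of Proposition~\ref{lem:varphase}, applied with the admissible blocks $\{n_j\}$ (which indeed satisfy $\delta_{n_j}^2(n_{j+1}-n_j) \asymp j^{-1}$), controls (b) by $|\chi_{n-1,n_i}| \lesssim R i^{(\epsilon-1)/2}$ on $\A_\chi(R,T;z)$; the oscillatory-sum machinery of Section~\ref{sec:osc} (using that $e^{-4\i\phi_{n-1}}$ averages out along the block) controls (a) and the bracket terms arising from (b), (c). Summed over blocks $i \in [\mathfrak{J}_0, \mathfrak{J}_1)$, the deterministic part of $U$ is bounded by $\varepsilon = \varepsilon(\eta,R)$ tending to $0$ as $\eta \to 0$ and $R$ grows appropriately, while $\sum_i \Exp|U_{i+1}^{\rm mart}|^2 \to 0$ in the same regime. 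Together with the bound on $V_{i+1}$, the hypotheses of Lemma~\ref{lem:Gron} are met, yielding $\max_{j \in [\mathfrak{J}_0,\mathfrak{J}_1)}|\Delta_j^1| \overset{\P}{\to} 0$.

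For the bracket claim \eqref{Wblockbd}, decompose directly
\[
\W_{N,m}(z) - \i \sum_{j=\mathfrak{J}_0}^{\mathfrak{J}_1-1} \tfrac{e^{2\i \phi_{n_j}(z)} S_{j+1}(z)}{\sqrt{2j}}
= \sum_j \sum_{n \in (n_j, n_{j+1}]} \i Z_n(z)\, e^{2\i(\vartheta_{n,n_j}(z) + \phi_{n_j}(z))} \Bigl[\delta_n(z) e^{2\i \chi_{n-1,n_j}(z)} - \tfrac{1}{\sqrt{\mathfrak{D} j}}\Bigr],
\]
which is an $\{\F_n\}$-martingale. Its bracket expands into (i) the leading term $\sum_j \sum_n (\delta_n - 1/\sqrt{\mathfrak{D} j})^2 \Exp[Z_n^2|\F_{n-1}] e^{4\i(\vartheta_{n,n_j}+\phi_{n_j})}$, controlled by the oscillatory sum estimate of Proposition~\ref{prop:osc2} combined with the deterministic smallness $|\delta_n - 1/\sqrt{\mathfrak{D} j}| \lesssim \mathfrak{D}^{-1/2} j^{-3/2}$; (ii) a cross-term proportional to $\sum \delta_n(e^{2\i\chi_{n-1,n_j}}-1)/\sqrt{\mathfrak{D} j}$, controlled by $|\chi| \lesssim R j^{(\epsilon-1)/2}$ from Proposition~\ref{lem:varphase}; and (iii) the purely $\chi$-driven term $\sum \delta_n^2 (e^{4\i \chi}-1)$, of the same order. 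Summing over blocks and taking $\eta(N) \to 0$, each contribution tends to $0$ in probability, whence the bracket vanishes and the martingale difference itself converges to $0$ in probability.

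\textbf{Main obstacle.} The tight point is quantitative: the phase-continuity bound $|\chi_{n-1,n_j}| \lesssim R j^{(\epsilon-1)/2}$ holds only on $\A_\chi(R,T;z)$ whose complement probability is $\exp(-cR)$ rather than $\exp(-cR^2)$, so the summed error $\sum_j R^2 j^{\epsilon - 2}$ is merely $O(R^2)$ -- bounded but not vanishing at fixed $R$. To extract genuine convergence to $0$ one must let $R = R(N) \to \infty$ jointly with $\eta(N) \to 0$, balancing the probability of $\A_\chi$ against the size of the bracket and of the $U_{i+1}^1$ sum inside Lemma~\ref{lem:Gron}; this tuning, together with the bookkeeping of the oscillatory sums at the parabolic time scale, is the technical heart of the proof.
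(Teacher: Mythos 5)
Your plan is essentially the paper's proof: the same block recursion of type \eqref{Gron} with $V_{j+1}$ the block-martingale noise of conditional variance $\lesssim 1/j$, the same three error sources (freezing of the phase inside each block, dropping $\boldsymbol{\chi}_{\cdot,n_j}$, replacing $\delta_n$ by $1/\sqrt{\mathfrak{D}j}$), control of the $\boldsymbol{\chi}$-part via Proposition~\ref{lem:varphase} on $\A_\chi$, conclusion via Lemma~\ref{lem:Gron}, and the bracket claim \eqref{Wblockbd} by the same block-level comparison. Two caveats: the paper handles your error (a), the within-block variation of $\varphi^0$, by a direct drift/martingale bound (an auxiliary event on which $\max_{k\in[n_j,n_{j+1}]}|\varphi^0_{k,n_j}|\lesssim R j^{-1/2}$), not by the oscillatory-sum machinery, which keeps the hypothesis at just $\eta\ll1$ — averaging $e^{2\i\vartheta}$ over a block of length $\mathfrak{D}=\eta\varrho^2N$ would implicitly require $\eta\mathfrak{R}^3\gg1$, an assumption this proposition does not make (it only enters later, in Lemma~\ref{lem:coupling}). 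Also, your ``main obstacle'' is illusory: the block index starts at $\mathfrak{J}_0=\delta/\eta\to\infty$, so the summed error $\sum_{j\ge\mathfrak{J}_0}R^2 j^{\epsilon-2}\lesssim R^2\mathfrak{J}_0^{\epsilon-1}$ vanishes at fixed $R$ as $\eta\to0$, and the standard two-step limit ($N\to\infty$ with $\eta(N)\to0$ first, then $R\to\infty$ only to kill $\P[\A_\chi^{\rm c}]$ and $\P[\A_\varphi^{\rm c}]$) suffices; no joint tuning of $R(N)$ against $\eta(N)$ is needed, although the slow joint tuning you propose would also work.
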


\begin{proof}
Both  \eqref{varphi1} and  \eqref{varphi0} are of the type \eqref{Gron} with  ${\rm f}(w)=(1-e^{\i\Im w})$.
In particular, $\overline{{\rm f}(w_1)}-\overline{{\rm f}(w_0)}= -e^{\i\Im w_1} {\rm f}(w_1-w_0)$.
Then, we can decompose
\begin{equation} \label{Delta1}
\Delta_{j+1}^1 - \Delta_{j}^1 =\tfrac2{\sqrt\beta} \overline{V_{j+1}{\rm f}(\Delta_{j}^1)} 
+ \tfrac2{\sqrt\beta}  \big(\overline{U_{j+1}^1 + U_{j+1}^0}\big) , \qquad  V_{j+1}= e^{\i \Im\varphi^1_{j} } \W_{n_{j+1},n_j} ,
\end{equation}
where the errors (replacing $S_{j+1}$)
\[
U_{j+1}^1 = {\rm f}(\varphi^1_{j}) \sum_{k=n_{j}+1}^{n_{j+1}} \big(\W_{k,k-1} - \i\sqrt{\tfrac1{j\mathfrak{D}}} Z_{k} e^{2\i \vartheta_{k,n_{j}}} e^{2\i\phi_{n_j}} \big) , \qquad 
U_{j+1}^0= e^{\i \varphi^0_{n_j}}  \sum_{k=n_{j}+1}^{n_{j+1}} \W_{k,k-1} {\rm f}(\varphi^0_{k-1,n_j}).
\]
In this expansion,  $\{V_{j}\}, \{U_{j}^1\}, \{U_{j}^0\}$ are all  martingale increments with respect to the filtration 
$\{\F_{n_j}\}$ and using that $\W$ is a martingale sum with $\|Z_k\|_2^2 \lesssim 1$,  
\[
\|\W_{n_{j+1},n_j} \|_2^2 \le \delta_{n_j}^2 {\textstyle \sum_{k = n_{j}+1}^{n_{j+1}}}\|Z_k\|_2^2 \lesssim \delta_{n_j}^2 \mathfrak{D} = j^{-1}
\]
since $\W_{k,k-1}(z) = \i \delta_k  Z_k  e^{2\i\theta_k}e^{2\i\phi_{k-1}} $  (see Lemma~\ref{lem:lin3}).
Similarly, using that $\phi_{k-1} = \phi_{n_j}+ \vartheta_{k-1,n_j}+ \boldsymbol{\chi}_{k-1,n_j}$,
we can decompose 
\begin{equation} \label{ErrUblock}
U_{j+1}^1 = \i\, {\rm f}(\varphi^1_{j}) \sum_{k=n_{j}+1}^{n_{j+1}} \Big(
\big( \delta_k - \delta_{n_j}\big) Z_k  e^{2\i\theta_k}e^{2\i\phi_{k-1}} 
+  \delta_{n_j} Z_{k} e^{2\i \vartheta_{k,n_{j}}}e^{2\i\phi_{n_j}} {\rm f}\big(\boldsymbol{\chi}_{k-1,n_j}\big)\Big) 
\end{equation}
This sum is also a martingale with quadratic variation ($Z_k$ are independent random variables with $\E|Z_k|^2=1$ and ${\rm f}$ is 1-Lipschitz continuous and uniformly bounded by 2):
\[
[U_{j+1}^1] \lesssim \mathfrak{D} |\delta_{n_{j+1}} - \delta_{n_j}|^2
+ \mathfrak{D} \delta_{n_j}^2 \max_{k\in(n_j,n_{j+1}]}|{\rm f}\big(\boldsymbol{\chi}_{k-1,n_j}\big)|^2
\le  j^{-2} +  j^{-1}\big(\max_{k\in(n_j,n_{j+1}]}|\boldsymbol{\chi}_{k-1,n_j}| \big)^2
\]
where $\mathfrak{D}$ and using that $\mathfrak{D} \delta_{n_j}^2 = j^{-1}$. 
We now use that the  random phase $\{ \boldsymbol{\chi}_{k,n_j}\}_{k\in[n_j,n_{j-+1}]}$ is slowly varying. 
On the event $\A_\chi = \A_{\chi}(R,\mathfrak{J}_0; z)$ from Lemma~\ref{lem:varphase} with the blocks $\{n_j \}$ and $R \gg 1$, we have  
\[
\max_{j\in[\mathfrak{J}_0,\mathfrak{J}_1]}\big\{ \sqrt{j} \max_{k\in[n_j,n_{j+1}]} |   \boldsymbol{\chi}_{k,n_j} | \big\} \lesssim R
\]
where the implied constant depend only on $\delta$ since $\mathfrak{J}_1/\mathfrak{J}_0 = {\rm c}/\delta$ is independent of $N$. 
Then, on $\A_\chi$, we have  
\[
\big[{\textstyle \sum_{j = \mathfrak{J}_0}^{\mathfrak{J}_1-1}} U_{j+1}^1 \big]
= {\textstyle \sum_{j = \mathfrak{J}_0}^{\mathfrak{J}_1-1}}[U_{j+1}^1] \lesssim R  {\textstyle \sum_{j = \mathfrak{J}_0}^{\mathfrak{J}_1-1}} j^{-2}  \lesssim R \eta . 
\]
This estimate can replace the bound \eqref{Gronmartbd} from Lemma~\ref{lem:Gron}, namely 
\begin{equation} \label{U1}
\E\big( \1\{\A_\chi\} \big[{\textstyle \sum_{j = \mathfrak{J}_0}^{\mathfrak{J}_1-1}} U_{j+1}^1 \big]\big)   \lesssim R \eta  
\end{equation}
will suffice to prove that $\max_{j \in[\mathfrak{J}_0, \mathfrak{J}_1)} |\Delta_j^1| $ converges to 0 in probability since the parameter $\eta \ll 1$. 

\smallskip

We proceed similarly to control $\{U_{j}^0\}$. Using the evolution \eqref{varphi0}, one has for $k\in(n_j, n_{j+1}]$, 
\[
\varphi^0_{k,n_j} = 2 \sum_{n=n_j+1}^{k} \Big(\delta_{n} \tfrac{-\i\lambda}{\sqrt{N\varrho^2}}  + \tfrac{1}{\sqrt\beta}  \overline{\W_{n,n-1}{\rm f}(\varphi^0_{n-1})}\Big) .
\] 
The drift term is $\O\big(\delta_{n_j} \tfrac{\mathfrak{D}}{\sqrt{N\varrho^2}}\big)  = \O\big( \tfrac{\eta}{j}\big)  $
by \eqref{Srv} and the quadratic variation of the martingale 
$[\cdot] \lesssim \delta_{n_j}^2\mathfrak{D} = j^{-1}$
(this is a deterministic bound since $|{\rm f}| \le 2$). 
In particular, the drift is negligible and using a martingale tail-bound, for any $R\ge 1$, 
\[
\P\Big[\max_{k\in[n_j,n_{j+1}]} |  \varphi^0_{k,n_j}| \ge R j^{(\epsilon-1)/2}  \mathfrak{J}_0^{-\epsilon/2} \big] \lesssim \exp\big(-cR^2 (j/ \mathfrak{J}_0)^\epsilon\big)
\]
Then, by a union bound (using that $\mathfrak{J}_1/\mathfrak{J}_0 \le C(\delta)$), for $R\ge 1$
\[
\P\Big[\underbrace{\Big\{\max_{j\in[\mathfrak{J}_0,\mathfrak{J}_1]}\big\{ \sqrt{j} \max_{k\in[n_j,n_{j+1}]}  |  \varphi^0_{k,n_j}| \big\}  \ge C R\Big\}}_{\A_\varphi^{\rm c}} \Big] \lesssim \exp(-cR^2) . 
\]
Exactly as above, 
$\displaystyle\sqrt{[U_{j+1}^2]} \lesssim j^{-1/2} \max_{k\in(n_j,n_{j+1}]}|\boldsymbol{\chi}_{k-1,n_j}| \lesssim R j^{-1}$ on the event ${\A_\varphi}$ and we conclude that 
\begin{equation} \label{U2}
\E\big( \1\{\A_\varphi\} \big[{\textstyle \sum_{j = \mathfrak{J}_0}^{\mathfrak{J}_1-1}} U_{j+1}^2 \big]\big)   \lesssim R \eta . 
\end{equation}
Hence, applying Lemma~\ref{Gron} using \eqref{U1}, \eqref{U2},
since $\P[\A_\chi^{\rm c}],  \P[\A_\varphi^{\rm c}] \to 0$ as $N\to\infty$ followed by $R\to\infty$ (see Lemma~\ref{lem:varphase} with $\P[\A_m^{\rm c}] \lesssim \exp(-c N^\epsilon)$ in this regime) and $\eta \to 0$ as $N\to\infty$, we deduce that 
$\max_{j \in[\mathfrak{J}_0, \mathfrak{J}_1)} |\Delta_j^1|  \to 0$ in probability as $N\to\infty$.

\smallskip

Finally, exactly as in \eqref{ErrUblock},
\[
\left(\W_{n_{j+1}, n_j} -\tfrac{e^{2\i\phi_{n_{j}}}S_{j+1}}{\sqrt{2j}} \right)
\]
are martingales with the same control as $U_{j+1}^1$, then \eqref{Wblockbd} follows as in \eqref{U1}.
\end{proof}

\paragraph{Step3: Gaussian coupling.}
We now proceed to replace the driving noise $\{S_j\}$ in the evolution \eqref{varphi1} by independent complex Gaussians. This relies on the following coupling:

\begin{lemma} \label{lem:coupling}
Assume $\eta(N) \ll 1$ in such a way that $\eta \mathfrak{R}^3 \gg 1 $ (Assumptions~\ref{ass:bulk}) as $N\to\infty$. 
We enlarge our probability space with a sequence of i.i.d.~random variables $\mathscr{Z}_j \sim \boldsymbol{\gamma}_{\C} $ and an independent complex Gaussian random variable $\mathscr{G}_\delta$ with $\E|\mathscr{G}_\delta|^2 =  \log({\rm c}/\delta) +\o(1)$ as $N\to\infty$  such that for any $p\ge 1$, in Wasserstein-$p$ distance, 
\[
\limsup_{N\to\infty}\sup_{j\ge \mathfrak{J}_0(N)} \d_\W^p(S_j,\mathscr{Z}_j ) =0 , \qquad 
\limsup_{N\to\infty} \d_\W^p(\G_{N,m},\mathscr{G}_\delta) =0 .
\]
Moreover, on this enlarged probability space, we consider the filtration for $j\ge \mathfrak{J}_0$, 
\begin{equation} \label{newfiltration}
\widehat{\F}_j \coloneqq \F_{n_j} \vee \sigma\big( \mathscr{Z}_i  : i\le j \big) .
\end{equation}
Then, the processes $\{\phi_{n_j}(z)\}$ and $\{\varphi^1_j(\cdot;z)\}$ are $\{\widehat{\F}_j \}$ adapted and for every $j$, the random variables  $\{S_k,\mathscr{Z}_k\}_{k>j}$ are independent of $\widehat{\F}_j $.
\end{lemma}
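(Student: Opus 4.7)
The first observation is that because $z$ and the block endpoints $\{n_j\}$ are deterministic, the phases $\vartheta_{k,n_j}(z)$ are deterministic, so $S_{j+1}$ is a fixed linear combination of $\{Z_k(z)\}_{n_j < k \le n_{j+1}}$. By the independence structure of Definition~\ref{def:noise} this makes $S_{j+1}$ independent of $\F_{n_j}$, and the whole family $\{S_{j+1}\}_{j \ge \mathfrak{J}_0}$ is mutually independent. The same holds for $\G_{N,m}$, since under Assumptions~\ref{ass:bulk} one has $m \ge N_0 + T\mathfrak{L}$ so the cutoff $\Gamma_T(z)$ is inactive on $(m,N]$, and $\G_{N,m}$ is a weighted independent sum that is independent of $\F_m$. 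Consequently the coupling problem decouples across blocks.

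The second step is to identify the limiting covariances. The normalization $\sqrt{2/\mathfrak{D}}$ combined with $\E|Z_k(z)|^2 = 1$ from Lemma~\ref{lem:Z} gives $\E|S_{j+1}|^2 = 2$ exactly, matching $\boldsymbol{\gamma}_\C$. For the pseudocovariance,
\begin{equation*}
\E S_{j+1}^2 = \tfrac{2}{\mathfrak{D}}\sum_{k=n_j+1}^{n_{j+1}} \cos\theta_k(z)\, e^{-\i\theta_k(z)}\, e^{4\i\vartheta_{k,n_j}(z)},
\end{equation*}
and Abel summation together with the geometric-sum bound $|\sum_k e^{4\i\vartheta_{k,n_j}}| \lesssim |\sin 2\theta_{n_j}|^{-1}$ (as in Lemma~\ref{lem:osc}) plus the variation $|\cos\theta_k e^{-\i\theta_k} - \cos\theta_{n_j} e^{-\i\theta_{n_j}}| \lesssim \mathfrak{D}\delta_{n_j}^2 \lesssim \eta$ along a block yields $|\E S_{j+1}^2| \lesssim (\mathfrak{D}|\sin 2\theta_{n_j}|)^{-1} + \eta$. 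Under the hypotheses $\eta \mathfrak{R}^3 \gg 1$, $\eta \ll 1$, and $\theta_{n_j} \gtrsim \sqrt{\delta}$ on $j \in [\mathfrak{J}_0,\mathfrak{J}_1]$, this vanishes uniformly in $j$; hence the $2\times 2$ real covariance of $S_{j+1}$ converges to $\mathrm{Id}$ uniformly. A Riemann-sum computation (as in Proposition~\ref{prop:G1}) identifies the limiting covariance of $\G_{N,m}$ as a deterministic matrix $\Sigma_\delta$ with $\mathrm{tr}\,\Sigma_\delta = \log({\rm c}/\delta) + \o(1)$, and fixes the law of $\mathscr{G}_\delta$ as the centered complex Gaussian with that covariance.

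Third, sub-Gaussianity of $\{Z_k\}$ from Definition~\ref{def:noise} provides the moment control needed to apply a quantitative Wasserstein-$p$ central limit theorem for sums of independent sub-Gaussian random vectors (e.g.\ Zaitsev's multidimensional strong approximation or a Bonis-type Wasserstein bound): one obtains
\begin{equation*}
d_\W^p\bigl(S_{j+1},\, \mathcal{N}_\C(0,\Sigma_{j+1})\bigr) \lesssim_p \mathfrak{D}^{-1/2}(\log \mathfrak{D})^{C_p}
\end{equation*}
uniformly in $j$, where $\Sigma_{j+1}$ is the covariance of $S_{j+1}$; combining with the convergence $\Sigma_{j+1} \to \mathrm{Id}$ and the Wasserstein continuity of the Gaussian family in its covariance gives $\sup_{j \ge \mathfrak{J}_0(N)} d_\W^p(S_{j+1}, \boldsymbol{\gamma}_\C) \to 0$. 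The same argument applied to the $(N-m)$-term sum $\G_{N,m}$ yields $d_\W^p(\G_{N,m}, \mathscr{G}_\delta) \to 0$.

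Finally, I would realize the coupling on an enlarged probability space by introducing auxiliary independent uniforms $\{U_j\}$ on $[0,1]$, setting $\mathscr{Z}_{j+1}$ to be the result of the optimal Wasserstein-$p$ coupling between the laws of $S_{j+1}$ and $\boldsymbol{\gamma}_\C$, selected measurably via a Skorokhod representation as a function of $(S_{j+1}, U_{j+1})$; do the same for $(\G_{N,m}, \mathscr{G}_\delta)$ at the terminal step. Taking $\widehat{\F}_j = \F_{n_j} \vee \sigma(U_i, \mathscr{Z}_i : i \le j)$, the $\widehat{\F}_j$-measurability of $\phi_{n_j}$ and $\varphi^1_j$ follows because $\phi_{n_j}$ is $\F_{n_j}$-measurable by construction while $\varphi^1_j$ is determined recursively by $\eqref{varphi1}$ in terms of $\{\mathscr{Z}_i, \phi_{n_i}\}_{i < j}$, and the independence of $\{S_k, \mathscr{Z}_k\}_{k>j}$ from $\widehat{\F}_j$ follows because $S_k$ is independent of $\F_{n_j}$ and $U_k$ is fresh. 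The main obstacle is obtaining the Wasserstein-$p$ rate \emph{uniformly} in $j$ and in the (slowly varying, possibly degenerate-looking) covariances $\Sigma_{j+1}$; this is resolved by the uniform sub-Gaussian norm supplied by Definition~\ref{def:noise} together with the uniform lower bound $\theta_{n_j} \gtrsim \sqrt{\delta}$ ensuring $\Sigma_{j+1}$ stays in a compact neighborhood of $\mathrm{Id}$.
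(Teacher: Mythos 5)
Your treatment of the block sums $S_j$ is essentially sound and even somewhat stronger than the paper's: the covariance computations ($\E|S_{j+1}|^2=2$, $|\E S_{j+1}^2|\lesssim (\mathfrak{D}\sin\theta_{n_j})^{-1}+\eta\ll1$ under $\eta\mathfrak{R}^3\gg1$) match the paper's, and replacing the paper's soft argument (multivariate CLT with convergence of all moments, plus a compactness footnote giving uniformity of $\d_\W^p$ over $j$) by a quantitative Wasserstein/strong-approximation bound is a legitimate alternative, provided you correct the lower bound on the angle to $\sin\theta_{n_j}\gtrsim\sqrt{\delta}\,\varrho(z)$ (not $\gtrsim\sqrt{\delta}$), which is still enough since $\varrho^3N\ge\mathfrak{R}^3$. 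Your randomized measurable selection of the optimal couplings and the resulting adaptedness/independence of $\{S_k,\mathscr{Z}_k\}_{k>j}$ from $\widehat{\F}_j$ are also fine.

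The genuine gap is the construction of $\mathscr{G}_\delta$. The lemma asserts that $\mathscr{G}_\delta$ is a complex Gaussian \emph{independent} of the i.i.d.\ sequence $\{\mathscr{Z}_j\}$, and this independence is what is used later (Section~\ref{sec:sine}: $\mathscr{G}_\delta$ must be independent of the Brownian motion $\{\zeta^z_t\}$ built from the $\mathscr{Z}_j$, so that the characteristic-function factorization in the Weyl equidistribution step goes through). Coupling $\G_{N,m}$ to a Gaussian ``at the terminal step'' via a separate optimal coupling cannot deliver this: $\G_{N,m}$ is built from the \emph{same} noise $\{X_k,Y_k\}_{m<k\le N}$ as the $S_j$, so a $\mathscr{G}_\delta$ constructed as a function of $(\G_{N,m},U)$ will in general be correlated with the already-constructed $\mathscr{Z}_j$'s; exact independence is not a consequence of the two marginal Wasserstein bounds. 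The paper resolves this by running the CLT \emph{jointly} per block for the four-dimensional vector $(\Re S_j,\Im S_j,G_j^X,G_j^Y)$, where $G_j^X,G_j^Y$ are the real block sums whose weighted combination reconstitutes $\G_{N,m}$ (up to a negligible error from the variation of $e^{-\i\theta_k}$ within a block); the cross-covariances $\E(S_{j+1}G_{j+1}^X),\E(S_{j+1}G_{j+1}^Y)$ are shown to vanish by the same oscillatory-sum bound, so the limiting Gaussian four-vector has independent components. One then couples the whole four-vector blockwise to independent Gaussians $(\mathscr{Z}_j,\mathscr{G}_j^X,\mathscr{G}_j^Y)$ and defines $\mathscr{G}_\delta\coloneqq-\i\sum_j(\mathscr{G}_j^X+e^{-\i\theta_{n_j}}\mathscr{G}_j^Y)/\sqrt{2j}$, which is exactly independent of $\{\mathscr{Z}_j\}$ by construction, with $\E\bigl[\operatorname{dist}(\G_{N,m},\mathscr{G}_\delta)^2\bigr]\lesssim\log({\rm c}/\delta)\,\gamma(N)\ll1$ and $\E|\mathscr{G}_\delta|^2=\sum_j j^{-1}=\log({\rm c}/\delta)+\o(1)$. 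Your proof needs this joint blockwise step (or an equivalent joint coupling of the full vector $(\{S_j\},\G_{N,m})$ with a product Gaussian law); without it the stated independence of $\mathscr{G}_\delta$ is unproved.
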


The proof of Lemma~\ref{lem:coupling} relies on some estimates in Wasserstein distance and it is postponed at the end of this section. For now, we record its consequence for the evolution \eqref{varphi1}.
Replacing the noise $\{S_j\}$ by $\{\mathscr{Z}_j\}$ and also the drift terms\footnote{
$\sum_{k=n_{j} +1}^{n_{j+1}} \delta_k(z) = \sqrt{\mathfrak{D}/j}\big(1+\O(j^{-1})\big).$
by \eqref{deltablocking}},
we consider a new process  $\{\varphi^2_j(\lambda;z) \}_{j\ge \mathfrak{J}_0}$ which satisfies the evolution
\begin{equation} \label{varphi2}
\varphi^2_j(\lambda;z)  = 2\partial\bpsi_{m}(w_\lambda,z) +  \sum_{i= \mathfrak{J}_0}^{j-1}  \bigg({2\i\lambda}\sqrt{\frac\eta{i}}
+\i{\rm c}_\beta \big(1-e^{-\i \Im\varphi^2_{i}(\lambda;z) }\big)  \frac{e^{-2\i\phi_{n_{i}}(z)}\overline{\mathscr{Z}_{i+1}}}{\sqrt i}  \bigg) .
\end{equation}
By construction, this process is also $\{\widehat{\F}_j \}$  adapted  and the initial data $\partial\bpsi_{m}(\cdot,z)$ is measurable in $\widehat{\F}_{\mathfrak{J}_0}$.
As usual, we control the difference with \eqref{varphi1} using a stochastic Gr\"onwall inequality.
Let 
\[
\Delta_j^2(\lambda;z)   \coloneqq  \varphi^2_j(\lambda;z)-\varphi^1_j(\lambda;z), \qquad  \Delta_{\mathfrak{J}_0}^2(\lambda;z) =0 . 
\]

\begin{proposition} \label{prop:Delta2}
Assume $\eta(N) \ll 1$ in such a way that $\eta \mathfrak{R}^3 \gg 1 $ (Assumptions~\ref{ass:bulk}) as $N\to\infty$, then
\[
\max_{j \in[\mathfrak{J}_0, \mathfrak{J}_1)} |\Delta_j^2|  \overset{\P}{\to}   0. 
\]
Moreover, as $N\to\infty$, 
\[
\big|\W_{N,m}(z)- \i{\textstyle \sum_{j = \mathfrak{J}_0}^{\mathfrak{J}_1-1}} \tfrac{e^{2\i\phi_{n_{j}}(z)}\mathscr{Z}_{j+1}}{\sqrt{2j}} \big| \overset{\P}{\to}   0. 
\]
\end{proposition}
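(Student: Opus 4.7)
The plan is to subtract \eqref{varphi1} from \eqref{varphi2} and cast the resulting difference as an instance of \eqref{Gron} in the enlarged filtration $\{\widehat{\F}_j\}$ of Lemma~\ref{lem:coupling}, then invoke Lemma~\ref{lem:Gron}. Writing ${\rm f}(w)=1-e^{\i\Im w}$, the telescoping identity $\overline{{\rm f}(\varphi^2_j)}-\overline{{\rm f}(\varphi^1_j)}=e^{-\i\Im\varphi^1_j}\overline{{\rm f}(\Delta^2_j)}$ produces
\[
\Delta^2_{j+1}-\Delta^2_j = D_{j+1}+U_{j+1}+V_{j+1}\,\overline{{\rm f}(\Delta^2_j)},
\]
where $D_{j+1}=2\i\lambda\bigl(\sqrt{\eta/j}-(\sqrt{N}\varrho(z))^{-1}\sum_{k=n_j+1}^{n_{j+1}}\delta_k(z)\bigr)$ is deterministic, $V_{j+1}=\i\mathrm{c}_\beta\, j^{-1/2}e^{-2\i\phi_{n_j}(z)-\i\Im\varphi^1_j}\,\overline{\mathscr{Z}_{j+1}}$, and $U_{j+1}=\i\mathrm{c}_\beta\, j^{-1/2}e^{-2\i\phi_{n_j}(z)}\overline{{\rm f}(\varphi^1_j)}(\overline{\mathscr{Z}_{j+1}}-\overline{S_{j+1}})$. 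The initial condition is $\Delta^2_{\mathfrak{J}_0}=0$ and the ratio $\mathfrak{J}_1/\mathfrak{J}_0=\tau/\delta$ is bounded independently of $N$.

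Next I verify the hypotheses of Lemma~\ref{lem:Gron} for $\{\widehat{\F}_j\}$. Since $\mathscr{Z}_{j+1}\sim \boldsymbol{\gamma}_{\C}$ is independent of $\widehat{\F}_j$ with an $\widehat{\F}_j$-measurable prefactor of modulus $\lesssim 1$, the $V_{j+1}$ are martingale increments with $\|V_{j+1}\|_2^2\lesssim j^{-1}$. Both $S_{j+1}$ and $\mathscr{Z}_{j+1}$ are centered and independent of $\widehat{\F}_j$, hence $U_{j+1}$ is also a $\widehat{\F}$-martingale increment, and the uniform Wasserstein bound $\|\mathscr{Z}_{j+1}-S_{j+1}\|_2\le \varepsilon_N\to 0$ from Lemma~\ref{lem:coupling} yields $\sum_j\|U_{j+1}\|_2^2\lesssim \varepsilon_N^{2}\log(\tau/\delta)\to 0$. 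Finally, the Riemann-sum expansion $\sum_{k=n_j+1}^{n_{j+1}}\delta_k(z)=\sqrt{\mathfrak{D}/j}\,(1+\O(j^{-1}))$ with $\mathfrak{D}=\eta N\varrho(z)^2$ gives $|D_{j+1}|\lesssim |\lambda|\sqrt{\eta}\,j^{-3/2}$ and $\sum_j|D_{j+1}|\lesssim |\lambda|\eta/\sqrt{\delta}\to 0$ since $\eta(N)\to 0$. Lemma~\ref{lem:Gron} then delivers $\max_{\mathfrak{J}_0\le j\le\mathfrak{J}_1}|\Delta^2_j|\overset{\P}{\to}0$. The second display follows from \eqref{Wblockbd} combined with the $L^2$ bound
\[
\Exp\bigg|{\textstyle\sum_{j=\mathfrak{J}_0}^{\mathfrak{J}_1-1}}\frac{e^{2\i\phi_{n_j}(z)}(S_{j+1}-\mathscr{Z}_{j+1})}{\sqrt{2j}}\bigg|^2\lesssim \varepsilon_N^{2}\log(\tau/\delta),
\]
which is again a $\widehat{\F}$-martingale sum by the same argument used for $U_{j+1}$.

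The main obstacle will be justifying the martingale step: the coupling only provides $L^p$-closeness of $S_{j+1}$ and $\mathscr{Z}_{j+1}$, so a pathwise bound on $U_{j+1}$ is unavailable. The construction of $\{\widehat{\F}_j\}$ in Lemma~\ref{lem:coupling} is designed precisely so that both noise sequences are martingale differences against it, which converts the Wasserstein estimate into the $L^2$ martingale bound needed to feed Lemma~\ref{lem:Gron}; every other ingredient (the Lipschitz structure of ${\rm f}$, the deterministic drift cancellation, the boundedness of $\mathfrak{J}_1/\mathfrak{J}_0$) is routine.
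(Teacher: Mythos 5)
Your proposal is correct and follows essentially the same route as the paper: the same decomposition into a deterministic drift error, a martingale coupling error controlled by the Wasserstein bound of Lemma~\ref{lem:coupling}, and the $V_{j+1}\overline{{\rm f}(\Delta^2_j)}$ term handled by the stochastic Gr\"onwall inequality (Lemma~\ref{lem:Gron}) in the enlarged filtration, with the second claim deduced from \eqref{Wblockbd} plus an $L^2$ bound of order $\gamma\log({\rm c}/\delta)$ on the substituted martingale sum. The only difference — freezing $e^{-\i\Im\varphi^1_j}$ rather than $e^{-\i\Im\varphi^2_j}$ in the Lipschitz term — is immaterial.
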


\begin{proof}
The process $\{\Delta^2_j\}_{k\ge \mathfrak{J}_0}$ satisfies the evolution (with $\mathfrak{D} = \eta \varrho^2 N$ )
\[
\Delta_{j+1}^2- \Delta_j^2 = \underbrace{ {2\i\lambda}\sqrt{\tfrac\eta{j}}
\big({\textstyle 1- \sqrt{\tfrac{j}{\mathfrak{D}}}\sum_{k=n_{j} +1}^{n_{j+1}}  \delta_k}  \big)}_{= U^1_{j+1}}
+ \i{\rm c}_\beta \underbrace{\tfrac{(1-e^{-\i \Im\varphi^1_{j}})   e^{-2\i\phi_{n_j}}}{\sqrt j} 
\big(\overline{\mathscr{Z}_{j+1}-S_{j+1}}\big)}_{U^2_{j+1}}
-\i{\rm c}_\beta \overline{V_{j+1}} (1-e^{-\i \Im\Delta^2_{j}})
\] 
where $V_{j+1} = e^{\i \Im\varphi^2_j}\frac{e^{2\i\phi_{n_{j}}(z)}\mathscr{Z}_{j+1}}{\sqrt j} $. 
In particular, $\{U^1_{j+1}\}$ are deterministic errors and $\{V_{j+1}\}$, $\{U^2_{j+1}\}$ are both martingale increments 
$\big( \E[V_{j+1}|\widehat{\F}_j] = \E[U^2_{j+1}|\widehat{\F}_j] =0\big)$ with 
\[
\|V_{j+1}\|_{2,n_j}^2 \lesssim j^{-1} , \qquad  \|U^2_{j+1}\|_{2,n_j}^2 \lesssim \gamma  j^{-1} , 
\]
where $\gamma(N) = \sup_{j\ge \mathfrak{J}_0(N)} \d_\W^2(S_j,\mathscr{Z}_j )$.
Moreover, the deterministic errors satisfy (with $\mathfrak{J}_0= \delta/\eta$),
\[
\sum_{j\ge \mathfrak{J}_0} |U^1_{j+1}| \lesssim \sqrt{\eta} \sum_{j\ge \mathfrak{J}_0} j^{-3/2}
\lesssim  \delta^{-1/2}\eta . 
\]
Then, as $\eta\ll 1$ and  $\gamma \ll 1$ according to Lemma~\ref{lem:coupling},
the first claim follows from Lemma~\ref{lem:Gron}. 

\smallskip

The second claim is a consequence of \eqref{Wblockbd}, since we can (deterministically) bound the bracket
\[
\big[{\textstyle \sum_{j = \mathfrak{J}_0}^{\mathfrak{J}_1-1}} \tfrac{e^{2\i\phi_{n_{j}}}S_{j+1}}{\sqrt{2j}} -{\textstyle \sum_{j = \mathfrak{J}_0}^{\mathfrak{J}_1-1}} \tfrac{e^{2\i\phi_{n_{j}}}\mathscr{Z}_{j+1}}{\sqrt{2j}} \big] 
\le \gamma \log({\rm c}/\delta) .
\]
as $\gamma \ll 1$, this quantity also converges to 0 in probability as $N\to\infty$.
\end{proof}

\begin{proof}[Proof of Lemma~\ref{lem:coupling}]
Recall that $\{Z_k\}$ are independent complex random variables with (Lemma~\ref{lem:Z}), 
\[
\E |Z_{k}|^2 =1 , \qquad \E Z_{k}^2 =  (\cos \theta_k) e^{-\i\theta_k} , \qquad
\]
In addition to \eqref{Srv}, we define for $j\ge \mathfrak{J}_0$, 
\[
G_{j+1}^X := \sqrt{j} \sum_{k=n_{j}+1}^{n_{j+1}} \delta_k  X_k , \qquad    G_{j+1}^Y := \sqrt{j} \sum_{k=n_{j}+1}^{n_{j+1}} \delta_k  Y_k . 
\]
Under the assumptions of Definition~\ref{def:noise}, the random variables $\{G_{j+1}^X ,G_{j+1}^Y\}$ are real-valued, independent with the same variance 
\[
\E (G_{j+1}^{X2}) = \E(G_{j+1}^{Y2}) =   \sum_{k=n_{j}+1}^{n_{j+1}} j\delta_k^2 = 1+ \O(\eta) 
\]
using that $j\ge \delta/\eta$ ($\delta$ is fixed) and $\delta_k$ are decreasing with
\begin{equation} \label{deltablocking}
\delta_{n_{j}}^2 (n_{j+1}-n_j) = j^{-1} , \qquad \delta_{n_{j+1}}^2 (n_{j+1}-n_j) = (1+j)^{-1} .
\end{equation}
The sequence of random variables $\{S_{j+1}, G_{j+1}^X ,G_{j+1}^Y\}$ is  also independent. 

\paragraph{Covariance structure.}
The total variance of $S_{j+1}$ is 
$ \E|S_{j+1}|^2 = \mathfrak{D}^{-1}  \sum_{k=n_j+1}^{n_{j+1}} \E |Z_{k}|^2  = 2 $
and 
\[
\E S_{j+1}^2 =  \frac2{\mathfrak{D}}  \sum_{n_j<k\le n_{j+1}} \E(Z_{k}^2) e^{4\i \vartheta_{k,n_{j}}} 
= \frac{2}{\mathfrak{D}} \E(Z_{n_j}^2)  \sum_{n_j<k\le n_{j+1}} e^{4\i \vartheta_{k,n_{j}}}  
+ \O\bigg( \max_{n_j<k\le n_{j+1}} \big| \E(Z_{k}^2) - \E(Z_{n_j}^2) \big| \bigg).
\]
Using that $\theta_k \mapsto \E(Z_{k}^2) $ is Lipchitz-continuous and $|\theta_{k}-\theta_{n_j}| \le (k-n_j) \delta_{n_j}^2 \le j^{-1}$ for $k\in[n_j,n_{j+1}]$ (Lemma~\ref{lem:theta}), we have 
\[
\max_{n_j<k\le n_{j+1}} \big| \E(Z_{k}^2) - \E(Z_{n_j}^2) \big| \lesssim j^{-1} \lesssim \eta
\]
Then, by \eqref{oscest1} with block-length $\mathfrak{D} \ll N^{1/3}$  (here the condition $\eta \mathfrak{R}^3 \gg 1$ 
implies that $\mathfrak{D} \ll \varrho^{-1} \ll N^{1/3} $ 
and $n_j \ge m \ge \delta N$ with $\delta>0$ fixed), we have
\[
\bigg|\E(Z_{n_j}^2) \sum_{k=n_{j}+1}^{n_{j+1}}  e^{4\i \vartheta_{k,n_{j}}} \bigg| 
\lesssim \frac{1}{\sin \theta_{n_j}} \le \frac{1}{\sin \theta_m} .
\]
By construction, $\tfrac{1}{\sin \theta_m} = \sqrt{\frac{m}{\delta N \varrho^2}} \le \delta^{-1/2} \varrho^{-1}$, then 
\[
\E S_{j}^2 \lesssim \frac{1}{\mathfrak{D}\varrho} +\eta \ll 1 .
\]
Similarly, 
\[
\E(S_{j+1} G_{j+1}^X) =   \sqrt{\frac{j}{\mathfrak{D}}}  \sum_{k=n_{j}+1}^{n_{j+1}} \delta_k   e^{2\i \vartheta_{k,n_{j}}} , \qquad 
\E(S_{j+1} G_{j+1}^X) =   \sqrt{\frac{j}{\mathfrak{D}}}  \sum_{k=n_{j}+1}^{n_{j+1}} \delta_k  e^{-\i \theta_k} e^{2\i \vartheta_{k,n_{j}}}
\]
Using that $\sqrt{j \mathfrak{D}}\delta_k = 1+ \O(j^{-1})$, $\theta_k = \theta_{n_j} + \O(j^{-1})$  for $k\in[n_j,n_{j+1}]$ and  $\big|\sum_{k=n_{j}+1}^{n_{j+1}}  e^{2\i \vartheta_{k,n_{j}}} \big| \lesssim \tfrac{1}{\sin \theta_{n_j}} \le \tfrac{\delta^{-1/2}}{\varrho}$ (Lemma~\ref{lem:osc}) and $j\ge \epsilon/\eta$, we obtain
\[
|\E(S_{j+1} G_{j+1}^X)| , |\E(S_{j+1} G_{j+1}^Y)| \lesssim 
\frac1{\mathfrak{D}} {\textstyle \big|\sum_{k=n_{j}+1}^{n_{j+1}}  e^{2\i \vartheta_{k,n_{j}}} \big|} +\O(\eta) \ll 1. 
\]
So that $\{S_{j+1}, G_{j+1}^X ,G_{j+1}^Y\}$  are asymptotically uncorrelated. 

\paragraph{Central limit theorem \& coupling.}
$\{S_{j}, G_{j}^X ,G_{j}^Y\}$ are normalized linear combinations\footnote{For instance, we can write
$G_{j+1}^X =\frac{1}{\sqrt{ \mathfrak{D}}}  \sum_{n_j<k\le {n_{j+1}}} \sqrt{\gamma_k} X_k$ where 
$\gamma_k = j \mathfrak{D} \delta_k^2 \simeq 1$ in the appropriate range. }
of independent (sub-Gaussian) mean-zero random variables $\{X_k,Y_k\}$, so by the multivariate CLT, 
the above computations show that  
\[
\{\Re S_{j}, \Im S_j,  G_{j}^X ,G_{j}^Y\} \overset{\rm law}{\to} \boldsymbol{\gamma}_{\R^4} \qquad\text{as $N\to\infty$}
\]
where the limit is a standard Gaussian measure on $\R^4$ and all moments also converge (because of the sub-Gaussian condition). 
Then, we claim in Wasserstein-$p$ distance\footnote{Convergence in Wasserstein-$p$ distance is equivalent to convergence in distribution and convergence of the $p^{\rm th}$ moment.
In particular, the collection of probability laws $\{\Re S_{j}, \Im S_j,  G_{j}^X ,G_{j}^Y\}$ lie in a compact set with respect for $\d_\W^p$, so these random variables converge uniformly with respect to $\d_\W^p$.}  (for any $p\ge 1$), 
\[
\limsup_{N\to\infty}\sup_{j\ge \mathfrak{J}_0(N)} \d_\W^p\big(\{\Re S_{j}, \Im S_j,  G_{j}^X ,G_{j}^Y\}, \boldsymbol{\gamma}_{\R^4}\big) =0 . 
\]
Moreover, since $\{S_{j}, G_{j}^X ,G_{j}^Y\}$ are independent for different $j$, the convergence holds jointly, meaning that we can enlarge our probability space with a collection of independent Gaussians
$\mathscr{Z}_j \sim \boldsymbol{\gamma}_{\C} $, $\{\mathscr{G}_j^X, \mathscr{G}_j^Y\} \sim \boldsymbol{\gamma}_{\R^2}$ for $j\in\N$ such that (by definition of the Wasserstein distance), for any $p\ge 1$, 
\[
\limsup_{N\to\infty}\sup_{j\ge \mathfrak{J}_0(N)} \E\big[ {\rm dist}\big(\{S_j,  G_{j}^X ,G_{j}^Y\} ,  \{\mathscr{Z}_j , \mathscr{G}_j^X, \mathscr{G}_j^Y\}\big)^p \big] =0 . 
\]

\paragraph{Convergence of the random variable $\G_{N,m}$.}
We can rewrite
\[
\G_{N,m} = -\i \sum_{m<k\le N} \delta_k Z_k =  -\i \sum_{\mathfrak{J}_0\le j<\mathfrak{J}_1}\frac1{\sqrt{2j}} \bigg(G_{j+1}^X+ e^{-\i \theta_{n_j}} G_{j+1}^Y + \sum_{k=n_{j}+1}^{n_{j+1}}  \delta_k Y_k \big(e^{-\i \theta_{k}} - e^{-\i \theta_{n_j}} \big) \bigg) .
\]
The last sum is an error term, using that $|\theta_{k}-\theta_{n_j}| \le (k-n_j) \delta_{n_j}^2 \le j^{-1}$ for $k\in[n_j,n_{j+1}]$, it second moment is bounded by 
$\delta_m\sum_{j\ge \mathfrak{J}_0} j^{-3/2} \ll 1 $ as $N\to\infty$. 

Within the previous coupling, define the (complex Gaussian) random variable 
\[
\mathscr{G}_\delta  \coloneqq  -\i \sum_{\mathfrak{J}_0\le j<\mathfrak{J}_1}\frac{\mathscr{G}_j^X + e^{-\i \theta_{n_j}} \mathscr{G}_j^Y}{\sqrt{2j}}  
\]
Then, using that $\{G_j^X, G_j^Y,\mathscr{G}_j^X, \mathscr{G}_j^Y\}$ are (mean-zero)  independent and independent for different $j$: 
\[
\E\big[{\rm dist}(\G_{N,m},\mathscr{G}_\delta)^2\big] \le \sum_{\mathfrak{J}_0\le j<\mathfrak{J}_1} \frac{1}{2j}
\E\big[{\rm dist}\big\{G_j^X, G_j^Y\},\{\mathscr{G}_j^X, \mathscr{G}_j^Y\}\big)^2 \big] \ll 1
\]
using that $\mathfrak{J}_1/\mathfrak{J}_0 = {\rm c}/\delta$
so that the previous sum is $\le \log({\rm c}/\delta) \gamma(N)$ where $\gamma(N)\ll 1$ controls the Wasserstein-$2$ distance with the Gaussians uniformly for $j\ge \mathfrak{J}_0$.
Moreover, we immediately verify that since $\mathfrak{J}_0\gg 1$:
\[
\E|\mathscr{G}_\delta|^2  = \sum_{\mathfrak{J}_0\le j<\mathfrak{J}_1} \frac{1}{j}  =  \log({\rm c}/\delta) +\underset{N\to\infty}{o(1)} . \qedhere
\]
\end{proof}

\paragraph{Step 4: Continuum approximation.}
Finally, we can  replace the sum in \eqref{varphi2} by a stochastic integral. To this hand, we let
$t_j \coloneqq \eta(N) j$ so that  $n_j = N_0(z) +\varrho(z)^2 N t_j$  for $j \ge \mathfrak{J}_0$
(in particular $t_{\mathfrak{J}_0} = \delta$ and $t_{\mathfrak{J}_1} ={\rm c}$ with $\sqrt{\rm c} = \pi/2$).  
Then, we make a continuous-time interpolation of \eqref{varphi2}:
\[
\varphi^3_t(\lambda;z)  \coloneqq  \varphi^2_j(\lambda;z) , \qquad t\in[t_j,t_{j+1}).
\]
Since $\{e^{2\i\phi_{n_{j}}(z)} \mathscr{Z}_{j+1}\}$ is a $ \widehat{\F}_j $ adapted  sequence of i.i.d.~complex Gaussians, enlarging our probability space and filtration $\{\widehat{\F}_j\}$, there is a standard complex Brownian motion $\{\zeta_t^z\}_{t\in\R_+}$ 
such that 
\begin{equation}\label{BMinc}
\i \, \overline{e^{2\i\phi_{n_{j}}(z)} \mathscr{Z}_{j+1}} = \eta^{-1/2} \int_{t_j}^{t_{j+1}} \hspace{-.2cm}\d \zeta_t^z ,\qquad\text{for } j\ge \mathfrak{J}_0 ,
\end{equation}
and the sequence $\{\zeta_{t}^z; t\le t_j\}$ is adapted to $\{\widehat{\F}_j\}$.
Now, let $\{\varphi^4_t(\lambda;z) \}_{t\ge\delta}$ be a solution of the stochastic sine equation \eqref{Zeq:alpha} on  driven by  $\{\zeta_t^z\}_{t\in\R_+}$:
\begin{equation} \label{varphi4}
\varphi^4_t(\lambda;z)  = 2\partial\bpsi_{m}(w_\lambda,z) + 2\i \lambda \int_\delta^t  \frac{\d s}{\sqrt{s}} 
+ {\rm c}_\beta  \int_\delta^t  \big(1-e^{-\i \Im\varphi^4_{s}(\lambda;z) }\big) \frac{\d \zeta_s^z}{\sqrt s} . 
\end{equation}

We can compare the two process $\{\varphi^4_t(\lambda;z) \}_{t\ge\delta}$ and $\{\varphi^3_t(\lambda;z) \}_{t\ge\delta}$ using a stochastic Gr\"onwall inequality.
Let 
\[
\Delta_t^3(\lambda;z)   \coloneqq  \varphi^4_t(\lambda;z)-\varphi^3_t(\lambda;z), \qquad  \Delta_{\delta}^3(\lambda;z) =0 . 
\]

\begin{proposition} \label{prop:Delta3}
Assume $\eta(N) \ll 1$ as $N\to\infty$, then
\[
\max_{t \in[\delta,{\rm c}]} |\Delta_t^3|  \overset{\P}{\to}   0. 
\]
Moreover, as $N\to\infty$, 
\begin{equation} \label{Wcvg}
\bigg| \overline{\W_{N,m}(z)}-  \int_\delta^{\rm c} \frac{\d \zeta_s^z}{\sqrt{2s}}  \bigg| \overset{\P}{\to}   0. 
\end{equation}
\end{proposition}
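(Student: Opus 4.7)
The plan is to apply the stochastic Gr\"onwall inequality (Lemma~\ref{lem:Gron}) to the sampled process $\{\Delta^3_{t_j}\}_{j=\mathfrak{J}_0}^{\mathfrak{J}_1}$, following the same template as the proof of Proposition~\ref{prop:Delta2}, and then upgrade from the grid to all $t \in[\delta,{\rm c}]$ using the $L^2$-continuity of $\varphi^4$.

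First I would subtract the $[t_j,t_{j+1}]$-increment of \eqref{varphi2}, rewritten in terms of $\zeta^z$ via \eqref{BMinc}, from the corresponding increment of \eqref{varphi4}. Using the algebraic identity
\[
\overline{{\rm f}(\varphi^4_s)}-\overline{{\rm f}(\varphi^3_{t_j})} = e^{-\i\Im\varphi^3_{t_j}}\,\overline{{\rm f}(\Delta^3_{t_j})} + e^{-\i\Im\varphi^4_{t_j}}\,\overline{{\rm f}(\varphi^4_s-\varphi^4_{t_j})},
\]
which separates the rotation at $t_j$ from the small within-block increment, the sampled recursion takes the form required by \eqref{Gron}:
\[
\Delta^3_{t_{j+1}}-\Delta^3_{t_j} = V_{j+1}\,\overline{{\rm f}(\Delta^3_{t_j})} + U^1_{j+1}+U^2_{j+1},
\]
with $V_{j+1} = {\rm c}_\beta\, e^{-\i\Im\varphi^3_{t_j}}\int_{t_j}^{t_{j+1}} \d\zeta^z_s/\sqrt{s}$, a deterministic drift-discretization error $U^1_{j+1} = 2\i\lambda \int_{t_j}^{t_{j+1}} (1/\sqrt{s}-1/\sqrt{t_j})\,\d s$, and a martingale-difference error $U^2_{j+1}$ combining the noise discretization with the term involving $\overline{{\rm f}(\varphi^4_s-\varphi^4_{t_j})}$. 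Then one checks $\|V_{j+1}\|_2^2 = {\rm c}_\beta^2\int_{t_j}^{t_{j+1}} \d s/s \le {\rm c}_\beta^2/j$; summing $|U^1_{j+1}|\lesssim |\lambda|\eta^2/t_j^{3/2}$ gives $\sum_j |U^1_{j+1}| \lesssim |\lambda|\sqrt{\eta/\delta}$; and the It\^o isometry combined with the a priori estimate $\|\varphi^4_s-\varphi^4_{t_j}\|_2^2 \lesssim \eta/t_j$ (which follows from \eqref{varphi4} and the uniform bound $|{\rm f}|\le 2$) yields $\|U^2_{j+1}\|_2^2 \lesssim 1/j^2$, hence $\sum_j \|U^2_{j+1}\|_2^2 \lesssim \eta/\delta$. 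As $\eta\to 0$, all three error bounds vanish, and Lemma~\ref{lem:Gron} gives $\max_{\mathfrak{J}_0\le j\le \mathfrak{J}_1}|\Delta^3_{t_j}|\overset{\P}{\to} 0$.

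To upgrade to all $t \in [\delta,{\rm c}]$, note that for $t\in[t_j,t_{j+1})$ we have $\varphi^3_t = \varphi^3_{t_j}$, so $\Delta^3_t = \Delta^3_{t_j} + (\varphi^4_t - \varphi^4_{t_j})$; Doob's maximal inequality applied to \eqref{varphi4} yields $\max_{t\in[t_j,t_{j+1}]} |\varphi^4_t-\varphi^4_{t_j}| = \O_\P(\sqrt{\eta/t_j})$ uniformly over blocks, completing the first claim. For \eqref{Wcvg}, Proposition~\ref{prop:Delta2} reduces the task to comparing $\i\sum_{j} e^{-2\i\phi_{n_j}}\overline{\mathscr{Z}_{j+1}}/\sqrt{2j}$ to $\int_\delta^{\rm c} \d\zeta^z_s/\sqrt{2s}$; by \eqref{BMinc} the former equals $\sum_j (2t_j)^{-1/2}\int_{t_j}^{t_{j+1}} \d\zeta^z_s$ (up to sign), whose discrepancy with the target is the stochastic integral of the piecewise-constant approximation error of $s\mapsto(2s)^{-1/2}$, with $L^2$-norm squared bounded by $\sum_j \int_{t_j}^{t_{j+1}} (s-t_j)^2/(4t_j^3)\,\d s \lesssim \eta/\delta^2 \to 0$.

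The main obstacle is the decomposition step: producing a Gr\"onwall representation where the coefficient $V_{j+1}$ of $\overline{{\rm f}(\Delta^3_{t_j})}$ has the clean scaling $\|V_{j+1}\|_2^2 \lesssim 1/j$ while the residual noise error $U^2_{j+1}$ depends only on the small within-block increment of $\varphi^4$. The uniform boundedness of ${\rm f}$ is crucial here, as it prevents the potentially large imaginary parts of $\varphi^3_{t_j}$ and $\varphi^4_{t_j}$ from contaminating the error bounds.
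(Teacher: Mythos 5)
Your proposal is correct and follows essentially the same route as the paper: reduce to the grid $\{t_j\}$, write the block increment of $\Delta^3$ in the form \eqref{Gron} with a conditionally centered coefficient of size $\O(j^{-1/2})$ in $L^2$, bound the drift-discretization, within-block $\varphi^4$-increment, and noise-kernel discretization errors exactly as in the paper, and conclude with Lemma~\ref{lem:Gron}; the claim \eqref{Wcvg} is likewise handled via Proposition~\ref{prop:Delta2} plus the $L^2$-small piecewise-constant approximation of $s\mapsto(2s)^{-1/2}$. The only differences are cosmetic — you anchor the rotation at $\varphi^3_{t_j}$ rather than $\varphi^4_{t_j}$ and merge the paper's two martingale errors $U^2,U^3$ into one — and your passage from the grid to all $t\in[\delta,\mathrm{c}]$ via per-block Doob bounds is the same (slightly informal) step the paper makes by invoking continuity of $\varphi^4$.
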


\begin{proof}
By continuity of $t \mapsto \varphi^4_t$ (and using that $t \mapsto \varphi^3_t$ is a step function) it suffices to show that (as $\eta(N) \to 0$) as $N\to\infty$, 
\[
\max_{j \in[\mathfrak{J}_0, \mathfrak{J}_1)} |\Delta_{t_j}^3| \overset{\P}{\to}   0. 
\] 
By construction, $\varphi^3_{t_j} = \varphi^2_j$ along the mesh $t_j =\eta j$, then by \eqref{varphi2} and \eqref{BMinc},
one has
\[\begin{aligned}
\Delta_{t_{j+1}}^3- \Delta_{t_j}^3 
&=  2\i\lambda \bigg(\int_{t_j}^{t_{j+1}} \hspace{-.15cm}\frac{\d s}{\sqrt{s}} -\sqrt{\frac\eta{j}} \bigg)
- {\rm c}_\beta e^{-\i \Im\varphi^4_{t_j}} \int_{t_j}^{t_{j+1}}  \hspace{-.15cm}  \mathrm{f}(\varphi^4_{s}-\varphi^4_{t_j}) \frac{\d \zeta_s^z}{\sqrt s} 
+ {\rm c}_\beta  \overline{\mathrm{f}\big(\varphi^4_{t_j}\big)}  \int_{t_j}^{t_{j+1}}  \hspace{-.15cm}\bigg( \frac{1}{\sqrt{t}} - \frac{1}{\sqrt{t_j}} \bigg)  \d \zeta^z_t \\
&\quad +\i {\rm c}_\beta   \overline{\mathrm{f}\big(-\Delta_{t_j}^3\big) V_{j+1}}
\end{aligned}\]
where $V_{j+1} = e^{\i \Im\varphi^4_{t_j}} \frac{e^{2\i\phi_{n_{j}}(z)}\mathscr{Z}_{j+1}}{\sqrt{j}}$ are independent Gaussians.  Obviously, $\E|V_{j+1}|^2 = j^{-1}$, so this equation is of type \eqref{Gron} with three errors, $U_{j+1} = U_{j+1}^1+U_{j+1}^2+ U_{j+1}^3 $, 
where $\{U^1_{j+1}\}$ are deterministic and $\{U_{j+1}^2\}$, $\{U^3_{j+1}\}$ are both $\{\widehat{\F}_j\}$-martingale increments. 

First, for the deterministic errors:
\[
\bigg|\int_{t_j}^{t_{j+1}} \hspace{-.15cm}\frac{\d s}{\sqrt{s}} -\sqrt{\frac\eta{j}} \bigg| 
\lesssim \frac{\eta^2}{t_j^{3/2}} = \frac{\sqrt{\eta}}{j^{3/2}}
\]
so that $\sum_{j\ge \mathfrak{J}_0}   |U_{j+1}^1|  \lesssim \eta $ using that the first index $\mathfrak{J}_0 = \delta \eta^{-1} $.

Second, using that $|{\rm f}| \le 2$ and $|\sqrt{t} - \sqrt{t_j}| \le t_j^{3/2} \eta$ for $t\in[t_j, t_{j+1}]$
\[
\E |U_{j+1}^3|^2 \le  8 t_j^3 \eta^3 =8 j^{-3}
\]
so that $\sum_{j\ge \mathfrak{J}_0}  \E |U_{j+1}^3|^2  \lesssim \eta^2 $. 

Finally, according to \eqref{varphi4}, for $\lambda\in\mathcal{K}$, for $t\in[t_j,t_{j+1}]$
\[
|\varphi^4_{t}-\varphi^4_{t_j}| \le \frac{C\eta}{\sqrt{t_j}} +   {\rm c}_\beta  \bigg| \int_{t_j}^t {\rm f}(\varphi^4_{s}) \frac{\d \zeta_s^z}{\sqrt s} \bigg|
\]
and the martingale part has quadratic variation $[\cdot] \lesssim \frac{\eta}{t_j} = j^{-1}$
(this is a deterministic bound as $|{\rm f}| \le 2$). 
Then, the drift term is negligible as $\eta \ll 1$ and, using Doob's inequality, we can bound
\[
\E\Big[\max_{t\in[t_j,t_{j+1}]} |\varphi^4_{t}-\varphi^4_{t_j}|^2\Big] \lesssim j^{-1} . 
\]
Consequently, since ${\rm f}$ is Lipchitz continuous,
\[
[|U_{j+1}^2] \lesssim  \frac{\eta}{t_j} \max_{t\in[t_j,t_{j+1}]} |\varphi^4_{t}-\varphi^4_{t_j}|^2
\]
and then
\[
\E |U_{j+1}^2|^2  \lesssim  j^{-2}
\]
so that $\sum_{j\ge \mathfrak{J}_0}  \E |U_{j+1}^2|^2  \lesssim \eta $.  

Altogether, the errors satisfy the conditions \eqref{Gronmartbd} from Lemma~\ref{lem:Gron} with $\varepsilon= \sqrt{\eta} \ll 1$; this proves the first claim.

\smallskip

The second claim is a consequence of Proposition~\ref{prop:Delta2}. By \eqref{BMinc}, 
\[
\int_\delta^{\rm c} \frac{\d \zeta_s^z}{\sqrt{2s}}
- \i \sum_{j = \mathfrak{J}_0}^{\mathfrak{J}_1-1} \tfrac{e^{-2\i\phi_{n_{j}}(z)}\overline{\mathscr{Z}_{j+1}}}{\sqrt{2j}}
= \sum_{j = \mathfrak{J}_0}^{\mathfrak{J}_1-1} \int_{t_j}^{t_{j+1}} \bigg(\frac1{\sqrt{2t}} - \frac1{\sqrt{2t_j}} \bigg) \d \zeta_s^z
\]
and this quantity is similar to ${\textstyle \sum_{j = \mathfrak{J}_0}^{\mathfrak{J}_1-1}} U_{j+1}^3$; its bracket is (deterministically) controlled by $[\cdot] \lesssim \eta^2 \ll 1$ as $N\to\infty$. This proves \eqref{Wcvg}. 
\end{proof}

\paragraph{Step 5: Fixing the initial condition.}
Finally, for $\epsilon>0$, let $\{\omega_t^{(\epsilon)}(\lambda;z)\}$ be the solution of the SDE
\begin{equation} \label{varphiF}
d\omega_t^{(\epsilon)}(\lambda;z) = 2\i \lambda\frac{dt}{\sqrt{t}} + \frac{{\rm c}_\beta}{\sqrt t}  \bigl(1-e^{-\i\Im\omega_t^{(\epsilon)}(\lambda;z)}\bigr)\d\zeta^z_t ,\qquad
t \ge \epsilon ,
\end{equation}
with initial data $\omega_t^{(\epsilon)} =0$ for $t\in[0,\epsilon]$.
Up to a trivial time-change ($t\leftarrow t\tau$), \eqref{varphiF} corresponds to the SDE \eqref{Zeq:alpha}, so it has the same properties (see Section~\ref{app:Sine}). 
In particular, $\{\omega_t^{(\epsilon)}(\lambda;z), t\in\R_+\}$ is continuous and, by Corollary~\ref{cor:continuity0},
for any $\tau>0$, $\{\omega_t^{(\epsilon)}(\lambda;z)\}_{t\in[0,\tau]} \to \{\omega_t(\lambda;z)= \omega_t^{(0)}(\lambda;z)\}_{t\in[0,\tau]}$ in probability as continuous process, where $\{\omega_t(\lambda;z);t\in\R_+\}$ is the unique (strong) solution with initial data $\omega_0(\lambda;z) =0$

We consider the difference with \eqref{varphi4}:
\[
\Delta_t^{4,\delta}(\lambda;z)   \coloneqq  \varphi^{(\delta)}_t(\lambda;z)-\varphi^4_t(\lambda;z), \qquad t \ge \delta  . 
\] 
\begin{proposition} \label{prop:Delta4}
Under the Assumptions~\ref{ass:bulk}, one has as $N\to\infty$,  followed by $\delta\to0$, 
\[
\max_{t \in[\delta,{\rm c}]} |\Delta_t^{4,0}|  \overset{\P}{\to}   0. 
\]
\end{proposition}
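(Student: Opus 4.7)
The plan is to insert the intermediate process $\varphi^{(\delta)}$ and split by the triangle inequality,
\[
\max_{t\in[\delta,{\rm c}]} |\Delta_t^{4,0}|
\le \max_{t\in[0,{\rm c}]} |\omega_t - \varphi^{(\delta)}_t|
\,+\, \max_{t\in[\delta,{\rm c}]} |\Delta_t^{4,\delta}|.
\]
The first term on the right does not depend on $N$ and vanishes in probability as $\delta\to 0$ by the continuity statement recalled just before the proposition, namely $\varphi^{(\delta)} \to \omega$ in probability in $C([0,{\rm c}])$ as $\delta\to 0$ (Corollary~\ref{cor:continuity0}). The whole task therefore reduces to showing that the second term tends to $0$ in probability as $N\to\infty$ followed by $\delta\to 0$.

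For this, observe that $\varphi^{(\delta)}$ and $\varphi^4$ satisfy the same SDE on $[\delta,{\rm c}]$ -- compare \eqref{varphiF} restricted to $t\ge \delta$ with \eqref{varphi4} -- driven by the same complex Brownian motion $\zeta^z$, and differing only in their initial data at $t=\delta$: $\varphi^{(\delta)}_\delta = 0$ while $\varphi^4_\delta = 2\partial\bpsi_m(w_\lambda,z)$. Using the identity $e^{-\i\Im\varphi^{(\delta)}_t} - e^{-\i\Im\varphi^4_t} = e^{-\i\Im\varphi^{(\delta)}_t}\overline{\mathrm{f}(\varphi^4_t-\varphi^{(\delta)}_t)}$ with $\mathrm{f}(w)=1-e^{\i\Im w}$, the difference $\mathrm{D}_t \coloneqq \varphi^4_t - \varphi^{(\delta)}_t = -\Delta_t^{4,\delta}$ satisfies
\[
\d \mathrm{D}_t = \frac{{\rm c}_\beta}{\sqrt{t}}\, e^{-\i\Im\varphi^{(\delta)}_t}\,\overline{\mathrm{f}(\mathrm{D}_t)}\,\d \zeta^z_t,\qquad \mathrm{D}_\delta = 2\partial\bpsi_m(w_\lambda,z).
\]
This is a continuous-time instance of the recursion \eqref{Gron} with bounded, $1$-Lipschitz nonlinearity $\mathrm{f}$ satisfying $\mathrm{f}(0)=0$, no deterministic drift, and martingale noise whose total quadratic variation on $[\delta,{\rm c}]$ is bounded by $\int_\delta^{{\rm c}} {\rm c}_\beta^2/t\,\d t = {\rm c}_\beta^2 \log({\rm c}/\delta)$.

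The cleanest implementation is to Euler-discretize both equations on a mesh fine enough that the strong-approximation error on the compact interval $[\delta,{\rm c}]$ is $\o_\P(1)$ (standard since the SDE coefficients are $C_b$-Lipschitz on this interval, with singularity only at $0$), and then apply Lemma~\ref{lem:Gron} / Proposition~\ref{prop:l2logladder} to the resulting discrete equation with zero deterministic errors, $V$-bracket summing to $\O(\log(\delta^{-1}))$, and initial perturbation $\mathrm{D}_\delta$. This yields an estimate of the form $\max_{t\in[\delta,{\rm c}]} |\mathrm{D}_t| \lesssim |\mathrm{D}_\delta|\cdot(\log \delta^{-1})^{C_\beta} + \o_\P(1)$. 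By \eqref{varphi_ini} of Lemma~\ref{lem:lin3}, $|\mathrm{D}_\delta| = |2\partial\bpsi_m(w_\lambda,z)| = \O_\P(\delta^\epsilon)$ in the joint limit $N\to\infty$, $\delta\to 0$, so the polylogarithmic amplification is absorbed and the right-hand side vanishes. The main technical obstacle is handling the $1/\sqrt{t}$ singularity cleanly at the continuous level with the initial data allowed to be nonzero and random; the Euler discretization sidesteps this by reducing the argument to the discrete stochastic Gronwall framework already established in the paper.
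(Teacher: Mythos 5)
Your overall architecture coincides with the paper's: split off $\max_{t\le{\rm c}}|\omega_t-\omega^{(\delta)}_t|$ (handled by Corollary~\ref{cor:continuity0}), observe that $\varphi^4$ and $\omega^{(\delta)}$ solve the \emph{same} SDE on $[\delta,{\rm c}]$ driven by the same $\zeta^z$ and differ only through the initial datum $2\partial\bpsi_m(w_\lambda,z)$, and then kill the initial difference using \eqref{varphi_ini}. The only place you diverge is the middle step: the paper does not discretize at all, but invokes Proposition~\ref{prop:continuity0}, which is exactly the continuous-time ``bounded amplification of the initial difference'' estimate you are trying to manufacture. There, the difference $\Delta_t$ of two solutions is shown (via the identity $\Im\Delta_t=\Im\Delta_\delta\, M_t$ for an explicit exponential martingale $M_t$, plus a martingale tail bound for the real part) to satisfy
\[
\P\Big[\big\{\sup_{t\in[\delta,{\rm c}]}|\Delta_t|\ge 2\varepsilon\,{\rm c}^{\rm a}\delta^{-\rm a}\big\}\cap\{|\Delta_\delta|\le\varepsilon\}\Big]\lesssim \delta^{{\rm a}^2\beta/4},
\]
and choosing $\varepsilon=\delta^\epsilon$ with ${\rm a}=\epsilon/2$ closes the argument. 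Your Euler-discretization detour through Proposition~\ref{prop:l2logladder} can be made to work, but it reproves a lemma the paper already supplies in a cleaner continuous form, and it forces you to verify strong-approximation uniformity over the (random, $N$-dependent) initial data.

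Two points in your reduction need repair. First, the claimed output $\max_{t}|\mathrm{D}_t|\lesssim|\mathrm{D}_\delta|(\log\delta^{-1})^{C_\beta}+\o_\P(1)$ is stronger than what either Grönwall mechanism delivers: the quadratic variation of the multiplicative noise over $[\delta,{\rm c}]$ is of order $\log\delta^{-1}$, so the exponential-martingale amplification that is controllable with failure probability tending to $0$ is of size $\delta^{-{\rm a}}$ for a small power ${\rm a}>0$ (as in Proposition~\ref{prop:continuity0}), not polylogarithmic; with a polylog threshold the tail bound in Proposition~\ref{prop:l2logladder} (take $\log(a/u)\asymp\log\log\delta^{-1}$ against $C_VT\asymp\log\delta^{-1}$) is vacuous. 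The conclusion survives because \eqref{varphi_ini} makes the initial datum of size $\delta^\epsilon$, so an amplification $\delta^{-\epsilon/2}$ is affordable — but you must state and use the correct power-law form. Second, Lemma~\ref{lem:Gron} as stated is not applicable: it assumes $j_1/j_0\le C$ bounded, whereas any discretization of $[\delta,{\rm c}]$ has $j_1/j_0\asymp\delta^{-1}$; you must go to the full Proposition~\ref{prop:l2logladder}, track the $T\asymp\log\delta^{-1}$ dependence explicitly, and accommodate its zero-initial-condition hypothesis by folding $\mathrm{D}_\delta$ into the first error increment $U_{j_0+1}$ (which is legitimate, since in the representation the $U$'s and the initial datum suffer the same $e^{O(p)}$ amplification). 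With these corrections your route is valid, but it is strictly more work than quoting Proposition~\ref{prop:continuity0} as the paper does.
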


\begin{proof}
The processes $\{\varphi_t^4\}_{t\ge \delta}$ and $\{\omega_t^{(\delta)}\}_{t\ge \delta}$ both satisfy the stochastic sine SDE driven by the same Brownian motion $\{\zeta^z_t\}_{t\in\R_+}$, with different initial conditions:
$\Delta_\delta^{4,\delta} = 2\partial\bpsi_{m}(w_\lambda,z)$. 
Then  by Proposition \ref{prop:continuity0} (with $\sqrt{\rm c} =\pi/2$), 
there are a numerical constants $C,c>0$ so that for any small $\varepsilon, \epsilon>0$, 
\[
\P\Big[ \Big\{ \sup_{t\in[\delta,{\rm c}]} |\Delta_t^{4,\delta}|  \ge C \varepsilon \delta^{-\epsilon/2}  \Big\} \cap \big\{ |\partial\bpsi_{m}(w_\lambda,z)| \le \varepsilon \big\} \Big] \lesssim \delta^{c\beta \epsilon} . 
\]
By \eqref{varphi_ini}, choosing $\varepsilon = \delta^\epsilon$, we conclude that 
\[
\lim_{\delta\to 0} \limsup_{N\to\infty}  \P\Big[ \Big\{ \sup_{t\in[\delta,{\rm c}]} |\Delta_t^{4,\delta}|  \ge C \delta^{\epsilon/2}  \Big] =0 . 
\]
Since $\{\omega_t^{(\delta)}\}_{t\ge \delta} \to \{\omega_t\}_{t\ge 0}$  in probability as $\delta\to0$, 
this also implies the claim with $\delta=0$.
\end{proof}

\subsection{Convergence to the stochastic sine equation: Proof of Proposition~\ref{prop:sine}.} \label{sec:sine}
For $z\in(-1,1)$, let $\ell_t(z) \coloneqq N_0(z) +\lfloor \varrho(z)^2 N t \rfloor$ for $t\in [0, {\tau}]$  so that  $\ell_{\tau}(z) = N$ and define the \emph{microscopic relative phase}:
\begin{equation*} \label{varphiN}
\varphi^{(N)}_t(\lambda;z) \coloneqq 2\big( \psi_{\ell_t}\big(z+\tfrac{\lambda}{N\varrho(z)}\big) - \psi_{\ell_t}(z) \big) , 
\qquad t\in [0, {\tau}] .  
\end{equation*}

\paragraph{Convergence in probability.}
For a fixed $\lambda$, we compare $\{\varphi_{t}^{(N)}(\lambda) ; t\in [\delta, {\tau}]\}$ to the solution of the complex sine equation $\{\omega_{t}(\lambda) ; t\in\R_+\}$ with $\omega_{0}=0$.
The starting point is that $\varphi^{(N)}_t(\lambda;z)=2\partial\bpsi_{\ell_t}(w_\lambda,z)$ satisfies an approximate sine equation with a small initial condition as in Lemma~\ref{lem:lin3}.
Based on the approximation scheme described at the beginning of Section~\ref{sec:homo}, we can bound 
for $0<\delta \le \epsilon$,
\[
\max_{t\in[\epsilon,{\tau}]} |\omega_t  - \varphi^{(N)}_t |  
\le  \max_{K_0< k \le K_1}|\Delta^0_k(\lambda;z) |
+\max_{j \in[\mathfrak{J}_0, \mathfrak{J}_1)} |\Delta_j^1| 
+ \max_{j \in[\mathfrak{J}_0, \mathfrak{J}_1)} |\Delta_j^2| 
+ \max_{t \in[\delta,{\tau}]} |\Delta_t^3| 
+\max_{t \in[\delta,{\tau}]} |\Delta_t^4| .
\]
Then, combining  Propositions
\ref{prop:Delta0},
\ref{prop:Delta1},
\ref{prop:Delta2},
\ref{prop:Delta3},
\ref{prop:Delta4},
all these approximation errors converge to 0 in probability as $N\to\infty$ followed by $\delta\to0$ (under the Assumptions~\ref{ass:bulk} and choosing the mesh parameter $\eta(N) \ll 1$ in such a way that $\eta \mathfrak{R}^3 \gg 1$ as $N\to\infty$).
Namely, we construct a Brownian motion $\{\zeta_t^z\}_{t\in\R_+}$  on our probability space, which is driving the SDE \eqref{varphiF} such that for any fixed $\epsilon>0$, as $N\to\infty$, 
\begin{equation} \label{varphiuni}
\max_{t\in[\epsilon,{\tau}]} |\omega_t(\lambda;z)  - \varphi^{(N)}_t(\lambda;z) |   \overset{\P}{\to} 0. 
\end{equation}
This implies convergence of the finite dimensional distributions of the process $\{\varphi^{(N)}_{\tau}(\lambda;z) ; \lambda\in\R\}$.

\smallskip

We also record that by Lemma~\ref{lem:coupling} and \eqref{Wcvg}, as $N\to\infty$, 
\[
\big(\G_{N,m}, \overline{\W_{N,m}}\big)  \overset{\P}{\to} (\mathscr{G}_\delta, \mathscr{W}_\delta)
\]
where the limits $\mathscr{G}_\delta, \mathscr{W}_\delta$ are mean-zero Gaussians, $\displaystyle\mathscr{W}_{\delta} = \int_\delta^{\tau} \frac{\d \zeta_s^z}{\sqrt{2s}}$ and $\mathscr{G}_\delta$ is independent of $\{\zeta_t^z\}_{t\in\R_+}$. \\
Thus, by \eqref{phi_ini}, for some deterministic sequence $\Lambda_{N,m}(z) \in \R$, as $N\to\infty$, 
\begin{equation} \label{GWcvg}
\big(\phi_{N,m} - \Lambda_{N,m}\big)   \overset{\P}{\to} \tfrac1{\sqrt\beta}\Im\big(\mathscr{G}_\delta + \mathscr{W}_\delta \big) .
\end{equation}

\paragraph{Weak convergence.}
We now establish the joint weak convergence of 
$\varphi_{\tau}^{(N)}(\lambda) = 2\big( \psi_{N}\big(z+\tfrac{\lambda}{N\varrho(z)}\big) -  \psi_{N}(z) \big)$ and $\phi_{N}(z)\, [2\pi]$ in the  sense of finite dimensional marginals. 
Let $X_N \coloneqq \big(\varphi_{\tau}^{(N)}(\lambda_j) : 1 \leq j \leq p\big)$ and 
$Y_\epsilon  \coloneqq \big(\omega_{\tau}^{(\epsilon)}(\lambda_j) : 1 \leq j \leq p\big)$ for fixed $\{\lambda_j\} \in\R^p$ and  $\epsilon\ge 0$ in terms of the solutions of \eqref{varphiF}. 

Since the random variable  $\phi_{N}(z)\, [2\pi]$ takes values in $\R/[2\pi]$ and $\boldsymbol{\alpha}$ is uniform in $[0,2\pi]$, by Weyl's equidistribution criterion, it suffices to show that for any function $g :\C^p \to \R$, $1$-Lipchitz continuous with $|g|\le 1$, and for any $k\in\Z$, 
\begin{equation} \label{Weylcvg}
\lim_{N\to\infty}\Exp\left[ e^{\i k\Im \psi_N} g(X_N) \right] = \1\{k=0\} \E\left[ g(Y_0) \right] .
\end{equation}
If $k=0$, the claim follows directly from \eqref{varphiuni}, so we can assume that $k\in\N$.
Then we introduce again the parameter $m=N_0 + \lfloor \delta (N-N_0)\rfloor$ for a fixed $\delta>0$. 
Using the convergence in probability \eqref{varphiuni}, \eqref{GWcvg}, we obtain 
\begin{equation} \label{Weylcond}
\big|\E\left[ e^{\i k\phi_N} g(X_N) | \F_m \right] \big|  =  \big| \E\big[e^{\i k \Im(\mathscr{G}_\delta + \mathscr{W}_\delta)/\sqrt{\beta}} g(Y_0) |\F_m\big] \big| +\underset{N\to\infty}{\o(1)} .
\end{equation}
In particular, the extra phase $ e^{\i k\Im \Lambda_{N,m}+\i k \phi_m }$ cancels while taking modulus.

Here $\mathscr{W}_\delta$ and $Y_0$ are not independent. However, for $\epsilon\ge \delta$, we can replace $Y_0$ by $Y_\epsilon$, up to a small extra error (by Corollary~\ref{cor:continuity0}),  and decompose 
$ \mathscr{W}_\delta =  \mathscr{W}_\epsilon+  \mathscr{W}_{\epsilon,\delta}$ where 
$\displaystyle\mathscr{W}_{\epsilon,\delta} = \int_\delta^\epsilon \tfrac{\d \zeta_s}{\sqrt{2s}}$ is  independent of $(\mathscr{G}_\delta, \mathscr{W}_\epsilon, Y_\epsilon)$  \big($Y_\epsilon$ is $\{\zeta_s\}_{s\ge \epsilon}$ measurable while $\mathscr{W}_{\epsilon,\delta}$ is independent of $\{\zeta_s\}_{s\ge \epsilon}$ \big). 
Hence, 
\[\begin{aligned}
\E\big[e^{\i k \Im(\mathscr{G}_\delta + \mathscr{W}_\delta)/\sqrt{\beta}} g(Y_0) |\F_m\big] 
&=\E\big[e^{\i k \Im(\mathscr{G}_\delta + \mathscr{W}_\delta)/\sqrt{\beta}} g(Y_\epsilon) |\F_m\big] 
+\underset{\epsilon\to0}{\o(1)}  \\
& =\E\big[e^{\i k \Im(\mathscr{W}_{\epsilon,\delta})/\sqrt{\beta}} |\F_m\big] 
\E\big[e^{\i k \Im(\mathscr{G}_\delta )/\sqrt{\beta}} |\F_m\big]
\E\big[e^{\i k \Im(\mathscr{W}_\epsilon)/\sqrt{\beta}} g(Y_\epsilon) |\F_m\big] +\underset{\epsilon\to0}{\o(1)} .  \\
\end{aligned}\]
Here, one cannot use the independent Gaussian $\mathscr{G}_\delta$ for avaeraging since 
$\E(\Im\mathscr{G}_\delta)^2 \simeq \varrho(z_*) \log\big(\tfrac{1}{\delta\varrho(z_*)^2+z_*^2}\big)$ where $z_* = \lim z(N) \in[-1,1]$ and this quantity vanishes in the edge case $z_*\in\{\pm1\}$. 
However,  $\Im\mathscr{W}_{\epsilon,\delta}$ is also Gaussian with variance $\E(\Im\mathscr{W}_{\epsilon,\delta})^2 \simeq \log(\epsilon/\delta) + \O(\epsilon)$. Hence, for $k\in\N$, 
\[
\big| \E\big[e^{\i k \Im(\mathscr{G}_\delta + \mathscr{W}_\delta)/\sqrt{\beta}} g(Y_0) |\F_m\big] \big|
\le \exp\big(- \tfrac{k^2}{4\beta}\big(\log(\epsilon/\delta)+\O(\epsilon)\big) \big)  +\underset{\epsilon\to0}{\o(1)} . 
\]
The LHS of \eqref{Weylcond} is independent of $(\delta,\epsilon)$, so taking the limit as $\delta\to0$, followed by $\epsilon\to0$, we conclude that 
\[
\limsup_{N\to\infty} \big|\E\left[ e^{\i k\phi_N} g(X_N) | \F_m \right] \big| =0
\]
This proves \eqref{Weylcvg}, which completes the proof of proof of Proposition~\ref{prop:sine}.\hfill\qed

\clearpage
\appendix

\section{The complex (stochastic) Sine equation} \label{app:Sine}

In this section, we review the properties of the \emph{log--structure--function} of the Stochastic $\zeta_\beta$ function, and develop some basic properties of it.
Here $\beta>0$ is a fixed parameter. 
This function is the solution of the SDE:
\begin{equation}\label{Zeq:alpha}
d\omega_t(\lambda) = \i2\pi \lambda \d\sqrt{t} + \sqrt{\frac{2}{\beta t}} 
\bigl(1-e^{-\i\Im\omega_t(\lambda)}\bigr) \d Z_t ,
\qquad
t >0,\, \lambda\in\R,
\end{equation}
where $\{Z_t\}_{t\in\R_+}$ is a complex Brownian motion with brackets $[Z_t,Z_{t}]=0$ and $[Z_t,\overline{Z}_t] = 2t$. This equation has a simple structure, $\Im\omega_t$ satisfies an autonomous SDE with a drift proportional to $\lambda\in\R$ and $\Re\omega_t$ is a martingale depending on $\Im\omega_t$. In fact, for a fixed $\lambda\in\R$, there is a standard real Brownian motion $\{X_t\}_{t\in\R_+}$ so that 
\[
\d\Re\omega_t = \frac{4\sin(\Im\omega_t/2)}{\sqrt{2\beta t}} \d X_t , \qquad
t >0. 
\]
The equation \eqref{Zeq:alpha} for $\{\Im \omega_t(\lambda); \lambda\in\R\}_{t\in\R_+}$ first appeared n the seminal work  \cite{KillipStoiciu} to describe the counting function of the sine$_\beta$ point process.  
The equation is singular as $t \to 0$, but there is a unique continuous family of strong solution 
$\{\omega_t(\lambda) ; t\ge 0 , \lambda\in\R \} $ with  the initial condition $\omega_0(\lambda) = 0$ (see \cite{KillipStoiciu} or Proposition \ref{prop:Holdersine}). This SDE can also be considered for $\lambda\in\C$ in which case the solution is analytic for  $\lambda\in\C$, and then the  stochastic $\zeta_\beta$ function can be represented by \eqref{eq:zeta} in terms of the solution. 
This is a direct consequence of the fact that \eqref{Zeq:alpha} only differs from the equation for the \emph{log--structure--function} of $\zeta_\beta$ introduced in \cite{ValkoViragZeta} by a simple time change.  

\begin{lemma}\label{Zlem:tchange}
Let $g : t\in [-\infty,0] \mapsto  e^{\beta t/2}/(2\pi)^2$ and $u : t\in\R_+  \mapsto (t/2\pi)^2 $. \\ 
The process $\{ \omega_{g(t)} ; t\in[-\infty,0] , \lambda\in\R\}$ corresponds to the structure function of the stochastic $\zeta_\beta$ function as defined in \cite{ValkoViragZeta}. 
Moreover, the process $\{\omega_{u(t)} ; t\in\R_+, \lambda\in\R\}$ satisfies the SDE (59) from \cite[Proposition 4.5]{KillipStoiciu}.
\end{lemma}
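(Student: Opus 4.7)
The plan is to apply the standard deterministic time-change formula for It\^o SDEs to \eqref{Zeq:alpha} under the maps $s = u(t)$ and $s = g(t)$ respectively, and then read off that the resulting SDEs coincide (up to the usual convention choices for the driving Brownian motion and the spectral parameter) with the SDE (59) of \cite[Proposition~4.5]{KillipStoiciu} and with the log--structure--function equation of \cite{ValkoViragZeta}. The lemma is at its core a one-line substitution, so the proof will be short: the real content is the bookkeeping check against the two reference equations.

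The first step is to record the general time-change identity. For any smooth strictly increasing $h$ on an open interval of $\R$ with $h>0$, the continuous complex martingale $t\mapsto Z_{h(t)}$ has brackets $[Z_{h(\cdot)},Z_{h(\cdot)}]_t=0$ and $[Z_{h(\cdot)},\overline{Z_{h(\cdot)}}]_t=2h(t)$, so by L\'evy's characterization
\[
\widetilde Z_t \coloneqq \int_0^t h'(s)^{-1/2}\,\dif Z_{h(s)}
\]
is a complex Brownian motion with the same normalization as $Z$. Substituting $s=h(t)$ into \eqref{Zeq:alpha} and using $\dif Z_{h(t)}=\sqrt{h'(t)}\,\dif\widetilde Z_t$ gives
\[
\dif \omega_{h(t)}(\lambda)
= \i\pi\lambda\,\frac{h'(t)}{\sqrt{h(t)}}\,\dif t
+ \sqrt{\frac{2 h'(t)}{\beta h(t)}}\,\bigl(1-e^{-\i\Im\omega_{h(t)}(\lambda)}\bigr)\,\dif \widetilde Z_t.
\]

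Second, I would plug in each of the two time changes. For $u(t)=(t/2\pi)^2$ one has $h'(t)/\sqrt{h(t)}=1/\pi$ and $h'(t)/h(t)=2/t$, giving
\[
\dif \omega_{u(t)}(\lambda) = \i\lambda\,\dif t + \frac{2}{\sqrt{\beta t}}\bigl(1-e^{-\i\Im\omega_{u(t)}(\lambda)}\bigr)\,\dif \widetilde Z_t,
\]
which is SDE (59) of \cite{KillipStoiciu} after identifying $\lambda$ with their spectral variable. For $g(t)=e^{\beta t/2}/(2\pi)^2$ one has $h'(t)/h(t)=\beta/2$ and $h'(t)/\sqrt{h(t)}=\beta e^{\beta t/4}/(4\pi)$, giving
\[
\dif \omega_{g(t)}(\lambda) = \frac{\i\beta\lambda\, e^{\beta t/4}}{4}\,\dif t + \bigl(1-e^{-\i\Im\omega_{g(t)}(\lambda)}\bigr)\,\dif \widetilde Z_t,
\]
which is exactly the log--structure--function SDE of \cite{ValkoViragZeta} (whose martingale coefficient is normalized to be constant equal to $1$, with a deterministic drift exponential in $t$).

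The only non-mechanical point is matching conventions: \cite{KillipStoiciu} and \cite{ValkoViragZeta} drive their equations by real, rather than complex, Brownian motions, and parametrize the eigenvalue variable slightly differently. I would dispose of this by separating real and imaginary parts of $\widetilde Z$, checking that $\Im\omega_{h(t)}$ is driven by a real Brownian motion with the bracket dictated by the references, and absorbing the remaining normalization into the identification of $\lambda$. I do not anticipate any genuine obstacle—the argument is purely computational, and the only place where care is required is in verifying that the bracket conventions $[Z,Z]=0$, $[Z,\overline Z]_t=2t$ used here produce, after the time change, the normalization used in the two cited papers.
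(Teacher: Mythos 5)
Your proposal is correct and follows essentially the same route as the paper: a direct deterministic time change of \eqref{Zeq:alpha}, recognizing that the rescaled increments $h'(t)^{-1/2}\,\dif Z_{h(t)}$ define a new complex Brownian motion with the same bracket normalization, and then matching coefficients with the reference equations (the paper's only cosmetic difference is that it rotates the new Brownian motion by a factor of $\i$ in the $g$-case to land exactly on the form in \cite{ValkoViragZeta}, which is the same convention-matching step you flag). No gap.
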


\begin{proof}
Let $\widehat{\omega}_t \coloneqq  \omega_{g(t)}$ for $t \in [-\infty,\infty)$.
Observe that $\d \sqrt{g(t)} = \tfrac{f(t)}{2\pi} \d t$ for $t\in\R$, with $f(t)\coloneqq \tfrac{\beta}{4}e^{\beta t/4}$ and one has for $t\in\R$, 
\[
d\widehat{\omega}_t =
\i2\pi \lambda \d\sqrt{g(t)}
+{\rm c}_\beta\bigl(1-e^{-\i\Im \hat\omega_t}\bigr) 
\frac{\d Z_{g(t)}}{\sqrt{g(t)}}
\]
with ${\rm c}_\beta^2 = 2/\beta$ and 
and $\d Z_{g(t)} = \i a(t) \d\widehat{Z}_t  $ for  a new complex Brownian motion $\{\widehat{Z}_t\}_{t\in\R}$ where $a(t) = \sqrt{g'(t)}$ (so the brackets match). 
Since $a(t) = {\rm c}_\beta^{-1} \sqrt{g(t)}$, we conclude that  
\[
\d\widehat{\omega}_t = \i \lambda f(t) \d t+\i \bigl(1-e^{-\i\Im \widehat{\omega}_t}\bigr) \d \widehat{Z}_{t},
\]
which is the same SDE as in \cite[Corollary 50 -- Corollary 51]{ValkoViragZeta}. 

Let $\psi_t \coloneqq  \omega_{u(t)}$ for $t\in\R_+$. By a similar computation, $\d \sqrt{u(t)} = \d t /2\pi$
and $\sqrt{(\log u(t))'} = \sqrt{2/t} $, so that 
\[
\d\psi_t = \i\lambda\d t - \frac{2}{\sqrt{\beta t}} 
\bigl(1-e^{-\i\Im\psi_t(\lambda)}\bigr) \d \widehat{Z}_t 
\]
for another complex Brownian motion $\{\widehat{Z}_t\}_{t\in\R_+}$. 
Taking $\Psi_t = \Im \psi_t$, we obtain the SDE \cite[(59)]{KillipStoiciu}.
\end{proof}

We begin by reviewing a few elementary properties of solutions of \eqref{Zeq:alpha}; see also \cite{ValkoVirag} where most of these are developed in greater generality, although in a different time scale.
In particular, $\{\Im \omega_t(\lambda); \lambda\in\R\}_{t\in\R_+}$ satisfies an autonomous SDE with a unique strong continuous solution with initial condition $\omega_0=0$; see also \cite[Proposition 4.5]{KillipStoiciu}.

\begin{lemma}\label{lem:szbasics}
Let $\delta>0$ and let $\{\omega_t^{(\delta)}(\lambda) ; \lambda\in\R \}_{t\in\R_+}$ be the solution of \eqref{Zeq:alpha} with  $\omega_{t} = 0$ for $t\in[0,\delta]$.  Then:
\begin{enumerate}
\item (Positivity) For any $\lambda > 0,$ the function $t\mapsto \Im \omega_t^{(\delta)}(\lambda)$ for $t>\delta$ is almost surely positive.
\item (Symmetry) For any $\lambda \in \R$, $\{-\omega_t^{(\delta)}(\lambda) ; t\in\R_+\} \overset{\rm law}{=}\{\omega_t^{(\delta)}(-\lambda) ; t\in\R_+\}$.
\item (Translation invariance) For $\lambda_1, \lambda_2 \in \R$, $\{ \omega_t^{(\delta)}(\lambda_1)-\omega_t^{(\delta)}(\lambda_2) ; t\in\R_+\}\overset{\rm law}{=}\{\omega_t^{(\delta)}(\lambda_1-\lambda_2); t\in\R_+\}$. 
\item (Monotonicity) Almost surely, $\omega_t^{(\delta)}(0)=0$ and, for any $t> \delta$, $\lambda\in\R \mapsto \Im \omega_t^{(\delta)}(\lambda)$ is increasing. 
\item (Bounded influence) For any $\lambda > 0,$ if $\{\tilde{\omega}_t(\lambda) ; t\ge \delta\}$ is another solution of \eqref{Zeq:alpha}  with $0<\Im \tilde{\omega}_{\delta}(\lambda)<2\pi$, then the difference $\Im (\tilde{\omega}_t-\omega_t^{(\delta)})(\lambda) \in (0,2\pi)$ for all $t \ge \delta.$
Generally, if $\Im \tilde{\omega}_{\delta}(\lambda)>0$, then $\Im \tilde{\omega}_t(\lambda)> \Im \omega_t^{(\delta)}(\lambda)>0$ for all $t \ge \delta.$
\end{enumerate}
\end{lemma}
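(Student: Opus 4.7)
The plan is to reduce the complex equation \eqref{Zeq:alpha} to an autonomous scalar SDE for $\Psi_t := \Im\omega_t^{(\delta)}(\lambda)$ and then exploit two classical tools: (a) the rotational symmetry of the driving noise, namely that for any adapted phase $\theta_t$ the process $\widetilde Z_t := \int_0^t e^{\i\theta_s}dZ_s$ is again a complex Brownian motion with the same brackets $[\widetilde Z,\widetilde Z]=0$ and $[\widetilde Z,\overline{\widetilde Z}]_t=2t$ (immediate from bracket calculus); and (b) the one-dimensional comparison theorem for SDEs driven by a common real noise, together with a Feller-test argument for the inaccessibility of the lattice $2\pi\Z$.

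Writing $dZ_t = dB^1_t + \i\, dB^2_t$ and taking imaginary parts of \eqref{Zeq:alpha}, I would first record the autonomous SDE
\[
d\Psi_t = \tfrac{\pi\lambda}{\sqrt t}\,dt + \sqrt{\tfrac{2}{\beta t}}\bigl[(1-\cos\Psi_t)\,dB^2_t + \sin\Psi_t\, dB^1_t\bigr],
\qquad
d\langle\Psi\rangle_t = \tfrac{8}{\beta t}\sin^2(\Psi_t/2)\,dt.
\]
From this, (4) follows immediately from the one-dimensional comparison theorem (common noise across all $\lambda$, drift strictly monotone in $\lambda$, Lipschitz diffusion in $\Psi$), and the identity $\omega_t^{(\delta)}(0) \equiv 0$ from strong uniqueness of \eqref{Zeq:alpha} at $\lambda = 0$. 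For (1), after rewriting the diffusion in scalar form $\sqrt{8/(\beta t)}\,|\sin(\Psi_t/2)|\,dW_t$ and applying the deterministic time change $s = (8/\beta)\log(t/\delta)$, the equation reduces to $dX_s = 2\sin(X_s/2)\,d\widetilde W_s$ plus a strictly positive drift; since $\int_{0^+} dx/\sin^2(x/2) = \infty$, the Feller test forces the boundary $0$ to be inaccessible, hence $\Psi_t > 0$ for all $t > \delta$.

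For (2) and (3), I would set $\tilde\omega_t := -\omega_t^{(\delta)}(-\lambda)$, respectively $\eta_t := \omega_t^{(\delta)}(\lambda_1) - \omega_t^{(\delta)}(\lambda_2)$, and compute directly (using $1 - e^{\i\Im\tilde\omega_t} = -e^{\i\Im\tilde\omega_t}(1-e^{-\i\Im\tilde\omega_t})$ in the first case, and factoring out $e^{-\i\Im\omega_t(\lambda_2)}$ in the second) that each satisfies \eqref{Zeq:alpha} with parameter $\lambda$, resp.\ $\lambda_1 - \lambda_2$, driven by a rotated complex Brownian motion $\widetilde Z$ of the form in (a). Strong uniqueness of \eqref{Zeq:alpha} then yields the claimed equalities in law.

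Finally, for (5), I expect the hard part will be the non-crossing of the lattice $2\pi\Z$. Setting $\Theta_t := \Im\tilde\omega_t - \Psi_t$ and using the identities $\cos x - \cos y = -2\sin\tfrac{x+y}{2}\sin\tfrac{x-y}{2}$ and $\sin x - \sin y = 2\cos\tfrac{x+y}{2}\sin\tfrac{x-y}{2}$, the $\lambda$-drifts cancel and a short computation (checking that the two $dB^i_t$ coefficients have total quadratic variation $dt$) gives the autonomous scalar SDE
\[
d\Theta_t = 2\sqrt{\tfrac{2}{\beta t}}\sin(\Theta_t/2)\,dW_t
\]
for a real Brownian motion $W_t$. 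Applying the same time-change and Feller-test argument as in (1) to this equation, each $2\pi k$ is an inaccessible boundary, so $\Theta_t$ is trapped in whichever interval $(2\pi k,2\pi(k+1))$ contains $\Theta_\delta$. Combined with the hypothesis $\Theta_\delta \in (0,2\pi)$, this gives the first bound in (5); the strict inequality $\Im\tilde\omega_t > \Psi_t$ under $\Im\tilde\omega_\delta > 0$ is the same inaccessibility applied at $\Theta=0$.
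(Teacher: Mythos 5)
Your proposal is correct, and for items (2), (3) and (5) it is essentially the paper's own argument made explicit: the paper also handles (2)--(3) by rotation invariance of the complex Brownian motion plus strong uniqueness, and (5) by noting that the imaginary part of the difference of two solutions driven by the same noise satisfies an autonomous equation of the form $\d\Theta_t = 2\sqrt{2/(\beta t)}\,\sin(\Theta_t/2)\,\d W_t$, whose diffusion coefficient vanishes linearly at $2\pi\Z$, so these levels are never reached (your normalization is in fact the correct one; the paper's $\varpi_t = 2\Im(\tilde\omega_t-\omega_t)$ is a slip for the half-angle variable, cf.\ the computation in Proposition~\ref{prop:continuity0}). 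The two places where you genuinely deviate are (1) and (4). For (1), the paper simply asserts positivity from the positive drift and the linearly vanishing diffusion coefficient; your time change plus scale-function/Feller computation is a legitimate formalization of the non-return part, but note that $\Psi$ starts exactly at the boundary value $0$ at time $\delta$, so one should also say a word about immediate entrance into $(0,\infty)$ (this is where the strictly positive drift is used) before inaccessibility takes over --- the same point the paper leaves implicit. For (4), the paper does not invoke a comparison theorem: it combines (3) with (1), since the coupled difference $\omega_t^{(\delta)}(\lambda_1)-\omega_t^{(\delta)}(\lambda_2)$ has the law of $\omega_t^{(\delta)}(\lambda_1-\lambda_2)$, whose imaginary part is a.s.\ positive for $t>\delta$; this buys strict monotonicity in $\lambda$ directly. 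Your comparison-theorem route is also valid (it applies with two driving Brownian motions because both equations share the same Lipschitz diffusion coefficients and differ only in the drift), but as stated it yields only the non-strict inequality, so to get ``increasing'' in the strict sense you should finish as the paper does, or equivalently apply your difference-equation argument from (5) to the pair $(\lambda_1,\lambda_2)$, where the extra drift $\pi(\lambda_1-\lambda_2)/\sqrt{t}$ is strictly positive.
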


\begin{proof}
These properties can be verified by elementary manipulations of the SDE.  
Let $ \omega_t(\lambda) \coloneqq \omega_t^{(\delta)}(\lambda)$ for $t\ge 0$ and $\lambda\in\R$.
2 and 3 are direct consequences of the linearity of the drift in the parameter $\lambda$ and the invariance properties of complex Brownian motion. 
1 is a consequence of the drift being positive and the diffusion coefficient vanishes linearly in $\Im \omega_t$ as $\Im \omega_t \to 0$.  
Similarly, for $k\in\N$ if  $\Im \omega_{\tau_k} = 2\pi k$ for a stopping time $\tau_k>0$, then  $\Im \omega_t > 2\pi k$ for $t>\tau_k$. 
In particular $\tau_1<\tau_2,$ etc. 
4 is an immediate consequence of 1 and 3. 
5 also follows by a similar argument; there is a standard real Brownian motion $\{X_t\}$ so that the process $\varpi_t \coloneqq 2 \Im (\tilde{\omega}_t-\omega_t)(\lambda)$ satisfies the autonomous SDE: 
\[
\d \varpi_t  = {\rm c}_\beta \sin(\varpi_t) \frac{\d X_t}{\sqrt t} , \qquad t > \delta
\]
with initial data $\varpi_\delta \in (0,2\pi)$. 
As the diffusion coefficient vanishes linearly as $\varpi_t \to \{0,\pi\}$, this process never hits these values.  Moreover, if $\{\tilde{\omega}_t(\lambda) ; t\ge \delta\}$ is a solution of \eqref{Zeq:alpha} with $\tilde{\omega}_{\delta}(\lambda)=2\pi k$ for a $k\in\N$, then  $\tilde{\omega}_{t}(\lambda) = \omega_t(\lambda)+2\pi k$ for all $t\ge \delta$. 
\end{proof}

Using these properties, we show that  \eqref{Zeq:alpha} has a unique strong solution defined on $[0,1]$ with initial condition $\omega_0(\lambda)=0$ for $\lambda\in\R$. 
Moreover his solution also has the following continuity estimates:

%
%

\begin{lemma} \label{lem:sine0}
The SDE \eqref{Zeq:alpha} has a unique strong solution with $\omega_0(\lambda)=0$ and for any $\lambda\in\R$, $t \in \R_+\mapsto \omega_0(\lambda) $ is continuous. 
This solution also satisfies the properties 1--5 from Lemma~\ref{lem:szbasics} with $\delta=0$ and  
the space-time scaling invariance; for any $\gamma >0$, $\{\omega_{\gamma^2 t}(\gamma \lambda) : t \ge 0, \lambda \in \R\}\overset{\rm law}{=}\{\omega_{t}(\lambda) : t \ge 0, \lambda \in \R\}.$
\end{lemma}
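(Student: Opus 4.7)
I would construct the unique strong solution with $\omega_0 = 0$ as the monotone limit as $\delta \downarrow 0$ of the family $\{\omega^{(\delta)}\}$ from Lemma~\ref{lem:szbasics}. For each fixed $\delta > 0$, strong existence and uniqueness of $\omega^{(\delta)}$ on $[\delta, T]$ is immediate from Itô theory: the diffusion coefficient $\omega \mapsto (1 - e^{-\i \Im \omega})$ is bounded and globally Lipschitz in $\omega$, the drift is smooth and deterministic, and $t \mapsto \sqrt{2/(\beta t)}$ is bounded on $[\delta, T]$. All the qualitative properties 1--5 of Lemma~\ref{lem:szbasics} are therefore available for every $\omega^{(\delta)}$.

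The heart of the matter is the convergence $\omega^{(\delta)} \to \omega$ as $\delta \downarrow 0$. Fix $\lambda > 0$ and $0 < \delta_1 < \delta_2$: item 1 gives $\Im \omega^{(\delta_1)}_{\delta_2}(\lambda) > 0$, so applying item 5 with $\tilde\omega = \omega^{(\delta_1)}$ restricted to $[\delta_2, \infty)$ yields
\[
\Im \omega^{(\delta_1)}_t(\lambda) > \Im \omega^{(\delta_2)}_t(\lambda) > 0, \qquad t \ge \delta_2.
\]
Hence $\delta \mapsto \Im \omega^{(\delta)}_t(\lambda)$ is monotone non-decreasing as $\delta \downarrow 0$. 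The matching a~priori upper bound, as well as the continuity at $t = 0$, will come from the space-time scaling, which I would verify first and independently by a direct SDE computation using that $\tilde Z_s := \gamma^{-1} Z_{\gamma^2 s}$ is again a standard complex Brownian motion. This scaling relates $\omega^{(\delta)}$ on $[\delta, \infty)$ to a solution on $[\gamma^2 \delta, \infty)$ and identifies $\omega^{(\delta)}_t(\lambda)$ in law with $\omega^{(1)}_{t/\delta}(\cdot)$ for an appropriate choice of parameter; since $\omega^{(1)}_s(0) \equiv 0$ by item 4 and $\lambda \mapsto \omega^{(1)}_s(\lambda)$ is continuous at $0$ (by item 5 applied to a small bounded perturbation), this tightly controls $\Im \omega^{(\delta)}_t(\lambda)$ uniformly in $\delta$ and reduces the small-$t$ behavior to continuity of $\omega^{(1)}$ at $\lambda = 0$. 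The limit $\varpi_t(\lambda) := \lim_{\delta \downarrow 0} \Im \omega^{(\delta)}_t(\lambda)$ thus exists and tends to $0$ in probability as $t \downarrow 0$.

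Passing to the limit in the Itô integral form of \eqref{Zeq:alpha} then shows that $\omega$ (with real part recovered from the resulting stochastic integral) is a strong continuous solution on $(0, \infty)$ extending to $\omega_0 = 0$: the drift $\i \pi \lambda \dif t/\sqrt t$ is integrable on $(0, T]$, and the diffusion's quadratic variation is bounded by $4 \int_0^T \sin^2(\Im \omega_s/2) \dif s/s$, which is finite thanks to the scaling estimate $\Im \omega_s(\lambda) = O_\P(\sqrt{s})$. Uniqueness follows from item 5 once more: any continuous solution $\tilde\omega$ with $\tilde\omega_0 = 0$ satisfies $\Im \omega_t^{(\delta)} < \Im \tilde\omega_t$ for $t \ge \delta$ whenever $\Im \tilde\omega_\delta(\lambda) > 0$, and the reverse comparison is obtained by interchanging roles, forcing $\tilde \omega = \omega$. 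Properties 1--5 and the scaling relation are inherited from $\omega^{(\delta)}$ by monotone convergence.

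The main obstacle is precisely the uniform-in-$\delta$ control required to pass the Itô integral through the singularity at $t = 0$: the diffusion coefficient $\sqrt{2/(\beta t)}$ blows up and no local Lipschitz / Picard argument applies in a neighborhood of $0$. The scaling self-similarity of \eqref{Zeq:alpha} is the essential ingredient that turns this singular small-$t$ question into a regular large-time statement about the regular process $\omega^{(1)}$ near $\lambda = 0$, where items 1 and 4 provide the needed continuity for free.
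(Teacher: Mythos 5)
Your skeleton coincides with the paper's: regular strong solutions $\omega^{(\delta)}$ for each $\delta>0$, pathwise monotonicity in $\delta$ via properties 1 and 5, passage to the limit in the integral form, inheritance of properties 1--5, and the scaling via Brownian rescaling. The gap is in the step you yourself identify as the heart of the matter, the uniform-in-$\delta$ control near $t=0$, which you propose to extract from the scaling relation together with continuity of $\omega^{(1)}$ at $\lambda=0$. Under the scaling, $\omega^{(\delta)}_t(\lambda)$ is identified in law with $\omega^{(1)}$ evaluated at the time $t/\delta\to\infty$ and at a simultaneously rescaled parameter; the relevant combination (parameter times square root of time) stays of order $\lambda\sqrt t$, so the question is a joint large-time/small-parameter limit, and continuity of $\lambda\mapsto\omega^{(1)}_s(\lambda)$ at $\lambda=0$ for a \emph{fixed} $s$ gives no control over it. The scaling merely maps the singular small-$t$ question to an equivalent one; it does not produce a bound. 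The paper's proof supplies the missing quantitative input by an elementary first-moment computation: taking expectations in \eqref{Zeq:alpha} gives $\E\,\Im\omega^{(\delta)}_t(\lambda)\le 2\pi\lambda\sqrt t$ uniformly in $\delta$, Fatou bounds the monotone limit $\alpha_t(\lambda)=\sup_{\delta>0}\Im\omega^{(\delta)}_t(\lambda)$, Doob's maximal inequality for the positive submartingale $\alpha$ gives $\P[\sup_{u\le t}\alpha_u>c]\le c^{-1}\lambda\sqrt t$, and a Borel--Cantelli argument along dyadic times upgrades this to the \emph{almost sure} decay $\alpha_t=o(t^{\gamma})$ for any $\gamma<1/2$.

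That pathwise decay is also exactly what your second step needs and does not have. To define $\Re\omega$ down to $t=0$ you must check that $\int_0^T \tfrac{2}{\beta s}\,|1-e^{-\i\alpha_s}|^2\,\d s\lesssim\int_0^T \alpha_s^2\,\tfrac{\d s}{s}$ is finite almost surely; an in-probability bound $\Im\omega_s=O_\P(\sqrt s)$ at each fixed $s$ does not furnish an integrable pathwise majorant near $s=0$, whereas $\alpha_s=o(s^{\gamma})$ a.s.\ does. A smaller but real issue is your uniqueness sketch: property 5 only yields the one-sided comparison $\Im\tilde\omega_t\ge\Im\omega^{(\delta)}_t$ (every $\omega^{(\delta)}$ starts at exactly $0$ at time $\delta$, so the situation is not symmetric and ``interchanging roles'' gives nothing); the reverse inequality requires dominating $\tilde\omega$ by solutions started from small positive data combined with a continuity-in-initial-condition estimate of the type of Proposition~\ref{prop:continuity0}. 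Your route can be repaired, but only by inserting a quantitative moment/maximal estimate of the above kind; the scaling invariance alone cannot substitute for it.
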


\begin{proof}
We start by constructing the solution, which we show for the case of $\lambda \geq 0$ (a symmetric argument can be used for $\lambda \le 0$ and obviously $\omega_t(0)=0$ for all $t\ge 0$). 
For any $\delta>0$ and $\lambda \le 0$, the initial value problem for 
$\{\omega_t^{(\delta)}(\lambda) ; t\in\R_+\}$ is well-posed since the SDE \eqref{Zeq:alpha} has Lipschitz coefficients for $t\ge \delta$, so there is a unique strong solution, which is continuous for $t\in\R_+$. 
Then, almost surely, the function $(t,\lambda) \mapsto \Im \omega_t^{(\delta)}(\lambda)$ is non-negative and non-decreasing in $\lambda\ge 0$.  
By Property 5, for any $\lambda\ge 0$, $\{\Im \omega_t^{(\delta)}(\lambda), t\ge 0\}$ are also non-decreasing in $\delta>0$ ($\Im\omega_t^{(\delta)}(\lambda)\ge  \Im\omega_\epsilon^{(\epsilon)}(\lambda) =0$ for $t\in[0,\epsilon]$ if $\epsilon\ge \delta$). 
Then, we can define 
\[
\alpha_t(\lambda) = \sup_{\delta>0} \Im\omega_t^{(\delta)}(\lambda) , \qquad \lambda ,t \geq 0.
\]
To ensure that this supremum is finite, by  \eqref{Zeq:alpha},  we observe that for any $\delta>0$,
\[
\Exp\Im\omega_t^{(\delta)}(\lambda) \le 2\pi \lambda \sqrt{t}.
\]
Hence, by Fatou's lemma,
\begin{equation} \label{Ealphabd}
\Exp \alpha_t(\lambda)  \leq 2\pi\lambda\sqrt{t},
\end{equation}
so that almost surely, $\alpha_t(0)=0$ for $t\ge 0$ and, if $\lambda>0$, $0<\alpha_t(\lambda) < \infty$ for $t>0$ with $\alpha_0(\lambda) = 0$.  By dominated convergence for stochastic integrals (the diffusion coefficient is bounded away from 0), it holds for $t>s>0$, 
\[
\alpha_t(\lambda) = \alpha_s(\lambda) 
+\int_s^t\biggl\{ 2\pi\lambda \d\sqrt{u} +  
{\rm c}_\beta   \frac{\Im\bigl((1-e^{-\i\alpha_u})\d Z_u \bigr)}{\sqrt u}  
\biggr\}.
\]
In particular, from the existence of strong solutions, for a fixed $\lambda\ge 0$, $t\mapsto \alpha_t(\lambda)$ is continuous on $[0,\infty)$ and 
$t\mapsto \alpha_t(\lambda)$ is a positive submartingale (the drift is non-negative). Thus, by Doob's maximal inequality and \eqref{Ealphabd}, for any $c > 0 $
\[
\P\Big[ \sup_{0 \leq u \leq t} \alpha_u(\lambda) > c\Big]= \lim_{s\to 0} \P\Big[ \sup_{s \leq u \leq t} \alpha_u(\lambda) > c\Big] \le c^{-1} \lambda\sqrt t , \qquad t>0, \lambda\ge0 .
\]
The limit follows from monotone convergence. 
Then, for $0<\gamma<1/2$, by a Borel--Cantelli argument, 
\[
2^{k \gamma}  \sup_{0 \leq u \leq 2^{-k}} \alpha_u(\lambda) \to0 \qquad\text{talmost surely as $k\to\infty$}, 
\]
hence $\alpha_t(\lambda)/t^{\gamma} \to 0$ almost surely, locally uniformly in $\lambda$ (by monotonicity again).
This allows us to define the stochastic integral for any  $\lambda, t \geq 0$, 
\begin{equation} \label{psisdeint} 
\omega_t(\lambda)
\coloneqq
\int_0^t \biggl\{ \i 2\pi\lambda  \d\sqrt{u} +  
{\rm c}_\beta   \frac{\Im\bigl((1-e^{-\i\alpha_u})\d Z_u \bigr)}{\sqrt u}  \biggr\} 
\end{equation}
and $\alpha_t(\lambda)$ is the imaginary part of both sides. 
Then, the  properties 1--5  of Lemma~\ref{lem:szbasics} (with the same proof with $\delta=0$) follow for the process $\{\omega_t(\lambda) ;\lambda\in\R \}_{t\ge 0}$ and 
\[
\alpha_t(\lambda) = \Im \omega_t(\lambda) = \lim_{\delta\to0} \Im\omega_t^{(\delta)}(\lambda) \qquad\text{for $t\in\R_+$ and  $\lambda\in\R$}.
\]
Finally, the space-time distributional scaling invariance of the solution follows from the scaling law for Brownian motion  and the invariance of the drift $(\lambda ,t) \mapsto  \lambda \d\sqrt{t} $ by rescaling $(\lambda ,t)\leftarrow (\gamma^2 t, \gamma \lambda)$ for  any $\gamma>0$
\end{proof}

Lemma~\ref{lem:sine0} does not ensure continuity in $\lambda$ of the process $\{\lambda\mapsto\omega_t(\lambda); \lambda\in\R\}_{t\in\R_+}$. 
We now prove that the process is (almost surely) H\"older continuous in both variables.

\begin{proposition}\label{prop:Holdersine}
The (strong) solution $\{\omega_t(\lambda); \lambda\in\R\}_{t\in\R_+}$ of \eqref{Zeq:alpha} with $\omega_0 = 0$ satisfies, for any $0 < \delta < \min\{ \frac{\beta}{2+\beta}, \frac12\}$, there is an $\epsilon > 0$ so that for any compact $\mathcal{K} \subset \R$,
\begin{equation} \label{psiestcont}
\E\biggl(\sup_{0 \leq s \leq 1} \sup_{\lambda_1,\lambda_2 \in \mathcal{K}} 
\frac{|\omega_s(\lambda_1) -\omega_s(\lambda_2)|}{s^{1/2}\log(\tfrac{1}{s+1})|\lambda_1 - \lambda_2|^{\delta}}\biggr)^{1+\epsilon}<\infty.
\end{equation}
\end{proposition}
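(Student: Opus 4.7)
The plan is to invoke Kolmogorov--Chentsov's continuity criterion once a sharp $L^p$ moment bound on the single-$\lambda$ increment $\omega_t(\mu)$ has been established for small $\mu$; translation invariance (Lemma~\ref{lem:szbasics}(3)) then converts this into the required two-point estimate in $\lambda$, while a standard Garsia--Rodemich--Rumsey argument handles the time variable.

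The core input is the bound
\[
\E|\omega_t(\mu)|^p \;\leq\; K_p (|\mu|\sqrt t)^p, \qquad p\in[1,\,1+\tfrac{\beta}{2}),\ t\in[0,1],\ \mu\in\R.
\]
To prove it, set $\alpha_t:=\Im\omega_t(\mu)$ for $\mu\geq 0$, which is nonnegative by Lemma~\ref{lem:szbasics}(1)--(4). Itô's formula applied to $\alpha_t^p$, the bound $\sin^2(x/2)\leq x^2/4$, and integration give
\[
\E\alpha_t^p \;\leq\; p\pi\mu\int_0^t \E\alpha_s^{p-1} s^{-1/2}\,ds \;+\; \tfrac{p(p-1)}{\beta}\int_0^t \E\alpha_s^p\,\tfrac{ds}{s}.
\]
Inserting the scaling ansatz $\E\alpha_s^p\leq C_p(\mu\sqrt s)^p$ on the right-hand side yields the closed recursion $C_p\bigl(1-\tfrac{2(p-1)}{\beta}\bigr)\geq 2\pi C_{p-1}$, solvable precisely when $p<1+\beta/2$; the bound for $\alpha_t$ then follows by induction in $p$ (with Hölder interpolation for non-integer $p$). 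The extension to $|\omega_t(\mu)|$ is obtained by applying BDG to the complex martingale part of \eqref{Zeq:alpha}, whose bracket is dominated by $\int_0^t 8\alpha_s^2/(\beta s)\,ds$, combined with Minkowski's integral inequality and the bound on $\E\alpha_s^p$. The case $\mu<0$ is handled by the symmetry property Lemma~\ref{lem:szbasics}(2).

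Fix $\delta<\beta/(\beta+2)$. Since $1/(1-\delta)<1+\beta/2$, one may pick $p$ with $1/(1-\delta)<p<1+\beta/2$ and set $1+\epsilon:=p(1-\delta)>1$ to retain an integrability margin. Translation invariance gives
\[
\E|\omega_t(\lambda_1)-\omega_t(\lambda_2)|^p \;\leq\; K_p\, t^{p/2}|\lambda_1-\lambda_2|^p,
\]
and the standard Kolmogorov--Chentsov chaining argument, run on any compact $\mathcal{K}\subset\R$, produces a version of $\lambda\mapsto\omega_t(\lambda)$ with
\[
\E\Bigl(\sup_{\lambda_1\neq\lambda_2\in\mathcal{K}}\tfrac{|\omega_t(\lambda_1)-\omega_t(\lambda_2)|}{|\lambda_1-\lambda_2|^\delta}\Bigr)^{1+\epsilon} \;\leq\; C_{\delta,\mathcal{K}}\,t^{(1+\epsilon)/2}.
\]
For the time modulus at fixed $\lambda$, decomposing $\omega_t(\lambda)-\omega_s(\lambda)$ into drift and martingale and applying BDG with the same $\alpha$-bound yields $\E|\omega_t(\lambda)-\omega_s(\lambda)|^p \lesssim |\lambda|^p(\sqrt t - \sqrt s)^p + |\lambda|^p(t-s)^{p/2}$. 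A two-parameter Garsia--Rodemich--Rumsey estimate applied to the scaled field $(s,\lambda)\mapsto\omega_s(\lambda)/\sqrt s$ then delivers joint Hölder regularity, and the logarithmic factor in the denominator of \eqref{psiestcont} comes from the sharp Lévy-type modulus $\sqrt{s\log(1/s)}$ of the martingale part as $s\to 0$.

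\textbf{Main obstacle.} The threshold $p<1+\beta/2$ is sharp: the linearization $d\tilde\alpha_t=\pi\mu\,dt/\sqrt t+\sqrt{2/(\beta t)}\,\tilde\alpha_t\,dB_t$ (valid for small $\mu$) is a time-changed geometric Brownian motion whose $p$-th moment diverges for $p\geq 1+\beta/2$, so one cannot push past this exponent, and it is exactly this threshold which produces the Hölder cap $\delta<\beta/(\beta+2)$ through Kolmogorov. The delicate technical point is to preserve the $\sqrt t$ scaling uniformly as $t\to 0$ despite the $1/t$ singularity of the Itô bracket; this requires the self-consistent moment bound to be fed back into the bracket estimate before applying BDG, with careful control of the induction constants $C_p$ as $p\uparrow 1+\beta/2$. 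The logarithmic sharpening in the time denominator is a standard GRR improvement over plain Kolmogorov, but its joint implementation with the $\lambda$-Hölder bound of the previous step is the most technical piece.
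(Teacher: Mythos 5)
Your core moment bound on $\alpha_t=\Im\omega_t(\mu)$ is essentially the paper's own argument: It\^o's formula for $\alpha_t^{p}$, the bound $4\sin^2(\alpha/2)\le\alpha^2$, and closing the resulting comparison/Gr\"onwall recursion, which works exactly for $p<1+\beta/2$; combined with the symmetry and translation-invariance properties and the Kolmogorov criterion in $\lambda$, this part matches the paper. Two steps, however, do not go through as written. First, your extension to the martingale (real) part via ``BDG plus Minkowski's integral inequality'': after BDG you must bound $\E\bigl[\bigl(\int_0^t\alpha_s^2\,\tfrac{ds}{s}\bigr)^{p/2}\bigr]$, and Minkowski's integral inequality (the triangle inequality in $L^{p/2}$) requires $p\ge 2$, whereas every admissible exponent satisfies $p<1+\beta/2$; so for all $\beta\le 2$ this step is invalid, and the alternative of pulling the expectation inside by concavity would need $\E\alpha_s^2$, which is equally unavailable there. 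The paper circumvents this by cutting $\int_0^t$ into the blocks $(te^{-k},te^{1-k}]$, using subadditivity of $x\mapsto x^{\gamma/2}$ for $\gamma\le 2$ together with the Doob maximal bound $\E\max_{u\le t}\alpha_u^{\gamma}\lesssim\lambda^{\gamma}t^{\gamma/2}$ on each block, and summing the geometric series; some such device is needed in your write-up.

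Second, the passage to the supremum over $s\in[0,1]$ with the weight $s^{1/2}\log(\cdot)$ in \eqref{psiestcont} is asserted rather than proved. Your Kolmogorov estimate is stated at a fixed time $t$ (no $\sup_{s\le t}$ inside the expectation), and the proposed two-parameter Garsia--Rodemich--Rumsey argument for $(s,\lambda)\mapsto\omega_s(\lambda)/\sqrt{s}$ would require time-increment moment bounds for the rescaled field that stay uniform as $s\to0$, which you have not established; moreover, attributing the logarithm to a ``L\'evy-type modulus'' misidentifies the mechanism, since an almost-sure modulus statement does not by itself give the $L^{1+\epsilon}$ bound of the weighted supremum. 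In the paper the logarithm has a more pedestrian origin: one first proves maximal-in-time versions of the moment bounds (Doob for the submartingale $\alpha^{\gamma}$, BDG for $\Re\omega$), applies Kolmogorov to $\lambda\mapsto\max_{0\le s\le t}|\omega_s(\lambda_1)-\omega_s(\lambda_2)|$ to get an estimate normalized by $t^{1/2}|\lambda_1-\lambda_2|^{\delta}$ uniformly in $t$, and then replaces the maximum over the dyadic time blocks $e^{-k-1}\le s\le e^{-k}$ by a sum; the factor $\log(1/s)\asymp k$ is precisely what makes $\sum_k k^{-1-\epsilon}$ converge. Your outline needs this (or an equivalent) maximal-in-time ingredient to be complete; the remaining bookkeeping (e.g.\ the choice $1+\epsilon=p(1-\delta)$) is harmless.
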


\begin{proof}
Without loss of generality, we assume that $\lambda\ge 0$. 
Let $\alpha_t(\lambda) \coloneqq \Im \omega_t(\lambda) $ for $\lambda\ge 0$ and $t\ge 0$.
By It\^o's rule, for $\gamma>1$ and $t > 0$
\[
\d \alpha_t^\gamma
=\gamma \alpha_t^{\gamma-1}
\biggl( 2\pi \lambda \d\sqrt{t} + \sqrt{\frac{2}{\beta t}}
\Im \bigl( \bigl(1-e^{-\i\alpha_t}\bigr) dZ_t \bigr) \biggr)
+
\frac{4}{\beta t}
{\gamma(\gamma-1)}\alpha_t^{\gamma-2}
\sin( \alpha_t/2)^2 \d t.
\]
Taking expectation and using that $4\sin( \alpha_t/2)^2 \le \alpha_t^2$,  we obtain the inequality
\[
\Exp\alpha_t^\gamma
\leq \gamma
\int_0^t
\biggl( 
\frac{\pi \lambda}{\sqrt{s}} \Exp \alpha_s^{\gamma-1}+
\frac{\gamma-1}{\beta s}\Exp \alpha_s^{\gamma}
\biggr)\,ds
\]
Strictly speaking, we should first derive this inequality for the process $\alpha_t^{(\delta)}$ with $\delta>0$, in case all moments exist, and then take a limit as $\delta\to0$ using monotone convergence (by Lemma~\ref{lem:sine0}). 

By Jensen's inequality and \eqref{Ealphabd}, we obtain for $\gamma\le 2$, 
\[
\Exp \alpha_t^{\gamma-1}(\lambda) \le \lambda^{\gamma-1} t^{(\gamma-1)/2}.
\]
so that by comparison: $\Exp\alpha_t^\gamma \le w(t)$ where $w(t)$ solves the ODE 
\[
w(t)= \pi \lambda^\gamma t^{\frac\gamma2}  + \frac{\gamma(\gamma-1)}{\beta} \int_0^t
\frac{w(s)}{s} ds \qquad\text{with $w(0)=0$.}
\]
This equation can be solved (uniquely) by $c_\gamma \lambda^\gamma  t^{\frac\gamma2}$ with $c_\gamma = \frac{1}{4} -\frac{\gamma-1}{2\beta}$ provided that  $\gamma < 1+\frac\beta2$. 
This shows that for $1\le \gamma < \min(1+\frac\beta2,2)$, 
\[
\Exp \alpha_t^{\gamma}(\lambda) \le c_\gamma\lambda^{\gamma} t^{\frac\gamma2}.
\]

By Doob's maximal inequality ($t\mapsto \alpha_t^{\gamma}(\lambda)$ is a submartingale for $\gamma\ge 1$), one has under the same conditions; 
\begin{equation} \label{Doobmaxalpha}
\E\big( \max_{0 \leq u \leq t}\alpha_u^\gamma(\lambda) \big)
\lesssim \lambda^{\gamma} t^{\frac\gamma2}.
\end{equation}
Since for $\lambda_1 \ge \lambda_2\ge 0$, $\{ \alpha_t(\lambda_1)-\alpha_t(\lambda_2) ; t\ge 0 \} \overset{\rm law}{=} \{ \alpha_t(\lambda_1-\lambda_2) ; t\ge 0 \} $, using  Kolmogorov continuity criterion, for a given $t>0$, 
$\lambda \mapsto  t^{-\frac12}\max_{0 \leq u \leq t}\alpha_u(\lambda)$ is $\delta$--H\"older continuous with $\delta<\min\{\frac{\beta}{2+\beta},\frac12\}$ and there is $\epsilon(\delta)>0$ sufficiently small so that for any $t>0$, 
\begin{equation} \label{alphaestcont}
\Exp
\biggl(
\max_{0 \leq s \leq t} \sup_{\lambda_1,\lambda_2 \in \mathcal{K}}
\frac{
|\alpha_s(\lambda_1) -\alpha_s(\lambda_2)|}
{
t^{1/2}
|\lambda_1 - \lambda_2|^{\delta}
}
\biggr)^{1+\epsilon}
\lesssim 1
\end{equation}
for some constant depending only on $(\beta, \delta, \mathcal{K})$. 

To deduce \eqref{psiestcont} for $\alpha=\Im\psi$ using  \eqref{alphaestcont}, we 
break the maximum in dyadic scales (replacing $\max_{k\ge 0}$ by $\sum_{k\ge 0}$)
\[\begin{aligned}
\Exp
\biggl(\max_{0 \leq s \leq 1} 
\sup_{\lambda_1,\lambda_2 \in \mathcal{K}} 
\frac{
|\alpha_s(\lambda_1) -\alpha_s(\lambda_2)|}{s^{1/2}\log(\tfrac{1}{s+1})
|\lambda_1 - \lambda_2|^{\delta}}\biggr)^{1+\epsilon}
&\lesssim \Exp
\bigg( \max_{k\ge 0}  \max_{e^{-k-1} \leq s \leq e^{-k}} 
\sup_{\lambda_1,\lambda_2 \in \mathcal{K}}
\frac{
|\alpha_s(\lambda_1) -\alpha_s(\lambda_2)|}
{
ke^{-k/2}
|\lambda_1 - \lambda_2|^{\delta}}\bigg)^{1+\epsilon}\\
&\lesssim \sum_{k\ge 0} k^{-1-\epsilon} <\infty
\end{aligned}\]

It remains to prove \eqref{psiestcont} for $\rho  = \Re\omega $.
This process is defined through the SDE \eqref{psisdeint} and it is a martingale so, using the Burkholder--Davis--Gundy inequality with $\gamma\ge 1$, 
\[
\Exp \sup_{0 \leq s \leq t} |\rho_s(\lambda_1)-\rho_s(\lambda_2)|^{\gamma}
\leq \frac{C_\gamma}\beta
\Exp\bigg[
\biggl(\int_0^t 
\frac{|\alpha_s(\lambda_1)-\alpha_s(\lambda_2)|^2}{s} ds\biggr)^{\gamma/2}\bigg] .
\]
By the previous estimates, this expectation is finite  for $\gamma \le 1+\epsilon$. 

We break this integral into $(te^{-k},te^{1-k}]$ for $k\ge1$, using subadditivity of $x\in\R_+ \mapsto x^{\gamma/2}$ for $\gamma\le 2$, we obtain 
\[\begin{aligned}
\Exp \sup_{0 \leq s \leq t} |\rho_s(\lambda_1)-\rho_s(\lambda_2)|^{\gamma}
& \lesssim \sum_{k\ge 1} 
\Exp\Big[\Big( \max_{te^{-k} \leq s \leq te^{1-k}}
|\alpha_s(\lambda_1)-\alpha_s(\lambda_2)| \Big)^\gamma\Big]
\biggl(\underbrace{\int_{te^{-k}}^{te^{1-k}} \frac{ds}{s}}_{=1}\biggr)^{\gamma/2}
\end{aligned}\]
Now, if $1\le \gamma < \min(1+\frac\beta2,2)$, using \eqref{Doobmaxalpha} and translation-invariance, we obtain
\[
\Exp \sup_{0 \leq s \leq t} |\rho_s(\lambda_1)-\rho_s(\lambda_2)|^{\gamma}
\lesssim |\lambda_1-\lambda_2|^{\gamma}
\sum_{k\ge1}  t^{\frac\gamma2} e^{-\frac\gamma2 k}
\lesssim
|\lambda_1-\lambda_2|^{\gamma} t^{\gamma/2}
\]
where the implied constants depend only on $(\beta,\gamma)$. 
Thus, using  Kolmogorov continuity criterion again, we conclude that \eqref{alphaestcont} also holds for the process  $\rho  = \Re\omega$.
Just as above, we can upgrade this estimate using a dyadic decomposition scheme to obtain \eqref{psiestcont}. 
\end{proof}

\begin{proposition} \label{prop:continuity0}
Let $0<\delta<1$ and let $\{\Delta_t(\lambda)\}_{t\ge \delta}$ be the difference of two solutions of \eqref{Zeq:alpha} with $\lambda\in\R$ fixed for $t\ge \delta$ with different initial conditions at time $\delta$. One has for any $\tau \ge 1$,  $\varepsilon>0$ and ${\rm c}>0$, 
\[
\P\bigg[ \Big\{ \sup_{t\in[\delta,\tau]} |\Delta_t|  \ge 2\varepsilon  \tau^{\rm c}\delta^{-\rm c}  \Big\} \cap \big\{ |\Delta_\delta| \le \varepsilon \big\} \bigg] \lesssim \delta^{\tfrac{\rm c^2 \beta}{4}} . 
\]
\end{proposition}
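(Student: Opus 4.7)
The plan is to build from $|\Delta_t|^p$ a nonnegative supermartingale $X_t$ on $[\delta,\tau]$ and then apply Doob's maximal inequality followed by an optimisation of the free exponent $p$ against the target $(\tau/\delta)^{\rm c}$.

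The first observation is that, writing the two solutions as $\omega^{(1)}_t,\omega^{(2)}_t$ driven by the same complex Brownian motion $\{Z_t\}_{t\ge 0}$, the deterministic drift in \eqref{Zeq:alpha} cancels and $\Delta_t=\omega^{(1)}_t-\omega^{(2)}_t$ is a continuous complex martingale
\[
\d\Delta_t=\sigma_t\,\d Z_t,\qquad \sigma_t=\sqrt{\tfrac{2}{\beta t}}\bigl(e^{-\i\Im\omega^{(2)}_t}-e^{-\i\Im\omega^{(1)}_t}\bigr).
\]
The identity $|e^{-\i a}-e^{-\i b}|^2=4\sin^2(\tfrac{a-b}{2})\le(a-b)^2$ gives $|\sigma_t|^2\le\tfrac{2}{\beta t}|\Delta_t|^2$ and hence $\d[\Delta,\overline{\Delta}]_t\le\tfrac{4}{\beta t}|\Delta_t|^2\,\d t$. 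For any $p>0$, the complex It\^o formula then yields
\[
\d|\Delta_t|^p=\d m_t+\tfrac{p^2}{2}|\sigma_t|^2|\Delta_t|^{p-2}\,\d t\le \d m_t+\tfrac{p^2}{\beta t}|\Delta_t|^p\,\d t,
\]
with $\d m_t$ a martingale increment. Multiplying by $t^{-p^2/\beta}$ cancels the drift, so
\[
X_t\coloneqq |\Delta_t|^p\,t^{-p^2/\beta},\qquad t\in[\delta,\tau],
\]
is a nonnegative continuous local supermartingale, which Fatou's lemma (together with $X_\delta\le\varepsilon^p\delta^{-p^2/\beta}$ on $\{|\Delta_\delta|\le\varepsilon\}$) upgrades to a genuine supermartingale.

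Applying Doob's maximal inequality for nonnegative supermartingales conditionally on the Brownian filtration at time $\delta$ and intersecting with $\{|\Delta_\delta|\le\varepsilon\}$,
\[
\P\bigl[\{\sup_{t\in[\delta,\tau]}X_t\ge\mu\}\cap\{|\Delta_\delta|\le\varepsilon\}\bigr]\le\mu^{-1}\varepsilon^p\delta^{-p^2/\beta}.
\]
Since $t\le\tau$, $\{\sup_t|\Delta_t|\ge 2\varepsilon(\tau/\delta)^{\rm c}\}\subseteq\{\sup_tX_t\ge(2\varepsilon)^p(\tau/\delta)^{p{\rm c}}\tau^{-p^2/\beta}\}$, whence
\[
\P\bigl[\{\sup_{t\in[\delta,\tau]}|\Delta_t|\ge 2\varepsilon\,\tau^{{\rm c}}\delta^{-{\rm c}}\}\cap\{|\Delta_\delta|\le\varepsilon\}\bigr]\le 2^{-p}(\tau/\delta)^{p^2/\beta-p{\rm c}}.
\]
Minimising $p^2/\beta-p{\rm c}$ over $p>0$ gives $p={\rm c}\beta/2$ and value $-{\rm c}^2\beta/4$, and since $\tau\ge 1\ge\delta$ one has $(\tau/\delta)^{-{\rm c}^2\beta/4}\le\delta^{{\rm c}^2\beta/4}$; the final bound is therefore $2^{-{\rm c}\beta/2}\delta^{{\rm c}^2\beta/4}\lesssim\delta^{{\rm c}^2\beta/4}$, as claimed.

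The only delicate step is the It\^o computation when the optimising $p={\rm c}\beta/2$ lies in $(0,2)$, where $z\mapsto|z|^p$ fails to be $C^2$ at the origin. I would handle this by localising at $\tau_\eta=\inf\{t\ge\delta:|\Delta_t|\le\eta\}$, applying It\^o to $|\Delta_t|^p$ on $[\delta,\tau_\eta]$, and letting $\eta\downarrow 0$; the key observation is that $\sigma_t$ vanishes whenever $\Delta_t=0$, so $0$ is absorbing for $\Delta$ and the supermartingale property extends across $\{\Delta_t=0\}$ without additional drift. On $\{\Delta_\delta=0\}$ one has $\Delta_t\equiv 0$ by strong uniqueness, so the bound holds trivially there.
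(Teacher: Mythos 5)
Your proof is correct, and it takes a genuinely different route from the paper's. The paper splits $\Delta_t/2=\rho_t+\i\alpha_t$, observes that the imaginary part solves the autonomous equation $\d\alpha_t=\sqrt{2/(\beta t)}\,\sin(\alpha_t)\,\d X_t$, and then solves it exactly through the stochastic exponential, $\alpha_t=\alpha_\delta M_t$; the tail $\delta^{{\rm c}^2\beta/4}$ comes from the Gaussian tail of $\log M_t\le R_t$, whose bracket is at most $\tfrac{2}{\beta}\log(\tau/\delta)$, and the real part is then handled by a second, conditional martingale tail bound that contributes only a negligible $\exp(-\delta^{-{\rm c}/2})$. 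You instead treat the modulus directly: the Lipschitz bound $|e^{-\i a}-e^{-\i b}|\le|a-b|$ gives $|\sigma_t|^2\le\tfrac{2}{\beta t}|\Delta_t|^2$, so $X_t=|\Delta_t|^p t^{-p^2/\beta}$ is a nonnegative (local, hence by Fatou genuine) supermartingale, and Doob plus optimization over $p$ yields $2^{-p}(\tau/\delta)^{p^2/\beta-p{\rm c}}$ with optimum $p={\rm c}\beta/2$ and exponent $-{\rm c}^2\beta/4$ — the same exponent, because the $p$-optimization is implicitly exploiting the same logarithmic bracket bound that drives the paper's estimate. Your argument is shorter, treats real and imaginary parts simultaneously, avoids the change of Brownian motion and the exponential-martingale identity, and would transfer verbatim to any diffusion coefficient that vanishes Lipschitz-continuously at $\Delta=0$ with the $1/\sqrt{t}$ scaling; what it gives up is the finer structural information in the paper's proof (the exact multiplicative representation $\alpha_t=\alpha_\delta M_t$, hence sign preservation of $\Im\Delta$, and the sharper control of the real part given the imaginary one). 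Your handling of the two delicate points is also sound: since $\{|\Delta_\delta|\le\varepsilon\}\in\F_\delta$, multiplying $X$ by its indicator (or conditioning on $\F_\delta$) makes the initial value bounded so integrability is not an issue, and for $p\in(0,2)$ the localization at $\tau_\eta$ together with pathwise uniqueness for $t\ge\delta$ (Lipschitz coefficients away from $t=0$), which makes $\Delta\equiv 0$ after its first zero, ensures that the supremum over $[\delta,\tau]$ is captured by the stopped processes as $\eta\downarrow 0$.
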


\begin{proof}
We consider two solutions with different initial conditions at time $\delta>0$, so the difference $\Delta_t$ satisfies the SDE, 
\[
\d\Delta_t = \sqrt{\frac{2}{\beta t}} \bigl(1- e^{-\i\Im \Delta_t}\bigr) e^{-\i\Im\psi_t}\d Z_t , \qquad t\ge \delta
\]
and we assume that $|\Delta_\delta| \le \varepsilon$. 
Write writing $\Delta_t/2 =  \rho_t + \i\alpha_t$, since $\{Z_t\}$ is a complex Brownian motion, introducing a new complex Brownian motion $\{W_t\}$ with $\d W_t  =  e^{-\i \alpha_t} e^{-\i\Im\psi_t}\d Z_t$, we obtain 
\[
\frac{\d \Delta_t}{2} = \i \sqrt{\frac{2}{\beta t}} \sin(\alpha_t) \d W_t , \qquad t \ge \delta.
\]
This yields an autonomous equation for the imaginary part  $\{\alpha_t\}$, writing $\{W_t =X_t- \i Y_t\}$, 
\begin{equation} \label{ODEalpha}
\d \alpha_t = \sqrt{\frac{2}{\beta t}} \sin(\alpha_t) \d X_t , \qquad \d\rho_t =  \sqrt{\frac{2}{\beta t}} \sin(\alpha_t) \d Y_t . 
\end{equation}

Consider the exponential martingale:
\[
M_t := 
\exp \biggl( \int_{\delta}^t 
\sqrt{\frac{2}{\beta s}} 
\frac{\sin(\alpha_s)}{\alpha_s} \d X_s - \frac12 \int_{\delta}^t 
\frac{2}{\beta s}
\bigg(\frac{\sin(\alpha_s)}{\alpha_s}\bigg)^2 \d s
\biggr) , \qquad t \ge \delta.
\]
By It\^o's formula ($\{X_t\}$ is a standard Brownian motion), 
\[
\d M_t^{-1} = -\bigg(\sqrt{\frac{2}{\beta t}}  \frac{\sin(\alpha_t)}{\alpha_t} \d X_t
- \frac{2}{\beta t} \bigg(  \frac{\sin(\alpha_t)}{\alpha_t}\bigg)^2 \d t \bigg) M_t^{-1}
\]
In particular, the bracket $\d  \langle \alpha_t,  M_t^{-1} \rangle = -\frac{2}{\beta t}  \frac{\sin(\alpha_t)^2}{\alpha_t} \d t $, so that
\[
\d( \alpha_t M_t^{-1} ) =
M_t^{-1} \d\alpha_t + \alpha_t \d M_t^{-1} +\d \langle \alpha_t,  M_t^{-1} \rangle = 0. 
\]
Then, since $M_\delta =1$,  we have $\alpha_t = \alpha_\delta M_t$ for $t\in[\delta,1]$. 
In particular, $\alpha_t\neq 0$ almost surely (if $\alpha_\delta \neq 0$). 

Let $S : = \sup_{t\in[\delta,\tau]} M_t $ and define the martingale, 
\[
R_t : =  \int_{\delta}^t 
\sqrt{\frac{2}{\beta s}} \frac{\sin(\alpha_s)}{\alpha_s} \d X_s ,\qquad
[R_t] = \int_{\delta}^t \frac{2}{\beta s} \bigg(\frac{\sin(\alpha_s)}{\alpha_s}\bigg)^2 \d s \le \frac{2}{\beta} \log(\tau\delta^{-1}) , \qquad t \in [\delta,\tau] .
\]
Then, $M_t \le \exp R_t$   and using a martingale tail-bound, for any ${\rm c}>0$
\[
\P\left[S\ge  \tau^{\rm c}\delta^{-\rm c}\right]
\le \P\left[\sup_{t\in[\delta,\tau]} R_t \ge  {\rm c} \log(\tau\delta^{-1}) \right]
\lesssim \exp\left( -  \frac{{\rm c}^2 \log(\tau\delta^{-1})}{4/\beta} \right) \le \delta^{\tfrac{\rm c^2 \beta}{4}} . 
\]
Similarly, by \eqref{ODEalpha}, $\rho_t = \rho_\delta + S_t$ where the martingale 
\[
S_t : =  \int_{\delta}^t 
\sqrt{\frac{2}{\beta s}} \sin(\alpha_s) \d Y_s ,\qquad
[S_t] = \int_{\delta}^t \frac{2}{\beta s} \sin(\alpha_s)^2 \d s \le \frac{2}{\beta} \alpha_\delta^2 S  \log \delta^{-1}  , \qquad t \ge \delta.
\]
Then, using a martingale tail-bound, 
\[\begin{aligned}
\P\left[ \Big\{ \sup_{t\in[\delta,\tau]} |\rho_t| \ge 2\varepsilon \Big\} \cap \{S\le  \tau^{\rm c}\delta^{-\rm c}\} \cap  \cap \big\{ |\Delta_\delta| \le \varepsilon \big\}  \right]
& \le \P\left[ \Big\{ \sup_{t\in[\delta,\tau]} |S_t| \ge \varepsilon \Big\} \cap \Big\{\sup_{t\in[\delta,\tau]} [S_t] \le   \varepsilon^2  \delta^{-\rm c/2}  \Big\}  \right] \\
&\lesssim  \exp\left( - \delta^{-\rm c/2}  \right) .
\end{aligned}\]
This probability is negligible, so we conclude that 
\[
\P\bigg[ \Big\{ \sup_{t\in[\delta,\tau]} |\Delta_t|  \ge 2\varepsilon \tau^{\rm c} \delta^{-\rm c}  \big\} \cap \big\{ |\Delta_\delta| \le \varepsilon \big\} \bigg] \le \P\left[S\ge  \tau^{\rm c}\delta^{-\rm c}\right] + \O\left(\exp(- \delta^{-\rm c/2}) \right)
\lesssim \delta^{\tfrac{\rm c^2 \beta}{4}} . \qedhere
\]
\end{proof}

\begin{corollary} \label{cor:continuity0}
Let $\delta>0$ and let $\{\omega_t^{(\delta)}(\lambda) ; \lambda\in\R \}_{t\in\R_+}$ be the solution of \eqref{Zeq:alpha} with  $\omega_{t} = 0$ for $t\in[0,\delta]$.
For any fixed $\lambda\in\R$ and $\tau>0$, one has as $\delta\to0$, 
\[
\max_{t\in[0,\tau]}\big| \omega_t^{(\delta)}(\lambda)- \omega_t(\lambda)\big|  \overset{\P}{\to}   0. 
\]
\end{corollary}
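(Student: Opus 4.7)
The plan is to split the supremum at time $\delta$ and handle the two pieces with different tools: the piece on $[0,\delta]$ uses the H\"older continuity of $\omega$ at the origin, and the piece on $[\delta,\tau]$ uses the comparison estimate in Proposition~\ref{prop:continuity0}.

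For $t\in[0,\delta]$, by definition $\omega_t^{(\delta)}(\lambda)=0$, so the difference equals $|\omega_t(\lambda)|$. Applying Proposition~\ref{prop:Holdersine} with $\lambda_2=0$ (using $\omega_s(0)=0$ from Lemma~\ref{lem:szbasics}), there is a random constant $C=C(\omega,\lambda)$ with finite $(1+\epsilon)$-moment such that
\[
\max_{t\in[0,\delta]}|\omega_t(\lambda)|\le C\,\delta^{1/2}\log\!\bigl(\tfrac{1}{\delta+1}\bigr),
\]
and the right-hand side tends to $0$ in probability (in fact almost surely) as $\delta\to 0$.

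For $t\in[\delta,\tau]$, the processes $\omega$ and $\omega^{(\delta)}$ are both strong solutions of \eqref{Zeq:alpha} for $t\ge\delta$ driven by the same complex Brownian motion $\{Z_t\}$, so their difference $\Delta_t=\omega_t(\lambda)-\omega_t^{(\delta)}(\lambda)$ is exactly of the form treated in Proposition~\ref{prop:continuity0}, with initial datum $\Delta_\delta=\omega_\delta(\lambda)$. I will pick a small constant $\mathrm{c}\in(0,1/4)$ and set $\varepsilon=\varepsilon(\delta):=\delta^{2\mathrm{c}}$. Proposition~\ref{prop:Holdersine} applied at $\lambda_2=0$ (together with Markov's inequality and the moment bound $\E|\omega_\delta(\lambda)|^{1+\epsilon}\lesssim\delta^{(1+\epsilon)/2}$) gives $\P[|\omega_\delta(\lambda)|>\varepsilon]\to 0$. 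On the complementary event, Proposition~\ref{prop:continuity0} (applied with $\tau\vee 1$ in place of $\tau$) yields
\[
\P\!\left[\Big\{\sup_{t\in[\delta,\tau]}|\Delta_t|\ge 2\delta^{\mathrm{c}}(\tau\vee 1)^{\mathrm{c}}\Big\}\cap\{|\omega_\delta(\lambda)|\le\varepsilon\}\right]\lesssim\delta^{\mathrm{c}^2\beta/4}.
\]
Since $2\delta^{\mathrm{c}}(\tau\vee 1)^{\mathrm{c}}\to 0$ and the failure probability $\delta^{\mathrm{c}^2\beta/4}\to 0$, this supremum tends to $0$ in probability.

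Combining the two pieces via the triangle inequality gives the corollary. The only potential obstacle is matching the scales: one must pick the threshold $\varepsilon$ and the constant $\mathrm{c}$ compatibly so that both the growth factor $\tau^{\mathrm{c}}\delta^{-\mathrm{c}}$ in Proposition~\ref{prop:continuity0} and the exceptional probability $\delta^{\mathrm{c}^2\beta/4}$ remain negligible, which is achieved by any $\mathrm{c}\in(0,1/4)$ with $\varepsilon=\delta^{2\mathrm{c}}$. No new probabilistic input is needed beyond Propositions~\ref{prop:Holdersine} and~\ref{prop:continuity0}; the argument is essentially bookkeeping that combines the small-time H\"older modulus of $\omega$ at $0$ with the interior stability estimate for \eqref{Zeq:alpha}.
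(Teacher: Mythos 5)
Your proof is correct and follows essentially the same route as the paper: split the supremum at $t=\delta$, control the piece on $[0,\delta]$ (and the initial datum $\Delta_\delta$) via the H\"older/moment estimate of Proposition~\ref{prop:Holdersine} with $\lambda_2=0$, and control the piece on $[\delta,\tau]$ via the stability estimate of Proposition~\ref{prop:continuity0}. Your explicit choice $\varepsilon=\delta^{2\mathrm{c}}$ to absorb the factor $\tau^{\mathrm{c}}\delta^{-\mathrm{c}}$ is a slightly more careful bookkeeping of the same argument, not a different method.
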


\begin{proof}
Note that in the proof of Lemma~\ref{lem:sine0}, we have already established that for a fixed $\lambda\in\R$,  $ \Im\omega_t^{(\delta)}(\lambda) \to \Im\omega_t(\lambda)$ as continuous processes on $\R_+$, almost surely as $\delta\to0$, so the statement is in fact about $\Re\omega_t^{(\delta)}(\lambda)$. 
It can be proved directly using Propositions~\ref{prop:Holdersine} and~\ref{prop:continuity0}. Let $\Delta_t \coloneqq \omega_t^{(\delta)}(\lambda)- \omega_t(\lambda)$ for $t\ge 0$ (with $\lambda>0$ fixed). One has 
\[
\max_{t\in[0,\tau]}|\Delta_t| \le  \max_{t\in[0,\delta]}|\omega_t(\lambda)| + \max_{t\in[\delta,\tau]}|\Delta_t|  . 
\]
Proposition~\ref{prop:Holdersine} (with $\lambda_2=0$ so that $ \omega_t(\lambda_2) =0$ for $t\ge 0$) implies that, using Markov's inequality, for a small $c>0$,  for any $\varepsilon>0$, 
\[
\P\Big[ \max_{t\in[0,\delta]}|\omega_t(\lambda)| \ge \varepsilon \Big]  \lesssim  \varepsilon^{-1} \delta^{1-c} .
\]
Then, by Proposition~\ref{prop:continuity0}, we conclude that there is a constant $c>0$ such that if $\delta\tau \ll 1 $, 
\[\begin{aligned}
\P\Big[\max_{t\in[0,\tau]}|\Delta_t| \ge 2\varepsilon \Big] 
&\le \P\Big[\big\{\max_{t\in[\delta,\tau]}|\Delta_t|  \ge \varepsilon\big\} \cap \big\{ |\Delta_\delta| \le \varepsilon \big\} \Big]
+ \P\Big[ \max_{t\in[0,\delta]}|\omega_t(\lambda)| \ge \varepsilon \Big] \\
&\lesssim \delta^c(1+ \varepsilon^{-1}) .
\end{aligned}\]
This proves the claim.
\end{proof}

\section{Pr\"ufer phase for the characteristic polynomials} \label{app:charpoly}

\todo[inline]{Fix notations}

The monic characteristic polynomials of the tridiagonal matrix model \eqref{def:trimatrix} are the sequence
\begin{equation*} 
\widehat{\Phi}_n(z) \coloneqq \det [z-({4}{N\beta})^{-1/2}\mathbf{A}]_{n} , \qquad n\in\N, \, z\in\C .
\end{equation*}
With this normalization, the zeros of $\E\widehat{\Phi}_{n}$ (a rescaled Hermite polynomial of degree $n$) lie in the interval $\mathcal{I}_n \coloneqq \big(-\sqrt{n/N}, \sqrt{n/N}\big)$ with an asymptotically semicircular density. The goal of this section is to introduce a polar representation, or \emph{Pr\"ufer phase}, for the characteristic polynomials that holds in the \emph{elliptic regime}, and which will be the basis for the study of the characteristic polynomials.

The starting point for this representation is the 3-term recurrence, which we can represent via \emph{transfer matrices}. 
By a cofactor expansion, we obtain the following recursion;  for any $n \in \N_0$, 
\begin{equation} \label{rec1}
\begin{pmatrix}\widehat{\Phi}_{n+1}(z) \\ \widehat{\Phi}_{n}(z) \end{pmatrix}=  T_{n}^\beta(z) 
\begin{pmatrix}\widehat{\Phi}_{n}(z) \\ \widehat{\Phi}_{n-1}(z) \end{pmatrix} , \qquad T_{n}^\beta(z) \coloneqq \begin{pmatrix}z-\frac{b_{n+1}}{{2}\sqrt{N\beta}} & -\frac{ a_{n}^2}{{4}N\beta}  \\
1 & 0 \end{pmatrix}
\end{equation}
with initial condition $\big(\begin{smallmatrix} 1 \\ 0\end{smallmatrix}\big)$. 
Under the conditions of Definition~\ref{def:noise} (and for G$\beta$E), $\E \tfrac{ a_{n}^2}{{4}N\beta} = \tfrac{n}{4N}$ and 
\begin{equation} \label{mnoise}
T_{n}^\beta(z) = T_{n}^\infty(z)  -  \frac{1}{\sqrt{2\beta N}}\begin{pmatrix} X_n & \sqrt{\frac{n}{4N}} Y_n\\  0 & 0  \end{pmatrix} , \qquad 
T_{n}^\infty(z)  =\E T_{n}^\beta(z) =  \begin{pmatrix} z &- \frac{n}{{4}N} \\  1 & 0  \end{pmatrix} .
\end{equation}

The main behavior of this recursion is governed by the deterministic matrices $\{T^\infty_{n}(z)\}_{n\ge 0}$ and their eigenvalues.
In particular, the eigenvalues are real if $n<N_0(z) = Nz^2$ (and complex conjugates otherwise), so that if $z\in [-1,1]$, there is a turning point\footnote{In fact, if $|z|\le \mathfrak{R} N^{-1/2}$ for some constant $ \mathfrak{R} \ge 1$, there is no tuning point and the whole behavior of the recursion is \emph{elliptic}.
From this viewpoint, 0 is a special point in the spectrum (with extra symmetries) and the recurrence \eqref{rec1} in this case has already been study in \cite{TaoVu}.} where the qualitative behavior of the recursion \eqref{rec1} changes from \emph{hyperbolic} to \emph{elliptic}, which is to say the eigenvalues of $\Exp T_n^\beta(z)$ change from real to complex conjugate pairs. 
This identifies $[-1,1]$ as the \emph{support of the spectrum} of the truncated matrix  
$[\mathbf{A}]_{N}/\sqrt{4\beta N}$ and the noise is diffusive away from the turning point. 

One can explicitly diagonalize the matrix $T_{n}^\infty(z)$; according to  \cite[Lemma~1]{LambertPaquette02}, one has for $n\in \N$, 
\begin{equation} \label{conj}
T_{n}^\infty =  V_n \Lambda_n V_{n}^{-1}  ,  \qquad 
\Lambda_n \coloneqq  \sqrt{\frac{n}{4N}} \begin{pmatrix}  \lambda_{n}  &0 \\  0 & \lambda_{n}^{-1}\end{pmatrix} , \quad
V_n \coloneqq  \begin{pmatrix}  \sqrt{\frac{n}{4N}} \lambda_{n} &   \sqrt{\frac{n}{4N}} \lambda_{n}^{-1} \\ 1 &1  \end{pmatrix} 
\end{equation}
where for $z\in\R$, 
\vspace{-.3cm}
\begin{equation*} 
\lambda_{n}(z) = J\big(z\sqrt{N/n}\big)^{-1}, 
\qquad\qquad
J(w) = \begin{cases}
w \mp \sqrt{w^2-1} , &\pm w \ge 1\\
e^{-\i\arccos(w)} , & w\in [-1,1] 
\end{cases} . 
\end{equation*}

In the elliptic regime $n>N_0(z)$, it is convenient to convert the recursion \eqref{mnoise} into a (complex) scalar recursion by using the matrix 
\begin{equation} \label{V-1}
V_n^{-1}(z) = -\i\sqrt{N}\delta_n(z) \begin{pmatrix}  1 &   -\sqrt{\frac{n}{4N}}e^{-\i\theta_n(z)}\\ -1 & \sqrt{\frac{n}{4N}} e^{\i\theta_n(z)}\end{pmatrix} 
\end{equation}
where $\lambda_{n}(z) =e^{-\i\theta_n(z)}$, 
$\theta_n(z)= \arccos\big(z\sqrt{N/n}\big)$ and we used that
$\big(\sin\theta_n(z)\big)^{-1} =\sqrt{n}\delta_n(z)$.  

Then, with $\xi_n(z) = e^{\bpsi_n(z)}$ for $z\in(-1,1)$ and $n>N_0(z)$, 
\begin{equation} \label{xi1}
\begin{pmatrix}
\xi_n \\
\overline{\xi_n}
\end{pmatrix}
=  2V_n^{-1} \begin{pmatrix} \sqrt{\frac{n+1}{4N}}  \Phi_{n+1} \\  \Phi_{n} \end{pmatrix}
\end{equation}
and we recover the characteristic polynomial taking $\Phi_n = \Re\xi_n$. 
In addition, we deduce from the recursion \eqref{rec1} a scalar recursion for the process 
$\{\xi_n(z)\}_{n>N_0(z)}$. These calculations are collected in Lemma~\ref{lem:rec}, and they will be the main recurrence studied in this paper.  In this Appendix, we will develop some basic properties of these phases.

Before doing so, we note there is another way to represent the characteristic polynomials, which is to use the \emph{Pr\"ufer phases} introduced in \cite[Section 1.9.9]{Forrester}:
$\{\tphi_n(\mu); \mu\in \R\}_{n\ge 0}$ by setting $R_n(\mu) e^{\i\tphi_{n-1}(\mu)} \coloneqq {\rm x}_n(\mu)+ \i a_{n-1} {\rm x}_{n-1}(\mu)$ for $n\in\N$ and $\mu \in \R$ where $\{{\rm x}_n(\mu); \mu\in \R\}_{n\ge 0}$ are the solutions of the symmetric 3-term recursion associated with the matrix \eqref{def:trimatrix}.
We slightly modify this definition by replacing $a_k$ by $\E a_k  = \sqrt{\tfrac{k}{4N}}$. 
The next lemma is the counterpart of \cite[Proposition~1.9.10]{Forrester} in this case. 

\begin{lemma}\label{lem:PP}
Define for $n\in\N$, 
\begin{equation*} 
\widehat{\Phi}_{n+1}(z)+ \i \sqrt{\tfrac n{4N}}\widehat{\Phi}_n(z) 
\coloneqq \widehat{R}_n(z) e^{\i\tphi_n(z)}, \qquad\qquad z\in\R,
\end{equation*}
where $\widehat{R}_n(z)>0$, $\tphi_n(z)\in\R$ and $\tphi_n(+\infty) =\displaystyle\lim_{z\to+\infty} \tphi_n(z) =0$.
The phases $\{\tphi_n(z); z\in\R\}_{n\ge 0}$ are smooth on $\R$, decreasing and it holds for $n\in\N$, 
\[
\big\{z: \widehat{\Phi}_n(z) =0 \big\} = \big\{ z: \tphi_n(z) = k\pi  , k\in[n]\big\} \qquad\qquad 
\tphi_n(-\infty) =\displaystyle\lim_{z\to-\infty} \tphi_n(z) = (n+1)\pi. 
\]
Moreover, one has $| \tphi_{n+1}(z)- \tphi_n(z) |<3\pi/2 $ for all $n\ge 1$ and $z\in\R$.

\end{lemma}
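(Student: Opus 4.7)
The plan is to build on standard properties of orthogonal polynomials for a real Jacobi matrix and to exploit the specific form of the recursion \eqref{rec1}. Set $f_n(z) := \widehat{\Phi}_{n+1}(z) + \i\, c_n \widehat{\Phi}_n(z)$ with $c_n := \sqrt{n/4N}$. The first step is to show $f_n(z) \neq 0$ for every real $z$, so that $\tphi_n(z) = \arg f_n(z)$ admits a continuous (in fact smooth) lift on $\R$ once we fix $\tphi_n(+\infty) = 0$. Since $[\mathbf{A}]_n/\sqrt{4N\beta}$ is real symmetric, Cauchy interlacing for nested principal minors gives the strict interlacing of zeros of $\widehat{\Phi}_n$ and $\widehat{\Phi}_{n+1}$, so at a real zero of $\widehat{\Phi}_n$ we have $\widehat{\Phi}_{n+1}\neq 0$, while away from such zeros the imaginary part $c_n\widehat{\Phi}_n$ is nonzero; in both cases $f_n \neq 0$.

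Next I would establish the monotonicity $\tphi_n'<0$ by computing
\[
\tphi_n'(z) \;=\; \Im\!\bigl(f_n'(z)/f_n(z)\bigr) \;=\; c_n\, W_n(z)/|f_n(z)|^2, \qquad W_n := \widehat{\Phi}_n'\widehat{\Phi}_{n+1} - \widehat{\Phi}_n \widehat{\Phi}_{n+1}',
\]
and showing $W_n<0$ on $\R$. Differentiating the recursion \eqref{rec1} and substituting yields the identity $W_n = -\widehat{\Phi}_n^2 + D_n^2\, W_{n-1}$ with $D_n^2 = a_n^2/(4N\beta)$ and base case $W_0 = -1$, which proves negativity by induction. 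Zeros of $\widehat{\Phi}_n$ are characterized by $\Im f_n = 0$ together with $\widehat{\Phi}_{n+1}\neq 0$, so they are exactly the points where $\tphi_n\in\pi\Z$. Leading-order asymptotics $\widehat{\Phi}_{n+1}(z)\sim z^{n+1}$ give $\tphi_n(+\infty)=0$, and for $z\to-\infty$ we factor $f_n(z)\sim z^n(z+\i c_n)$, whose continuous lift from $+\infty$ produces $\tphi_n(-\infty) = (n+1)\pi$. Combined with strict monotonicity, this forces $\tphi_n$ to hit each of $\pi,2\pi,\ldots,n\pi$ exactly once, matching the $n$ simple zeros of $\widehat{\Phi}_n$.

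The nontrivial step is the bound $|\tphi_{n+1}-\tphi_n|<3\pi/2$. The plan is to use the recursion $\widehat{\Phi}_{n+2} = (z-C_{n+2})\widehat{\Phi}_{n+1} - D_{n+1}^2\widehat{\Phi}_n$ (with $C_{n+2}=b_{n+2}/(2\sqrt{N\beta})$) together with $\widehat{\Phi}_{n+1} = (f_n+\bar f_n)/2$ and $\widehat{\Phi}_n=(f_n-\bar f_n)/(2\i c_n)$ (valid on $\R$) to derive the factorization
\[
\frac{f_{n+1}(z)}{f_n(z)} \;=\; \frac{1}{2}\bigl(\alpha(z) + \beta(z)\, e^{-2\i\tphi_n(z)}\bigr),
\]
where $\alpha(z) = (z-C_{n+2}) + \i(c_{n+1}+D_{n+1}^2/c_n)$ and $\beta(z) = (z-C_{n+2}) + \i(c_{n+1}-D_{n+1}^2/c_n)$. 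A direct computation gives $|\alpha|^2-|\beta|^2 = 4c_{n+1}D_{n+1}^2/c_n > 0$, so $\alpha+\beta e^{-2\i\tphi_n}$ stays in a disc $D(\alpha,|\beta|)$ that avoids the origin. Since $\Im\alpha>0$ is constant while $\Re\alpha = z-C_{n+2}$ sweeps all of $\R$, the argument $\arg\alpha(z)$ increases monotonically from $0$ to $\pi$ as $z$ decreases from $+\infty$ to $-\infty$; the angular half-width of the disc is $\arcsin(|\beta|/|\alpha|)<\pi/2$. Combining these facts, the continuous lift normalized by $(\tphi_{n+1}-\tphi_n)(+\infty)=0$ stays within $(-\pi/2, 3\pi/2)$, which is strictly stronger than the claimed bound.

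The expected obstacle is precisely this last paragraph: producing the right affine-in-$\bar f_n/f_n$ representation of $f_{n+1}/f_n$ and then carrying out the continuous lift carefully. The other three items are routine from the Jacobi/orthogonal-polynomial machinery once interlacing and the Wronskian identity are in place.
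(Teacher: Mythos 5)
Your proposal is correct, and it splits into a part that coincides in substance with the paper's proof and a part that is genuinely different. For the construction of the phase, the zero characterization, the limits, and the monotonicity, you and the paper do essentially the same thing: your Wronskian recursion $W_n=-\widehat{\Phi}_n^2+D_n^2W_{n-1}$, $W_0=-1$, is exactly the induction the paper runs after differentiating the cotangent form of the three-term recursion (the paper's identity for $\tphi_n'$ in terms of $\tphi_{n-1}'$ is the same statement, since $\tphi_k'=c_kW_k/\widehat{R}_k^2$), so this is a repackaging rather than a new route; one small point worth writing out is that the asymptotics $f_n(z)\sim z^n(z+\i c_n)$ only pin down $\tphi_n(-\infty)$ modulo $2\pi$ — the exact value $(n+1)\pi$ comes from strict monotonicity together with counting the $2n+1$ crossings of the lattice $\tfrac{\pi}{2}\Z$ forced by the $n$ zeros of $\widehat{\Phi}_n$ and the $n+1$ interlacing zeros of $\widehat{\Phi}_{n+1}$ (the paper is equally terse here). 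Where you genuinely diverge is the bound $|\tphi_{n+1}-\tphi_n|<3\pi/2$: the paper gets it by observing that the zeros of $\widehat{\Phi}_{n+1}$ are simultaneously the points where $\tphi_{n+1}\in\pi\Z$ and $\tphi_n\in\pi\Z-\tfrac{\pi}{2}$, and squeezing the two decreasing phases between consecutive such points; you instead derive the exact affine representation $f_{n+1}/f_n=\tfrac12\bigl(\alpha+\beta e^{-2\i\tphi_n}\bigr)$ with $|\alpha|^2-|\beta|^2=4c_{n+1}D_{n+1}^2/c_n>0$ and $\Im\alpha>0$, and conclude by a branch argument. Both give the same one-sided range $(-\pi/2,3\pi/2)$; your version reproves $f_{n+1}\neq 0$ for free and is quantitative in $z$ (the deviation from $\arg\alpha$ is at most $\arcsin(|\beta|/|\alpha|)$), while the paper's is shorter once the level-$(n+1)$ zero characterization is available. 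To make your winding step airtight, phrase it as: the continuous function $(\tphi_{n+1}-\tphi_n)-\arg\alpha$ vanishes at $+\infty$ and takes values in $(-\pi/2,\pi/2)$ modulo $2\pi$ at every $z$, hence by continuity it stays in $(-\pi/2,\pi/2)$, and $\arg\alpha\in(0,\pi)$ gives the claim.
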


\begin{proof}
For $n\ge 1$, since the zeros of the polynomial $\widehat{\Phi}_{n}(z),\widehat{\Phi}_{n+1}(z)$ interlace on $\R$, so $\widehat{R}_n(z)>0$ and the phase $\tphi_n(z)$ is determined for all $z\in\R$ by the condition  $\tphi_n(+\infty)=0$ (one has $\widehat{R}_n(z) \sim z^{n+1}$ as $n\to\infty$). 
Moreover, it follows that the phase is smooth on $\R$ with 
$\widehat{\Phi}_{n+1}(z)=\widehat{R}_n(z)\cos{\tphi_n(z)},$
$\widehat{\Phi}_{n}(z)= \sqrt{\tfrac{4N}n} \widehat{R}_n(z)\sin{\tphi_n(z)},$ for all $n\ge 1$. 
Using the 3-term recursion \eqref{rec1}, one has for $n\ge 2$, 
\[
\frac1{\tan(\tphi_n)}= \sqrt{\tfrac{4N}n} \frac{\widehat{\Phi}_{n+1}}{\widehat{\Phi}_{n}}
= \sqrt{\tfrac{4N}n}\bigg( z -\frac{b_{n+1}}{\sqrt{4 N\beta}} -\frac{a_{n}^2}{4N\beta}  \frac{\widehat{\Phi}_{n-1}}{\widehat{\Phi}_{n}}\bigg)
= \sqrt{\tfrac{4N}n}\bigg(z -\frac{b_{n+1}}{\sqrt{4N\beta}} - \frac{a_{n}^2\tan(\tphi_{n-1})}{2\sqrt{N(n-1)}\beta}  \bigg).
\]
Then, if we differentiate this equation with respect to $z\in\R$, we obtain 
\begin{equation} \label{PP3}
\frac{\tphi_n'}{\sin^2(\tphi_n)} = - \sqrt{\tfrac{4N}n}\bigg(1 - \frac{a_n^2\tphi_{n-1}'}{2\sqrt{N(n-1)}\beta\cos(\tphi_{n-1})^2} \bigg)
\end{equation}
or equivalently
\[
\tphi_n' = - \sqrt{\tfrac n{4N}}\widehat{R}_n^{-2}\big( \widehat{\Phi}_{n}^2 -  \tfrac{a_n^2\tphi_{n-1}'}{2\sqrt{N(n-1)}\beta}\widehat{R}_{n-1}^2\big)
\]
In particular, if $\tphi_{n-1}'(z)< 0$ for all $z\in\R$, then $\tphi_n'(z) <0$ as well.
At initialization ($n=1$), the above computation gives instead
\[
\frac1{\tan(\tphi_1)} = \sqrt{4N}\bigg(z -\frac{b_{2}}{\sqrt{4N\beta}} - \frac{a_1^2}{4N\beta \widehat{\Phi}_1}\bigg)
\]
and using that $\widehat{\Phi}_1'=1$, by differentiating  with respect to $z\in\R$, we obtain
\[
\tphi_1' = -\sqrt{4N}{\sin^2(\tphi_1)}\bigg(1+ \frac{a_1^2}{4N\beta\widehat{\Phi}_1^2}\bigg)
=-\frac{\widehat{\Phi}_1^2+a_1^2/4N\beta}{\widehat{R}_1^{2}\sqrt{4N}} <0
\]
This completes the proof that $\tphi_n'(z) < 0$ for all $z\in\R$ and $n\in\N$.
Since the phase $\tphi_n$ is decreasing, we easily obtain that the zeros of the polynomial $\widehat{\Phi}_n$ are the (unique) solution of the equations $\tphi_n(z) = k\pi$  for  $k\in[n]$ and also that $\tphi_n(-\infty) = (n+1)\pi$.
Finally the zeros of $\widehat{\Phi}_{n+1}$ are also solutions of $\tphi_{n}(z)= k\pi -\pi/2$ for $k\in[n]$, so  $| \tphi_{n+1}(z)- \tphi_n(z) |<3\pi/2 $ for all $z\in\R$  (since both functions are decreasing)
\end{proof} 

\begin{remark}
The \emph{Pr\"ufer phase} is related to the eigenvalue of counting function as follows;
$\lfloor\tphi_N(z)/\pi\rfloor = \#\big\{\lambda_j\ge z\big\}$ where $\{\lambda_j\}_{j=1}^N$ denotes the eigenvalues of the matrix $[({4}{N\beta})^{-1/2}\mathbf{A}]_{N}$.
\end{remark}

\vspace{0.5cm}
In what remains, we return to the phases $\phi_n$, to develop some of their basic properties, which mirror those of the $\hat{\phi}_n$.  The next proposition collects some deterministic properties of this process. 

\begin{proposition}\label{prop:prufer}
For any $n\ge 1$, there are smooth functions $\rho_n, \phi_n : \mathcal{I}_n \to \R$ such that
\begin{equation} \label{PP1}
\xi_n(z) = \exp(\bpsi_n(z)) \coloneqq \exp\big(\rho_n(z) + \i \phi_n(z)\big)
=\i\sqrt{n}\delta_n(z) 
\biggl(e^{-\i\theta_n(z)}\Phi_{n}(z)-
\sqrt{\frac{n+1}{n}}
\Phi_{n+1}(z)
\biggr).
\end{equation}
Furthermore, these functions $\phi_n$ satisfy the following properties: 
\begin{enumerate}
\item $\phi_n(0) = \tphi_n(0)+\pi/2 $ for all $n\ge 1$;  hence $\phi_1(0) \in[\pi/2,3\pi/2]$ and $| \phi_{n+1}(0)- \phi_n(0) |<3\pi/2 $ for all $n\ge 1$.
\item The zeros of $\Phi_n$ in $\mathcal{I}_n$ are exactly the solutions of the equations $\phi_n(z)-\pi/2 \in \pi \Z$, and moreover for all $z\in \mathcal{I}_n$,
\[
	\lfloor\phi_n(z)-\tfrac\pi2\rfloor_{\pi}= \pi\mathrm{N}_n([z,\infty)),
\]
where $\lfloor\cdot\rfloor_\pi$ denotes the integer part modulo $\pi$ and $\mathrm{N}_n(I)$ for $I \subset \R$ is the number of eigenvalues of the matrix $[({4}{N\beta})^{-1/2}\mathbf{A}]_{n}$ in the set $I$.
\item The map 
\[
z \in \mathcal{I}_n  \mapsto 
\phi_n(z) - \frac{3}{\tan\theta_n(z)} = \phi_n(z) - \frac{3z\sqrt{N}}{\sqrt{n-Nz^2}} 
\]
is decreasing on $\mathcal{I}_n$.
\end{enumerate}
\end{proposition}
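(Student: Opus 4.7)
The starting point is to unpack the real and imaginary parts of \eqref{PP1}. Writing $e^{-\i\theta_n(z)} = z\sqrt{N/n} - \i\sqrt{1-Nz^2/n}$ and using $\sqrt n\,\delta_n(z)\sqrt{1-Nz^2/n}=1$, one obtains directly
\[
\Re\xi_n(z) = \Phi_n(z), \qquad \Im\xi_n(z) = -\delta_n(z)\bigl[\sqrt{n+1}\,\Phi_{n+1}(z) - \sqrt N\,z\,\Phi_n(z)\bigr].
\]
Hence a zero of $\xi_n$ on $\mathcal{I}_n$ would force $\Phi_n(z_0) = \Phi_{n+1}(z_0) = 0$ simultaneously, contradicting the strict interlacing of eigenvalues of the nested principal submatrices of $\mathbf{A}/\sqrt{4N\beta}$ (whose off-diagonal entries are a.s.\ positive). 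So $\xi_n$ is nowhere-vanishing on $\mathcal{I}_n$, and $\rho_n = \log|\xi_n|$ and $\phi_n = \arg\xi_n$ are well-defined smooth functions once a branch is fixed; the representation $\Phi_n = \Re(\exp\bpsi_n)$ is then automatic.

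To establish (1), I evaluate at $z=0$: since $\theta_n(0) = \pi/2$ gives $e^{-\i\theta_n(0)} = -\i$ and $\delta_n(0) = n^{-1/2}$, \eqref{PP1} reduces to $\xi_n(0) = \Phi_n(0) - \i\sqrt{(n+1)/n}\,\Phi_{n+1}(0)$. Substituting $\Phi_k = w_k\widehat\Phi_k$ and using $w_{n+1}(0)/w_n(0) = \sqrt{4N/(n+1)}$, this equals $-2\i\,w_n(0)\sqrt{N/n}\bigl[\widehat\Phi_{n+1}(0)+\i\sqrt{n/(4N)}\,\widehat\Phi_n(0)\bigr]$, which by Lemma~\ref{lem:PP} is a positive real multiple of $e^{\i(\tphi_n(0)+\pi/2)}$ modulo $2\pi$. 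Fixing the branch of $\phi_n$ to match then gives $\phi_n(0) = \tphi_n(0)+\pi/2$, and the bounds $\phi_1(0)\in[\pi/2,3\pi/2]$ and $|\phi_{n+1}(0)-\phi_n(0)|<3\pi/2$ are inherited verbatim from Lemma~\ref{lem:PP}. Property (2) is then immediate from $\Re\xi_n = \Phi_n$: $\Phi_n(z) = 0$ iff $\cos\phi_n(z) = 0$, i.e.\ $\phi_n(z)\in\pi/2 + \pi\Z$. The counting identity follows by combining the strict monotonicity of $\phi_n - 3\cot\theta_n$ from (3) with the boundary behavior $\cot\theta_n(z) \to \mp\infty$ as $z \to \pm\sqrt{n/N}$, which together force $\phi_n$ to decrease by exactly $\pi$ across each eigenvalue.

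The core of the proof is (3). Differentiating $\log\xi_n$ using $\xi_n = \Phi_n - \i\delta_n\chi_n$ with $\chi_n \coloneqq \sqrt{n+1}\Phi_{n+1} - \sqrt N z\Phi_n$, together with $\delta_n' = Nz\delta_n^3$ and $1+Nz^2\delta_n^2 = n\delta_n^2$, one finds after simplification
\[
\phi_n'(z) = \frac{\sqrt{n+1}\,\delta_n\,\widetilde W_n(z) + \sqrt N\,n\,\delta_n^3\,\Phi_n(z)^2}{\Phi_n(z)^2 + \delta_n(z)^2\,\chi_n(z)^2}, \qquad \widetilde W_n \coloneqq \Phi_n'\Phi_{n+1} - \Phi_{n+1}'\Phi_n - Nz\,\delta_n^2\,\Phi_n\Phi_{n+1}.
\]
Since $(3\cot\theta_n)'(z) = 3\sqrt N\,n\,\delta_n(z)^3$, the monotonicity claim reduces to $\sqrt{n+1}\,\delta_n\,\widetilde W_n < \sqrt N\,n\,\delta_n^3\bigl[2\Phi_n^2 + 3\delta_n^2\chi_n^2\bigr]$. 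The key algebraic input is the Christoffel--Darboux identity for the monic characteristic polynomials of the Jacobi matrix, derived by induction on $n$ from the three-term recurrence \eqref{rec1}:
\[
\widehat\Phi_n'\widehat\Phi_{n+1} - \widehat\Phi_{n+1}'\widehat\Phi_n = -\widehat\Phi_n^2 - \sum_{k=0}^{n-1}\Bigl(\prod\nolimits_{j=k+1}^n a_j^2\Bigr)\,\widehat\Phi_k^2,
\]
which transferred through $\Phi_k = w_k\widehat\Phi_k$ expresses the Wronskian $\Phi_n'\Phi_{n+1} - \Phi_{n+1}'\Phi_n$ as a strictly negative weighted sum of $\Phi_k^2$. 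Substituting $\sqrt{n+1}\Phi_{n+1} = \chi_n + \sqrt N z\Phi_n$ eliminates the remaining cross term in $\widetilde W_n$ and reduces the desired inequality to a positivity statement that can be checked term-by-term against the quadratic form in the denominator. The main obstacle will be the coefficient bookkeeping in this last comparison: one has to track how the Christoffel--Darboux weights $\prod_{j=k+1}^n a_j^2$ and the weight ratios $w_k^2/w_n^2$ combine so that each contribution in the expansion is dominated by the matching term in $2\Phi_n^2 + 3\delta_n^2\chi_n^2$.
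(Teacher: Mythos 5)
Your setup is sound: the nonvanishing of $\xi_n$ on $\mathcal{I}_n$ via interlacing, the identification $\Re\xi_n=\Phi_n$, $\Im\xi_n=-\delta_n\chi_n$ with $\chi_n=\sqrt{n+1}\,\Phi_{n+1}-\sqrt{N}z\,\Phi_n$, the evaluation at $z=0$ giving property (1) through Lemma~\ref{lem:PP}, and your formula for $\phi_n'$ (which I checked; it is equivalent to the paper's, though the paper routes it through the classical Pr\"ufer phase $\tphi_n$ and the quantity $Q_n$). The problems are in the two substantive claims. For (3), the decisive inequality $\sqrt{n+1}\,\delta_n\widetilde W_n<\sqrt{N}n\,\delta_n^3\bigl[2\Phi_n^2+3\delta_n^2\chi_n^2\bigr]$ is not proved: the substitution $\sqrt{n+1}\,\Phi_{n+1}=\chi_n+\sqrt{N}z\,\Phi_n$ does \emph{not} eliminate the cross term — it converts $-Nz\delta_n^2\sqrt{n+1}\,\Phi_n\Phi_{n+1}$ into $-Nz\delta_n^2\Phi_n\chi_n-N^{3/2}z^2\delta_n^2\Phi_n^2$, and the mixed term $\Phi_n\chi_n$ is exactly where the content of the inequality lies; it has to be absorbed into $2\Phi_n^2+3\delta_n^2\chi_n^2$ by Cauchy--Schwarz with weight of order $\sqrt{N}n\,\delta_n^2$, using $\delta_n^2(n-Nz^2)=1$ and $Nz^2\le n$. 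The Christoffel--Darboux expansion into lower $\Phi_k^2$'s is a red herring: all those terms have a favorable sign and can simply be dropped, so no ``term-by-term'' bookkeeping against the denominator is needed — only $\Phi_n'\Phi_{n+1}-\Phi_{n+1}'\Phi_n\le 0$ plus one AM--GM step. Since you stop at ``the main obstacle will be the coefficient bookkeeping,'' the key estimate of (3) is left as a gap (a fixable one). For comparison, the paper proves (3) by expressing $\phi_n'$ through $\tphi_n'$, using $\tphi_n'<0$ from Lemma~\ref{lem:PP} and $|Q_n|^2\ge 1-\cos\theta_n$.

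The deduction of the counting identity in (2) is not valid as stated. Monotonicity of $\phi_n-3\cot\theta_n$ only yields $\phi_n'<3\sqrt{N}n\,\delta_n^3$, a \emph{positive} upper bound, so it cannot determine the direction in which $\phi_n$ crosses the levels $\tfrac\pi2+\pi\Z$, let alone ``force $\phi_n$ to decrease by exactly $\pi$ across each eigenvalue''; and anchoring at the endpoints of $\mathcal{I}_n$ via $\cot\theta_n\to\pm\infty$ fails because eigenvalues of the $n\times n$ minor may lie outside $\mathcal{I}_n$ and $\phi_n$ is not controlled as $z\to\pm\sqrt{n/N}$. What the argument actually needs is (i) a crossing-direction statement: $\phi_n'(z_0)<0$ at every zero $z_0$ of $\Phi_n$ in $\mathcal{I}_n$ — this drops out of your own formula, since there $\phi_n'(z_0)=\sqrt{n+1}\,(\Phi_n'\Phi_{n+1})(z_0)/\bigl(\delta_n\chi_n^2\bigr)(z_0)$ and your Wronskian identity makes the numerator strictly negative, so each level is crossed only downward and only at zeros of $\Phi_n$, whence $\lfloor\phi_n-\tfrac\pi2\rfloor_\pi$ is non-increasing and drops by one unit of $\pi$ at each zero; and (ii) an anchor: at $z=0$ combine $\phi_n(0)=\tphi_n(0)+\pi/2$ with $\lfloor\tphi_n(0)\rfloor_\pi=\pi\,\mathrm{N}_n([0,\infty))$ (the remark after Lemma~\ref{lem:PP}) to pin the integer part. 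This is precisely how the paper concludes; you have both ingredients available from your part (1) and your $\phi_n'$ formula, but the route you invoked instead does not deliver the identity.
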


\begin{proof}
Since the zeros of $\Phi_n,\Phi_{n+1}$ interlace, by \eqref{xi1}, the function $z\in  \mathcal{I}_n \mapsto \xi_n(z)$ does not vanish so we can define its $\psi_n =\log \xi_n $ as a smooth complex-valued function on $\mathcal{I}_n.$
Then, we  define the phases $\phi_n= \Im \psi_n$ by fixing the values of $\phi_n(0)$ for $n\ge 1$ using the relationship to the \emph{Pr\"ufer phases} for tridiagonal matrix models introduced in \cite[Section 1.9.4]{Forrester}.
As a consequence, we also obtain some monotonicity properties of the phases $\{\phi_n(z) : z\in\mathcal{I}_n\}$. 
Moreover, formula \eqref{PP1} follows immediately from \eqref{xi1}.

By \eqref{PP1} and using Lemma~\ref{lem:PP}, for $n\ge 1$, 
\begin{equation} \label{PP2}
\Big(\Phi_n(z) - \i\sqrt{\tfrac{n+1}n}\Phi_{n+1}(z)\Big)= \Big( \widehat{\Phi}_n(z) - \i\sqrt{\tfrac{4N}n} \widehat{\Phi}_{n+1}(z)\Big)\Big(  {\textstyle \prod_{k=1}^{n} \sqrt{\tfrac{k}{4N}}} \Big)^{-1} (\tfrac{2N}{\pi})^{1/4}e^{-N z^2}  = -\i R_n(z) e^{-\i\tphi_n(z)}
\end{equation}
with $R_n(z) \coloneqq \Big(  {\textstyle \prod_{k=1}^{n-1} \sqrt{\tfrac{k}{4N}}} \Big)^{-1}
\widehat{R}_n(z) (\tfrac{2N}{\pi})^{1/4}e^{-N z^2}$.
In particular, $R_n(z)>0$ for all $z\in\R$ and at $z=0$,
\[
\xi_n(0) = \Phi_n(0) - \i\sqrt{\tfrac{n+1}n}\Phi_{n+1}(0) = R_n(0) e^{-\i(\tphi_n(0)+\pi/2)}
\]
This allows us to define the phase \eqref{def:psi} using the convention that for all $n\ge1$,
\[
\phi_n(0) = \tphi_n(0)+\pi/2 . 
\]
This definition is consistent in the sense that by Lemma~\ref{lem:PP}, 
$| \phi_{n+1}(0)- \phi_n(0) |<3\pi/2 $ for all $n\ge 1$. 





\medskip  
We start by deriving a differential identity for $\phi_n$ which connects the two Pr\"ufer phases $\tphi_n$ and $\phi_n$.
Matching the real/imaginary part of \eqref{PP1} and \eqref{PP2}, we obtain
\[
\xi_n
= \frac{-\i R_n}{\sin\theta_n} \big( e^{\i\theta_n}\sin(\tphi_n) +   \cos(\tphi_n)\big)
\eqqcolon \frac{-\i R_n}{\sin\theta_n} Q_n
\]
after replacing $(\sin\theta_n)^{-1} =\sqrt{n}\delta_n$ and $\overline{\lambda_n}= e^{\i\theta_n}$. 
Since $\xi_n$ and $\frac{R_n}{\sin\theta_n}$ are smooth $0$-free functions for $z \in \mathcal{I}_n$, taking the logarithmic derivative with respect to $z$ and imaginary part to recover $\phi_n'$, we obtain 
\begin{equation} \label{app:phi_prime}
\phi_n' =\frac{\Im(Q_n'\overline{Q_n})}{|Q_n|^2}
= \frac{\sin\theta_n \cdot\tphi_n'+ \theta_n' \sin(\tphi_n) \cdot(\cos\theta_n \sin(\tphi_n) +\cos(\tphi_n))}{|Q_n|^2} .
\end{equation}
Note that $\sin(\theta_n) > 0$ for all $z \in \mathcal{I}_n$ and that 
\begin{equation} \label{app:Q_n}
|Q_n|^2 = 1-\cos\theta_n \cdot \sin(2\tphi_n) \ge 1-\cos\theta_n > 0,
\end{equation}
so that $\phi_n'$ is well-defined for all $z \in \mathcal{I}_n$.

To make the link between $\phi_n$ and the counting function, we start by recalling \eqref{PP1}; at any zero of $\Phi_n(z)$, we have 
\[
\xi_n(z) = -\i \sqrt{n} \delta_n(z) \sqrt{\tfrac{n+1}{n}} \Phi_{n+1}(z)
\]
which is on the imaginary axis, and hence $\phi_n(z)-\pi/2 \in \pi \Z$.  Conversely at any $z \in \mathcal{I}_n$ for which $\phi_n(z)-\pi/2 \in \pi \Z$, we have that $\Im e^{-\i \theta_n(z)} \neq 0$ and hence $\Phi_n(z) = 0$.  Thus the solution set of $\phi_n(z)-\pi/2 \in \pi \Z$ is exactly the set of zeros of $\Phi_n(z)$, when restricting both sets to $\mathcal{I}_n$.

Now at any $z$ for which $\tphi_n(z) \in \pi \Z$ (which is equivalent to $\phi_n(z)-\pi/2 \in \pi \Z$), we have that 
\[
\phi_n'(z) = \frac{\sin( \theta_n(z) ) \cdot \tphi_n'(z)}{|Q_n|^2} < 0.
\]
Hence it follows that the integer part $[\tfrac{1}{\pi}\phi_n(z) - \tfrac12]$ is non-increasing, and moreover it jumps by $1$ at each zero of $\Phi_n(z)$.  Since at $0$, we have $\phi_n(0) = \tphi_n(0)+\pi/2$ and since $[\tfrac1{\pi}\tphi_n(0)] = N_n([0,\infty))$, we therefore conclude that 
\[
[\tfrac{1}{\pi}\phi_n(z)-\tfrac12] = N_n([z,\infty))
\]
at all $z \in \mathcal{I}_n$.

Finally, turning to the monotonicity of $\phi_n(z) - \tfrac{3}{\tan\theta_n(z)}$, recall that $\theta_n(z)= \arccos\big(z\sqrt{N/n}\big)$ so that $\theta_n :z \in \mathcal{I}_n \mapsto [0,\pi]$ is decreasing.
Then, the first term of \eqref{app:phi_prime} is negative according to Lemma~\ref{lem:PP}, so we can bound for $z \in \mathcal{I}_n$ using \eqref{app:Q_n},
\[
\phi_n' \le  \frac{-3\theta_n'}{2|Q_n|^2} \le  \frac{-3\theta_n'}{2(1-\cos\theta_n)} 
\]
So using that $\big(\tfrac1{\tan\theta_n}\big)' =  \frac{-\theta_n'}{2(1-\cos\theta_n )} $, this concludes the proof.
\end{proof}

\begin{remark}
Since the term added to is smooth on a different scale from $\phi_n$, the phase $\phi_n$ will be in practical terms monotone on its smoothness scale.  
\end{remark}

\section{Concentration \& Martingale CLT} \label{sec:concentration}

We rely on concentration results for martingales with sub--exponential and/or   sub--Gaussian entries. 
We refer to  \cite[Chapter 2]{Vershynin} for a comprehensive introduction and we  briefly overview in this section the results that we need.

Define, for any $p >0$ and any complex valued random variable $X$, 
\[
\| X \|_p = \inf\left\{ t \geq 0 : \Exp\big( e^{|X|^p/t^p}\big) \leq 2\right\}
\asymp \sup_{k \in \N} \frac{\left(\Exp |X|^k\right)^{1/k}}{k^{1/p}}
\]
If $p\ge 1$, $X \mapsto \| X \|_p < \infty$ defines a norm on our probability space. In particular, by the triangle inequality, if $ \|X_k \|_p <\infty$ for $k\in\N$, then for any $n\in\N$, 
\[
\left\| {\textstyle \sum_{k=1}^n} X_k \right\|_p \le  {\textstyle \sum_{k=1}^n} \|X_k \|_p . 
\]

Other important properties include;
\begin{itemize}
\item
$\| \cdot \|_p$ is essentially monotone in $p\geq 1$, that is for any random variable $X$,
\[
\| X \|_p \lesssim \| X \|_q
\]
where the implied constants depend only on $(p,q)$. 
\item If $\| X \|_p < \infty,$ by Markov's inequality, for all $t \geq 0$, 
\begin{equation*}
\P \big( |X| \geq t \big)\leq 2\exp( -t^p/ \| X \|_p^p ). 
\end{equation*}
This is equivalent to the finiteness of $\|\cdot\|_p$ and the  infimum in the definition of $\| \cdot \|_p$ is attained. 
\item Control of $\| \cdot \|_p$ can also be formulated in terms of moments. For any $p,q \geq 1$, 
\begin{equation}\label{eq:Xmoment}
\E \big( |X|^q  \big) \lesssim  \| X\|_p^q 
\end{equation}
where the implied constants depend only on $(p,q)$. 
\item There is a version of Young's inequality, that is for any $p,q \geq 1$ satisfying $1/p + 1/q = 1,$
for any two random variables $X$ and $Y$, 
\[
\| XY \|_{1} \leq  \| X\|_p \| Y \|_q.
\]
See \cite[Lemma 2.7.7]{Vershynin} for details.
\end{itemize}

We now recall some important concentration inequalities for sums of random variables, which we will formulate in terms of the $\| \cdot \|_p$ norms for $p\in\{1,2\}$.   
We begin with a version of Hoeffding's inequality; 

\begin{proposition}[{\cite[Proposition 2.6.1]{Vershynin}}]\label{thm:hoeffding}
If $(X_k)_{k\in\N}$ are independent sub-Gaussian random variables (i.e.~$ \|X_k \|_2 <\infty$ for $k\in\N$), then for any $n\in\N$, 
\[
\| \textstyle{\sum_{i=1}^n (X_i-\Exp X_i }) \|_2 \lesssim \sum_{i=1}^n \| X_i \|_2^2.
\]
\end{proposition}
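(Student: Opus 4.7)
The plan is to reduce the bound to the standard sub-Gaussian MGF argument. First I would note that the statement is essentially insensitive to centering: the triangle inequality gives $\|X_i - \Exp X_i\|_2 \le 2\|X_i\|_2$, so we may assume $\Exp X_i = 0$ throughout and absorb the loss into the implicit constant.

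The key ingredient is the classical equivalence between the Orlicz norm $\|\cdot\|_2$ and a moment generating function bound: there are absolute constants $c, C > 0$ such that for any centered random variable $Y$,
\[
\|Y\|_2 \le C \sigma
\quad \Longleftrightarrow \quad
\Exp \exp(\lambda Y) \le \exp(c \lambda^2 \sigma^2) \text{ for all } \lambda \in \R.
\]
The forward direction follows from expanding the exponential, applying \eqref{eq:Xmoment} with $q = 2k$ and summing the series; the reverse follows by Markov's inequality and the standard sub-Gaussian tail bound which then controls $\Exp e^{Y^2 / t^2}$ for $t$ a multiple of $\sigma$.

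With this in hand, the core of the argument is a one-line factorization. Setting $S_n = \sum_{i=1}^n X_i$ and applying the MGF bound to each $X_i$, independence gives
\[
\Exp \exp(\lambda S_n) = \prod_{i=1}^n \Exp \exp(\lambda X_i) \le \exp\Bigl(c\lambda^2 \sum_{i=1}^n \|X_i\|_2^2\Bigr),
\]
for all $\lambda \in \R$. This is exactly the MGF bound with parameter $\sigma^2 = \sum_i \|X_i\|_2^2$, so the reverse direction of the equivalence yields $\|S_n\|_2 \lesssim \bigl(\sum_i \|X_i\|_2^2\bigr)^{1/2}$, which is the claimed Hoeffding inequality (the displayed statement being read in its natural dimensionally-consistent form).

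There is no real obstacle here; the only thing to be careful about is tracking that the constants in the Orlicz--MGF equivalence are absolute and do not degrade under the product over $i$. Since the proposition is only asserted up to an implicit constant, this is automatic, and the result quoted from \cite[Proposition 2.6.1]{Vershynin} applies directly without further modification.
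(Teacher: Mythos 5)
Your proof is correct and is essentially the same argument as the source the paper quotes: the paper gives no proof of its own, citing Vershynin, Proposition 2.6.1, whose proof is exactly this Orlicz--MGF equivalence combined with the factorization of the moment generating function over independent summands (with the centering step handled by the same norm comparison you use). You are also right to read the display in its dimensionally consistent form, i.e.\ with the left-hand side squared (equivalently, the right-hand side under a square root), which is the statement in the cited reference and what your argument establishes.
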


We can also upgrade this inequality for martingale differences. 
Let $(\F_n)_{n\ge 0}$ be a filtration of our probability space $(\Omega,\F,\P)$. Define for $p\ge 1$, 
\[
\| X \|_{p,n} = \left\| \inf\left\{ t \geq 0 :  \Exp\big( e^{|X|^p/t^p} | \F_n\big) \leq 2\right\} \right\|_{L^\infty(\P)} . 
\]
In particular, $\| X \|_{p} = \| X \|_{p,0}$ with this definition. 

\begin{proposition} \label{lem:conc2}
Let $(M_n)_{n\ge 0}$ be a $(\F_n)_{n\ge 0}$--martingale such that $M_0 = 0$.
Suppose that for any $n\in\N_0$, 
\[
\| M_{n+1}-M_n \|_{2,n}  \le \sigma_n <\infty. 
\]
Then, for any $n\in\N$, 
\[
\big\| \max_{k\le n}|M_k| \big\|_2 \lesssim \sqrt{\textstyle \sum_{k=1}^n \sigma_k^2}  . 
\]
In particular, there is a numerical constant $c>0$, so that for any $n\in\N$ and $t>0$, 
\[
\P\left[\max_{k\le n}|M_k|   \ge t\right] \le 2 \exp\left(-\frac{ct^2}{\sum_{k=1}^n \sigma_k^2} \right). 
\]
\end{proposition}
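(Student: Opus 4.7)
The plan is to apply the standard exponential moment (Chernoff) method combined with Doob's maximal inequality, which is the textbook route for this type of martingale sub-Gaussian bound. First I would convert the conditional Orlicz hypothesis into a conditional MGF estimate: the bound $\|M_{n+1}-M_n\|_{2,n}\le \sigma_n$ means $\E[\exp(|M_{n+1}-M_n|^2/\sigma_n^2)\mid \F_n]\le 2$ almost surely, which via the standard scalar lemma (expand the exponential, combine with $\E[M_{n+1}-M_n\mid\F_n]=0$) yields, for some numerical constant $C>0$ and every $\lambda\in\R$,
\[
\E\big[\exp(\lambda(M_{n+1}-M_n))\,\big|\,\F_n\big]\le \exp(C\lambda^2\sigma_n^2).
\]

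Next I would iterate this via the tower property: the process $N_k^{\lambda} \coloneqq \exp\bigl(\lambda M_k - C\lambda^2\sum_{j=1}^{k}\sigma_j^2\bigr)$ is then a non-negative supermartingale with $N_0^\lambda = 1$, so $\{\exp(\lambda M_k)\}_{k\le n}$ is a non-negative submartingale dominated in expectation by $\exp\bigl(C\lambda^2\sum_{j=1}^n\sigma_j^2\bigr)$. Doob's maximal inequality applied to this submartingale gives, for any $t>0$,
\[
\P\Big[\max_{k\le n} M_k \ge t\Big]\le e^{-\lambda t}\,\E[\exp(\lambda M_n)]\le \exp\!\Big(-\lambda t + C\lambda^2 {\textstyle\sum_{k=1}^n}\sigma_k^2\Big),
\]
and optimizing in $\lambda>0$ produces the sub-Gaussian tail $\exp(-ct^2/\sum_k\sigma_k^2)$. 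Since $M$ may be complex-valued, I apply the same argument to $\pm\Re(M_k)$ and $\pm\Im(M_k)$ separately, using $\|\Re X\|_{2,n}\vee\|\Im X\|_{2,n}\le \|X\|_{2,n}$, and combine by a union bound (absorbing the factor of $4$ into $c$) to obtain the two-sided tail for $\max_{k\le n}|M_k|$.

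Finally, to recover $\|\max_{k\le n}|M_k|\|_2\lesssim \sqrt{\sum_k\sigma_k^2}$, I would integrate the tail bound: from $\P[\max_{k\le n}|M_k|\ge t]\le 2\exp(-ct^2/S_n)$ with $S_n=\sum_{k=1}^n\sigma_k^2$, the definition of the Orlicz norm gives $\E\exp\bigl((\max_{k\le n}|M_k|)^2/(c' S_n)\bigr)\le 2$ for a suitable numerical $c'$, which is precisely the asserted norm bound. The only non-routine step is the conditional MGF estimate in the first paragraph; this is the standard conditional analogue of \cite[Prop.~2.5.2]{Vershynin} and should be invoked as a black box rather than re-derived, since all other steps (Doob, union bound, tail-to-Orlicz conversion) are entirely mechanical.
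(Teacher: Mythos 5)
Your argument is correct: the conditional MGF bound from the conditional $\psi_2$ hypothesis (using that the increments are conditionally centered), the exponential supermartingale $N_k^{\lambda}$, Doob's maximal inequality for the submartingale $\exp(\lambda\,\Re M_k)$, optimization in $\lambda$, the real/imaginary and two-sided union bound with the factor $4$ absorbed into $c$, and the final tail-to-Orlicz integration are all sound and in the right order. The paper itself gives no proof of this proposition — it is stated in the concentration appendix as a standard sub-Gaussian martingale maximal inequality with a reference to Vershynin's Chapter 2 — and your Chernoff--Doob route is exactly the standard argument it implicitly relies on.
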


\begin{proposition} \label{lem:conc1}
Let $(M_n)_{n\ge 0}$ be a $(\F_n)_{n\ge 0}$--martingale such that $M_0 = 0$. 
Suppose that for any $n\in\N_0$, 
\[
\| M_{n+1}-M_n \|_{1,n}  \le \sigma_n <\infty. 
\]
Then, there is a numerical constant $c>0$, so that for any $n\in\N$ and $t>0$,
\[
\P\left[\max_{k\le n}|M_k|   \ge t\right] \le 2 \exp\left(-\frac{ct^2}{ \sum_{k=1}^n \sigma_k^2+ t \max_{k\le n} \sigma_k} \right). 
\]
\end{proposition}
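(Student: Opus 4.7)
The plan is to run the standard Chernoff / exponential-supermartingale argument, using a conditional sub-exponential moment generating function bound and then optimizing over the dual parameter $\lambda$ to produce the Bernstein-type denominator $\sum_{k\le n}\sigma_k^2 + t\max_{k\le n}\sigma_k$.

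First, I would record the following one-step MGF estimate: there exist numerical constants $c_0, C_0>0$ such that whenever a random variable $X$ satisfies $\E X = 0$ and $\|X\|_{\boldsymbol{\Psi}_1}\le\sigma$, one has
\[
\E\bigl[e^{\lambda X}\bigr] \le \exp\bigl(C_0\lambda^2\sigma^2\bigr), \qquad |\lambda|\le c_0/\sigma.
\]
This is standard (see e.g.\ \cite[Proposition~2.7.1]{Vershynin}): expand the exponential, use \eqref{eq:Xmoment} to bound $\E |X|^k \lesssim k!\, \sigma^k$, and sum the resulting geometric series. The conditional version, applied to $X = M_{k+1}-M_k$ with the conditional Orlicz norm $\|M_{k+1}-M_k\|_{1,k}\le\sigma_k$, yields
\[
\E\bigl[e^{\lambda(M_{k+1}-M_k)} \bigm| \F_k\bigr] \le \exp\bigl(C_0\lambda^2\sigma_k^2\bigr), \qquad |\lambda|\le c_0/\sigma_k,
\]
almost surely, since $\E[M_{k+1}-M_k\mid \F_k]=0$ by the martingale property.

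Next, fix $\lambda$ with $|\lambda|\le c_0/\max_{k\le n}\sigma_k$ and define
\[
E_k \coloneqq \exp\Bigl(\lambda M_k - C_0\lambda^2{\textstyle \sum_{j\le k}}\sigma_j^2\Bigr), \qquad 0\le k\le n.
\]
The previous display implies $\E[E_{k+1}\mid \F_k]\le E_k$, so $\{E_k\}$ is a non-negative $\{\F_k\}$-supermartingale with $E_0 = 1$. By Doob's maximal inequality applied to $\{E_k\}$ (and replacing $\lambda$ by $-\lambda$ to get the lower tail), for any $u>0$,
\[
\P\Bigl[\max_{k\le n}\bigl(\pm\lambda M_k\bigr)\ge u + C_0\lambda^2{\textstyle \sum_{k\le n}}\sigma_k^2\Bigr] \le e^{-u}.
\]
Choosing $\pm$ so that $\pm\lambda > 0$ and taking $u = \lambda t - C_0\lambda^2\sum_{k\le n}\sigma_k^2$, this gives
\[
\P\Bigl[\max_{k\le n}|M_k|\ge t\Bigr] \le 2\exp\Bigl(-\lambda t + C_0\lambda^2{\textstyle \sum_{k\le n}}\sigma_k^2\Bigr),
\]
valid for every $0<\lambda\le c_0/\max_{k\le n}\sigma_k$.

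Finally, I would carry out the Chernoff optimization. Write $V \coloneqq \sum_{k\le n}\sigma_k^2$ and $\sigma_* \coloneqq \max_{k\le n}\sigma_k$. The unconstrained minimizer is $\lambda_* = t/(2C_0 V)$; if $\lambda_*\le c_0/\sigma_*$ (the "Gaussian" regime, equivalent to $t\sigma_*\lesssim V$) plugging in yields an exponent of $-t^2/(4C_0 V)$. Otherwise we pick $\lambda = c_0/\sigma_*$ (the "Poisson" regime), giving an exponent of order $-t/\sigma_*$. Combining the two regimes produces the claimed denominator $V + t\sigma_*$ after adjusting $c$. There is no real obstacle here beyond bookkeeping; the main technical point is the sub-exponential MGF bound in the first step, and the rest is the classical Freedman/Bernstein recipe for supermartingales.
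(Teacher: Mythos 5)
Your proof is correct. The paper itself contains no proof of Proposition~\ref{lem:conc1}: it is stated in Appendix~\ref{sec:concentration} as a standard Bernstein/Freedman-type inequality for martingales with conditionally sub-exponential increments, with Vershynin's book cited for the background, and your Chernoff/exponential-supermartingale argument (conditional MGF bound, Ville's maximal inequality, two-regime optimization in $\lambda$) is exactly the standard derivation that citation points to. The only step where the paper's specific setup matters is the conditional MGF estimate, and your treatment is fine: the definition of $\|\cdot\|_{1,k}$ as the essential supremum of the conditional Orlicz norm gives the conditional moment bounds almost surely, and the martingale property $\E[M_{k+1}-M_k\mid\F_k]=0$ supplies the conditional centering needed to kill the linear term, so the exponent $C_0\lambda^2\sigma_k^2$ for $|\lambda|\le c_0/\sigma_k$ holds a.s.\ as you claim.
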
 

\section{Stochastic Gr\"onwall inequality.} \label{sec:Gron}

In this section, we prove a tail bound for comparing solutions of some equations driven by some martingale noise scaling  geometrically,  which can be viewed as a type of \emph{stochastic Gr\"onwall inequality}. 
We apply this bound several times in Section~\ref{sec:zeta} to compare solutions of different approximations of the stochastic sine equation. 

\begin{proposition} \label{prop:l2logladder}
Let  $\{U_j\}, \{V_j\}$ be two adapted sequences of (complex) random variables with respect to a filtration $\{\F_j\}$ and assume that $\{V_j\}$ are martingale increments \big($\E[V_{j+1}|\F_j] =0$\big) and for a $\delta \in[0,\frac16) $, 
\[
\Exp( |V_j|^2~| \mathscr{G}_{j-1}) \leq C_V j^{-1}, \qquad
\|V_j\|_{1} \leq j^{-1/2+\delta}, \qquad 
\text{for } j\ge j_0, 
\]
with $j_0\in\N$. 
Let ${\rm f} : \C \to \R$ be a $1$-Lipschitz continuous function with ${\rm f}(0)=0$. 

Let $\{A_j\}$ be a solution of 
\[
A_{j+1} = A_j + U_{j+1} + V_{j+1}F(A_j),
\quad\text{for}
\quad j \geq j_0
\]
with $A_{j_0} = 0$. 
Let $j_1 \in \N$ with $j_1^2 \leq e^{j_0^\delta}$ and $T\coloneqq\max\{1, \log(j_1/j_0)\}$. 
Then, there are numerical constant $c>0$ and $C_\delta\ge 1$, 
\[
\P\biggl(
\biggl\{
	\max_{j_0 \leq j \leq j_1} |A_j| \geq a
\biggr\}
\cap
\biggl\{
	\max_{j_0 \leq j \leq j_1} \biggl| \sum_{k=j_0+1}^j U_k \biggr| \leq u
\biggr\}
\biggr)
\leq C\exp\left(-c \min\left\{ j_0^{\delta}, \frac{\left(\log (a/u)\right)^2}{C_V T + C_\delta} \right\}\right).
\]
\end{proposition}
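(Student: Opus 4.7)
The plan is to exploit the Lipschitz-type bound $|F(A_j)| \leq |A_j|$ to turn the recursion into a log-transformed telescoping identity, converting the multiplicative feedback between $|A_j|$ and the noise into additive martingale increments with a tame bracket. First I would decompose $A_j = R_j + B_j$ where $R_j = \sum_{k=j_0+1}^{j} U_k$ and $B_j = \sum_{k=j_0+1}^{j} V_k F(A_{k-1})$ is an $\{\mathscr{G}_j\}$-martingale, and introduce the event $\mathcal{U} := \{\max_{j_0 \leq j \leq j_1} |R_j| \leq u\}$ together with the stopping time $\tau := \min\{j \geq j_0 : |A_j| \geq a\} \wedge j_1$. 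Assuming $a \geq 2u$ (otherwise the bound is trivial by enlarging constants), on $\mathcal{U} \cap \{\tau \leq j_1\}$ one has $|B_\tau| \geq a - u \geq a/2$, so the goal becomes estimating $|B_\tau|$.

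The key algebraic step is to set $N_j := \log(u^2 + |B_j|^2)$ and to expand
\[
N_{j+1} - N_j = \log\Bigl(1 + \tfrac{2\Re(\bar B_j V_{j+1} \overline{F(A_j)}) + |V_{j+1}|^2|F(A_j)|^2}{u^2 + |B_j|^2}\Bigr) \leq W_{j+1} + 2|V_{j+1}|^2,
\]
using $\log(1+x) \leq x$ and the bound $|F(A_j)|^2 \leq (u+|B_j|)^2 \leq 2(u^2+|B_j|^2)$ valid on $\mathcal{U}$. Here $W_{j+1} := 2\Re(\bar B_j V_{j+1} \overline{F(A_j)})/(u^2+|B_j|^2)$ defines a martingale increment with $|W_{j+1}| \lesssim |V_{j+1}|$ and $\Exp[W_{j+1}^2 \mid \mathscr{G}_j] \leq 8 C_V/(j+1)$. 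Telescoping up to time $\tau$ and using $N_\tau \geq \log(a^2/4)$, $N_{j_0} = \log u^2$, gives
\[
2\log(a/(2u)) \leq {\textstyle \sum_{k=j_0+1}^{\tau}} W_k + 2{\textstyle \sum_{k=j_0+1}^{\tau}} |V_k|^2
\]
on the event in question.

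Each sum on the right is then controlled by a Freedman-type step. For $\sum W_k$ the conditional bracket is at most $8 C_V T$ and $\|W_{k}\|_{1,k-1} \lesssim k^{-1/2+\delta} \leq j_0^{-1/2+\delta}$, so Proposition~\ref{lem:conc1} yields a tail of the form $\exp(-c t^2/(C_V T + t\,j_0^{-1/2+\delta}))$, which is sub-Gaussian on the range where $t \leq j_0^{1/2-\delta} C_V T$. For $\sum |V_k|^2$ I would subtract the compensator $\sum_k \Exp[|V_k|^2 \mid \mathscr{G}_{k-1}] \leq 2 C_V T$ and concentrate the remaining martingale $\widetilde{M}_\tau$ after truncating each $V_k$ at scale $K_k := k^{-1/2+\delta} j_0^{\delta/2}$. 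The sub-exponential bound from $\|V_k\|_1 \leq k^{-1/2+\delta}$ gives $\P(|V_k| > K_k) \lesssim e^{-j_0^{\delta/2}}$, and a union bound over $k \in [j_0,j_1]$ contributes $j_1 e^{-j_0^{\delta/2}} \leq e^{-c j_0^\delta}$ thanks to the standing assumption $j_1^2 \leq e^{j_0^\delta}$; on the truncated event, the increments of $\widetilde{M}$ become bounded, and one more application of Proposition~\ref{lem:conc1} yields sub-Gaussian concentration with bracket $\lesssim j_0^\delta T$. Combining the two pieces and absorbing the $j_0^\delta$ absolute constants into $C_\delta$ produces the stated minimum $\min\{j_0^\delta,\,(\log(a/u))^2/(C_V T + C_\delta)\}$.

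The main obstacle I anticipate is the interaction between the nonlinear feedback and the fact that the noise is only sub-exponential rather than sub-Gaussian: a naive Gr\"onwall argument on $|A_j|$ produces multiplicative bounds of the form $\prod_k(1+|V_k|)$ that accumulate to something of order $\exp(\sum |V_k|)$, which is far too large. The log-transform on $u^2 + |B_j|^2$ is what converts this product into an additive sum whose increments carry the natural $L^2$ scale $C_V/k$ from the conditional variance; the factor $u^2$ inside the logarithm is there precisely to keep the denominator away from zero and to produce the clean $\log(a/u)$ on the left. The second delicate point is the control of $\sum |V_k|^2$, which cannot be handled purely by $L^2$ concentration; the truncation step, calibrated so that the truncation error is $e^{-c j_0^\delta}$, is what ties together the condition $j_1^2 \leq e^{j_0^\delta}$ with the saturation term in the stated tail.
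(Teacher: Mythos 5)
Your route is genuinely different from the paper's. The paper solves the recursion exactly by variation of constants: it introduces the product $P_j=\prod_{k>j_0}\bigl(1+\tilde V_k F(A_{k-1})/A_{k-1}\bigr)$ built from truncated increments, writes $A_{j+1}/P_{j+1}$ as an Abel-summed series against the partial sums of $U$, controls $\log|P_j|$ on an event $\{\max_j|\log|P_j||\le p\}$ via Freedman, then bounds two auxiliary martingales and optimizes over $p\approx\tfrac13\log(a/u)$. You instead split $A_j=R_j+B_j$ with $B_j$ the martingale part, stop at the first time $|A_j|\ge a$, and run a Lyapunov argument on $N_j=\log(u^2+|B_j|^2)$, which converts the multiplicative growth directly into a martingale with bracket $\lesssim C_VT$ plus the additive term $2\sum|V_k|^2$. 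This is a legitimate and arguably more streamlined scheme; note that, because of the $2\sum|V_k|^2\approx C_VT$ loss, your combined bound matches the stated form only when $\log(a/u)$ dominates a constant multiple of $C_VT$ — but the paper's own optimization is feasible only for $\log(a/u)>3\max\{C_\delta,C_VT\}$, so on this point you are no worse off than the paper.

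Two steps, however, need repair. First, your truncation is miscalibrated: with $K_k=k^{-1/2+\delta}j_0^{\delta/2}$ the per-term failure probability is $e^{-j_0^{\delta/2}}$, and the union bound gives $j_1e^{-j_0^{\delta/2}}\le e^{j_0^{\delta}/2-j_0^{\delta/2}}$, which is \emph{not} small under $j_1^2\le e^{j_0^\delta}$ (for large $j_0$, $j_0^{\delta}/2\gg j_0^{\delta/2}$). You must truncate at scale $k^{-1/2+2\delta}$, as the paper does, so each term fails with probability $\le 2e^{-k^{\delta}}\le 2e^{-j_0^{\delta}}$ and the union bound closes to $\lesssim e^{-j_0^{\delta}/2}$. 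Moreover this truncation should be applied to \emph{all} the noise, including the $W$-martingale: the hypothesis $\|V_j\|_1\le j^{-1/2+\delta}$ is an unconditional Orlicz bound, so Proposition~\ref{lem:conc1} applied with $\sigma_k=\|W_k\|_{1,k-1}$ is not justified as written, and even granting it, that proposition puts $\sum_k\sigma_k^2\asymp j_1^{2\delta}-j_0^{2\delta}$ (not $C_VT$) in the Gaussian term; the tail you describe, $\exp\bigl(-ct^2/(C_VT+t\,j_0^{-1/2+2\delta})\bigr)$, comes from Freedman's inequality with the true predictable bracket $\lesssim C_VT$ together with the almost-sure bound on the truncated increments. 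Second, the bounds $|F(A_j)|\le u+|B_j|$, hence $|W_{j+1}|\lesssim|V_{j+1}|$ and $\Exp[|W_{j+1}|^2\,|\,\F_j]\lesssim C_V/j$, hold only on the non-adapted event $\mathcal U$; before invoking any martingale inequality you must localize, e.g.\ replace $W_{j+1}$ by $W_{j+1}\mathds{1}\{\max_{j_0\le i\le j}|R_i|\le u\}$ (the indicator is $\F_j$-measurable, so these are still martingale increments and agree with the originals on $\mathcal U$), or stop at the first time $|R_j|>u$. With these corrections and the final combination written out (absorbing the $O(1)$ and $j_0$-dependent constants into $C_\delta$), your argument goes through.
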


\begin{proof}
We can create a sequence of cutoff martingale increments $\tilde{V}_j$  which are also adapted to $\mathscr{G}$, which have $|\tilde{V}_j| \leq j^{-1/2+2\delta}$ almost surely and 
\[
\P( V_j \neq \tilde{V}_j ) 
\leq \exp( -j_0^{\delta}).
\]
This can also be done in such a way that the variance of $\tilde{V}_j$ increases no more than a factor of $2$.
Let $\mathscr{E}$ be the event that all $V_j = \tilde{V}_j$ for $j_0 \leq j \leq j_1$ and that $\max_{j_0 \leq j \leq j_1} \biggl| \sum_{k=j_0+1}^j U_k \biggr| \leq u$.  Under the setup of the proposition, it suffices to work on the event $\mathscr{E}$.

Define for $j \geq j_0$ (with the product empty in the case $j=j_0$)
\[
P_{j} \coloneqq \prod_{k=j_0+1}^{j} \bigl(1+ \tilde{V}_{k} F(A_{k-1})/A_{k-1}\bigr).
\]
Then $P_{j+1}/P_j = \bigl(1+ \tilde{V}_{j+1} F(A_j)/A_j\bigr)$ and so we can express
on $\mathscr{E}$,
\[
A_{j+1} = \frac{P_{j+1}}{P_j}A_j + {U}_{j+1},
\quad\text{for}
\quad j \geq j_0.
\]
Dividing through by $P_{j+1}$, this can therefore be solved explicitly to give the representation
\begin{equation}\label{eq:A_j_rep}
\frac{A_{j+1}}{P_{j+1}} = 
\sum_{\ell=j_0+1}^{j+1} \frac{U_\ell}{P_\ell}
=
\sum_{k=j_0+1}^{j+1} \left(\frac{-1}{P_k}+\frac{1}{P_{k-1}}\right)
\left(\sum_{\ell=j_0+1}^{k-1} U_\ell\right)
+ \frac{1}{P_{j+1}} \sum_{\ell=j_0+1}^{j+1} U_\ell.
\end{equation}

Now we introduce the event $\mathscr{P}$, and let $p$ be a parameter to be chosen later, and we show that for any $p$ greater than some constant depending only on $\delta$ and for some absolute constant $c>0$,
\begin{equation}
	\mathscr{P} \coloneqq 
	\biggl\{
		\max_{j_0 \leq j \leq j_1} \bigl|\log |P_j|\bigr| \leq p
	\biggr\}
	\quad\text{we have}\quad
\P\bigl( \mathscr{P}^c\bigr) \leq 4\exp\left(-c \min\left\{ p j_0^{1/2-2\delta}, \frac{p^2}{2C_V T} \right\}\right).
\label{eq:logPtail}
\end{equation}
Using the truncation, we have that
\[
\log\biggl(\frac{P_{j+1}}{P_j} \biggr) = 
\tilde{V}_{j+1} F(A_j)/A_j
- \tilde{V}^2_{j+1} (F(A_j)/A_j)^2/2 + \mathcal{O}( j^{-3/2+3\delta}).
\]
With the $\mathcal{O}$ term deterministically bounded, and its absolute sum bounded by a constant that depends only on $\delta$.  Hence we have
\[
\max_{j_0 \leq j \leq j_1} \bigl|\log |P_j| \bigr|
\leq
\max_{j_0 \leq j \leq j_1}
\biggl|
\sum_{k=j_0+1}^j
\tilde{V}_{k+1} F(A_k)/A_k
\biggr|
+
\max_{j_0 \leq j \leq j_1}
\biggl|
\sum_{k=j_0+1}^j
\tilde{V}^2_{k+1} (F(A_k)/A_k)^2
\biggr|
+\mathcal{O}(1).
\]
Thus the tail bound on $\log|P_j|$ in \eqref{eq:logPtail} follows from Freedman's inequality, noting that the sum of variances is bounded by 
\[
C_V\left(\log (j_1/j_0)+\gamma\right)
\leq 2C_V T
\] with $\gamma$ the Euler-Mascheroni constant; the mean term in the second sum is bounded by the same; and the sum of variances of the second is bounded by a constant that depends only on $\delta$.  Thus we conclude that there is an aboslute constant $c>0$ and a constant $C(\delta)>0$ so that for all $p \geq C(\delta) + C_V T$ 
\[
	\P\bigl( \mathscr{P}^c\bigr)
	\leq 2\exp\left(-c \min\left\{ p j_0^{1/2-2\delta}, p^2/(2C_V T) \right\}\right)
	+2\exp\left(-c \min\left\{ p j_0^{1-4\delta}, p^2/C(\delta) \right\}\right)
\]
We note the first of these always dominates the second, provided $p$ is greater than some constant depending on $\delta,$ which completes the claim of \eqref{eq:logPtail}.

We can represent the difference
\[
\frac{1}{P_k} - \frac{1}{P_{k-1}} = \frac{1}{P_{k-1}^2}\left(
\tilde{V}_{k} F(A_{k-1})/A_{k-1}
-
\tilde{V}_{k}^2 (F(A_{k-1})/A_{k-1})^2
+\mathcal{O}(j^{-3/2+3\delta})
\right)
\]
On the event $\mathscr{E} \cap \mathscr{P}$, we have that the process 
\[
M^{(1)} \coloneqq j\mapsto \sum_{k=j_0+1}^{j} \frac{\tilde{V}_{k} F(A_{k-1})/A_{k-1}}{P_{k-1}^2} \left(\sum_{\ell=j_0+1}^{k-1} U_\ell\right)
\]
is a martingale whose increments are predictably bounded by $e^{2p}u j_0^{-1/2+\delta}$ and whose bracket process is bounded by $2C_V e^{4p} u^2 T$.  Hence we can apply Freedman's inequality to conclude that for some absolute constant $c>0$
\[
\P\biggl(\biggl\{\max_{j_0 \leq j \leq j_1} |M^{(1)}_j| \geq x\biggr\}
\cap
\mathscr{E} \cap \mathscr{P}
\biggr) 
\leq 2\exp\left(-c \min\left\{ \frac{x j_0^{1/2-2\delta}}{e^{2p} u},
\frac{x^2}{2C_V e^{4p} u^2 T} \right\}\right)
\]
A similar argument bounds the same process with the square:
\[
M^{(2)} \coloneqq j\mapsto \sum_{k=j_0+1}^{j} \frac{\bigl(\tilde{V}_{k} F(A_{k-1})/A_{k-1}\bigr)^2}{P_{k-1}^2} \left(\sum_{\ell=j_0+1}^{k-1} U_\ell\right),
\]
which now has a mean bounded by $2C_V e^{2p} u T$, and has bracket bounded by $C(\delta) e^{4p} u^2$ for some constant depending only on $\delta$.  Thus, for $x > C_V e^{2p} u T$
\[
\P\biggl(\biggl\{\max_{j_0 \leq j \leq j_1} |M^{(2)}_j| \geq x\biggr\}
\cap
\mathscr{E} \cap \mathscr{P}
\biggr) 
\leq 2\exp\left(-c \min\left\{ \frac{x j_0^{1-4\delta}}{e^{2p} u},
\frac{x^2}{C(\delta) e^{4p} u^2} \right\}\right)
\]
Once more, provided that $x$ is larger than some constant depending on $\delta$, the $M^{(1)}$ tail bound dominates the $M^{(2)}$ tail bound.  

Returning to \eqref{eq:A_j_rep}
and we conclude that if $|A_j| \geq a$, on the even $\mathscr{E} \cap \mathscr{P}$, one of $|M^{(1)}_j|$ or $|M^{(2)}_j|$ is larger than $a/e^p$.  Hence, for all $p,a$ such that $a/e^p > \max\{C(\delta), C_V e^{2p} u T\}$ and $p > \max\{C(\delta), C_V T\}$,
\[
	\begin{gathered}
	\P\biggl(
	\biggl\{
		\max_{j_0 \leq j \leq j_1} |A_j| \geq a
	\biggr\}
	\cap
	\biggl\{
		\max_{j_0 \leq j \leq j_1} \biggl| \sum_{k=j_0}^j U_k \biggr| \leq u
	\biggr\}
	\biggr)
	\leq 4\exp\left(-c \min\left\{ p j_0^{1/2-2\delta}, \frac{p^2}{2C_V T} \right\}\right)
	\\
	+4\exp\left(-c \min\left\{ \frac{a j_0^{1/2-2\delta}}{e^{3p} u},
	\frac{a^2}{2C_V e^{6p} u^2 T} \right\}\right)
	+j_1 \exp(-j_0^{\delta}).
	\end{gathered}
\]

We optimize this quantity by choosing $p = \frac{1}{3}\left(\log a/u - \log\log a/u\right)$, which is feasible provided that $\log a/u > 3\max\{C(\delta), C_V T\}$ for some constant $C(\delta)>0.$  Hence we conclude that for all such $a/u$, all $j_1 \leq e^{j_0^\delta/2}$, we can appropriately shrink the constant $c>0$ to abosrb the $j_1 \exp(-j_0^{\delta})$ term and absorb the other absolute constants, to conclude that
\[
	\P\biggl(
	\biggl\{
		\max_{j_0 \leq j \leq j_1} |A_j| \geq a
	\biggr\}
	\cap
	\biggl\{
		\max_{j_0 \leq j \leq j_1} \biggl| \sum_{k=j_0}^j U_k \biggr| \leq u
	\biggr\}
	\biggr)
	\leq C\exp\left(-c \min\left\{ j_0^{\delta}, \frac{\left(\log (a/u)\right)^2}{C_V T} \right\}\right). \qedhere
\]
\end{proof}

\section{Asymptotics for the deterministic part of the phase.}

\begin{proposition} \label{prop:detphase}
Let $z\in\Q$ and $\pm = \sgn(z)$,  it holds locally uniformly for $T>0$, 
\[
\vartheta_{N,N_T}(z) -  \pi N F(z)  =  - N_T(z)\1\{z<0\} \mp \big( \tfrac23 T^{3/2} - \tfrac\pi4\big)
-\frac{\arcsin(z)}{2} +\underset{N\to\infty}{\o(1)} . 
\] 
\end{proposition}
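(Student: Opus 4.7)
The statement is purely deterministic (a Riemann-sum/Euler--Maclaurin estimate for $\sum_{k=N_T+1}^N \theta_k(z)$ with $\theta_k(z) = \arccos(z\sqrt{N/k})$), so my plan is to attack it directly. First, I would reduce to the case $z>0$ using the symmetry $\theta_k(-z) = \pi - \theta_k(z)$: for $z<0$ this gives $\vartheta_{N,N_T}(z) = (N-N_T)\pi - \vartheta_{N,N_T}(|z|)$, while $F(z) = 1 - F(-z)$ by the symmetry of the semicircle. These two facts, together with $\arcsin$ being odd, produce exactly the $-\pi N_T \1\{z<0\}$ term and the sign flip $\mp$ from the $z>0$ result.

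For $z>0$, I would apply the Euler--Maclaurin formula to $f(t) = \arccos(z\sqrt{N/t})$:
\[
\sum_{k=N_T+1}^{N} f(k) = \int_{N_T}^{N} f(t)\,dt + \frac{f(N)-f(N_T)}{2} + \mathcal{R},
\]
and control the remainder via $|\mathcal{R}| \lesssim \int_{N_T}^N |f''(t)|\, dt = f'(N_T) - f'(N)$ (since $f'' \le 0$). A direct calculation gives $f'(t) = \frac{z\sqrt{N}}{2t\sqrt{t-Nz^2}}$, so $f'(N_T) = O(\mathfrak{L}^{-2})$ (using $N_T - Nz^2 \asymp T\mathfrak{L}$ and $Nz^2 \asymp \mathfrak{L}^3$) and $f'(N) = O(N^{-1})$; hence $\mathcal{R} = o(1)$ as $N\to\infty$. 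The hypothesis $z\in\Q$ is exactly what ensures $\mathfrak{L}\to\infty$ so that the singularity of $f'$ at $t=Nz^2$ is not seen at the lower endpoint.

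Next, I would evaluate the integral explicitly. The substitution $x = z\sqrt{N/t}$ followed by integration by parts gives the antiderivative $2Nz^2 \big[-\tfrac{\arccos x}{2x^2}+\tfrac{\sqrt{1-x^2}}{2x}\big]$, so
\[
\int_{N_T}^N \arccos(z\sqrt{N/t})\,dt = \Big[-N_T \arccos(x_T) + z\sqrt{NN_T}\sqrt{1-x_T^2}\Big] - \Big[-N\arccos(z) + Nz\sqrt{1-z^2}\Big],
\]
with $x_T = z\sqrt{N/N_T}$. The lower endpoint recovers $\pi N F(z)$ via the explicit formula $\pi F(z) = \arccos(z) - z\sqrt{1-z^2}$ (which is just $\arccos z = \tfrac\pi2 - \arcsin z$ plugged into $\pi F(z) = \tfrac\pi2 - z\sqrt{1-z^2} - \arcsin z$). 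For the upper endpoint I would expand $x_T = (1+T/\mathfrak{L}^2)^{-1/2}$ and use $\arccos(1-u) = \sqrt{2u}(1+u/12+O(u^2))$ to get $\arccos(x_T) = \sqrt{T}/\mathfrak{L} - T^{3/2}/(3\mathfrak{L}^3) + O(\mathfrak{L}^{-5})$; meanwhile $z\sqrt{NN_T}\sqrt{1-x_T^2} = \mathfrak{L}^2 \sqrt{T}$ identically. The leading $\mathfrak{L}^2\sqrt{T}$ terms cancel and the next-order contribution produces exactly $-\tfrac{2}{3}T^{3/2}+o(1)$.

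Finally, the boundary term $\tfrac{f(N)-f(N_T)}{2} = \tfrac{\arccos(z) - O(\sqrt T/\mathfrak{L})}{2} = \tfrac\pi4 - \tfrac{\arcsin z}{2} + o(1)$. Combining these three contributions gives the claimed identity for $z>0$, and the symmetry reduction of the first paragraph finishes the proof. The main obstacle I expect is the bookkeeping of the Taylor expansion of $\arccos$ at $1$ to the correct order: naively one gets $\mathfrak{L}^2\sqrt{T}$ terms that look divergent, and only after computing the expansion one order beyond what is obvious does the precise coefficient $\tfrac{2}{3}$ of $T^{3/2}$ emerge from the cancellation. The floor in the definition of $N_T = \lfloor Nz^2 + T\mathfrak{L}\rfloor$ and the ceiling in $\mathfrak{L} = \lceil Nz^2\rceil^{1/3}$ only affect the answer by $O(f'(N_T)) = O(\mathfrak{L}^{-2})$, hence $o(1)$, and require no separate argument.
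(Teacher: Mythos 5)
Your proposal is correct and follows essentially the same route as the paper: reduce to $z>0$ by the symmetry $\theta_k(-z)=\pi-\theta_k(z)$, apply a trapezoid-corrected Euler--Maclaurin estimate with remainder controlled by $f'(N_T)=\O(\mathfrak{L}^{-2}\sqrt{T}^{-1})$, and extract $\pi N F(z)-\tfrac23 T^{3/2}+\tfrac{\arccos z}{2}$; the only (cosmetic) difference is that you evaluate $\int_{N_T}^N\arccos(z\sqrt{N/t})\,\dif t$ by a closed-form antiderivative and expand at the endpoint $N_T$, whereas the paper splits it as $\int_{N_0}^{N}-\int_{N_0}^{N_T}$, identifying the first piece with $\pi N F(z)$ by differentiating in $z$ and expanding $\arccos$ near $1$ for the second. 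Note also that your symmetry reduction yields $-\pi N_T(z)\1\{z<0\}$, which agrees with the paper's own proof and its use in Proposition~\ref{prop:limit}; the factor $\pi$ is simply missing from the statement as printed.
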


\begin{proof}
Using McLaurin formula, if $f'(u) \ge 0 $ and decreasing for $u\ge m$, with $f''(u)$ integrable, then 
\[
\sum_{k=m+1}^N f(k/N_0) = \int_{m+1}^N f(t/N_0) \d t + \frac{f(N/N_0)- f(m/N_0)}{2}
+\O\bigg(\frac{f'(m/N_0)}{N_0} \bigg)
\]
We apply this formula with $f : u \in [1,\infty] \mapsto \arccos(u^{-1/2})$,  $N_0 = Nz^2$ (with $z>0$), $m=N_0+T\L$ and $\L^3 =N_0$. 
We have for $u>1$
\[
f'(u) = \frac{1/2}{u\sqrt{u-1}} , \qquad\qquad   \frac{f'(m/N_0)}{N_0} \le \frac{1}{\sqrt{N_0(m-N_0)}} =\frac{1}{\L^2\sqrt{T}} .
\]
Using that $f(1+\epsilon)\le \sqrt{\epsilon}$, we obtain
\[
\vartheta_{N,m}(z)  = \int_{m+1}^N \arccos(z\sqrt{N/t})\, \d t + \frac{\arccos(z)}{2} + \O\bigg( \frac{\sqrt{T}}{\L} + \frac{1}{\L^2\sqrt{T}} \bigg) .
\]
For the leading term, we have for $z>0$
\[
-\partial_z\bigg( \int_{N_0}^N \arccos(z\sqrt{N/t})\, \d t\bigg)
=  \int_{N_0}^N \frac{\sqrt{N}}{\sqrt{t-N_0}} \d t   =2 N\sqrt{1-z^2} 
\]
and then 
\[
\int_{N_0}^N \arccos(z\sqrt{N/t})\, \d t = 2 N \int_z^1 \sqrt{1-u^2} \d u  = \pi N F(z) .
\]
Moreover, using that $ f(1+\epsilon) =  \sqrt{\epsilon}  -\epsilon^{3/2}/3 +\O(\epsilon^{5/2})$, 
\[\begin{aligned}
\int_{N_0}^m \arccos(z\sqrt{N/t})\, \d t  &= \L  \int_0^{T}   f(1+u/\L^2) \d u \\
&=  \int_0^{T}  \sqrt{\epsilon} \d \epsilon +\O\big(T^{5/2}/\L^2\big) \\
&\simeq  \tfrac23 T^{3/2}  
\end{aligned}\]
Using that $\arccos(z) = \pi/2-\arcsin(z)$, we conclude that for $z\ge 0$,
\[
\vartheta_{N,m}(z) =  \pi N F(z) - \tfrac23 T^{3/2} - \frac{\arcsin(z)-\pi/2}{2} +\underset{N\to\infty}{\o(1)}
\]

We have $\arccos(-z) = \pi-\arccos(z)$ and $F(-z) = 1-F(z)$ for $z\in[-1,1]$, so that  $\vartheta_{N,m}(-z)-\pi NF(-z) = -\pi m - (\vartheta_{N,m}(z)-\pi NF(z))$ and for $z<0$, 
\[\begin{aligned}
\vartheta_{N,m}(z)-\pi NF(z) &= -\pi m + \tfrac23 T^{3/2} - \frac{\arcsin(z)+\pi/2}{2} +\underset{N\to\infty}{\o(1)} . 
\end{aligned}\]
Combining these asymptotics, this completes the proof. 
\end{proof}

Finally, we also need precise asymptotics for the bracket of the $\G$ field. 

\begin{proposition}\label{prop:app_G1}
The $\G$ field satisfies for $z\in\Q$ and $T\ge 1$, 
\[
[\G_{N_0}(z)]   =  2 \log(\L/2) + \underset{N\to\infty}{\o(1)} , \qquad
\big[\G_{N,N_T}(z)\big]  = \log_+\bigg(\frac{\L^2 (1-z^2)}{T}\bigg) \pm \i\pi -2\i \arcsin(z)  +\underset{N\to\infty}{\o(1)} .
\]
\end{proposition}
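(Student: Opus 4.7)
The plan is to split $\G_N = \G^1 + \G^2$ into the hyperbolic contribution ($k < N_0 - T\L$) and the elliptic contribution ($k > N_0 + T\L$), which involve disjoint subsets of the independent noise variables $\{Z_k\}$. Consequently $[\G_{N_0}(z)] = [\G^1(z)]$ and $[\G_{N,N_T}(z)] = [\G^2(z)]$ are deterministic complex sums, which I would evaluate separately by standard harmonic-sum plus Riemann-integral asymptotics, mirroring the proof of Proposition~\ref{prop:G1} but carried out to the $o(1)$ level.

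For the first identity: $\G^1$ is real-valued. Using the algebraic identity $1-J(w)^2 = 2J(w)\sqrt{w^2-1}$ for $w \geq 1$ (which follows from $J(w)+J(w)^{-1}=2w$), I would rewrite
\[
[\G^1(z)] = \sum_{k<N_0-T\L} \frac{1}{Nz^2-k} \;-\; \sum_{k<N_0-T\L} \frac{J(z\sqrt{N/k})}{k\sqrt{Nz^2/k-1}}.
\]
The first sum is a telescoping harmonic sum equal to $\log(N_0/(T\L)) + o(1) = 2\log\L + O(1)$ (with the $T$-dependent constant absorbed in the overall convention matching Proposition~\ref{prop:limit}). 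The second sum is a Riemann sum converging as $N \to \infty$ to $\int_0^{z^2} J(z/\sqrt{t})/\sqrt{t(z^2-t)}\,dt$; using the substitution $t=z^2\sin^2\theta$ and $J(1/\sin\theta) = (1-\cos\theta)/\sin\theta$, this reduces to $\int_0^{\pi/2} 2(1-\cos\theta)/\sin\theta\,d\theta = 2\log 2$. Combining produces the constant $-2\log 2$ that converts $2\log\L$ into $2\log(\L/2)$.

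For the second identity: using $\E Z_k^2 = \cos(\theta_k) e^{-i\theta_k}$ from Lemma~\ref{lem:Z} together with $\cos^2\theta_k = N_0/k$ and $\cos\theta_k\sin\theta_k = \sgn(z)\sqrt{N_0(k-N_0)}/k$, I would obtain (modulo a sign convention in $\pm$)
\[
[\G^2(z)] = \sum_{N_T<k\leq N}\frac{N_0}{k(k-N_0)} \;\pm i \sum_{N_T<k\leq N}\frac{\sqrt{N_0}}{k\sqrt{k-N_0}}.
\]
The real part telescopes via $N_0/(k(k-N_0))=(k-N_0)^{-1}-k^{-1}$ to $\log\big((N-N_0)N_T/(NT\L)\big) = \log\!\big(\L^2(1-z^2)/T\big) + o(1)$, upon using $N = \L^3$, $N_T \sim N_0 = z^2 N$ and the factor $z^2$ from $N_T/N$ cancelling one of the $\L$'s. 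For the imaginary part, the Riemann sum converges to $\pm z \int_{z^2}^1 dt/(t\sqrt{t-z^2})$; the substitution $t = z^2\sec^2\phi$ evaluates this to $\pm 2\arccos(|z|) = \pm(\pi-2\arcsin|z|)$, which after converting the $\pm=\sgn(z)$ factor into the argument of $\arcsin$ yields the stated $\pm i\pi - 2i\arcsin(z)$.

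The main obstacle is bookkeeping: the constants arising from the finite-$T$ cutoff $\Gamma_T(z)$ and from the boundary terms of the harmonic sums must be tracked with care so that only those explicitly appearing in the stated form survive, while the remaining $T$-dependent and $\log z^2$ corrections either cancel between the two sums when substituted into $\Omega_N^0(z;T)$ (as in Proposition~\ref{prop:limit}) or are absorbed in the $o(1)$ remainder. None of these steps presents a genuine analytic difficulty beyond the precise evaluation of the two elementary integrals above.
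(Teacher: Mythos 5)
Your proposal is correct and takes essentially the same route as the paper: the bracket is split at the turning point into the hyperbolic part $[\G_{N_0}]$ and the elliptic part $[\G_{N,N_T}]$, the harmonic sums give the logarithms, and the remaining Riemann sums are evaluated as elementary integrals (your trigonometric substitutions giving $2\log 2$ and $\pm(\pi-2\arcsin|z|)$ agree with the paper's evaluation, which instead integrates the identity $\tfrac{d}{dt}\log\big(1+J(z/\sqrt t)^2\big)=\tfrac{1-J(z/\sqrt t)^2}{4(z^2-t)}$). Two small slips that do not affect the outcome: in the first identity the cutoff in $\G_{N_0}$ is the $T$-independent one $\{|k-Nz^2|<\L\}$ from Definition~\ref{def:GW}, so there is no $T$-dependent constant to ``absorb'' and your computation yields exactly $2\log\L-2\log 2$; and it is $\lceil Nz^2\rceil=\L^3$ (not $N=\L^3$) that, together with $N_T\sim Nz^2$, produces the factor $\L^2(1-z^2)/T$ in the second identity.
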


\begin{proof} 
Using the properties of the map $J$, \eqref{Jouk},  one has for $z\in\R$ and $0<t<z^2$, 
\[
\frac{d}{dt}  \log\big(1+ J(z/\sqrt{t})^2\big) =  \frac{J(z/\sqrt{t}) }{2t\sqrt{z^2/t-1}} 
= \frac{1- J(z/\sqrt{t})^2}{4(z^2-t)} .
\]
This follows from the fact that $J'(w)  = -J(w)/\sqrt{w^2-1} $ and the fact that $J$ satisfies the quadratic equation 
\[
1+J(w)^2 =  2wJ(w)  , \qquad\qquad   1-J(w)^2 = 2\sqrt{w^2-1}J(w) .
\]
Under Definition~\ref{def:noise} and Definition~\ref{def:GW},
with $\L=\L(z)$ and $\L^3= Nz^2$, the field $\G^1$ is real-valued and with $m =Nz^2-\L$, 
\[
[\G_{N_0}(z)]  =  \sum_{k \le m} \frac{1 +J(z\sqrt{N/k})^2}{2(Nz^2-k)} 
=  \sum_{k \le m} \frac{1/2}{Nz^2-k} -\sum_{k \le m} \frac{1 -J(z\sqrt{N/k})^2}{2(Nz^2-k)} .
\]
The main term has the asymptotics
\[
\sum_{k \le m} \frac{1}{Nz^2-k}  = \log\bigg(\frac{Nz^2}{\L}\bigg) +\underset{\L\to\infty}{\o(1)}. 
\]
For the second term, we use a Riemann sum approximation with $f: t\in \R_+\mapsto \frac{1- J(z\sqrt{N/t})^2}{4(Nz^2-t)}$. 
This function is monotone for $t<m$, so that 
\[\begin{aligned}
\sum_{k \le m} \frac{1 -J(z\sqrt{N/k})^2}{Nz^2-k}  & = \int_{1}^m \frac{1 -J(z\sqrt{N/t})^2}{Nz^2-t}  \d t +  \underset{\L\to\infty}{\o(1)} \\
&= 4\log\big(1+ J(z/\sqrt{N/m})^2\big) +  \underset{\L\to\infty}{\o(1)}  \\
&= 4 \log 2 +  \underset{\L\to\infty}{\o(1)}
\end{aligned}\]
since $J(w) \to 0$ as $w\to\infty$ and $J(w) \to \pm1$ as $w\to\pm1$. 

We conclude that if $z\in\Q$, 
\[
[\G_{N_0}(z)]   =  2 \log(\L/2) + \underset{N\to\infty}{\o(1)} .
\]

By definition, we also have for $z\in [-1,1]$ and $T\ge 1$, 
\[
\big[\G_{N,N_T}(z)\big]= \sum_{N_T < k \le N} \frac{1 +J(z\sqrt{N/k})^2}{2(k-Nz^2)} 
= \sum_{N_T < k \le N} \frac{1 + e^{2\i \arccos(z\sqrt{N/k})}}{2(k-Nz^2)} 
\]
where this sum is 0 if $|z|^2 \ge 1-T/\L^2$. 
This implies that 
\[\begin{aligned}
\Re\big[\G_{N,N_T}(z)\big]  
&= \sum_{N_T < k \le N} \frac{\cos(\arccos(z\sqrt{N/k}))^2}{k-Nz^2} 
= \sum_{N_T < k \le N} \frac{N z^2}{k(k-Nz^2)}  \\
\Im\big[\G_{N,N_T}(z)\big]
&= \sum_{N_T < k \le N}  \frac{\sin(2\arccos(z\sqrt{N/k})) }{2(k-Nz^2)}
= \sum_{N_T < k \le N} \frac{\pm \sqrt{N z^2} }{k\sqrt{k-Nz^2}} 
\end{aligned}\]
where $\pm =\sgn(z)$.
These two sums are convergent and   
\[
\sum_{N_T < k \le N} \frac{N z^2}{k(k-Nz^2)}  =  \sum_{N_T < k \le N} \bigg( \frac{1}{k-Nz^2}  -\frac1k \bigg) =  \log\bigg(\frac{N(1-z^2)}{T\L}\bigg) -\log\bigg(\frac{N}{N_T}\bigg)  +  \underset{\L\to\infty}{\o(1)}
\]
If $z\in\Q$, as $N_T \simeq \L^3$, this implies that 
\[
\Re\big[\G_{N,N_T}(z)\big]  = \log_+\bigg(\frac{\L^2 (1-z^2)}{T}\bigg) + \underset{N\to\infty}{\o(1)} .
\]
By a Riemann sum approximation,
\[
\sum_{N_T < k \le N} \frac{\sqrt{N z^2} }{k\sqrt{k-Nz^2}}   =  \int_{N_T}^N \frac{\sqrt{N z^2} \, \d t}{t^{3/2}\sqrt{1-Nz^2/t}} + \underset{N\to\infty}{\o(1)} .
\]
We make a change of variable $u = \sqrt{Nz^2 / t}$ $(\d u = - \sqrt{Nz^2} \d t/ 2t^{3/2})$ so that 
\[
\sum_{N_T < k \le N} \frac{\sqrt{N z^2} }{k\sqrt{k-Nz^2}}   =  \int_{|z|}^1 \frac{2\d u}{\sqrt{1-u^2}} + \underset{N\to\infty}{\o(1)} .
\]
using that $\sqrt{Nz^2 / N_T} \simeq 1$. This implies that for $z\in[-1,1]$
\[
\Im\big[\G_{N,N_T}(z)\big] = \pm 2 \big(\pi/2-\arcsin(|z|) \big) + \underset{N\to\infty}{\o(1)} .
\]
We conclude that if $z\in\Q$ for $T\ge 1$, 
\[
\big[\G_{N,N_T}(z)\big]  = \log_+\bigg(\frac{\L^2 (1-z^2)}{T}\bigg) \pm \i\pi -2\i \arcsin(z)  +\underset{N\to\infty}{\o(1)} . \qedhere
\]
\end{proof}

\todo{Include the sketch of proof}

\printbibliography

\end{document}